\newtheoremstyle{theoremstyle}
  {10pt}      
  {5pt}       
  {\itshape}  
  {}          
  {\bfseries} 
  {:}         
  {.5em}      
  {}          
\newtheoremstyle{examplestyle}
  {10pt}      
  {5pt}       
  {}          
  {}          
  {\bfseries} 
  {:}         
  {.5em}      
  {}          
\theoremstyle{theoremstyle}
\newtheorem{theorem}{Theorem}[section]
\newtheorem*{theorem*}{Theorem}
\newtheorem{lemma}[theorem]{Lemma}
\newtheorem{proposition}[theorem]{Proposition}
\newtheorem*{proposition*}{Proposition}
\newtheorem*{conjecture*}{Conjecture}
\newtheorem{corollary}[theorem]{Corollary}
\newtheorem*{corollary*}{Corollary}
\theoremstyle{examplestyle}
\newtheorem{example}[theorem]{Example}
\newtheorem{definition}[theorem]{Definition}
\newtheorem{definition*}{Definition}
\newtheorem{remark}[theorem]{Remark}
\newtheorem{remark*}{Remark}
\newcommand{\Hom}{\operatorname{Hom}}
\newcommand{\ev}{{\operatorname{ev}}}
\newcommand{\CC}{{\operatorname{C}}}
\newcommand{\Ev}{{\operatorname{Ev}}}
\newcommand{\Fr}{{\operatorname{Fr}}}
\newcommand{\Group}{{\operatorname{G}}}
\newcommand{\id}{{\operatorname{id}}}
\newcommand{\op}{{\operatorname{op}}}
\newcommand{\fin}{{\operatorname{fin}}}
\newcommand{\sgn}{{\operatorname{sgn}}}
\newcommand{\C}{\mathbb{C}}
\newcommand{\FF}{\mathbb{F}}
\newcommand{\AAA}{\mathbb{A}}
\newcommand{\PPP}{\mathbb{P}}
\newcommand{\Q}{\mathbb{Q}}
\newcommand{\R}{\mathbb{R}}
\newcommand{\N}{\mathbb{N}}
\newcommand{\ZZ}{\mathbb{Z}}
\newcommand{\D}{\mathcal{D}}
\newcommand{\A}{\mathcal{A}}
\newcommand{\Hilbert}{\mathcal{H}}
\newcommand{\K}{\mathcal{K}}
\newcommand{\T}{\mathcal{T}}
\newcommand{\GG}{\mathcal{G}}
\newcommand{\M}{\mathcal{M}}
\newcommand{\NN}{\mathcal{N}}
\newcommand{\QQ}{\mathcal{Q}}
\newcommand{\CCC}{\mathcal{C}}
\newcommand{\DDD}{\mathcal{D}}
\newcommand{\RRR}{\mathcal{R}}
\newcommand{\SSSS}{\mathcal{S}}
\newcommand{\TTTT}{\mathcal{T}}
\newcommand{\E}{\mathcal{E}}
\newcommand{\F}{\mathcal{F}}
\newcommand{\G}{\mathcal{G}}
\newcommand{\HHH}{\mathcal{H}}
\newcommand{\Hspt}{\mathcal{H}}
\newcommand{\X}{\mathcal{X}}
\newcommand{\Y}{\mathcal{Y}}
\newcommand{\Z}{\mathcal{Z}}
\newcommand{\W}{\mathcal{W}}
\newcommand{\U}{\mathcal{U}}
\newcommand{\V}{\mathcal{V}}
\newcommand{\Ch}{{\bf Ch}}
\newcommand{\bb}{{\bf b}}
\newcommand{\HH}{{\bf H}}
\newcommand{\HL}{{\bf HL}}
\newcommand{\KK}{{\bf KK}}
\newcommand{\MM}{{\bf M}}
\newcommand{\RR}{{\bf R}}
\newcommand{\SH}{{\bf SH}}
\newcommand{\Ho}{{\bf Ho}}
\newcommand{\htp}{{\bf ht}}
\newcommand{\Alg}{{\bf Alg}}
\newcommand{\Rep}{{\bf Rep}}
\newcommand{\Spc}{{\bf Spc}}
\newcommand{\Set}{{\bf Set}}
\newcommand{\Spt}{{\bf Spt}}
\newcommand{\Sym}{{\bf Sym}}
\newcommand{\Alt}{{\bf Alt}}
\newcommand{\Mod}{{\bf Mod}}
\newcommand{\Ab}{{\bf Ab}}
\newcommand{\SSS}{{\bf S}}
\newcommand{\ddet}{{\bf det}}
\newcommand{\Ex}{\operatorname{Ex}}
\newcommand{\cyl}{\operatorname{cyl}}
\newcommand{\ttop}{\operatorname{top}}
\newcommand{\Sing}{\operatorname{Sing}}
\newcommand{\cone}{\operatorname{cone}}
\newcommand{\colim}{\operatorname{colim}}
\newcommand{\holim}{\operatorname{holim}}
\newcommand{\hofib}{\operatorname{hofib}}
\newcommand{\hocolim}{\operatorname{hocolim}}
\newcommand{\cof}{\rightarrowtail}
\newcommand{\fib}{\twoheadrightarrow}
\newcommand{\ccc}{\mathsf{c}}
\newcommand{\tr}{\mathsf{tr}}
\newcommand{\fd}{\mathsf{fd}}
\newcommand{\eff}{\mathsf{eff}}
\newcommand{\f}{\mathsf{f}}
\newcommand{\ppt}{\mathsf{pt}}
\newcommand{\hht}{\mathsf{ht}}
\newcommand{\sst}{\mathsf{st}}
\title{{Homotopy theory of $\CC^{\ast}$-algebras}}
\author{Paul Arne {\O}stv{\ae}r
\footnote{Department of Mathematics, University of Oslo, Norway. 
\newline MSC Primary: 46L99, 55P99}}
\begin{document}
\maketitle

\begin{abstract}
In this work we construct from ground up a homotopy theory of $\CC^{\ast}$-algebras.
This is achieved in parallel with the development of classical homotopy theory by first introducing an unstable model structure 
and second a stable model structure.
The theory makes use of a full fledged import of homotopy theoretic techniques into the subject of $\CC^{\ast}$-algebras.
\vspace{0.1in}

The spaces in $\CC^{\ast}$-homotopy theory are certain hybrids of functors represented by $\CC^{\ast}$-algebras and spaces studied 
in classical homotopy theory.  
In particular, 
we employ both the topological circle and the $\CC^{\ast}$-algebra circle of complex-valued continuous functions on the real numbers 
which vanish at infinity.  
By using the inner workings of the theory, 
we may stabilize the spaces by forming spectra and bispectra with respect to either one of these circles or their tensor product.
These stabilized spaces or spectra are the objects of study in stable $\CC^{\ast}$-homotopy theory.
\vspace{0.1in}

The stable homotopy category of $\CC^{\ast}$-algebras gives rise to invariants such as stable homotopy groups and bigraded cohomology 
and homology theories.  
We work out examples related to the emerging subject of noncommutative motives and zeta functions of $\CC^{\ast}$-algebras.
In addition, 
we employ homotopy theory to define a new type of $K$-theory of $\CC^{\ast}$-algebras.
\end{abstract}

\newpage 
\tableofcontents

\newpage 
\section{Introduction}
In this work we present some techniques and results which lead to new invariants of $\CC^{\ast}$-algebras.
The fundamental organizational principle of $\CC^{\ast}$-homotopy theory infers there exists a 
homotopy theory of $\CC^{\ast}$-algebras determined by short exact sequences, 
matrix invariance and by complex-valued functions on the topological unit interval. 
We shall make this precise by constructing model structures on certain spaces which are build up of 
$\CC^{\ast}$-algebras in much the same way as every natural number acquires a prime factorization.
Our approach combines a new take on $\CC^{\ast}$-algebras dictated by category theory and the recently 
perfected homotopy theory of cubical sets.
The idea of combining $\CC^{\ast}$-algebras and cubical sets into a category of 
``cubical $\CC^{\ast}$-spaces'' may perhaps be perceived as quite abstract on a first encounter.
However, these spaces arise naturally from a homotopy theoretic viewpoint.
We observe next the failure of a more straightforward topological approach.
\vspace{0.1in}

By employing the classical homotopy lifting property formulated in \cite{Schochet:III} for maps between 
$\CC^{\ast}$-algebras one naturally arrives at the notion of a $\CC^{\ast}$-algebra cofibration.
The definition is rigged such that under the Gelfand-Naimark correspondence a map between locally
compact Hausdorff spaces $X\rightarrow Y$ is a topological cofibration if and only if the induced map 
$C_{0}(Y)\rightarrow C_{0}(X)$ is a $\CC^{\ast}$-algebra cofibration.
Now a standard argument shows every ${\ast}$-homomorphism factors as the composition of an injective 
homotopy equivalence and a $\CC^{\ast}$-algebra cofibration.
This might suggest to willing minds that there exists a bona fide model structure on $\CC^{\ast}$-algebras  
with fibrations the $\CC^{\ast}$-algebra cofibrations and weak equivalences the homotopy equivalences.
In this aspiring model structure every $\CC^{\ast}$-algebra is fibrant and for a suitable tensor product 
the suspension functor $\Sigma=C_0(\R)\otimes-$ acquires a left adjoint.
Thus for every diagram of $\CC^{\ast}$-algebras indexed by some ordinal $\lambda$ the suspension functor 
$\Sigma$ induces a homotopy equivalence
\begin{equation*}
\xymatrix{
\Sigma\prod_{i\in\lambda} A_i\ar[r] & \prod_{i\in\lambda}\Sigma A_i. }
\end{equation*}
But this map is clearly not a homotopy equivalence;
for example, applying $K_1$ to the countable constant diagram with value the complex numbers yields an 
injective map with image the subgroup of bounded sequences in $\prod_{\N}\mathbb{Z}$.
The categories of cubical and simplicial $\CC^{\ast}$-spaces introduced in this work offer alternate approaches to a homotopy theory 
employing constructions which are out of reach in the traditional confines of $\CC^{\ast}$-algebra theory, 
as in e.g.~\cite{BlackadarKbook} and \cite{Rosenberg:noncommutativetopology}.
\vspace{0.1in}

One of the main goals of $\CC^{\ast}$-homotopy theory is to provide a modern framework for 
cohomology and homology theories of $\CC^{\ast}$-algebras.
In effect, 
we introduce the stable homotopy category $\SH^{\ast}$ of $\CC^{\ast}$-algebras by 
stabilizing the model structure on cubical $\CC^{\ast}$-spaces with respect to the tensor product 
$C\equiv S^1\otimes C_0(\R)$ of the cubical circle $S^1$ and the nonunital $\CC^{\ast}$-algebra $C_0(\R)$.
Combining these two circles allow us to define a bigraded cohomology theory 
\begin{equation}
\label{cohomology}
\E^{p,q}(A)
\equiv
\SH^{\ast}\bigl(\Sigma_C^{\infty} A,S^{p-q}\otimes C_0(\R^q)\otimes\E\bigr)
\end{equation} 
and a bigraded homology theory
\begin{equation}
\label{homology}
\E_{p,q}(A)
\equiv
\SH^{\ast}\bigl(\Sigma_C^{\infty} S^{p-q}\otimes C_0(\R^q),A\otimes \E\bigr).
\end{equation} 
Here, 
$A$ is a $\CC^{\ast}$-algebra considered as a discrete cubical $\CC^{\ast}$-space and $\E$ is a 
$\CC^{\ast}$-spectrum in the stable homotopy category $\SH^{\ast}$.
Kasparov's $KK$-theory and Connes-Higson's $E$-theory, 
suitably extended to cubical $\CC^{\ast}$-spaces,
give rise to examples of $C$-spectra.
The precise definitions of the tensor product $\otimes$ and the suspension functor $\Sigma_C^{\infty}$ 
will be explained in the main body of the text.
An allied theory of noncommutative motives rooted in algebraic geometry is partially responsible for the choice of bigrading, 
see \cite{Ostvar:noncommutativemotives}. 
Inserting the sphere spectrum $\Sigma_C^{\infty}\C$ into the formula (\ref{homology}) yields a theory of 
bigraded stable homotopy groups which receives a canonical map from the classical homotopy groups of spheres.
An intriguing problem, 
which will not be attempted in this paper, 
is to compute the commutative endomorphism ring of $\Sigma_C^{\infty}\C$.
\vspace{0.1in}

The spaces in $\CC^{\ast}$-homotopy theory are convenient generalizations of $\CC^{\ast}$-algebras.
A $\CC^{\ast}$-space is build out of $\CC^{\ast}$-algebras considered as representable set-valued functors.
In unstable $\CC^{\ast}$-homotopy theory we work with cubical $\CC^{\ast}$-spaces $\X$. 
By definition, 
for every $\CC^{\ast}$-algebra $A$ we now get homotopically meaningful objects in form of cubical 
sets $\X(A)$.
Using the homotopy theory of cubical sets, 
which models the classical homotopy theory of topological spaces,
declare a map $\X\rightarrow\Y$ of cubical $\CC^{\ast}$-spaces to be a pointwise weak equivalence if 
$\X(A)\rightarrow\Y(A)$ is a weak equivalence of cubical sets. 
This is a useful but at the same time an extremely coarse notion of weak equivalence in our setting.
In order to introduce a much finer notion of weak equivalence reflecting the data of short exact sequences, 
matrix invariance and homotopy equivalence of $\CC^{\ast}$-algebras, 
we shall localize the pointwise model structure with respect to a set of maps in the category $\Box\Spc$ 
of cubical $\CC^{\ast}$-spaces.
We define the unstable homotopy category $\HH$ of $\CC^{\ast}$-algebras as the homotopy category of the 
localized model structure.

The homotopy category is universal in the sense that the localized model structure gives the initial example 
of a left Quillen functor $\Box\Spc\rightarrow\MM$ to some model category $\MM$ with the property that every 
member of the localizing set of maps derives to an isomorphism in the homotopy category of $\MM$. 
\vspace{0.1in}

Section \ref{section:Cstarhomotopytheory} details the constructions and some basic properties 
of the unstable model structures in $\CC^{\ast}$-homotopy theory.
Moreover, 
for the noble purpose of stabilizing, 
we note there exist entirely analogous model structures for pointed cubical $\CC^{\ast}$-spaces 
and a pointed unstable homotopy category $\HH^{\ast}$ of $\CC^{\ast}$-algebras.
As in topology, 
every pointed cubical $\CC^{\ast}$-space gives rise to homotopy groups indexed by the 
non-negative integers. 
These invariants determine precisely when a map is an isomorphism in $\HH^{\ast}$.
We use the theory of representations of $\CC^{\ast}$-algebras to interpret Kasparov's $KK$-groups
as maps in $\HH^{\ast}$.
Due to the current setup of $K$-theory,
in this setting it is convenient to work with simplicial rather than cubical $\CC^{\ast}$-spaces.
However, 
this distinction makes no difference since the corresponding homotopy categories are equivalent.
\vspace{0.1in}

Section \ref{section:stableCstarhomotopytheory} introduces the ``spaces'' employed in stable $\CC^{\ast}$-homotopy theory, 
namely spectra in the sense of algebraic topology with respect to the suspension coordinate $C$.
These are sequences $\X_{0},\X_{1},\cdots$ of pointed cubical $\CC^{\ast}$-spaces equipped with 
structure maps $C\otimes\X_{n}\rightarrow\X_{n+1}$ for every $n\geq 0$.
We show there exists a stable model structure on spectra and define $\SH^{\ast}$ as the associated 
homotopy category.
There is a technically superior category of $\CC^{\ast}$-symmetric spectra with a closed symmetric
monoidal product.
The importance of this category is not emphasized in full in this paper, 
but one would expect that it will play a central role in further developments of the subject.
Its stable homotopy category is equivalent to $\SH^{\ast}$.  
\vspace{0.1in}

Using the pointed model structure we define a new type of $K$-theory of $\CC^{\ast}$-algebras.
We note that it contains the $K$-theory of the cubical sphere spectrum or Waldhausen's $A$-theory of a point as a retract.
This observation brings our $\CC^{\ast}$-homotopy theory in contact with manifold theory.
More generally, 
working relative to some $\CC^{\ast}$-algebra $A$, 
we construct a $K$-theory spectrum $K(A)$ whose homotopy groups cannot be extracted from the ordinary $K_{0}$- and 
$K_{1}$-groups of $A$.
It would of course be of considerable interest to explicate some of the $K$-groups arising from this construction, 
even for the trivial $\CC^{\ast}$-algebra.
We show that the pointed (unstable) model structure and the stable model structure on $S^{1}$-spectra of spaces in unstable 
$\CC^{\ast}$-homotopy theory give rise to equivalent $K$-theories.
This is closely related to the triangulated structure on $\SH^{\ast}$.   
\vspace{0.1in}

In \cite{Ostvar:noncommutativemotives} we construct a closely related theory of noncommutative motives. 
On the level of $\CC^{\ast}$-symmetric spectra this corresponds to twisting Eilenberg-MacLane spectra in ordinary stable 
homotopy theory by the $KK$-theory of tensor products of $C_{0}(\R)$.  
This example relates to $K$-homology and $K$-theory and is discussed in Section \ref{subsection:KK-theory}.
The parallel theory of Eilenberg-MacLane spectra twisted by local cyclic homology is sketched in Section \ref{subsection:localcyclichomology}.
As it turns out there is a Chern-Connes character, 
with highly structured multiplicative properties, 
between the $\CC^{\ast}$-symmetric spectra build from Eilenberg-MacLane spectra twisted by $KK$-theory and local cyclic homology.  
This material is covered in Section \ref{subsection:theChernConnescharacter}.
An alternate take on motives has been worked out earlier by Connes-Consani-Marcolli in \cite{CCM}.
\vspace{0.1in}

Related to the setup of motives we introduce for a $\CC^{\ast}$-space $\E$ its zeta function $\zeta_{\E}(t)$ taking values in the formal power 
series over a certain Grothendieck ring.
The precise definition of zeta functions in this setting is given in Section \ref{subsection:zetafunctions}.
In the same section we provide some motivation by noting an analogy with zeta functions defined for algebraic varieties.
As in the algebro-geometric situation a key construction is that of symmetric powers.
It turns out that $\zeta_{\E}(t)$ satisfies a functional equation involving Euler characteristics $\chi(\E)$ and $\chi_{+}(\E)$ and the determinant 
$\ddet(\E)$ provided $\E$ is ``finite dimensional'' in some sense.
If $\D\E$ denotes the dual of $\E$ then the functional equation reads as follows:
\begin{equation*}
\zeta_{\D\E}(t^{-1})=
(-1)^{\chi_{+}(\E)}\ddet(\E) t^{\chi(\E)}\zeta_{\E}(t)
\end{equation*}
\vspace{0.1in}

In the last section we show there exists a filtration of the stable homotopy category $\SH^{\ast}$ by full triangulated subcategories:
\begin{equation*}
\cdots\subseteq 
\Sigma_{C}^{1}\SH^{\ast,\eff}\subseteq
\SH^{\ast,\eff}\subseteq
\Sigma_{C}^{-1}\SH^{\ast,\eff}\subseteq
\cdots
\end{equation*}
Here, 
placed in degree zero is the so-called effective stable $\CC^{\ast}$-homotopy category comprising all suspension spectra.
The above is a filtration of the stable $\CC^{\ast}$-homotopy category in the sense that the smallest triangulated subcategory containing 
$\Sigma_{C}^{q}\SH^{\ast,\eff}$ for every integer $q$ coincides with $\SH^{\ast}$.
In order to make this construction work we use the fact that $\SH^{\ast}$ is a compactly generated triangulated category.
The filtration points toward a whole host open problems reminiscent of contemporary research in motivic homotopy theory \cite{VVopenproblems}.
A first important problem in this direction is to identify the zero slice of the sphere spectrum.
\vspace{0.1in}

The results described in the above extend to $\CC^{\ast}$-algebras equipped with a strongly continuous representation of a locally compact group 
by $\CC^{\ast}$-algebra automorphisms.
This fact along with the potential applications have been a constant inspiration for us.
Much work remains to develop the full strength of the equivariant setup.
\vspace{0.1in}

Our overall aim in this paper is to formulate,
by analogy with classical homotopy theory, 
a first thorough conceptual introduction to $\CC^{\ast}$-homotopy theory.
A next step is to indulge in the oodles of open computational questions this paper leaves behind.
Some of these emerging questions should be resolved by making difficult things easy as a consequence of the setup, 
while others will require considerable hands-on efforts.
\vspace{0.1in}

{\bf Acknowledgments.}
Thanks go to the the members of the operator algebra and geometry/topology groups in Oslo for interest in this work. 
We are grateful to Clark Barwick, George Elliott, Nigel Higson, Rick Jardine, Andr\'e Joyal, 
Jack Morava, Sergey Neshveyev, Oliver R{\"o}ndigs and Claude Schochet for inspiring correspondence and discussions.
We extend our gratitude to Michael Joachim for explaining his joint work with Mark Johnson on a 
model category structure for sequentially complete locally multiplicatively convex $\CC^{\ast}$-algebras 
with respect to some infinite ordinal number \cite{JJ:modelstructure}.
The two viewpoints turned out to be wildly different.
Aside from the fact that we are not working with the same underlying categories,
one of the main differences is that the model structure in \cite{JJ:modelstructure} is right proper, 
since every object is fibrant, 
but it is not known to be left proper. 
Hence it is not suitable fodder for stabilization in terms of today's (left) Bousfield localization machinery.
One of the main points in our work is that fibrancy is a special property; 
in fact, it governs the whole theory, 
while left properness is required for defining the stable $\CC^{\ast}$-homotopy category.

\newpage 
\section{Preliminaries}
\label{section:preliminaries}

\subsection{$\CC^{\ast}$-spaces}
\label{subsection:CC-spaces}
Let $\CC^{\ast}-\Alg$ denote the category of separable $\CC^{\ast}$-algebras and $\ast$-homomorphisms.
It is an essentially small category with small skeleton the set of $\CC^{\ast}$-algebras which are 
operators on a fixed separable Hilbert space of countably infinite dimension.
In what follows, 
all $\CC^{\ast}$-algebras are objects of $\CC^{\ast}-\Alg$ so that commutative $\CC^{\ast}$-algebras 
can be identified with pointed compact metric spaces via Gelfand-Naimark duality. 
Let $\K$ denote the $\CC^{\ast}$-algebra of compact operators on a separable, 
infinite dimensional Hilbert space, 
e.g.~the space $\ell^2$ of square summable sequences. 
\vspace{0.1in}

The object of main interest in this section is obtained from $\CC^{\ast}-\Alg$ via embeddings 
\begin{equation*}
\xymatrix{
\CC^{\ast}-\Alg \ar[r] &
\CC^{\ast}-\Spc \ar[r] &
\Box\CC^{\ast}-\Spc. }
\end{equation*}
A $\CC^{\ast}$-space is a set-valued functor on $\CC^{\ast}-\Alg$. 
Let $\CC^{\ast}-\Spc$ denote the category of $\CC^{\ast}$-spaces and natural transformations.
By the Yoneda lemma there exists a full and faithful contravariant embedding of $\CC^{\ast}-\Alg$ into 
$\CC^{\ast}-\Spc$ which preserves limits.
This entails in particular natural bijections $\CC^{\ast}-\Alg(A,B)=\CC^{\ast}-\Spc(B,A)$ for all 
$\CC^{\ast}$-algebras $A$, $B$.
Since, 
as above, 
the context will always clearly indicate the meaning we shall throughout identify every 
$\CC^{\ast}$-algebra with its corresponding representable $\CC^{\ast}$-space.
Note that every set determines a constant $\CC^{\ast}$-space.
A pointed $\CC^{\ast}$-space consists of a $\CC^{\ast}$-space $\X$ together with a map of 
$\CC^{\ast}$-spaces from the trivial $\CC^{\ast}$-algebra to $\X$.
We let $\CC^{\ast}-\Spc_{0}$ denote the category of pointed $\CC^{\ast}$-spaces.
There exists a functor $\CC^{\ast}-\Spc\rightarrow\CC^{\ast}-\Spc_{0}$ obtained by taking pushouts of 
diagrams of the form $\X\leftarrow\emptyset\rightarrow 0$; it is left adjoint to the forgetful functor.
Observe that every $\CC^{\ast}$-algebra is canonically pointed. 
The category $\Box\CC^{\ast}-\Spc$ of cubical $\CC^{\ast}$-spaces consists of possibly void collections of 
$\CC^{\ast}$-spaces $\X_n$ for all $n\geq 0$ together with face maps 
$d_i^{\alpha}\colon\X_n\rightarrow\X_{n-1},1\leq i\leq n,\alpha=0,1$
(corresponding to the $2n$ faces of dimension $n-1$ in a standard geometrical $n$-cube),   
and degeneracy maps $s_i\colon\X_{n-1}\rightarrow\X_n$ where $1\leq i\leq n$ subject to the cubical 
identities $d_i^{\alpha} d_j^{\beta}=d_{j-1}^{\beta} d_i^{\alpha}$ for $i<j$, $s_i s_j=s_{j+1}s_i$ for 
$i\leq j$ and  
\begin{align*}
d_i^{\alpha}s_j=
\begin{cases}
s_{j-1} d_i^{\alpha} & i<j\\
\id & i=j\\
s_j d_{i-1}^{\alpha} & i>j.
\end{cases}
\end{align*}
A map of cubical $\CC^{\ast}$-spaces is a collection of maps of $\CC^{\ast}$-spaces $\X_n\rightarrow\Y_n$ 
for all $n\geq 0$ which commute with the face and degeneracy maps.
An alternate description uses the box category $\Box$ of abstract hypercubes representing the 
combinatorics of power sets of finite ordered sets \cite[\S3]{Jardine:Categoricalhomotopytheory}.
The box category $\Box$ is the subcategory of the category of poset maps $1^{n}\rightarrow 1^{m}$ 
which is generated by the face and degeneracy maps.
Here, 
$1^{n}=1^{\times n}=\{(\epsilon_{1},\cdots,\epsilon_{n})\vert \epsilon_{i}=0,1\}$ is the $n$-fold hypercube.
As a poset $1^{n}$ is isomorphic to the power set of $\{0,1,\cdots,n\}$. 
The category $\Box\Set$ of cubical sets consists of functors $\Box^{\op}\rightarrow\Set$ and natural 
transformations.
With these definitions we may identify $\Box\CC^{\ast}-\Spc$ with the functor category 
$[\CC^{\ast}-\Alg,\Box\Set]$ of cubical set-valued functors on $\CC^{\ast}-\Alg$.
Note that every cubical set defines a constant cubical $\CC^{\ast}$-space by extending degreewise the 
correspondence between sets and $\CC^{\ast}$-spaces.  
A particularly important example is the standard $n$-cell defined by $\Box^{n}\equiv\Box(-,1^{n})$.
Moreover, every $\CC^{\ast}$-algebra defines a representable $\CC^{\ast}$-space which can be viewed 
as a discrete cubical $\CC^{\ast}$-space.
The category $\Box\CC^{\ast}-\Spc_{0}$ of pointed cubical $\CC^{\ast}$-spaces is defined using the exact same 
script as above. 
Hence it can be identified with the functor category of pointed cubical set-valued functors on 
$\CC^{\ast}-\Alg$. 
\vspace{0.1in}

We shall also have occasion to work with the simplicial category $\Delta$ of finite ordinals 
$[n]=\{0<1<\cdots<n\}$ for $n\geq 0$ and order-preserving maps.
The category $\Delta\CC^{\ast}-\Spc$ of simplicial $\CC^{\ast}$-spaces consists of $\CC^{\ast}$-spaces
$\X_{n}$ for all $n\geq 0$ together with face maps $d_i\colon\X_n\rightarrow\X_{n-1}$, $1\leq i\leq n$, 
and degeneracy maps $s_i\colon\X_{n-1}\rightarrow\X_n$, $1\leq i\leq n$, subject to the simplicial 
identities $d_{i} d_{j}=d_{j-1} d_{i}$ for $i<j$, $s_i s_j=s_{j+1}s_i$ for $i\leq j$ and  
\begin{align*}
d_{i} s_{j}=
\begin{cases}
s_{j-1} d_i & i<j\\
\id & i=j,j+1\\
s_{j} d_{i-1} & i>j+1.
\end{cases}
\end{align*}

Let $\otimes_{\CC^{\ast}-\Alg}$ denote a suitable monoidal product on $\CC^{\ast}-\Alg$ with unit 
the complex numbers.
Later we shall specialize to the symmetric monoidal maximal and minimal tensor products, 
but for now it is not important to choose a specific monoidal product. 
In \S\ref{modelcategories} we recall the monoidal product $\otimes_{\Box\Set}$ in Jardine's closed symmetric 
monoidal structure on cubical sets \cite[\S3]{Jardine:Categoricalhomotopytheory}.
We shall outline an extension of these data to a closed monoidal structure on $\Box\CC^{\ast}-\Spc$ following 
the work of Day \cite{Day:closedfunctorsI}.
The external monoidal product of two cubical $\CC^{\ast}$-spaces 
$\X,\Y\colon\CC^{\ast}-\Alg\rightarrow\Box\Set$ is defined by setting 
\begin{equation*}
\X\widetilde{\otimes}\Y\equiv\otimes_{\Box\Set}\circ (\X\times\Y). 
\end{equation*}
Next we introduce the monoidal product $\X\otimes\Y$ of $\X$ and $\Y$ by taking the left Kan extension 
of $\otimes_{\CC^{\ast}-\Alg}$ along $\X\widetilde{\otimes}\Y$ or universal filler in the diagram:
\begin{equation*}
\xymatrix{ 
\CC^{\ast}-\Alg\times\CC^{\ast}-\Alg \ar[r]^-{\X\widetilde{\otimes}\Y} \ar[d]_-{\otimes_{\CC^{\ast}-\Alg}} &
\Box\Set\\
\CC^{\ast}-\Alg  \ar@{ -->}[ur] }
\end{equation*}
Thus the $\Box\Set$-values of the monoidal product are given by the formulas
\begin{equation*}
\X\otimes\Y(A)\equiv
\underset{A_1\otimes_{\CC^{\ast}-\Alg}A_2\rightarrow A}{\colim}
\X(A_1)\otimes_{\Box\Set}\Y(A_2).
\end{equation*}
The colimit is indexed on the category with objects 
$\alpha\colon A_1\otimes_{\CC^{\ast}-\Alg}A_2\rightarrow A$ and maps pairs of maps 
$(\phi,\psi)\colon (A_1,A_2)\rightarrow (A'_1,A'_2)$ such that $\alpha'(\psi\otimes\phi)=\alpha$.
By functoriality of colimits it follows that $\X\otimes\Y$ is a cubical $\CC^{\ast}$-space.
When couched as a coend, 
the tensor product is a weighted average of all of the handicrafted external tensor products 
$\X\widetilde{\otimes}\Y\equiv\otimes_{\Box\Set}\circ (\X\times\Y)$ in the sense that 
\begin{equation*}
\X\otimes\Y(A)=
\int^{A_1,A_2\in\CC^{\ast}-\Alg}
\bigl(\X(A_1)\otimes_{\Box\Set}\Y(A_2)\bigr)
\otimes_{\Box\Set}
\CC^{\ast}-\Alg(A_1\otimes_{\CC^{\ast}-\Alg}A_2,A).
\end{equation*}
Since the tensor product is defined by a left Kan extension, 
it is characterized by the universal property
\begin{equation*}
\Box\CC^{\ast}-\Spc(\X\otimes\Y,\Z)=
[\CC^{\ast}-\Alg,\Box\CC^{\ast}-\Spc] 
(\X\widetilde{\otimes}\Y,\Z\circ\otimes_{{\CC^{\ast}-\Alg}}).
\end{equation*}
The bijection shows that maps between cubical $\CC^{\ast}$-spaces $\X\otimes\Y\rightarrow\Z$ are uniquely 
determined by maps of cubical sets $\X(A)\otimes_{\Box\Set}\Y(B)\rightarrow\Z(A\otimes_{\CC^{\ast}-\Alg}B)$ 
which are natural in $A$ and $B$.
Note also that the tensor product of representable $\CC^{\ast}$-spaces $A\otimes B$ is represented by the 
monoidal product $A\otimes_{\CC^{\ast}-\Alg}B$ and for cubical sets $K$, $L$, 
$K\otimes L=K\otimes_{\Box\Set} L$, 
i.e.~$(\CC^{\ast}-\Alg,\otimes_{\CC^{\ast}-\Alg})\rightarrow (\Box\CC^{\ast}-\Spc,\otimes)$ and 
$(\Box,\otimes_{\Box\Set})\rightarrow (\Box\CC^{\ast}-\Spc,\otimes)$ are monoidal functors in the strong 
sense that both of the monoidal structures are preserved to within coherent isomorphisms. 
According to our standing hypothesis, 
the $\CC^{\ast}$-algebra $\C$ (the complex numbers) represents the unit for the monoidal product $\otimes$.

If $\widetilde{\Z}$ is a cubical set-valued functor on $\CC^{\ast}-\Alg\times\CC^{\ast}-\Alg$ 
and $\Y$ is a cubical $\CC^{\ast}$-space,
define the external function object $\underline{\widetilde{\Hom}}(\Y,\widetilde{\Z})$ by 
\begin{equation*}
\underline{\widetilde{\Hom}}(\Y,\widetilde{\Z})(A)
\equiv
\Box\CC^{\ast}-\Spc\bigl(\Y,\widetilde{\Z}(A,-)\bigr).
\end{equation*}
Then for every cubical $\CC^{\ast}$-space $\X$ there is a bijection
\begin{equation*}
\Box\CC^{\ast}-\Spc\bigl(\X,\underline{\widetilde{\Hom}}(\Y,\widetilde{\Z})\bigr)=
[\CC^{\ast}-\Alg\times\CC^{\ast}-\Alg,\Box\Set]
(\X\widetilde{\otimes}\Y,\widetilde{\Z}).
\end{equation*}
A pair of cubical $\CC^{\ast}$-spaces $\Y$ and $\Z$ acquires an internal hom object
\begin{equation*}
\underline{\Hom}(\Y,\Z)
\equiv
\underline{\widetilde{\Hom}}(\Y,\Z\circ\otimes_{\CC^{\ast}-\Alg}).
\end{equation*}
Using the characterization of the monoidal product it follows that 
\begin{equation*}
\xymatrix{
\Z\ar@{|->}[r] & \underline{\Hom}(\Y,\Z) }
\end{equation*}
determines a right adjoint of the functor
\begin{equation*}
\xymatrix{
\X\ar@{|->}[r] & \X\otimes\Y.} 
\end{equation*}
Observe that $\Box\CC^{\ast}-\Spc$ equipped with $\otimes$ and $\underline{\Hom}$ becomes a closed symmetric 
monoidal category provided the monoidal product $\otimes_{\CC^{\ast}-\Alg}$ is symmetric, 
which we may assume.

According to the adjunction the natural evaluation map $\underline{\Hom}(\Y,\Z)\otimes\Y\rightarrow\Z$ 
determines an exponential law
\begin{equation*}
\Box\CC^{\ast}-\Spc(\X\otimes\Y,\Z)=\Box\CC^{\ast}-\Spc\bigl(\X,\underline{\Hom}(\Y,\Z)\bigr).
\end{equation*}
Using these data, 
standard arguments imply there exist natural isomorphisms
\begin{equation*}
\underline{\Hom}(\X\otimes\Y,\Z)=\underline{\Hom}\bigl(\X,\underline{\Hom}(\Y,\Z)\bigr)
\end{equation*}
\begin{equation*}
\Box\CC^{\ast}-\Spc(\Y,\Z)=
\Box\CC^{\ast}-\Spc(\C\otimes\Y,\Z)=
\Box\CC^{\ast}-\Spc\bigl(\C,\underline{\Hom}(\Y,\Z)\bigr) 
\end{equation*}
and 
\begin{equation*}
\underline{\Hom}(\C,\Z)=\Z.
\end{equation*}

In what follows we introduce a cubical set tensor and cotensor structure on $\Box\CC^{\ast}-\Spc$.
This structure will greatly simplify the setup of the left localization theory of model structures 
on cubical $\CC^{\ast}$-spaces.
If $\X$ and $\Y$ are cubical $\CC^{\ast}$-spaces and $K$ is a cubical set, 
define the tensor $\X\otimes K$ by 
\begin{equation}
\label{cubicalCstarspacetensor}
\X\otimes K(A)\equiv
\X(A)\otimes_{\Box\Set} K
\end{equation}
and the cotensor $\Y^K$ in terms of the ordinary cubical function complex
\begin{equation}
\label{cubicalCstarspacecotensor}
\Y^K(A)\equiv{\hom}_{\Box\Set}\bigl(K,\Y(A)\bigr).
\end{equation}
The cubical function complex ${\hom}_{\Box\CC^{\ast}-\Spc}(\X,\Y)$ of $\X$ and $\Y$ is defined by setting
\begin{equation*}
{\hom}_{\Box\CC^{\ast}-\Spc}(\X,\Y)_n\equiv\Box\CC^{\ast}-\Spc(\X\otimes\Box^{n},\Y).
\end{equation*}
By the Yoneda lemma there exists a natural isomorphism of cubical sets
\begin{equation}
\label{functioncomplexofrepresentable}
{\hom}_{\Box\CC^{\ast}-\Spc}(A,\Y)=\Y(A).
\end{equation}
Using these definitions one verifies easily that $\Box\CC^{\ast}-\Spc$ is enriched in cubical sets $\Box\Set$. 
Moreover, 
there are natural isomorphisms of cubical sets
\begin{equation*}
{\hom}_{\Box\CC^{\ast}-\Spc}(\X\otimes K,\Y)
=
{\hom}_{\Box\Set}\bigl(K,{\hom}_{\Box\CC^{\ast}-\Spc}(\X,\Y)\bigr)
 =
{\hom}_{\Box\CC^{\ast}-\Spc}(\X,\Y^K).
\end{equation*}
In particular, taking $0$-cells we obtain the natural isomorphisms
\begin{equation}
\label{0-celladjunction}
\Box\CC^{\ast}-\Spc(\X\otimes K,\Y)=
\Box\Set\bigl(K,{\hom}_{\Box\CC^{\ast}-\Spc}(\X,\Y)\bigr)=
\Box\CC^{\ast}-\Spc(\X,\Y^K).
\end{equation}

It is useful to note that the cubical function complex is the global sections of the internal 
hom object,
and more generally that 
\begin{equation*}
\underline{\Hom}(\X,\Y)(A)={\hom}_{\Box\CC^{\ast}-\Spc}\bigl(\X,\Y(-\otimes A)\bigr).
\end{equation*}
In effect, 
note that according to the Yoneda lemma and the exponential law for cubical $\CC^{\ast}$-spaces, 
we have
\begin{align*}
\underline{\Hom}(\X,\Y)(A) 
& ={\hom}_{\Box\CC^{\ast}-\Spc}\bigl(A,\underline{\Hom}(\X,\Y)\bigr)\\
& ={\hom}_{\Box\CC^{\ast}-\Spc}\bigl((\X\otimes A),\Y\bigr).
\end{align*}
Hence,
since the Yoneda embedding of $(\CC^{\ast}-\Alg)^{\op}$ into $\Box\CC^{\ast}-\Spc$ is monoidal, 
we have 
\begin{equation*}
\underline{\Hom}(B,\Y)=\Y(-\otimes B).
\end{equation*}
The above allows to conclude there are natural isomorphisms
\begin{align*}
\underline{\Hom}(\X,\Y)(A) 
& ={\hom}_{\Box\CC^{\ast}-\Spc}\bigl((\X\otimes A),\Y\bigr)\\
& ={\hom}_{\Box\CC^{\ast}-\Spc}\bigl(\X,\underline{\Hom}(A,\Y)\bigr)\\
& ={\hom}_{\Box\CC^{\ast}-\Spc}\bigl(\X,\Y(-\otimes A)\bigr).
\end{align*}
In particular, 
the above entails natural isomorphisms
\begin{equation}
\label{representableinternalhomevalution}
\underline{\Hom}(B,\Y)(A)=\Y(A\otimes B).
\end{equation}

There exist entirely analogous constructs for pointed cubical $\CC^{\ast}$-spaces and pointed cubical sets.
In short, 
there exists a closed monoidal category $(\Box\CC^{\ast}-\Spc_{0},\otimes,\underline{\Hom})$ and all the 
identifications above hold in the pointed context.
Similarly, 
there are closed monoidal categories $(\Delta\CC^{\ast}-\Spc,\otimes,\underline{\Hom})$ and
$(\Delta\CC^{\ast}-\Spc_{0},\otimes,\underline{\Hom})$ of simplicial and pointed simplicial 
$\CC^{\ast}$-spaces constructed by the same method.
Here we consider the categories of simplicial sets $\Delta\Set$ and pointed simplicial sets 
$\Delta\Set_{\ast}$ with their standard monoidal products.
\vspace{0.1in}

Next we recall some size-related concepts which are also formulated in \cite[\S2.1.1]{Hovey:Modelcategories}.
One of the lessons of the next sections is that these issues matter when dealing with 
model structures on cubical $\CC^{\ast}$-spaces. 
Although the following results are stated for cubical $\CC^{\ast}$-spaces,
all results hold in the pointed category $\Box\CC^{\ast}-\Spc_{0}$ as well.

Let $\lambda$ be an ordinal, 
i.e.~the partially ordered set of all ordinals $<\lambda$.
A $\lambda$-sequence or transfinite sequence indexed by $\lambda$ in $\Box\CC^{\ast}-\Spc$ is a 
functor $F\colon\lambda\rightarrow\Box\CC^{\ast}-\Spc$ which is continuous at every limit ordinal 
$\beta<\lambda$ in the sense that there is a naturally induced isomorphism
$\colim_{\alpha<\beta}F_{\alpha}\rightarrow F_{\beta}$.
If $\lambda$ is a regular cardinal, 
then no $\lambda$-sequence has a cofinal subsequence of shorter length. 

Let $\kappa$ be a cardinal.
A cubical $\CC^{\ast}$-space $\X$ is $\kappa$-small relative to a class of maps $I$ if for 
every regular cardinal $\lambda\geq\kappa$ and $\lambda$-sequence $F$ in $\Box\CC^{\ast}-\Spc$ 
for which each map $F_{\alpha}\rightarrow F_{\alpha+1}$ belongs to $I$, 
there is a naturally induced isomorphism
\begin{equation*}
\xymatrix{
\colim_{\alpha}\Box\CC^{\ast}-\Spc(\X,F_{\alpha})\ar[r] & \Box\CC^{\ast}-\Spc(\X,\colim_{\alpha}F_{\alpha}).}
\end{equation*}
The idea is that every map from $\X$ into the colimit factors through $F_{\alpha}$ 
for some $\alpha<\lambda$ and the factoring is unique up to refinement.
Moreover, 
$\X$ is small relative to $I$ if it is $\kappa$-small relative to $I$ for some cardinal $\kappa$, 
and small if it is small relative to $\Box\CC^{\ast}-\Spc$.
Finitely presentable cubical $\CC^{\ast}$-spaces are $\omega$-small cubical $\CC^{\ast}$-spaces, 
where as usual $\omega$ denotes the smallest infinite cardinal number.
\begin{example}
\label{example:finitelypresentable}
Every $\CC^{\ast}$-algebra $A$ is $\kappa$-small for every cardinal $\kappa$, 
and $\Box^{n}$ is a finitely presentable cubical set for all $n\geq 0$ since every representable 
cubical set has only a finite number of non-degenerate cells.
Thus $A\otimes\Box^{n}$ is a finitely presentable cubical $\CC^{\ast}$-space.
\end{example}

Since $\CC^{\ast}-\Alg\times\Box^{\op}$ is a small category $\Box\CC^{\ast}-\Spc$ is locally presentable 
according to \cite[5.2.2b]{Borceux:Handbook2}, 
i.e.~$\Box\CC^{\ast}-\Spc$ is cocomplete and there is a regular cardinal $\lambda$ and a set $\A$ of 
$\lambda$-small cubical $\CC^{\ast}$-spaces such that every cubical $\CC^{\ast}$-space is a 
$\lambda$-filtered colimit of objects from $\A$.
\begin{lemma}
\label{lemma:finitelypresentable}
The category of cubical $\CC^{\ast}$-spaces is locally presentable.
\end{lemma}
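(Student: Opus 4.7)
The plan is to reduce the statement to the standard fact that presheaf categories on a small category are locally presentable, which is precisely the result \cite[5.2.2b]{Borceux:Handbook2} already cited in the discussion preceding the lemma.

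First I would fix an honest small skeleton of $\CC^{\ast}-\Alg$, using the remark at the beginning of Section \ref{subsection:CC-spaces} that every separable $\CC^{\ast}$-algebra is isomorphic to one whose underlying Hilbert space is a fixed separable infinite dimensional Hilbert space. Replacing $\CC^{\ast}-\Alg$ by this skeleton does not change the functor category $\Box\CC^{\ast}-\Spc$ up to equivalence, and all subsequent arguments respect equivalence of categories. Next I would unwind the definition of a cubical $\CC^{\ast}$-space to give the identification
\begin{equation*}
\Box\CC^{\ast}-\Spc = [\CC^{\ast}-\Alg,\Box\Set] = [\CC^{\ast}-\Alg\times\Box^{\op},\Set],
\end{equation*}
using the currying isomorphism together with $\Box\Set = [\Box^{\op},\Set]$.

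Once the identification is in place, the source $\CC^{\ast}-\Alg\times\Box^{\op}$ is a product of two small categories, hence small. Applying \cite[5.2.2b]{Borceux:Handbook2} (which asserts that $[\DDD^{\op},\Set]$ for $\DDD$ small is locally finitely presentable, with the representable functors furnishing a generating set of $\omega$-presentable objects) then yields that $\Box\CC^{\ast}-\Spc$ is locally presentable. I would also record explicitly that the generating $\omega$-presentable objects can be taken to be the $\CC^{\ast}$-spaces of the form $A\otimes\Box^{n}$ for $A$ a representable $\CC^{\ast}$-algebra and $n\geq 0$, since this already appeared as Example \ref{example:finitelypresentable} and will be needed downstream when constructing cofibrantly generated model structures.

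The only genuine subtlety, and hence the main obstacle, is the passage to a small skeleton of $\CC^{\ast}-\Alg$; without this reduction one cannot invoke the presheaf theorem because the indexing category would be a proper class. Once one commits to separable $\CC^{\ast}$-algebras represented on a fixed separable Hilbert space, however, this is immediate and the rest of the argument is a straightforward citation.
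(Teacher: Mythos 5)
Your proof is correct and takes essentially the same route as the paper: after fixing a small skeleton of $\CC^{\ast}-\Alg$ (which the paper sets up at the start of Section \ref{subsection:CC-spaces} and records again at the start of Section \ref{section:Cstarhomotopytheory}), one identifies $\Box\CC^{\ast}-\Spc$ with the presheaf category $[\CC^{\ast}-\Alg\times\Box^{\op},\Set]$ on a small category and invokes \cite[5.2.2b]{Borceux:Handbook2}. The paper states this argument in the paragraph immediately preceding the lemma rather than in a separate proof environment; your additional remarks on the currying isomorphism and the generating objects $A\otimes\Box^{n}$ are sound and consistent with Example \ref{example:finitelypresentable}.
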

This observation implies the set of all representable cubical $\CC^{\ast}$-spaces is a strong
generator for $\Box\CC^{\ast}-\Spc$ \cite[pg.~18]{AR:book}.
We shall refer repeatedly to Lemma \ref{lemma:finitelypresentable} when localizing model structures 
on cubical $\CC^{\ast}$-spaces.
\vspace{0.1in}
 
The next straightforward lemmas are bootstrapped for finitely presentable objects.
\begin{lemma}
\label{lemma:filteredandfinitecolimit}
Every cubical $\CC^{\ast}$-space is a filtered colimit of finite colimits of cubical $\CC^{\ast}$-spaces 
of the form $A\otimes\Box^{n}$ where $A$ is a $\CC^{\ast}$-algebra.
\end{lemma}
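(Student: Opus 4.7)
The plan is to combine the density theorem for presheaves with the standard fact that every small colimit can be re-indexed as a filtered colimit of its finite subcolimits.

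First, I would identify the functor category $\Box\CC^{\ast}-\Spc=[\CC^{\ast}-\Alg,\Box\Set]$ with the presheaf category on $(\CC^{\ast}-\Alg)^{\op}\times\Box$, using $\Box\Set=[\Box^{\op},\Set]$ and currying. Under this identification the representable presheaf at the object $(A,1^{n})$ is precisely the cubical $\CC^{\ast}$-space $A\otimes\Box^{n}$; indeed, combining (\ref{functioncomplexofrepresentable}) with the defining formula (\ref{cubicalCstarspacetensor}) gives the evaluation formula ${\hom}_{\Box\CC^{\ast}-\Spc}(A\otimes\Box^{n},\X)=\X_{n}(A)$, which is exactly what the Yoneda lemma demands of the representable at $(A,1^{n})$.

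Next, the density theorem (co-Yoneda) supplies the canonical cocone presentation
\begin{equation*}
\X \;=\; \colim_{(A,n,c)\in\int\X}\, A\otimes\Box^{n},
\end{equation*}
where the indexing category of elements $\int\X$ has objects the triples $(A,n,c)$ with $c\in\X_{n}(A)$, and morphisms $(A,n,c)\to(A',n',c')$ given by pairs consisting of a $\ast$-homomorphism $A\to A'$ and a morphism $1^{n'}\to 1^{n}$ in $\Box$, compatible with $c$ and $c'$. Since $\CC^{\ast}-\Alg$ is essentially small and $\Box$ is small, $\int\X$ admits a small skeleton, which I would silently replace it with.

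Finally, I would invoke the general fact that for any diagram $D\colon\mathcal{I}\rightarrow\mathcal{C}$ indexed by a small category into a cocomplete category, one has
\begin{equation*}
\colim_{\mathcal{I}}D \;\cong\; \colim_{J\in\mathcal{F}(\mathcal{I})}\,\colim_{J}D|_{J},
\end{equation*}
where $\mathcal{F}(\mathcal{I})$ is the poset of finite subcategories of $\mathcal{I}$ ordered by inclusion; this poset is filtered because any two finite subcategories are contained in their (still finite) union. Applying this to $D\colon\int\X\rightarrow\Box\CC^{\ast}-\Spc$, $(A,n,c)\mapsto A\otimes\Box^{n}$, exhibits $\X$ as a filtered colimit of finite colimits of objects of the required form. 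The content is essentially formal category theory: the main thing to verify is that the canonical cocone out of $\int\X$ is colimiting (density) and that the finite-subcategory reindexing is legitimate in a cocomplete category; no $\CC^{\ast}$-algebraic input is needed beyond the identification of representables with the $A\otimes\Box^{n}$.
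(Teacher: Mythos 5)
The paper offers no explicit proof of this lemma: it is stated immediately after Lemma \ref{lemma:finitelypresentable} with the remark that the ``next straightforward lemmas are bootstrapped for finitely presentable objects,'' i.e.\ the author is invoking locally presentable category theory together with the fact (Example \ref{example:finitelypresentable}) that the representables $A\otimes\Box^{n}$ are finitely presentable generators. Your argument is a different, self-contained route: density for presheaves plus the general decomposition of an arbitrary small colimit into a filtered colimit of finite subcolimits. The outline is correct and arguably more transparent than invoking the locally presentable machinery, and your identification of $A\otimes\Box^{n}$ with the representable at $(A,1^{n})$ is right.

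There is, however, a genuine gap in the last step. You claim the poset $\mathcal{F}(\mathcal{I})$ of finite subcategories of $\mathcal{I}=\int\X$ is filtered ``because any two finite subcategories are contained in their (still finite) union.'' This is false: the union of two subcategories need not be closed under composition, so it is generally not a subcategory, and the subcategory it generates need not be finite. For instance, if $\int\X$ contains an object $(A,n,c)$ with a non-identity endomorphism $e=(\phi,\theta)$ all of whose powers $e^{k}$ are distinct (which happens as soon as $A$ has a $\ast$-endomorphism of infinite order fixing $c$), then the smallest subcategory containing $e$ already has infinitely many morphisms. So $\mathcal{F}(\mathcal{I})$, as you have defined it, need not be filtered, and the ``general fact'' you invoke is not correct in this form.

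The standard repair is to index not by finite subcategories but by finite subgraphs: pairs $(O,F)$ where $O$ is a finite set of objects of $\mathcal{I}$ and $F$ a finite set of arrows of $\mathcal{I}$ whose sources and targets lie in $O$, with no closure condition imposed. This poset is genuinely filtered, since finite unions of finite sets of objects and arrows remain finite. For each $(O,F)$ one takes the coequalizer
\begin{equation*}
\operatorname{coeq}\Bigl(\coprod_{f\in F} D\bigl(s(f)\bigr)
\rightrightarrows
\coprod_{i\in O} D(i)\Bigr),
\end{equation*}
which is a finite colimit of objects $D(i)=A\otimes\Box^{n}$. Since colimits commute with colimits, the coproducts over all objects and arrows of $\mathcal{I}$ are the filtered colimits over $(O,F)$ of the corresponding finite coproducts, and passing to coequalizers recovers $\colim_{\mathcal{I}}D$. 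With this adjustment your proof goes through; without it, the claimed filteredness fails.
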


We let ${\bf fp}\Box\CC^{\ast}-\Spc$ denote the essentially small category of finitely presentable 
cubical $\CC^{\ast}$-spaces.
It is closed under retracts, 
finite colimits and tensors in $\Box\CC^{\ast}-\Spc$.
\begin{lemma}
\label{lemma:filteredandfinitecolimitfinitelypresentable}
The subcategory ${\bf fp}\Box\CC^{\ast}-\Spc$ exhausts $\Box\CC^{\ast}-\Spc$ in the sense that every cubical 
$\CC^{\ast}$-space is a filtered colimit of finitely presentable cubical $\CC^{\ast}$-spaces.
\end{lemma}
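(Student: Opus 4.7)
The plan is to deduce this lemma almost immediately from Lemma \ref{lemma:filteredandfinitecolimit} together with the closure property of ${\bf fp}\Box\CC^{\ast}-\Spc$ under finite colimits stated just above.

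First, I would invoke Lemma \ref{lemma:filteredandfinitecolimit} to write an arbitrary cubical $\CC^{\ast}$-space $\X$ as a filtered colimit $\X=\colim_{i\in I}\X_i$ in which each $\X_i$ is itself a finite colimit of cubical $\CC^{\ast}$-spaces of the form $A\otimes\Box^n$ with $A$ a $\CC^{\ast}$-algebra. Second, by Example \ref{example:finitelypresentable} each building block $A\otimes\Box^n$ is finitely presentable, i.e.~lies in ${\bf fp}\Box\CC^{\ast}-\Spc$. Since ${\bf fp}\Box\CC^{\ast}-\Spc$ is closed under finite colimits in $\Box\CC^{\ast}-\Spc$, each $\X_i$ is finitely presentable, and the original presentation already realizes $\X$ as a filtered colimit of finitely presentable cubical $\CC^{\ast}$-spaces.

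The only non-formal ingredient is the closure of ${\bf fp}\Box\CC^{\ast}-\Spc$ under finite colimits, but this has already been recorded in the paragraph preceding the statement. It can be proved by the usual argument: for a finite diagram $D\colon J\to {\bf fp}\Box\CC^{\ast}-\Spc$ and a $\lambda$-sequence $F$ with $\lambda\geq\omega$, one computes
\begin{equation*}
\Box\CC^{\ast}-\Spc(\colim_{J}D,\colim_{\alpha}F_{\alpha})=\lim_{J}\Box\CC^{\ast}-\Spc(D,\colim_{\alpha}F_{\alpha})=\lim_{J}\colim_{\alpha}\Box\CC^{\ast}-\Spc(D,F_{\alpha}),
\end{equation*}
using that each value of $D$ is $\omega$-small, and then commutes the finite limit past the filtered colimit to identify this with $\colim_{\alpha}\Box\CC^{\ast}-\Spc(\colim_{J}D,F_{\alpha})$.

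There is no serious obstacle here; the content of the lemma is really a repackaging of Lemma \ref{lemma:filteredandfinitecolimit} once one observes that the double-colimit presentation can be absorbed into a single filtered colimit with finitely presentable stages. The pointed variant follows verbatim, as remarked in the text.
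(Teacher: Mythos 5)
Your argument is correct and is precisely what the paper's organization intends: Lemma \ref{lemma:filteredandfinitecolimit} gives the filtered-colimit-of-finite-colimits presentation, Example \ref{example:finitelypresentable} identifies the building blocks $A\otimes\Box^{n}$ as finitely presentable, and the recorded closure of ${\bf fp}\Box\CC^{\ast}-\Spc$ under finite colimits (which you correctly justify by commuting the finite limit past the filtered colimit in sets, using $\omega$-smallness of each vertex of the diagram) absorbs the inner stage. The paper gives no explicit proof of either lemma, calling them ``straightforward,'' so your write-up simply supplies the intended details.
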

\begin{remark}
In this paper we shall employ the pointed analog of ${\bf fp}\Box\CC^{\ast}-\Spc$ when defining $K$-theory and 
also as the source category for a highly structured model for the stable $\CC^{\ast}$-homotopy category.
The results above hold in the pointed context.
\end{remark}
\begin{corollary}
\label{corollary:internalhomfinitelypresentable}
A cubical $\CC^{\ast}$-space $\X$ is finitely presentable if and only if the internal hom functor 
$\underline{\Hom}(\X,-)$ is finitely presentable. 
\end{corollary}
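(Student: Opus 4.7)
The plan is to use two structural facts: filtered colimits of cubical $\CC^{\ast}$-spaces are computed pointwise (since $\Box\CC^{\ast}-\Spc = [\CC^{\ast}-\Alg,\Box\Set]$ and filtered colimits of cubical sets are taken degreewise in $\Set$), and the formula $\underline{\Hom}(\X,\Y)(A)={\hom}_{\Box\CC^{\ast}-\Spc}\bigl(\X,\Y(-\otimes A)\bigr)$ derived just above the corollary. Throughout, ``finitely presentable'' applied to the functor $\underline{\Hom}(\X,-)$ is interpreted in the standard sense that it preserves filtered colimits.

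For the forward direction, assume $\X$ is finitely presentable. Let $\{\Y_i\}$ be a filtered diagram with colimit $\Y$. Pointwise computation of filtered colimits gives, for each $A\in\CC^{\ast}-\Alg$, the identification $\bigl(\colim_i \Y_i\bigr)(-\otimes A)=\colim_i\bigl(\Y_i(-\otimes A)\bigr)$ as cubical $\CC^{\ast}$-spaces. Finitely presentability of $\X$ means $\hom_{\Box\CC^{\ast}-\Spc}(\X,-)$ commutes with filtered colimits in each cubical degree (using the adjunction (\ref{0-celladjunction}) and finite presentability of each $\X\otimes\Box^{n}$, which follows from Example \ref{example:finitelypresentable} once $\X$ itself is finitely presentable, since tensoring with $\Box^{n}$ is a left adjoint by (\ref{cubicalCstarspacetensor})). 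Combining,
\begin{equation*}
\underline{\Hom}(\X,\colim_i\Y_i)(A)={\hom}(\X,\colim_i \Y_i(-\otimes A))=\colim_i{\hom}(\X,\Y_i(-\otimes A))=\colim_i\underline{\Hom}(\X,\Y_i)(A),
\end{equation*}
and since this holds pointwise in $A$, the internal hom functor preserves the filtered colimit.

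For the converse, assume $\underline{\Hom}(\X,-)$ preserves filtered colimits. The evaluation functor $\Box\CC^{\ast}-\Spc\to\Box\Set$ at the unit $\C$ preserves (filtered) colimits, and by the displayed formula specialized to $A=\C$ we obtain $\underline{\Hom}(\X,\Y)(\C)={\hom}_{\Box\CC^{\ast}-\Spc}(\X,\Y)$. Hence $\Y\mapsto{\hom}_{\Box\CC^{\ast}-\Spc}(\X,\Y)$ preserves filtered colimits as a functor to cubical sets. Taking $0$-cells (which commutes with filtered colimits in $\Box\Set$) and using (\ref{0-celladjunction}) with $K=\Box^{0}$ yields that $\Box\CC^{\ast}-\Spc(\X,-)$ preserves filtered colimits, so $\X$ is $\omega$-small, i.e.\ finitely presentable.

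The only mildly subtle point is the ``commutes pointwise'' step: we must know that the reindexing $\Y\mapsto\Y(-\otimes A)$ preserves filtered colimits. Since colimits in $\Box\CC^{\ast}-\Spc$ are pointwise, this reduces to the fact that a filtered colimit of set-valued functors, evaluated at an object, is the filtered colimit of the values — a tautology. Everything else is a formal manipulation of the closed monoidal structure established earlier in the section, so no step poses a real obstacle.
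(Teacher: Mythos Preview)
Your argument is correct and is the natural elaboration of what the paper leaves unproved (the corollary is stated without proof, as one of the ``straightforward lemmas'' following Lemma~\ref{lemma:filteredandfinitecolimitfinitelypresentable}). One small expository point: your justification that $\X\otimes\Box^{n}$ is finitely presentable is slightly garbled --- the fact that $-\otimes\Box^{n}$ is a left adjoint is not by itself enough, and Example~\ref{example:finitelypresentable} concerns $A\otimes\Box^{n}$ for a $\CC^{\ast}$-algebra $A$, not a general finitely presentable $\X$. The clean reason is that the right adjoint $(-)^{\Box^{n}}$ preserves filtered colimits (since $\Box^{n}$ is finitely presentable in $\Box\Set$ and colimits are pointwise), and any left adjoint whose right adjoint is finitary preserves finitely presentable objects. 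With that clarification the forward direction goes through exactly as you wrote, and the converse via evaluation at $\C$ and passage to $0$-cells is entirely correct.
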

\begin{example}
\label{example:internalhomfinitelypresentable}
The internal hom object $\underline{\Hom}\bigl(S^{1}\otimes C_{0}(\R),-\bigr)$ is finitely presentable. 
\end{example}
\vspace{0.1in}

Next we introduce the geometric realization functor for cubical $\CC^{\ast}$-spaces.

Denote by $\Box_{\ttop}^{\bullet}$ the topological standard cocubical set equipped with the coface maps 
\begin{equation*}
\xymatrix{
\delta_{0}^{\alpha}\colon \ast=I^{0}\ar[r] & I^{1};\, 
\ast\ar@{|->}[r] &\alpha,
\\
\delta_i^{\alpha}\colon I^{n-1}\ar[r] & I^{n};\,
(t_{1},\dots,t_{n-1})\ar@{|->}[r] & (t_{1},\dots,t_{i-1},\alpha,t_{i},\dots,t_{n-1}), }
\end{equation*}
where $I$ is the topological unit interval, 
$1\leq i\leq n,\alpha=0,1$,  
and the codegeneracy maps 
\begin{equation*}
\xymatrix{
\epsilon_{0}\colon I^{1}\ar[r] &\ast=I^{0};\, 
t\ar@{|->}[r] &\ast
\\
\epsilon_{i}\colon I^{n}\ar[r] & I^{n-1};\,
(t_{1},\dots,t_{n})\ar@{|->}[r] & (t_1,\cdots,\widehat{t_i},\cdots,t_n). }
\end{equation*}

These maps satisfies the cocubical identities $\delta_j^{\beta} \delta_i^{\alpha}=\delta_{i+1}^{\alpha} \delta_j^{\beta}$ and 
$\epsilon_i \epsilon_j=\epsilon_{j}\epsilon_{i+1}$ for $j\leq i$, 
and  
\begin{align*}
\epsilon_j\delta_i^{\alpha}=
\begin{cases}
\delta_{i-1}^{\alpha}
\epsilon_{j} & j<i\\
\id & j=i\\
\delta_{i}^{\alpha}\epsilon_{j-1} & j>i.
\end{cases}
\end{align*}

Denote by $C(\Box_{\ttop}^{\bullet})$ the standard cubical $\CC^{\ast}$-algebra 
\begin{equation}
\label{equation:standardcubicalC*algebra}
\xymatrix{
n\ar@{|->}[r] & C(\Box_{\ttop}^{n})\colon 
\C &
C(I^{1}) \ar@<3pt>[l]\ar@<-3pt>[l] & 
C(I^{2}) \ar@<9pt>[l]\ar@<3pt>[l]\ar@<-3pt>[l]\ar@<-9pt>[l] &
\ar@<15pt>[l]\ar@<9pt>[l]\ar@<3pt>[l]\ar@<-3pt>[l]\ar@<-9pt>[l]\ar@<-15pt>[l]
\cdots }
\end{equation}
comprising continuous complex-valued functions on the topological standard $n$-cube.
Its cubical structure is induced in the evident way by the coface and codegeneracy maps of $\Box_{\ttop}^{\bullet}$ given above.
\vspace{0.1in}

For legibility we shall use the same notation $C(\Box_{\ttop}^{\bullet})$ for the naturally induced cocubical $\CC^{\ast}$-space 
\begin{equation*}
\xymatrix{
\CC^{\ast}-\Alg\bigl(C(\Box_{\ttop}^{\bullet}),-\bigr)\colon\CC^{\ast}-\Alg
\ar[r] & (\Box\Set)^{\op}.} 
\end{equation*}

The singular functor
\begin{equation*}
\xymatrix{
\Sing_{\Box}^{\bullet}\colon 
\Box\CC^{\ast}-\Spc\ar[r] & \Box\CC^{\ast}-\Spc}
\end{equation*}
is an endofunctor of cubical $\CC^{\ast}$-spaces.
Its value at a $\CC^{\ast}$-space $\X$ is by definition given as the internal hom object
\begin{equation*}
\Sing_{\Box}^{\bullet}(\X)\equiv\underline{\Hom}\bigl(C(\Box_{\ttop}^{\bullet}),\X\bigr).
\end{equation*}
The cubical structure of $\Sing_{\Box}^{\bullet}(\X)$ is obtained from the cocubical structure of
$C(\Box_{\ttop}^{\bullet})$.

Plainly this functor extends to an endofunctor of $\Box\CC^{\ast}-\Spc$ by taking the diagonal of 
the bicubical $\CC^{\ast}$-space
\begin{equation*}
\xymatrix{
(m,n)\ar@{|->}[r] & 
\underline{\Hom}\bigl(C(\Box_{\ttop}^{m}),\X_n\bigr)=
\X_n\bigl(C(\Box_{\ttop}^{m})\otimes -\bigr).}
\end{equation*}
In particular, 
the singular functor specializes to a functor from $\CC^{\ast}$-spaces 
\begin{equation*}
\xymatrix{
\Sing_{\Box}^{\bullet}\colon 
\CC^{\ast}-\Spc\ar[r] & \Box\CC^{\ast}-\Spc.}
\end{equation*}
Its left adjoint is the geometric realization functor
\begin{equation*}
\xymatrix{
\vert \cdot \vert \colon\Box\CC^{\ast}-\Spc\ar[r] & \CC^{\ast}-\Spc.}
\end{equation*}
If $\X$ is a cubical $\CC^{\ast}$-space,  
then its geometric realization $\vert\X\vert$ is the coend of the functor 
$\Box\times\Box^{\op}\rightarrow\CC^{\ast}-\Spc$ given by $(1^m,1^{n})\mapsto C(\Box_{\ttop}^{m})\otimes\X_n$.
Hence there is a coequalizer in $\CC^{\ast}-\Spc$ 
\begin{equation*}
\xymatrix{
\coprod_{\Theta\colon 1^{m}\rightarrow 1^{n}}C(\Box_{\ttop}^{m})\otimes\X_n
\ar@<3pt>[r]\ar@<-3pt>[r] & 
\coprod_{1^{n}}C(\Box_{\ttop}^{n})\otimes\X_n \ar[r] & 
\vert \X \vert\equiv\int^{\Box^{\op}}C(\Box_{\ttop}^{n})\otimes\X_n.} 
\end{equation*}
The two parallel maps in the coequalizer associated with the maps $\Theta\colon 1^{m}\rightarrow 1^{n}$ in the 
box category $\Box$ are gotten from the natural maps 
\begin{equation*}
\xymatrix{
C(\Box_{\ttop}^{m})\otimes\X_n\ar[r] & 
C(\Box_{\ttop}^{n})\otimes\X_n\ar[r] & 
\coprod_{n}C(\Box_{\ttop}^{n})\otimes\X_n }
\end{equation*}
and 
\begin{equation*}
\xymatrix{
C(\Box_{\ttop}^{m})\otimes\X_n\ar[r] & 
C(\Box_{\ttop}^{m})\otimes\X_m\ar[r] & 
\coprod_{n}C(\Box_{\ttop}^{n})\otimes\X_n.}
\end{equation*}
\begin{example}
\label{example:monomorphism}
For every cubical $\CC^{\ast}$-space $\X$ there is a monomorphism $\X\rightarrow\Sing_{\Box}^{\bullet}(\X)$.
In $n$-cells it is given by the canonical map 
\begin{equation*}
\xymatrix{
\X_n\ar[r] & 
\underline{\Hom}\bigl(C(\Box_{\ttop}^{n}),\X_n\bigr).}
\end{equation*}
\end{example}
\begin{example}
For $n\geq 0$ there are natural isomorphisms
\begin{equation*}
\xymatrix{
\Sing_{\Box}^{n}(\X)(A)=\X\bigl(A\otimes C(\Box_{\ttop}^{n})\bigr)}
\end{equation*}
and 
\begin{equation*}
\xymatrix{
\vert \X\otimes C(\Box_{\ttop}^{n}) \vert=\vert \X \vert\otimes C(\Box_{\ttop}^{n}).}
\end{equation*}
\end{example}
\begin{remark}
The cognoscenti of homotopy theory will notice the formal similarities between $\vert \cdot \vert$ 
and the geometric realization functors of Milnor from semi-simplicial complexes to CW-complexes 
\cite{Milnor:realization} and of Morel-Voevodsky from simplicial sheaves to sheaves on some site 
\cite{MV:IHES}. 
Note that using the same script we obtain a geometric realization functor for every cocubical 
$\CC^{\ast}$-algebra.
The standard cocubical $\CC^{\ast}$-space meshes well with the monoidal products we shall consider in the 
sense that $C(\Box_{\ttop}^{n})$ and $C(\Box_{\ttop}^{1})\otimes\cdots\otimes C(\Box_{\ttop}^{1})$ 
are isomorphic as $\CC^{\ast}$-algebras, and hence as $\CC^{\ast}$-spaces.
\end{remark}
\begin{remark}
Note that $n\mapsto C(\Box_{\ttop}^{n})$ defines a functor 
$\Box\rightarrow\CC^{\ast}-\Spc\subset\Box\CC^{\ast}-\Spc$.
Since the category $\Box\CC^{\ast}-\Spc$ is cocomplete this functor has an enriched symmetric monoidal 
left Kan extension $\Box\Set\rightarrow\Box\CC^{\ast}-\Spc$ which commutes with colimits and sends 
$\Box^{\bullet}$ to $C(\Box_{\ttop}^{\bullet})$.
\end{remark}
The next result is reminiscent of \cite[Lemma 3.10]{MV:IHES} and \cite[Lemma B.1.3]{Jardine:MSS}.
\begin{lemma}
\label{lemma:geometricrealizationpreservesmonomorphisms}
The geometric realization functor $\vert\cdot\vert\colon\Box\CC^{\ast}-\Spc\rightarrow\CC^{\ast}-\Spc$
preserves monomorphisms.
\end{lemma}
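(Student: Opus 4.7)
The proof will follow the template of \cite[Lemma 3.10]{MV:IHES} and \cite[Lemma B.1.3]{Jardine:MSS}. The realization functor $\vert\cdot\vert$ is left adjoint to $\Sing_{\Box}^{\bullet}$ and therefore commutes with all colimits. In the $\Set$-valued presheaf category $\CC^{\ast}-\Spc$ monomorphisms are precisely the pointwise injections, so they are closed under pushout along arbitrary maps, under transfinite composition, and under filtered colimits. It therefore suffices to exhibit any monomorphism $\X \hookrightarrow \Y$ of cubical $\CC^{\ast}$-spaces as a transfinite composite of pushouts of a class of ``generating'' monomorphisms whose realization can be checked monic by hand.

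\emph{Cellular reduction.} Since $\Box\CC^{\ast}-\Spc$ is the presheaf category on $\CC^{\ast}-\Alg \times \Box$, every monomorphism is built in this way from the boundary inclusions $A \otimes \partial \Box^{n} \hookrightarrow A \otimes \Box^{n}$, where $A$ ranges over $\CC^{\ast}$-algebras and $n \geq 0$. Appealing to Lemma \ref{lemma:filteredandfinitecolimit}, I would filter $\Y$ relative to $\X$ one cubical degree at a time: set $\Y_{(-1)} = \X$, let $\Y_{(n)}$ be generated over $\Y_{(n-1)}$ by the non-degenerate $n$-cells of $\Y$ not lying in $\X$, and observe that each inclusion $\Y_{(n-1)} \hookrightarrow \Y_{(n)}$ fits in a pushout along a coproduct of such generators, with $\Y = \colim_{n}\Y_{(n)}$. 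It then remains to show that $\vert A \otimes \partial\Box^{n}\vert \to \vert A \otimes \Box^{n}\vert$ is a monomorphism of $\CC^{\ast}$-spaces for every $A$ and every $n \geq 0$.

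\emph{Identification and conclusion.} By the remark immediately preceding the statement, the assignment $1^{n} \mapsto C(\Box_{\ttop}^{n})$ extends to a symmetric monoidal, colimit-preserving functor $\Box\Set \to \Box\CC^{\ast}-\Spc$. Combined with the identity $\vert \X \otimes C(\Box_{\ttop}^{n})\vert = \vert\X\vert\otimes C(\Box_{\ttop}^{n})$ of the preceding example and the adjunction computation
\begin{equation*}
\CC^{\ast}-\Spc\bigl(\vert A\otimes\Box^{n}\vert,Y\bigr)=Y\bigl(A\otimes_{\CC^{\ast}-\Alg}C(\Box_{\ttop}^{n})\bigr),
\end{equation*}
this gives $\vert A \otimes \Box^{n}\vert \cong A \otimes_{\CC^{\ast}-\Alg} C(\Box_{\ttop}^{n})$ as representable $\CC^{\ast}$-spaces, and similarly $\vert A \otimes \partial\Box^{n}\vert \cong A \otimes_{\CC^{\ast}-\Alg} C(\partial\Box_{\ttop}^{n})$, the latter obtained by applying $C(-)$ to the standard topological boundary pushout and taking the representable $\CC^{\ast}$-space associated to the resulting pullback of $\CC^{\ast}$-algebras. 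The map in question then corresponds under the contravariant Yoneda embedding to the restriction $\ast$-homomorphism
\begin{equation*}
A \otimes_{\CC^{\ast}-\Alg} C(\Box_{\ttop}^{n}) \twoheadrightarrow A \otimes_{\CC^{\ast}-\Alg} C(\partial\Box_{\ttop}^{n}),
\end{equation*}
which is surjective because restriction of continuous functions on a compact Hausdorff pair is surjective, and because tensoring by $A$ preserves surjections, the commutative factor being nuclear. Surjections in $\CC^{\ast}-\Alg$ correspond to monomorphisms of representable $\CC^{\ast}$-spaces, which completes the argument.

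\emph{Main obstacle.} The most delicate step is the identification $\vert \partial\Box^{n}\vert \cong C(\partial\Box_{\ttop}^{n})$ as a representable $\CC^{\ast}$-space. The realization is defined as a coend in $\CC^{\ast}-\Spc$, not in $\CC^{\ast}-\Alg$, and one must check carefully that this coend is obtained via Gelfand duality from the topological boundary presentation of $\partial\Box_{\ttop}^{n}$ as a pushout of its $(n-1)$-faces, and that the resulting comparison map into $\vert\Box^{n}\vert \cong C(\Box_{\ttop}^{n})$ is a pointwise injection of set-valued functors on $\CC^{\ast}-\Alg$.
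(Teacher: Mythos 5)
Your reduction to the generating monomorphisms $A\otimes\partial\Box^{n}\subset A\otimes\Box^{n}$ is sound and agrees with what the paper does implicitly by reference to \cite[Lemma 3.10]{MV:IHES} and \cite[Lemma B.1.3]{Jardine:MSS}, and the identification $\vert\Box^{n}\vert\cong C(\Box_{\ttop}^{n})$ is correct. But the step you flag as the main obstacle, namely the claimed isomorphism $\vert\partial\Box^{n}\vert\cong C(\partial\Box_{\ttop}^{n})$ of representable $\CC^{\ast}$-spaces, is false, and this is a genuine gap rather than a detail to be checked. The realization $\vert\partial\Box^{n}\vert$ is a colimit of representables computed pointwise in the presheaf category $\CC^{\ast}-\Spc$, and the contravariant Yoneda embedding does not preserve such colimits. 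Concretely, a $B$-section of $\vert\partial\Box^{n}\vert$ is a $\ast$-homomorphism $C(\Box_{\ttop}^{n})\rightarrow B$ that factors through a single face quotient $C(\Box_{\ttop}^{n-1})$, whereas a $B$-section of the representable $C(\partial\Box_{\ttop}^{n})$ is an arbitrary $\ast$-homomorphism $C(\partial\Box_{\ttop}^{n})\rightarrow B$; already the identity of $C(\partial\Box_{\ttop}^{n})$ lies in the latter and not the former. Thus the comparison map is at best a monomorphism, and establishing that it is one is precisely the content of the lemma. Your appeal to the surjection $C(\Box_{\ttop}^{n})\twoheadrightarrow C(\partial\Box_{\ttop}^{n})$ only shows that the second leg of the factoring $\vert\partial\Box^{n}\vert\rightarrow C(\partial\Box_{\ttop}^{n})\rightarrow C(\Box_{\ttop}^{n})$ is a monomorphism of representables; it says nothing about the first leg, hence nothing about the composite.

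The ingredient you are missing, and what the paper's proof actually supplies, is the observation that for $i<j$ and $n\geq 2$ the cocubical identities produce pullback squares
\begin{equation*}
\xymatrix{
C(\Box_{\ttop}^{n-2})\ar[r]\ar[d] & C(\Box_{\ttop}^{n-1})\ar[d]\\
C(\Box_{\ttop}^{n-1})\ar[r] & C(\Box_{\ttop}^{n}) }
\end{equation*}
in $\CC^{\ast}-\Spc$. These squares guarantee that the coequalizer computing $\vert\partial\Box^{n}\vert$ maps injectively onto the union $\partial C(\Box_{\ttop}^{n})$ of the images of the $d^{i}\colon C(\Box_{\ttop}^{n-1})\hookrightarrow C(\Box_{\ttop}^{n})$: any two cells of $\coprod_{i}C(\Box_{\ttop}^{n-1})$ that agree after mapping into $C(\Box_{\ttop}^{n})$ are already identified through the appropriate double face $C(\Box_{\ttop}^{n-2})$. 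That handles $n\geq 2$; the case $n=1$ requires the separate augmentation claim that $\partial\Box^{1}\subset\Box^{1}$ realizes to an injection $C(\Box_{\ttop}^{0})\coprod C(\Box_{\ttop}^{0})\rightarrow C(\Box_{\ttop}^{1})$. Surjectivity of restriction on the $\CC^{\ast}$-algebra side cannot substitute for these pullback conditions; without them the argument does not close.
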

\begin{proof}
For $i<j$ and $n\geq 2$ the cosimplicial identities imply there are pullback diagrams:
\begin{equation*}
\xymatrix{
C(\Box_{\ttop}^{n-2})\ar[r]^{d^{j-1}}\ar[d]_{d^{i}} & C(\Box_{\ttop}^{n-1})\ar[d]^{d^{i}}\\
C(\Box_{\ttop}^{n-1})\ar[r]^{d^{j}} & C(\Box_{\ttop}^{n}) }
\end{equation*}
Hence $\vert\partial\Box^{n}\vert$ is isomorphic to the union $\partial C(\Box_{\ttop}^{n})$ 
of the images $d^{i}\colon C(\Box_{\ttop}^{n-1})\rightarrow C(\Box_{\ttop}^{n})$,
and $\vert\partial\Box^{n}\vert\rightarrow\vert\Box^{n}\vert$ is a monomorphism for $n\geq 2$.
And therefore the lemma is equivalent to the fact that $C(\Box_{\ttop}^{\bullet})$ is augmented, 
i.e.~the two maps $\partial\Box^{1}\subseteq\Box^{1}$ induce an injection 
$C(\Box_{\ttop}^{0})\coprod C(\Box_{\ttop}^{0})\rightarrow C(\Box_{\ttop}^{1})$.
\end{proof}
\begin{remark}
Denote by $\Delta_{\ttop}^{\bullet}$ the topological standard cosimplicial set and by 
$C(\Delta_{\ttop}^{\bullet})$ the simplicial $\CC^{\ast}$-algebra $n\mapsto C(\Delta_{\ttop}^{n})$ 
of continuous complex-valued functions on $\Delta_{\ttop}^{n}$ which vanish at infinity.
As in the cubical setting, the corresponding cosimplicial $\CC^{\ast}$-space 
$C(\Delta_{\ttop}^{\bullet})$ defines a singular functor $\Sing_{\Delta}^{\bullet}$ and a geometric
realization functor $\vert\cdot\vert\colon\Delta\CC^{\ast}-\Spc\rightarrow\CC^{\ast}-\Spc$.
We note that $C(\Delta_{\ttop}^{\bullet})$ does not mesh well with monoidal products in the sense 
that $C(\Delta_{\ttop}^{n})\neq C(\Delta_{\ttop}^{1})\otimes\cdots\otimes C(\Delta_{\ttop}^{1})$. 
The other properties of the cubical singular and geometric realization functors in the above hold 
simplicially.
\end{remark}

\subsection{$\Group-\CC^{\ast}$-spaces}
\label{subsection:GG-CC-spaces}
Let $\Group$ be a locally compact group. 
In this section we indicate the steps required to extend the results in the previous section to $\Group-\CC^{\ast}$-algebras. 
Recall that a $\Group-\CC^{\ast}$-algebra is a $\CC^{\ast}$-algebra equipped with  a strongly continuous representation of $\Group$ 
by $\CC^{\ast}$-algebra automorphisms.
There is a corresponding category $\Group-\CC^{\ast}-\Alg$ comprised of $\Group-\CC^{\ast}$-algebras and $\Group$-equivariant $\ast$-homomorphisms.
Since every $\CC^{\ast}$-algebra acquires a trivial $\Group$-action, 
there is an evident functor $\CC^{\ast}-\Alg\rightarrow \Group-\CC^{\ast}-\Alg$.
It gives a unique $\Group-\CC^{\ast}$-algebra structure to $\C$ because the identity is its only automorphism.
Denote by $\otimes_{\Group-\CC^{\ast}-\Alg}$ a symmetric monoidal tensor product on $\Group-\CC^{\ast}-\Alg$ with unit $\C$.
To provide examples, 
note that if $\otimes_{\CC^{\ast}-\Alg}$ denotes the maximal or minimal tensor product on $\CC^{\ast}-\Alg$,
then $A\otimes_{\CC^{\ast}-\Alg} B$ inherits two strongly continuous $\Group$-actions and hence the structure of a 
$(\Group\times \Group)-\CC^{\ast}$-algebra for all objects $A,B\in \Group-\CC^{\ast}-\Alg$.
Thus $A\otimes_{\CC^{\ast}-\Alg} B$ becomes a $\Group-\CC^{\ast}$-algebra by restricting the $(\Group\times \Group)$-action to the diagonal.
For both choices of a tensor product on $\CC^{\ast}-\Alg$ this construction furnishes symmetric monoidal structures on $\Group-\CC^{\ast}-\Alg$ 
with unit the complex numbers turning $\CC^{\ast}-\Alg\rightarrow \Group-\CC^{\ast}-\Alg$ into a symmetric monoidal functor.
\vspace{0.1in}

With the above as background we obtain embeddings 
\begin{equation*}
\xymatrix{
\Group-\CC^{\ast}-\Alg \ar[r] &
\Group-\CC^{\ast}-\Spc \ar[r] &
\Box \Group-\CC^{\ast}-\Spc }
\end{equation*}
by running the same tape as for $\CC^{\ast}-\Alg$.
The following properties can be established using the same arguments as in the previous section.  
\begin{itemize}
\item 
$\Box \Group-\CC^{\ast}-\Spc$ is a closed symmetric monoidal category with symmetric monoidal product $\X\otimes^{\Group}\Y$,  
internal hom object $\underline{\Hom}^{\Group}(\X,\Y)$ and cubical function complex ${\hom}_{\Box \Group-\CC^{\ast}-\Spc}(\X,\Y)$
for cubical $\Group-\CC^{\ast}$-spaces $\X$ and $\Y$. 
The unit is representable by the complex numbers.
\item 
$\Box \Group-\CC^{\ast}-\Spc$ is enriched in cubical sets.
\item
$\Box \Group-\CC^{\ast}-\Spc$ is locally presentable.
\item
$\underline{\Hom}^{\Group}\bigl(S^{1}\otimes C_{0}(\R),-\bigr)$ is finitely presentable. 
\item
There exists a $G$-equivariant singular functor 
\begin{equation*}
\xymatrix{
\Sing^{\Group,\bullet}_{\Box}\colon\Box \Group-\CC^{\ast}-\Spc \ar[r] &
\Box \Group-\CC^{\ast}-\Spc. }
\end{equation*}
\end{itemize}
The categories of pointed cubical $\Group-\CC^{\ast}$-spaces, simplicial $\Group-\CC^{\ast}$-spaces and pointed simplicial $\Group-\CC^{\ast}$-spaces
acquire the same formal properties as $\Box \Group-\CC^{\ast}-\Spc$.

\subsection{Model categories}
\label{modelcategories}
In order to introduce $\CC^{\ast}$-homotopy theory properly we follow Quillen's ideas 
for axiomatizing categories in which we can ``do homotopy theory.''
A striking beauty of the axioms for a model structure is that algebraic categories such as 
chain complexes also admit natural model structures, 
as well as the suggestive geometric examples of topological spaces and simplicial sets. 
The axioms for a stable homotopy category, or even for a triangulated category, 
are often so cumbersome to check that the best way to construct such structures is as the 
homotopy category of some model structure.
The standard references for this material include \cite{DS:Modelcategories}, \cite{GJ:Modelcategories}, 
\cite{Hirschhorn:Modelcategories}, \cite{Hovey:Modelcategories} and \cite{Quillen:Homotopicalalgebra}.

\begin{definition}
\label{definition:modelcategory}
A model category is a category $\M$ equipped with three classes of maps called weak equivalences, 
cofibrations and fibrations which are denoted by $\overset{\sim}{\rightarrow}$, $\cof$ and $\fib$ 
respectively.
Maps which are both cofibrations and weak equivalences are called acyclic cofibrations and denoted by 
$\overset{\sim}{\cof}$; acyclic fibrations are defined similarly and denoted by $\overset{\sim}{\fib}$.
The following axioms are required \cite[Definition 1.1.4]{Hovey:Modelcategories}:
\begin{enumerate}[{\bf CM} 1:]
\item $\M$ is bicomplete.
\item (Saturation or two-out-of-three axiom) 
If $f\colon\X\rightarrow\Y$ and $g\colon\Y\rightarrow\W$ are maps in $\M$ and any two of $f$, $g$, 
and $gf$ are weak equivalences, then so is the third.
\item (Retract axiom) Every retract of a weak equivalence (respectively cofibration, fibration) 
is a weak equivalence (respectively cofibration, fibration).
\item (Lifting axiom) Suppose there is a commutative square in $\M$:
\begin{equation*}
\xymatrix{ 
\X \ar[r]\ar@{ >->}[d]_{p} & \Z \ar@{->>}[d]^{q} \\
\Y \ar@{ -->}[ur] \ar[r] & \W }
\end{equation*}
Then the indicated lifting $\Y\rightarrow\Z$ exists if either $p$ or $q$ is a weak equivalence.
\item\label{f} (Factorization axiom)
Every map $\X\rightarrow\W$ may be functorially factored in two ways, 
as $\X\overset{\sim}{\cof}\Y\fib\W$ and as $\X\cof\Z\overset{\sim}{\twoheadrightarrow}\W$.  
\end{enumerate}
\end{definition}

If every square as in {\bf CM} 4 has a lifting $\Y\rightarrow\Z$, 
then $\X\rightarrow\Y$ is said to have the left lifting property with respect to $\Z\rightarrow\W$.
The right lifting property is defined similarly.
When $\M$ is a model category, 
one may formally invert the weak equivalences to obtain the homotopy category $\Ho(\M)$ of $\M$ 
\cite[I.1]{Quillen:Homotopicalalgebra}.  
A model category is called pointed if the initial object and terminal object are the same. 
The homotopy category of any pointed model category acquires a suspension functor denoted by $\Sigma$.  
It turns out that $\Ho(\M)$ is a pre-triangulated category in a natural way 
\cite[\S7.1]{Hovey:Modelcategories}.
When the suspension is an equivalence, 
$\M$ is called a stable model category, 
and in this case $\Ho(\M)$ becomes a triangulated category \cite[\S7.1]{Hovey:Modelcategories}.  
We will give examples of such model structures later in this text.
\vspace{0.1in}

A Quillen map of model categories $\M\rightarrow\NN$ consists of a pair of adjoint functors 
\begin{equation*}
\xymatrix{
L\colon \M \ar@<3pt>[r] & \NN \colon R \ar@<3pt>[l] }
\end{equation*}
where the left adjoint $L$ preserves cofibrations and trivial cofibrations, 
or equivalently that $R$ preserves fibrations and trivial fibrations.  
Every Quillen map induces adjoint total derived functors between the homotopy categories 
\cite[I.4]{Quillen:Homotopicalalgebra}.  
The map is a Quillen equivalence if and only if the total derived functors are adjoint 
equivalences of the homotopy categories.  
\vspace{0.1in}

For the definition of a cofibrantly generated model category $\M$ with generating cofibrations $I$ and
generating acyclic cofibrations $J$ and related terminology we refer to \cite[\S2.1]{Hovey:Modelcategories}.
The definition entails that in order to check whether a map in $\M$ is an acyclic fibration or fibration it 
suffices to test the right lifting property with respect to $I$ respectively $J$.
In addition, 
the domains of $I$ are small relative to $I$-cell and likewise for $J$ and $J$-cell.
It turns out the (co)domains of $I$ and $J$ often have additional properties.
Next we first recall \cite[Definition 4.1]{Hovey:spectra}.
\begin{definition}
\label{definition:finitelygenerated}
A cofibrantly generated model category is called finitely generated if the domains and codomains of 
$I$ and $J$ are finitely presentable,
and almost finitely generated if the domains and codomains of $I$ are finitely presentable and there 
exists a set of trivial cofibrations $J'$ with finitely presentable domains and codomains such that a map 
with fibrant codomain is a fibration if and only if it has the right lifting property with respect to $J'$, 
i.e.~the map is contained in $J'$-inj.
\end{definition}

In what follows we will use the notion of a weakly finitely generated model structure introduced in \cite[Definition 3.4]{DRO:general}.   
\begin{definition}
\label{definition:weaklyfinitelygenerated}
A cofibrantly generated model category is called weakly finitely generated if the domains and 
the codomains of $I$ are finitely presentable,
the domains of the maps in $J$ are small, 
and if there exists a subset $J'$ of $J$ of maps with finitely presentable domains and codomains 
such that a map with fibrant codomain is a fibration if and only if it is contained in $J'$-inj.
\end{definition}

\begin{lemma}
\label{lemma:afgmodelcategories} (\cite[Lemma 3.5]{DRO:general}).
In weakly finitely generated model categories,   
the classes of acyclic fibrations, fibrations with fibrant codomains, fibrant objects, 
and weak equivalences are closed under filtered colimits.
\end{lemma}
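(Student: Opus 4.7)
The plan is to establish the four closure properties in sequence---fibrant objects, then acyclic fibrations, then fibrations with fibrant codomain, then weak equivalences---letting earlier steps feed the later ones. The overarching principle is that in a weakly finitely generated $\M$, each of the first three classes admits a characterization by the right lifting property against a set of maps with finitely presentable domains and codomains; since $\M(P,-)$ commutes with filtered colimits whenever $P$ is finitely presentable, such lifting characterizations are automatically preserved under filtered colimits.

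For fibrant objects, observe that $X$ is fibrant if and only if $X\to\ast$ lies in $J'$-inj, because $\ast$ is trivially fibrant. Given a filtered diagram $\{X_\lambda\}$ of fibrant objects, any square from a map $A\to B$ in $J'$ into $\colim X_\lambda\to\ast$ has its top map $A\to\colim X_\lambda$ factoring through some $X_\lambda$ by finite presentability of $A$; applying the lift at $X_\lambda$ and composing with the structure map produces the desired lift. For acyclic fibrations, cofibrant generation characterizes them as $I$-inj, and the same mechanism goes through using that the domains and codomains of $I$ are finitely presentable. For fibrations with fibrant codomain, consider a filtered diagram $\{f_\lambda\colon X_\lambda\to Y_\lambda\}$ of such; the first step shows $Y=\colim Y_\lambda$ is fibrant, and then by definition of weakly finitely generated the colimit $f=\colim f_\lambda$ is a fibration if and only if $f\in J'$-inj, to which the argument of the first step applies verbatim.

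The weak equivalence case is the main obstacle, as this class has no direct lifting characterization at the finitely presentable level. The approach is to apply the functorial factorization produced by the small object argument on $J$ to each $f_\lambda$, yielding $f_\lambda=p_\lambda\circ i_\lambda$ with $i_\lambda$ a $J$-cell map (hence a trivial cofibration) and $p_\lambda$ a fibration. The two-out-of-three axiom forces $p_\lambda$ to be an acyclic fibration. Passing to the filtered colimit in the arrow category gives $f=p\circ i$ with $p=\colim p_\lambda$ an acyclic fibration by the preceding step. Since filtered colimits commute with pushouts and with transfinite compositions, the colimit $i=\colim i_\lambda$ is itself a $J$-cell map, in particular a trivial cofibration. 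One more application of the two-out-of-three axiom then identifies $f$ as a weak equivalence. The delicate point is this identification of $i$ as a $J$-cell map: it requires choosing the ordinal length of the small object argument uniformly in $\lambda$, which is possible because the domains of $J$ are small in a weakly finitely generated model category, so that the factorization is simultaneously functorial over the diagram.
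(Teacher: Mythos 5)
Your proof is correct and takes the standard approach used in \cite[Lemma 3.5]{DRO:general}, to which the paper defers without reproducing the argument: lifting characterizations against $I$ and against $J'$, combined with finite presentability of the relevant domains and codomains, give the first three closure properties, and the weak equivalence case reduces to the acyclic fibration case via the functorial $J$-factorization. Your phrasing of the delicate final step is slightly loose --- the uniform ordinal length is what permits interchanging the filtered colimit over $\lambda$ with the transfinite composition, pushouts and coproducts building each $i_\lambda$ (so that a filtered colimit of coproducts of maps drawn from $J$, taken along summand-inclusion transition maps, is again a coproduct of maps from $J$, and hence $\colim i_\lambda$ is a relative $J$-cell complex), whereas functoriality of the factorization over the diagram is automatic from the small object argument rather than a consequence of choosing the ordinal uniformly.
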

\begin{remark}
Lemma \ref{lemma:afgmodelcategories} implies that in weakly finitely generated model categories,
the homotopy colimit of a filtered diagram maps by a weak equivalence to the colimit of the diagram.
This follows because the homotopy colimit is the total left derived functor of the colimit and 
filtered colimits preserves weak equivalences.
\end{remark}

Two fundamental examples of model structures are the standard model structures on the functor categories 
of simplicial sets $\Delta\Set\equiv[\Delta^{\op},\Set]$ constructed by 
Quillen \cite{Quillen:Homotopicalalgebra} and of cubical sets $\Box\Set\equiv[\Box^{\op},\Set]$ constructed 
independently by Cisinski \cite{Cisinski:presheavesasmodelsforhomotopytypes} and 
Jardine \cite{Jardine:Categoricalhomotopytheory}.
The box category $\Box$ has objects $1^{0}=\{0\}$ and $1^{n}=\{0,1\}^{n}$ for every $n\geq 1$.
The maps in $\Box$ are generated by two distinct types of maps which are subject to the dual of the 
cubical relations,  
and defined as follows.
For $n\geq 1$, $1\leq i\leq n$ and $\alpha=0,1$ define the coface map 
$\delta_{n}^{i,\alpha}\colon 1^{n-1}\rightarrow 1^{n}$ by 
$(\epsilon_{1},\cdots,\epsilon_{n-1})\mapsto
(\epsilon_{1},\cdots,\epsilon_{i-1},\alpha,\epsilon_{i},\cdots,\epsilon_{n-1})$.
And for $n\geq 0$ and $1\leq i\leq n+1$ the codegeneracy map 
$\sigma_{n}^{i}\colon 1^{n+1}\rightarrow 1^{n}$ is defined by  
$(\epsilon_{1},\cdots,\epsilon_{n+1})\mapsto
(\epsilon_{1},\cdots,\epsilon_{i-1},\epsilon_{i+1},\cdots,\epsilon_{n+1})$.
Recall that a map $f$ in $\Box\Set$ is a weak equivalence if applying the triangulation functor yields 
a weak equivalence $\vert f\vert$ of simplicial sets.
A cofibration of cubical sets is a monomorphism.
The Kan fibrations are forced by the right lifting property with respect to all acyclic monomorphisms.

\begin{theorem}
\label{theorem:cubicalsetmodelstructure}
(\cite{Cisinski:presheavesasmodelsforhomotopytypes},\cite{Jardine:Categoricalhomotopytheory})
The weak equivalences, cofibrations and Kan fibrations define a cofibrantly generated and proper 
model structure on $\Box\Set$ for which the triangulation functor is a Quillen equivalence.
\end{theorem}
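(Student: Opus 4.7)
The plan is to apply the recognition theorem for cofibrantly generated model categories. For generating cofibrations I take $I=\{\partial\Box^{n}\hookrightarrow\Box^{n}:n\geq 0\}$, where $\partial\Box^{n}$ is the subcomplex generated by the nondegenerate proper faces; a standard skeletal induction identifies the closure of $I$ under pushouts and transfinite composition with the class of all monomorphisms. For generating acyclic cofibrations I would use cubical horns $\sqcap^{n}_{i,\alpha}\hookrightarrow\Box^{n}$, each obtained by deleting a single codimension-one face together with the top cell. Since every map in $I$ and in this proposed $J$ has finitely presentable domain and codomain, Quillen's small object argument supplies the two functorial factorizations required by \textbf{CM} 5.

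The verification that $J$-cell maps are weak equivalences goes through the triangulation: each $|\sqcap^{n}_{i,\alpha}|\hookrightarrow|\Box^{n}|$ decomposes into standard simplicial horn inclusions and is therefore anodyne, and anodyne extensions are stable under pushouts and transfinite composition because $|\cdot|$ is a left adjoint. The hard direction is the converse, namely that every acyclic monomorphism lies in the saturation of $J$; this is the Cisinski--Jardine key step, which I would carry out by constructing a cubical fibrant replacement as a transfinite $J$-cell tower and then extracting the needed lifts by a patching argument on skeleta. Once this is in hand the lifting axiom \textbf{CM} 4 follows formally from the small object argument.

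Left properness is free since every cubical set is cofibrant (monomorphism = cofibration). Right properness reduces to the corresponding property of $\Delta\Set$ once one checks that $|\cdot|$ both preserves and reflects weak equivalences (by definition) and carries Kan fibrations of cubical sets to Kan fibrations of simplicial sets. This last point is the most delicate combinatorial ingredient: one must show that each triangulated cubical horn factors through a simplicial horn inclusion compatibly with lifting, which requires an explicit description of the triangulation of $\Box^{n}$.

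For the Quillen equivalence I would use the right adjoint $K\mapsto\bigl([n]\mapsto\Delta\Set(|\Box^{n}|,K)\bigr)$ to the triangulation. The derived counit is a weak equivalence because each $|\Box^{n}|$ is contractible, and the derived unit then follows by two-out-of-three applied to a fibrant replacement, together with the fact that weak equivalences are defined via $|\cdot|$. The principal obstacle throughout is the lack of an intrinsic cubical Kan-type argument, so every homotopical statement must be routed through $|\cdot|$, and one must track the compatibility of cubical and simplicial combinatorics carefully at each stage — most acutely in the identification of acyclic monomorphisms with the saturation of the cubical horns.
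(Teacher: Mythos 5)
The paper does not prove this theorem; it is cited from Cisinski and Jardine, so there is no internal argument against which to check your sketch. Judged on its own terms, the outline tracks the published approach in broad strokes, but the right-properness step contains a concrete gap. You propose to reduce right properness to that of $\Delta\Set$ by applying the triangulation functor. Even granting that $\vert\cdot\vert$ preserves Kan fibrations and reflects weak equivalences, the reduction does not go through, because triangulation does not preserve pullbacks. It is a left Kan extension along $1^{n}\mapsto(\Delta^{1})^{n}$, hence a colimit construction, and it is compatible with the monoidal product $\otimes_{\Box\Set}$ (so that $\vert\Box^{m}\otimes_{\Box\Set}\Box^{n}\vert\cong\vert\Box^{m}\vert\times\vert\Box^{n}\vert$) rather than with the cartesian product of cubical sets; already $\vert\Box^{1}\times\Box^{1}\vert$ is not $\vert\Box^{1}\vert\times\vert\Box^{1}\vert$. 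Thus there is no natural identification $\vert Z\times_{Y}X\vert\cong\vert Z\vert\times_{\vert Y\vert}\vert X\vert$, and without one you cannot transport base change along a fibration across the comparison functor. You would either need a separate lemma that triangulation sends pullbacks along Kan fibrations to homotopy pullbacks up to weak equivalence, or you must abandon the transport strategy; Cisinski establishes properness for presheaf model structures by an intrinsic argument using a good cylinder object, not by reduction along triangulation.

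Two further steps are acknowledged but left at a level of vagueness that hides genuine difficulty. The assertion that $\vert\cdot\vert$ preserves Kan fibrations does not follow from left Quillen-ness (left Quillen functors preserve cofibrations and acyclic cofibrations, not fibrations), and the heuristic about cubical horns factoring through simplicial horns runs in the wrong direction for a lifting argument. And contractibility of each $\vert\Box^{n}\vert$ alone does not make the derived counit of the adjunction a weak equivalence; the Quillen equivalence needs a careful analysis of the cubical singular functor. Finally, the identification of acyclic monomorphisms with the saturation of the cubical horns is the crux of Cisinski's theorem and cannot be waved through with a skeletal patching argument; the simplicial analogue already requires minimal-fibration theory, and the cubical case has its own combinatorics.
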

\begin{example}
The cubical set $\partial\Box^{n}$ is the subobject of the standard $n$-cell $\Box^{n}$ 
generated by all faces $d_{i}^{\alpha}:\Box^{n-1}\rightarrow\Box^{n}$. 
It follows there is a coequalizer diagram of cubical sets
\begin{equation*}
\xymatrix{
\coprod_{0\leq i<j\leq n,(\alpha_{1},\alpha_{2})}
\Box^{n-2}\ar@<3pt>[r]\ar@<-3pt>[r] & 
\coprod_{(i,\alpha)}\Box^{n-1}\ar[r] &
\partial\Box^{n}. }
\end{equation*}
The cubical set $\sqcap^{n}_{(\alpha,i)}$ is the subobject of $\Box^{n}$ generated by all faces 
$d_{j}^{\gamma}\colon\Box^{n-1}\rightarrow\Box^{n}$ for $(j,\gamma)\neq(i,\alpha)$.
There is a coequalizer diagram of cubical sets where the first disjoint union is indexed over  
pairs for which $0\leq j_{1}<j_{2}\leq n$ and $(j_{k},\gamma_{k})\neq (i,\alpha)$ for $k=1,2$ 
\begin{equation*}
\xymatrix{
\coprod_{(j_{1},\gamma_{1}),(j_{2},\gamma_{2})}
\Box^{n-2}\ar@<3pt>[r]\ar@<-3pt>[r] & 
\coprod_{(j,\gamma)\neq(i,\alpha)}\Box^{n-1}\ar[r] &
\sqcap^{n}_{(\alpha,i)}. }
\end{equation*}
\end{example}

The sets of all monomorphisms $\partial\Box^{n}\subset\Box^{n}$ and $\sqcap^{n}_{(\alpha,i)}\subset\Box^{n}$
furnish generators for the cofibrations respectively the acyclic cofibrations of cubical sets.
By using these generators one can show that the model structure in Theorem 
\ref{theorem:cubicalsetmodelstructure} is weakly finitely generated and monoidal with respect to 
the closed symmetric monoidal product $\otimes_{\Box\Set}$ on $\Box\Set$ introduced by Jardine in 
\cite[\S3]{Jardine:Categoricalhomotopytheory}.
The monoidal product is determined by $\Box^{m}\otimes_{\Box\Set}\Box^{n}=\Box^{m+n}$ and the internal 
homs or cubical function complexes are defined by 
${\hom}_{\Box\Set}(K,L)_{n}\equiv\Box\Set(K\otimes_{\Box\Set}\Box^{n},L)$
as in Day's work \cite{Day:closedfunctorsI}.
This structure allows to define a notion of cubical model categories in direct analogy with Quillen's 
{\bf SM}7 axiom for simplicial model categories.
We include a sketch proof of the next result.
\begin{lemma}
\label{lemma:cubicalmappingcylinderfactorization}
Suppose $\M$ is a cubical model category and $f\colon\X\rightarrow\Y$ is a map between cofibrant objects.
Then the cubical mapping cylinder $\cyl(f)$ is cofibrant, 
$\X\rightarrow\cyl(f)$ is a cofibration and $\cyl(f)\rightarrow\Y$ is a cubical homotopy equivalence.
\end{lemma}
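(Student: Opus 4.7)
The plan is to realize $\cyl(f)$ as the explicit pushout
\begin{equation*}
\xymatrix{
X \ar[r]^{f} \ar[d]_{X\otimes d^{0}} & Y \ar[d]^{i_{Y}} \\
X\otimes\Box^{1} \ar[r] & \cyl(f)
}
\end{equation*}
along $f$ and the $\Box^{1}$-face at $0$, and then to extract all three assertions from successive applications of the cubical pushout-product axiom ({\bf SM}7) plus one explicit deformation onto $Y$.

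For cofibrancy of $\cyl(f)$ and the fact that $i_{Y}\colon Y\to\cyl(f)$ is a cofibration, I apply {\bf SM}7 to the cofibration $\emptyset\to X$ (valid because $X$ is cofibrant) and to the monomorphism $\{0\}\hookrightarrow\Box^{1}$ of cubical sets. This produces a cofibration $X=X\otimes\{0\}\to X\otimes\Box^{1}$ in $\M$ whose pushout along $f$ is exactly $i_{Y}$. Combined with cofibrancy of $Y$ this gives $\cyl(f)$ cofibrant. To show that $i_{X}\colon X\to\cyl(f)$ is a cofibration I would reorganize the same pushout starting one step earlier: applying {\bf SM}7 instead to $\emptyset\to X$ and $\partial\Box^{1}\hookrightarrow\Box^{1}$ produces a cofibration $X\amalg X=X\otimes\partial\Box^{1}\to X\otimes\Box^{1}$, whose pushout along $(f,\id_{X})\colon X\amalg X\to Y\amalg X$ is again $\cyl(f)$. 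Hence $Y\amalg X\to\cyl(f)$ is a cofibration, and composing with the summand inclusion $X\to Y\amalg X$ (a cofibration because $Y$ is cofibrant) exhibits $i_{X}$ as a cofibration.

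For the cubical homotopy equivalence, I define the projection $p\colon\cyl(f)\to Y$ by $\id_{Y}$ on $Y$ and by $f$ composed with the canonical degeneracy $X\otimes\Box^{1}\to X$ on the other summand, so that $p\circ i_{Y}=\id_{Y}$ holds by construction. The plan is then to produce a cubical homotopy $H\colon\cyl(f)\otimes\Box^{1}\to\cyl(f)$ from $\id_{\cyl(f)}$ to $i_{Y}\circ p$. Using that $-\otimes\Box^{1}$ preserves pushouts, one has
\begin{equation*}
\cyl(f)\otimes\Box^{1}=(Y\otimes\Box^{1})\cup_{X\otimes\Box^{1}}(X\otimes\Box^{1}\otimes\Box^{1}),
\end{equation*}
and $H$ assembles from the projection $Y\otimes\Box^{1}\to Y\hookrightarrow\cyl(f)$ on the first piece together with $X\otimes m\colon X\otimes\Box^{1}\otimes\Box^{1}\to X\otimes\Box^{1}\hookrightarrow\cyl(f)$ on the second, where $m\colon\Box^{1}\otimes\Box^{1}\to\Box^{1}$ is a contraction restricting to $\id_{\Box^{1}}$ on one face and to the constant map at $0$ on the other. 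The end-point identities for $m$ then give the desired restrictions $H\circ(\id\otimes d^{0})=\id$ and $H\circ(\id\otimes d^{1})=i_{Y}\circ p$.

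The main obstacle is producing the contraction $m$, which is not available from faces and degeneracies alone and is precisely where the specific cubical structure of $\M$ enters. In the category $\Box\CC^{\ast}-\Spc$ that drives this paper it comes essentially for free: $m$ is dual to the $\ast$-homomorphism $C(I)\to C(I)\otimes C(I)$ induced by the multiplication $I\times I\to I$ on the topological unit interval, with the endpoint conditions falling out of $0\cdot t=0$ and $1\cdot t=t$. Granted $m$, the pieces of $H$ agree on the common $X\otimes\Box^{1}$ by a direct check, so $H$ is well defined and $p$ is a cubical homotopy equivalence with inverse $i_{Y}$.
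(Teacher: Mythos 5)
Your treatment of cofibrancy and the two cofibration statements is correct and matches the paper's own proof: the pushout square over $\X\otimes(\partial\Box^1\subset\Box^1)$ exhibits $\X\coprod\Y\to\cyl(f)$ as a cofibration, $\X\to\X\coprod\Y$ is a cofibration because $\Y$ is cofibrant, and cofibrancy of $\cyl(f)$ follows. No concerns there.

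The homotopy equivalence is where the proposal breaks. You correctly identify that the naive deformation requires a map of cubical sets $m\colon\Box^1\otimes_{\Box\Set}\Box^1=\Box^2\to\Box^1$ restricting to $\id$ on one face and to a constant map on the opposite face, and that no such map arises from faces and degeneracies: the box category $\Box$ of this paper has no connections, so the min and max poset maps $1^2\to 1^1$ are simply not morphisms of $\Box$. The proposed fix, however, conflates two distinct tensor structures. A cubical homotopy is a map $\cyl(f)\otimes\Box^1\to\cyl(f)$ for the $\Box\Set$-tensoring of (\ref{cubicalCstarspacetensor}), so the required $m$ must be a map of cubical sets. The multiplication $I\times I\to I$ produces, under Gelfand duality, a map $C(I)\otimes C(I)\to C(I)$ of representable $\CC^{\ast}$-spaces, which lives in a wholly different tensor slot; $\X\otimes\Box^1$ and $\X\otimes C(I)$ are distinct constructions, and nothing about the former is governed by $\ast$-homomorphisms of $C(I)$. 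So the homotopy as written does not assemble, even for $\M=\Box\CC^{\ast}-\Spc$. The paper is itself terse here, citing \cite[II Lemma 3.5]{GJ:Modelcategories} and calling the rest routine; the leverage it is pointing at is that $\Y\to\cyl(f)$ is an \emph{acyclic} cofibration between cofibrant objects split by $p$, an abstract situation from which one should argue rather than trying to manufacture the missing connection on $\Box^1$.
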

\begin{proof}
The cubical mapping cylinder $\cyl(f)$ is defined as the pushout of the diagram    
\begin{equation*}
\xymatrix{
\X\otimes\Box^{1} & \X \ar[l] \ar[r]^{f} & \Y,} 
\end{equation*}
induced by the embedding $1^{0}\rightarrow 1^{1}$, $0\mapsto 0$, 
via the Yoneda lemma. 
This construction uses the isomorphism $\X=\X\otimes\Box^{0}$. 
The second embedding $1^{0}\rightarrow 1^{1}$, $0\mapsto 1$, yields the map $\X\rightarrow\cyl(f)$,
while the diagram
\begin{equation*}
\xymatrix{
\X \ar[r]^-{f}\ar[d] & \Y \ar@{=}[d] \\
\X\otimes\Box^{1}  \ar[r] & \Y } 
\end{equation*}
implies there is a map $\cyl(f)\rightarrow\Y$, 
where the lower horizontal map 
\begin{equation*}
\xymatrix{
\X\otimes\Box^{1}\ar[r] &
\X\otimes\Box^{0}=\X\ar[r] & \Y }
\end{equation*}
is induced by the unique map $1^{1}\rightarrow 1^{0}$.
This produces the desired factorization.
\vspace{0.1in}

The cofibrancy assumption on $\X$ implies $\X\otimes (\partial\Box^{1}\subset\Box^{1})$ is a cofibration
since the model structure is cubical. 
It follows that $\X\rightarrow\X\coprod\Y\rightarrow\cyl(f)$ is a cofibration on account of the pushout diagram: 
\begin{equation*}
\xymatrix{
\X\otimes\partial\Box^{1}=\X\coprod\X \ar[r]^-{\id_{\X}\coprod f}\ar[d] & \X\coprod\Y \ar[d] \\
\X\otimes\Box^{1}  \ar[r] & \cyl(f) } 
\end{equation*}
Clearly this shows $\cyl(f)$ is cofibrant. 
Finally,
using that $\X\otimes \Box^{1}$ is a cylinder object for any cofibrant $\X$, cf.~\cite[II Lemma 3.5]{GJ:Modelcategories},  
one verifies routinely that $\cyl(f)\rightarrow\Y$ is a cubical homotopy equivalence.
\end{proof}
\begin{remark}
\label{remark:pointedcubicalsets}
The above remains valid for pointed cubical sets $\Box\Set_{\ast}\equiv[\Box^{\op},\Set_{\ast}]$.
Note that the cofibrations are generated by the monomorphisms $(\partial\Box^{n}\subset\Box^{n})_{+}$ 
and the acyclic cofibrations by the monomorphisms $(\sqcap^{n}_{(\alpha,i)}\subset\Box^{n})_{+}$.
\end{remark}
\begin{proposition}
\label{proposition:homotopyclassesofmaps}
Suppose $\X$ is cofibrant and $\Y$ is fibrant in some cubical model category $\M$ with cubical function complex ${\hom}_{\M}(\X,\Y)$.
Then there is an isomorphism
\begin{equation*}
\Ho(\M)(\X,\Y)=\pi_{0}{\hom}_{\M}(\X,\Y).
\end{equation*}
\end{proposition}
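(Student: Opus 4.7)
The plan is to identify $\pi_{0}{\hom}_{\M}(\X,\Y)$ with the set of left homotopy classes of maps $\X\rightarrow\Y$ in the Quillen sense, and then invoke the standard fact that left homotopy classes compute morphisms in the homotopy category when the source is cofibrant and the target is fibrant.

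First I would unpack the right-hand side. By the Yoneda-type formula for cubical function complexes, the $0$-cells of ${\hom}_{\M}(\X,\Y)$ are the hom-set $\M(\X,\Y)$, while the $1$-cells are the maps $\X\otimes\Box^{1}\rightarrow\Y$ with the two face maps sending such an $H$ to its restrictions along $\X\otimes\delta^{0}$ and $\X\otimes\delta^{1}$. Since $\pi_{0}$ of a cubical set is the coequalizer of the two $d^{\alpha}\colon K_{1}\rightrightarrows K_{0}$, we see that $\pi_{0}{\hom}_{\M}(\X,\Y)$ is the quotient of $\M(\X,\Y)$ by the equivalence relation generated by the relation $f\sim g$ whenever there exists $H\colon \X\otimes\Box^{1}\rightarrow\Y$ with $H\circ(\X\otimes\delta^{0})=f$ and $H\circ(\X\otimes\delta^{1})=g$.

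Next I would show that, under the cubical model category hypothesis, this is exactly the left homotopy relation defined through a cylinder object on $\X$. Concretely, one checks that $\X\otimes\Box^{1}$ is a cylinder object for $\X$ when $\X$ is cofibrant: the analogue of {\bf SM}7 for cubical model categories applied to the generating cofibration $\partial\Box^{1}\subset\Box^{1}$ gives that $\X\otimes\partial\Box^{1}=\X\amalg\X\cof\X\otimes\Box^{1}$ is a cofibration, and the same axiom applied to the acyclic map $\Box^{1}\rightarrow\Box^{0}$ (together with cofibrancy of $\X$) ensures $\X\otimes\Box^{1}\overset{\sim}{\rightarrow}\X$ is a weak equivalence. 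This is precisely the observation already invoked in the proof of Lemma~\ref{lemma:cubicalmappingcylinderfactorization}.

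With $\X\otimes\Box^{1}$ a cylinder object, the relation above is the usual left homotopy relation. By standard model category theory \cite[I.1]{Quillen:Homotopicalalgebra}, left homotopy on $\M(\X,\Y)$ is an equivalence relation as soon as $\X$ is cofibrant and $\Y$ is fibrant (so the generation clause in the previous paragraph is automatic), and the resulting set of equivalence classes coincides with $\Ho(\M)(\X,\Y)$. Composing these identifications yields the desired bijection. The main point to verify carefully is the cylinder object property above, in particular that the tensoring axiom of a cubical model category does produce a cofibration $\X\amalg\X\cof\X\otimes\Box^{1}$ and a weak equivalence $\X\otimes\Box^{1}\overset{\sim}{\rightarrow}\X$; everything else is a formal consequence of Quillen's homotopy category construction.
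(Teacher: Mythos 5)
Your proof is correct and takes essentially the same route as the paper's: identify $\X\otimes\Box^{1}$ as a cylinder object for cofibrant $\X$ via the cubical ${\bf SM}7$ axiom, then read off $\pi_{0}$ of the function complex as left homotopy classes. The only organizational difference is that the paper invokes ${\bf SM}7$ a second time to note that ${\hom}_{\M}(\X,\Y)$ is a fibrant cubical set (so that the $1$-cell relation on $0$-cells is already an equivalence relation without taking a generated closure), whereas you instead cite Quillen's theorem that left homotopy is an equivalence relation when the source is cofibrant and the target is fibrant; both arguments close the same gap.
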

\begin{proof}
By {\bf SM}7, 
which ensures that ${\hom}_{\M}(\X,\Y)$ is fibrant, 
the right hand side is the set of homotopies $\Box^{1}\rightarrow{\hom}_{\M}(\X,\Y)$, 
or equivalently $\X\otimes\Box^{1}\rightarrow\Y$,
i.e.~homotopies between $\X$ and $\Y$ because $\X\otimes\Box^{1}$ is a cylinder object for $\X$.
\end{proof}
\begin{corollary}
\label{corollary:weakequivalencesfibrantobjects}
A map $\X\rightarrow\Y$ in a cubical model category $\M$ with a cofibrant replacement functor $\QQ\rightarrow\id_{\M}$ is a weak equivalence 
if and only if for every fibrant object $\Z$ of $\M$ the induced map ${\hom}_{\M}(\QQ\Y,\Z)\rightarrow{\hom}_{\M}(\QQ\X,\Z)$ is a weak equivalence 
of cubical sets.
\end{corollary}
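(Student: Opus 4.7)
The plan is to reduce both directions to Proposition \ref{proposition:homotopyclassesofmaps}, which identifies $\pi_{0}{\hom}_{\M}(-, -)$ on cofibrant–fibrant pairs with $\Ho(\M)$-morphisms, together with the pushout-product axiom built into the definition of a cubical model category.

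For the forward direction, suppose $\X \to \Y$ is a weak equivalence. Applying two-out-of-three to the naturality square of $\QQ \to \id_{\M}$ shows that $\QQ\X \to \QQ\Y$ is also a weak equivalence, and both objects are cofibrant. Now fix a fibrant $\Z$. The cubical \textbf{SM}7 axiom applied to a cofibration between cofibrant objects $A \cof B$ and the fibration $\Z \fib \ast$ produces a fibration ${\hom}_{\M}(B,\Z) \fib {\hom}_{\M}(A,\Z)$ which is trivial whenever $A \cof B$ is. By Ken Brown's lemma, it follows that the functor ${\hom}_{\M}(-,\Z)$ carries all weak equivalences between cofibrant objects to weak equivalences of cubical sets; in particular, ${\hom}_{\M}(\QQ\Y,\Z) \to {\hom}_{\M}(\QQ\X,\Z)$ is a weak equivalence.

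For the reverse direction, assume the hom-complex condition. Applying $\pi_{0}$ to the assumed weak equivalences of cubical sets and invoking Proposition \ref{proposition:homotopyclassesofmaps} gives a bijection
\begin{equation*}
\Ho(\M)(\QQ\Y,\Z) \longrightarrow \Ho(\M)(\QQ\X,\Z)
\end{equation*}
for every fibrant $\Z$. Since $\QQ\X \to \X$ and $\QQ\Y \to \Y$ are weak equivalences, hence isomorphisms in $\Ho(\M)$, this bijection is identified with $\Ho(\M)(\Y,\Z) \to \Ho(\M)(\X,\Z)$. A fibrant replacement argument extends the bijection to arbitrary $\Z$, so the Yoneda lemma in $\Ho(\M)$ forces $\X \to \Y$ to be an isomorphism there, i.e.~a weak equivalence in $\M$.

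The main obstacle is the Ken Brown style argument in the forward direction: one must verify carefully that the cubical \textbf{SM}7 axiom in this setting really does yield a fibration of cubical function complexes (trivial whenever the source map is), so that factoring a weak equivalence $\QQ\X \to \QQ\Y$ as an acyclic cofibration followed by a weak equivalence between fibrant objects transports through ${\hom}_{\M}(-,\Z)$ correctly. Everything else is formal manipulation of the adjunction between $\QQ$ and the identity plus the Yoneda lemma in $\Ho(\M)$.
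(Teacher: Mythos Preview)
Your proof is correct and follows essentially the same route as the paper: Ken Brown's lemma combined with the cubical \textbf{SM}7 axiom for the direction ``weak equivalence $\Rightarrow$ hom condition'', and Proposition~\ref{proposition:homotopyclassesofmaps} (applying $\pi_0$ to identify with $\Ho(\M)$-maps) plus a Yoneda argument for the converse. Your version is slightly more explicit in spelling out the fibrant-replacement/Yoneda step that the paper leaves implicit, and your hedging about \textbf{SM}7 is unnecessary---taking the fibration to be $\Z\fib\ast$ in the pushout-product formulation immediately yields that ${\hom}_{\M}(B,\Z)\to{\hom}_{\M}(A,\Z)$ is a (trivial) fibration whenever $A\cof B$ is a (trivial) cofibration.
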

\begin{proof}
The map $\X\rightarrow\Y$ is a weak equivalence if and only if $\QQ\X\rightarrow\QQ\Y$ is so.
For the if implication it suffices to show there is an induced isomorphism 
\begin{equation*}
\xymatrix{
\Ho(\M)(\QQ\Y,\Z)\ar[r] & \Ho(\M)(\QQ\X,\Z)  }
\end{equation*}
for every fibrant $\Z$.
This follows from Proposition \ref{proposition:homotopyclassesofmaps} since, 
by the assumption, 
${\hom}_{\M}(\QQ\Y,\Z)\rightarrow {\hom}_{\M}(\QQ\X,\Z)$ is a weak equivalence of cubical sets, 
and hence there is an isomorphism $\pi_{0}{\hom}_{\M}(\QQ\Y,\Z)\rightarrow\pi_{0}{\hom}_{\M}(\QQ\X,\Z)$.
\vspace{0.1in}

Conversely,
Ken Brown's lemma \cite[Lemma 1.1.12]{Hovey:Modelcategories} shows that we may assume $\QQ\X\rightarrow\QQ\Y$ is an acyclic cofibration since 
weak equivalences have the two-out-of-three property.
By {\bf SM}7, 
the map ${\hom}_{\M}(\QQ\Y,\Z)\rightarrow {\hom}_{\M}(\QQ\X,\Z)$ is then an acyclic fibration.
\end{proof}
\begin{remark}
The dual of Corollary \ref{corollary:weakequivalencesfibrantobjects} shows that $\X\rightarrow\Y$ is a weak equivalence 
if and only if for every cofibrant object $\W$ of $\M$ and fibrant replacement functor $\RRR$ the induced map
${\hom}_{\M}(\W,\RRR\X)\rightarrow{\hom}_{\M}(\W,\RRR\Y)$ is a weak equivalence 
of cubical sets.
\end{remark}
\vspace{0.1in}

The homotopy colimit of a small diagram of cubical sets ${\bf X}\colon I\rightarrow\Box\Set$ is the 
cubical set defined by 
\begin{equation}
\label{homotopycolimit}
\underset{I}{\hocolim}\,\,{\bf X}\equiv 
B(-\downarrow I)^{\op}\otimes_{[I,\Box\Set]}{\bf X}.
\end{equation}
Here $B(i\downarrow I)$ is the cubical nerve of the undercategory $i\downarrow I$ so that there is 
a natural map from (\ref{homotopycolimit}) to the colimit of ${\bf X}$.
In model categorical terms the homotopy colimit of ${\bf X}$ is a left derived functor of the colimit
\begin{equation}
\label{leftderivedcolimit}
{\bf L}\,\,\underset{I}{\colim}\,\,{\bf X}\equiv 
\underset{I}{\colim}\,\,\QQ\,\,{\bf X}.
\end{equation}
Here $\QQ$ is a cofibrant replacement functor.
The homotopy limit is defined dually.
Items (\ref{homotopycolimit}) and (\ref{leftderivedcolimit}) define homotopy functors and a naturally 
induced weak equivalence 
\begin{equation*}
\xymatrix{
{\bf L}\,\,\underset{I}{\colim}\,\,{\bf X}\ar@<3pt>[r] & 
\underset{I}{\hocolim}\,\,{\bf X}.}
\end{equation*}
Homotopy colimits and homotopy limits of small diagrams of simplicial sets are defined by the same script.
\vspace{0.1in}

Combining the notions of cofibrantly generated and locally presentable one arrives at the following definition.
\begin{definition}
A model category is called combinatorial if it is cofibrantly generated and the underlying category is locally presentable.
\end{definition}

It is useful to note there exists an accessible fibrant replacement functor in every combinatorial model category
\cite[Proposition 3.2]{Rosicky:brownrepresentability}.
Recall that a functor between $\lambda$-accessible categories is $\lambda$-accessible if it preserves $\lambda$-filtered colimits.
We shall use this when setting up model structures on $\CC^{\ast}$-functors in Section \ref{subsection:Cstarfunctors}.
\vspace{0.1in}

Next we review the process of localizing model structures as in \cite{Hirschhorn:Modelcategories}.
Suppose $\mathcal{L}$ is a set of maps in a model category $\M$.
Then the Bousfield localization $\M_{\mathcal{L}}$ of $\M$ with respect to $\mathcal{L}$ is a new 
model structure on $\M$ having the same class of cofibrations, 
but in which the maps of $\mathcal{L}$ are weak equivalences.  
Furthermore, 
$\M_{\mathcal{L}}$ is the initial such model structure in the sense that if $\M\rightarrow\NN$ is 
some Quillen functor that sends the maps of $\mathcal{L}$ to weak equivalences, 
then $\M_{\mathcal{L}}\rightarrow\NN$ is also a Quillen functor.  
The total right derived functor of the identity $\M_{\mathcal{L}}\rightarrow\M$ is fully faithful.
The Bousfield localization $\M_{\mathcal{L}}$ exists when $\M$ is left proper and combinatorial by (unpublished) work 
of Jeff Smith.
Reference \cite{Barwick} gives a streamlined presentation of this material.
\vspace{0.1in}

In \cite{Hirschhorn:Modelcategories},
Bousfield localizations are shown to exist for left proper cellular model categories, 
which are special kinds of cofibrantly generated model categories.
Three additional hypotheses are required on the sets of generating cofibrations $I$ and acyclic cofibrations $J$:
\begin{itemize}
\item
The domains and codomains of $I$ are compact relative to $I$.
\item
The domains of $J$ are small relative to the cofibrations.
\item
Cofibrations are effective monomorphisms.
\end{itemize}

When presented with the definition of ``compact relative to $I$'' for the first time it is helpful to keep the example of CW-complexes in mind.
We recall:
\begin{definition}
\label{definition:presentationofcellcomplexes}
A presentation of a relative $I$-cell complex $f\colon\X\rightarrow\Y$, 
i.e.~a transfinite composition of pushouts of coproducts of maps in $I$, 
consists of a presentation ordinal $\lambda$, 
a $\lambda$-sequence $F$ in $\M$, 
a collection $\{(T^{\beta},e^{\beta},h^{\beta})_{\beta<\lambda}\}$ where $T^{\beta}$ is a set and $e^{\beta}:T^{\beta}\rightarrow I$ a map for which the 
following properties hold.
\begin{itemize}
\item The composition of $F$ is isomorphic to $f$.
\item If $i\in T^{\beta}$ and $e^{\beta}_{i}\colon C_{i}\rightarrow D_{i}$ is the image of $i$, 
      then $h^{\beta}_{i}:C_{i}\rightarrow F_{\beta}$ is a map.
\item For every $\beta<\lambda$ there is a pushout:
\begin{equation*}
\xymatrix{
\coprod_{i\in T^{\beta}} C_{i} \ar[r]^-{\coprod e^{\beta}_{i}} \ar[d]_-{\coprod h^{\beta}_{i}} & \coprod_{i\in T^{\beta}}  D_{i} \ar[d] \\
F_{\beta} \ar[r] & F_{\beta +1} }
\end{equation*}
\end{itemize}
The map $f$ has said to have size the cardinality of its set of cells $\coprod _{\beta <\lambda}T^{\beta}$ and the presentation ordinal of a cell 
$e$ of $f$ is the ordinal $\beta$ such that $e\in T^{\beta}$.
\end{definition}

Next we need the definition of a subcomplex;
the motivational example is that of a CW-subcomplex.
\begin{definition}
\label{definition:subcomplexes}
A subcomplex of a presentation $F\colon\lambda\rightarrow\M,\{(T^{\beta},e^{\beta},h^{\beta})_{\beta<\lambda}\}$ of an $I$-cell complex 
$f\colon\X\rightarrow\Y$ is a collection $\{(\widetilde{T}^{\beta},\widetilde{e}^{\beta},\widetilde{h}^{\beta})_{\beta<\lambda}\}$ such that
\begin{itemize}
\item For every $\beta <\lambda$, $\widetilde{T}^{\beta}\subseteq T^{\beta}$ and $\widetilde{e}^{\beta}$ is the restriction of 
$e^{\beta}$ to $\widetilde{T}^{\beta}$.
\item There is a $\lambda$-sequence $\widetilde{F}$ such that $\widetilde{F}_{0}=F_{0}$ and a natural transformation 
$\widetilde{F}\rightarrow F$ such that for every $\beta<\lambda$ and every $i\in\widetilde{T}^{\beta}$,
the map $\widetilde{h}^{\beta}_{i}:C_{i}\rightarrow \widetilde{F}_{\beta}$ is a factorization of $h^{\beta}_{i}:C_{i}\rightarrow F_{\beta}$ 
through $\widetilde{F}_{\beta}\rightarrow F_{\beta}$.
\item For every $\beta<\lambda$ there is a pushout:
\begin{equation*}
\xymatrix{
\coprod_{i\in\widetilde{T}^{\beta}} C_{i} \ar[r]^-{\coprod\widetilde{e}^{\beta}_{i}}
\ar[d]_-{\coprod\widetilde{h}^{\beta}_{i}} & \coprod_{i\in\widetilde{T}^{\beta}}  D_{i} \ar[d] \\
\widetilde{F}_{\beta} \ar[r] & \widetilde{F}_{\beta +1} }
\end{equation*}
\end{itemize}
\end{definition}

We are ready to make precise the condition ``compact relative to $I$.''
\begin{definition}
\label{definition:compactrelativeto}
\begin{itemize}
\item Let $\kappa$ be a cardinal. 
An object $\Z$ of $\M$ is $\kappa$-compact relative to $I$ if for every presented relative $I$-cell complex $f\colon\X\rightarrow\Y$,
every map $\Z\rightarrow\Y$ factors through a subcomplex of $f$ of size at most $\kappa$.
\item An object $\Z$ of $\M$ is compact relative to $I$ if it is $\kappa$-compact relative to $I$ for some cardinal $\kappa$.
\end{itemize}
\end{definition}
\vspace{0.1in}

Recall that a map $\X\rightarrow\Y$ in $\M$ is an effective monomorphism if it is the equalizer of the two naturally induced maps
\begin{equation*}
\xymatrix{
\Y 
\ar@<3pt>[r]\ar@<-3pt>[r] & 
\Y\coprod_{\X}\Y. }
\end{equation*}
In the category of sets, 
the effective monomorphisms are precisely the injective maps. 
It is important to note that the Bousfield localization of a left proper cellular model category is also a left proper and cellular model category.
\vspace{0.1in}

In the following sections we shall detail the localization process for various model structures 
on cubical $\CC^{\ast}$-spaces.  
A common theme for all of these model structures is that we know precisely what the fibrant objects 
should be in the localized model structure, 
and this forces the new weak equivalences defined by cubical function complexes.
Regardless of the shape of $J$ in $\M$, 
it is often problematic to explicate a new set of generating trivial cofibrations in the localized 
model structure.

\begin{proposition}
\label{proposition:almostfinitelygeneratedpreservedunderlocalization}
(\cite[Proposition 4.2]{Hovey:spectra})
If $\M$ is an almost finitely generated, combinatorial, cubical and left proper model category,
$\mathcal{L}$ a set of cofibrations of $\M$ such that for every domain and codomain $\X$ of $\mathcal{L}$ 
and every finitely presentable cubical set $K$, $\X\otimes K$ is finitely presentable,
then the Bousfield localization of $\M$ with respect to $\mathcal{L}$ is almost finitely generated.
\end{proposition}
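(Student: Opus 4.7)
The plan is to construct a set $J'_{\mathcal{L}}$ of trivial cofibrations in $\M_{\mathcal{L}}$ with finitely presentable domains and codomains that detects $\M_{\mathcal{L}}$-fibrations with $\M_{\mathcal{L}}$-fibrant codomain. Since Bousfield localization leaves the class of cofibrations unchanged, the generating set $I$ of $\M$ serves unchanged as generating cofibrations for $\M_{\mathcal{L}}$, with finitely presentable domains and codomains inherited from the hypothesis on $\M$. So only the trivial-cofibration side needs work.

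For each $\ell \colon A \to B$ in $\mathcal{L}$, Lemma \ref{lemma:cubicalmappingcylinderfactorization} supplies a factorization $A \cof \cyl(\ell) \overset{\sim}{\to} B$ where the second map is a cubical homotopy equivalence. Writing $i_\ell$ for the cofibration $A \to \cyl(\ell)$ and $j_n$ for $\partial\Box^n \subset \Box^n$, form for each $n \geq 0$ the pushout-product
$$
\lambda_{\ell,n} \colon (A \otimes \Box^n) \coprod_{A \otimes \partial\Box^n} (\cyl(\ell) \otimes \partial\Box^n) \longrightarrow \cyl(\ell) \otimes \Box^n.
$$
Let $\Lambda(\mathcal{L})$ be the collection of all $\lambda_{\ell,n}$, and set $J'_{\mathcal{L}} \equiv J'_{\M} \cup \Lambda(\mathcal{L})$, where $J'_{\M}$ is the detecting set from the almost finite generation of $\M$.

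Each $\lambda_{\ell,n}$ is a cofibration by the cubical pushout-product axiom, and an $\mathcal{L}$-equivalence because $i_\ell$ itself is one (its composite with the homotopy equivalence $\cyl(\ell) \to B$ coincides with $\ell \in \mathcal{L}$) and the pushout-product of an $\mathcal{L}$-acyclic cofibration with a monomorphism of cubical sets is again an $\mathcal{L}$-acyclic cofibration in the left proper cubical localization. Finite presentability of the domain and codomain of $\lambda_{\ell,n}$ follows from the hypothesis on $\mathcal{L}$ together with closure of ${\bf fp}\Box\CC^{\ast}-\Spc$ under finite colimits and tensors with finitely presentable cubical sets: the cylinder $\cyl(\ell)$ is a finite pushout of the finitely presentable objects $A$, $A \otimes \Box^1$, and $B$, hence finitely presentable, and tensoring with $\Box^n$ or $\partial\Box^n$ preserves finite presentability.

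To verify the detection property, suppose $p \colon X \to Y$ has $\M_{\mathcal{L}}$-fibrant codomain $Y$ and the RLP with respect to $J'_{\mathcal{L}}$. The RLP against $J'_{\M}$ makes $p$ an $\M$-fibration, so $X$ is also $\M$-fibrant. By {\bf SM}7-adjointness, the RLP against all $\lambda_{\ell,n}$ for $n \geq 0$ is equivalent to
$$
{\hom}_{\M}(\cyl(\ell), X) \longrightarrow {\hom}_{\M}(\cyl(\ell), Y) \times_{{\hom}_{\M}(A, Y)} {\hom}_{\M}(A, X)
$$
being a trivial fibration of cubical sets. Since $\cyl(\ell) \to B$ is a cubical homotopy equivalence and all function complexes involved are fibrant, this condition is equivalent to the analogous one with $\cyl(\ell)$ replaced by $B$, which by Corollary \ref{corollary:weakequivalencesfibrantobjects} together with the $\M_{\mathcal{L}}$-fibrancy of $Y$ is precisely the characterization of $p$ as an $\M_{\mathcal{L}}$-fibration. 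The main obstacle is this final step: executing the pushout-product/adjointness translation cleanly and using the fibrancy hypotheses to transfer from $\cyl(\ell)$ back to $B$ while keeping the pullback on the target side compatible.
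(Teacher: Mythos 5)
Your argument follows Hovey's original (the reference cited in the statement) and the paper's, and the core is correct: $I$ stays put, and one adjoins to $J'_{\M}$ the pushout products of the maps in $\mathcal{L}$ with cubical boundary inclusions $\partial\Box^{n}\subset\Box^{n}$, their finite presentability secured by the tensor hypothesis and closure of finitely presentable objects under finite colimits. Two remarks, though. First, the mapping cylinder detour is unnecessary, and it is precisely what manufactures the transfer difficulty you flag at the end as the ``main obstacle.'' Since $\mathcal{L}$ is assumed to consist of cofibrations, you may pushout-product $\ell\colon A\to B$ itself with $\partial\Box^{n}\subset\Box^{n}$: these are cofibrations by the cubical pushout-product axiom and $\mathcal{L}$-local acyclic cofibrations because $\M_{\mathcal{L}}$ is again cubical and $\ell$ is an $\mathcal{L}$-local acyclic cofibration. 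Adjointness then identifies RLP against these generators directly with the corner map ${\hom}_{\M}(B,X)\rightarrow{\hom}_{\M}(B,Y)\times_{{\hom}_{\M}(A,Y)}{\hom}_{\M}(A,X)$ being an acyclic Kan fibration, which --- combined with the fact that an $\mathcal{L}$-local fibration between $\mathcal{L}$-local fibrant objects is an ordinary fibration --- gives the detection property with no back-and-forth between $\cyl(\ell)$ and $B$ and no need to chase weak equivalences of pullbacks. Second, the paper's displayed set (\ref{extramaps}) is built from the horn inclusions $\sqcap^{n}_{(\alpha,i)}$ rather than the boundaries; by the same adjointness, any ordinary fibration between fibrant objects already has RLP against those pushout products by {\bf SM}7, so they add nothing to $J'_{\M}$ and cannot detect $\mathcal{L}$-local fibrancy. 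Your choice of $\partial\Box^{n}$ is the correct one and agrees with Hovey's Proposition 4.2.
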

\begin{proof}
If $\X\rightarrow\Y$ is a map in $\mathcal{L}$ the set of maps 
\begin{equation}
\label{extramaps}
\xymatrix{
(\X\otimes\Box^{n})\coprod_{\X\otimes\sqcap^{n}_{(\alpha,i)}}
(\Y\otimes\sqcap^{n}_{(\alpha,i)})
\ar[r] & \Y\otimes \Box^{n} }
\end{equation}
detect $\mathcal{L}$-local fibrant objects. 
An $\mathcal{L}$-local fibration between $\mathcal{L}$-local fibrant objects is an ordinary fibration.  
Now let $J'$ consist of the maps in (\ref{extramaps}) together with the old set of generating trivial cofibrations.
\end{proof}

If $\M$ is a symmetric monoidal category, 
then in order for the homotopy category $\Ho(\M)$ to acquire an induced closed symmetric 
monoidal structure the monoidal structure is not required to preserve equivalences on the nose.
By \cite[Theorem 4.3.2]{Hovey:Modelcategories} it suffices that the unit is cofibrant and the 
monoidal product $\otimes$ satisfies the pushout product axiom, 
i.e.~for cofibrations $\X\rightarrow\Y$ and $\Z\rightarrow\W$ the pushout product map
\begin{equation*}
\xymatrix{
(\X\otimes\W)\coprod_{\X\otimes\Z} (\Y\otimes\Z)\ar[r] & \Y\otimes\W}
\end{equation*}
is a cofibration, 
and if either of the two maps is an acyclic cofibration, 
then so is their pushout product map.
If this property holds, 
then $\M$ is called a monoidal model category \cite[\S4.2]{Hovey:Modelcategories}.
\newpage

\section{Unstable $\CC^{\ast}$-homotopy theory}
\label{section:Cstarhomotopytheory}
In this section we shall introduce four types of unstable model structures on cubical $\CC^{\ast}$-spaces.
The two pointwise model structures are lifted from the model structure on cubical sets in canonical ways,
while the other three pairs of model structures are determined by short exact sequences of 
$\CC^{\ast}$-algebras, 
matrix invariance and homotopy invariance. 
Throughout we fix a small skeleton for $\CC^{\ast}-\Alg$.

\subsection{Pointwise model structures}
\begin{definition}
A map $\X\rightarrow\Y$ of cubical $\CC^{\ast}$-spaces is a projective fibration if for every 
$\CC^{\ast}$-algebra $A$ there is a Kan fibration of cubical sets $\X(A)\rightarrow\Y(A)$.
Pointwise weak equivalences are defined similarly.
Projective cofibrations of cubical $\CC^{\ast}$-spaces are maps having the 
left lifting property with respect to all pointwise acyclic projective fibrations.
\end{definition}

We refer to the following as the pointwise projective model structure.
\begin{theorem}
\label{theorem:projectiveobjectwise}
The classes of projective cofibrations, projective fibrations and pointwise weak equivalences 
determine a combinatorial model structure on $\Box\CC^{\ast}-\Spc$.
\end{theorem}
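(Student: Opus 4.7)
The plan is to realize the pointwise projective model structure as a cofibrantly generated model structure on the diagram category $\Box\CC^{\ast}-\Spc = [\CC^{\ast}-\Alg,\Box\Set]$, transferred from the cubical-set model structure of Theorem \ref{theorem:cubicalsetmodelstructure} via the evaluation--corepresentation adjunction. I would then invoke the standard recognition theorem for cofibrantly generated model categories (Hovey \cite[Thm.~2.1.19]{Hovey:Modelcategories}) and conclude combinatoriality from Lemma \ref{lemma:finitelypresentable}.

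Fix a small skeleton of $\CC^{\ast}-\Alg$. For each such $A$ and each cubical set $K$ define the cubical $\CC^{\ast}$-space $A\otimes K$ via the Yoneda-type adjunction
\[
\Box\CC^{\ast}-\Spc(A\otimes K,\X)=\Box\Set\bigl(K,\X(A)\bigr),
\]
and take as proposed generating sets
\[
I=\{A\otimes(\partial\Box^{n}\hookrightarrow\Box^{n})\},\qquad
J=\{A\otimes(\sqcap^{n}_{(\alpha,i)}\hookrightarrow\Box^{n})\}.
\]
Bicompleteness of $\Box\CC^{\ast}-\Spc$ is immediate since (co)limits in a functor category to a bicomplete category are computed pointwise. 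The two-out-of-three axiom and retract stability of pointwise weak equivalences and projective fibrations are inherited pointwise from $\Box\Set$, while retract stability of projective cofibrations follows formally from their defining lifting property.

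By the adjunction displayed above together with Theorem \ref{theorem:cubicalsetmodelstructure}, a map $\X\to\Y$ lies in $J$-inj precisely when each $\X(A)\to\Y(A)$ is a Kan fibration, and in $I$-inj precisely when each $\X(A)\to\Y(A)$ is an acyclic Kan fibration. Hence $I$-inj equals the class of pointwise acyclic projective fibrations, which is one of the equalities required for recognition. The domains and codomains of $I$ and $J$ are of the form $A\otimes K$ with $K$ a finite cubical set; by Example \ref{example:finitelypresentable} together with pointwise computation of colimits these objects are finitely presentable, so the smallness hypothesis of the small object argument holds with room to spare.

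The crux is the acyclicity condition: that every map in $J$-cell is a pointwise acyclic projective cofibration. Evaluated at any $B$, a generator $A\otimes(\sqcap^{n}_{(\alpha,i)}\to\Box^{n})$ becomes the copower $\coprod_{\CC^{\ast}-\Alg(A,B)}(\sqcap^{n}_{(\alpha,i)}\to\Box^{n})$, which is a coproduct of acyclic cofibrations of cubical sets and thus an acyclic cofibration. Since pushouts, transfinite compositions, and coproducts of acyclic cofibrations in $\Box\Set$ are acyclic cofibrations, and these operations in $\Box\CC^{\ast}-\Spc$ are performed pointwise, it follows that $J$-cell lies inside $I$-cof $\cap$ (pointwise weak equivalences). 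The recognition theorem then produces the model structure, with $I$ and $J$ as generating sets. Combinatoriality follows from Lemma \ref{lemma:finitelypresentable}: the underlying category is locally presentable and the generating sets are small. The main obstacle is bookkeeping rather than conceptual, namely checking that the adjunction identifications assemble cleanly so that $I$-inj and $J$-inj match the pointwise classes in the definition; once this is in hand, the remainder of the argument is an instance of the standard projective-diagram machinery.
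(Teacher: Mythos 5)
Your proposal is correct and takes essentially the same route as the paper: both arguments set up the adjunction between the $A$-section functor $\Ev_A$ and $A\otimes -$, take $I=\{A\otimes(\partial\Box^{n}\subset\Box^{n})\}$ and $J=\{A\otimes(\sqcap^{n}_{(\alpha,i)}\subset\Box^{n})\}$ as generating sets, transfer lifting problems and the small object argument pointwise from $\Box\Set$, and deduce combinatoriality from local presentability (Lemma~\ref{lemma:finitelypresentable}). The only cosmetic difference is that you package the small object argument inside Hovey's recognition theorem while the paper sketches the factorization construction explicitly before referring to Hirschhorn for the general diagram-category statement.
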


The proof of Theorem \ref{theorem:projectiveobjectwise} is straightforward and will not be considered 
in much detail. 
Rather we give an outline and refer to \cite{Hirschhorn:Modelcategories} for a more general result 
concerning diagram categories. 
Evaluating cubical $\CC^{\ast}$-spaces at $A$ yields the $A$-sections functor
\begin{equation*}
\xymatrix{ \Ev_A\colon\Box\CC^{\ast}-\Spc\ar[r] & \Box\Set.}
\end{equation*}
Lemma \ref{lemma:finitelypresentable} shows $\Box\CC^{\ast}-\Spc$ is locally finitely presentable 
and the same holds for $\Box\Set$.
Now since limits and colimits are formed pointwise in these categories, 
it follows from \cite[Theorem 1.66]{AR:book} that $\Ev_A$ acquires a left adjoint functor 
which the Yoneda lemma shows is given by $A\otimes -$.
By using the model structure on $\Box\Set$ we find that the class of projective cofibrations is generated by the set
\begin{equation*}
I_{\Box\CC^{\ast}-\Spc}\equiv\{A\otimes(\partial\Box^{n}\subset\Box^{n})\}^{n\geq 0} 
\end{equation*}
indexed by representatives $A$ of the isomorphism classes in $\CC^{\ast}-\Alg$.
Likewise, 
the class of pointwise acyclic projective cofibrations is generated by 
\begin{equation*}
J_{\Box\CC^{\ast}-\Spc}\equiv\{A\otimes(\sqcap^{n}_{(\alpha,i)}\subset\Box^{n})\}^{n>0}_{0\leq i\leq n} .
\end{equation*}
It follows that every map between cubical $\CC^{\ast}$-spaces acquires a factorization through some 
sequential $I_{\Box\CC^{\ast}-\Spc}$-cell (respectively $J_{\Box\CC^{\ast}-\Spc}$-cell) composed with a 
pointwise acyclic projective fibration (respectively projective fibration).

For example, 
to prove the claim for $J_{\Box\CC^{\ast}-\Spc}$, 
assume $\X\rightarrow\Y$ is a projective fibration and consider commutative diagrams 
of cubical $\CC^{\ast}$-spaces of the form:
\begin{equation}
\label{firstliftingdiagram}
\minCDarrowwidth20pt
\begin{CD}
A\otimes\sqcap^{n}_{(\alpha,i)} @>>> \X \\
@VVV @VVV\\
A\otimes\Box^{n} @>>> \Y
\end{CD}
\end{equation}
By the Yoneda lemma (\ref{functioncomplexofrepresentable}) and (\ref{0-celladjunction}), 
such diagrams are in one-to-one correspondence with commutative diagrams of cubical sets of the form:
\begin{equation}
\label{secondliftingdiagram}
\minCDarrowwidth20pt
\begin{CD}
\sqcap^{n}_{(\alpha,i)} @>>> {\hom}_{\Box\CC^{\ast}-\Spc}(A,\X) @>\cong>> \X(A)\\
@VVV @VVV @VVV\\
\Box^{n} @>>> {\hom}_{\Box\CC^{\ast}-\Spc}(A,\Y) @>\cong>> \Y(A)
\end{CD}
\end{equation}
The assumption implies there exists a lifting $\Box^{n}\rightarrow\X(A)$ in (\ref{secondliftingdiagram}), 
which means there is a lifting $A\otimes\Box^{n}\rightarrow\X$ in (\ref{firstliftingdiagram}).  
Now suppose that $\X\rightarrow\Y$ is a map in $\Box\CC^{\ast}-\Spc$.
Define the pushout $\Y^{0}$ by combining all commutative diagrams of the following form, 
where $n\geq 0$, $\alpha=0,1$ and $A$ runs through the isomorphism classes of $\CC^{\ast}$-algebras:
\begin{equation}
\label{smallobjectdiagram}
\minCDarrowwidth20pt
\begin{CD}
A\otimes\sqcap^{n}_{(\alpha,i)} @>>> \X \\
@VVV @VVV\\
A\otimes\Box^{n} @>>> \Y
\end{CD}
\;\;\;\;
\rightsquigarrow
\;\;\;\;
\minCDarrowwidth20pt
\begin{CD}
\coprod A\otimes\sqcap^{n}_{(\alpha,i)} @>>> \X \\
@VVV @VVV\\
\coprod A\otimes\Box^{n} @>>> \Y^{0}
\end{CD}
\end{equation}
Then $\X\rightarrow\Y^{0}$ is a pointwise acyclic projective cofibration because the induced map 
of coproducts in (\ref{smallobjectdiagram}) is so and the evaluation functor $\Ev_{A}$ preserves 
pushouts and acyclic cofibrations.
By iterating this construction countably many times we obtain a diagram where the horizontal maps 
are pointwise acyclic projective cofibrations:
\begin{equation}
\label{countablediagram}
\xymatrix{
\X\ar[r] & \Y^{0}\ar[r] & \Y^{1}\ar[r] & \cdots\ar[r] & \Y^{n}\ar[r] & \cdots }
\end{equation}
Letting $\Y^{\infty}$ denote the colimit of (\ref{countablediagram}) yields a factorization of the 
original map
\begin{equation}
\label{factorization}
\xymatrix{
\X\ar[r] & \Y^{\infty}\ar[r] & \Y.}
\end{equation}
It is straightforward to show that $\X\rightarrow\Y^{\infty}$ is a pointwise acyclic projective 
cofibration and the map $\Y^{\infty}\rightarrow\Y$ acquires the right lifting property with respect 
to the set $J_{\Box\CC^{\ast}-\Spc}$.
This small object argument is due to Quillen \cite{Quillen:Homotopicalalgebra} and shows that every map of 
cubical $\CC^{\ast}$-spaces factors into a pointwise acyclic cofibration composed with a projective fibration
(\ref{factorization}).
The remaining arguments constituting a proof of Theorem \ref{theorem:projectiveobjectwise} are of a similar 
flavor,  
and one shows easily that the pointwise projective model is weakly finitely generated.
We deduce the following result.
\begin{corollary}
Pointwise weak equivalences, projective fibrant objects, 
pointwise acyclic projective fibrations and projective fibrations are closed under filtered colimits.
\end{corollary}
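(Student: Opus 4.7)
The plan is to derive all four closure statements from the fact, asserted in the sentence immediately preceding the corollary, that the pointwise projective model structure is weakly finitely generated, and then to invoke Lemma \ref{lemma:afgmodelcategories}. Three of the four assertions (pointwise weak equivalences, projective fibrant objects, and pointwise acyclic projective fibrations) are covered directly by that lemma; only the statement about all projective fibrations (not merely those with fibrant codomain) needs a small supplementary argument.

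First I would verify that the model structure of Theorem \ref{theorem:projectiveobjectwise} meets Definition \ref{definition:weaklyfinitelygenerated}. The domains and codomains in
\begin{equation*}
I_{\Box\CC^{\ast}-\Spc}=\{A\otimes(\partial\Box^{n}\subset\Box^{n})\}
\quad\text{and}\quad
J_{\Box\CC^{\ast}-\Spc}=\{A\otimes(\sqcap^{n}_{(\alpha,i)}\subset\Box^{n})\}
\end{equation*}
are finitely presentable: by Example \ref{example:finitelypresentable}, every $\CC^{\ast}$-algebra $A$ is $\kappa$-small for all $\kappa$, and the cubical sets $\partial\Box^{n}$, $\sqcap^{n}_{(\alpha,i)}$, $\Box^{n}$ are finitely presentable, so their tensor products $A\otimes K$ are finitely presentable cubical $\CC^{\ast}$-spaces (they are finite colimits of objects of the form $A\otimes\Box^{k}$, which are finitely presentable). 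Hence one can take $J'=J_{\Box\CC^{\ast}-\Spc}$, which is a set because we have fixed a small skeleton of $\CC^{\ast}-\Alg$; and a map is a projective fibration, with or without fibrant codomain hypothesis, exactly when it lies in $J'$-inj.

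With weak finite generation in hand, Lemma \ref{lemma:afgmodelcategories} immediately yields that pointwise weak equivalences, projective fibrant objects and pointwise acyclic projective fibrations are closed under filtered colimits. For the remaining claim concerning projective fibrations, I would argue directly: given a filtered diagram $(f_\iota\colon\X_\iota\to\Y_\iota)$ of projective fibrations with colimit $f\colon\X\to\Y$, a lifting problem
\begin{equation*}
\xymatrix{
A\otimes\sqcap^{n}_{(\alpha,i)}\ar[r]\ar[d] & \X\ar[d]^{f}\\
A\otimes\Box^{n}\ar[r] & \Y}
\end{equation*}
has its data given by maps out of finitely presentable objects into the colimits $\X$ and $\Y$. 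By finite presentability and filteredness, both the top and bottom maps factor through a single index $\iota$, and the commutativity of the resulting square holds at some further index $\iota'\geq\iota$. Since $f_{\iota'}$ is a projective fibration, the square lifts at level $\iota'$; composing this lift with the canonical map $\X_{\iota'}\to\X$ provides a lift at the colimit.

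The only potential obstacle is the last step: one must make sure the square actually commutes at a finite stage, not merely at the colimit. This is where finite presentability of the codomain $A\otimes\Box^{n}$ is essential, because two maps $A\otimes\Box^{n}\rightrightarrows\Y$ that agree after post-composition to $\Y$ need not agree at any stage unless $A\otimes\Box^{n}$ sees only finitely many indices. Once this elementary bookkeeping is carried out, the corollary follows.
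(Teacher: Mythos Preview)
Your proposal is correct and matches the paper's intended argument: the paper simply records that the pointwise projective model is weakly finitely generated and then states the corollary with no further proof, implicitly invoking Lemma~\ref{lemma:afgmodelcategories}. You have correctly spotted that Lemma~\ref{lemma:afgmodelcategories} as stated only covers fibrations with fibrant codomain, and your supplementary lifting argument for arbitrary projective fibrations---using that here one may take $J'=J_{\Box\CC^{\ast}-\Spc}$ with finitely presentable domains \emph{and} codomains, so $J'$-inj detects \emph{all} fibrations rather than only those with fibrant target---fills a gap the paper glosses over.
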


Recall that $\X\rightarrow\Y$ is a projective cofibration if and only if it is a relative 
$I_{\Box\CC^{\ast}-\Spc}$-cell complex or a retract thereof.
Hence projective cofibrations are pointwise cofibrations, 
a.k.a.~monomorphisms or injective cofibrations of cubical $\CC^{\ast}$-spaces.
\begin{lemma}
\label{projectiveimpliesinjective}
Every projective cofibration is a monomorphism.
\end{lemma}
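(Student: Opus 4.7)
The plan is to argue that each generating projective cofibration is a (pointwise) monomorphism, and then to propagate this through the cell-complex construction that describes arbitrary projective cofibrations. Recall from the proof of Theorem \ref{theorem:projectiveobjectwise} that the class of projective cofibrations is generated by
\begin{equation*}
I_{\Box\CC^{\ast}-\Spc}=\{A\otimes(\partial\Box^{n}\subset\Box^{n})\}_{n\geq 0},
\end{equation*}
indexed by isomorphism classes of $\CC^{\ast}$-algebras $A$. By the small object argument, every projective cofibration is a retract of a transfinite composition of pushouts of coproducts of maps in $I_{\Box\CC^{\ast}-\Spc}$. Monomorphisms in $\Box\CC^{\ast}-\Spc$ mean pointwise monomorphisms of cubical sets; so it suffices to verify (i) that each generator is a pointwise monomorphism, and (ii) that pointwise monomorphisms are closed under coproducts, cobase change, transfinite composition and retracts.

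For (i), fix $B\in\CC^{\ast}-\Alg$. Using the definition (\ref{cubicalCstarspacetensor}) of the cubical set tensor together with the fact that the representable $\CC^{\ast}$-space $A$ has value $A(B)=\CC^{\ast}-\Alg(A,B)$ viewed as a discrete cubical set, the $B$-sections of $A\otimes(\partial\Box^{n}\subset\Box^{n})$ are computed as
\begin{equation*}
\coprod_{\CC^{\ast}-\Alg(A,B)}(\partial\Box^{n}\subset\Box^{n}),
\end{equation*}
which is a coproduct of copies of the monomorphism $\partial\Box^{n}\subset\Box^{n}$ of cubical sets and hence a monomorphism.

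For (ii), note that limits and colimits of cubical $\CC^{\ast}$-spaces are formed pointwise and that limits and colimits of cubical sets are formed pointwise in $\Set$. In $\Set$ monomorphisms are preserved by arbitrary coproducts, by pushouts along any map, by (transfinite) filtered colimits, and trivially by retracts; these stability properties therefore hold in $\Box\Set$ and, evaluating pointwise, in $\Box\CC^{\ast}-\Spc$.

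Combining (i) and (ii), any relative $I_{\Box\CC^{\ast}-\Spc}$-cell complex is a pointwise monomorphism, and so is any retract thereof. This completes the proposed argument. I expect no substantial obstacle: the only non-formal input is the elementary identification of $A\otimes K$ at $B$ as a coproduct of copies of $K$ indexed by $\CC^{\ast}-\Alg(A,B)$, after which everything reduces to standard stability properties of monomorphisms in $\Set$.
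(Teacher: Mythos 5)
Your proof is correct and follows the same line as the paper, which in fact does not give a formal proof but only the remark immediately preceding the lemma (projective cofibrations are retracts of relative $I_{\Box\CC^{\ast}-\Spc}$-cell complexes, and the generators are pointwise cofibrations). Your version merely fills in the details: the explicit identification $A\otimes(\partial\Box^{n}\subset\Box^{n})(B)\cong\coprod_{\CC^{\ast}-\Alg(A,B)}(\partial\Box^{n}\subset\Box^{n})$ is exactly right since $A(B)$ is a discrete cubical set and $\otimes_{\Box\Set}$ preserves colimits in each variable, and the stability properties of monomorphisms that you invoke are standard.
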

\begin{lemma}
\label{lemma:pprojectiveproperness}
The pointwise projective model is a proper model structure.
\end{lemma}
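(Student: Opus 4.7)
The plan is to reduce properness for the pointwise projective model structure to properness of the cubical set model structure of Theorem \ref{theorem:cubicalsetmodelstructure}. The key observation is that for every $\CC^{\ast}$-algebra $A$ the evaluation functor $\Ev_{A}\colon\Box\CC^{\ast}-\Spc\rightarrow\Box\Set$ has both a left and a right adjoint (the left adjoint being $A\otimes -$, and a right adjoint existing by local presentability and the adjoint functor theorem), so $\Ev_{A}$ preserves all small limits and colimits, in particular pushouts and pullbacks.

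For left properness, consider a pushout square
\begin{equation*}
\xymatrix{
\Z \ar[r]^{\sim}\ar[d] & \Y \ar[d] \\
\X \ar[r] & \X\coprod_{\Z}\Y }
\end{equation*}
in $\Box\CC^{\ast}-\Spc$ with $\Z\rightarrow\X$ a projective cofibration and $\Z\rightarrow\Y$ a pointwise weak equivalence. Applying $\Ev_{A}$ yields a pushout of cubical sets whose top horizontal map is a weak equivalence and whose left vertical map is a monomorphism: indeed, by Lemma \ref{projectiveimpliesinjective} every projective cofibration is a monomorphism, so each $\Z(A)\rightarrow\X(A)$ is a monomorphism, which is a cofibration in $\Box\Set$. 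Since the cubical set model structure is left proper by Theorem \ref{theorem:cubicalsetmodelstructure}, the induced map $\X(A)\rightarrow(\X\coprod_{\Z}\Y)(A)$ is a weak equivalence. This holds for every $A$, so the pushout map is a pointwise weak equivalence.

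For right properness, consider a pullback square
\begin{equation*}
\xymatrix{
\Y\times_{\W}\X \ar[r]\ar[d] & \X \ar[d]^{\sim} \\
\Y \ar@{->>}[r] & \W }
\end{equation*}
with $\Y\rightarrow\W$ a projective fibration and $\X\rightarrow\W$ a pointwise weak equivalence. Again $\Ev_{A}$ carries this to a pullback square of cubical sets in which $\Y(A)\rightarrow\W(A)$ is a Kan fibration and $\X(A)\rightarrow\W(A)$ is a weak equivalence. Right properness of $\Box\Set$ in Theorem \ref{theorem:cubicalsetmodelstructure} then implies $(\Y\times_{\W}\X)(A)\rightarrow\Y(A)$ is a weak equivalence, which is exactly the statement that the pulled back map is a pointwise weak equivalence.

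No genuine obstacle is expected, since the argument is entirely a transport of properness along the evaluation functors; the only point requiring care is ensuring projective cofibrations are pointwise monomorphisms, which is precisely Lemma \ref{projectiveimpliesinjective}.
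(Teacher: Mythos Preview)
Your proposal is correct and is precisely the routine check the paper has in mind: it too invokes properness of the cubical set model structure together with Lemma~\ref{projectiveimpliesinjective}, using that (co)limits in $\Box\CC^{\ast}-\Spc$ are formed pointwise. The only difference is that you have spelled out the details of the pointwise argument, which the paper leaves implicit.
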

\begin{proof}
This is a routine check using properness of the model structure on cubical sets and Lemma \ref{projectiveimpliesinjective}.
\end{proof}

If $A$, $B$ are $\CC^{\ast}$-algebras and $K$, $L$ are cubical sets there is a natural isomorphism 
\begin{equation*}
(A\otimes K)\otimes (B\otimes L)=
(A\otimes_{\CC^{\ast}-\Alg}B)\otimes (K\otimes_{\Box\Set}L).
\end{equation*}
This follows since for every cubical $\CC^{\ast}$-space $\X$ there are natural isomorphisms 
\begin{align*}
\Box\CC^{\ast}-\Spc\bigl((A\otimes K)\otimes (B\otimes L),\X\bigr) & =
\Box\Set\bigl(K\otimes_{\Box\Set}L,\X(A\otimes_{\CC^{\ast}-\Alg}B)\bigr) \\ & =
\Box\CC^{\ast}-\Spc\bigl((A\otimes_{\CC^{\ast}-\Alg}B)\otimes (K\otimes_{\Box\Set}L),\X\bigr).
\end{align*}
\begin{lemma}
\label{lemma:projectivecofibrant}
If $A$ is a $\CC^{\ast}$-algebra and $K$ is a cubical sets, 
then $A\otimes K$ is a projective cofibrant cubical $\CC^{\ast}$-space.
In particular, 
every $\CC^{\ast}$-algebra is projective cofibrant.
\end{lemma}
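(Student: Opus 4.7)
The plan is to recognize $A\otimes(-)\colon \Box\Set\to\Box\CC^{\ast}-\Spc$ as a colimit-preserving functor that sends the generating cofibrations of cubical sets to the generating projective cofibrations of cubical $\CC^{\ast}$-spaces. The conclusion will then follow from the standard characterization of cofibrations in a cofibrantly generated model category.

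First, I would establish the adjunction $A\otimes(-)\dashv\Ev_{A}$. Setting $\X=A$ in (\ref{0-celladjunction}) yields
\begin{equation*}
\Box\CC^{\ast}-\Spc(A\otimes K,\Y)=\Box\Set\bigl(K,{\hom}_{\Box\CC^{\ast}-\Spc}(A,\Y)\bigr),
\end{equation*}
and the Yoneda identification (\ref{functioncomplexofrepresentable}) rewrites the second argument as $\Y(A)=\Ev_{A}\Y$. Hence $A\otimes(-)$ is left adjoint to evaluation at $A$ and, in particular, preserves all colimits. Second, by the very definition of the generating set $I_{\Box\CC^{\ast}-\Spc}$ appearing in the proof of Theorem \ref{theorem:projectiveobjectwise}, the map $A\otimes(\partial\Box^{n}\subset\Box^{n})$ is itself a generating projective cofibration for every $n\geq 0$.

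Third, by Theorem \ref{theorem:cubicalsetmodelstructure} together with the standard description of cofibrations in a cofibrantly generated model structure, the map $\emptyset\to K$ in $\Box\Set$ is a retract of a transfinite composition of pushouts of coproducts of the maps $\partial\Box^{n}\subset\Box^{n}$. Applying the colimit-preserving functor $A\otimes(-)$ converts this presentation into a retract of a relative $I_{\Box\CC^{\ast}-\Spc}$-cell complex whose underlying map is $\emptyset\to A\otimes K$; this is exactly the definition of a projective cofibration. For the final clause, either specialize to $K=\Box^{0}$ and use the isomorphism $A\otimes\Box^{0}\cong A$ (since $\Box^{0}$ is the monoidal unit for $\otimes_{\Box\Set}$), or equivalently observe that $\emptyset\to A$ coincides with $A\otimes(\partial\Box^{0}\subset\Box^{0})\in I_{\Box\CC^{\ast}-\Spc}$ because $\partial\Box^{0}=\emptyset$.

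No substantial obstacle is expected; this is a clean Quillen-functor calculation. The only point requiring care is the verification of the adjunction $A\otimes(-)\dashv\Ev_{A}$, which is immediate once the Yoneda formula (\ref{functioncomplexofrepresentable}) is inserted into the exponential law (\ref{0-celladjunction}).
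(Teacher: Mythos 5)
Your argument is correct and coincides with the paper's own proof, which simply cites that $A\otimes-$ is a left Quillen functor (with right adjoint $\Ev_{A}$) together with the fact that every cubical set is cofibrant, and then reads off the special case $K=\Box^{0}$ from the generating set $I_{\Box\CC^{\ast}-\Spc}$. Your version merely unwinds the phrase ``left Quillen functor'' by exhibiting the adjunction and the transport of relative $I$-cell complexes explicitly, which is harmless and arguably clearer.
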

\begin{proof}
By definition, 
$A\otimes-\colon\Box\Set\rightarrow\Box\CC^{\ast}-\Spc$ is a left Quillen functor for the pointwise 
projective model structure on $\Box\CC^{\ast}-\Spc$ and every cubical set is cofibrant.
We note the assertion for $\CC^{\ast}$-algebras follows by contemplating the set $I_{\Box\CC^{\ast}-\Spc}$ 
of generating projective cofibrations.
\end{proof}

Since every map between discrete cubical sets is a Kan fibration we get:
\begin{lemma}
\label{lemma:projectivefibrant}
Every $\CC^{\ast}$-algebra is projective fibrant.
\end{lemma}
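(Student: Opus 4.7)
The plan is to unpack the definitions and reduce the claim to the parenthetical remark preceding the lemma. A $\CC^{\ast}$-algebra $B$ regarded as a (discrete) cubical $\CC^{\ast}$-space is by definition the representable cubical $\CC^{\ast}$-space whose $A$-sections cubical set is $B(A)=\CC^{\ast}-\Alg(A,B)$ concentrated in cubical degree zero, with all higher cells degenerate. To say that $B$ is projective fibrant is, by the definition of the projective model structure, to say that $B\to 0$ has the right lifting property against all projective trivial cofibrations. By the small object argument it suffices to check this against the generating set
\begin{equation*}
J_{\Box\CC^{\ast}-\Spc}=\{A\otimes(\sqcap^{n}_{(\alpha,i)}\subset\Box^{n})\}.
\end{equation*}

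Next I would apply the Yoneda-style adjunction of (\ref{functioncomplexofrepresentable}) and (\ref{0-celladjunction}), exactly as in the proof of Theorem \ref{theorem:projectiveobjectwise}, to convert a lifting problem against $A\otimes(\sqcap^{n}_{(\alpha,i)}\subset\Box^{n})$ into a lifting problem of cubical sets of the form
\begin{equation*}
\xymatrix{
\sqcap^{n}_{(\alpha,i)} \ar[r]\ar[d] & B(A) \\
\Box^{n} \ar[r] & \ast }
\end{equation*}
So the claim reduces to showing that $B(A)\to\ast$ is a Kan fibration of cubical sets for every $\CC^{\ast}$-algebra $A$.

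Finally, I would invoke the observation preceding the lemma: $B(A)$ is a discrete cubical set, and any map into a discrete cubical set from $\sqcap^{n}_{(\alpha,i)}$ (which is connected for all $n\geq 1$) is constant, hence extends uniquely along the degeneracy $\Box^{n}\to\Box^{0}$ to $\Box^{n}\to B(A)$. This provides the required lift, finishing the proof. There is no serious obstacle here; the content of the lemma is really the combinatorial fact that horn inclusions have unique constant extensions into discrete targets, which has already been asserted.
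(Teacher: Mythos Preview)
Your proof is correct and lands on the same key fact the paper uses: $B(A)$ is a discrete cubical set, hence Kan fibrant. The paper's argument is even shorter, since by definition a projective fibration is a \emph{pointwise} Kan fibration, so there is no need to pass through the generating set $J_{\Box\CC^{\ast}-\Spc}$ and the Yoneda adjunction; one checks directly that $B(A)\to\ast$ is a Kan fibration for every $A$.
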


In addition to the Quillen adjunction gotten by evaluating cubical $\CC^{\ast}$-spaces at a fixed 
$\CC^{\ast}$-algebra, 
the constant diagram functor from $\Box\Set$ to $\Box\CC^{\ast}-\Spc$ has as left adjoint the colimit 
functor, which can be derived.
Each evaluation functor passes directly to a functor between the corresponding homotopy categories.
\vspace{0.1in}

Next we observe that the projective model is compatible with the enrichment of cubical 
$\CC^{\ast}$-spaces in cubical sets introduced in \S\ref{subsection:CC-spaces}.

\begin{lemma}
\label{lemma:pprojectivecubical}
The pointwise projective model is a cubical model structure.
\end{lemma}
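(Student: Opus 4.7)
The plan is to verify the cubical analog of Quillen's \textbf{SM}7 axiom: for any projective cofibration $f\colon\X\to\Y$ of cubical $\CC^{\ast}$-spaces and any cofibration $g\colon K\to L$ of cubical sets, the pushout product
\begin{equation*}
f\Box g\colon (\Y\otimes K)\coprod_{\X\otimes K}(\X\otimes L)\longrightarrow \Y\otimes L
\end{equation*}
is a projective cofibration, which is pointwise acyclic whenever $f$ or $g$ is. Here $\X\otimes K$ is the cubical-set tensor from (\ref{cubicalCstarspacetensor}).

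First I would invoke the standard reduction for cofibrantly generated model categories: since the pushout product of cofibrations, and of a cofibration with an acyclic cofibration, is a colimit-preserving bifunctor in each variable, it suffices to verify the axiom on the generating sets $I_{\Box\CC^{\ast}-\Spc}$ and $J_{\Box\CC^{\ast}-\Spc}$ on the $\CC^{\ast}$-side and on the generating (acyclic) cofibrations $\partial\Box^n\subset\Box^n$ and $\sqcap^{n}_{(\alpha,i)}\subset\Box^n$ of cubical sets given by Theorem \ref{theorem:cubicalsetmodelstructure}. Thus one reduces to analyzing, for a $\CC^{\ast}$-algebra $A$ and cubical set inclusions $K'\hookrightarrow K$ and $L'\hookrightarrow L$, the pushout product of $A\otimes(K'\hookrightarrow K)$ with $(L'\hookrightarrow L)$.

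The second step is the key compatibility identity. Using the natural isomorphism $(A\otimes M)\otimes N=A\otimes(M\otimes_{\Box\Set}N)$ established just before Lemma \ref{lemma:projectivecofibrant} (more precisely its $K=\Box^0$ specialization combined with (\ref{cubicalCstarspacetensor})), the pushout product $\bigl(A\otimes(K'\hookrightarrow K)\bigr)\Box(L'\hookrightarrow L)$ is naturally isomorphic to
\begin{equation*}
A\otimes\Bigl[(K\otimes_{\Box\Set}L')\coprod_{K'\otimes_{\Box\Set}L'}(K'\otimes_{\Box\Set}L)\longrightarrow K\otimes_{\Box\Set}L\Bigr].
\end{equation*}
Since the cubical set model structure is monoidal with respect to $\otimes_{\Box\Set}$ (Theorem \ref{theorem:cubicalsetmodelstructure} and Jardine's monoidal structure), the bracketed map is a cofibration of cubical sets, and it is acyclic whenever one of the input inclusions is. The functor $A\otimes(-)\colon\Box\Set\to\Box\CC^{\ast}-\Spc$ is left Quillen for the pointwise projective structure (Lemma \ref{lemma:projectivecofibrant}), so it carries (acyclic) cofibrations of cubical sets to (pointwise acyclic) projective cofibrations. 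This gives the required conclusion on generators and hence in general.

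The only point that requires care — and what I would call the main obstacle — is the unit/acyclic case: when the cubical set map $L'\hookrightarrow L$ is the identity but $K'\hookrightarrow K$ is an arbitrary projective acyclic cofibration not of the form $A\otimes(-)$. This is handled by the usual observation that if the pushout product axiom holds on generators then it holds for all maps built by transfinite composition, pushout, and retracts out of them, together with the fact that $(-)\otimes L'\to(-)\otimes L$ preserves projective (acyclic) cofibrations pointwise because $\Ev_A$ commutes with the tensor by a cubical set (both sides agree with $\X(A)\otimes_{\Box\Set}(-)$). Given this, the monoidality of the cubical set structure propagates pointwise into $\Box\CC^{\ast}-\Spc$, completing the verification.
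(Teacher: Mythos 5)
Your proof is correct, but it takes a genuinely different route from the paper's argument for this lemma.

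The paper's proof evaluates everything pointwise: since colimits (hence pushout products) in $\Box\CC^{\ast}-\Spc$ are formed in $A$-sections, since projective cofibrations are monomorphisms by Lemma \ref{projectiveimpliesinjective}, and since the cubical set model structure satisfies {\bf SM}7, the evaluation of the pushout product at each $A$ is a cofibration of cubical sets, acyclic when one input is. Read naively this gives only a pointwise (injective) cofibration rather than a projective one, but by adjunction it directly establishes item two of Corollary \ref{corollary:pprojectivecubicalstructurecorollary}, that $\Z^{L}\to\Z^{K}\times_{\W^{K}}\W^{L}$ is a (pointwise acyclic) projective fibration, which is a pointwise condition; the pushout-product form of {\bf SM}7 then follows from that equivalence. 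Your argument instead works entirely on the cofibration side: reduce to the generating sets (pushout product in each variable preserves coproducts, pushouts, transfinite compositions, retracts), use the compatibility $(A\otimes M)\otimes N\cong A\otimes(M\otimes_{\Box\Set}N)$ to rewrite a generator-level pushout product as $A\otimes(\text{pushout product in }\Box\Set)$, and finish by the left Quillen property of $A\otimes(-)$ recorded in Lemma \ref{lemma:projectivecofibrant}. This is in fact exactly the strategy the paper deploys a bit later for the monoidal pushout-product axiom in Lemma \ref{lemma:pprojectivemonoidal}, and it has the virtue of exhibiting the pushout product explicitly as a projective, not merely injective, cofibration without detouring through the adjoint fibration form. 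Your final paragraph about the "unit/acyclic case" is a little muddled — there is no special case once one has reduced both variables to generators and invoked the standard saturation of (acyclic) cofibrations under cell operations — but the underlying argument it gestures at is the right one and the rest of the proof is sound.
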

\begin{proof}
If $i\colon\X\cof\Y$ is a projective cofibration of cubical $\CC^{\ast}$-spaces and $p\colon K\cof L$ 
is a cofibration of cubical sets,  
we claim the naturally induced pushout map
\begin{equation*}
\xymatrix{
\X\otimes L\coprod_{\X\otimes K}\Y\otimes K\ar[r] & \Y\otimes L}
\end{equation*}
is a projective cofibration, 
and a pointwise acyclic projective cofibration if in addition either $i$ is a pointwise weak equivalence 
or $p$ is a weak equivalence. 
In effect, 
the model structure on $\Box\Set$ is cubical and, as was noted above,  
evaluating projective cofibrations produce cofibrations or monomorphisms of cubical sets.
Since colimits of cubical $\CC^{\ast}$-spaces, 
in particular all pushouts, 
are formed pointwise the assertions follow.
\end{proof}

One shows easily using the cotensor structure that Lemma \ref{lemma:pprojectivecubical} is equivalent to
the following result: 
\begin{corollary}
\label{corollary:pprojectivecubicalstructurecorollary}
The following statements hold and are equivalent.
\begin{itemize}
\item 
If $j\colon\U\cof\V$  is a projective cofibration and $k\colon\Z\fib\W$ is a projective fibration, 
then the pullback map
\begin{equation*}
\xymatrix{
{\hom}_{\Box\CC^{\ast}-\Spc}(\V,\Z)\ar[r] &
{\hom}_{\Box\CC^{\ast}-\Spc}(\V,\W)
\times_{{\hom}_{\Box\CC^{\ast}-\Spc}(\U,\W)}{\hom}_{\Box\CC^{\ast}-\Spc}(\U,\Z)}
\end{equation*}
is a Kan fibration of cubical sets which is a weak equivalence if in addition either $j$ or $k$ is 
pointwise acyclic.
\item
If $p\colon K\cof L$ is a cofibration of cubical sets and $k\colon\Z\fib\W$ is a projective fibration, 
then 
\begin{equation*}
\xymatrix{
\Z^L\ar[r] &
\Z^K\times_{\W^{K}}\W^L}
\end{equation*}
is a projective fibration which is pointwise acyclic if either $k$ is pointwise acyclic or $p$ is acyclic.
\end{itemize} 
\end{corollary}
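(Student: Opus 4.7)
The plan is to establish the corollary by showing that each of its two formulations is equivalent, via the enrichment adjunctions of \S\ref{subsection:CC-spaces}, to the pushout-product statement of Lemma \ref{lemma:pprojectivecubical}. Once that equivalence is in hand, the corollary is immediate.

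For the first item, I would start with the testing set for Kan fibrations of cubical sets: a map $M\rightarrow N$ is a Kan fibration (respectively, acyclic Kan fibration) if and only if it has the right lifting property with respect to $\sqcap^{n}_{(\alpha,i)}\cof\Box^{n}$ (respectively, $\partial\Box^n\cof\Box^n$). Applying this criterion to the pullback map
\begin{equation*}
{\hom}_{\Box\CC^{\ast}-\Spc}(\V,\Z)\rightarrow
{\hom}_{\Box\CC^{\ast}-\Spc}(\V,\W)
\times_{{\hom}_{\Box\CC^{\ast}-\Spc}(\U,\W)}{\hom}_{\Box\CC^{\ast}-\Spc}(\U,\Z),
\end{equation*}
and using the adjunction isomorphisms (\ref{0-celladjunction}) together with the universal property of the pushout, the required lifting problem translates into the problem of lifting the pushout-product map
\begin{equation*}
\U\otimes\Box^{n}\coprod_{\U\otimes K}\V\otimes K\rightarrow \V\otimes\Box^{n}
\end{equation*}
(where $K=\sqcap^{n}_{(\alpha,i)}$ or $\partial\Box^{n}$) against $k\colon\Z\fib\W$. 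By Lemma \ref{lemma:pprojectivecubical}, the map on the left is a projective cofibration that is pointwise acyclic whenever $j$ or $K\cof\Box^{n}$ is, so the desired lifting exists. For the weak-equivalence half one additionally invokes the fact that, in a cofibrantly generated model category, testing against generating (acyclic) cofibrations suffices.

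For the second item I would run the analogous argument using the cotensor adjunction (\ref{cubicalCstarspacecotensor}) and (\ref{0-celladjunction}). Testing the pullback map $\Z^{L}\rightarrow \Z^{K}\times_{\W^{K}}\W^{L}$ for the right lifting property against the generating projective (acyclic) cofibrations $A\otimes(\partial\Box^{n}\subset\Box^{n})$, $A\otimes(\sqcap^{n}_{(\alpha,i)}\subset\Box^{n})$ transposes, via the exponential law and the identification $(A\otimes M)\otimes N=A\otimes(M\otimes_{\Box\Set}N)$, into a lifting problem of the pushout-product of $p\colon K\cof L$ with the given generating projective cofibration against $k\colon\Z\fib\W$. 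Once more Lemma \ref{lemma:pprojectivecubical} supplies the required projective (pointwise acyclic) cofibration, and the lifts exist by hypothesis on $k$.

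The equivalence of the two formulations with each other (and with the pushout-product statement) is then a transparent bookkeeping exercise with the same adjunctions, since in each direction the data of a commutative square corresponds bijectively, so the existence of a filler in one diagram is the existence of a filler in the transposed diagram. The only step I would flag as requiring any care is verifying that testing against the generating (acyclic) cofibrations really suffices to detect Kan fibrations and projective fibrations in the respective pullback formulations; this is routine because both model structures are cofibrantly generated, but one must be mindful that the class of maps having the right lifting property with respect to a generating set is closed under pullback, so the reduction to generators is legitimate.
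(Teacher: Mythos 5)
Your proposal is correct and follows the approach the paper intends: the paper states that the corollary is obtained from Lemma \ref{lemma:pprojectivecubical} ``using the cotensor structure,'' and your argument spells out exactly how the adjunction (\ref{0-celladjunction}) transposes lifting problems for the pullback maps into lifting problems for the pushout-product maps handled by that lemma. The remark you flag about the legitimacy of testing against generating (acyclic) cofibrations is the right thing to check and is handled correctly.
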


Recall that every $\CC^{\ast}$-algebra,
in particular the complex numbers, 
determines a projective cofibrant representable cubical $\CC^{\ast}$-space.
Thus the next result verifies that the projective model is a monoidal model structure.

\begin{lemma}
\label{lemma:pprojectivemonoidal}
The following statements hold and are equivalent.
\begin{itemize}
\item
If $i\colon\X\cof\Y$ and $j\colon\U\cof\V$ are projective cofibrations, then 
\begin{equation*}
\xymatrix{
\X\otimes\V\coprod_{\X\otimes\U}\Y\otimes\U\ar[r] & \Y\otimes\V}
\end{equation*}
is a projective cofibration which is pointwise acyclic if either $i$ or $j$ is.
\vspace{0.1cm}
\item
If $j\colon\U\cof\V$ is a projective cofibration and $k\colon\Z\fib\W$ is a projective fibration, 
then the pullback map
\begin{equation*}
\xymatrix{
\underline{\Hom}(\V,\Z)
\ar[r] & 
\underline{\Hom}(\V,\W)\times_{\underline{\Hom}(\U,\W)}\underline{\Hom}(\U,\Z)}
\end{equation*}
is a projective fibration which is pointwise acyclic if either $j$ or $k$ is.
\end{itemize} 
\end{lemma}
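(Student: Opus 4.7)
The two bullets are equivalent by the standard transposition argument: using the adjunction $-\otimes\V\dashv\underline{\Hom}(\V,-)$ and the closed monoidal identifications recalled in Section \ref{subsection:CC-spaces}, a projective fibration $k$ has the right lifting property against the pushout product of $i$ and $j$ if and only if the pullback power $\underline{\Hom}(j,k)$ has the right lifting property against $i$. So it suffices to prove the first bullet.

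By the standard reduction for cofibrantly generated monoidal model categories, the pushout product axiom need only be verified when $i$ and $j$ both range over the generating set $I_{\Box\CC^{\ast}-\Spc}$ of projective cofibrations and, for the acyclic half, when one of them is instead drawn from $J_{\Box\CC^{\ast}-\Spc}$. Thus we consider $i=A\otimes(K\cof L)$ and $j=B\otimes(K'\cof L')$ for $\CC^{\ast}$-algebras $A,B$ and (acyclic) generating cofibrations of cubical sets. Using the natural isomorphism
\begin{equation*}
(A\otimes K)\otimes(B\otimes L)\cong (A\otimes_{\CC^{\ast}-\Alg}B)\otimes(K\otimes_{\Box\Set}L)
\end{equation*}
established immediately before the lemma, the pushout product of $i$ and $j$ is isomorphic to
\begin{equation*}
(A\otimes_{\CC^{\ast}-\Alg}B)\otimes\Bigl[(L\otimes_{\Box\Set}K')\coprod_{K\otimes_{\Box\Set}K'}(K\otimes_{\Box\Set}L')\to L\otimes_{\Box\Set}L'\Bigr].
\end{equation*}

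The bracketed map is precisely the pushout product of $K\cof L$ and $K'\cof L'$ inside $\Box\Set$, which is a cofibration of cubical sets, acyclic when either of its inputs is, because Jardine's monoidal model structure on $\Box\Set$ satisfies the pushout product axiom \cite[\S3]{Jardine:Categoricalhomotopytheory}. Since $\Ev_{C}$ preserves fibrations and acyclic fibrations for every $\CC^{\ast}$-algebra $C$, its left adjoint $(A\otimes_{\CC^{\ast}-\Alg}B)\otimes-\colon\Box\Set\to\Box\CC^{\ast}-\Spc$ is a left Quillen functor for the pointwise projective model. Consequently, tensoring the bracketed map with $A\otimes_{\CC^{\ast}-\Alg}B$ produces a projective cofibration that is pointwise acyclic in exactly the expected cases.

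The only real obstacle is bookkeeping in the reduction to generators. One has to confirm that the pushout product operation is cocontinuous in each variable and that the classes of projective cofibrations and of pointwise acyclic projective cofibrations are closed under pushouts, transfinite compositions, and retracts; both facts are routine for a cofibrantly generated category equipped with a cocontinuous monoidal product, and they require no $\CC^{\ast}$-algebraic input beyond the identifications already recorded in Section \ref{subsection:CC-spaces}.
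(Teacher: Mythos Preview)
Your proof is correct and follows essentially the same route as the paper's own argument: both reduce the pushout product axiom to the generating (acyclic) cofibrations, invoke the natural isomorphism $(A\otimes K)\otimes(B\otimes L)\cong(A\otimes_{\CC^{\ast}-\Alg}B)\otimes(K\otimes_{\Box\Set}L)$ to rewrite the pushout product as $(A\otimes_{\CC^{\ast}-\Alg}B)$ tensored with a pushout product in $\Box\Set$, and then use that $(A\otimes_{\CC^{\ast}-\Alg}B)\otimes-$ is left Quillen together with the monoidalness of Jardine's model structure on cubical sets. The paper likewise derives the second bullet from the first by the adjunction correspondence between lifting diagrams.
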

\begin{remark}
Lemma \ref{lemma:pprojectivemonoidal} shows that the pointwise projective model structures on 
$\Box\CC^{\ast}-\Spc$ is monoidal \cite[Definition 3.1]{SS:monoidal}.
Now since the complex numbers $\C$ is a projective cofibrant cubical $\CC^{\ast}$-space, 
it follows that $\otimes$ induces a monoidal product on the associated homotopy category.
\end{remark}
\begin{remark}
We note that since limits are defined pointwise, 
the second part of Lemma \ref{lemma:pprojectivemonoidal} implies the first part of Corollary \ref{corollary:pprojectivecubicalstructurecorollary} 
by evaluating internal hom objects at the complex numbers.
\end{remark}
\begin{proof}
To prove the first statement we may assume 
$$
i=A\otimes(\partial\Box^m\subset\Box^m)\text{ and } j=B\otimes(\partial\Box^{n}\subset\Box^{n}).
$$
Projective cofibrations are retracts of relative $I_{\Box\CC^{\ast}-\Spc}$-cell complexes so if $j$ 
is a retract of a transfinite composition of cobase changes of maps in $I_{\Box\CC^{\ast}-\Spc}$ the 
pushout product map of $i$ and $j$ is a retract of a transfinite composition of cobase changes of 
pushout product maps between $i$ and members of $I_{\Box\CC^{\ast}-\Spc}$.
Thus, 
by analyzing the generating projective cofibrations, 
we may assume the pushout product map in question is of the form
\begin{equation*}
\xymatrix{
(A\otimes B)
\otimes(\Box^m\otimes\partial\Box^{n}\coprod_{\partial\Box^m
\otimes\partial\Box^{n}}\partial\Box^m\otimes\Box^{n}
\ar[r] & \Box^{m+n}).}
\end{equation*}
This is a projective cofibration since $(A\otimes B)\otimes -$ is a left Quillen functor for the pointwise 
projective model structure and the map of cubical sets in question is a cofibration.
\\ \indent
The remaining claim is proved analogously: 
We may assume 
$i=A\otimes(\partial\Box^m\subset\Box^m)$ and $j=B\otimes(\sqcap^{n}_{(\alpha,i)}\subset\Box^{n})$, 
so that the pushout product map is of the form
\begin{equation*}
\xymatrix{
(A\otimes B)\otimes(\Box^{m}\otimes\sqcap^{n}_{(\alpha,i)}
\coprod_{\partial\Box^m\otimes\sqcap^{n}_{(\alpha,i)}}
\partial\Box^{m}\otimes\Box^{n}\ar[r] & \Box^{m+n}).}
\end{equation*}
This map is a pointwise acyclic projective cofibration by the previous argument.
\vspace{0.1in}

In order to prove the second part,
note that by adjointness there is a one-to-one correspondence between the following types of commutative diagrams:
\begin{equation*}
\minCDarrowwidth20pt
\begin{CD}
\X\otimes\V\coprod_{\X\otimes\U}\Y\otimes\U @>>> \Z \\
@VVV @VVV\\
\Y\otimes\V @>>> \W
\end{CD}
\;\;\;\;
\leftrightarrow
\;\;\;\;
\minCDarrowwidth20pt
\begin{CD}
\X @>>> \underline{\Hom}(\V,\Z) \\
@VVV @VVV\\
\Y @>>> \underline{\Hom}(\V,\W)\times_{\underline{\Hom}(\U,\W)}\underline{\Hom}(\U,\Z)
\end{CD}
\end{equation*}
Hence the lifting properties relating projective cofibrations and projective fibrations combined with the first part finish the proof.
\end{proof}

\begin{lemma}
\label{lemma:pprojectiveboxing}
Suppose $\Z$ is a projective cofibrant cubical $\CC^{\ast}$-space.
Then 
\begin{equation*}
\xymatrix{
\Z\otimes-\colon\Box\CC^{\ast}-\Spc\ar[r] & \Box\CC^{\ast}-\Spc} 
\end{equation*}
preserves the classes of projective cofibrations, 
acyclic projective cofibrations and pointwise weak equivalences between projective cofibrant cubical 
$\CC^{\ast}$-spaces.
\end{lemma}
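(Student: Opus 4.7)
My plan is to deduce all three assertions from the pushout-product axiom established in Lemma \ref{lemma:pprojectivemonoidal}, applied against the cofibrancy witness of $\Z$. Since $\Z$ is projective cofibrant, the unique map $\emptyset\to\Z$ out of the initial cubical $\CC^\ast$-space is by definition a projective cofibration. Moreover, because $\otimes$ has a right adjoint in each variable (the closed symmetric monoidal structure of \S\ref{subsection:CC-spaces}), tensoring with any cubical $\CC^\ast$-space preserves colimits, and in particular the initial object: $\emptyset\otimes\W=\W\otimes\emptyset=\emptyset$ for every $\W$.

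For the preservation of projective cofibrations and of acyclic projective cofibrations, let $j\colon\U\to\V$ be a projective cofibration. Applying Lemma \ref{lemma:pprojectivemonoidal} to the pair $(\emptyset\to\Z,\,j)$, the pushout-product map
\begin{equation*}
\emptyset\otimes\V\coprod_{\emptyset\otimes\U}\Z\otimes\U \longrightarrow \Z\otimes\V
\end{equation*}
collapses, by the preceding paragraph, to $\Z\otimes j\colon\Z\otimes\U\to\Z\otimes\V$. Hence $\Z\otimes j$ is a projective cofibration, and it is pointwise acyclic whenever $j$ is, as Lemma \ref{lemma:pprojectivemonoidal} also asserts in the acyclic case. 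This disposes of the first two claims with no restriction on the domain and codomain of $j$.

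For the last claim I would invoke Ken Brown's lemma \cite[Lemma 1.1.12]{Hovey:Modelcategories}, applied to the functor $\Z\otimes-$ viewed as a functor from the pointwise projective model category to itself. Having just established that this functor sends acyclic projective cofibrations to pointwise weak equivalences, Ken Brown's lemma immediately yields that it carries every pointwise weak equivalence between projective cofibrant objects to a pointwise weak equivalence. Note that the pushout-product characterization in fact guarantees slightly more, namely that the image $\Z\otimes\X\to\Z\otimes\Y$ is a weak equivalence between projective cofibrant cubical $\CC^\ast$-spaces, since tensoring with $\Z$ preserves projective cofibrancy by the first assertion of the lemma applied to $\emptyset\to\X$ and $\emptyset\to\Y$.

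The only genuinely substantive point is identifying the pushout-product with $\emptyset\to\Z$ as the map $\Z\otimes j$, and this is forced by the preservation of initial objects under $\otimes$; there is no real obstacle once the monoidal axiom of Lemma \ref{lemma:pprojectivemonoidal} is in hand.
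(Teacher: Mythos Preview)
Your proof is correct and is precisely the standard unpacking of the paper's one-line argument ``This follows because the pointwise projective model structure is monoidal'': the pushout-product axiom against $\emptyset\to\Z$ gives preservation of (acyclic) cofibrations, and Ken Brown's lemma then handles weak equivalences between cofibrant objects. The approaches are essentially identical; you have simply spelled out what the paper leaves implicit.
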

\begin{proof}
This follows because the pointwise projective model structure is monoidal.
\end{proof}

\begin{lemma}
\label{lemma:pprojectivemonoid}
The monoid axiom holds in the pointwise projective model structure.
\end{lemma}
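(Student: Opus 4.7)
The plan is to verify the monoid axiom by reducing to a pointwise computation in the Kan model structure on cubical sets. By the standard criterion for cofibrantly generated monoidal model categories (see \cite[Lemma 3.5]{SS:monoidal}), it suffices to show that every map in $(J_{\Box\CC^{\ast}-\Spc}\otimes\Box\CC^{\ast}-\Spc)$-cell is a pointwise weak equivalence.

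For a generator $j = A\otimes(\sqcap^{n}_{(\alpha,i)}\subset\Box^{n})$ of $J_{\Box\CC^{\ast}-\Spc}$ and an arbitrary cubical $\CC^{\ast}$-space $\X$, I will first establish the identification
\begin{equation*}
(\X\otimes j)(B) \cong (\X\otimes A)(B)\otimes_{\Box\Set}\bigl(\sqcap^{n}_{(\alpha,i)}\subset\Box^{n}\bigr)
\end{equation*}
by combining the symmetry of the Day convolution with the cubical set tensor formula (\ref{cubicalCstarspacetensor}) and the adjunctions (\ref{0-celladjunction}) and (\ref{representableinternalhomevalution}). Since the model structure on $\Box\Set$ is monoidal and every cubical set is cofibrant, tensoring the cubical set $(\X\otimes A)(B)$ with the acyclic monomorphism $\sqcap^{n}_{(\alpha,i)}\subset\Box^{n}$ yields an acyclic cofibration of cubical sets. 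Hence $\X\otimes j$ is a pointwise acyclic cofibration.

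Because colimits in $\Box\CC^{\ast}-\Spc$ are formed sectionwise and the class of acyclic cofibrations of cubical sets is closed under pushouts, transfinite composition, and retracts, every map in $(J_{\Box\CC^{\ast}-\Spc}\otimes\Box\CC^{\ast}-\Spc)$-cell is a pointwise acyclic cofibration, in particular a pointwise weak equivalence. The main obstacle will be the identification above: one must carefully unwind the Day convolution to see that tensoring with an arbitrary $\X$ contributes only through the single cubical set $(\X\otimes A)(B)$ at each section $B$, after which the argument reduces to the well-behaved monoidal structure on $\Box\Set$. Everything else is formal cofibrant-generation bookkeeping.
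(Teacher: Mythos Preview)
Your proposal is correct and follows essentially the same route as the paper: both reduce to the sectionwise identification $(\X\otimes j)(B)\cong (\X\otimes A)(B)\otimes_{\Box\Set}(\sqcap^{n}_{(\alpha,i)}\subset\Box^{n})$ and then exploit that colimits are formed pointwise. The only cosmetic difference is that the paper phrases the conclusion as landing in $\Box\Set\otimes J_{\Box\Set}$ and then cites the monoid axiom for cubical sets, whereas you observe directly (using that every cubical set is cofibrant) that the resulting map of cubical sets is an acyclic cofibration; these are equivalent in this setting.
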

\begin{proof}
We need to check that $(\Box\CC^{\ast}-\Spc\otimes J_{\Box\CC^{\ast}-\Spc})$-cell consists of 
weak equivalences \cite[Definition 3.3]{SS:monoidal}. 
Since the monoid axiom holds for cubical sets and colimits in $\Box\CC^{\ast}-\Spc$ are defined pointwise, 
it suffices to show that $(\Box\CC^{\ast}-\Spc\otimes J_{\Box\CC^{\ast}-\Spc})(B)$ is contained in 
$\Box\Set\otimes J_{\Box\Set}$ for every $\CC^{\ast}$-algebra $B$. 
This follows from the equalities
\begin{align*}
\Bigl(\X\otimes \bigl(A\otimes(\sqcap^{n}_{(\alpha,i)}\subset\Box^{n})\bigr)\Bigr)(B)
& = 
\bigl((\X\otimes A)\otimes (\sqcap^{n}_{(\alpha,i)}\subset\Box^{n})\bigr)(B) \\
& = 
(\X\otimes A)(B)\otimes (\sqcap^{n}_{(\alpha,i)}\subset\Box^{n}).
\end{align*}
\end{proof}
\begin{remark}
Lemma \ref{lemma:pprojectivemonoid} combined with the work of Schwede-Shipley \cite{SS:monoidal} implies that modules over a monoid in 
$\Box\CC^{\ast}-\Spc$ inherit a module structure, 
where the fibrations and weak equivalences of modules over the monoid are just the module maps that are fibrations and weak equivalences
in the underlying model structure.
In the refined model structures on $\Box\CC^{\ast}-\Spc$ the same result holds for cofibrant monoids by reference to 
\cite{Hovey:monoidsandalgebras}.
\end{remark}
\vspace{0.1in}

Next we turn to the injective model structure on $\Box\CC^{\ast}-\Spc$.
Let $\kappa$ be the first infinite cardinal number greater than the cardinality of the set of maps of $\CC^{\ast}-\Spc$.
If $\omega$, 
as usual, 
denotes the cardinal of the continuum, 
define the cardinal number $\gamma$ by $\gamma\equiv\kappa\omega^{\kappa\omega}$.
\begin{definition}
Let $I_{\Box\CC^{\ast}-\Spc}^{\kappa}$ denote the set of maps $\X\rightarrow\Y$ such that 
$\X(A)\rightarrow\Y(A)$ is a cofibration of cubical sets of cardinality less than $\kappa$ for all $A$.
Likewise,
let $J_{\Box\CC^{\ast}-\Spc}^{\gamma}$ denote the set of  maps $\X\rightarrow\Y$ such that 
$\X(A)\rightarrow\Y(A)$ is an acyclic cofibration of cubical sets of cardinality less than 
$\gamma$ for all $A$.
\end{definition}
\vspace{0.1in}

The next result follows by standard tricks of the model category trade.
\begin{proposition}
\label{proposition:injectiveliftings}
Let $q\colon\Z\rightarrow\W$ be a map of cubical $\CC^{\ast}$-spaces.
\begin{itemize}
\item
If $q$ has the right lifting property with respect to every map in $I_{\Box\CC^{\ast}-\Spc}^{\kappa}$,
then $q$ has the right lifting property with respect to all maps $\X\rightarrow\Y$ for which 
$\X(A)\rightarrow\Y(A)$ is a cofibration of cubical sets for all $A$.
\item
If $q$ has the right lifting property with respect to every map in $J_{\Box\CC^{\ast}-\Spc}^{\gamma}$,
then $q$ has the right lifting property with respect to all maps $\X\rightarrow\Y$ for which 
$\X(A)\rightarrow\Y(A)$ is an acyclic cofibration of cubical sets for all $A$.
\end{itemize} 
\end{proposition}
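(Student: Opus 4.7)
The plan is to deploy the classical bounded cofibration and bounded acyclic cofibration arguments, adapted from Joyal--Jardine's work on simplicial presheaves to the diagram category $\Box\CC^{\ast}-\Spc=[\CC^{\ast}-\Alg,\Box\Set]$. Both parts proceed by a Zorn-style transfinite extension. Given a pointwise (acyclic) cofibration $i\colon\X\rightarrow\Y$ and a lifting problem against $q$, I would consider the poset $\mathcal{P}$ of pairs $(\X',\ell')$ with $\X\subseteq\X'\subseteq\Y$ an intermediate sub-cubical-$\CC^{\ast}$-space such that $\X\rightarrow\X'$ remains a pointwise (acyclic) cofibration and $\ell'\colon\X'\rightarrow\Z$ extends the prescribed map, ordered by inclusion. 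The union along a chain stays in $\mathcal{P}$ because pointwise cofibrations and pointwise weak equivalences of cubical sets are closed under filtered colimits, so Zorn's lemma yields a maximal element $(\X^{\ast},\ell^{\ast})$; the task reduces to showing $\X^{\ast}=\Y$.

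Suppose not, and pick a cell of $\Y(A)$ not belonging to $\X^{\ast}(A)$ for some $A$ in the fixed skeleton. For part (i) the crux is a bounded monomorphism property: every cell of $\Y$ lies in a sub-cubical-$\CC^{\ast}$-space $\Y_0\subseteq\Y$ of total cardinality less than $\kappa$. One builds $\Y_0$ by starting from the chosen cell and closing under the face and degeneracy operators together with the action of every morphism in the skeleton of $\CC^{\ast}-\Alg$; the definition of $\kappa$ as greater than the cardinality of the set of maps of $\CC^{\ast}-\Spc$ is tailored precisely so this closure has size less than $\kappa$. Setting $\X^{\ast}_{0}\equiv\X^{\ast}\cap\Y_0$, the inclusion $\X^{\ast}_{0}\rightarrow\Y_0$ is pointwise a cofibration of cubical sets of cardinality less than $\kappa$, hence lies in $I_{\Box\CC^{\ast}-\Spc}^{\kappa}$. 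The hypothesis on $q$ then supplies a lift $\Y_0\rightarrow\Z$ compatible with $\ell^{\ast}|_{\X^{\ast}_{0}}$, and gluing along the pushout $\X^{\ast}\coprod_{\X^{\ast}_{0}}\Y_0$ strictly enlarges $(\X^{\ast},\ell^{\ast})$, contradicting maximality.

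The main obstacle lies in part (ii), where the bounded subobject $\Y_0$ must additionally satisfy that $\X^{\ast}\cap\Y_0\rightarrow\Y_0$ is pointwise a weak equivalence, not merely a cofibration. This is the bounded acyclic cofibration theorem, and the specific cardinal $\gamma=\kappa\omega^{\kappa\omega}$ is calibrated exactly so that the standard argument for cubical sets survives in the diagram setting. I would run the iterative construction: begin with a $\kappa$-small $\Y_0^{(0)}$ containing the chosen cell, and at each successor stage enlarge $\Y_0^{(n)}$ to $\Y_0^{(n+1)}$ by adjoining, for every potential lifting problem $\sqcap^{m}_{(\alpha,i)}\rightarrow\X^{\ast}\cap\Y_0^{(n)}$ with $\Box^{m}\rightarrow\Y_0^{(n)}$, a filler chosen inside $\Y$; such fillers exist because $\X^{\ast}\rightarrow\Y$ is pointwise an acyclic cofibration, which provides the necessary homotopical witnesses. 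The number of such obstructions generated by a $\kappa$-small piece is bounded by $\omega^{\kappa\omega}$; iterating $\omega$ many times and passing to the union produces $\Y_0$ of cardinality less than $\gamma$ with $\X^{\ast}\cap\Y_0\rightarrow\Y_0$ pointwise an acyclic cofibration. This inclusion then lies in $J_{\Box\CC^{\ast}-\Spc}^{\gamma}$, a lift extending $\ell^{\ast}$ exists by hypothesis, and Zorn maximality is again contradicted. Verifying that the iterative enlargement genuinely preserves pointwise acyclicity at the limit stage, and that the cardinal book-keeping across all $\CC^{\ast}$-algebras in the skeleton simultaneously stays bounded by $\gamma$, is the delicate combinatorial point of the argument.
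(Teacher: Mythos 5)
The paper gives no proof of this proposition --- it is dispatched with the phrase ``follows by standard tricks of the model category trade'' --- so there is no argument to compare against. Your reconstruction is the expected one: the Joyal--Jardine Zorn-plus-bounded-subobject argument that underlies every injective model structure on a presheaf category, and the overall structure (maximal partial lift, construction of a $\kappa$-small respectively $\gamma$-small subobject containing an escaping cell, lift on the small piece, glue, contradict maximality) is correct.

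One sentence in part (ii) is technically off and should be repaired. You write that fillers for $\sqcap^{m}_{(\alpha,i)}\rightarrow\X^{\ast}\cap\Y_0^{(n)}$, $\Box^{m}\rightarrow\Y_0^{(n)}$ ``exist because $\X^{\ast}\rightarrow\Y$ is pointwise an acyclic cofibration.'' But acyclic cofibrations have the left lifting property with respect to fibrations; they do not fill horns, so such a filler need not exist literally in $\Y$. The mechanism that actually drives the bounded acyclic cofibration lemma is the vanishing of the relative homotopy groups $\pi_{*}\bigl(\Y(A),\X^{\ast}(A)\bigr)$: any relative class supported on the finite datum inside $\Y_0^{(n)}(A)$ becomes null in $\bigl(\Y(A),\X^{\ast}(A)\bigr)$, and one adjoins to $\Y_0^{(n+1)}$ a finite set of cells of $\Y$ witnessing that nullhomotopy (together with its closure under the cubical operators and the structure maps $\Y(f)$). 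After $\omega$ stages, the commutation of relative homotopy groups with the filtered colimit along the tower forces $\pi_{*}\bigl(\Y_0(A),\X^{\ast}_{0}(A)\bigr)=0$ for every $A$, which is what makes $\X^{\ast}_{0}\rightarrow\Y_0$ a pointwise acyclic cofibration. With ``fillers'' read in this sense your cardinal bookkeeping and the rest of the proof go through; as literally phrased the step does not.
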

Note that $\X\rightarrow\Y$ is a monomorphism if and only if $\X(A)\rightarrow\Y(A)$ is a cofibration 
of cubical sets for all $A$.

From Proposition \ref{proposition:injectiveliftings} we obtain immediately the next result.
\begin{corollary}
\label{corollary:injectiveliftings}
Let $p\colon\X\rightarrow\Y$ be a map of cubical $\CC^{\ast}$-spaces.
\begin{itemize}
\item
The map $p$ is an $I_{\Box\CC^{\ast}-\Spc}^{\kappa}$-cofibration,
i.e.~has the left lifting property with respect to every map with the right lifting property with 
respect to $I_{\Box\CC^{\ast}-\Spc}^{\kappa}$, 
if and only if $\X(A)\rightarrow\Y(A)$ is a cofibration of cubical sets for all $A$.
\item
The map $p$ is a $J_{\Box\CC^{\ast}-\Spc}^{\gamma}$-cofibration,
i.e.~has the left lifting property with respect to every map with the right lifting property with 
respect to $J_{\Box\CC^{\ast}-\Spc}^{\gamma}$, 
if and only if $\X(A)\rightarrow\Y(A)$ is an acyclic cofibration of cubical sets for all $A$.
\end{itemize}
\end{corollary}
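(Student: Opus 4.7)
The plan is to deduce both equivalences from Proposition \ref{proposition:injectiveliftings} via the standard retract argument, treating the two bullet points in parallel.

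For the easy ("if") direction of the first bullet, suppose $\X(A) \to \Y(A)$ is a cofibration of cubical sets for every $\CC^{\ast}$-algebra $A$. To conclude that $p$ is an $I_{\Box\CC^{\ast}-\Spc}^{\kappa}$-cofibration I must verify that $p$ has the left lifting property against any $q\colon\Z\rightarrow\W$ that has the right lifting property against $I_{\Box\CC^{\ast}-\Spc}^{\kappa}$. But by Proposition \ref{proposition:injectiveliftings} any such $q$ has the right lifting property against every pointwise cofibration, and $p$ is one by hypothesis, so lifts against $q$ exist. The "if" direction of the second bullet is identical, using the acyclic-cofibration clause of Proposition \ref{proposition:injectiveliftings} applied to $J_{\Box\CC^{\ast}-\Spc}^{\gamma}$.

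For the converse of the first bullet, suppose $p$ is an $I_{\Box\CC^{\ast}-\Spc}^{\kappa}$-cofibration. The set $I_{\Box\CC^{\ast}-\Spc}^{\kappa}$ is actually a set (not a proper class) thanks to the cardinality bound $\kappa$, and its domains and codomains are small by local presentability (Lemma \ref{lemma:finitelypresentable}), so Quillen's small object argument applies and factors $p$ as $\X \cof \Z \twoheadrightarrow \Y$ in which the first map is a relative $I_{\Box\CC^{\ast}-\Spc}^{\kappa}$-cell complex and the second has the right lifting property against $I_{\Box\CC^{\ast}-\Spc}^{\kappa}$. Since colimits of cubical $\CC^{\ast}$-spaces are computed pointwise and monomorphisms of cubical sets are closed under pushouts, transfinite compositions and coproducts, $\X(A) \rightarrow \Z(A)$ is a monomorphism for every $A$. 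By assumption $p$ lifts against $\Z \twoheadrightarrow \Y$, so the retract argument exhibits $p$ as a retract of $\X\cof\Z$. Since pointwise cofibrations are closed under retracts, we conclude $\X(A)\rightarrow\Y(A)$ is a cofibration of cubical sets for every $A$.

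The converse of the second bullet follows by the same recipe, replacing cofibrations with acyclic cofibrations throughout: the cardinal $\gamma$ is precisely engineered so that $J_{\Box\CC^{\ast}-\Spc}^{\gamma}$ is a set permitting the small object argument; $J_{\Box\CC^{\ast}-\Spc}^{\gamma}$-cell complexes are pointwise acyclic cofibrations because acyclic cofibrations of cubical sets are closed under the relevant colimits; and retracts of pointwise acyclic cofibrations are pointwise acyclic cofibrations. The only subtle point, and arguably the main technical issue underlying the whole statement, is confirming that the cardinal bookkeeping behind $\kappa$ and $\gamma$ really does let the small object argument run in both cases, which is exactly what those cardinals were cooked up to guarantee and is subsumed by Proposition \ref{proposition:injectiveliftings}.
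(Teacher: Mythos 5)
Your proof is correct and supplies exactly the standard argument that the paper compresses into the single phrase "we obtain immediately from Proposition \ref{proposition:injectiveliftings}." The "if" directions are direct applications of that proposition, and the "only if" directions are the usual small-object-argument-plus-retract argument, together with the observation that relative $I_{\Box\CC^{\ast}-\Spc}^{\kappa}$-cell (resp.\ $J_{\Box\CC^{\ast}-\Spc}^{\gamma}$-cell) complexes are pointwise (acyclic) cofibrations because colimits in $\Box\CC^{\ast}-\Spc$ are formed pointwise and (acyclic) cofibrations of cubical sets are closed under pushouts, coproducts, transfinite compositions and retracts.
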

\begin{definition}
A map $\X\rightarrow\Y$ of cubical $\CC^{\ast}$-spaces is an injective fibration if it has the 
right lifting property with respect to all maps which are simultaneously a monomorphism and a
pointwise weak equivalence.
\end{definition}
\vspace{0.1in}

We refer to the following as the pointwise injective model structure.
\begin{theorem}
\label{theorem:injectiveobjectwise}
The classes of monomorphisms a.k.a.~injective cofibrations, injective fibrations and pointwise 
weak equivalences determine a combinatorial model structure on $\Box\CC^{\ast}-\Spc$.
\end{theorem}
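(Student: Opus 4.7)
The plan is to verify that the injective model structure is cofibrantly generated by the sets $I_{\Box\CC^{\ast}-\Spc}^{\kappa}$ and $J_{\Box\CC^{\ast}-\Spc}^{\gamma}$ introduced just above the theorem statement, which will simultaneously establish combinatoriality since $\Box\CC^{\ast}-\Spc$ is locally presentable by Lemma \ref{lemma:finitelypresentable}. Axiom {\bf CM}1 is immediate from local presentability, and {\bf CM}2 together with {\bf CM}3 are inherited from the corresponding properties of the cubical set model structure of Theorem \ref{theorem:cubicalsetmodelstructure}, since weak equivalences and monomorphisms are defined pointwise in $\CC^{\ast}-\Alg$.

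For {\bf CM}5 I would run Quillen's small object argument on $I^{\kappa}$ and $J^{\gamma}$. This is permissible because the domains and codomains of these sets are bounded in cardinality and hence small in the locally presentable category $\Box\CC^{\ast}-\Spc$. The output is, for every map $f$ of cubical $\CC^{\ast}$-spaces, factorizations $f=q_{1}i_{1}=q_{2}i_{2}$ with $i_{1}\in I^{\kappa}\text{-cell}$, $q_{1}\in I^{\kappa}\text{-inj}$, $i_{2}\in J^{\gamma}\text{-cell}$ and $q_{2}\in J^{\gamma}\text{-inj}$. By Corollary \ref{corollary:injectiveliftings}, $I^{\kappa}\text{-cof}$ coincides with the class of monomorphisms and $J^{\gamma}\text{-cof}$ with the class of pointwise acyclic monomorphisms, so $i_{1}$ is an injective cofibration and $i_{2}$ is a pointwise acyclic injective cofibration, while $q_{2}$ is an injective fibration by definition.

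To complete {\bf CM}5 one needs that $q_{1}$ is an injective fibration which is also a pointwise weak equivalence. Since $q_{1}\in I^{\kappa}\text{-inj}$, Proposition \ref{proposition:injectiveliftings} forces $q_{1}$ to have the right lifting property with respect to all monomorphisms; in particular against pointwise acyclic monomorphisms, so $q_{1}$ is an injective fibration. Evaluating at any $\CC^{\ast}$-algebra $A$ shows $q_{1}(A)$ has the right lifting property against every monomorphism of cubical sets, hence is an acyclic Kan fibration, so $q_{1}$ is pointwise acyclic. This also handles the nontrivial half of {\bf CM}4: given an injective cofibration $j$ and a pointwise acyclic injective fibration $p$, factor $p=qi$ with $i$ a monomorphism and $q\in I^{\kappa}\text{-inj}$; two-out-of-three forces $i$ to be a pointwise acyclic monomorphism, so $i$ has the left lifting property against $p$, and the usual retract argument exhibits $p$ as a retract of $q$. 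Since $q$ lifts against all monomorphisms, so does $p$. The other half of {\bf CM}4 is definitional.

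The main obstacle is not any of the steps above but rather lies upstream in Proposition \ref{proposition:injectiveliftings}: one must know that every monomorphism, and more delicately every pointwise acyclic monomorphism, of cubical $\CC^{\ast}$-spaces is generated under transfinite composition, pushout and retract from the bounded-cardinality members of $I^{\kappa}$ respectively $J^{\gamma}$. This is the bounded cofibration property, and it is precisely what dictates the choice of the cardinals $\kappa$ and $\gamma$ in terms of the size of $\CC^{\ast}-\Spc$ and the continuum. Granting that proposition, the verification of Theorem \ref{theorem:injectiveobjectwise} is routine cofibrantly-generated-model-category bookkeeping.
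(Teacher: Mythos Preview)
Your proposal is correct and follows exactly the approach the paper sets up: the paper does not write out an explicit proof of Theorem~\ref{theorem:injectiveobjectwise}, but Proposition~\ref{proposition:injectiveliftings} and Corollary~\ref{corollary:injectiveliftings} are placed immediately before it precisely so that the standard cofibrantly-generated verification you outline goes through with $I_{\Box\CC^{\ast}-\Spc}^{\kappa}$ and $J_{\Box\CC^{\ast}-\Spc}^{\gamma}$ as generators. Your identification of the bounded cofibration property in Proposition~\ref{proposition:injectiveliftings} as the substantive input is on the mark.
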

\begin{lemma}
\label{lemma:injectiveobjectwise}
The following hold for the pointwise injective model structure.
\begin{itemize}
\item
It is a proper model structure.
\item
It is a cubical model structure.
\item
Every cubical $\CC^{\ast}$-space is cofibrant.
\item
The identity functor  on $\Box\CC^{\ast}-\Spc$ yields a Quillen equivalence with the pointwise 
projective model structure.
\end{itemize} 
\end{lemma}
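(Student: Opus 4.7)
The plan is to verify each of the four items separately, in each case reducing to the corresponding assertion for the proper cubical model structure on $\Box\Set$ from Theorem \ref{theorem:cubicalsetmodelstructure}, exploiting that limits, colimits, monomorphisms, and weak equivalences are all computed pointwise in $\Box\CC^{\ast}-\Spc$.

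The first step, which drives most of what follows, is to observe that every injective fibration is a projective fibration, hence evaluates to a Kan fibration at each $\CC^{\ast}$-algebra. By Lemma \ref{projectiveimpliesinjective} every map in $J_{\Box\CC^{\ast}-\Spc}$ is an acyclic monomorphism, so having the right lifting property against all acyclic monomorphisms is stronger than having it against $J_{\Box\CC^{\ast}-\Spc}$; evaluation at any $A$ then produces a Kan fibration by the usual Yoneda/adjunction argument already used in the proof of Theorem \ref{theorem:projectiveobjectwise}. With this in hand, properness is immediate: pushouts and pullbacks in $\Box\CC^{\ast}-\Spc$ are formed pointwise, so preservation of pointwise weak equivalences under pushout along monomorphisms and under pullback along injective fibrations both reduce to the properness clause of Theorem \ref{theorem:cubicalsetmodelstructure}.

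For the cubical model axiom, given a monomorphism $i\colon\X\cof\Y$ and a cofibration $p\colon K\cof L$ of cubical sets, the pushout product evaluated at any $\CC^{\ast}$-algebra $A$ is the pushout product of $\X(A)\cof\Y(A)$ with $p$, which is a monomorphism, and acyclic whenever either factor is, by the cubical structure on $\Box\Set$; the equivalent cotensor form follows by the adjunction of (\ref{0-celladjunction}). Cofibrancy of every object is trivial: the initial cubical $\CC^{\ast}$-space is the constant empty one, and the unique map from it into any $\X$ evaluates pointwise to $\emptyset\hookrightarrow\X(A)$, a monomorphism.

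For the Quillen equivalence, the identity functor from the projective to the injective model sends projective cofibrations to monomorphisms by Lemma \ref{projectiveimpliesinjective} and preserves the common class of pointwise weak equivalences, hence is left Quillen. Because both model structures share exactly the same weak equivalences, the total derived functors are naturally isomorphic to the identity on the shared homotopy category, so the adjunction is automatically a Quillen equivalence. The one potentially delicate point I anticipate is the opening observation that injective fibrations are pointwise Kan fibrations; once that is established, the remaining items are essentially pointwise transfers of known facts about $\Box\Set$.
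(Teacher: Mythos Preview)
Your proof is correct. The paper states Lemma \ref{lemma:injectiveobjectwise} without proof, so there is nothing to compare against; your argument supplies exactly the routine verification the paper leaves implicit. Each item is handled by the expected pointwise reduction to Theorem \ref{theorem:cubicalsetmodelstructure}, and your opening observation that injective fibrations are projective fibrations (via Lemma \ref{projectiveimpliesinjective} and the generators $J_{\Box\CC^{\ast}-\Spc}$) is the right way to access right properness.
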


By adopting the same definitions to simplicial $\CC^{\ast}$-spaces we get a combinatorial 
projective model structure on $\Delta\CC^{\ast}-\Spc$. 
Using standard notations for boundaries and horns in $\Delta\Set$, 
a set of generating cofibrations is given by 
\begin{equation*}
I_{\Delta\CC^{\ast}-\Spc}\equiv\{A\otimes(\partial\Delta^{n}\subset\Delta^{n})\}^{n\geq 0} 
\end{equation*}
and a set of generating acyclic cofibrations by
\begin{equation*}
J_{\Delta\CC^{\ast}-\Spc}\equiv\{A\otimes(\Lambda^{n}_{i}\subset\Delta^{n})\}^{n>0}_{0\leq i\leq n}.
\end{equation*}
The projective model structure on $\Delta\CC^{\ast}-\Spc$ acquires the same additional properties as 
the projective model structure on cubical $\CC^{\ast}$-spaces.
\vspace{0.1in}

From Theorem \ref{theorem:cubicalsetmodelstructure} we deduce that the corresponding homotopy categories
are equivalent:
\begin{lemma}
\label{lemma:projectiveBoxDelta}
There is a naturally induced Quillen equivalence between projective model structures:
\begin{equation*}
\xymatrix{
\Box\CC^{\ast}-\Spc \ar@<3pt>[r] & \Delta\CC^{\ast}-\Spc \ar@<3pt>[l] }
\end{equation*}
\end{lemma}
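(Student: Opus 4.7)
The plan is to transport the Quillen equivalence between cubical and simplicial sets (Theorem \ref{theorem:cubicalsetmodelstructure}) through the functor category construction. Let $T\colon\Box\Set\to\Delta\Set$ denote the triangulation functor and $S\colon\Delta\Set\to\Box\Set$ its right adjoint, the cubical singular complex of a simplicial set. Define
\begin{equation*}
\xymatrix{
L\colon\Box\CC^{\ast}-\Spc\ar@<3pt>[r] & \Delta\CC^{\ast}-\Spc\colon R\ar@<3pt>[l]}
\end{equation*}
by postcomposition, namely $(L\X)(A)\equiv T(\X(A))$ and $(R\Y)(A)\equiv S(\Y(A))$. The adjunction $T\dashv S$ at the level of (co)simplicial versus (co)cubical sets lifts pointwise to an adjunction $L\dashv R$ because natural transformations are built pointwise and $\CC^{\ast}-\Alg$ plays a purely indexing role.

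The first step is to verify that $(L,R)$ is a Quillen pair. Projective fibrations and pointwise weak equivalences in $\Delta\CC^{\ast}-\Spc$ are detected by evaluation at $\CC^{\ast}$-algebras, and $S$ preserves both Kan fibrations and weak equivalences since it is a right Quillen functor in a Quillen equivalence. Hence $R$ preserves projective fibrations and pointwise weak equivalences (even between non-fibrant objects, using that $T$ preserves finite products and monomorphisms so that $S$ preserves the relevant lifting properties on the nose). Equivalently, one can check directly that $L$ sends the generating cofibration $A\otimes(\partial\Box^n\subset\Box^n)$ to $A\otimes(T\partial\Box^n\to T\Box^n)$, which is a projective cofibration in $\Delta\CC^{\ast}-\Spc$ because $T\partial\Box^n\to T\Box^n$ is a cofibration of simplicial sets; similarly for the generating acyclic cofibrations $A\otimes(\sqcap^n_{(\alpha,i)}\subset\Box^n)$.

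The second step is to promote this to a Quillen equivalence by checking the derived unit and counit are weak equivalences. Since weak equivalences in both projective model structures are pointwise and every cubical (respectively simplicial) set is cofibrant, for cofibrant $\X\in\Box\CC^{\ast}-\Spc$ the derived unit $\X\to R\tilde{L}\X$ evaluated at $A$ is the composite of the unit $\X(A)\to SRT\X(A)$ of the Quillen equivalence $T\dashv S$ with a fibrant replacement; this is a weak equivalence of cubical sets by Theorem \ref{theorem:cubicalsetmodelstructure}. Dually, for fibrant $\Y\in\Delta\CC^{\ast}-\Spc$, which is precisely a $\Y$ with each $\Y(A)$ a Kan complex, the derived counit $L\tilde R\Y\to\Y$ at $A$ is the counit $TS\Y(A)\to\Y(A)$ between a Kan complex and its triangulated singular complex, hence a simplicial weak equivalence.

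The proof is essentially a diagram-categorical transfer, so no step is a serious obstacle; the only point that requires mild care is confirming that $L$ preserves acyclic cofibrations (equivalently that $R$ preserves fibrant objects pointwise). This follows immediately from the fact that $T$ is a left Quillen functor between the standard model structures on $\Box\Set$ and $\Delta\Set$, applied levelwise after evaluation at each $A\in\CC^{\ast}-\Alg$.
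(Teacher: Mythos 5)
Your proposal is correct and carries out exactly the deduction the paper has in mind (the paper just says the lemma follows from Theorem \ref{theorem:cubicalsetmodelstructure} and leaves the argument implicit). Post-composing with the triangulation/cubical-singular adjunction, observing that the lift is again an adjunction because natural transformations are assembled pointwise, checking the Quillen pair condition on generating (acyclic) cofibrations using that $T$ preserves colimits, and then verifying the derived unit/counit pointwise against Theorem \ref{theorem:cubicalsetmodelstructure} is the standard transfer argument. Two cosmetic remarks: the display ``$\X(A)\to SRT\X(A)$'' should read $\X(A)\to ST\X(A)$ (the unit of $T\dashv S$), and the parenthetical about $T$ preserving finite products is an unnecessary digression --- for the Quillen-pair direction all you need is that $S$ is right Quillen (preserves fibrations and acyclic fibrations), which you already invoke; preservation of all pointwise weak equivalences by $R$ is not required and is not what right Quillen functors give you in general.
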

\vspace{0.1in}

There is also a pointwise injective model structure on $\Delta\CC^{\ast}-\Spc$ where again the 
cofibrations and the weak equivalences are defined pointwise. 
It defines a simplicial model structure, and acquires the same formal properties as the pointwise 
injective model structure on $\Box\CC^{\ast}-\Spc$.
In particular, 
the identity on $\Delta\CC^{\ast}-\Spc$ defines a Quillen equivalence between the pointwise injective 
and projective model structures.
\vspace{0.1in}

At last in this section we shall verify the rather technical cellular model category conditions for our
examples at hand.
\begin{proposition}
\label{proposition:cellular}
The pointwise injective and projective model structures on $\Box\CC^{\ast}-\Spc$ and $\Delta\CC^{\ast}-\Spc$
and their pointed versions are cellular model structures.
\end{proposition}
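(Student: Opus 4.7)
The plan is to verify the three cellularity axioms: (i) compactness of the domains and codomains of the generating cofibrations relative to these generators; (ii) smallness of the domains of the generating acyclic cofibrations relative to the cofibrations; (iii) every cofibration is an effective monomorphism. The arguments for the four variants---projective and injective, cubical and simplicial, and their pointed analogues---run in parallel, so I concentrate on the pointwise projective structure on $\Box\CC^{\ast}-\Spc$ and indicate at the end what changes.

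For axioms (i) and (ii) in the projective case, each generator in $I_{\Box\CC^{\ast}-\Spc}$ or $J_{\Box\CC^{\ast}-\Spc}$ has domain and codomain of the form $A\otimes K$ with $K$ one of the finite cubical sets $\partial\Box^{n}$, $\Box^{n}$ or $\sqcap^{n}_{(\alpha,i)}$. Example \ref{example:finitelypresentable} shows these are finitely presentable, hence $\omega$-small with respect to every class of maps, which settles (ii). For (i) I upgrade finite presentability to $\omega$-compactness: given a presented relative $I$-cell complex $\X\rightarrow\Y$ with $\lambda$-sequence $F$ as in Definition \ref{definition:presentationofcellcomplexes}, a map $A\otimes K\rightarrow\Y$ factors through some $F_{\alpha}$ by $\omega$-smallness. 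One then proceeds by transfinite induction on $\alpha$: since each attaching map $h^{\beta}_{i}$ has finitely presentable source $C_{i}$, the composite $C_{i}\rightarrow F_{\beta}$ can itself be traced back through a bounded collection of earlier cells, and iterating extracts an $\omega$-bounded subcomplex of the cellular presentation in the sense of Definition \ref{definition:subcomplexes} through which the original map factors.

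For axiom (iii), Lemma \ref{projectiveimpliesinjective} identifies projective cofibrations with monomorphisms, while in the injective structure this identification is the very definition. Since $\Box\CC^{\ast}-\Spc=[\CC^{\ast}-\Alg,\Box\Set]$ has limits and colimits computed sectionwise, effectivity reduces to the corresponding statement in $\Box\Set$; and monomorphisms in the presheaf topos $\Box\Set$ are regular, hence effective. The same applies to $\Delta\Set$ and to the pointed variants, since the forgetful functor to the unpointed category preserves and reflects both monomorphisms and the equalizer diagrams that witness their effectivity.

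For the injective model structures, the generating sets $I_{\Box\CC^{\ast}-\Spc}^{\kappa}$ and $J_{\Box\CC^{\ast}-\Spc}^{\gamma}$ consist of maps whose sections have cardinality strictly less than $\kappa$ respectively $\gamma$, which renders their domains and codomains $\kappa$-small (respectively $\gamma$-small) with respect to any class of maps, and the compactness induction of the previous paragraph goes through after replacing $\omega$ by these cardinals. The simplicial cases are handled by substituting $\partial\Delta^{n}$, $\Delta^{n}$ and $\Lambda^{n}_{i}$ for the corresponding cubical generators. The principal obstacle is axiom (i): finite presentability yields smallness immediately, but the upgrade to compactness requires converting a factoring stage $F_{\alpha}$ into an honest subcomplex of bounded size, and it is precisely for this conversion that the cellular bookkeeping recorded in Definition \ref{definition:subcomplexes} is needed.
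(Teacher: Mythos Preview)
Your proposal is correct and follows essentially the same route as the paper: the paper establishes the three cellularity conditions via Lemmas \ref{lemma:smallness}, \ref{lemma:compactness} and \ref{lemma:effectivenessness}, using exactly the smallness/cardinality bounds, the transfinite induction that traces each cell into a bounded subcomplex, and the sectionwise reduction of effectivity to $\Set$ that you describe. The only presentational difference is that the paper states the compactness and effectivity lemmas for the injective structure (with the large cardinal $\kappa$) and lets the projective case follow, whereas you lead with the projective case and claim the sharper bound of $\omega$-compactness; one small slip is that Lemma \ref{projectiveimpliesinjective} only gives the inclusion of projective cofibrations into monomorphisms rather than an identification, but that direction is all your effectivity argument needs.
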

\begin{lemma}
\label{lemma:smallness}
Every cubical or simplicial $\CC^{\ast}$-space $\X$ is small.
\end{lemma}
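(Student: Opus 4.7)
The plan is to derive smallness directly from the local presentability result already established in Lemma \ref{lemma:finitelypresentable}, using only facts from Adámek--Rosický that the paper has already been citing.

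First I would recall the definition at hand: an object $\X$ is small if there exists some regular cardinal $\kappa$ such that for every regular $\lambda\geq\kappa$ and every $\lambda$-sequence $F\colon\lambda\to\Box\CC^{\ast}-\Spc$, the canonical comparison map
\begin{equation*}
\colim_{\alpha<\lambda}\Box\CC^{\ast}-\Spc(\X,F_{\alpha})\longrightarrow\Box\CC^{\ast}-\Spc(\X,\colim_{\alpha<\lambda}F_{\alpha})
\end{equation*}
is a bijection. By Lemma \ref{lemma:finitelypresentable}, $\Box\CC^{\ast}-\Spc$ is locally presentable. A standard fact (see \cite[Proposition 1.16, Theorem 1.20]{AR:book}) asserts that in any locally presentable category, every object is $\mu$-presentable for some regular cardinal $\mu$, in the sense that $\Box\CC^{\ast}-\Spc(\X,-)$ preserves $\mu$-filtered colimits. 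Since every $\lambda$-sequence with $\lambda\geq\mu$ is in particular a $\mu$-filtered diagram, $\mu$-presentability immediately yields $\mu$-smallness in Hovey's sense. This completes the argument in the cubical case.

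For the simplicial case I would run the identical script: since $\CC^{\ast}-\Alg\times\Delta^{\op}$ is a small category, the functor category $\Delta\CC^{\ast}-\Spc=[\CC^{\ast}-\Alg,\Delta\Set]$ is locally presentable by \cite[5.2.2b]{Borceux:Handbook2}, and the same presentability-implies-smallness passage applies. The pointed versions follow from the corresponding remark earlier in the text, as the pointed functor categories are themselves locally presentable (slice categories of locally presentable categories under a fixed object remain locally presentable).

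I do not anticipate any real obstacle; the only mild subtlety is keeping the distinction between $\mu$-presentability (preservation of $\mu$-filtered colimits) and $\mu$-smallness relative to a class of maps (preservation of transfinite compositions) in mind, but the former is strictly stronger, so presentability in a locally presentable category bootstraps directly to smallness. No delicate estimates on the presentability rank $\mu$ are needed since the statement only asserts existence of \emph{some} $\kappa$.
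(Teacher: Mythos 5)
Your proof is correct, and it takes a genuinely different and in fact cleaner route than the paper's. The paper argues by hand: it fixes $\kappa$ to be the regular successor of the cardinal of $\coprod_{A,n}\X_{n}(A)$, shows injectivity of the comparison map by taking $A$-sections and invoking smallness of cubical (simplicial) sets from Hovey, and shows surjectivity by observing that each of the $<\kappa$ cells of $\X$ individually factors through some stage of the $\lambda$-sequence and using regularity of $\lambda$ to extract a single stage that works for all of them. Your approach bypasses this bookkeeping entirely by citing the general principle that every object of a locally presentable category is $\mu$-presentable for some regular $\mu$, and then noting that a $\lambda$-sequence with $\lambda\geq\mu$ regular is in particular a $\mu$-filtered diagram (the ordinal $\lambda$, being regular, is $\lambda$-filtered and a fortiori $\mu$-filtered), so $\Hom(\X,-)$ preserves its colimit. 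The trade-off is that the paper's argument produces an explicit bound on the smallness rank in terms of the number of cells of $\X$, whereas yours only establishes existence of a $\mu$ — but since the statement asks for nothing more, that is perfectly adequate, and your version is shorter and less error-prone (the paper's surjectivity step tacitly requires also that the factorizations through $F_{\alpha}$ of the individual cells glue to a map of cubical $\CC^{\ast}$-spaces, a compatibility which your high-level argument never has to confront). One small caution for precision: you should make explicit that the presentability rank $\mu$ can be taken regular and that the relevant fact is that the ordinal $\lambda$ is $\mu$-filtered when $\lambda\geq\mu$ is regular; as written you gesture at this but do not quite spell it out.
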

\begin{proof}
Suppose $\X$ a cubical $\CC^{\ast}$-space (the following argument applies also to simplicial $\CC^{\ast}$-spaces).
Let $\kappa$ be the regular successor cardinal of the set 
\begin{equation*}
\coprod_{A\in\CC^{\ast}-\Alg, n\geq 0} \X_{n}(A).
\end{equation*}
For every regular cardinal $\lambda\geq\kappa$ and $\lambda$-sequence $F$ in $\Box\CC^{\ast}-\Spc$ we claim there is a 
naturally induced isomorphism
\begin{equation}
\label{equation:smallness}
\xymatrix{
\colim_{\alpha<\lambda}\Box\CC^{\ast}-\Spc(\X,F_{\alpha})\ar[r] & \Box\CC^{\ast}-\Spc(\X,\colim_{\alpha<\lambda}F_{\alpha}).}
\end{equation}
Injectivity of (\ref{equation:smallness}) follows by taking sections and using the fact that every cubical (simplicial) 
set is small, 
cf.~\cite[Lemma 3.1.1]{Hovey:Modelcategories}.
The restriction of $\X\rightarrow\colim_{\alpha}F_{\alpha}$ in $\Box\CC^{\ast}-\Spc$ to any cell of $\X$ factors through 
$F_{\alpha}$ for some $\alpha<\lambda$.
Regularity of $\lambda$ and the fact that there are less than $\kappa<\lambda$ cells in $\X$ implies the restriction to 
any cell of $\X$ factors through $F_{\alpha}$ for some $\alpha<\kappa$. 
Hence the map in (\ref{equation:smallness}) is surjective.
\end{proof}
\begin{lemma}
\label{lemma:compactness}
The domains and codomains of the generating cofibrations $I$ of the injective model structures on $\Box\CC^{\ast}-\Spc$ 
and $\Delta\CC^{\ast}-\Spc$ are compact relative to $I$.
\end{lemma}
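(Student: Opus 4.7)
The plan is to show that every domain or codomain $\Z$ of $I=I_{\Box\CC^{\ast}-\Spc}^{\kappa}$ is $\kappa$-compact relative to $I$. By construction $\Z(A)$ has cardinality less than $\kappa$ for each $\CC^{\ast}$-algebra $A$, and the choice of $\kappa$ guarantees $|\CC^{\ast}-\Alg|<\kappa$, so the total number of cells of $\Z$ summed across all algebras and cubical degrees is bounded by a cardinal strictly less than $\kappa$.

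First I would fix a presented relative $I$-cell complex $f\colon\X\to\Y$ with presentation ordinal $\lambda$ and data $\{(T^{\beta},e^{\beta},h^{\beta})\}_{\beta<\lambda}$ as in Definition~\ref{definition:presentationofcellcomplexes}, together with a map $g\colon\Z\to\Y$. For each cell $e\in\Z_{n}(A)$, regarded via the Yoneda lemma as a map $A\otimes\Box^{n}\to\Z$, smallness of $A\otimes\Box^{n}$ (Lemma~\ref{lemma:smallness}) produces a factorization of the composite $A\otimes\Box^{n}\to\Y$ through some stage $F_{\beta_{e}}$. I would record a cell index $(i_{e},\beta_{e})\in\coprod_{\beta<\lambda}T^{\beta}$ of $f$ whose attachment carries the image of $e$ (discarding cells that already factor through $\X=F_{0}$), and let $S_{0}$ be the resulting set, which has cardinality less than $\kappa$.

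Next I would close $S_{0}$ under attaching data by an $\omega$-iteration. Given $S_{n}$, for each $(i,\beta)\in S_{n}$ the source $C_{i}$ of $e^{\beta}_{i}\in I$ has fewer than $\kappa$ cells, so applying the same recipe to the attaching map $h^{\beta}_{i}\colon C_{i}\to F_{\beta}$ produces fewer than $\kappa$ indices at strictly lower stages. Let $S_{n+1}$ be $S_{n}$ together with all such new indices, and set $S\equiv\bigcup_{n<\omega}S_{n}$, so that $|S|\leq\kappa\cdot\omega=\kappa$. By construction each cell of $S$ enters at some finite stage $n$ and has its attaching dependencies contained in $S_{n+1}\subseteq S$; hence the collection $\widetilde{T}^{\beta}\equiv S\cap T^{\beta}$ satisfies the compatibility conditions of Definition~\ref{definition:subcomplexes} and exhibits a subcomplex of $f$ of size at most $\kappa$ through which $g$ factors.

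The main obstacle is confirming that $\omega$ closure iterations suffice even when $\lambda$ is large. This works because for any single starting cell the sequence of ordinal components along successive closure steps is strictly decreasing, and by well-foundedness of $\lambda$ such a sequence is finite; taking the union over $n<\omega$ simultaneously accommodates the closure of every initial cell without having to iterate transfinitely. The simplicial case of $\Delta\CC^{\ast}-\Spc$ is identical upon replacing the pair $(\Box^{n},\sqcap^{n}_{(\alpha,i)})$ with $(\Delta^{n},\Lambda^{n}_{i})$ and invoking the analogous smallness statement.
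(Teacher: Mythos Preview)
Your argument is correct and follows essentially the same strategy as the paper: bound the total number of cells of $\Z$ by a regular cardinal $\kappa$, then close the set of cells hit by $g\colon\Z\to\Y$ under attaching dependencies to obtain a subcomplex of size at most $\kappa$. The paper organizes the closure step differently---it first reduces to the case $\X=\emptyset$ and then argues by transfinite induction on the presentation ordinal $\beta$ that every single cell lies in a small subcomplex, finally taking the union over the cells hit by $\Z$---whereas you build the subcomplex directly by an $\omega$-iteration starting from $S_{0}$. Both routes rely on the same two facts: regularity of $\kappa$ to control the cardinality at each stage, and the observation that attaching maps of cells at ordinal $\beta$ land in cells of strictly smaller ordinal.

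One remark: your final paragraph about well-foundedness is unnecessary. The closure $S=\bigcup_{n<\omega}S_{n}$ is automatically closed under attaching dependencies by construction---if $(i,\beta)\in S_{n}$ then its dependencies lie in $S_{n+1}\subseteq S$---so $\omega$ steps always suffice regardless of the size of $\lambda$. The descending-ordinal observation is true but plays no logical role in your argument; the only place cardinality control enters is in keeping each $|S_{n}|<\kappa$ via regularity, which you already handle.
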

\begin{proof}
Let $\kappa$ be the regular successor cardinal of the cardinal of the set
\begin{equation*}
\coprod_{\X\rightarrow\Y\in I}\,\,
\coprod_{A\in\CC^{\ast}-\Alg,n\geq 0}\X_{n}(A)\sqcup\Y_{n}(A).
\end{equation*}
If $\Z$ is a domain or codomain of $I$ we will show that $\Z$ is $\kappa$-compact relative to $I$.
Suppose $f\colon\X\rightarrow\Y$ is an $I$-cell complex and $\Z\rightarrow\Y$ a map.
Note that since all monomorphisms are $I$-cells for the injective model structure, 
$\emptyset\rightarrow\X$ is an $I$-cell complex which when combined with any presentation of $f$ yields a presentation of $\Y$
as an $I$-cell complex with $\X$ as a subcomplex.
Provided the claim holds for the induced $I$-cell complex $\emptyset\rightarrow\Y$ then $\Z$ factors through a subcomplex $\X'$ 
of size less than $\kappa$.
The union of $\X$ and $\X'$ is a subcomplex of the same size as $\X'$ and $\Z\rightarrow\Y$ factors through it.
This would imply that $\Z$ is $\kappa$-compact relative to $I$.
It remains to consider $I$-cell complexes of the form $f\colon\emptyset\rightarrow\Y$.
\\ \indent

Consider an $I$-cell presentation of $f\colon\emptyset\rightarrow\Y$ with presentation ordinal $\lambda$
\begin{equation*}
\xymatrix{
\emptyset=
F_{0}\ar[r] &
F_{1}\ar[r] &
\cdots\ar[r] &
F_{\beta<\lambda}\ar[r] &
\cdots.}
\end{equation*}
We use transfinite induction to show that every cell of $\Y$ is contained in a subcomplex of size less than $\kappa$.  
The induction starts for the presentation ordinal $0$ which produces a subcomplex of size $1$.
Suppose $e_{\beta}$ is a cell with presentation ordinal $\beta<\lambda$.
Using induction on $\beta$ and regularity of $\lambda$ one shows that the attaching map $h_{e_{\beta}}$ has image contained 
in a subcomplex $\Y'$ of size less than $\kappa$.
The subcomplex obtained from $\Y'$ by attaching $e_{\beta}$ via $h_{e_{\beta}}$ yields the desired subcomplex.
Since the cardinality of $\Z$ is bounded by $\kappa$, 
the image of $\Z\rightarrow\Y$ is contained in less than $\kappa$ cells of $\Y$.
Every such cell is in turn contained in a subcomplex of $\Y$ of size less than $\kappa$ by the argument above.
Taking the union of these subcomplexes yields a subcomplex $\Y'$ of $\Y$ of size less than the regular cardinal $\kappa$.
Clearly $\Z\rightarrow\Y$ factors through $\Y'$.
\end{proof}
\begin{lemma}
\label{lemma:effectivenessness}
The cofibrations in the injective model structures on $\Box\CC^{\ast}-\Spc$ and $\Delta\CC^{\ast}-\Spc$ are effective monomorphisms.
\end{lemma}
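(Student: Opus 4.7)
The plan is to reduce the claim to a pointwise assertion in $\Box\Set$ and $\Delta\Set$, and then ultimately to the category of sets, where the statement is essentially tautological.

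First I would recall that by Corollary \ref{corollary:injectiveliftings} a map $\X\rightarrow\Y$ is an injective cofibration precisely when $\X(A)\rightarrow\Y(A)$ is a monomorphism of cubical (respectively simplicial) sets for every $\CC^{\ast}$-algebra $A$. Equivalently, injective cofibrations are exactly the monomorphisms in $\Box\CC^{\ast}-\Spc$ (resp.~$\Delta\CC^{\ast}-\Spc$), since these functor categories have limits and colimits computed pointwise.

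Second, because the functor categories $\Box\CC^{\ast}-\Spc=[\CC^{\ast}-\Alg,\Box\Set]$ and $\Delta\CC^{\ast}-\Spc=[\CC^{\ast}-\Alg,\Delta\Set]$ inherit limits and colimits from their targets pointwise, both the cokernel pair $\Y\coprod_{\X}\Y$ and the equalizer of $\Y\rightrightarrows\Y\coprod_{\X}\Y$ are computed pointwise. Consequently, an injective cofibration $\X\rightarrow\Y$ is the equalizer of its cokernel pair if and only if, for every $\CC^{\ast}$-algebra $A$, the map $\X(A)\rightarrow\Y(A)$ is the equalizer of
\begin{equation*}
\xymatrix{\Y(A)\ar@<3pt>[r]\ar@<-3pt>[r] & \Y(A)\coprod_{\X(A)}\Y(A)}
\end{equation*}
in $\Box\Set$ (resp.~$\Delta\Set$). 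This reduces the problem to proving effectiveness of monomorphisms in cubical and simplicial sets.

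Third, $\Box\Set$ and $\Delta\Set$ are themselves presheaf categories on $\Box$ and $\Delta$, so the same pointwise argument reduces effectiveness further to the classical fact that any injection $S\hookrightarrow T$ of sets is the equalizer of the two canonical inclusions into the pushout $T\coprod_{S}T$: the pushout is the quotient of $T\sqcup T$ gluing two copies of $T$ along $S$, and the pair of inclusions agrees precisely on the image of $S$. Assembling these three reductions gives the claim for both model structures, and also for the pointed versions since basepoints are preserved throughout.

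There is no real obstacle: the content of the lemma is entirely topos-theoretic, and the whole argument is the observation that the three layers of functor categories $\CC^{\ast}-\Alg\to\Box\to\Set$ (respectively with $\Delta$ in place of $\Box$) let us transport the elementary fact in $\Set$ all the way up. The only point requiring minor care is making sure that the cokernel pair and its equalizer are genuinely computed pointwise, which is guaranteed because cubical and simplicial sets are bicomplete and the functor categories in question are formed over them.
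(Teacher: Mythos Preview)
Your proof is correct and follows essentially the same route as the paper: both reduce the claim to the pointwise statement in $\Set$ using that limits and colimits in these presheaf categories are computed levelwise, and then invoke the elementary fact that injections of sets are effective monomorphisms. The paper compresses your two reduction steps into one, passing directly from $\Box\CC^{\ast}-\Spc$ to $\Set$ via the double index $(A,n)$, but the content is identical.
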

\begin{proof}
Note that $\X\rightarrow\Y$ is an effective monomorphism if and only if $\X_{n}(A)\rightarrow\Y_{n}(A)$ is an effective monomorphism
of sets for all $A\in\CC^{\ast}-\Alg$ and $n\geq 0$. 
This follows since all limits and colimits are formed pointwise. 
To conclude, 
use that for sets the class of effective monomorphisms coincides with the class of injective maps. 
\end{proof}

According to Proposition \ref{proposition:cellular} the pointwise model structures on $\Box\CC^{\ast}-\Spc$ can be localized using 
the theory of cellular model structures \cite{Hirschhorn:Modelcategories}.
\newpage

\subsection{Exact model structures}
Clearly the pointwise injective and projective model structures involves too many homotopy types of 
$\CC^{\ast}$-algebras.
We shall remedy the situation slightly by introducing the exact model structures.
This involves a strengthening of the fibrancy condition by now requiring that every fibrant cubical 
$\CC^{\ast}$-space turns certain types of short exact sequences in $\CC^{\ast}-\Alg$ into homotopy 
fiber sequences of cubical sets.
First we fix some standard conventions concerning exact sequences and monoidal structures on 
$\CC^{\ast}-\Alg$ \cite{Cuntz:bivariant}.
Corresponding to the maximal tensor product we consider all short exact sequences of $\CC^{\ast}$-algebras
\begin{equation}
\label{ses}
\xymatrix{
0\ar[r] & A\ar[r] & E\ar[r] & B\ar[r] & 0.}
\end{equation}
That is,
the image of the injection $A\rightarrow E$ is a closed $2$-sided ideal, 
the composition $A\rightarrow B$ is trivial and the induced map $E/A\rightarrow B$ is an isomorphism of 
$\CC^{\ast}$-algebras.
Corresponding to the minimal tensor product we restrict to completely positive split
short exact sequences.
Recall that $f\colon A\rightarrow B$ is positive if $f(a)\in B_{+}$ for every positive element $a\in A_{+}$,  
i.e.~$a=a^{\ast}$ and $\sigma_{A}(a)\subseteq [0,\infty)$, 
and $f$ is completely 
positive if $M_{n}(A)\rightarrow M_{n}(B)$, $(a_{ij})\mapsto \bigl(f(a_{ij})\bigr)$, is positive for all $n$.
With these conventions, 
the monoidal product is flat in the sense that (\ref{ses}) remains exact when tensored with any 
$\CC^{\ast}$-algebra.
\begin{example}
The short exact sequence 
$0\rightarrow C_0(\R)\rightarrow C(I)\rightarrow \C\oplus\C\rightarrow 0$ is not split since
$[0,1]$ does not map continuously onto $\{0,1\}$, 
while $0\rightarrow \K\rightarrow\T\rightarrow C(S^1)\rightarrow 0$ where the shift generator of the 
Toeplitz algebra $\T$ is send to the unitary generator of $C(S^1)$ acquires a completely positive 
splitting by sending $f\in C(S^1)$ to the operator $T_{f}$ on the Hardy space $H^{2}\subset L^{2}(S^1)$;
here $T_{f}(g)\equiv\pi(fg)$, 
where $\pi\colon L^{2}(S^1)\rightarrow H^{2}$ denotes the orthogonal projection.  
Note that the splitting is not a map of algebras.
\end{example}
\begin{example}
A deformation of $B$ into $A$ is a continuous field of $\CC^{\ast}$-algebras over a half-open interval 
$[0,\varepsilon)$, 
locally trivial over $(0,\varepsilon)$, 
whose fibers are all isomorphic to $A$ except for the fiber over $0$ which is $B$. 
Every such deformation gives rise to a short exact sequence of $\CC^{\ast}$-algebras
\begin{equation}
\label{extension:deformation}
\xymatrix{
0\ar[r] & C_0\bigl((0,\varepsilon),A\bigr)\ar[r] & E\ar[r] & B\ar[r] & 0.}
\end{equation}
When $A$ is nuclear, 
so that the maximal and minimal tensor products with $A$ coincide, 
then (\ref{extension:deformation}) has a completely positive splitting.
In particular, 
this holds whenever $A$ is finite dimensional, 
commutative or of type I \cite{Blackadar}.  
\end{example}
\begin{remark}
Kasparov's $KK$-theory \cite{Kasparov:KK1}, \cite{Kasparov:KK2} and the $E$-theory of 
Connes-Higson \cite{CH:Etheory} are the universal bivariant theories corresponding to 
the minimal and maximal tensor products respectively, cf.~\cite{Cuntz:bivariant}.
\end{remark}

The set of exact squares consists of all diagrams
\begin{equation}
\label{exactsquare}
\E\equiv
\minCDarrowwidth20pt
\begin{CD}
A @>>> E \\
@VVV @VVV\\
0 @>>> B
\end{CD}
\end{equation}
obtained from a short exact sequence of $\CC^{\ast}$-algebras as in (\ref{ses}), 
and the degenerate square with only one entry $A=0$ in the upper left hand corner.
Note that exactness of the sequence (\ref{ses}) implies the exact square in (\ref{exactsquare}) 
is both a pullback and a pushout diagram in $\CC^{\ast}-\Alg$ \cite{Pedersen:pullbackpushout}.
The exact model structures will be rigged such that exact squares turn into homotopy cartesian 
squares when viewed in $\Box\CC^{\ast}-\Spc$.

A cubical $\CC^{\ast}$-space $\Z$ is called flasque if it takes every exact square to a 
homotopy pullback square.
In detail, 
we require that $\Z(0)$ is contractible and applying $\Z$ to every exact square $\E$ obtained 
from some short exact sequence of $\CC^{\ast}$-algebras yields a homotopy cartesian diagram of 
cubical sets:
\begin{equation}
\label{homotopypullback}
\E\equiv
\minCDarrowwidth20pt
\begin{CD}
A @>>> E \\
@VVV @VVV\\
0 @>>> B
\end{CD}
\;\;\;\;
\rightsquigarrow
\;\;\;\;
\Z(\E)\equiv
\minCDarrowwidth20pt
\begin{CD}
\Z(A) @>>> \Z(E) \\
@VVV @VVV\\
\ast @>>> \Z(B)
\end{CD}
\end{equation}
The definition translates easily into the statement that $\Z$ is flasque if and only if applying $\Z$ 
to any short exact sequence of $\CC^{\ast}$-algebras (\ref{ses}) yields a homotopy fiber sequence of 
cubical sets
\begin{equation*}
\xymatrix{
\Z(A)\ar[r] & \Z(E)\ar[r] & \Z(B).}
\end{equation*}
Recall that $\Z(\E)$ is a homotopy cartesian diagram if the canonical map from $\Z(A)$ to the 
homotopy pullback of the diagram $\ast\rightarrow\Z(B)\leftarrow\Z(E)$ is a weak equivalence.
Right properness of the model structure on $\Box\Set$ implies that if $\Z(B)$ and $\Z(E)$ are 
fibrant the homotopy pullback and homotopy limit of $\ast\rightarrow\Z(B)\leftarrow\Z(E)$ are 
naturally weakly equivalent.
If $\Z(B)$ is contractible, 
then $\Z(\E)$ is a homotopy cartesian diagram if and only if $\Z(A)\rightarrow\Z(E)$ is a 
weak equivalence.
We denote homotopy fibers of maps in model categories by $\hofib$.

If ${\bf X}\colon I\rightarrow\Box\CC^{\ast}-\Spc$ is a small diagram, 
the homotopy limit of ${\bf X}$ is the cubical $\CC^{\ast}$-space defined by 
\begin{equation*}
\underset{I}{\holim}\,\,{\bf X}(A)\equiv
\underset{I}{\holim}\,\,\Ev_{A}\circ {\bf X}.
\end{equation*}
Here $\Ev_{A}\circ {\bf X}\colon I\rightarrow\Box\Set$ and the homotopy limit is formed in cubical sets.
Likewise, 
the homotopy colimit of ${\bf X}$ is the cubical $\CC^{\ast}$-space defined using 
(\ref{homotopycolimit}) by setting
\begin{equation*}
\underset{I}{\hocolim}\,\,{\bf X}(A)\equiv
\underset{I}{\hocolim}\,\,\Ev_{A}\circ {\bf X}.
\end{equation*}

\begin{definition}
A cubical $\CC^{\ast}$-space $\Z$ is exact projective fibrant if it is projective fibrant and flasque.
A map $f\colon\X\rightarrow\Y$ is an exact projective weak equivalence if for every exact projective 
fibrant cubical $\CC^{\ast}$-space $\Z$ there is a naturally induced weak equivalence of cubical sets
\begin{equation*}
\xymatrix{
{\hom}_{\Box\CC^{\ast}-\Spc}(\QQ f,\Z)
\colon
{\hom}_{\Box\CC^{\ast}-\Spc}(\QQ\Y,\Z)\ar[r] & {\hom}_{\Box\CC^{\ast}-\Spc}(\QQ\X,\Z).}
\end{equation*}
Here $\QQ\rightarrow\id_{\Box\CC^{\ast}-\Spc}$ denotes a cofibrant replacement functor in the pointwise 
projective model structure.
Exact projective fibrations of cubical $\CC^{\ast}$-spaces are maps having the right lifting property 
with respect to exact projective acyclic cofibrations.
\end{definition}
\begin{remark}
\label{remark:cubicallocalization}
Since the pointwise projective model structure on $\Box\CC^{\ast}-\Spc$ is a cubical model structure according 
to Lemma \ref{lemma:pprojectivecubical}, 
we may recast the localization machinery in \cite{Hirschhorn:Modelcategories} based on homotopy 
function complexes in terms of cubical function complexes.
\end{remark}

We are now ready to introduce the exact projective model structure as the first out of three types of 
localizations of the pointwise model structures on cubical $\CC^{\ast}$-spaces.

\begin{theorem}
\label{theorem:exactprojective}
The classes of projective cofibrations, exact projective fibrations, 
and exact projective weak equivalences determine a combinatorial cubical model structure on 
$\Box\CC^{\ast}-\Spc$.
\end{theorem}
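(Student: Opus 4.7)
The plan is to obtain this model structure as the left Bousfield localization of the pointwise projective model structure of Theorem \ref{theorem:projectiveobjectwise} at a set $\mathcal{L}$ of maps encoding the flasqueness condition (\ref{homotopypullback}). All the hypotheses needed to apply the localization machinery are already in place: the pointwise projective model is combinatorial (Theorem \ref{theorem:projectiveobjectwise}), left proper (Lemma \ref{lemma:pprojectiveproperness}), cubical (Lemma \ref{lemma:pprojectivecubical}), and cellular (Proposition \ref{proposition:cellular}). The localized structure then keeps the projective cofibrations unchanged and acquires as fibrant objects exactly the $\mathcal{L}$-local projective fibrant cubical $\CC^{\ast}$-spaces; its weak equivalences coincide with the exact projective weak equivalences of the definition via Corollary \ref{corollary:weakequivalencesfibrantobjects}. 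Combinatoriality and the cubical enrichment pass to the localization by standard properties of Bousfield localization in a cubically-enriched combinatorial model category, so these formal inheritance statements amount to bookkeeping.

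To specify $\mathcal{L}$, first include the map $\emptyset\rightarrow 0$, so that locality forces $\Z(0)$ to be contractible. For each non-degenerate exact square (\ref{exactsquare}) arising from $0\rightarrow A\rightarrow E\rightarrow B\rightarrow 0$, note that under the contravariant Yoneda embedding the pushout in $\CC^{\ast}-\Alg$ becomes a pullback in $\CC^{\ast}-\Spc$, and in particular yields a span $0\leftarrow B\rightarrow E$ of projective cofibrant representables (Lemma \ref{lemma:projectivecofibrant}). Factor $B\rightarrow E$ through the cubical mapping cylinder of Lemma \ref{lemma:cubicalmappingcylinderfactorization} to replace it by a projective cofibration, and let $P_{\E}$ denote the resulting strict pushout, which is projective cofibrant and models the homotopy pushout of $0\leftarrow B\rightarrow E$. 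Include in $\mathcal{L}$ the canonical comparison map $\alpha_{\E}\colon P_{\E}\rightarrow A$ provided by the universal property of the pushout. Because we have fixed a small skeleton of $\CC^{\ast}-\Alg$, this produces an honest set.

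The decisive step is the identification of $\mathcal{L}$-local projective fibrant objects with flasque projective fibrant cubical $\CC^{\ast}$-spaces. By the Yoneda isomorphism (\ref{functioncomplexofrepresentable}) we have ${\hom}_{\Box\CC^{\ast}-\Spc}(C,\Z)=\Z(C)$ for every $\CC^{\ast}$-algebra $C$. Since $P_{\E}$ is a homotopy pushout of projective cofibrants and $\Z$ is projective fibrant, ${\hom}_{\Box\CC^{\ast}-\Spc}(P_{\E},\Z)$ is weakly equivalent to the homotopy limit of the cospan $\Z(0)\rightarrow\Z(B)\leftarrow\Z(E)$ of cubical sets. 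Locality of $\Z$ against $\alpha_{\E}$ therefore unfolds into the assertion that the comparison map $\Z(A)\rightarrow\Z(0)\times^{h}_{\Z(B)}\Z(E)$ is a weak equivalence, which combined with $\Z(0)\simeq\ast$ (forced by locality against $\emptyset\rightarrow 0$) is exactly the flasqueness condition (\ref{homotopypullback}). The main technical obstacle is this matching of local with flasque, which hinges on verifying that the cubical mapping cylinder model for $P_{\E}$ genuinely computes the homotopy pushout in the pointwise projective structure and that ${\hom}_{\Box\CC^{\ast}-\Spc}(P_{\E},\Z)$ really does present the claimed homotopy limit; once this is settled, the remaining conclusions of the theorem follow from the general theory of left Bousfield localizations.
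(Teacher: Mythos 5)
Your proposal is correct and essentially identical to the paper's proof: both realize the exact projective model structure as the Bousfield localization of the pointwise projective model at the maps from a homotopy pushout of each span $0\leftarrow B\rightarrow E$ to $A$, together with $\emptyset\rightarrow 0$, and then identify the local fibrant objects with the flasque projective fibrant ones via the Yoneda isomorphism (\ref{functioncomplexofrepresentable}). The only cosmetic difference is that you build an explicit cofibrant model $P_{\E}$ for the homotopy pushout via the cubical mapping cylinder where the paper invokes the abstract $\hocolim$ in the proof itself; in fact the paper constructs exactly your $P_{\E}=s(\E)=\cyl(B\rightarrow E)\coprod_{B}0$ immediately after the theorem when identifying the set $J^{\cyl(\E)}_{\Box\CC^{\ast}-\Spc}$ of generating acyclic cofibrations.
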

\begin{proof}
We show the exact projective model structure arise as the localization of the pointwise projective 
model with respect to the maps $\hocolim (\E)\rightarrow A$,
i.e.~the set of maps $\hocolim (0\leftarrow B\rightarrow E)\rightarrow A$ and $\emptyset\rightarrow 0$
indexed by exact squares.
The localized model structure exists because the projective model structure is combinatorial and left 
proper according to Theorem \ref{theorem:projectiveobjectwise} and Lemma \ref{lemma:pprojectiveproperness}.
Since the cofibrations and the fibrant objects determine the weak equivalences in any model structure,
it suffices to identify the fibrant objects in the localized model structure with the exact projective 
fibrant ones defined in terms of short exact sequences.

In effect, 
note that $\Z$ is fibrant in the localized model structure if and only if it is projective fibrant, 
the cubical set $\Z(0)$ is contractible and for all short exact sequences (\ref{ses}) the cubical set 
maps 
\begin{equation*}
\xymatrix{
{\hom}_{\Box\CC^{\ast}-\Spc}(A,\Z)
\ar[r] &
{\hom}_{\Box\CC^{\ast}-\Spc}\bigl(\hocolim (0\leftarrow B\rightarrow E),\Z\bigr)}
\end{equation*}
are weak equivalences.
By (\ref{functioncomplexofrepresentable}),
the latter holds if and only if there exist naturally induced weak equivalences of cubical sets
\begin{align*}
\xymatrix{
\Z(A) \ar[r] & \holim{\hom}_{\Box\CC^{\ast}-\Spc}\bigl((0\leftarrow B\rightarrow E),\Z\bigr) 
= \holim\bigl(\Z(0)\rightarrow\Z(B)\leftarrow\Z(E)\bigr).}
\end{align*}
Put differently, 
the cubical set $\Z(0)$ is contractible and there exist naturally induced weak equivalences
\begin{equation*}
\xymatrix{
\Z(A)\ar[r] & \hofib\bigl(\Z(E)\rightarrow\Z(B)\bigr).} 
\end{equation*}
This shows that $\Z$ is fibrant in the localized model structure if and only if it is exact 
projective fibrant.
It follows that the classes of maps in question form part of a model structure with the stated 
properties.
\end{proof}
\begin{remark}
By construction,
every exact square gives rise to a homotopy cartesian square of cubical $\CC^{\ast}$-spaces in the 
exact projective model structure. 
Taking pushouts along the exact projective weak equivalence $\emptyset\rightarrow 0$ shows that every 
cubical $\CC^{\ast}$-space is exact projective weakly equivalent to its image in $\Box\CC^{\ast}-\Spc_{0}$.
\end{remark}
\begin{lemma}
\label{lemma:projectiveexactleftproper}
The exact projective model structure is left proper.
\end{lemma}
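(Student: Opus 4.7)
The plan is to reduce the statement to the general principle that left Bousfield localization preserves left properness. Recall from the proof of Theorem \ref{theorem:exactprojective} that the exact projective model structure is obtained as the left Bousfield localization of the pointwise projective model structure at the set of maps $\{\hocolim(\E) \to A\}$ indexed by exact squares, together with $\emptyset \to 0$. The pointwise projective model is left proper by Lemma \ref{lemma:pprojectiveproperness}, and the classes of cofibrations agree before and after the localization. Hence the task reduces to verifying that the pushout of an exact projective weak equivalence along a projective cofibration is again an exact projective weak equivalence.

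For this verification, suppose $\X \cof \Y$ is a projective cofibration, $\X \to \Z$ is an exact projective weak equivalence, and form the pushout $\W = \Y \coprod_{\X}\Z$; the goal is to show that the canonical map $\Y \to \W$ is an exact projective weak equivalence. By Corollary \ref{corollary:weakequivalencesfibrantobjects}, it suffices to establish that for every exact projective fibrant cubical $\CC^{\ast}$-space $\U$, the induced map
\begin{equation*}
{\hom}_{\Box\CC^{\ast}-\Spc}(\QQ\W, \U) \longrightarrow {\hom}_{\Box\CC^{\ast}-\Spc}(\QQ\Y, \U)
\end{equation*}
is a weak equivalence of cubical sets, where $\QQ$ denotes cofibrant replacement in the pointwise projective model. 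Choosing $\QQ$ from the standard small object argument factorization so that $\QQ\X \cof \QQ\Y$ remains a projective cofibration, the pushout $\QQ\Y \coprod_{\QQ\X} \QQ\Z$ serves as a projective cofibrant replacement of $\W$. Applying ${\hom}_{\Box\CC^{\ast}-\Spc}(-,\U)$ converts this pushout into a pullback square of cubical sets. By Corollary \ref{corollary:pprojectivecubicalstructurecorollary} applied to the projective cofibration $\QQ\X \cof \QQ\Y$ and the projective fibration $\U \to \ast$, the edge $\hom(\QQ\Y,\U) \to \hom(\QQ\X,\U)$ is a Kan fibration. By the defining property of the exact projective weak equivalence $\X \to \Z$ together with exact projective fibrancy of $\U$, the opposite edge $\hom(\QQ\Z,\U) \to \hom(\QQ\X,\U)$ is a weak equivalence of cubical sets. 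Right properness of cubical sets (Theorem \ref{theorem:cubicalsetmodelstructure}) then yields the desired weak equivalence on the pulled back edges.

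The main obstacle I anticipate is technical rather than conceptual: one must coordinate cofibrant replacement with the pushout so that the pushout of the cofibrant replacements genuinely models $\W$ up to pointwise weak equivalence. This is handled by arranging $\QQ\X \cof \QQ\Y$ to be a projective cofibration via the functorial factorization, whence cofibrancy is preserved under pushouts and left properness of the pointwise projective model (Lemma \ref{lemma:pprojectiveproperness}) identifies $\QQ\Y \coprod_{\QQ\X} \QQ\Z$ with $\W$ up to pointwise equivalence. Everything else is formal from the cubical enrichment (Lemma \ref{lemma:pprojectivecubical}) and right properness of $\Box\Set$.
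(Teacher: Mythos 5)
Your proof is correct and lands exactly where the paper does, but the paper's own argument is a one-liner: it simply cites the standard fact that left Bousfield localizations of left proper model structures are left proper, and then points to Lemma \ref{lemma:pprojectiveproperness} for left properness of the pointwise projective model. You open with that same reduction but then go on to actually unpack and reprove the general principle in the situation at hand. That extra work is correct and makes the argument self-contained; the chain of steps — replace the span by a cofibrant span with $\QQ\X\cof\QQ\Y$ a genuine projective cofibration, identify $\QQ\Y\coprod_{\QQ\X}\QQ\Z$ as a cofibrant replacement of $\W$ via left properness of the pointwise model (Lemma \ref{lemma:pprojectiveproperness}) and the gluing lemma, apply $\hom_{\Box\CC^{\ast}-\Spc}(-,\U)$ to turn the pushout into a pullback of cubical sets, use the cubical structure (Corollary \ref{corollary:pprojectivecubicalstructurecorollary}) to see the bottom edge is a Kan fibration, the definition of exact projective weak equivalence to see the right edge is a weak equivalence, and right properness of $\Box\Set$ to transfer the equivalence to the parallel edge — is exactly the proof of the cited general fact, specialized. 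So the two proofs have the same content; yours trades a citation for roughly a page of verification. That is a defensible choice pedagogically, and it has the advantage of making transparent where the cubical enrichment and the right properness of $\Box\Set$ enter; the paper instead opts for brevity by outsourcing the argument to \cite{Hirschhorn:Modelcategories}.
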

\begin{proof}
Left properness is preserved under localizations of left proper model structures.
Lemma \ref{lemma:pprojectiveproperness} shows the pointwise projective model is left proper.
\end{proof}

We are interested in explicating sets of generating acyclic cofibrations for the exact projective model.
In the following we shall use the cubical mapping cylinder construction to produce a convenient set of generators.
In effect, 
apply the cubical mapping cylinder construction $\cyl$ to exact squares and form the pushouts: 
\begin{equation*}
\E\equiv
\minCDarrowwidth20pt
\begin{CD}
A @>>> E \\
@VVV @VVV\\
0 @>>> B
\end{CD}
\;\;\;\;
\rightsquigarrow
\;\;\;\;
\minCDarrowwidth20pt
\begin{CD}
B @>>> \cyl(B\rightarrow E) @>>> E\\
@VVV @VVV @VVV\\
0 @>>> \cyl(B\rightarrow E)\coprod_{B}0 @>>> A
\end{CD}
\end{equation*}
By Theorem \ref{theorem:exactprojective} the exact projective model structure is cubical. 
Lemmas \ref{lemma:cubicalmappingcylinderfactorization} and \ref{lemma:projectivecofibrant}
imply $B\rightarrow\cyl(B\rightarrow E)$ is a projective cofibration between projective cofibrant 
cubical $\CC^{\ast}$-spaces.
Thus $s(\E)\equiv\cyl(B\rightarrow E)\coprod_{B}0$ is projective cofibrant 
\cite[Corollary 1.11.1]{Hovey:Modelcategories}.
For the same reasons, 
applying the cubical mapping cylinder to $s(\E)\rightarrow A$ and setting 
$t(\E)\equiv\cyl\bigl(s(\E)\rightarrow A\bigr)$ we get a projective cofibration
\begin{equation}
\label{exactprojectivecofibration}
\xymatrix{
\cyl(\E)\colon s(\E)\ar[r] & t(\E).}
\end{equation}
We claim the map $\cyl(\E)$ is an exact projective weak equivalence.
To wit, 
since cubical homotopy equivalences are pointwise weak equivalences, 
it suffices by Lemma \ref{lemma:cubicalmappingcylinderfactorization} to prove that $s(\E)\rightarrow A$ 
is an exact projective weak equivalence. 
The canonical map $E\coprod_{B}0 \rightarrow A$ is an exact projective weak equivalence and there is a 
factoring $E\coprod_{B}0\rightarrow s(\E)\rightarrow A$.
Moreover, 
since $E\rightarrow\cyl(B\rightarrow E)$ is an exact acyclic projective cofibration, 
so is $E\coprod_{B}0\rightarrow s(\E)$. 

Let $J^{\E}_{\Box\CC^{\ast}-\Spc}$ denote the set of maps $J_{\Box\CC^{\ast}-\Spc}\cup J^{\cyl(\E)}_{\Box\CC^{\ast}-\Spc}$
where $J^{\cyl(\E)}_{\Box\CC^{\ast}-\Spc}$ consists of all pushout product maps 
\begin{equation*} 
\xymatrix{
s(\E)\otimes\Box^{n}\coprod_{s(\E)\otimes\partial\Box^{n}} 
t(\E)\otimes\partial\Box^{n}\ar[r] & t(\E)\otimes\Box^{n}.}
\end{equation*}

\begin{proposition}
\label{proposition:projectiveexactJ}
A cubical $\CC^{\ast}$-space is exact projective fibrant if and only if 
it has the right lifting property with respect to the set $J^{\E}_{\Box\CC^{\ast}-\Spc}$.
\end{proposition}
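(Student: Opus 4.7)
The plan is to establish both implications of the biconditional, exploiting the decomposition $J^{\E}_{\Box\CC^{\ast}-\Spc} = J_{\Box\CC^{\ast}-\Spc}\cup J^{\cyl(\E)}_{\Box\CC^{\ast}-\Spc}$, where the first summand detects projective fibrancy and the second detects the flasque condition. I will treat the two summands essentially independently, since the first summand handles the underlying pointwise projective fibrancy while the second handles the short-exact-sequence condition that upgrades projective fibrancy to exact projective fibrancy.

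For the forward direction, assume $\Z$ is exact projective fibrant. Every map in $J_{\Box\CC^{\ast}-\Spc}$ is a pointwise acyclic projective cofibration, so in particular an exact projective acyclic cofibration, and $\Z$ has the right lifting property against these by definition. For the maps in $J^{\cyl(\E)}_{\Box\CC^{\ast}-\Spc}$, note that $\cyl(\E)\colon s(\E)\to t(\E)$ was shown (in the paragraph preceding the proposition) to be a projective cofibration that is also an exact projective weak equivalence. Since the exact projective model structure is cubical by Theorem \ref{theorem:exactprojective}, the pushout product of $\cyl(\E)$ with the cofibration $\partial\Box^n\subset\Box^n$ is again an exact projective acyclic cofibration, against which $\Z$ lifts.

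For the reverse direction, suppose $\Z$ has the right lifting property with respect to $J^{\E}_{\Box\CC^{\ast}-\Spc}$. Lifting against $J_{\Box\CC^{\ast}-\Spc}$ is equivalent to projective fibrancy by Theorem \ref{theorem:projectiveobjectwise}, so it remains to show that a projective fibrant $\Z$ with the right lifting property against $J^{\cyl(\E)}_{\Box\CC^{\ast}-\Spc}$ is flasque. Applying the cubical enrichment adjunction of Corollary \ref{corollary:pprojectivecubicalstructurecorollary} to the pushout product $\cyl(\E)\,\square\,(\partial\Box^n\subset\Box^n)$, the lifting condition against $J^{\cyl(\E)}_{\Box\CC^{\ast}-\Spc}$ translates into the statement that
\begin{equation*}
{\hom}_{\Box\CC^{\ast}-\Spc}\bigl(t(\E),\Z\bigr)\longrightarrow{\hom}_{\Box\CC^{\ast}-\Spc}\bigl(s(\E),\Z\bigr)
\end{equation*}
is a trivial Kan fibration of cubical sets for every exact square $\E$, and in particular a weak equivalence.

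The main obstacle is identifying this weak equivalence with the defining condition of flasqueness. Here the cylinder constructions do the work. On the target side, $t(\E)=\cyl\bigl(s(\E)\to A\bigr)\to A$ is a cubical homotopy equivalence by Lemma \ref{lemma:cubicalmappingcylinderfactorization}, and by the Yoneda isomorphism (\ref{functioncomplexofrepresentable}) we obtain ${\hom}_{\Box\CC^{\ast}-\Spc}\bigl(t(\E),\Z\bigr)\simeq\Z(A)$. On the source side, $s(\E)=\cyl(B\to E)\coprod_{B}0$ is a pushout along the projective cofibration $B\to\cyl(B\to E)$, so applying ${\hom}_{\Box\CC^{\ast}-\Spc}(-,\Z)$ yields the pullback
\begin{equation*}
{\hom}_{\Box\CC^{\ast}-\Spc}\bigl(\cyl(B\to E),\Z\bigr)\times_{\Z(B)}\Z(0),
\end{equation*}
in which, by projective fibrancy of $\Z$ combined with Corollary \ref{corollary:pprojectivecubicalstructurecorollary}, the first factor is cubically equivalent to $\Z(E)$ and the projection to $\Z(B)$ is a Kan fibration; hence this pullback is a model for $\holim\bigl(\Z(0)\to\Z(B)\leftarrow\Z(E)\bigr)$. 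Combining the two sides, the trivial-fibration condition becomes the assertion that $\Z(A)\to\holim\bigl(\Z(0)\to\Z(B)\leftarrow\Z(E)\bigr)$ is a weak equivalence, which is precisely the homotopy cartesian condition (\ref{homotopypullback}); the contractibility of $\Z(0)$ comes from applying the same machinery to the degenerate exact square. This shows $\Z$ is flasque and completes the proof.
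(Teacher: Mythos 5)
Your proof is correct and follows essentially the same route as the paper: both pass through the cubical enrichment adjunction converting the lifting condition against $J^{\cyl(\E)}_{\Box\CC^{\ast}-\Spc}$ into the assertion that ${\hom}_{\Box\CC^{\ast}-\Spc}\bigl(t(\E),\Z\bigr)\rightarrow{\hom}_{\Box\CC^{\ast}-\Spc}\bigl(s(\E),\Z\bigr)$ is a trivial Kan fibration, and then identify $t(\E)\simeq A$ and unpack $s(\E)$ via its defining pushout to recover the flasque condition. The paper simply runs the argument as a single biconditional chain at the level of arbitrary projective fibrations $\X\rightarrow\Y$ rather than splitting into two implications for an object $\Z\rightarrow\ast$.
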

\begin{remark}
Theorem \ref{theorem:exactprojective} shows the members of $J^{\cyl(\E)}_{\Box\CC^{\ast}-\Spc}$ are 
exact acyclic projective cofibrations because the exact projective model structure is cubical and 
the map $\cyl(\E)$ in (\ref{exactprojectivecofibration}) is an exact acyclic projective cofibration.
\end{remark}
\begin{proof}
Note that a projective fibration $\X\rightarrow\Y$ has the right lifting property with respect to 
$J^{\E}_{\Box\CC^{\ast}-\Spc}$ if and only if it has the right lifting property with respect to 
$J^{\cyl(\E)}_{\Box\CC^{\ast}-\Spc}$.
By adjointness, 
the latter holds if and only if $\X(0)\rightarrow\Y(0)$ is a weak equivalence of cubical sets and for 
every exact square $\E$ obtained from a short exact sequence of $\CC^{\ast}$-algebras as in (\ref{ses}) 
there exist liftings in all diagrams of the following form:
\begin{equation*}
\minCDarrowwidth20pt
\begin{CD}
\partial\Box^{n} @>>> 
\hom_{\Box\CC^{\ast}-\Spc}\bigl(t(\E),\X\bigr)\\
@VVV @VVV\\
\Box^{n}         @>>> 
\hom_{\Box\CC^{\ast}-\Spc}\bigl(s(\E),\X\bigr)
\times_{\hom_{\Box\CC^{\ast}-\Spc}\bigl(s(\E),\Y\bigr)}
\hom_{\Box\CC^{\ast}-\Spc}\bigl(t(\E),\Y\bigr)
\end{CD}
\end{equation*}
In other words there are homotopy cartesian diagrams of cubical sets:
\begin{equation*}
\minCDarrowwidth20pt
\begin{CD}
\hom_{\Box\CC^{\ast}-\Spc}\bigl(t(\E),\X\bigr) @>>> \hom_{\Box\CC^{\ast}-\Spc}\bigl(t(\E),\Y\bigr) \\
@VVV @VVV\\
\hom_{\Box\CC^{\ast}-\Spc}\bigl(s(\E),\X\bigr) @>>> \hom_{\Box\CC^{\ast}-\Spc}\bigl(s(\E),\Y\bigr)
\end{CD}
\end{equation*}
An equivalent statement obtained from Yoneda's lemma and the construction of $\cyl(\E)$ is to require 
that there are naturally induced homotopy cartesian diagrams:
\begin{equation*}
\minCDarrowwidth20pt
\begin{CD}
\X(A) @>>> \Y(A)\\
@VVV @VVV\\
\hom_{\Box\CC^{\ast}-\Spc}\bigl(\cyl(B\rightarrow E),\X\bigr)\times_{\X(B)}\X(0)
@>>> 
\hom_{\Box\CC^{\ast}-\Spc}\bigl(\cyl(B\rightarrow E),\Y\bigr)\times_{\Y(B)}\Y(0)
\end{CD}
\end{equation*}
In particular, 
a projective fibrant cubical $\CC^{\ast}$-space $\Z$ has the right lifting property with respect 
to $J^{\cyl(\E)}_{\Box\CC^{\ast}-\Spc}$ if and only if $\Z(0)$ is contractible and for every exact square $\E$ 
obtained from a short exact sequence of $\CC^{\ast}$-algebras there is a homotopy cartesian diagram:
\begin{equation*}
\minCDarrowwidth20pt
\Z(\E)\equiv
\begin{CD}
\Z(A) @>>> \Z(E)\\
@VVV @VVV\\
\ast @>>> \Z(B)
\end{CD}
\end{equation*}
This holds if and only if $\Z$ is flasque since the latter diagram coincides with (\ref{homotopypullback}).
\end{proof}
\begin{corollary}
The exact projective model is weakly finitely generated.
\end{corollary}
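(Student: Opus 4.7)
The plan is to verify the three clauses of Definition \ref{definition:weaklyfinitelygenerated} by taking $I$ to be the set $I_{\Box\CC^{\ast}-\Spc}$ of generating projective cofibrations from Theorem \ref{theorem:projectiveobjectwise} (cofibrations are unchanged under left Bousfield localization) and $J'$ to be the set $J^{\E}_{\Box\CC^{\ast}-\Spc}$ constructed immediately before Proposition \ref{proposition:projectiveexactJ}. The domains and codomains of $I$ are $A\otimes\partial\Box^{n}$ and $A\otimes\Box^{n}$, which are finitely presentable by Example \ref{example:finitelypresentable}. By Lemma \ref{lemma:smallness} every cubical $\CC^{\ast}$-space is small, so in particular the domains of every map in $J^{\E}_{\Box\CC^{\ast}-\Spc}$ are small.

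The key finite-presentability input is that the domains and codomains of $J^{\E}_{\Box\CC^{\ast}-\Spc}$ themselves lie in ${\bf fp}\Box\CC^{\ast}-\Spc$. For the $J_{\Box\CC^{\ast}-\Spc}$ summand this is Example \ref{example:finitelypresentable}. For the $J^{\cyl(\E)}_{\Box\CC^{\ast}-\Spc}$ summand I will use closure of ${\bf fp}\Box\CC^{\ast}-\Spc$ under finite colimits and tensors, as recorded after Lemma \ref{lemma:filteredandfinitecolimitfinitelypresentable}: since $\cyl(B\rightarrow E)$ is the pushout of $B\otimes\Box^{1}\leftarrow B\otimes\Box^{0}\rightarrow E$, while $s(\E)=\cyl(B\rightarrow E)\coprod_{B}0$ and $t(\E)=\cyl\bigl(s(\E)\rightarrow A\bigr)$ are similarly finite pushouts of finitely presentable pieces, both $s(\E)$ and $t(\E)$ are finitely presentable, and hence so are their tensors with $\partial\Box^{n}$ and $\Box^{n}$ and the pushout-product domains appearing in $J^{\cyl(\E)}_{\Box\CC^{\ast}-\Spc}$.

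It remains to show that a map $f\colon\X\rightarrow\Y$ into an exact projective fibrant $\Y$ is an exact projective fibration if and only if it lies in $J^{\E}_{\Box\CC^{\ast}-\Spc}$-inj. The forward implication follows from the Remark after Proposition \ref{proposition:projectiveexactJ}, which identifies every member of $J^{\cyl(\E)}_{\Box\CC^{\ast}-\Spc}$ as an exact acyclic projective cofibration, together with the obvious fact that members of $J_{\Box\CC^{\ast}-\Spc}$ are pointwise acyclic projective cofibrations. The converse is the main obstacle: given such an $f$, having the RLP against $J_{\Box\CC^{\ast}-\Spc}$ first implies $f$ is a projective fibration, so $\X$ inherits projective fibrancy from $\Y$; then the chain of equivalences in the proof of Proposition \ref{proposition:projectiveexactJ} shows that the RLP against $J^{\cyl(\E)}_{\Box\CC^{\ast}-\Spc}$ is equivalent to each square $\X(\E)\rightarrow\Y(\E)$ being homotopy cartesian, and right properness together with flasqueness of $\Y$ promotes this to flasqueness of $\X$. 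Thus both $\X$ and $\Y$ are exact projective fibrant. Since the exact projective model structure is a left Bousfield localization of the pointwise projective model structure, a projective fibration between local fibrant objects is automatically a local fibration, i.e., an exact projective fibration, which completes the verification and hence the proof.
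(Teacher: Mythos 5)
Your proposal is correct and takes essentially the same approach as the paper, namely exhibiting $J^{\E}_{\Box\CC^{\ast}-\Spc}$ as the witnessing set $J'$ in Definition~\ref{definition:weaklyfinitelygenerated}; the paper's own proof is the single sentence that the members of $J^{\E}_{\Box\CC^{\ast}-\Spc}$ have finitely presentable domains and codomains, treating the rest as implicit in Proposition~\ref{proposition:projectiveexactJ} and its proof. You have filled in what the paper leaves tacit — in particular that finite presentability of $s(\E)$ and $t(\E)$ follows from closure of ${\bf fp}\Box\CC^{\ast}-\Spc$ under finite colimits and tensors, and that the chain of equivalences in the proof of Proposition~\ref{proposition:projectiveexactJ} (which is stated for objects but argued for maps) already shows that a projective fibration with exact projective fibrant codomain lying in $J^{\E}_{\Box\CC^{\ast}-\Spc}$-inj has exact projective fibrant domain, whence the standard fact about fibrations between local objects in a left Bousfield localization completes the verification.
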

\begin{proof}
Members of $J^{\E}_{\Box\CC^{\ast}-\Spc}$ have finitely presentable domains and codomains. 
\end{proof}
\begin{corollary}
The classes of exact projective weak equivalences, 
exact acyclic projective fibrations, 
exact projective fibrations with exact projective fibrant codomains, 
and all exact projective fibrant objects are closed under filtered colimits.
\end{corollary}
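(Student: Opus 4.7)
The plan is to invoke the machinery already assembled in the excerpt: the previous corollary asserts that the exact projective model structure is weakly finitely generated, and Lemma \ref{lemma:afgmodelcategories} says precisely that in any weakly finitely generated model category the listed classes of maps and objects are closed under filtered colimits. So the content of the statement is that the hypotheses of Lemma \ref{lemma:afgmodelcategories} have been verified for the exact projective structure.

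Concretely, I would first recall the set $J^{\E}_{\Box\CC^{\ast}-\Spc}=J_{\Box\CC^{\ast}-\Spc}\cup J^{\cyl(\E)}_{\Box\CC^{\ast}-\Spc}$ constructed above, whose members all have finitely presentable domains and codomains (domains and codomains are built out of $\CC^{\ast}$-algebras $A,B,E$, the standard cells $\Box^{n}$, $\partial\Box^{n}$, and the cubical mapping cylinders $s(\E)$, $t(\E)$, all of which are finitely presentable by Example \ref{example:finitelypresentable} and the fact that ${\bf fp}\Box\CC^{\ast}-\Spc$ is closed under finite colimits). Next I would observe that the generating projective cofibrations $I_{\Box\CC^{\ast}-\Spc}$ already have finitely presentable domains and codomains (again by Example \ref{example:finitelypresentable}), and that Proposition \ref{proposition:projectiveexactJ} identifies exact projective fibrant objects with the objects having the right lifting property against $J^{\E}_{\Box\CC^{\ast}-\Spc}$. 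Together with Lemma \ref{lemma:finitelypresentable} ensuring smallness of domains, this packages the exact projective model as weakly finitely generated in the sense of Definition \ref{definition:weaklyfinitelygenerated}.

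With that in hand, the four closure statements follow formally from Lemma \ref{lemma:afgmodelcategories}, which is the general fact about weakly finitely generated model categories. The argument in that lemma uses that in a filtered colimit one can commute $\hom(C,-)$ with the colimit whenever $C$ is finitely presentable, and then check the right lifting property against the generating sets $I$ and $J'$ pointwise along the filtered diagram; this is exactly the standard bootstrapping of filtered-colimit stability from finite presentability of (co)domains.

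The only step that is not a direct quotation is verifying finite presentability of $s(\E)$ and $t(\E)$. These are finite pushouts in $\Box\CC^{\ast}-\Spc$ of diagrams involving the finitely presentable objects $A$, $B$, $E$, $0$, $\Box^{1}$, and the zero object, so by closure of $\mathbf{fp}\Box\CC^{\ast}-\Spc$ under finite colimits (noted just before Lemma \ref{lemma:filteredandfinitecolimitfinitelypresentable}) they are finitely presentable; the pushout product maps in $J^{\cyl(\E)}_{\Box\CC^{\ast}-\Spc}$ likewise have finitely presentable source and target. I do not anticipate a genuine obstacle — the real work was done in Proposition \ref{proposition:projectiveexactJ} and its corollary — so this final statement is essentially a bookkeeping consequence of Lemma \ref{lemma:afgmodelcategories}.
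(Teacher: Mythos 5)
Your argument is exactly the one the paper intends: the preceding corollary establishes that the exact projective model structure is weakly finitely generated, and the four closure statements are then a verbatim specialization of Lemma \ref{lemma:afgmodelcategories}. The bookkeeping you supply about finite presentability of $s(\E)$ and $t(\E)$ correctly fills in the one-line justification the paper gives for the prior corollary.
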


Recall the contravariant Yoneda embedding yields a full and faithful embedding of $\CC^{\ast}-\Alg$ 
into cubical $\CC^{\ast}$-spaces.
In the next result we note that no non-isomorphic $\CC^{\ast}$-algebras become isomorphic in the 
homotopy category associated with the exact projective model structure.
This observation motivates to some extent the matrix invariant and homotopy invariant model structures 
introduced in the next sections. 
\begin{proposition}
\label{proposition:fullandfaithfulembedding}
The contravariant Yoneda embedding of $\CC^{\ast}-\Alg$ into $\Box\CC^{\ast}-\Spc$ yields a full and 
faithful embedding of the category of $\CC^{\ast}$-algebras into the homotopy category of 
the exact projective model structure.
\end{proposition}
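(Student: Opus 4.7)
The plan is to reduce the statement to Proposition \ref{proposition:homotopyclassesofmaps} by exhibiting every $\CC^{\ast}$-algebra $A$, viewed via the Yoneda embedding as the representable cubical $\CC^{\ast}$-space $\CC^{\ast}-\Alg(A,-)$, as both cofibrant and fibrant in the exact projective model structure $\M$. Once this is achieved, the morphism set in $\Ho(\M)$ will be computed as $\pi_{0}$ of the cubical function complex, which by Yoneda is a discrete cubical set whose $\pi_{0}$ returns the intended hom-set.

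Cofibrancy of the representable $B$ is immediate: Lemma \ref{lemma:projectivecofibrant} produces it in the pointwise projective model structure, and by construction (Theorem \ref{theorem:exactprojective}) the exact projective model shares the same class of cofibrations. The main step is to verify that every $\CC^{\ast}$-algebra $A$ is exact projective fibrant. Projective fibrancy of $A$ is Lemma \ref{lemma:projectivefibrant}, so only flasqueness remains. First, the degenerate exact square forces $A(0)=\CC^{\ast}-\Alg(A,0)$ to be contractible, which holds since there is a unique $\ast$-homomorphism $A\to 0$. For a genuine short exact sequence $0\to A'\to E\to B'\to 0$, the associated exact square is a pullback in $\CC^{\ast}-\Alg$ by \cite{Pedersen:pullbackpushout}. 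The representable functor $\CC^{\ast}-\Alg(A,-)$ preserves this pullback, so one obtains a strict pullback square of discrete cubical sets; since every discrete cubical set is fibrant (as every map between discrete cubical sets is a Kan fibration), strict pullbacks of such agree with homotopy pullbacks, yielding the required homotopy cartesian diagram.

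With cofibrancy of $B$ and fibrancy of $A$ in $\M$ confirmed, Proposition \ref{proposition:homotopyclassesofmaps} gives $\Ho(\M)(B,A)=\pi_{0}{\hom}_{\M}(B,A)$. By the Yoneda isomorphism (\ref{functioncomplexofrepresentable}) the cubical function complex ${\hom}_{\Box\CC^{\ast}-\Spc}(B,A)$ equals $A(B)=\CC^{\ast}-\Alg(A,B)$, regarded as a discrete cubical set; hence $\pi_{0}$ returns the set itself, giving $\Ho(\M)(B,A)=\CC^{\ast}-\Alg(A,B)$, which is precisely fullness and faithfulness of the contravariant Yoneda embedding into $\Ho(\M)$.

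The main obstacle is the flasqueness verification: one must recognize exact squares as honest pullbacks in $\CC^{\ast}-\Alg$ and exploit the clean fact that strict pullback squares of discrete Kan complexes are automatically homotopy cartesian. This lets the representability of $A$ transfer the pullback structure from $\CC^{\ast}-\Alg$ directly to cubical sets without any higher-homotopical correction; all other ingredients are essentially formal consequences of the cubical enrichment and the Yoneda lemma.
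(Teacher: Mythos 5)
Your proposal is correct and takes essentially the same route as the paper: both verify cofibrancy and exact projective fibrancy of the representables via Lemmas \ref{lemma:projectivecofibrant} and \ref{lemma:projectivefibrant} together with the observation that representable functors turn exact squares into strict pullbacks of discrete (hence fibrant) cubical sets, and both then identify $\Ho(B,A)$ with the discrete hom-set via the cubical enrichment and Yoneda. The only cosmetic difference is that you package the final step through Proposition \ref{proposition:homotopyclassesofmaps} and the isomorphism (\ref{functioncomplexofrepresentable}), while the paper invokes the homotopy-classes-of-maps theorem from \cite{Hovey:Modelcategories} and then observes that all cubical homotopies between discrete objects are constant; these are equivalent ways of finishing.
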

\begin{proof}
Every $\CC^{\ast}$-algebra is projective cofibrant by Lemma \ref{lemma:projectivecofibrant},  
projective fibrant by Lemma \ref{lemma:projectivefibrant} and also flasque:
note that $\Z(\E)$ is a pullback of discrete cubical sets for every $\CC^{\ast}$-algebra $\Z$ so the assertion 
follows from \cite[II Remark 8.17]{GJ:Modelcategories} since $\ast\rightarrow \Z(B)$ is a fibration of cubical sets.
This shows that every $\CC^{\ast}$-algebra is exact projective fibrant.
Thus \cite[Theorem 1.2.10]{Hovey:Modelcategories} implies there is a bijection between maps in the exact 
projective homotopy category, say $\Ho(\Box\CC^{\ast}-\Spc)(B,A)$, and homotopy classes of maps $[B,A]$. 
Since the exact projective model structure is cubical, 
maps $B\rightarrow A$ are homotopic if and only if there exists a cubical homotopy 
$B\otimes\Box^{1}\rightarrow A$ by an argument analogous to the proof of \cite[II Lemma 3.5]{GJ:Modelcategories} 
which shows that $B\otimes\Box^{1}$ is a cylinder object for $B$ in $\Box\CC^{\ast}-\Spc$.
Using the Yoneda embedding and the fact that $\CC^{\ast}$-algebras are discrete cubical $\CC^{\ast}$-spaces, 
so that all homotopies are constant, 
we get bijections $\Ho(\Box\CC^{\ast}-\Spc)(B,A)=\CC^{\ast}-\Spc(B,A)=\CC^{\ast}-\Alg(A,B)$.
\end{proof}

In the next result we note there exists an explicitly constructed fibrant replacement functor for the exact projective model structure.
\begin{proposition}
\label{proposition:projectiveexactreplacement}
There exists a natural transformation
$\id\rightarrow\Ex^{J^{\E}_{\Box\CC^{\ast}-\Spc}}$
such that for every cubical $\CC^{\ast}$-space $\X$ the map 
$\X\rightarrow\Ex^{J^{\E}_{\Box\CC^{\ast}-\Spc}}\X$
is an exact projective weak equivalence with an exact projective fibrant codomain.
\end{proposition}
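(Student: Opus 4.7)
The plan is a standard small object argument, exploiting the set $J^{\E}_{\Box\CC^{\ast}-\Spc}$ of generating exact acyclic projective cofibrations produced in Proposition \ref{proposition:projectiveexactJ} together with the fact that the exact projective model is weakly finitely generated. The crucial input is that all domains and codomains of maps in $J^{\E}_{\Box\CC^{\ast}-\Spc}$ are finitely presentable, which lets the iteration terminate at $\omega$ and allows exact projective fibrancy to be verified stage-by-stage.

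First I would define one step of the construction. For a cubical $\CC^{\ast}$-space $\X$ let $\Ex^{1}\X$ be the pushout
\begin{equation*}
\xymatrix{
\coprod_{(j,\varphi)} C_{j} \ar[r]\ar[d] & \X \ar[d] \\
\coprod_{(j,\varphi)} D_{j} \ar[r] & \Ex^{1}\X
}
\end{equation*}
where the disjoint unions range over all pairs $(j,\varphi)$ consisting of a map $j\colon C_{j}\to D_{j}$ in $J^{\E}_{\Box\CC^{\ast}-\Spc}$ and a map $\varphi\colon C_{j}\to\X$. This is functorial in $\X$ and comes equipped with a natural transformation $\iota_{\X}\colon\X\to\Ex^{1}\X$. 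I would iterate by setting $\Ex^{n+1}\X\equiv\Ex^{1}(\Ex^{n}\X)$ and define
\begin{equation*}
\Ex^{J^{\E}_{\Box\CC^{\ast}-\Spc}}\X\equiv\colim_{n<\omega}\Ex^{n}\X,
\end{equation*}
taking the natural transformation from the identity to be the canonical map into the colimit.

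It remains to verify the two asserted properties. By construction, $\X\to\Ex^{J^{\E}_{\Box\CC^{\ast}-\Spc}}\X$ is a transfinite composition of pushouts of coproducts of members of $J^{\E}_{\Box\CC^{\ast}-\Spc}$, so it lies in $J^{\E}_{\Box\CC^{\ast}-\Spc}$-cell and is therefore an exact acyclic projective cofibration, in particular an exact projective weak equivalence. For exact projective fibrancy of $\Ex^{J^{\E}_{\Box\CC^{\ast}-\Spc}}\X$, by Proposition \ref{proposition:projectiveexactJ} it suffices to check the right lifting property against $J^{\E}_{\Box\CC^{\ast}-\Spc}$. Given a lifting problem for some $j\colon C_{j}\to D_{j}$ in $J^{\E}_{\Box\CC^{\ast}-\Spc}$ along a map $g\colon C_{j}\to\Ex^{J^{\E}_{\Box\CC^{\ast}-\Spc}}\X$, finite presentability of $C_{j}$ implies $g$ factors through some $\Ex^{n}\X$; the very construction of $\Ex^{n+1}\X$ then provides an extension $D_{j}\to\Ex^{n+1}\X$, yielding the desired lift into the colimit.

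The main obstacle is not technical depth but making the right choice of generators: the proof hinges on having the explicit, cylinder-factored, finitely presentable set $J^{\E}_{\Box\CC^{\ast}-\Spc}$ from Proposition \ref{proposition:projectiveexactJ} rather than merely knowing that the exact projective model structure exists by Bousfield localization. Without this finitely generated detection set, neither the $\omega$-indexed termination of the small object argument nor the stage-by-stage lifting verification would be available.
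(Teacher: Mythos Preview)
Your proposal is correct and is exactly the approach the paper takes: the paper's proof consists of the single sentence ``Use Quillen's small object argument with respect to the maps $A\otimes(\sqcap^{n}_{(\alpha,i)}\subset\Box^{n})$ and $s(\E)\otimes\Box^{n}\coprod_{s(\E)\otimes\partial\Box^{n}} t(\E)\otimes\partial\Box^{n}\rightarrow t(\E)\otimes\Box^{n}$,'' and you have simply written out what that argument says, correctly invoking Proposition~\ref{proposition:projectiveexactJ} for the fibrancy of the colimit and finite presentability of the (co)domains for the $\omega$-termination.
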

\begin{proof}
Use Quillen's small object argument with respect to the maps $A\otimes(\sqcap^{n}_{(\alpha,i)}\subset\Box^{n})$ and
$s(\E)\otimes\Box^{n}\coprod_{s(\E)\otimes\partial\Box^{n}} t(\E)\otimes\partial\Box^{n}\rightarrow t(\E)\otimes\Box^{n}$.
See \cite[\S7]{DS:Modelcategories} and \S3.1 for details.
\end{proof}
The fibrant replacement functor gives a way of testing whether certain maps are exact projective fibrations: 
\begin{corollary}
\label{characterization:exactprojectivefibration}
Suppose $f\colon\X\rightarrow\Y$ is a pointwise projective fibration and $\Y$ exact projective fibrant.
Then $f$ is an exact projective fibration if and only if the diagram 
\begin{equation*}
\minCDarrowwidth20pt
\begin{CD}
\X @>>> \Ex^{J^{\E}_{\Box\CC^{\ast}-\Spc}}\X\\
@V{f}VV @VV{\Ex^{J^{\E}_{\Box\CC^{\ast}-\Spc}}f}V\\
\Y @>>> \Ex^{J^{\E}_{\Box\CC^{\ast}-\Spc}}\Y
\end{CD}
\end{equation*}
is homotopy cartesian in the pointwise projective model structure.
\end{corollary}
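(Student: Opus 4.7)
The plan is to derive both directions from a comparison with the fibrant replacement $\Ex^{J^{\E}_{\Box\CC^{\ast}-\Spc}}$ of Proposition \ref{proposition:projectiveexactreplacement}, using as a bridge the principle that whenever $\W$ is exact projective fibrant the exact projective acyclic cofibration $\W\rightarrow\Ex^{J^{\E}_{\Box\CC^{\ast}-\Spc}}\W$ is in fact a pointwise weak equivalence. This holds because it is a weak equivalence between fibrant objects in the cubical model structure of Theorem \ref{theorem:exactprojective}, hence a cubical homotopy equivalence by a standard Whitehead-type argument, and cubical homotopy equivalences are preserved by evaluation at each $\CC^{\ast}$-algebra.

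For the forward implication, the hypothesis that $f$ is an exact projective fibration with exact projective fibrant codomain $\Y$ forces $\X$ to be exact projective fibrant as the source of a fibration into a fibrant object. The bridge principle then shows that both horizontal maps of the displayed square are pointwise weak equivalences, so the square is automatically homotopy cartesian in the pointwise projective structure.

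For the converse, the bridge principle applied to $\Y$ supplies a pointwise weak equivalence $\Y\rightarrow\Ex^{J^{\E}_{\Box\CC^{\ast}-\Spc}}\Y$. Combined with right properness of the pointwise projective structure (Lemma \ref{lemma:pprojectiveproperness}), the homotopy cartesian hypothesis forces $\X\rightarrow\Ex^{J^{\E}_{\Box\CC^{\ast}-\Spc}}\X$ to also be a pointwise weak equivalence. Flasqueness is defined by homotopy cartesian conditions on values at individual $\CC^{\ast}$-algebras and is therefore preserved by pointwise weak equivalences, so $\X$ inherits flasqueness from $\Ex^{J^{\E}_{\Box\CC^{\ast}-\Spc}}\X$. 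Moreover $\X$ is pointwise projective fibrant as the source of a pointwise projective fibration into the pointwise projective fibrant $\Y$, so $\X$ is exact projective fibrant. With both $\X$ and $\Y$ flasque, the diagram chase inside the proof of Proposition \ref{proposition:projectiveexactJ} shows that the pointwise projective fibration $f$ automatically has the right lifting property against $J^{\cyl(\E)}_{\Box\CC^{\ast}-\Spc}$, and it visibly lifts against $J_{\Box\CC^{\ast}-\Spc}$. Applying the weakly finitely generated criterion (Definition \ref{definition:weaklyfinitelygenerated}) in the exact projective structure to the exact projective fibrant codomain $\Y$ then yields that $f$ is an exact projective fibration.

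The main obstacle is the bridge principle identifying exact projective weak equivalences between exact projective fibrant objects with pointwise weak equivalences. This relies crucially on the cubical enrichment, by means of which such a weak equivalence can be promoted to a cubical homotopy equivalence whose homotopy inverse and homotopies persist after evaluation at each $\CC^{\ast}$-algebra; without this structure one would have no direct route from the abstract exact projective weak equivalence to a pointwise statement.
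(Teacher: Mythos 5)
Your argument is correct in its overall structure and reaches the right conclusion, but it takes a genuinely different route from the paper. The paper disposes of the corollary in one line by citing Barwick's general recognition principle for fibrations in left Bousfield localizations (his Proposition~2.32) together with the existence of the explicit fibrant replacement from Proposition~\ref{proposition:projectiveexactreplacement}. You instead unwind the statement by hand: you identify the fibrant objects via flasqueness, transport flasqueness backwards along the pointwise weak equivalence produced by the homotopy cartesian hypothesis, and then invoke the explicit generating set $J^{\E}_{\Box\CC^{\ast}-\Spc}$ and the weakly finitely generated criterion to conclude. The paper's approach is shorter and automatically applies to the matrix invariant and homotopy invariant localizations as well; yours is more elementary and shows exactly which features of the exact projective structure are doing the work (the explicit set $J^{\E}_{\Box\CC^{\ast}-\Spc}$, flasqueness being a pointwise condition, and the weakly finitely generated property), which is informative even if longer.

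One imprecision you should repair is the justification of your ``bridge principle.'' You assert that an exact projective weak equivalence between exact projective fibrant objects is a cubical homotopy equivalence by ``a standard Whitehead-type argument,'' but the Whitehead theorem for (cubical) model categories requires both objects to be cofibrant as well as fibrant, and neither $\W$ nor $\Ex^{J^{\E}_{\Box\CC^{\ast}-\Spc}}\W$ is assumed projective cofibrant. The conclusion you want is nonetheless true, and for a simpler reason: in any left Bousfield localization a local weak equivalence between local (i.e.\ locally fibrant) objects is already a weak equivalence in the original model structure, which in the pointwise projective structure means a pointwise weak equivalence. Alternatively, you can use the remark following Corollary~\ref{corollary:weakequivalencesfibrantobjects} directly: since $\W\rightarrow\Ex^{J^{\E}_{\Box\CC^{\ast}-\Spc}}\W$ is an exact projective weak equivalence between exact projective fibrant objects, the induced map ${\hom}_{\Box\CC^{\ast}-\Spc}(A,\W)\rightarrow{\hom}_{\Box\CC^{\ast}-\Spc}(A,\Ex^{J^{\E}_{\Box\CC^{\ast}-\Spc}}\W)$ is a weak equivalence of cubical sets for each projective cofibrant $A$, and by (\ref{functioncomplexofrepresentable}) with Lemma~\ref{lemma:projectivecofibrant} this is exactly the pointwise statement. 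This also shows the cubical enrichment is being used only through the {\bf SM}7 characterization of weak equivalences, not through any homotopy-inverse construction; your closing reflection overstates its role.
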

\begin{proof}
Follows from \cite[Proposition 2.32]{Barwick} and Proposition \ref{proposition:projectiveexactreplacement}.
\end{proof}

In the following we show that the exact projective model structure is monoidal.
This is a highly desirable property from a model categorical viewpoint.
It turns out our standard conventions concerning short exact sequences of $\CC^{\ast}$-algebras is exactly 
the input we need in order to prove monoidalness. 

\begin{lemma}
\label{lemma:internalhomexactfibrant}
If $\X$ is projective cofibrant and $\Z$ is exact projective fibrant, 
then $\underline{\Hom}(\X,\Z)$ is exact projective fibrant.
\end{lemma}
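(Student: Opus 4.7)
The plan is to verify separately that $\underline{\Hom}(\X,\Z)$ is projective fibrant and that it is flasque; together these give that it is exact projective fibrant. Projective fibrancy is immediate from the second statement of Lemma \ref{lemma:pprojectivemonoidal}: applied to the projective cofibration $\emptyset\cof\X$ and the projective fibration $\Z\fib\ast$, the pullback map $\underline{\Hom}(\X,\Z)\to\underline{\Hom}(\X,\ast)\times_{\underline{\Hom}(\emptyset,\ast)}\underline{\Hom}(\emptyset,\Z)=\ast$ is a projective fibration.

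For flasqueness, fix an exact square $\E$ coming from a short exact sequence $0\to A'\to E'\to B'\to 0$. Combining the exponential law with the identification $\underline{\Hom}(B,\Y)(A)=\Y(A\otimes B)$ from (\ref{representableinternalhomevalution}) yields
\begin{equation*}
\underline{\Hom}(\X,\Z)(C)={\hom}_{\Box\CC^{\ast}-\Spc}\bigl(\X,\Z(-\otimes C)\bigr),
\end{equation*}
so $\underline{\Hom}(\X,\Z)(\E)$ is the image of the diagram $\underline{\Hom}(-,\Z)\circ\E$ in $\Box\CC^{\ast}-\Spc$ under the cubical function complex ${\hom}_{\Box\CC^{\ast}-\Spc}(\X,-)$. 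At each $D\in\CC^{\ast}-\Alg$ the diagram $\underline{\Hom}(-,\Z)\circ\E$ evaluates to $\Z(D\otimes\E)$. By the standing flatness convention on the monoidal product on $\CC^{\ast}-\Alg$, the sequence $0\to D\otimes A'\to D\otimes E'\to D\otimes B'\to 0$ remains exact, so $D\otimes\E$ is another exact square and the flasqueness of $\Z$ forces $\Z(D\otimes\E)$ to be homotopy cartesian in $\Box\Set$. Each entry $\Z(-\otimes C)$ is projective fibrant because $\Z$ is, and since pointwise projective fibrations and weak equivalences are detected pointwise, $\underline{\Hom}(-,\Z)\circ\E$ is a homotopy cartesian square of projective fibrant objects in the pointwise projective model structure.

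Projective cofibrancy of $\X$ makes the adjunction $\X\otimes-\dashv{\hom}_{\Box\CC^{\ast}-\Spc}(\X,-)$ between $\Box\Set$ and $\Box\CC^{\ast}-\Spc$ a Quillen pair (Corollary \ref{corollary:pprojectivecubicalstructurecorollary} applied with $j\colon\emptyset\cof\X$). The right Quillen functor ${\hom}_{\Box\CC^{\ast}-\Spc}(\X,-)$ therefore preserves homotopy cartesian squares of fibrant objects and sends $\underline{\Hom}(-,\Z)\circ\E$ to the required homotopy cartesian square $\underline{\Hom}(\X,\Z)(\E)$ in $\Box\Set$. For the degenerate exact square the condition reduces to contractibility of $\underline{\Hom}(\X,\Z)(0)={\hom}_{\Box\CC^{\ast}-\Spc}\bigl(\X,\Z(-\otimes 0)\bigr)$; since $D\otimes 0=0$ in $\CC^{\ast}-\Alg$ for every $D$, this is the cubical function complex into the constant cubical $\CC^{\ast}$-space at the contractible Kan complex $\Z(0)$, and the same right Quillen argument produces a weak equivalence with $\ast$.

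The principal obstacle is the identification of pointwise homotopy cartesian squares of projective fibrant cubical $\CC^{\ast}$-spaces with homotopy cartesian squares in the pointwise projective model structure that are preserved by right Quillen functors. This is routine once one notes that limits in $\Box\CC^{\ast}-\Spc$, together with weak equivalences and fibrations in the pointwise projective model, are all computed pointwise, so that homotopy pullbacks may be formed pointwise up to pointwise weak equivalence.
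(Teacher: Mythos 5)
Your proof is correct, but it takes a genuinely different route from the paper's. The paper argues syntactically with generating acyclic cofibrations: it invokes Lemma \ref{lemma:pprojectivemonoidal}(ii) to reduce to checking that $\underline{\Hom}\bigl(A\otimes(\partial\Box^n\subset\Box^n),\Z\bigr)$ has the right lifting property with respect to $J^{\cyl(\E)}_{\Box\CC^{\ast}-\Spc}$, and then by adjointness reduces this to showing the pushout product of $j_{\E}$ with $A\otimes(\partial\Box^n\subset\Box^n)$ lies in $J^{\cyl(\E)}_{\Box\CC^{\ast}-\Spc}$-cell, using the isomorphism $j_{\E}\otimes A = j_{\E\otimes A}$. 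Your argument is semantic: it unwinds the definition of flasqueness as a homotopy-pullback condition, identifies $\underline{\Hom}(\X,\Z)(\E)$ as the image under the right Quillen functor ${\hom}_{\Box\CC^{\ast}-\Spc}(\X,-)$ of the square $\underline{\Hom}(-,\Z)\circ\E$, and observes via $\underline{\Hom}(C,\Z)(D)=\Z(D\otimes C)$ that this square is pointwise homotopy cartesian with pointwise fibrant corners. Both arguments hinge on the same crucial input — the flatness convention ensuring $D\otimes\E$ (respectively $\E\otimes A$) is again an exact square — so neither is cheaper at the mathematical core. What your version buys is that it bypasses the explicit generating set $J^{\cyl(\E)}_{\Box\CC^{\ast}-\Spc}$ and works directly from the definition of flasqueness, which makes the role of flatness transparent and adapts immediately to the matrix-invariant and homotopy-invariant cases (where the paper in fact repeats the same generator-based pushout-product computation). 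What the paper's version buys is that it stays entirely inside the combinatorial framework of generators and lifting properties, which is the engine driving everything else in the section (in particular, Proposition \ref{proposition:projectiveexactJ} has already set up $J^{\cyl(\E)}_{\Box\CC^{\ast}-\Spc}$ as the tool of choice). One small point worth spelling out in your argument: when you claim the right Quillen functor preserves homotopy cartesian squares of fibrant objects, this uses Ken Brown's lemma (to pass weak equivalences between fibrant objects through $R$) together with the facts that right adjoints preserve pullbacks and that a pullback of a fibration between fibrant objects along a map of fibrant objects is again fibrant; you gesture at this but it is the one place where the reader has to fill in a standard lemma.
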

\begin{proof}
Lemma \ref{lemma:pprojectivemonoidal}(ii) shows it suffices to check 
$\underline{\Hom}\bigl(A\otimes(\partial\Box^{n}\subset\Box^{n}),\Z\bigr)$ 
has the right lifting property with respect to $J^{\cyl(\E)}_{\Box\CC^{\ast}-\Spc}$.
By adjointness, 
if suffices to check that for every exact square $\E$ the pushout product map of 
\begin{equation*}
\xymatrix{
j_{\E}\equiv
s(\E)\otimes\Box^{m}
\coprod_{s(\E)\otimes\partial\Box^{m}} 
t(\E)\otimes\partial\Box^{m}
\ar[r] & 
t(\E)\otimes\Box^{m}}
\end{equation*}
and $A\otimes(\partial\Box^{n}\subset\Box^{n})$ is a composition of pushouts of maps in 
$J^{\cyl(\E)}_{\Box\CC^{\ast}-\Spc}$.
This follows because there is an isomorphism  $j_{\E}\otimes A=j_{\E\otimes A}$ where $\E\otimes A$ 
denotes the exact square obtained by tensoring with $A$, and the pushout product map of 
$\partial\Box^{m}\subset\Box^{m}$ and $\partial\Box^{n}\subset\Box^{n}$ is a monomorphism of cubical 
sets formed by attaching cells.
\end{proof}
\begin{proposition}
\label{proposition:eprojectivemonoidal}
The exact projective model structure is monoidal.
\end{proposition}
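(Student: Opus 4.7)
The plan is to verify the pushout product axiom; the unit axiom is automatic because $\C$ is projective cofibrant by Lemma \ref{lemma:projectivecofibrant}. First, since the exact projective and pointwise projective model structures share the same cofibrations, Lemma \ref{lemma:pprojectivemonoidal}(i) immediately gives the cofibration part: the pushout product of two projective cofibrations is a projective cofibration. What remains is the acyclic part, namely that if $i\colon \X\cof \Y$ is an exact acyclic projective cofibration and $j\colon \U\cof \V$ is a projective cofibration, then $i\,\Box\, j$ is an exact projective weak equivalence. The symmetric case where $j$ is a pointwise acyclic projective fibration reduces directly to Lemma \ref{lemma:pprojectivemonoidal}(ii), since acyclic fibrations in a Bousfield localization coincide with acyclic fibrations in the underlying model structure (both being characterized by the right lifting property against the common class of cofibrations).

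The strategy is to view the exact projective model as the Bousfield localization of the pointwise projective model (constructed in the proof of Theorem \ref{theorem:exactprojective}) with respect to the set
\begin{equation*}
S\equiv \{\hocolim(\E)\to A : \E \text{ an exact square}\}\cup\{\emptyset\to 0\}.
\end{equation*}
By the standard criterion for monoidal Bousfield localization (in the spirit of Proposition \ref{proposition:almostfinitelygeneratedpreservedunderlocalization}), it suffices to check that for every $s\in S$ and every generating projective cofibration $C\otimes(\partial\Box^{n}\subset\Box^{n})$ with $C$ a $\CC^{\ast}$-algebra, the pushout product $s\,\Box\,\bigl(C\otimes(\partial\Box^{n}\subset\Box^{n})\bigr)$ is an $S$-local (i.e.\ exact projective) weak equivalence.

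The crucial observation, already implicit in the proof of Lemma \ref{lemma:internalhomexactfibrant}, is that the $\CC^{\ast}$-tensor product is flat with respect to the chosen class of short exact sequences. Hence, tensoring the exact square $\E$ coming from $0\to A\to E\to B\to 0$ with $C$ yields another exact square $\E\otimes C$ coming from $0\to A\otimes C\to E\otimes C\to B\otimes C\to 0$. Since $-\otimes C$ is a left Quillen endofunctor of the pointwise projective model by Lemma \ref{lemma:pprojectivemonoidal}, it preserves homotopy colimits up to pointwise equivalence, so the tensored map $s\otimes C\colon \hocolim(\E)\otimes C\to A\otimes C$ is pointwise equivalent to $\hocolim(\E\otimes C)\to A\otimes C$, which is itself a member of $S$. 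Thus $s\otimes C$ is already an exact projective weak equivalence between projective cofibrant objects; taking the further cubical pushout product with $(\partial\Box^{n}\subset\Box^{n})$ preserves this property by the SM$7$ axiom for the exact projective model, which is cubical by Theorem \ref{theorem:exactprojective}.

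The main obstacle is threading the flatness argument through the machinery correctly: one has to verify that the class $S$ is stable under tensoring with projective cofibrant cubical $\CC^{\ast}$-spaces in a way that respects the localization, and this ultimately rests on the conventions fixed at the start of \S3.2 (all short exact sequences for the maximal tensor product, completely positive split ones for the minimal tensor product). A secondary technical point is invoking the monoidal Bousfield localization criterion in a form suitable for the weakly finitely generated setting, so that testing against the distinguished generating maps really does suffice; the weakly finitely generated structure established after Proposition \ref{proposition:projectiveexactJ} is what makes this reduction legitimate.
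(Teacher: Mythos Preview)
Your argument is essentially correct and relies on the same key ingredient as the paper's proof---flatness of the monoidal product on $\CC^{\ast}-\Alg$, so that tensoring an exact square $\E$ with a $\CC^{\ast}$-algebra $C$ yields another exact square $\E\otimes C$---but you organize the verification differently. The paper works on the \emph{right-adjoint} side: it first establishes Lemma~\ref{lemma:internalhomexactfibrant} (if $\X$ is projective cofibrant and $\Z$ exact projective fibrant, then $\underline{\Hom}(\X,\Z)$ is exact projective fibrant), and then, for an arbitrary exact acyclic projective cofibration $\X\to\Y$ and exact fibrant $\Z$, observes that the square of cubical function complexes with entries ${\hom}_{\Box\CC^{\ast}-\Spc}\bigl(\Y\text{ or }\X,\underline{\Hom}(A\otimes\Box^{n}\text{ or }A\otimes\partial\Box^{n},\Z)\bigr)$ has both horizontal maps weak equivalences, hence is homotopy cartesian; by adjunction this says the pushout product with the generating cofibration $A\otimes(\partial\Box^{n}\subset\Box^{n})$ is an exact weak equivalence. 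You instead work on the \emph{left-adjoint} side, invoking the monoidal Bousfield localization criterion and checking directly that each localizing map $s\in S$ remains $S$-local after tensoring with a representable. Both routes are standard and the flatness step is identical (indeed you note it is ``already implicit in the proof of Lemma~\ref{lemma:internalhomexactfibrant}''). Your route has the virtue of making the role of flatness explicit from the outset; the paper's route has the virtue of producing the useful Lemma~\ref{lemma:internalhomexactfibrant} as a byproduct, which is reused later (e.g.\ in Lemma~\ref{lemma:exactprojectivemonoidal}).

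Two small expositional slips are worth noting. First, your ``symmetric case'' sentence is garbled: you write ``$j$ is a pointwise acyclic projective fibration,'' but $j$ is a cofibration; presumably you meant either the symmetric case where $j$ (rather than $i$) is exact acyclic, or else the pullback-hom formulation with an exact acyclic \emph{fibration}. Either way the point is moot since pushout products are symmetric. Second, your final step invokes ``SM$7$ for the exact projective model'' to handle the cubical factor $(\partial\Box^{n}\subset\Box^{n})$, but SM$7$ concerns pushout products of \emph{cofibrations}, and $s\otimes C$ is only a weak equivalence between cofibrant objects, not a cofibration. The correct justification is that tensoring with a cubical set is left Quillen for the exact projective model (Theorem~\ref{theorem:exactprojective}), hence by Ken Brown preserves weak equivalences between cofibrant objects; together with left properness and two-out-of-three this yields the claim. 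Neither slip affects the validity of the overall argument.
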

\begin{proof}
Suppose $\X\rightarrow\Y$ is an exact acyclic projective cofibration and moreover that $\Z$ is exact projective fibrant.
There is an induced commutative diagram of cubical function complexes:
\begin{equation}
\label{exactmonoidalhomotopypullback}
\minCDarrowwidth20pt
\begin{CD}
{\hom}_{\Box\CC^{\ast}-\Spc}\bigl(\Y,\underline{\Hom}(A\otimes\Box^{n},\Z)\bigr) 
@>>> {\hom}_{\Box\CC^{\ast}-\Spc}\bigl(\X,\underline{\Hom}(A\otimes\Box^{n},\Z)\bigr) \\
@VVV @VVV \\ 
{\hom}_{\Box\CC^{\ast}-\Spc}\bigl(\Y,\underline{\Hom}(A\otimes\partial\Box^{n},\Z)\bigr) 
@>>> {\hom}_{\Box\CC^{\ast}-\Spc}\bigl(\X,\underline{\Hom}(A\otimes\partial\Box^{n},\Z)\bigr)
\end{CD}
\end{equation}
Lemma \ref{lemma:internalhomexactfibrant} implies the horizontal maps in 
(\ref{exactmonoidalhomotopypullback}) are weak equivalences. 
Thus (\ref{exactmonoidalhomotopypullback}) is a homotopy cartesian diagram.
\end{proof}

Next we record the analog of Lemma \ref{lemma:pprojectiveboxing} in the exact projective model structure.
\begin{lemma}
\label{lemma:exactprojectiveboxing}
Suppose $\Z$ is a projective cofibrant cubical $\CC^{\ast}$-space.
Then 
\begin{equation*}
\xymatrix{
\Z\otimes-\colon\Box\CC^{\ast}-\Spc\ar[r] & \Box\CC^{\ast}-\Spc } 
\end{equation*}
preserves the classes of acyclic projective cofibrations and 
exact weak equivalences between projective cofibrant cubical $\CC^{\ast}$-spaces.
\end{lemma}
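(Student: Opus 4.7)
The plan is to derive the lemma directly from Proposition \ref{proposition:eprojectivemonoidal}, which asserts that the exact projective model structure is monoidal, together with Ken Brown's lemma.

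First I would handle the preservation of exact acyclic projective cofibrations. Given a projective cofibrant $\Z$, the map $\emptyset \to \Z$ is a projective cofibration, so for any exact acyclic projective cofibration $f\colon \X \to \Y$ the pushout product of $\emptyset \to \Z$ with $f$ is, by the definition of a monoidal model category, an exact acyclic projective cofibration. Since the tensor product sends the initial object to the initial object, this pushout product reduces to $\id_{\Z} \otimes f\colon \Z \otimes \X \to \Z \otimes \Y$. Hence $\Z \otimes f$ is an exact acyclic projective cofibration, which is the first claim.

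Next I would address exact weak equivalences between projective cofibrant objects. Suppose $g\colon \X \to \Y$ is such a map. The argument above shows that the functor $\Z \otimes -$ sends exact acyclic projective cofibrations between projective cofibrant cubical $\CC^{\ast}$-spaces to exact weak equivalences, because projective cofibrancy is preserved under $\Z \otimes -$ as a consequence of Lemma \ref{lemma:pprojectiveboxing} (the underlying pointwise projective cofibrations are preserved). Ken Brown's lemma \cite[Lemma 1.1.12]{Hovey:Modelcategories}, applied to the exact projective model structure on the domain and to the class of exact weak equivalences on the target, then upgrades this to the statement that $\Z \otimes -$ takes all exact weak equivalences between projective cofibrant objects to exact weak equivalences.

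The content is essentially bookkeeping once monoidalness is in hand; the only subtle point is that Ken Brown's lemma requires the source category to be a model category and the target to have a reasonable notion of weak equivalence satisfying two-out-of-three, both of which are available by Theorem \ref{theorem:exactprojective}. I do not anticipate a serious obstacle.
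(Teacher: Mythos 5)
Your proof is correct and takes essentially the same route the paper intends: the lemma is stated immediately after Proposition \ref{proposition:eprojectivemonoidal} and, in parallel with the one-line proof of Lemma \ref{lemma:pprojectiveboxing} ("this follows because the pointwise projective model structure is monoidal"), the intended argument is exactly that monoidalness gives the first claim via the pushout product with $\emptyset\cof\Z$, and Ken Brown's lemma then yields the second. One small presentational point: the ``because'' connecting the preservation of cofibrancy to the hypothesis of Ken Brown's lemma is not a logical dependency — Ken Brown only needs that $\Z\otimes-$ sends exact acyclic projective cofibrations between cofibrant objects to exact weak equivalences, which your first paragraph already provides; the cofibrancy preservation from Lemma \ref{lemma:pprojectiveboxing} is true but not what makes the hypothesis hold.
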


For reference we include the next result which captures equivalent formulations of the statement that the 
exact projective model structure is monoidal.
\begin{lemma}
\label{lemma:exactprojectivemonoidal}
The following  statements hold and are equivalent.
\begin{itemize}
\item
If $i\colon\X\cof\Y$ and $j\colon\U\cof\V$ are projective cofibrations and either $i$ or $j$ is 
an exact projective weak equivalence, then so is
\begin{equation*}
\xymatrix{
\X\otimes\V\coprod_{\X\otimes\U}\Y\otimes\U\ar[r] & \Y\otimes\V.}
\end{equation*}
\item
If $j\colon\U\cof\V$ is a projective cofibration and $k\colon\Z\fib\W$ is an exact projective fibration, 
then the pullback map
\begin{equation*}
\xymatrix{
\underline{\Hom}(\V,\Z)
\ar[r] & 
\underline{\Hom}(\V,\W)\times_{\underline{\Hom}(\U,\W)}\underline{\Hom}(\U,\Z)}
\end{equation*}
is an exact projective fibration which is exact acyclic if either $j$ or $k$ is.
\item
With the same assumptions as in the previous item, the induced map
\begin{equation*}
\xymatrix{
{\hom}_{\Box\CC^{\ast}-\Spc}(\V,\Z)\ar[r] & 
{\hom}_{\Box\CC^{\ast}-\Spc}(\V,\W)
\times_{{\hom}_{\Box\CC^{\ast}-\Spc}(\U,\W)}{\hom}_{\Box\CC^{\ast}-\Spc}(\U,\Z)}
\end{equation*}
is a Kan fibration which is a weak equivalence of cubical sets if in addition either $j$ or $k$ is 
exact acyclic.
\end{itemize} 
\end{lemma}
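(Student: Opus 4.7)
The three statements are formal reformulations of the pushout-product axiom for the exact projective model structure, viewed through different adjunctions, so the bulk of the work is contained in Proposition \ref{proposition:eprojectivemonoidal}.

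For the equivalence (i) $\Leftrightarrow$ (ii), I would apply the tensor-hom adjunction $(\otimes, \underline{\Hom})$: the standard diagonal-filling correspondence shows that a lifting problem for the pushout-product $\X\otimes\V\coprod_{\X\otimes\U}\Y\otimes\U \to \Y\otimes\V$ against $k\colon\Z\to\W$ transposes bijectively to a lifting problem for $i\colon\X\to\Y$ against the pullback-hom map of (ii). Since exact projective fibrations (respectively exact acyclic projective fibrations) are characterized by the right lifting property against exact acyclic projective cofibrations (respectively projective cofibrations), and similarly for the left class, the two axioms are equivalent.

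For (ii) $\Leftrightarrow$ (iii), I would use the identity $\underline{\Hom}(\X,\Y)(\C) = {\hom}_{\Box\CC^{\ast}-\Spc}(\X,\Y)$, obtained from (\ref{representableinternalhomevalution}) by setting $A=\C$: evaluating the pullback diagram in (ii) at $\C$ reproduces the diagram in (iii), and since every exact projective fibration is in particular a pointwise projective fibration (the class of acyclic cofibrations enlarges under Bousfield localization, so the class of fibrations shrinks), the implication (ii) $\Rightarrow$ (iii) is immediate. The converse follows by applying (iii) to the pair $(j, \underline{\Hom}(A,k))$ for every $\CC^{\ast}$-algebra $A$: the functor $-\otimes A$ is exact projective left Quillen by Lemma \ref{lemma:exactprojectiveboxing}, so its right adjoint $\underline{\Hom}(A,-)$ preserves exact projective fibrations and acyclic ones, and (iii) applied pointwise recovers the full strength of (ii).

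To prove (i), note that the pushout-product of two projective cofibrations is again a projective cofibration by Lemma \ref{lemma:pprojectivemonoidal}, since the two model structures share cofibrations. For the acyclic case, the closure properties of acyclic cofibrations under transfinite composition, cobase change and retracts reduce the question to verifying that the pushout-product is an exact projective weak equivalence when one factor is an exact acyclic projective cofibration $i$ and the other is a generating projective cofibration $B \otimes(\partial\Box^n\subset\Box^n)$. By Corollary \ref{corollary:weakequivalencesfibrantobjects}, this in turn reduces to showing that the induced map of cubical function complexes into any exact projective fibrant $\Z$ is a weak equivalence; that is precisely the homotopy cartesianness of the displayed diagram in the proof of Proposition \ref{proposition:eprojectivemonoidal}, which rests on the fibrancy of $\underline{\Hom}(A\otimes\Box^n,\Z)$ granted by Lemma \ref{lemma:internalhomexactfibrant}. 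The main technical obstacle is this final reduction to generators and to exact projective fibrant targets; the equivalences above are purely formal.
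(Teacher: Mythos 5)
Your proposal is correct and matches the paper's intent: the paper states this lemma ``for reference'' without writing out a proof, treating it as a formal reformulation of Proposition \ref{proposition:eprojectivemonoidal}, and your adjunction arguments for (i) $\Leftrightarrow$ (ii) $\Leftrightarrow$ (iii) are the same ones the paper spells out in the pointwise case (Lemma \ref{lemma:pprojectivemonoidal}), while the substantive reduction to generators and exact fibrant targets via Lemma \ref{lemma:internalhomexactfibrant} is exactly the content of Proposition \ref{proposition:eprojectivemonoidal}. One small citation quibble: for the fact that $-\otimes A$ is a left Quillen functor of the exact projective model structure, Lemma \ref{lemma:exactprojectiveboxing} alone does not quite say this (it only addresses exact weak equivalences between cofibrant objects); the cleaner source is Proposition \ref{proposition:eprojectivemonoidal} together with $A$ being projective cofibrant, or equivalently item (i) which you establish first.
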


Next we construct the exact injective model structure on cubical $\CC^{\ast}$-spaces. 
\begin{definition}
A cubical $\CC^{\ast}$-space $\Z$ is exact injective fibrant if it is injective fibrant and flasque.
A map $f\colon\X\rightarrow\Y$ is an exact injective weak equivalence if for every exact injective 
fibrant cubical $\CC^{\ast}$-space $\Z$ there is a naturally induced weak equivalence of cubical sets
\begin{equation*}
\xymatrix{
{\hom}_{\Box\CC^{\ast}-\Spc}(f,\Z)
\colon
{\hom}_{\Box\CC^{\ast}-\Spc}(\Y,\Z)\ar[r] & {\hom}_{\Box\CC^{\ast}-\Spc}(\X,\Z).}
\end{equation*}
The exact injective fibrations of cubical $\CC^{\ast}$-spaces are maps having the right lifting property 
with respect to exact injective acyclic cofibrations.
\end{definition}
\begin{remark}
Note there is no cofibrant replacement functor involved in the definition of exact injective fibrant 
objects due to the fact that every cubical $\CC^{\ast}$-space is cofibrant in the injective model structure.
\end{remark}
The proof of the next result proceeds as the proof of Theorem \ref{theorem:exactprojective} by localizing 
the pointwise injective model structure with respect to the maps $\hocolim (\E)\rightarrow A$.
\begin{theorem}
\label{theorem:exactinjective}
The classes of monomorphisms, exact injective fibrations and exact injective weak equivalences determine a 
combinatorial, cubical and left proper model structure on $\Box\CC^{\ast}-\Spc$.
\end{theorem}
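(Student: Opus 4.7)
The plan is to mirror the proof of Theorem \ref{theorem:exactprojective}, this time localizing the pointwise injective model structure instead of the projective one. By Theorem \ref{theorem:injectiveobjectwise} and Lemma \ref{lemma:injectiveobjectwise}, the pointwise injective model structure on $\Box\CC^{\ast}-\Spc$ is combinatorial, cubical and left proper, so Smith's theorem on left Bousfield localization of combinatorial left proper model categories applies. I would localize with respect to the set $\mathcal{L}$ of maps
\begin{equation*}
\hocolim(0\leftarrow B\rightarrow E)\longrightarrow A
\end{equation*}
indexed by exact squares $\E$ as in (\ref{exactsquare}), together with the map $\emptyset\rightarrow 0$. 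Since cofibrations and fibrant objects together determine the remaining classes in any model structure, and the cofibrations are unchanged (they remain the monomorphisms), the crux of the argument is to identify the fibrant objects of the localized structure with the exact injective fibrant cubical $\CC^{\ast}$-spaces.

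For that identification, I would argue just as in the proof of Theorem \ref{theorem:exactprojective}: a cubical $\CC^{\ast}$-space $\Z$ is $\mathcal{L}$-local fibrant iff it is injective fibrant, $\Z(0)$ is contractible, and for every exact square $\E$ the cubical set map
\begin{equation*}
{\hom}_{\Box\CC^{\ast}-\Spc}(A,\Z)\longrightarrow{\hom}_{\Box\CC^{\ast}-\Spc}\bigl(\hocolim(0\leftarrow B\rightarrow E),\Z\bigr)
\end{equation*}
is a weak equivalence. Using the Yoneda identification (\ref{functioncomplexofrepresentable}) and the fact that the mapping complex out of a homotopy colimit is the homotopy limit of the mapping complexes, this reduces to the condition that $\Z(A)\rightarrow\hofib(\Z(E)\rightarrow\Z(B))$ is a weak equivalence of cubical sets, i.e.\ that $\Z$ is flasque. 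One nuance compared to the projective case is that no cofibrant replacement is needed in the definition of the weak equivalences, because every object is cofibrant in the injective structure; the definition of exact injective weak equivalence given just before the statement matches the $\mathcal{L}$-local weak equivalences out of the gate.

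The remaining assertions are formal. Left properness is preserved under left Bousfield localization, giving the left properness clause. Combinatoriality is likewise preserved, since the localized structure of a combinatorial model category is combinatorial. That the localized structure is cubical follows because the pointwise injective model structure is cubical and the localizing set $\mathcal{L}$ consists of maps between cofibrant objects, so the pushout product axiom with respect to monomorphisms of cubical sets passes to the localization by the standard argument (equivalently, by the analogue of Lemma \ref{lemma:internalhomexactfibrant} applied to exact injective fibrant objects). The one step that will require a small extra check rather than a direct quotation is verifying that $\mathcal{L}$ really is a set (which is immediate from our standing assumption that $\CC^{\ast}-\Alg$ has a small skeleton) so that Smith's theorem is legitimately applicable; I expect this and the fibrant-object identification to be the only non-routine points, both of which carry over from the exact projective case with essentially no change.
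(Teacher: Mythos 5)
Your proposal is correct and matches the paper's own argument: the paper states explicitly that Theorem~\ref{theorem:exactinjective} is proved ``as the proof of Theorem~\ref{theorem:exactprojective} by localizing the pointwise injective model structure with respect to the maps $\hocolim(\E)\rightarrow A$,'' and your identification of the $\mathcal{L}$-local fibrant objects with the exact injective fibrant ones, together with the formal inheritance of combinatoriality, left properness, and the cubical structure, is exactly the intended argument. Your observation that no cofibrant replacement is needed because every object is injective-cofibrant is the one genuine point of divergence from the projective case, and the paper records it in the accompanying remark.
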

\begin{proposition}
\label{proposition:exactprojectiveinjective}
The classes of exact injective and projective weak equivalences coincide.
Hence the identity functor on $\Box\CC^{\ast}-\Spc$ is a Quillen equivalence between the exact injective 
and projective model structures.
\end{proposition}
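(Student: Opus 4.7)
The plan is to show the two classes of weak equivalences coincide, from which the Quillen equivalence follows immediately. The starting point is that the underlying pointwise model structures already share their weak equivalences (Lemma \ref{lemma:injectiveobjectwise}) and that both exact structures are localizations at the same set $\{\hocolim(\E)\to A\}$.

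First I would verify two compatibility statements about fibrant objects. Flasqueness is defined entirely through the $\CC^{\ast}$-algebra values and is therefore intrinsic. Moreover, every injective fibration is a projective fibration: by Lemma \ref{projectiveimpliesinjective} each generator in $J_{\Box\CC^{\ast}-\Spc}$ is a pointwise acyclic monomorphism, and injective fibrations have the right lifting property against these. Hence every exact injective fibrant object is already exact projective fibrant. Conversely, given an exact projective fibrant $W$, its pointwise injective fibrant replacement $W\hookrightarrow W'$ is a pointwise trivial monomorphism, so $W'$ inherits flasqueness from $W$ and is both exact injective and exact projective fibrant; moreover, every exact injective fibrant object is its own such replacement.

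The heart of the argument is a comparison of cubical function complexes. Given $f\colon\X\to\Y$ with projective cofibrant replacement $\QQ f$, an exact projective fibrant $W$, and its injective fibrant replacement $W\hookrightarrow W'$, the {\bf SM}7 axioms of the pointwise projective cubical structure (Corollary \ref{corollary:pprojectivecubicalstructurecorollary}) and of the pointwise injective cubical structure (Lemma \ref{lemma:injectiveobjectwise}) produce weak equivalences of cubical sets
\begin{equation*}
\hom(\QQ f,W)\overset{\sim}{\longrightarrow}\hom(\QQ f,W')\overset{\sim}{\longleftarrow}\hom(f,W'),
\end{equation*}
the first because $W\to W'$ is a pointwise weak equivalence between projective fibrant objects, the second because $\QQ\X\to\X$ is a pointwise weak equivalence of injective cofibrant objects mapping to the injective fibrant $W'$. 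By two-out-of-three, $\hom(\QQ f,W)$ is a weak equivalence if and only if $\hom(f,W')$ is. Since by the previous paragraph the assignment $W\mapsto W'$ is a surjection onto the class of exact injective fibrant objects, $f$ is an exact projective weak equivalence if and only if it is an exact injective weak equivalence.

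For the Quillen equivalence, the identity functor from the exact projective to the exact injective model structure preserves cofibrations (projective cofibrations are monomorphisms by Lemma \ref{projectiveimpliesinjective}) and, by the coincidence of weak equivalences, preserves acyclic cofibrations; hence it is left Quillen, and the coinciding weak equivalences force the total derived functors to be mutually inverse equivalences on homotopy categories. The main obstacle is the third paragraph: reconciling the projective cofibrant replacement appearing in the definition of exact projective weak equivalence with the absence of cofibrant replacement in the definition of exact injective weak equivalence, which is made possible only by exploiting both cubical enrichments of $\Box\CC^{\ast}-\Spc$ simultaneously.
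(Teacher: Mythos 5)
Your proof is correct and takes essentially the same approach as the paper's: both proofs reduce the comparison to the observations that every exact injective fibrant object is exact projective fibrant, that exact projective fibrant objects admit pointwise injective fibrant replacements retaining flasqueness, and that the {\bf SM}7 axioms for both cubical enrichments reconcile the presence/absence of the cofibrant replacement $\QQ$ in the two definitions. The only cosmetic difference is that you package the two implications of the paper's proof into a single zig-zag of function complexes, whereas the paper separates them and uses a commutative square in the second direction; the underlying argument is the same.
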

\begin{proof}
If $\Z$ is exact injective fibrant then clearly $\Z$ is exact projective fibrant. 
Thus if $f\colon\X\rightarrow\Y$ is an exact projective weak equivalence,
then map ${\hom}_{\Box\CC^{\ast}-\Spc}(\QQ f,\Z)$ is a weak equivalence a cubical sets. 
Now $\QQ f$ maps to $f$ via pointwise weak equivalences, 
so ${\hom}_{\Box\CC^{\ast}-\Spc}(f,\Z)$ is also a weak equivalence.
\\ \indent
If $\Z$ is exact projective fibrant there exists a pointwise weak equivalence $Z\rightarrow\W$
where $\W$ is injective fibrant.
It follows that $\W$ is flasque.
Now if $f\colon\X\rightarrow\Y$ is an exact injective weak equivalence, 
using that the exact projective model structure is cubical we get the following diagram with 
vertical weak equivalences:
\begin{equation*}
\xymatrix{
{\hom}_{\Box\CC^{\ast}-\Spc}(\QQ\Y,\Z) \ar[d]_{\sim} \ar[r] & 
{\hom}_{\Box\CC^{\ast}-\Spc}(\QQ\X,\Z)\ar[d]^{\sim}\\
{\hom}_{\Box\CC^{\ast}-\Spc}(\QQ\Y,\W) \ar[r] & 
{\hom}_{\Box\CC^{\ast}-\Spc}(\QQ\X,\W) }
\end{equation*}
It remains to note that ${\hom}_{\Box\CC^{\ast}-\Spc}(\QQ f,\W)$ is a weak equivalence because $\QQ f$ is an 
exact injective weak equivalence.
\end{proof}
\begin{remark}
The proof of Proposition \ref{proposition:exactprojectiveinjective} applies with small variations to both the 
matrix invariant and the homotopy invariant model structures on $\Box\CC^{\ast}-\Spc$ which will be constructed  
in Sections \ref{subsection:matrixinvariantmodelstructures} and \ref{subsection:homotopyinvariantmodelstructures}
respectively;
details in these proofs will be left implicit in the next sections. 
By Proposition \ref{proposition:cellular} we may appeal to localizations of left proper cellular model structures
for the existence of the exact model structures.
The same applies to the matrix invariant and homotopy invariant model structures.
\end{remark}

By localizing the pointwise model structures on $\Delta\CC^{\ast}-\Spc$ with respect to the maps 
$\hocolim(\E)\rightarrow A$ as above,
we obtain exact model structures on simplicial $\CC^{\ast}$-spaces.
They acquire the same additional properties as the corresponding exact model structures on cubical $\CC^{\ast}$-spaces.
As a special case of \cite[Theorem 3.3.20]{Hirschhorn:Modelcategories} and Lemma \ref{lemma:projectiveBoxDelta} 
we deduce that the corresponding homotopy categories are equivalent:
\begin{lemma}
\label{lemma:exactprojectiveBoxDelta}
There are naturally induced Quillen equivalences between exact injective and projective 
model structures:
\begin{equation*}
\xymatrix{
\Box\CC^{\ast}-\Spc \ar@<3pt>[r] & \Delta\CC^{\ast}-\Spc \ar@<3pt>[l] }
\end{equation*}
\end{lemma}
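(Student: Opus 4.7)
The plan is to reduce the statement to the analogous fact at the pointwise level, already recorded in Lemma \ref{lemma:projectiveBoxDelta}, together with the comparison between exact injective and exact projective model structures as established in Proposition \ref{proposition:exactprojectiveinjective}. By Proposition \ref{proposition:cellular} and Lemma \ref{lemma:pprojectiveproperness} the pointwise projective model structures are left proper and cellular (in fact combinatorial), so the localization machinery applies uniformly, and the exact model structures are obtained by localizing at the set $S=\{\hocolim(\E)\rightarrow A\}$ indexed by exact squares together with $\emptyset\rightarrow 0$.

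First I would treat the projective case. Denote by $T\colon\Box\Set\rightleftarrows\Delta\Set\colon U$ the standard Quillen equivalence between cubical and simplicial sets, extended pointwise to a Quillen pair $\Box\CC^{\ast}-\Spc\rightleftarrows\Delta\CC^{\ast}-\Spc$ underlying Lemma \ref{lemma:projectiveBoxDelta}. Since $T$ is a strong symmetric monoidal left Quillen functor and the local set $S$ in $\Box\CC^{\ast}-\Spc$ consists of maps of the form $A\otimes(\hocolim(\E)\rightarrow\C)$ built from cubical homotopy colimits, its image $\mathbf{L}T(S)$ in $\Delta\CC^{\ast}-\Spc$ is weakly equivalent to the analogous simplicial set of generators. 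Then I would invoke \cite[Theorem 3.3.20]{Hirschhorn:Modelcategories}, which asserts that a Quillen equivalence between left proper cellular model categories descends to a Quillen equivalence of their left Bousfield localizations at corresponding sets of maps. This yields the desired equivalence between the exact projective model structures on $\Box\CC^{\ast}-\Spc$ and $\Delta\CC^{\ast}-\Spc$.

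Second, I would promote this to a Quillen equivalence between the exact injective model structures. By Proposition \ref{proposition:exactprojectiveinjective}, on each of the two categories the identity functor is a Quillen equivalence between the exact projective and exact injective model structures; the analogous statement in $\Delta\CC^{\ast}-\Spc$ follows by the same argument, using the pointwise injective vs.\ projective Quillen equivalence for simplicial $\CC^{\ast}$-spaces and the fact that flasqueness is intrinsic to a cubical or simplicial $\CC^{\ast}$-space, depending only on its values on short exact sequences of $\CC^{\ast}$-algebras. Composing the three Quillen equivalences (injective-to-projective on the cubical side, cubical-to-simplicial on the projective side, and projective-to-injective on the simplicial side) gives the injective comparison asserted in the lemma.

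The principal obstacle is verifying the hypotheses of \cite[Theorem 3.3.20]{Hirschhorn:Modelcategories} precisely, namely that the derived unit and counit of $T\dashv U$ carry the cubical localizing set $S$ to a set weakly equivalent to the simplicial one; this amounts to checking that $T$ commutes up to pointwise weak equivalence with the homotopy colimit construction applied to exact squares, which is a matter of inspecting that $T$ preserves homotopy colimits of finite diagrams, together with the fact that a $\CC^{\ast}$-algebra regarded as a discrete cubical $\CC^{\ast}$-space goes to the same $\CC^{\ast}$-algebra regarded as a discrete simplicial $\CC^{\ast}$-space. The remaining verifications are routine in view of the already established properties of the pointwise and exact model structures.
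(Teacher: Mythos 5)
For the exact projective model structures your route matches the paper exactly: localize the pointwise Quillen equivalence of Lemma \ref{lemma:projectiveBoxDelta} at the set $\{\hocolim(\E)\rightarrow A\}\cup\{\emptyset\rightarrow 0\}$ and invoke \cite[Theorem~3.3.20]{Hirschhorn:Modelcategories}, with the cellularity and left properness supplied by Proposition \ref{proposition:cellular} and Lemma \ref{lemma:pprojectiveproperness}. The verification that the triangulation functor carries the cubical localizing set to one weakly equivalent to the simplicial one is exactly the issue you identify, and the observations you list there (discrete objects go to the corresponding discrete objects, finite homotopy colimits are preserved) suffice.

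For the exact injective model structures your argument has a directionality problem. The identity functor of Proposition \ref{proposition:exactprojectiveinjective} is \emph{left} Quillen from the exact \emph{projective} to the exact \emph{injective} model structure (fewer cofibrations to more cofibrations), so the three Quillen pairs you list do not compose as a single Quillen adjunction between the exact injective structures on $\Box\CC^{\ast}-\Spc$ and $\Delta\CC^{\ast}-\Spc$: what you obtain is a zigzag
\begin{equation*}
\text{inj}_{\Box}\longleftarrow
\text{proj}_{\Box}\longrightarrow
\text{proj}_{\Delta}\longrightarrow
\text{inj}_{\Delta},
\end{equation*}
which does give an equivalence of homotopy categories, but not directly the assertion that the single adjunction drawn in the lemma is a Quillen equivalence for the injective structures. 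The cleaner route, and what the paper evidently intends, is to observe (as the paper does in the paragraph following Lemma \ref{lemma:projectiveBoxDelta}) that the same adjunction is already a Quillen equivalence between the pointwise \emph{injective} model structures -- the triangulation functor preserves monomorphisms and pointwise weak equivalences -- and then apply \cite[Theorem~3.3.20]{Hirschhorn:Modelcategories} to this injective pointwise Quillen equivalence exactly as you did in the projective case. Your flasqueness observation is correct and is what makes the two localizing sets correspond.
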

\begin{remark}
In the following we shall introduce the matrix invariant and homotopy invariant model structures.
As above, 
these model structures furnish two Quillen equivalences between $\Box\CC^{\ast}-\Spc$ and 
$\Delta\CC^{\ast}-\Spc$.
This observation will be employed implicitly in later sections in the proof of representability 
of Kasparov's $KK$-groups in the pointed unstable homotopy category and when dealing with the
triangulated structure of the stable homotopy category of $\CC^{\ast}$-algebras.
\end{remark}
\newpage

\subsection{Matrix invariant model structures}
\label{subsection:matrixinvariantmodelstructures}
In this section we refine the exact projective model structures by imposing a natural fibrancy condition 
determined by the highly noncommutative data of Morita-Rieffel equivalence or matrix invariance.
This amounts to the choice of a rank-one projection $p\in\K$ such that the corner embedding 
$A\rightarrow A\otimes\K=\colim M_{n}(A)$ given by $a\mapsto a\otimes p$ becomes a ``matrix exact''
weak equivalence.
To achieve this we shall localize the exact model structures with respect to such a rank-one projection.
With this approach the results and techniques in the previous section carry over in gross outline.
However, 
there are a couple of technical differences and the exposition tends to emphasize these.
As a motivation for what follows,
recall that matrix invariance is a natural and basic property in the theory of $K$-theory of 
$\CC^{\ast}$-algebras \cite{Connes:noncommutative}.
\vspace{0.1in}

A cubical $\CC^{\ast}$-space $\Z$ is matrix exact projective fibrant if $\Z$ is exact projective fibrant 
and for every $\CC^{\ast}$-algebra $A$ the induced map of cubical $\CC^{\ast}$-spaces 
\begin{equation}
\label{matrixinvariantlocalizationset}
\xymatrix{
A\otimes\K\ar[r] & A } 
\end{equation}
given by a rank one projection induces a weak equivalence of cubical sets
\begin{equation}
\label{matrixinvariantset}
\xymatrix{
\Z(A)={\hom}_{\Box\CC^{\ast}-\Spc}(A,\Z)
\ar[r] &
{\hom}_{\Box\CC^{\ast}-\Spc}(A\otimes\K,\Z)=\Z(A\otimes\K).}
\end{equation}
The definition of $\Z$ being matrix exact projective fibrant is independent of the choice of a rank one projection.

Recall $\QQ$ is a pointwise projective cofibrant replacement functor.
A map between cubical $\CC^{\ast}$-spaces $\X\rightarrow\Y$ is a matrix exact projective weak equivalence 
if for every matrix exact projective fibrant $\Z$ there is a naturally induced weak equivalence of cubical sets
\begin{equation}
\label{matrixexactweakequivalence}
\xymatrix{
{\hom}_{\Box\CC^{\ast}-\Spc}(\QQ\Y,\Z)
\ar[r] &
{\hom}_{\Box\CC^{\ast}-\Spc}(\QQ\X,\Z).}
\end{equation}
\begin{example}
\label{example:matrixexactweakequivalence}
The map $A\otimes\K\rightarrow A$ is a matrix exact projective weak equivalence for every 
$\CC^{\ast}$-algebra $A$ because representable cubical $\CC^{\ast}$-spaces are projective cofibrant. 
If $A\otimes\K\rightarrow M_{n}(A)$ is a matrix exact projective weak equivalence for some $n\geq 1$, 
then so is $M_{n}(A)\rightarrow A$.
\end{example}
The matrix invariant projective model structure is defined by taking the Bousfield localization 
of the exact projective model structure on $\Box\CC^{\ast}-\Spc$ with respect to the set of maps 
obtained by letting $A$ run through all isomorphism classes of $\CC^{\ast}$-algebras in 
(\ref{matrixinvariantlocalizationset}).
Thus the next result is a consequence of 
Theorem \ref{theorem:exactprojective} and Lemma \ref{lemma:projectiveexactleftproper}.
\begin{theorem}
\label{theorem:matrixexactprojective}
The classes of matrix exact projective weak equivalences defined by (\ref{matrixexactweakequivalence}), 
matrix exact projective fibrations and projective cofibrations form a combinatorial, cubical and left 
proper model structure on $\Box\CC^{\ast}-\Spc$.
\end{theorem}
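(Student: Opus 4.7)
The plan is to obtain the matrix invariant projective model structure as a Bousfield localization of the exact projective model structure established in Theorem \ref{theorem:exactprojective}. Fix a small skeleton of $\CC^{\ast}-\Alg$ (which exists by the remarks at the beginning of Section \ref{subsection:CC-spaces}) and let
\begin{equation*}
S \equiv \{A\otimes\K \rightarrow A \mid A \in \CC^{\ast}-\Alg\}
\end{equation*}
denote the set of corner embeddings determined by a choice of rank one projection $p \in \K$. Because localization depends only on the cofibrations and fibrant objects, we may assume each map in $S$ has been replaced by a projective cofibration between projective cofibrant objects via the cubical mapping cylinder construction of Lemma \ref{lemma:cubicalmappingcylinderfactorization}, applied to the projective cofibrant representables $A\otimes\K$ and $A$ (cf.\ Lemma \ref{lemma:projectivecofibrant}).

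Next I would invoke the Bousfield localization machinery. By Theorem \ref{theorem:exactprojective} the exact projective model structure is combinatorial and cubical, and by Lemma \ref{lemma:projectiveexactleftproper} it is left proper. Jeff Smith's theorem, as reviewed in the discussion following the definition of a combinatorial model category, provides the $S$-localized model structure on $\Box\CC^{\ast}-\Spc$: its cofibrations are the projective cofibrations, and the weak equivalences are the $S$-local ones. Alternatively, one may appeal to Hirschhorn's cellular localization theorem together with Proposition \ref{proposition:cellular}. The resulting model structure is automatically left proper and combinatorial since both properties are preserved by Bousfield localization.

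To identify this abstract localization with the matrix invariant projective model structure, I would characterize its fibrant objects. By the universal property of Bousfield localization, a cubical $\CC^{\ast}$-space $\Z$ is $S$-local fibrant if and only if it is exact projective fibrant and, for every $A$ in the skeleton, the map of cubical function complexes
\begin{equation*}
{\hom}_{\Box\CC^{\ast}-\Spc}(A,\Z) \rightarrow {\hom}_{\Box\CC^{\ast}-\Spc}(A\otimes\K,\Z)
\end{equation*}
is a weak equivalence. By the Yoneda identification (\ref{functioncomplexofrepresentable}), these are precisely the matrix exact projective fibrant objects as defined in (\ref{matrixinvariantset}). The corresponding weak equivalences are then precisely the matrix exact projective weak equivalences of (\ref{matrixexactweakequivalence}), since a map $f$ is an $S$-local equivalence if and only if ${\hom}_{\Box\CC^{\ast}-\Spc}(\QQ f, \Z)$ is a weak equivalence of cubical sets for every $S$-local fibrant $\Z$.

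The one genuine point that needs care, and which I regard as the main obstacle, is preservation of the cubical enrichment under localization. This amounts to checking the pushout-product axiom with respect to $\otimes_{\Box\Set}$ in the localized structure, i.e.\ that for a generating cofibration $\partial\Box^{n}\subset\Box^{n}$ of cubical sets and a generating $S$-local trivial cofibration $j$, the pushout product remains a matrix exact projective weak equivalence. For the old generating trivial cofibrations this is already contained in Theorem \ref{theorem:exactprojective}; for the new generators obtained from $S$ via the cubical mapping cylinder, it suffices to observe that tensoring the map $A\otimes\K \rightarrow A$ with any standard cube $\Box^{n}$ yields another map in $S$ up to the already established cubical structure, since $(A\otimes\K)\otimes\Box^{n} = (A\otimes\Box^{n})\otimes\K$ and the projective cofibrancy of $\Box^{n}$-tensored representables (Lemma \ref{lemma:projectivecofibrant}) allows us to reduce back to members of $S$. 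Combined with the general fact that Bousfield localization of a cubical model category is cubical whenever the localizing set is stable under tensoring with finite cubes, this yields the cubical property and completes the proof.
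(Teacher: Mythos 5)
Your proposal is correct and follows essentially the same route as the paper: the matrix invariant projective model structure is defined as the left Bousfield localization of the exact projective model structure at the corner embeddings $A\otimes\K\rightarrow A$, and combinatoriality, left properness, and the identification of fibrant objects and weak equivalences are exactly as you describe, with Theorem \ref{theorem:exactprojective} and Lemma \ref{lemma:projectiveexactleftproper} supplying the hypotheses for the localization. The only place you overcomplicate matters is the cubical enrichment: by Remark \ref{remark:cubicallocalization} and the general theory of (left proper, cellular/combinatorial) enriched Bousfield localizations, the localized structure is automatically cubical because the cofibrations are unchanged and the local equivalences are detected by cubical function complexes into local objects, so the ``stable under tensoring with finite cubes'' hypothesis you append to the general fact is neither standard nor needed.
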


Applying the cubical mapping cylinder to the map in (\ref{matrixinvariantlocalizationset}) yields a 
factoring
\begin{equation}
\label{matrixexactfactoring}
\xymatrix{
A\otimes\K\ar[r] & \cyl^{A}_{\K}\ar[r] & A.} 
\end{equation}
Recall the map $A\otimes\K\rightarrow \cyl^{A}_{\K}$ is a projective cofibration and 
$\cyl^{A}_{\K}\rightarrow A$ is a cubical homotopy equivalence.
In particular, $\cyl^{A}_{\K}$ is projective cofibrant.
Example \ref{example:matrixexactweakequivalence} and saturation imply  
$A\otimes\K\rightarrow \cyl^{A}_{\K}$ is a matrix exact projective weak equivalence.
Since the matrix invariant model structure is cubical, 
the factoring (\ref{matrixexactfactoring}) and the generating cofibrations 
$\partial\Box^{n}\subset \Box^{n}$ for $\Box\Set$ induce matrix exact acyclic projective cofibrations.
\vspace{0.1in}

Let $J^{\cyl(\K)}_{\Box\CC^{\ast}-\Spc}$ be the set consisting of the matrix exact acyclic projective 
cofibrations
\begin{equation}
\xymatrix{
(A\otimes\K)\otimes\Box^{n}\coprod_{(A\otimes\K)\otimes\partial\Box^{n}}
\cyl^{A}_{\K}\otimes\partial\Box^{n}\ar[r] & \cyl^{A}_{\K}\otimes\Box^{n} }
\end{equation}
where $A\in\CC^{\ast}-\Alg$ and $n\geq 0$. 

\begin{proposition}
\label{proposition:projectivematrixexactJ}
Define
\begin{equation*}
J^{\K}_{\Box\CC^{\ast}-\Spc}\equiv 
J_{\Box\CC^{\ast}-\Spc}
\cup
J^{\cyl(\E)}_{\Box\CC^{\ast}-\Spc}
\cup
J^{\cyl(\K)}_{\Box\CC^{\ast}-\Spc}.
\end{equation*}
Then a map of cubical $\CC^{\ast}$-spaces 
with a matrix exact projective fibrant codomain has the right lifting property with respect 
to $J^{\K}_{\Box\CC^{\ast}-\Spc}$ if and only if it is a matrix exact projective fibration.
\end{proposition}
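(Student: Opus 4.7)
The plan is to prove both directions, noting that the only-if direction is essentially bookkeeping while the if direction requires unpacking the lifting properties set by set.

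For the only-if direction, I would verify that every element of $J^{\K}_{\Box\CC^{\ast}-\Spc}$ is a matrix exact acyclic projective cofibration, so that RLP against this set is automatic for any matrix exact projective fibration. The members of $J_{\Box\CC^{\ast}-\Spc}$ are even pointwise acyclic projective cofibrations. The members of $J^{\cyl(\E)}_{\Box\CC^{\ast}-\Spc}$ are exact acyclic projective cofibrations by the construction preceding Proposition \ref{proposition:projectiveexactJ}, hence matrix exact acyclic. The members of $J^{\cyl(\K)}_{\Box\CC^{\ast}-\Spc}$ are built by pushout-product from the map $A\otimes\K\to\cyl^{A}_{\K}$, which is a projective cofibration and a matrix exact projective weak equivalence by Example \ref{example:matrixexactweakequivalence} combined with Lemma \ref{lemma:cubicalmappingcylinderfactorization} and saturation; the pushout-products with $\partial\Box^{n}\subset\Box^{n}$ remain matrix exact acyclic projective cofibrations because the matrix invariant projective model structure is cubical by Theorem \ref{theorem:matrixexactprojective}.

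For the if direction, assume $f\colon\X\to\Y$ has RLP with respect to $J^{\K}_{\Box\CC^{\ast}-\Spc}$ with $\Y$ matrix exact projective fibrant. The RLP against $J_{\Box\CC^{\ast}-\Spc}$ shows $f$ is a pointwise projective fibration. Arguing as in the proof of Proposition \ref{proposition:projectiveexactJ}, the RLP against $J^{\cyl(\E)}_{\Box\CC^{\ast}-\Spc}$ translates, via adjointness and Yoneda, into the requirement that for every exact square $\E$ the induced square
\begin{equation*}
\minCDarrowwidth20pt
\begin{CD}
\X(A) @>>> \Y(A) \\
@VVV @VVV \\
{\hom}_{\Box\CC^{\ast}-\Spc}\bigl(\cyl(B\to E),\X\bigr)\times_{\X(B)}\X(0) @>>> {\hom}_{\Box\CC^{\ast}-\Spc}\bigl(\cyl(B\to E),\Y\bigr)\times_{\Y(B)}\Y(0)
\end{CD}
\end{equation*}
is homotopy cartesian in cubical sets. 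Since $\Y$ is flasque, its right-hand column is a weak equivalence, and hence $\X$ is flasque as well, so it is exact projective fibrant. In an entirely analogous fashion, the RLP against $J^{\cyl(\K)}_{\Box\CC^{\ast}-\Spc}$, together with the matrix invariance of $\Y$, forces the naturally induced maps $\X(A)\to\X(A\otimes\K)$ to be weak equivalences for every $A$. Thus $\X$ too is matrix exact projective fibrant.

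To finish, I would invoke the general principle that in a left Bousfield localization a fibration between local fibrant objects is the same as a fibration in the base model structure (a consequence of the characterization of fibrations via lifting together with Ken Brown's lemma applied to the local weak equivalences among local objects). Applying this twice---first to pass from pointwise projective fibrations to exact projective fibrations between exact projective fibrant objects, and then from exact projective fibrations to matrix exact projective fibrations between matrix exact projective fibrant objects---yields that $f$ is a matrix exact projective fibration. The main technical obstacle is the adjointness bookkeeping in the second paragraph, translating RLP against the pushout-product generators into the two homotopy cartesian squares; this is a straightforward extension of the argument in Proposition \ref{proposition:projectiveexactJ} to the relative case but needs to be written out carefully to keep track of the exact square $\E$ and the cylinder factorization.
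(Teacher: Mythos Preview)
Your proposal is correct and follows essentially the same route as the paper. The paper's proof is a one-line citation of Proposition~\ref{proposition:projectiveexactJ} together with \cite[3.3.16]{Hirschhorn:Modelcategories}; the latter is precisely the ``general principle'' you invoke at the end (that in a left Bousfield localization a base-model fibration between local fibrant objects is a local fibration), and the former supplies the relative homotopy-cartesian-square translation you carry out in your second paragraph---indeed, the proof of Proposition~\ref{proposition:projectiveexactJ} already treats a general projective fibration $\X\to\Y$ before specializing to $\Y=\ast$. You have simply unpacked what these two citations mean.
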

\begin{proof}
Follows from Proposition \ref{proposition:projectiveexactJ} and \cite[3.3.16]{Hirschhorn:Modelcategories}.
\end{proof}

We shall use the set $J^{\cyl(\K)}_{\Box\CC^{\ast}-\Spc}$ to prove the following crucial result.
\begin{proposition}
\label{proposition:miprojectivemonoidal}
The matrix invariant projective model structure is monoidal.
\end{proposition}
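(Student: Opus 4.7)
The plan is to imitate the proof of Proposition \ref{proposition:eprojectivemonoidal}, exploiting the fact that the matrix invariant projective model structure has the same cofibrations as the exact projective model structure and that it is cubical by Theorem \ref{theorem:matrixexactprojective}. Since the class of cofibrations is unchanged under Bousfield localization, the first half of the pushout product axiom---that the pushout product of two projective cofibrations is a projective cofibration---is already contained in Lemma \ref{lemma:pprojectivemonoidal} and needs nothing new.

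For the acyclic half, I would first establish a matrix-invariant analogue of Lemma \ref{lemma:internalhomexactfibrant}: if $\X$ is projective cofibrant and $\Z$ is matrix exact projective fibrant, then $\underline{\Hom}(\X,\Z)$ is matrix exact projective fibrant. By Proposition \ref{proposition:projectivematrixexactJ} this reduces to verifying the right lifting property of $\underline{\Hom}(\X,\Z)$ against $J^{\K}_{\Box\CC^{\ast}-\Spc}$. Lifting against $J_{\Box\CC^{\ast}-\Spc}\cup J^{\cyl(\E)}_{\Box\CC^{\ast}-\Spc}$ is precisely what Lemma \ref{lemma:internalhomexactfibrant} provides; only lifting against $J^{\cyl(\K)}_{\Box\CC^{\ast}-\Spc}$ remains to be checked.

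After reducing to generating projective cofibrations $\X=A\otimes(\partial\Box^{n}\subset\Box^{n})$ as in Lemma \ref{lemma:pprojectivemonoidal}(ii), the adjoint lifting problem becomes the following: show that the pushout product of $A\otimes(\partial\Box^{n}\subset\Box^{n})$ with the generator
\begin{equation*}
(B\otimes\K)\otimes\Box^{m}\coprod_{(B\otimes\K)\otimes\partial\Box^{m}}\cyl^{B}_{\K}\otimes\partial\Box^{m}\longrightarrow\cyl^{B}_{\K}\otimes\Box^{m}
\end{equation*}
again lies, up to canonical isomorphism, in $J^{\cyl(\K)}_{\Box\CC^{\ast}-\Spc}$. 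The key identifications are $(A\otimes\K)\otimes B\cong(A\otimes B)\otimes\K$, which comes from the symmetric monoidal structure on $\CC^{\ast}-\Alg$, and $A\otimes\cyl^{B}_{\K}\cong\cyl^{A\otimes B}_{\K}$, which comes from the fact that $A\otimes-$ is a left adjoint and therefore commutes with the pushout defining the cubical mapping cylinder. Combined with the standard cubical identity $(\partial\Box^{n}\subset\Box^{n})\Box(\partial\Box^{m}\subset\Box^{m})=(\partial\Box^{n+m}\subset\Box^{n+m})$, the iterated pushout product reorganizes as a single member of $J^{\cyl(\K)}_{\Box\CC^{\ast}-\Spc}$ corresponding to the $\CC^{\ast}$-algebra $A\otimes B$ and cubical index $n+m$, yielding the required lifting.

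Given this internal hom preservation, the standard adjoint reformulations---exactly as listed in Lemma \ref{lemma:exactprojectivemonoidal}---will translate it into the acyclic half of the pushout product axiom: whenever $i\colon\X\cof\Y$ and $j\colon\U\cof\V$ are projective cofibrations and either is a matrix exact projective weak equivalence, the pushout product is a matrix exact acyclic projective cofibration. The main obstacle is bookkeeping rather than conceptual: one must check carefully that tensoring with $A$ commutes with the cubical mapping cylinder, and that the triple pushout product really reduces to a single generator of $J^{\cyl(\K)}_{\Box\CC^{\ast}-\Spc}$ rather than merely a cellular construction from such generators (which would still suffice, but with more work).
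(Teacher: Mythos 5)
Your proposal is correct, and its technical core coincides with the paper's: both rest on establishing that $\underline{\Hom}(\X,\Z)$ is matrix exact projective fibrant whenever $\X$ is projective cofibrant and $\Z$ is matrix exact projective fibrant (the paper's Lemma \ref{lemma:internalhommatrixexactfibrant}), and both prove this via exactly the isomorphisms you identify, $(A\otimes\K)\otimes B\cong(A\otimes B)\otimes\K$ and $\cyl^{A}_{\K}\otimes B\cong\cyl^{A\otimes B}_{\K}$, together with the reduction to $J^{\cyl(\K)}_{\Box\CC^{\ast}-\Spc}$ after delegating the exact part to Lemma \ref{lemma:internalhomexactfibrant}. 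Where the two arguments differ is in the final translation into the pushout product axiom: you propose to mimic the homotopy-cartesian function-complex argument of Proposition \ref{proposition:eprojectivemonoidal}, while the paper instead first proves Lemma \ref{lemma:matrixinvariantboxing} (tensoring a matrix exact weak equivalence of cofibrant objects with a cofibrant object yields a matrix exact weak equivalence) and then deduces the monoidal axiom via the cube lemma \cite[Corollary 1.1.11]{Hovey:Modelcategories} applied to the domains and codomains of generating cofibrations. Both routes are standard and both work. One small remark in your favor: your claim that the triple pushout product reorganizes as a single generator of $J^{\cyl(\K)}_{\Box\CC^{\ast}-\Spc}$ (with algebra $A\otimes B$ and cubical index $n+m$) is in fact correct, using associativity and commutativity of the pushout product together with the identity $(\partial\Box^{n}\subset\Box^{n})\Box(\partial\Box^{m}\subset\Box^{m})\cong(\partial\Box^{n+m}\subset\Box^{n+m})$, so the weaker caveat you flag at the end (``merely a cellular construction'') is unnecessary; the paper itself only asserts the weaker cellular statement, so your observation is a mild sharpening.
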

\begin{proof}
If $\Z$ is projective cofibrant, 
Lemmas \ref{lemma:pprojectiveboxing} and \ref{lemma:matrixinvariantboxing} imply the functor 
\begin{equation*}
\xymatrix{
\Z\otimes-\colon\Box\CC^{\ast}-\Spc\ar[r] & \Box\CC^{\ast}-\Spc}
\end{equation*}
preserves matrix exact acyclic projective cofibrations $f\colon\X\rightarrow\Y$.
In particular, 
this result applies to the domains and codomains of the generating projective cofibrations 
$I_{\Box\CC^{\ast}-\Spc}$.
Hence for every $\CC^{\ast}$-algebra $A$ and $n\geq0$ there is a commutative diagram where the horizontal 
maps are matrix exact acyclic projective cofibrations:
\begin{equation*}
\minCDarrowwidth20pt
\begin{CD}
(A\otimes\partial\Box^{n})\otimes\X @>>> (A\otimes\partial\Box^{n})\otimes\Y \\
@VVV @VVV \\
(A\otimes\Box^{n})\otimes\X @>>> (A\otimes\Box^{n})\otimes\Y
\end{CD}
\end{equation*}
Thus,
by \cite[Corollary 1.1.11]{Hovey:Modelcategories}, 
the pushout map
\begin{equation*}
\xymatrix{
(A\otimes\Box^{n})\otimes\X
\ar[r] & 
(A\otimes\Box^{n})\otimes\X
\coprod_{(A\otimes\partial\Box^{n})\otimes\X}
(A\otimes\partial\Box^{n})\otimes\Y}
\end{equation*}
of $(A\otimes\partial\Box^{n})\otimes f$ along 
$\bigl(A\otimes(\partial\Box^{n}\subset\Box^{n})\bigr)\otimes\X$ 
is a matrix exact acyclic projective cofibration.
By saturation if follows that the pushout product map of $A\otimes(\partial\Box^{n}\subset\Box^{n})$ 
and $f$ is a matrix exact projective weak equivalence. 
\end{proof}

To complete the proof of Proposition \ref{proposition:miprojectivemonoidal} it remains to prove the next result.
\begin{lemma}
\label{lemma:matrixinvariantboxing}
If $\X\rightarrow\Y$ is a matrix exact weak equivalence and $\W$ is projective cofibrant,
then the induced map $\X\otimes\W\rightarrow\Y\otimes\W$ is a matrix exact projective weak equivalence.
\end{lemma}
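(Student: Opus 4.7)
The plan is to reduce the statement to a fibrancy claim for internal hom objects: for every projective cofibrant $\W$ and every matrix exact projective fibrant $\Z$, the cubical $\CC^{\ast}$-space $\underline{\Hom}(\W,\Z)$ is again matrix exact projective fibrant. Once this is in place, an adjunction argument of the type used in Lemma \ref{lemma:exactprojectiveboxing} closes the proof.

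First I would use the pointwise projective cofibrant replacement functor $\QQ$ and Lemma \ref{lemma:pprojectiveboxing} to reduce the lemma to the case where the source and target of $\X\rightarrow\Y$ are projective cofibrant. Indeed, pointwise weak equivalences are matrix exact weak equivalences, $-\otimes\W$ preserves pointwise weak equivalences between projective cofibrant objects, and the two-out-of-three property for matrix exact weak equivalences then turns the claim for $\X\otimes\W\rightarrow\Y\otimes\W$ into the claim for $\QQ\X\otimes\W\rightarrow\QQ\Y\otimes\W$; both endpoints of the latter are projective cofibrant. By the exponential law
\begin{equation*}
{\hom}_{\Box\CC^{\ast}-\Spc}(\QQ\X\otimes\W,\Z)={\hom}_{\Box\CC^{\ast}-\Spc}\bigl(\QQ\X,\underline{\Hom}(\W,\Z)\bigr),
\end{equation*}
together with the analogue for $\QQ\Y$, the matrix exact weak equivalence $\QQ\X\rightarrow\QQ\Y$ produces a weak equivalence of cubical function complexes as soon as $\underline{\Hom}(\W,\Z)$ is matrix exact projective fibrant for every matrix exact projective fibrant $\Z$.

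The main step is therefore the fibrancy claim. Lemma \ref{lemma:internalhomexactfibrant} supplies exact projective fibrancy, so only matrix invariance of $\underline{\Hom}(\W,\Z)$ has to be verified. Using the identification ${\underline{\Hom}(\W,\Z)(A)={\hom}_{\Box\CC^{\ast}-\Spc}(\W,\Z(-\otimes A))}$ recorded after (\ref{representableinternalhomevalution}), the structure map $\underline{\Hom}(\W,\Z)(A)\rightarrow\underline{\Hom}(\W,\Z)(A\otimes\K)$ is obtained by applying ${\hom}_{\Box\CC^{\ast}-\Spc}(\W,-)$ to the natural transformation $\Z(-\otimes A)\rightarrow\Z(-\otimes A\otimes\K)$. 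At each $\CC^{\ast}$-algebra $B$ this transformation is the matrix invariance map $\Z(B\otimes A)\rightarrow\Z(B\otimes A\otimes\K)$ of $\Z$ applied to $B\otimes A$, hence a weak equivalence of Kan complexes; thus it is a pointwise weak equivalence between projective fibrant cubical $\CC^{\ast}$-spaces. Corollary \ref{corollary:pprojectivecubicalstructurecorollary}(i) shows that ${\hom}_{\Box\CC^{\ast}-\Spc}(\W,-)$ sends pointwise acyclic projective fibrations to acyclic Kan fibrations, and Ken Brown's lemma upgrades this to preservation of arbitrary pointwise weak equivalences between projective fibrant objects, delivering the required matrix invariance.

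The only real danger in this approach is circularity: the question whether $-\otimes\W$ preserves the canonical map $A\otimes\K\rightarrow A$ up to weak equivalence is exactly the statement being proved, so one must resist routing the argument through it. This is avoided by extracting the pointwise weak equivalence $\Z(-\otimes A)\rightarrow\Z(-\otimes A\otimes\K)$ directly from the matrix invariance built into $\Z$, without ever tensoring $\W$ against the matrix invariance map. Assembling the three steps—cofibrant reduction, adjunction, and matrix exact projective fibrancy of $\underline{\Hom}(\W,\Z)$—then completes the proof.
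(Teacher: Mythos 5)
Your proof is correct, and it takes the same overall reduction as the paper (via the exponential law $\hom(\QQ\X\otimes\W,\Z)\cong\hom\bigl(\QQ\X,\underline{\Hom}(\W,\Z)\bigr)$, reduce to showing $\underline{\Hom}(\W,\Z)$ is matrix exact projective fibrant), but the key fibrancy step is handled by a genuinely different argument. The paper defers to Lemma \ref{lemma:internalhommatrixexactfibrant}, which is a right-lifting-property argument: one checks RLP against the generating set $J^{\cyl(\K)}_{\Box\CC^{\ast}-\Spc}$ by forming pushout-product maps with $B\otimes(\partial\Box^{n}\subset\Box^{n})$ and using the identification $\cyl^{A}_{\K}\otimes B=\cyl^{A\otimes B}_{\K}$. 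You instead invoke Lemma \ref{lemma:internalhomexactfibrant} for the exact projective part, and then verify the matrix invariance condition directly: using $\underline{\Hom}(\W,\Z)(A)={\hom}_{\Box\CC^{\ast}-\Spc}\bigl(\W,\Z(-\otimes A)\bigr)$, the structure map is ${\hom}_{\Box\CC^{\ast}-\Spc}(\W,-)$ applied to a pointwise weak equivalence between projective fibrant objects, hence a weak equivalence by \textbf{SM}7 (in the form of Corollary \ref{corollary:pprojectivecubicalstructurecorollary}) and Ken Brown's lemma. Both routes are sound; yours is more hands-on and avoids unpacking the local generating set, while the paper's lifting argument is the template reused throughout the section to establish monoidalness of each successive localization. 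You also insert an explicit $\QQ$-cofibrant-replacement reduction at the outset, which the paper's own proof of this lemma elides; that extra care is welcome, since the definition of matrix exact projective weak equivalence is phrased via cofibrant replacement. Your observation about avoiding circularity is also on point: the matrix invariance of $\underline{\Hom}(\W,\Z)$ must be extracted from the fibrancy of $\Z$, not from tensoring $\W$ against $A\otimes\K\rightarrow A$.
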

\begin{proof}
Suppose that $\Z$ is matrix exact projective fibrant.
We need to show there is an induced weak equivalence of cubical sets
\begin{equation*}
\xymatrix{
\hom_{\Box\CC^{\ast}-\Spc}(\Y\otimes\W,\Z)\ar[r] & \hom_{\Box\CC^{\ast}-\Spc}(\X\otimes\W,\Z).}
\end{equation*}
By adjointness the latter identifies with the map  
\begin{equation*}
\xymatrix{
\hom_{\Box\CC^{\ast}-\Spc}\bigl(\Y,\underline{\Hom}(\W,\Z)\bigr)\ar[r] & 
\hom_{\Box\CC^{\ast}-\Spc}\bigl(\X,\underline{\Hom}(\W,\Z)\bigr).}
\end{equation*}
Thus it suffices to show $\underline{\Hom}(\W,\Z)$ is matrix exact projective fibrant, 
see the next lemma.
\end{proof}

\begin{lemma}
\label{lemma:internalhommatrixexactfibrant}
If $\X$ is projective cofibrant and $\Z$ is matrix exact projective fibrant, 
then the internal hom $\underline{\Hom}(\X,\Z)$ is matrix exact projective fibrant.
\end{lemma}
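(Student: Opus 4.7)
The plan is to check the two defining conditions for $\underline{\Hom}(\X,\Z)$ to be matrix exact projective fibrant: first, that it is exact projective fibrant, and second, that for every $\CC^{\ast}$-algebra $A$ the induced map
\[
\underline{\Hom}(\X,\Z)(A) \longrightarrow \underline{\Hom}(\X,\Z)(A\otimes\K)
\]
is a weak equivalence of cubical sets. The first condition follows immediately from Lemma \ref{lemma:internalhomexactfibrant}, since $\X$ is projective cofibrant and $\Z$ is in particular exact projective fibrant. Hence all of the content lies in verifying the matrix invariance condition.

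For this, I would exploit the identity $\underline{\Hom}(\X,\Y)(A) = {\hom}_{\Box\CC^{\ast}-\Spc}\bigl(\X,\Y(-\otimes A)\bigr)$ derived in \S\ref{subsection:CC-spaces}, combined with the identification $\Z(-\otimes A)=\underline{\Hom}(A,\Z)$ from (\ref{representableinternalhomevalution}), to rewrite the displayed map as
\[
{\hom}_{\Box\CC^{\ast}-\Spc}\bigl(\X,\underline{\Hom}(A,\Z)\bigr) \longrightarrow {\hom}_{\Box\CC^{\ast}-\Spc}\bigl(\X,\underline{\Hom}(A\otimes\K,\Z)\bigr),
\]
induced by the map $\underline{\Hom}(A,\Z)\to\underline{\Hom}(A\otimes\K,\Z)$ of cubical $\CC^{\ast}$-spaces.

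The key observation is that this latter map is a pointwise weak equivalence between exact projective fibrant cubical $\CC^{\ast}$-spaces. Indeed, since $A$ and $A\otimes\K$ are projective cofibrant by Lemma \ref{lemma:projectivecofibrant}, both $\underline{\Hom}(A,\Z)$ and $\underline{\Hom}(A\otimes\K,\Z)$ are exact projective fibrant by Lemma \ref{lemma:internalhomexactfibrant}; and evaluated at any $\CC^{\ast}$-algebra $B$ the map reads $\Z(B\otimes A)\to \Z(B\otimes A\otimes\K)$, which is a weak equivalence by the matrix invariance hypothesis on $\Z$ applied to the $\CC^{\ast}$-algebra $B\otimes A$. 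To conclude, I would invoke that the pointwise projective model structure is cubical (Lemma \ref{lemma:pprojectivecubical}), so that Corollary \ref{corollary:pprojectivecubicalstructurecorollary} combined with Ken Brown's lemma ensures the functor ${\hom}_{\Box\CC^{\ast}-\Spc}(\X,-)$ carries pointwise weak equivalences between projective fibrant objects to weak equivalences of cubical sets whenever $\X$ is projective cofibrant.

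The main point requiring care is the bookkeeping needed to avoid any circular appeal to Proposition \ref{proposition:miprojectivemonoidal}, whose proof is built on Lemma \ref{lemma:matrixinvariantboxing} and hence on the present lemma. However, the argument above only invokes the pointwise (unlocalized) cubical structure and fibrancy properties already established in \S\ref{subsection:CC-spaces} together with the exact-level Lemma \ref{lemma:internalhomexactfibrant}, so no circularity arises.
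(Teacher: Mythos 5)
Your proof is correct, and it is careful about avoiding the circularity you flag; all the references you invoke (Lemma \ref{lemma:internalhomexactfibrant}, the identity (\ref{representableinternalhomevalution}), Lemma \ref{lemma:projectivecofibrant}, Lemma \ref{lemma:pprojectivecubical}, Corollary \ref{corollary:pprojectivecubicalstructurecorollary}) are established before the matrix invariant section and are independent of Proposition \ref{proposition:miprojectivemonoidal}. The argument genuinely differs from the paper's, though. The paper's proof is combinatorial: it reduces to a right-lifting-property check against the generating set $J^{\cyl(\K)}_{\Box\CC^{\ast}-\Spc}$ and then, by adjointness, to showing that the pushout product of a map $j_{\K}$ built from $\cyl^{A}_{\K}$ with $B\otimes(\partial\Box^{n}\subset\Box^{n})$ is a relative $J^{\K}_{\Box\CC^{\ast}-\Spc}$-cell, using the isomorphism $\cyl^{A}_{\K}\otimes B = \cyl^{A\otimes B}_{\K}$. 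You instead verify the defining pointwise conditions of matrix exact projective fibrancy directly: exact projective fibrancy comes from Lemma \ref{lemma:internalhomexactfibrant}, and the matrix invariance condition is reduced via $\underline{\Hom}(\X,\Z)(A)={\hom}_{\Box\CC^{\ast}-\Spc}\bigl(\X,\underline{\Hom}(A,\Z)\bigr)$ to the observation that $\underline{\Hom}(A,\Z)\rightarrow\underline{\Hom}(A\otimes\K,\Z)$ is a pointwise weak equivalence between projective fibrant objects (since in sections it is $\Z(B\otimes A)\rightarrow\Z(B\otimes A\otimes\K)$), which $\hom_{\Box\CC^{\ast}-\Spc}(\X,-)$ preserves for $\X$ cofibrant by Ken Brown. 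Both arguments ultimately rest on the same compatibility of the localizing map $A\otimes\K\rightarrow A$ with the tensor product, but you package it at the level of the functor's values rather than the generating cells. Your route is shorter and more conceptual; the paper's approach is the one that naturally feeds into the pushout-product analysis needed for Proposition \ref{proposition:miprojectivemonoidal}, which is presumably why the author phrased it in terms of $J^{\cyl(\K)}$.
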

\begin{proof}
Lemma \ref{lemma:exactprojectivemonoidal}(ii) shows it suffices to check that for every 
$\CC^{\ast}$-algebra $B$ the map 
$\underline{\Hom}\bigl(B\otimes(\partial\Box^{n}\subset\Box^{n}),\Z\bigr)$ 
has the right lifting property with respect to $J^{\cyl(\K)}_{\Box\CC^{\ast}-\Spc}$.

Using adjointness, 
if suffices to check that for all $\CC^{\ast}$-algebras $A$ and $B$ the pushout product map of 
\begin{equation}
\xymatrix{
(A\otimes\K)\otimes\Box^{m}\coprod_{(A\otimes\K)\otimes\partial\Box{m}} 
\cyl^{A}_{\K}\otimes\partial\Box^{m}\ar[r] & \cyl^{A}_{\K}\otimes\Box^{m}}
\end{equation}
and $B\otimes(\partial\Box^{n}\subset\Box^{n})$ is a composition of pushouts of maps in 
$J^{\K}_{\Box\CC^{\ast}-\Spc}$.
This follows using the isomorphism $\cyl^{A}_{\K}\otimes B=\cyl^{A\otimes B}_{\K}$,
cp.~the proof of Lemma \ref{lemma:internalhomexactfibrant}.
\end{proof}

The next result summarizes the monoidal property of the matrix invariant model structure.
\begin{lemma}
\label{lemma:matrixexactprojectivemonoidal}
The following statements hold and are equivalent.
\begin{itemize}
\item
If $i\colon\X\cof\Y$ and $j\colon\U\cof\V$ are projective cofibrations and either 
$i$ or $j$ is a matrix exact projective weak equivalence, 
then so is
\begin{equation*}
\xymatrix{
\X\otimes\V\coprod_{\X\otimes\U}\Y\otimes\U\ar[r] & \Y\otimes\V.}
\end{equation*}
\item
If $j\colon\U\cof\V$ is a projective cofibration and $k\colon\Z\fib\W$ is a 
matrix exact projective fibration, 
then the pullback map
\begin{equation*}
\xymatrix{
\underline{\Hom}(\V,\Z)\ar[r] &
\underline{\Hom}(\V,\W)\times_{\underline{\Hom}(\U,\W)}\underline{\Hom}(\U,\Z)}
\end{equation*}
is a matrix exact projective fibration which is matrix exact acyclic if either $j$ or $k$ is.
\item
With the same assumptions as in the previous item, the induced map
\begin{equation*}
\xymatrix{
{\hom}_{\Box\CC^{\ast}-\Spc}(\V,\Z)\ar[r] &
{\hom}_{\Box\CC^{\ast}-\Spc}(\V,\W)
\times_{{\hom}_{\Box\CC^{\ast}-\Spc}(\U,\W)}{\hom}_{\Box\CC^{\ast}-\Spc}(\U,\Z)}
\end{equation*}
is a Kan fibration which is a weak equivalence of cubical sets if in addition either 
$j$ or $k$ is matrix exact acyclic.
\end{itemize} 
\end{lemma}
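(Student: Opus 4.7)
The plan is to deduce the lemma from the monoidal property already established in Proposition \ref{proposition:miprojectivemonoidal}, together with formal adjunction arguments that convert between the three equivalent statements. This is entirely parallel to the exact projective case treated in Lemma \ref{lemma:exactprojectivemonoidal}, so the proof will mostly consist of recalling the adjoint transpose correspondences and invoking results from the previous subsection.

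First I would dispatch statement (i). The cofibration part of the pushout-product (without any weak equivalence hypothesis) already holds in the pointwise projective model structure by Lemma \ref{lemma:pprojectivemonoidal}, and matrix exact projective cofibrations coincide with pointwise projective cofibrations by Theorem \ref{theorem:matrixexactprojective}. The remaining content — that the pushout-product is matrix exact acyclic whenever either $i$ or $j$ is — is precisely Proposition \ref{proposition:miprojectivemonoidal} (with its proof via Lemma \ref{lemma:matrixinvariantboxing} showing that tensoring by a projective cofibrant object preserves matrix exact weak equivalences).

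Next I would prove (i) $\Leftrightarrow$ (ii) using the closed monoidal adjunction $\bigl(\otimes,\underline{\Hom}\bigr)$ on $\Box\CC^{\ast}-\Spc$. A commutative square from the pushout-product $\X\otimes\V\coprod_{\X\otimes\U}\Y\otimes\U\to\Y\otimes\V$ to $\Z\to\W$ corresponds bijectively, via the exponential law and the universal property of the pullback, to a commutative square from $\X\to\Y$ to $\underline{\Hom}(\V,\Z)\to\underline{\Hom}(\V,\W)\times_{\underline{\Hom}(\U,\W)}\underline{\Hom}(\U,\Z)$, and liftings correspond. Hence (i) is the assertion that the pushout-product has the left lifting property with respect to all matrix exact projective fibrations (resp.\ matrix exact acyclic ones), while (ii) is the assertion that the pullback map has the right lifting property with respect to all projective cofibrations (resp.\ matrix exact acyclic ones). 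These conditions are adjoint transposes and therefore equivalent.

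For (ii) $\Leftrightarrow$ (iii), I would use the identification ${\hom}_{\Box\CC^{\ast}-\Spc}(\X,\Y)=\Box\CC^{\ast}-\Spc(\C,\underline{\Hom}(\X,\Y))_{\bullet}$ together with the natural isomorphism $\underline{\Hom}(\Y,\Z)(A)={\hom}_{\Box\CC^{\ast}-\Spc}\bigl(\Y,\Z(-\otimes A)\bigr)$ from \S\ref{subsection:CC-spaces}. The implication (ii) $\Rightarrow$ (iii) comes from evaluating at $\C$, since projective fibrations (respectively matrix exact projective fibrations between matrix exact projective fibrant objects) evaluate to Kan fibrations of cubical sets, and matrix exact acyclic fibrations evaluate to weak equivalences. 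Conversely, (iii) applied to the projective cofibrations $\U\otimes A\to\V\otimes A$ for varying $A$, together with representability of $A$-sections, recovers the pullback-hom statement of (ii) pointwise in $A$.

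The main obstacle is conceptual rather than technical: one must verify that the monoidality argument of Proposition \ref{proposition:miprojectivemonoidal} really does upgrade to matrix exact acyclic fibrations on the right-hand side, and not merely to the underlying projective fibrations. This is where Lemma \ref{lemma:internalhommatrixexactfibrant} is indispensable — it guarantees that $\underline{\Hom}(\V,\Z)$ and $\underline{\Hom}(\U,\Z)$ remain matrix exact projective fibrant when $\Z$ is, so that the pullback in (ii) is genuinely a matrix exact projective fibration and the weak equivalence statements are meaningful in the localized model structure. Once this input is in place, the remaining verifications reduce to the formal adjunction bookkeeping outlined above.
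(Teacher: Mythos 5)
Your proposal is correct and reconstructs what the paper leaves implicit: Lemma \ref{lemma:matrixexactprojectivemonoidal} is stated without a proof as a summary of the monoidality already established in Proposition \ref{proposition:miprojectivemonoidal}, Lemma \ref{lemma:matrixinvariantboxing} and Lemma \ref{lemma:internalhommatrixexactfibrant}, and the intended argument is exactly the adjunction bookkeeping you outline, mirroring the exact projective case in Lemma \ref{lemma:exactprojectivemonoidal} and the pointwise case in Lemma \ref{lemma:pprojectivemonoidal}/Corollary \ref{corollary:pprojectivecubicalstructurecorollary}. One point worth tightening: in the (iii) $\Rightarrow$ (ii) direction, applying (iii) to $\U\otimes A\to\V\otimes A$ only shows the pullback-hom map is a \emph{pointwise} projective (acyclic) fibration; to conclude it is a matrix exact projective fibration you still need to know its source and target are matrix exact projective fibrant, which is precisely what Lemma \ref{lemma:internalhommatrixexactfibrant} supplies (together with stability of fibrant objects under pullback along fibrations) — you flag this as the key input at the end, so the gap closes, but it belongs explicitly in the (iii) $\Rightarrow$ (ii) step rather than only in the closing remark.
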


The matrix invariant injective model structure on cubical $\CC^{\ast}$-spaces arises in an analogous
way by declaring that $\Z$ is matrix exact injective fibrant if it is exact injective fibrant and 
$\Z(A)\rightarrow\Z(A\otimes\K)$ is a weak equivalence for all $A$.
A map $\X\rightarrow\Y$ is a matrix exact weak equivalence if for every matrix exact injective fibrant 
$\Z$ there is a naturally induced weak equivalence of cubical sets
\begin{equation*}
\xymatrix{
{\hom}_{\Box\CC^{\ast}-\Spc}(f,\Z)\colon
{\hom}_{\Box\CC^{\ast}-\Spc}(\Y,\Z)
\ar[r] &
{\hom}_{\Box\CC^{\ast}-\Spc}(\X,\Z).}
\end{equation*}

We are ready to formulate the main results concerning the class of matrix exact weak equivalences.
\begin{theorem}
\label{theorem:matrixexactinjective}
The classes of monomorphisms, matrix invariant injective fibrations and matrix exact injective weak 
equivalences determine a combinatorial, cubical and left proper model structure on $\Box\CC^{\ast}-\Spc$.
\end{theorem}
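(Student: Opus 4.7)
The plan is to obtain the matrix invariant injective model structure as the left Bousfield localization of the exact injective model structure of Theorem \ref{theorem:exactinjective} with respect to the set of maps
\begin{equation*}
\SSSS_{\K} \equiv \{A\otimes\K \rightarrow A\}_{A}
\end{equation*}
indexed by representatives of the isomorphism classes of $\CC^{\ast}$-algebras. Existence of the localization follows once we know the exact injective model structure is left proper and either combinatorial (Smith's theorem) or cellular (Proposition \ref{proposition:cellular}); both are supplied by Theorem \ref{theorem:exactinjective}. Since left Bousfield localization does not alter the class of cofibrations, the cofibrations in the localized structure are exactly the monomorphisms. Combinatoriality is inherited from the exact injective model structure, and left properness is preserved under localization of a left proper structure \cite[\S 3]{Hirschhorn:Modelcategories}.

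Next, I would identify the fibrant objects in the localized structure with the matrix exact injective fibrant cubical $\CC^{\ast}$-spaces. An object $\Z$ is fibrant in the $\SSSS_{\K}$-localization iff it is exact injective fibrant and, for every $\CC^{\ast}$-algebra $A$, the cubical set map
\begin{equation*}
{\hom}_{\Box\CC^{\ast}-\Spc}(A,\Z) \longrightarrow {\hom}_{\Box\CC^{\ast}-\Spc}(A\otimes\K,\Z)
\end{equation*}
is a weak equivalence. By (\ref{functioncomplexofrepresentable}) this map is identified with $\Z(A)\rightarrow\Z(A\otimes\K)$, so fibrancy in the localized structure is precisely the definition of matrix exact injective fibrancy.

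The remaining step is to match the localized weak equivalences with the class of matrix exact injective weak equivalences defined via cubical function complexes into matrix exact injective fibrant targets. Because the exact injective model structure is cubical (Theorem \ref{theorem:exactinjective}), the homotopy function complex machinery of \cite{Hirschhorn:Modelcategories} can be replaced with the cubical function complex as in Remark \ref{remark:cubicallocalization}. Since every cubical $\CC^{\ast}$-space is cofibrant in the injective setting, no cofibrant replacement is required, which matches the definition of matrix exact injective weak equivalence. The localized weak equivalences are by construction those maps $f$ for which ${\hom}_{\Box\CC^{\ast}-\Spc}(f,\Z)$ is a weak equivalence of cubical sets for every fibrant $\Z$, so the two classes coincide.

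Finally, one verifies the localized structure remains cubical: by \cite[Theorem 4.1.1]{Hirschhorn:Modelcategories} the cubical structure is preserved under left Bousfield localization provided the pushout product of a generating cofibration of cubical sets with a cofibration of $\Box\CC^{\ast}-\Spc$ remains a cofibration, which holds automatically since cofibrations are monomorphisms and monomorphisms of cubical $\CC^{\ast}$-spaces are closed under pushout products with monomorphisms of cubical sets (this was already exploited for the pointwise injective structure, cf.\ Lemma \ref{lemma:injectiveobjectwise}). The main subtlety is the comparison of the two descriptions of the weak equivalences; it parallels the argument used in Proposition \ref{proposition:exactprojectiveinjective}, and as noted in the remark following that proposition, the same reasoning applies essentially unchanged in the matrix invariant setting.
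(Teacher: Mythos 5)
Your proposal is correct and follows the same route the paper intends: the matrix invariant injective model structure is the left Bousfield localization of the exact injective model structure at the maps $A\otimes\K\rightarrow A$, with existence supplied by left properness plus either combinatoriality or cellularity (Proposition \ref{proposition:cellular}), fibrant objects identified via the cubical function complex characterization and (\ref{functioncomplexofrepresentable}), and the weak equivalences matched up without a cofibrant replacement because everything is cofibrant in the injective setting. The paper leaves this argument implicit, pointing to the remark after Proposition \ref{proposition:exactprojectiveinjective} and the analogous projective construction; your write-up simply makes those implicit steps explicit.
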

\begin{proposition}
\label{proposition:matrixexactprojectiveinjective}
The classes of matrix invariant injective and projective weak equivalences coincide.
Hence the identity functor on $\Box\CC^{\ast}-\Spc$ is a Quillen equivalence between the matrix invariant 
injective and projective model structures.
\end{proposition}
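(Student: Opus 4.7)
The plan is to imitate the proof of Proposition \ref{proposition:exactprojectiveinjective} verbatim, with the single modification that the word ``flasque'' be replaced by ``flasque and matrix invariant,'' i.e., by the conjunction of the exactness condition and the requirement that $\Z(A)\rightarrow\Z(A\otimes\K)$ be a weak equivalence of cubical sets for every $\CC^{\ast}$-algebra $A$. As already signaled in the remark following Proposition \ref{proposition:exactprojectiveinjective}, this modification should go through without essential changes; the whole argument rests on the fact that both conditions defining matrix exact fibrancy are homotopy invariant under pointwise weak equivalences.

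First I would check the easy direction. If $\Z$ is matrix exact injective fibrant, then $\Z$ is in particular injective fibrant and hence projective fibrant; moreover $\Z$ is flasque and satisfies the matrix invariance condition, so $\Z$ is matrix exact projective fibrant. Consequently, given a matrix exact projective weak equivalence $f\colon\X\rightarrow\Y$, the induced map ${\hom}_{\Box\CC^{\ast}-\Spc}(\QQ f,\Z)$ is a weak equivalence of cubical sets. Since $\QQ\X\rightarrow\X$ and $\QQ\Y\rightarrow\Y$ are pointwise weak equivalences between pointwise cofibrant objects and the target $\Z$ is injective fibrant, the map ${\hom}_{\Box\CC^{\ast}-\Spc}(f,\Z)$ agrees up to weak equivalence of cubical sets with ${\hom}_{\Box\CC^{\ast}-\Spc}(\QQ f,\Z)$, hence is itself a weak equivalence. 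Thus $f$ is a matrix exact injective weak equivalence.

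For the reverse direction, suppose $f\colon\X\rightarrow\Y$ is a matrix exact injective weak equivalence and $\Z$ is matrix exact projective fibrant. Factor the identity as a pointwise acyclic monomorphism $\Z\rightarrow\W$ followed by a pointwise acyclic projective fibration to an injective fibrant replacement $\W$. Because $\Z(0)$ is contractible, $\W(0)$ is as well, and for every exact square $\E$ and every $\CC^{\ast}$-algebra $A$ the squares $\Z(\E)$ and $\Z(A)\rightarrow\Z(A\otimes\K)$ are taken to squares that are weakly equivalent to $\W(\E)$ and $\W(A)\rightarrow\W(A\otimes\K)$; hence $\W$ is flasque and matrix invariant, and thus matrix exact injective fibrant. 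The pointwise weak equivalence $\Z\rightarrow\W$ induces a commutative diagram
\begin{equation*}
\xymatrix{
{\hom}_{\Box\CC^{\ast}-\Spc}(\QQ\Y,\Z) \ar[d]_{\sim} \ar[r] &
{\hom}_{\Box\CC^{\ast}-\Spc}(\QQ\X,\Z) \ar[d]^{\sim}\\
{\hom}_{\Box\CC^{\ast}-\Spc}(\QQ\Y,\W) \ar[r] &
{\hom}_{\Box\CC^{\ast}-\Spc}(\QQ\X,\W)
}
\end{equation*}
whose vertical maps are weak equivalences by the cubical structure of the matrix exact projective model (Lemma \ref{lemma:matrixexactprojectivemonoidal}) and whose bottom horizontal map is a weak equivalence since $\QQ f$ is a matrix exact injective weak equivalence and $\W$ is matrix exact injective fibrant. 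The top horizontal map is therefore a weak equivalence, showing that $f$ is a matrix exact projective weak equivalence.

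With the coincidence of the two classes of weak equivalences in hand, the Quillen equivalence follows formally: the identity functor preserves cofibrations in one direction (every projective cofibration is a monomorphism, Lemma \ref{projectiveimpliesinjective}) and preserves weak equivalences in both directions, so it is a Quillen pair whose derived adjunction is an equivalence of homotopy categories. The only step I expect to require care is verifying that an injective fibrant pointwise replacement of a matrix exact projective fibrant object inherits the matrix invariance property; but this is immediate from the fact that the defining condition $\Z(A)\rightarrow\Z(A\otimes\K)$ being a weak equivalence is stable under pointwise weak equivalences, by the two-out-of-three property.
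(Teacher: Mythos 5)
Your proof is correct and follows exactly the route the paper indicates (the paper's proof of this proposition simply says ``See the proof of Proposition \ref{proposition:exactprojectiveinjective}'', and you have carried out precisely that adaptation, with ``flasque'' replaced by ``flasque and matrix invariant'' and the key observation that both defining conditions are stable under pointwise weak equivalences by two-out-of-three). The only minor blemish is the garbled phrase ``Factor the identity as a pointwise acyclic monomorphism $\Z\rightarrow\W$ followed by a pointwise acyclic projective fibration''; what you mean, and what the argument needs, is simply to take an injective fibrant replacement $\Z\rightarrow\W$ via a pointwise acyclic monomorphism, i.e.~to factor $\Z\rightarrow\ast$ in the pointwise injective model structure.
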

\begin{proof}
See the proof of Proposition \ref{proposition:exactprojectiveinjective}.
\end{proof}

The category $\Delta\CC^{\ast}-\Spc$ acquires matrix invariant model structures.
We have:
\begin{lemma}
\label{lemma:matrixexactprojectiveBoxDelta}
There are naturally induced Quillen equivalences between matrix invariant injective and projective 
model structures:
\begin{equation*}
\xymatrix{
\Box\CC^{\ast}-\Spc \ar@<3pt>[r] & \Delta\CC^{\ast}-\Spc \ar@<3pt>[l] }
\end{equation*}
\end{lemma}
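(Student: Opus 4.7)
The plan is to obtain the matrix invariant Quillen equivalences by further localizing the already-established exact Quillen equivalences from Lemma \ref{lemma:exactprojectiveBoxDelta}, using the standard principle that Bousfield localization at corresponding sets of maps preserves Quillen equivalences (the invocation of \cite[Theorem 3.3.20]{Hirschhorn:Modelcategories} mentioned in the remark following Lemma \ref{lemma:exactprojectiveBoxDelta}).

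First I would focus on the projective case. By construction, the matrix invariant projective model structure on $\Box\CC^{\ast}-\Spc$ is the left Bousfield localization of the exact projective model structure with respect to the set $\SSSS_{\Box}\equiv\{A\otimes\K\rightarrow A\}$ indexed by isomorphism classes of $\CC^{\ast}$-algebras, with the analogous simplicial set $\SSSS_{\Delta}$ defining the matrix invariant projective model structure on $\Delta\CC^{\ast}-\Spc$. Let $(L,R)$ denote the Quillen equivalence between the exact projective model structures furnished by Lemma \ref{lemma:exactprojectiveBoxDelta}. Since the left adjoint $L$ is the identity on the $\CC^{\ast}-\Alg$-variable and only changes the cubical coordinate to a simplicial one through the appropriate left Kan extension, it sends the representable $A\otimes\Box^{0}=A$ to $A\otimes\Delta^{0}=A$, and more generally carries each map in $\SSSS_{\Box}$ to the corresponding map in $\SSSS_{\Delta}$ up to pointwise weak equivalence. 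Hence the total left derived functor $\mathbf{L}L$ takes $\SSSS_{\Box}$ to the image of $\SSSS_{\Delta}$ in the homotopy category. Hirschhorn's theorem then yields a Quillen equivalence between the localizations, which are precisely the matrix invariant projective model structures.

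Next I would handle the injective case by stringing together identity Quillen equivalences. By Proposition \ref{proposition:matrixexactprojectiveinjective}, the identity on $\Box\CC^{\ast}-\Spc$ is a Quillen equivalence between its matrix invariant projective and injective model structures, and similarly for $\Delta\CC^{\ast}-\Spc$. Composing the projective Quillen equivalence just obtained with these identity equivalences on either side produces the desired Quillen equivalence between the matrix invariant injective model structures.

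The main technical obstacle is verifying cleanly that the adjoint pair of the exact projective Quillen equivalence really does carry the localizing set $\SSSS_{\Box}$ to $\SSSS_{\Delta}$ up to weak equivalence, so that Hirschhorn's localization-of-equivalence theorem applies. Since both $\Box\Set$ and $\Delta\Set$ have $\C$ as the representable unit and the generating cofibrations involve only the boundary inclusions $\partial\Box^{n}\subset\Box^{n}$ versus $\partial\Delta^{n}\subset\Delta^{n}$, the matrix invariance maps are preserved strictly on the $\CC^{\ast}$-algebra variable; the only matter to check is that triangulation and its adjoint on the (co)cubical coordinate preserve the unit $\Box^{0}=\Delta^{0}=\ast$, which is immediate. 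This reduction is precisely what was used in the proof of Lemma \ref{lemma:exactprojectiveBoxDelta}, so the argument fits the same template without new input.
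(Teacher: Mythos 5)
Your proposal matches the paper's implicit argument: the matrix invariant Quillen equivalences are obtained by localizing the exact Quillen equivalences of Lemma \ref{lemma:exactprojectiveBoxDelta} at the matrix-invariance maps $A\otimes\K\rightarrow A$, invoking \cite[Theorem 3.3.20]{Hirschhorn:Modelcategories}, and the verification that the localizing sets correspond is straightforward because these maps are between discrete representable objects preserved exactly by the triangulation functor. The only minor rough edge is your treatment of the injective case: stringing together identity Quillen pairs and the projective equivalence yields a zig-zag rather than a single Quillen adjunction (the identity from injective to projective is a right, not left, Quillen functor); a cleaner route is to note that the triangulation functor preserves monomorphisms and pointwise weak equivalences, hence is already a left Quillen equivalence between the pointwise injective models, so Hirschhorn's localization theorem applies to that adjunction directly in the same way.
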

\begin{remark}
For completeness we note that the Quillen equivalent matrix invariant model structures on $\Box\CC^{\ast}-\Spc$
and $\Delta\CC^{\ast}-\Spc$ are examples of cellular model structures.
\end{remark}
\vspace{0.1in}

With the matrix invariant model structures in hand we are now ready to construct the last in the series of model structure appearing in 
unstable $\CC^{\ast}$-homotopy theory.
\newpage

\subsection{Homotopy invariant model structures}
\label{subsection:homotopyinvariantmodelstructures}
Let $A$ be a $\CC^{\ast}$-algebra, let $I=[0,1]$ denote the topological unit interval and $C(I,A)$ the 
$\CC^{\ast}$-algebra of continuous functions from $I$ to $A$ with pointwise operations and the supremum norm.
At time $t$, 
$0\leq t\leq 1$, 
there is an evaluation map 
\begin{equation*}
\xymatrix{
\ev^A_t\colon C(I,A)\ar[r] & A. }
\end{equation*}
Recall that $\ast$-homomorphisms $h_t\colon A\rightarrow B$ for $t=0,1$ are homotopic if there 
exists a map $H\colon A\rightarrow C(I,B)$ such that $\ev^B_t\circ H=h_t$.
The notions of homotopies between $\ast$-homomorphisms and contractible $\CC^{\ast}$-algebras 
are defined in terms of $C(I)\equiv C(I,\C)$ and the trivial $\CC^{\ast}$-algebra exactly as for 
topological spaces. 
There is an isomorphism of $\CC^{\ast}$-algebras $C(I,B)\cong C(I)\otimes B$ for the tensor products we consider. 
These definitions correspond under Gelfand-Naimark duality to the usual topological 
definitions in the event that $A$ and $B$ are commutative.
It turns out the cone of every $\CC^{\ast}$-algebra is contractible and every contractible $\CC^{\ast}$-algebra is nonunital. 
If $A$ is contractible, then its unitalization is homotopy equivalent to $\C$.
There is a canonical $\ast$-homomorphism, 
the constant function map, 
from $A$ to $C(I,A)$ sending elements of $A$ to constant functions. 
Composing this map with $\ev^A_t$ gives the identity map on $A$ for all $t$.
\vspace{0.1in}

Motivated by the notion of homotopies between maps of $\CC^{\ast}$-algebras we shall now introduce the 
homotopy invariant model structures on cubical $\CC^{\ast}$-spaces. 
The main idea behind these models is to enlarge the class of weak equivalences by formally adding all 
homotopy equivalences based on employing $C(I)$ as the unit interval.
Indeed, these model structures give rise to the correct unstable homotopy category in the sense that 
homotopic $\CC^{\ast}$-algebras become isomorphic upon inverting the weak equivalences in the homotopy 
invariant models.
Existence of the homotopy invariant model structures is shown using localization techniques, 
as one would expect.
We show there is an abstract characterization of the weak equivalences in the homotopy invariant 
model structure and introduce homotopy groups.
These invariants give a way of testing whether a map between $\CC^{\ast}$-spaces is a weak equivalence.
\vspace{0.1in}

A cubical $\CC^{\ast}$-space $\Z$ is called $\CC^{\ast}$-projective fibrant if $\Z$ is matrix exact 
projective fibrant and for every $\CC^{\ast}$-algebra $A$ the canonically induced map of cubical 
$\CC^{\ast}$-spaces $C(I,A)\rightarrow A$ induces a weak equivalence of cubical sets
\begin{equation}
\label{homotopysetone}
\xymatrix{
{\hom}_{\Box\CC^{\ast}-\Spc}(A,\Z)
\ar[r] & 
{\hom}_{\Box\CC^{\ast}-\Spc}\bigl(C(I,A),\Z\bigr).}
\end{equation}
It follows immediately that a matrix exact projective fibrant $\Z$ is $\CC^{\ast}$-projective fibrant 
if and only if for every $A$ and for some $0\leq t\leq 1$ the evaluation map $\ev^A_t$ yields a weak 
equivalence
\begin{equation}
\label{homotopysettwo}
\xymatrix{
{\hom}_{\Box\CC^{\ast}-\Spc}\bigl(C(I,A),\Z\bigr)
\ar[r] & 
{\hom}_{\Box\CC^{\ast}-\Spc}(A,\Z).}
\end{equation}
Moreover, 
note that the map in (\ref{homotopysetone}) is a weak equivalence if and only if the induced map 
$\pi_0\Z(A)\rightarrow\pi_0\Z\bigl(C(I,A)\bigr)$ is a surjection  and for every $0$-cell 
$x$ of $\Z(A)$ and $n\geq 1$ there is a similarly induced surjective map of higher homotopy groups
\begin{equation*}
\xymatrix{
\pi_{n}\bigl(\Z(A),x\bigr)
\ar[r] & 
\pi_{n}\biggl(\Z\bigl(C(I,A)\bigr),x\biggr).}
\end{equation*}
Likewise, the map (\ref{homotopysettwo}) induces surjections on all higher homotopy groups.
An alternate formulation of $\Z$ being $\CC^{\ast}$-projective fibrant is to require that for 
$0\leq t\leq 1$ there are naturally induced pointwise weak equivalences 
\begin{equation*}
\minCDarrowwidth20pt
\begin{CD}
\Z(-)@>>>
\Z\bigl(C(I)\otimes -\bigr)
@>\Z(\ev^{\C}_{t}\otimes -) >> 
\Z(-).
\end{CD}
\end{equation*}
In terms of internal hom objects, 
yet another equivalent formulation obtained from (\ref{representableinternalhomevalution})
is that for every $\CC^{\ast}$-algebra $A$ evaluating the naturally induced maps 
\begin{equation*}
\xymatrix{
\underline{\Hom}(A,\Z)\ar[r] & 
\underline{\Hom}\bigl(C(I,A),\Z\bigr)\ar[r] & 
\underline{\Hom}(A,\Z)}
\end{equation*}
at the complex numbers yield weak equivalences of cubical sets.

\begin{remark}
The notion of an $\CC^{\ast}$-projective fibrant cubical $\CC^{\ast}$-space depends only on the unit 
interval $C(I)$ and the matrix exact projective fibrancy condition in the sense that 
it may be checked using any of the evaluation maps or the constant function map.
\end{remark}

A map $\X\rightarrow\Y$ is a projective $\CC^{\ast}$-weak equivalence if for every 
$\CC^{\ast}$-projective fibrant $\Z$ there is an induced weak equivalence of cubical sets
\begin{equation*}
\xymatrix{
{\hom}_{\Box\CC^{\ast}-\Spc}(\QQ\Y,\Z)\ar[r] & {\hom}_{\Box\CC^{\ast}-\Spc}(\QQ\X,\Z).}
\end{equation*}
Recall that $\QQ$ is our notation for a cofibrant replacement in the pointwise projective model structure.
Every $\CC^{\ast}$-algebra $A$ is projective $\CC^{\ast}$-weakly equivalent to $C(I,A)$.
All matrix exact projective weak equivalences are examples of projective $\CC^{\ast}$-weak equivalences 
since the matrix exact projective model structure is cubical.

A map $\X\rightarrow\Y$ is a projective $\CC^{\ast}$-fibration if it has the right lifting property with 
respect to every $\CC^{\ast}$-acyclic projective cofibration.
The class of projective $\CC^{\ast}$-fibrations coincides with the fibrations in the 
$\CC^{\ast}$-projective model structure which we define by localizing the matrix invariant projective 
model at the set of maps $C(I,A)\rightarrow A$. 
\begin{theorem}
The projective cofibrations and projective $\CC^{\ast}$-weak equivalences determine a combinatorial, 
cubical and left proper model structure on $\Box\CC^{\ast}-\Spc$.
\end{theorem}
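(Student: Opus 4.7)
The plan is to construct the $\CC^{\ast}$-projective model structure as the left Bousfield localization of the matrix invariant projective model structure of Theorem \ref{theorem:matrixexactprojective} at the set of maps
\begin{equation*}
\mathcal{L}\equiv\{C(I,A)\rightarrow A\}_{A\in\CC^{\ast}-\Alg},
\end{equation*}
indexed by a small skeleton of $\CC^{\ast}-\Alg$, so that $\mathcal{L}$ is an honest set. Since the matrix invariant projective model structure is combinatorial, cubical, and left proper, and by Proposition \ref{proposition:cellular} the underlying pointwise projective model structure is cellular (a property inherited by the intermediate exact and matrix invariant localizations), the localization exists by Smith's theorem or by Hirschhorn's machinery for cellular model categories; see the discussion following Definition \ref{definition:compactrelativeto}. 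This automatically produces a model structure with the same cofibrations as the matrix invariant projective model, i.e.~the projective cofibrations.

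Next I would identify the fibrant objects and weak equivalences of the localization with those named in the statement. By the general theory, an object $\Z$ is fibrant in the localized structure if and only if it is matrix exact projective fibrant and cubically local with respect to $\mathcal{L}$, meaning that for every $A$ the map
\begin{equation*}
{\hom}_{\Box\CC^{\ast}-\Spc}(A,\Z)\longrightarrow{\hom}_{\Box\CC^{\ast}-\Spc}\bigl(C(I,A),\Z\bigr)
\end{equation*}
is a weak equivalence of cubical sets. By Remark \ref{remark:cubicallocalization} this cubical function complex characterization is valid because the matrix invariant projective model structure is cubical. Hence the fibrant objects are precisely the $\CC^{\ast}$-projective fibrant objects. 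The $\mathcal{L}$-local equivalences are then, by definition of Bousfield localization, exactly those maps $\X\to\Y$ which induce weak equivalences of cubical function complexes ${\hom}_{\Box\CC^{\ast}-\Spc}(\QQ\Y,\Z)\to{\hom}_{\Box\CC^{\ast}-\Spc}(\QQ\X,\Z)$ for every $\CC^{\ast}$-projective fibrant $\Z$, which matches the definition of projective $\CC^{\ast}$-weak equivalence. Left lifting against $\CC^{\ast}$-acyclic projective cofibrations then recovers the projective $\CC^{\ast}$-fibrations by the standard lifting characterization in a model category.

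The remaining properties are inherited from the matrix invariant projective model structure. Combinatoriality persists because Bousfield localization of a combinatorial model category at a set of maps is combinatorial. Left properness is preserved by left Bousfield localization of left proper model categories (cf.~the treatment in \cite{Hirschhorn:Modelcategories}). Cubicalness reduces, after forming pushout-product maps with the boundary inclusions $\partial\Box^{n}\subset\Box^{n}$ and horn inclusions $\sqcap^{n}_{(\alpha,i)}\subset\Box^{n}$, to the assertion that the pushout product of a projective cofibration with an acyclic cofibration of cubical sets is a projective $\CC^{\ast}$-weak equivalence and that the pushout product of two projective cofibrations is a projective cofibration; the latter is automatic, and the former follows since the matrix invariant model structure is already cubical and acyclic cofibrations in that structure are in particular projective $\CC^{\ast}$-weak equivalences.

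The main obstacle, such as it is, is not the existence of the localization but rather the bookkeeping needed to identify the fibrant objects with those described via the explicit evaluation/constant function maps on $C(I,A)$ rather than via the single map $C(I,A)\to A$ in $\mathcal{L}$. This matching is handled as in the proof of Theorem \ref{theorem:exactprojective}: using adjointness and the Yoneda identification (\ref{functioncomplexofrepresentable}), one translates the local condition into the weak equivalence (\ref{homotopysetone}), and then invokes the two-out-of-three property applied to the retraction $A\to C(I,A)\to A$ induced by the constant function map and any evaluation $\ev^{A}_{t}$ to deduce the equivalent formulations via (\ref{homotopysettwo}). A companion cubical mapping cylinder construction, as in (\ref{exactprojectivecofibration}) and the set $J^{\cyl(\K)}_{\Box\CC^{\ast}-\Spc}$ of Section \ref{subsection:matrixinvariantmodelstructures}, yields an explicit set of generating acyclic cofibrations $J^{\cyl(C(I))}_{\Box\CC^{\ast}-\Spc}$ detecting $\CC^{\ast}$-projective fibrancy, confirming that the resulting model structure is weakly finitely generated.
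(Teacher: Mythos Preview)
Your proposal is correct and follows essentially the same approach as the paper: the paper defines the $\CC^{\ast}$-projective model structure precisely as the Bousfield localization of the matrix invariant projective model at the set $\{C(I,A)\rightarrow A\}$, and the theorem is stated without further proof, relying on the general localization machinery exactly as you outline. One small remark: your direct verification of cubicalness checks the case (projective cofibration) $\Box$ (acyclic cubical cofibration) but omits the case ($\CC^{\ast}$-acyclic projective cofibration) $\Box$ (cubical cofibration); this gap is filled either by invoking Hirschhorn's general result that left Bousfield localization of a simplicial/cubical model category remains simplicial/cubical, or by the argument pattern used in the paper for monoidalness (cf.~Proposition \ref{proposition:hiprojectivemonoidal}).
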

The unstable $\CC^{\ast}$-homotopy category,
denoted by $\HH$,  
is defined by inverting the class of projective $\CC^{\ast}$-weak equivalences between cubical 
$\CC^{\ast}$-spaces.
\vspace{0.1in}

We trust that the notions of $\CC^{\ast}$-injective fibrant cubical $\CC^{\ast}$-spaces, 
injective $\CC^{\ast}$-weak equivalences and injective $\CC^{\ast}$-fibrations are clear from the 
above and the definitions of the injective model structures constructed in the previous sections. 
Next we state two basic results concerning the injective homotopy invariant model structure.
\begin{theorem}
\label{theorem:homotopyinvariantinjective}
The classes of monomorphisms, injective $\CC^{\ast}$-fibrations and injective $\CC^{\ast}$-weak equivalences 
determine a combinatorial, cubical and left proper model structure on $\Box\CC^{\ast}-\Spc$.
\end{theorem}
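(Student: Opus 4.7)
The strategy is to realize the model structure as the left Bousfield localization of the matrix invariant injective model structure of Theorem \ref{theorem:matrixexactinjective} with respect to the set of maps
\[
\mathcal{L} \equiv \{C(I,A)\rightarrow A\}
\]
indexed by $A$ running through representatives of the isomorphism classes in our fixed small skeleton of $\CC^{\ast}-\Alg$. Equivalently one could localize the pointwise injective model structure in one blow with respect to the union of $\{\hocolim(\E)\rightarrow A\}$, $\{A\otimes\K\rightarrow A\}$ and $\mathcal{L}$; the two presentations produce the same homotopy theory and I will use the former for brevity.

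Existence of the localization is guaranteed by Smith's theorem since the matrix invariant injective structure is combinatorial and left proper by Theorem \ref{theorem:matrixexactinjective}. Alternatively, the cellularity established via Proposition \ref{proposition:cellular} is preserved through the successive exact and matrix invariant localizations, so Hirschhorn's machinery applies directly. By construction the cofibrations of the localization agree with those of the base and are precisely the monomorphisms; the new model category is automatically combinatorial and left proper. Cubicality is inherited because the base is cubical and the maps of $\mathcal{L}$ have representable domain and codomain, so the pushout product axiom with the generating (acyclic) cofibrations of $\Box\Set$ reduces to the analogous statement already available through Lemma \ref{lemma:matrixexactprojectivemonoidal}.

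The main content is to identify the fibrant objects of the localization with the $\CC^{\ast}$-injective fibrant cubical $\CC^{\ast}$-spaces of the preceding definition. By the general theory, a fibrant object in the localization is precisely a matrix exact injective fibrant $\Z$ such that for every $A$ the map
\[
{\hom}_{\Box\CC^{\ast}-\Spc}(A,\Z) \longrightarrow {\hom}_{\Box\CC^{\ast}-\Spc}\bigl(C(I,A),\Z\bigr)
\]
induced by $C(I,A)\rightarrow A$ is a weak equivalence of cubical sets. The Yoneda isomorphism (\ref{functioncomplexofrepresentable}) identifies this condition with (\ref{homotopysetone}). Once the fibrant objects match, the defining class of injective $\CC^{\ast}$-weak equivalences coincides tautologically with the $\mathcal{L}$-local equivalences, and the injective $\CC^{\ast}$-fibrations coincide with the fibrations of the localized structure by the general fact that cofibrations together with fibrant objects determine a model structure.

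The least routine point is this fibrant-object identification; as for the analogous Theorems \ref{theorem:exactinjective} and \ref{theorem:matrixexactinjective}, it is handled by replacing each member of $\mathcal{L}$ with its cubical mapping cylinder factorization in the style of (\ref{matrixexactfactoring}) and invoking Hirschhorn's characterization of fibrant objects via right lifting against a set of test maps with finitely presentable domain and codomain. No genuinely new input beyond the arguments already in place for the exact and matrix invariant injective models is required.
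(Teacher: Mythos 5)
Your proposal takes essentially the same route as the paper: the paper leaves the proof of this theorem implicit, pointing back to the remark after Proposition \ref{proposition:exactprojectiveinjective} that the exact, matrix invariant and homotopy invariant model structures are all produced by left Bousfield localizations of the pointwise (or previously constructed) injective model structures, with existence furnished by left properness and combinatoriality (Smith) or cellularity (Proposition \ref{proposition:cellular} together with Hirschhorn). Your identification of the fibrant objects via the cylinder factorization of the maps $C(I,A)\rightarrow A$ follows the same script as the explicitly written Theorem \ref{theorem:exactprojective}, and the rest is standard localization bookkeeping.

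One small misstep: to justify that the localized injective structure remains cubical you cite Lemma \ref{lemma:matrixexactprojectivemonoidal}, which is a statement about the \emph{projective} model structure and is not the relevant input here. The correct justification is the general fact that a left Bousfield localization of a left proper cellular (or combinatorial) cubical model category is again cubical --- Hirschhorn's Theorem 4.1.1 (see Remark \ref{remark:cubicallocalization}) --- which applies with no reference to the projective structure at all. This is a mislabeled citation rather than a genuine gap; the conclusion is correct.
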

\begin{proposition}
\label{proposition:homotopyinvariantprojectiveinjective}
The classes of injective and projective $\CC^{\ast}$-weak equivalences coincide.
Hence the identity functor on $\Box\CC^{\ast}-\Spc$ is a Quillen equivalence between the homotopy invariant 
injective and projective model structures.
\end{proposition}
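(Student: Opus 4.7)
The plan is to mirror Proposition \ref{proposition:exactprojectiveinjective} (and its matrix invariant analogue Proposition \ref{proposition:matrixexactprojectiveinjective}) step by step, as the author has explicitly flagged this template. The substantive claim is the coincidence of the two classes of $\CC^{\ast}$-weak equivalences; once this is in hand, the Quillen equivalence assertion is formal, since the identity functor from the projective to the injective structure preserves cofibrations (projective cofibrations are monomorphisms by Lemma \ref{projectiveimpliesinjective}), and a Quillen adjunction between two model structures on the same underlying category that share a class of weak equivalences is automatically a Quillen equivalence.

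For the implication ``projective $\CC^{\ast}$-weak equivalence implies injective $\CC^{\ast}$-weak equivalence,'' I would first observe that any injective $\CC^{\ast}$-fibrant object $\Z$ is projective $\CC^{\ast}$-fibrant: injective fibrations are projective fibrations, and the flasque, matrix invariance, and homotopy invariance conditions of (\ref{homotopysetone}) are phrased in terms of the cubical sets $\Z(A)$ themselves and are therefore insensitive to the choice of cofibration class. Given a projective $\CC^{\ast}$-weak equivalence $f$, the hypothesis yields a weak equivalence ${\hom}_{\Box\CC^{\ast}-\Spc}(\QQ f,\Z)$, and since $\QQ f \rightarrow f$ is a pointwise weak equivalence between projective cofibrant objects, the cubical enrichment (Lemma \ref{lemma:pprojectivemonoidal}) promotes this to a weak equivalence ${\hom}_{\Box\CC^{\ast}-\Spc}(f,\Z)$.

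Conversely, given a projective $\CC^{\ast}$-fibrant $\Z$, I would factor $\Z \rightarrow \ast$ in the pointwise injective model structure to obtain a pointwise weak equivalence $\Z \rightarrow \W$ with $\W$ injective fibrant, and verify that $\W$ inherits the flasque, matrix invariance, and homotopy invariance conditions by two-out-of-three applied to the naturally commuting squares whose vertical edges are $\Z(A) \rightarrow \W(A)$ and $\Z\bigl(C(I,A)\bigr) \rightarrow \W\bigl(C(I,A)\bigr)$, together with the analogous squares for $A \otimes \K$ and for exact squares. Then for an injective $\CC^{\ast}$-weak equivalence $f$, the comparison diagram of cubical function complexes into $\Z$ and $\W$, whose vertical maps are weak equivalences by Lemma \ref{lemma:pprojectivemonoidal} and whose lower horizontal map is a weak equivalence by hypothesis applied to the injective $\CC^{\ast}$-fibrant $\W$, forces ${\hom}_{\Box\CC^{\ast}-\Spc}(\QQ f,\Z)$ to be a weak equivalence. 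The only mildly delicate step is the verification that homotopy invariance transfers from $\Z$ to $\W$ along the pointwise equivalence, which falls out immediately from two-out-of-three; all other ingredients are already available in the excerpt, so no new technical input is required.
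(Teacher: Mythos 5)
Your argument mirrors the paper's own, which (as flagged in the remark following Proposition \ref{proposition:exactprojectiveinjective}) simply ports the exact-structure proof, so the overall strategy is right. One correction to the first direction: $\X$ and $\Y$ need not be projective cofibrant, so $\QQ f\rightarrow f$ is not a pointwise weak equivalence ``between projective cofibrant objects'' and Lemma \ref{lemma:pprojectivemonoidal} does not apply here. The correct input is the injective side: $\Z$ is in particular injective fibrant, every object is injective cofibrant, and the pointwise injective model structure is cubical (Lemma \ref{lemma:injectiveobjectwise}), so the injective {\bf SM}7 axiom yields weak equivalences ${\hom}_{\Box\CC^{\ast}-\Spc}(\X,\Z)\rightarrow{\hom}_{\Box\CC^{\ast}-\Spc}(\QQ\X,\Z)$ and likewise for $\Y$; two-out-of-three then promotes the weak equivalence ${\hom}_{\Box\CC^{\ast}-\Spc}(\QQ f,\Z)$ to ${\hom}_{\Box\CC^{\ast}-\Spc}(f,\Z)$. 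The rest — the pointwise injective fibrant replacement $\Z\rightarrow\W$, the two-out-of-three check that $\W$ inherits flasqueness, matrix invariance and homotopy invariance, the comparison square (whose vertical maps are correctly handled by the projective cubical structure since $\QQ\X$, $\QQ\Y$ are projective cofibrant), and the formal deduction of the Quillen equivalence from coincidence of weak equivalence classes — is correct and matches the paper's template.
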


In the following we write $\CC^{\ast}$-weak equivalence rather than injective or projective $\CC^{\ast}$-weak equivalence.
We note there exist corresponding homotopy invariant model structures for simplicial $\CC^{\ast}$-spaces, 
and include the following observation.
\begin{lemma}
\label{lemma:homotopyinvariantBoxDelta}
There are naturally induced Quillen equivalences between homotopy invariant injective and projective 
model structures:
\begin{equation*}
\xymatrix{
\Box\CC^{\ast}-\Spc \ar@<3pt>[r] & \Delta\CC^{\ast}-\Spc \ar@<3pt>[l] }
\end{equation*}
\end{lemma}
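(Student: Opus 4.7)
The plan is to mimic the strategy already used in Lemma \ref{lemma:projectiveBoxDelta}, Lemma \ref{lemma:exactprojectiveBoxDelta}, and Lemma \ref{lemma:matrixexactprojectiveBoxDelta}: start from the Quillen equivalence at the matrix invariant level and push it through a left Bousfield localization using the machinery of \cite[Theorem 3.3.20]{Hirschhorn:Modelcategories}. By construction, the homotopy invariant model structures in both $\Box\CC^{\ast}-\Spc$ and $\Delta\CC^{\ast}-\Spc$ are the left Bousfield localizations of the respective matrix invariant models at the set $\mathcal{S}\equiv\{C(I,A)\rightarrow A\}_{A\in\CC^{\ast}-\Alg}$. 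Since $\CC^{\ast}$-algebras embed as discrete objects in both $\Box\CC^{\ast}-\Spc$ and $\Delta\CC^{\ast}-\Spc$, this same set $\mathcal{S}$ makes sense in both categories.

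First, I would fix the adjoint pair $(T,S)\colon\Box\CC^{\ast}-\Spc\rightleftarrows\Delta\CC^{\ast}-\Spc$ from Lemma \ref{lemma:matrixexactprojectiveBoxDelta}, where $T$ is induced levelwise from the triangulation functor $\Box\Set\rightarrow\Delta\Set$ and $S$ from its right adjoint. Both functors preserve constant (discrete) objects on the nose: for a $\CC^{\ast}$-algebra $A$ viewed as a discrete cubical $\CC^{\ast}$-space, $T(A)$ is isomorphic to $A$ viewed as a discrete simplicial $\CC^{\ast}$-space, and similarly for $S$. Consequently the total left derived functor $\mathbf{L}T$ sends the cubical localizing set $\mathcal{S}$ to (a set weakly equivalent to) the corresponding simplicial version of $\mathcal{S}$, and dually for $\mathbf{R}S$.

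Second, I would verify the hypotheses of Hirschhorn's localization theorem. By Theorem \ref{theorem:matrixexactprojective} and its simplicial analogue the matrix invariant projective models are left proper, combinatorial and cubical; cellularity on both sides is the analogue of Proposition \ref{proposition:cellular}. This gives existence of the two localized model structures and, via \cite[Theorem 3.3.20]{Hirschhorn:Modelcategories}, turns $(T,S)$ into a Quillen equivalence between the localized models. The same argument applies on the injective side, which yields the second Quillen equivalence. The within-category injective/projective Quillen equivalence is Proposition \ref{proposition:homotopyinvariantprojectiveinjective}, so combining both gives all four identifications promised by the lemma.

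The main obstacle is the bookkeeping around the first step: checking that $T$ and $S$ genuinely match the two copies of $\mathcal{S}$, which comes down to naturality of triangulation on discrete objects, combined with the fact that $C(I)$ is a $\CC^{\ast}$-algebra (hence discrete in both cubical and simplicial $\CC^{\ast}$-spaces) so that $C(I,A)=C(I)\otimes A$ is also discrete. Once this identification is made, the remainder is a direct citation of Hirschhorn, exactly as in the proof pattern used for Lemmas \ref{lemma:exactprojectiveBoxDelta} and \ref{lemma:matrixexactprojectiveBoxDelta}.
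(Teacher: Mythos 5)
Your proposal is correct and takes essentially the same route the paper has in mind: the paper gives no explicit proof of this lemma, instead invoking the remark following Lemma \ref{lemma:exactprojectiveBoxDelta} that the matrix invariant and homotopy invariant cases are handled ``as above,'' i.e.~by the same appeal to \cite[Theorem 3.3.20]{Hirschhorn:Modelcategories} and the tower of localizations starting from Lemma \ref{lemma:projectiveBoxDelta}. Your extra attention to the fact that $C(I,A)\rightarrow A$ is a map between discrete (hence projective cofibrant) objects preserved on the nose by the triangulation adjunction is precisely the bookkeeping needed to apply Hirschhorn's theorem, and is well placed.
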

\vspace{0.1in}

An elementary homotopy between maps $h_t\colon\X\rightarrow\Y$ of cubical $\CC^{\ast}$-spaces is a map
$H\colon\X\otimes C(I)\rightarrow\Y$ such that $H\circ(\id_{\X}\otimes\ev^{\C}_t)=h_t$ for $t=0,1$.
Two maps $f$ and $g$ are homotopic if there exists a sequence of maps $f=f_{0},f_{1},\cdots,f_{n}=g$ 
such that $f_{i-1}$ is elementary homotopic to $f_{i}$ for $1\leq i\leq n$. 
And $f\colon\X\rightarrow\Y$ is a homotopy equivalence if there exists a map $g\colon\Y\rightarrow\X$ 
such that $f\circ g$ and $g\circ f$ are homotopic to the respective identity maps.
\begin{remark}
Note that maps between representable cubical $\CC^{\ast}$-spaces are homotopic if and only if the maps 
between the corresponding $\CC^{\ast}$-algebras are so.
\end{remark}
\begin{lemma}
\label{homotopyimpliesweakequivalence}
Homotopy equivalences are $\CC^{\ast}$-weak equivalences.
\end{lemma}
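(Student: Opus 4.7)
The approach is to show that elementary homotopic maps become equal in the unstable $\CC^{\ast}$-homotopy category $\HH$, from which the lemma is immediate: a homotopy equivalence $f$ with pseudo-inverse $g$ satisfies $fg=\id$ and $gf=\id$ in $\HH$, so $f$ becomes an isomorphism in $\HH$ and hence is a $\CC^{\ast}$-weak equivalence.

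The first step is to verify that for every projective cofibrant $\W$ and every $t\in\{0,1\}$, the section
\begin{equation*}
\id_{\W}\otimes\ev^{\C}_{t}\colon\W\longrightarrow\W\otimes C(I)
\end{equation*}
is a $\CC^{\ast}$-weak equivalence. When $\W=A$ is representable, the multiplication $A\otimes C(I)=C(I,A)\to A$ is precisely one of the maps inverted in forming the homotopy invariant localization, and the composite
\begin{equation*}
A\xrightarrow{\id_{A}\otimes\ev^{\C}_{t}}A\otimes C(I)\longrightarrow A
\end{equation*}
is the identity by the defining property of evaluation; hence two-out-of-three gives the claim for the section. For general projective cofibrant $\W$, which is a retract of an $I_{\Box\CC^{\ast}-\Spc}$-cell complex, I would argue by transfinite induction along a cell presentation: at each stage the functor $-\otimes C(I)$, being a left adjoint, preserves the pushouts and transfinite composites involved, and left properness together with the pushout-product bookkeeping of Proposition~\ref{proposition:miprojectivemonoidal} guarantees that $\CC^{\ast}$-weak equivalences are preserved cell by cell in the homotopy invariant model structure.

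Granting the first step, for an elementary homotopy $H\colon\W\otimes C(I)\to\Y$ between $h_{0}$ and $h_{1}$ with $\W$ projective cofibrant, the identity $h_{t}=H\circ(\id_{\W}\otimes\ev^{\C}_{t})$ exhibits the two endpoints as $H$ precomposed with sections; both sections are right inverse to the multiplication $\W\otimes C(I)\to\W$, which is a $\CC^{\ast}$-weak equivalence, so the two sections coincide in $\HH$ with the inverse of this multiplication and therefore $h_{0}=h_{1}$ in $\HH$. The case of an elementary homotopy on an arbitrary $\X$ reduces to this one by taking a projective cofibrant replacement $\QQ\X\to\X$, pulling the homotopy back along it, and applying Corollary~\ref{corollary:weakequivalencesfibrantobjects} to detect $\CC^{\ast}$-weak equivalences through cubical function complexes into $\CC^{\ast}$-projective fibrant targets. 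Since every homotopy is a finite concatenation of elementary ones, homotopic maps agree in $\HH$, and the lemma follows. The principal technical obstacle is the first step: promoting the equivalence $\W\otimes C(I)\to\W$ from representables to all projective cofibrant inputs, which is essentially the homotopy invariant analogue of the monoidal property verified in Proposition~\ref{proposition:miprojectivemonoidal}.
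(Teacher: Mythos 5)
Your argument is correct and essentially coincides with the paper's proof: reduce to elementary homotopies, show both evaluation sections of $\QQ\X\otimes C(I)\to\QQ\X$ are one-sided inverses of a $\CC^{\ast}$-weak equivalence (hence coincide in $\HH$), and compose with the homotopy after cofibrant replacement. One small fix: the monoidal input you want is Proposition~\ref{proposition:hiprojectivemonoidal} for the homotopy invariant model structure rather than Proposition~\ref{proposition:miprojectivemonoidal}, and the transfinite cell induction is overkill --- the fact that $-\otimes\W$ is a left Quillen endofunctor for projective cofibrant $\W$ (Lemma~\ref{lemma:hiprojectiveQuillenadjunction}) combined with Ken Brown's lemma applied to the $\CC^{\ast}$-weak equivalence $C(I)\rightarrow\C$ between cofibrant objects already gives that $\W\otimes C(I)\rightarrow\W$ is a $\CC^{\ast}$-weak equivalence.
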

\begin{proof}
The proof reduces to showing that elementary homotopic maps $h_t\colon\X\rightarrow\Y$ become isomorphic 
in the unstable $\CC^{\ast}$-homotopy category:
If $f\colon\X\rightarrow\Y$ is a homotopy equivalence with homotopy inverse $g$ we need to show that 
$f\circ g$ and $g\circ f$ are equal to the corresponding identity maps in the homotopy category, 
but the composite maps are homotopic to the corresponding identity maps.
Now for the projective cofibrant replacement $\QQ\X\rightarrow\X$ the assertion holds for the maps 
$\QQ\X\rightarrow\QQ\X\otimes C(I)$ induced by evaluating at $t=0$ and $t=1$.
And hence the same holds for the two composite maps 
$\QQ\X\rightarrow\QQ\X\otimes C(I)\rightarrow\X\otimes C(I)$.
Composing these maps with the homotopy yields maps naturally isomorphic to $h_0$ and $h_1$ in 
the unstable $\CC^{\ast}$-homotopy category.
\end{proof}
\begin{remark}
Lemma \ref{homotopyimpliesweakequivalence} shows that cubical $\CC^{\ast}$-spaces represented 
by homotopy equivalent $\CC^{\ast}$-algebras are $\CC^{\ast}$-weakly equivalent.
\end{remark}
\begin{lemma}
\label{preservinghomotopies}
Suppose $f,g\colon\X\rightarrow\Y$ are homotopy equivalent maps and $\Z$ is a cubical $\CC^{\ast}$-space.
Then $\underline{\Hom}(f,\Z)$ and $\underline{\Hom}(g,\Z)$ respectively $f\otimes\Z$ and $g\otimes\Z$ 
are homotopy equivalent maps. 
Thus the internal hom functor $\underline{\Hom}(-,\Z)$ and the tensor functor $-\otimes\Z$ preserves 
homotopy equivalences.
\end{lemma}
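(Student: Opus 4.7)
The plan is to reduce immediately to the case of an elementary homotopy, since both assertions are preserved under composition of chains of elementary homotopies. So suppose $H\colon\X\otimes C(I)\rightarrow\Y$ is an elementary homotopy with $H\circ(\id_{\X}\otimes\ev^{\C}_t)=h_t$ for $t=0,1$, where $h_0=f$ and $h_1=g$.

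For the tensor statement, I would simply transport $H$ through the symmetric monoidal structure. Tensoring with $\id_{\Z}$ and rebracketing via the natural associativity and symmetry isomorphisms of $(\Box\CC^{\ast}-\Spc,\otimes)$ recalled in Section \ref{subsection:CC-spaces} produces
\begin{equation*}
(\X\otimes\Z)\otimes C(I)\;\cong\;(\X\otimes C(I))\otimes\Z\;\xrightarrow{H\otimes\id_{\Z}}\;\Y\otimes\Z,
\end{equation*}
and precomposing with $\id_{\X\otimes\Z}\otimes\ev^{\C}_{t}$ recovers $h_{t}\otimes\id_{\Z}$ for $t=0,1$. Hence $f\otimes\Z$ and $g\otimes\Z$ are elementary homotopic, and the tensor functor $-\otimes\Z$ preserves homotopy equivalences by applying this to a homotopy inverse pair.

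For the internal hom statement, I would use the closed structure. The map we need is a homotopy
\begin{equation*}
\widetilde{H}\colon\underline{\Hom}(\Y,\Z)\otimes C(I)\longrightarrow\underline{\Hom}(\X,\Z).
\end{equation*}
By the exponential law of \S\ref{subsection:CC-spaces}, such a $\widetilde{H}$ corresponds to a map
\begin{equation*}
\underline{\Hom}(\Y,\Z)\otimes C(I)\otimes\X\longrightarrow\Z,
\end{equation*}
and I would define it as the composite
\begin{equation*}
\underline{\Hom}(\Y,\Z)\otimes(\X\otimes C(I))\;\xrightarrow{\id\otimes H}\;\underline{\Hom}(\Y,\Z)\otimes\Y\;\xrightarrow{\ev}\;\Z,
\end{equation*}
after using the symmetry isomorphism $C(I)\otimes\X\cong\X\otimes C(I)$. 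Precomposing $\widetilde{H}$ with $\id\otimes\ev^{\C}_{t}$ then factors through $\underline{\Hom}(\Y,\Z)\otimes\X\xrightarrow{\id\otimes h_{t}}\underline{\Hom}(\Y,\Z)\otimes\Y\xrightarrow{\ev}\Z$, which by the universal property of the internal hom is adjoint to $\underline{\Hom}(h_{t},\Z)\colon\underline{\Hom}(\Y,\Z)\rightarrow\underline{\Hom}(\X,\Z)$. So $\widetilde{H}$ is an elementary homotopy between $\underline{\Hom}(f,\Z)$ and $\underline{\Hom}(g,\Z)$, and applying this to a homotopy inverse pair shows $\underline{\Hom}(-,\Z)$ preserves homotopy equivalences.

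There is no real obstacle here beyond bookkeeping with associators and the symmetry; the whole argument is formal once one has the closed symmetric monoidal structure $(\otimes,\underline{\Hom})$ and the evaluation map, together with the fact that the coface maps $\ev^{\C}_{0},\ev^{\C}_{1}\colon C(I)\rightarrow\C$ of the standard cocubical $\CC^{\ast}$-algebra are the only data used to define elementary homotopy.
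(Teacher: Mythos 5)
Your argument is correct and is essentially the paper's proof presented in a dual form: where you construct the homotopy $\underline{\Hom}(\Y,\Z)\otimes C(I)\rightarrow\underline{\Hom}(\X,\Z)$ explicitly via the evaluation map and then verify the endpoint conditions, the paper applies $\underline{\Hom}(-,\Z)$ to $H\colon\X\otimes C(I)\rightarrow\Y$ and reaches the same map through the natural isomorphism $\underline{\Hom}(\X\otimes C(I),\Z)\cong\underline{\Hom}\bigl(C(I),\underline{\Hom}(\X,\Z)\bigr)$ and the tensor--hom adjunction. Both are formal consequences of the closed symmetric monoidal structure of $\Box\CC^{\ast}-\Spc$, and the tensor case is handled the same way (the paper simply calls it clear).
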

\begin{proof}
An elementary homotopy from $f$ to $g$ determines a map of cubical $\CC^{\ast}$-spaces 
\begin{equation*}
\xymatrix{
\underline{\Hom}(\Y,\Z)\ar[r] & 
\underline{\Hom}\bigl(\X\otimes C(I),\Z\bigr).}
\end{equation*}
According to the closed symmetric monoidal structure of $\Box\CC^{\ast}-\Spc$ detailed in 
\S\ref{subsection:CC-spaces} there exists by adjointness a map 
\begin{equation*}
\xymatrix{
\underline{\Hom}(\Y,\Z)\ar[r] & 
\underline{\Hom}\bigl(C(I),\underline{\Hom}(\X,\Z)\bigr).}
\end{equation*}
By adjointness of the latter map we get the desired elementary homotopy
\begin{equation*}
\xymatrix{
\underline{\Hom}(\Y,\Z)\otimes C(I)
\ar[r] & 
\underline{\Hom}(\X,\Z).}
\end{equation*}
The claims concerning $f\otimes\Z$ and $g\otimes\Z$ are clear.
\end{proof}
\begin{corollary}
\label{elementaryweakequivalences}
For every cubical $\CC^{\ast}$-space $\X$ and $n\geq 0$ the canonical map 
\begin{equation*}
\xymatrix{
\X\ar[r] & \X\bigl(C(\Box_{\ttop}^{n})\otimes -\bigr)} 
\end{equation*}
is a $\CC^{\ast}$-weak equivalence.
\end{corollary}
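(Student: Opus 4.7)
The canonical map identifies, via the natural isomorphism $\X\bigl(C(\Box_{\ttop}^{n})\otimes-\bigr)=\underline{\Hom}\bigl(C(\Box_{\ttop}^{n}),\X\bigr)$ supplied by (\ref{representableinternalhomevalution}), with the map
\begin{equation*}
\underline{\Hom}(c,\X)\colon\X=\underline{\Hom}(\C,\X)\longrightarrow\underline{\Hom}\bigl(C(\Box_{\ttop}^{n}),\X\bigr)
\end{equation*}
induced by the morphism $c\colon C(\Box_{\ttop}^{n})\to\C$ of representable cubical $\CC^{\ast}$-spaces that corresponds, under the contravariant Yoneda embedding, to the unit $\ast$-homomorphism $\C\to C(\Box_{\ttop}^{n})$. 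In view of Lemmas \ref{homotopyimpliesweakequivalence} and \ref{preservinghomotopies}, it therefore suffices to prove that $c$ is a homotopy equivalence in $\Box\CC^{\ast}-\Spc$.

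For this I would fix any point $p\in\Box_{\ttop}^{n}$ and introduce the evaluation $\ast$-homomorphism $\ev_{p}\colon C(\Box_{\ttop}^{n})\to\C$, whose Yoneda image $\ev_{p}^{\vee}\colon\C\to C(\Box_{\ttop}^{n})$ will serve as a homotopy inverse of $c$. One composite is literally the identity, since $\ev_{p}$ is a left inverse of the unit. For the other composite I would exhibit the straight-line contraction of $\Box_{\ttop}^{n}$ toward $p$ as a $\ast$-homomorphism
\begin{equation*}
H\colon C(\Box_{\ttop}^{n})\to C\bigl(I,C(\Box_{\ttop}^{n})\bigr),\qquad H(f)(t)(x)=f\bigl((1-t)x+tp\bigr),
\end{equation*}
well defined thanks to convexity of $\Box_{\ttop}^{n}$ and satisfying $\ev_{0}\circ H=\id$ and $\ev_{1}\circ H=(\text{unit})\circ\ev_{p}$. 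The contravariant Yoneda embedding converts $H$ into a map of cubical $\CC^{\ast}$-spaces $C(\Box_{\ttop}^{n})\otimes C(I)\to C(\Box_{\ttop}^{n})$ whose endpoint restrictions are $\id$ and $\ev_{p}^{\vee}\circ c$, which is precisely an elementary homotopy in the sense defined in the previous subsection.

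With $c$ identified as a homotopy equivalence of cubical $\CC^{\ast}$-spaces, Lemma \ref{preservinghomotopies} produces a homotopy equivalence $\X\to\underline{\Hom}\bigl(C(\Box_{\ttop}^{n}),\X\bigr)$, and Lemma \ref{homotopyimpliesweakequivalence} upgrades this to a $\CC^{\ast}$-weak equivalence, as required. The one point demanding care is the translation between the $\CC^{\ast}$-algebra contraction $H$ and the resulting elementary cubical homotopy; this is routine bookkeeping under the contravariant Yoneda embedding, provided one keeps in mind that elementary homotopies of cubical $\CC^{\ast}$-spaces are modelled on the interval $C(I)$ rather than on the cubical $\Box^{1}$.
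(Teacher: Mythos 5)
Your proof is correct and follows exactly the route the paper intends: identify the canonical map with $\underline{\Hom}(c,\X)$ where $c$ comes from the unit $\C\to C(\Box_{\ttop}^{n})$, show $c$ is a homotopy equivalence of cubical $\CC^{\ast}$-spaces via the convexity contraction of $\Box_{\ttop}^{n}$, and then invoke Lemmas \ref{preservinghomotopies} and \ref{homotopyimpliesweakequivalence}. The paper leaves the corollary without explicit proof, treating it as an immediate consequence of those two lemmas together with the contractibility of $C(\Box_{\ttop}^{n})$, which is precisely what you have carefully spelled out.
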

\begin{corollary}
\label{corollary:singCstarweakequivalence}
The canonical map $\X\rightarrow\Sing_{\Box}^{\bullet}(\X)$ is a $\CC^{\ast}$-weak equivalence.
\end{corollary}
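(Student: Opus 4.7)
The plan is to realize $\Sing_{\Box}^{\bullet}(\X)$ as the diagonal of a bicubical $\CC^{\ast}$-space and reduce the claim to the elementary weak equivalences of Corollary \ref{elementaryweakequivalences}. Set $Y_{m,n}(A) \equiv \X_n\bigl(C(\Box_{\ttop}^m) \otimes A\bigr)$, so that $\Sing_{\Box}^{\bullet}(\X)_n = Y_{n,n}$ and the canonical map $\X \to \Sing_{\Box}^{\bullet}(\X)$ arises, in cubical degree $n$, from the unit inclusion $\C \to C(\Box_{\ttop}^n)$ as recorded in Example \ref{example:monomorphism}.

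As a first step, observe that for each fixed $m$ the row $n \mapsto Y_{m,n}$ is precisely the cubical $\CC^{\ast}$-space $\X\bigl(C(\Box_{\ttop}^m) \otimes -\bigr)$, and the natural row inclusion $\X \to Y_{m,-}$ coincides with the canonical map appearing in Corollary \ref{elementaryweakequivalences}. Hence each such row inclusion is already a $\CC^{\ast}$-weak equivalence; the remaining content of the corollary is to convert this rowwise information into a statement about the diagonal.

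For the diagonal step I would filter $\Sing_{\Box}^{\bullet}(\X)$ by its cubical skeleta $\Sing_{\Box}^{\leq k}(\X)$ and compare the comparison tower to the constant skeletal filtration of $\X$. At stage $k+1$ the attaching map fits into a pushout built from the generating cofibration $\partial \Box^{k+1} \subset \Box^{k+1}$ and the $\CC^{\ast}$-space of non-degenerate $(k+1)$-cells of $Y_{k+1,k+1}$, and the comparison map between the two filtrations at each stage identifies, via the cubical identities, with a pushout-product of a boundary inclusion against a row inclusion of the type controlled by the first step. The cubical and left proper structure of the homotopy invariant model (Theorem \ref{theorem:homotopyinvariantinjective}) promotes each such pushout-product to a $\CC^{\ast}$-weak equivalence between cofibrant objects, and the colimit over the tower is a filtered colimit of monomorphisms, along which $\CC^{\ast}$-weak equivalences are preserved because the homotopy invariant model is weakly finitely generated (Lemma \ref{lemma:afgmodelcategories}).

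The hard part will be the bookkeeping underlying this realization-style argument: one must isolate the non-degenerate cells of the diagonal bicubical object, verify that the relevant latching maps are sufficiently cofibrant, and check that the iterated pushouts really do reduce to tensors of generating cofibrations with the elementary weak equivalences of Corollary \ref{elementaryweakequivalences}. Once this bookkeeping is in place, the cubicality, left properness, and weak finite generation of the model structure furnish the passage from rowwise weak equivalences to the diagonal and conclude the proof.
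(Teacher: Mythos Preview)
Your approach is correct in spirit and would work, but the paper reaches the conclusion by a shorter route that avoids the skeletal bookkeeping entirely. Rather than filtering by cubical skeleta and analyzing latching maps, the paper invokes the Bousfield--Kan homotopy colimit: the canonical map
\[
\underset{\Box^{\op}}{\hocolim}\,\X_{n}\longrightarrow \X
\]
is a pointwise weak equivalence (Hirschhorn, Corollary~18.7.5), and likewise for $\Sing_{\Box}^{\bullet}(\X)$. This reduces the problem to showing that the map of $\Box^{\op}$-diagrams $n\mapsto\bigl(\X_{n}\to\underline{\Hom}(C(\Box_{\ttop}^{n}),\X_{n})\bigr)$ is a levelwise $\CC^{\ast}$-weak equivalence, which is exactly Corollary~\ref{elementaryweakequivalences} applied degreewise, and then to the general fact (the unnamed lemma immediately following, together with Corollary~\ref{levelprojectiveweakequivalenceimpliesprojectiveweakequivalence}) that $\hocolim$ preserves levelwise $\CC^{\ast}$-weak equivalences.

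So the two arguments agree on the essential input---the rowwise weak equivalences of Corollary~\ref{elementaryweakequivalences}---but differ in how they transport this to the diagonal. Your Reedy-style filtration does this by hand via pushout-products and a transfinite colimit; the paper outsources it to the single formal property that $\hocolim$ is homotopy invariant and computes the diagonal up to pointwise weak equivalence. Your version has the advantage of being self-contained within the cubical and left proper axioms you cite, at the cost of the latching-object verification you flag as the hard part; the paper's version is a two-line appeal to a standard result, at the cost of importing Hirschhorn~18.7.5.
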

\begin{proof}
Applying the homotopy colimit functor yields a commutative diagram with naturally induced vertical 
pointwise weak equivalences \cite[Corollary 18.7.5]{Hirschhorn:Modelcategories}:
\begin{equation*}
\minCDarrowwidth20pt
\begin{CD}
\underset{\Box^{\op}}{\hocolim}\,\,\X_{n} @>>>
\underset{\Box^{\op}}{\hocolim}\,\,\underline{\Hom}\bigl(C(\Box_{\ttop}^{n}),\X_{n}\bigr)\\
@VVV @VVV \\
\X @>>> \Sing_{\Box}^{\bullet}(\X)
\end{CD}
\end{equation*}
In $n$-cells there is a homotopy equivalence 
$\X_{n}\rightarrow\underline{\Hom}\bigl(C(\Box_{\ttop}^{n}),\X_{n}\bigr)$. 
The same map is a $\CC^{\ast}$-weak equivalence of discrete cubical $\CC^{\ast}$-spaces, 
so the upper horizontal map is a $\CC^{\ast}$-weak equivalence on account of 
Corollary \ref{levelprojectiveweakequivalenceimpliesprojectiveweakequivalence}.
\end{proof}
\begin{lemma}
Suppose ${\bf X}\rightarrow {\bf Y}$ is a natural transformation of small diagrams 
$I\rightarrow\Box\CC^{\ast}-\Spc$ such that for every $i\in I$ the induced map 
${\bf X}(i)\rightarrow{\bf Y}(i)$ is a $\CC^{\ast}$-weak equivalence. 
Then the induced map
\begin{equation*}
\xymatrix{
\underset{I}{\hocolim}\,\,{\bf X}\ar[r] & \underset{I}{\hocolim}\,\,{\bf Y} }
\end{equation*}
is a $\CC^{\ast}$-weak equivalence.
\end{lemma}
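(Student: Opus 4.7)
The plan is to identify the homotopy colimit with the total left derived functor of $\colim_I$ with respect to a projective model structure on the diagram category $[I,\Box\CC^{\ast}-\Spc]$, and then apply Ken Brown's lemma.

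First, I would equip $[I,\Box\CC^{\ast}-\Spc]$ with the projective model structure in which a natural transformation is a weak equivalence (respectively projective fibration) exactly when every one of its components is a $\CC^{\ast}$-weak equivalence (respectively projective $\CC^{\ast}$-fibration). Existence of this model structure is guaranteed by combinatoriality of the $\CC^{\ast}$-projective model structure on $\Box\CC^{\ast}-\Spc$ (via Smith's theorem, applicable since $\Box\CC^{\ast}-\Spc$ is locally presentable and the $\CC^{\ast}$-projective structure is cofibrantly generated). By construction, the constant diagram functor $\Box\CC^{\ast}-\Spc\to [I,\Box\CC^{\ast}-\Spc]$ preserves both fibrations and trivial fibrations, so the adjunction $\colim_I\dashv\mathrm{const}$ is a Quillen adjunction, and hence $\colim_I$ is left Quillen.

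Next, Ken Brown's lemma \cite[Lemma 1.1.12]{Hovey:Modelcategories} applied to this Quillen adjunction implies that $\colim_I$ sends any weak equivalence between cofibrant diagrams to a $\CC^{\ast}$-weak equivalence. Applying a functorial cofibrant replacement $\QQ$ in the projective model structure to the given transformation yields a pointwise $\CC^{\ast}$-weak equivalence $\QQ{\bf X}\to\QQ{\bf Y}$ between projectively cofibrant diagrams, whose image under $\colim_I$ is thus a $\CC^{\ast}$-weak equivalence. By formula (19) this image computes ${\bf L}\colim_I$ of the original map.

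Finally, the natural weak equivalence ${\bf L}\colim_I\,{\bf Z}\to\hocolim_I\,{\bf Z}$ recorded immediately after (19) for both ${\bf X}$ and ${\bf Y}$, combined with the two-out-of-three property of $\CC^{\ast}$-weak equivalences, produces the desired conclusion. The plan contains no substantive obstacle; the one point deserving care is verifying that the projective model structure exists on the diagram category after passage to the localized ($\CC^{\ast}$-projective) structure rather than only on the pointwise projective one, but this is automatic from the combinatorial framework set up in Section \ref{modelcategories}.
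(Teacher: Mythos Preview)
Your argument is correct, but it takes a different route from the paper's. The paper's proof is a one-line reduction via the adjunction between homotopy colimits and homotopy limits on function complexes: it records the canonical isomorphism
\[
{\hom}_{\Box\CC^{\ast}-\Spc}\bigl(\underset{I}{\hocolim}\,{\bf X},\Z\bigr)\;=\;\underset{I^{\op}}{\holim}\,{\hom}_{\Box\CC^{\ast}-\Spc}({\bf X},\Z),
\]
so that testing the map against a $\CC^{\ast}$-projective fibrant $\Z$ (which is exactly how $\CC^{\ast}$-weak equivalences are defined) becomes the statement that a pointwise weak equivalence of diagrams of cubical sets induces a weak equivalence on $\holim$. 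The paper then remarks that ``the lemma can also be proven using general properties of homotopy colimits,'' which is precisely the direction you took.

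Your approach builds the projective model structure on $[I,\Box\CC^{\ast}-\Spc]$ over the localized $\CC^{\ast}$-projective model, invokes Ken Brown for the left Quillen functor $\colim_I$, and then identifies ${\bf L}\colim_I$ with $\hocolim_I$. This is more portable --- it works verbatim in any combinatorial cubical model category --- but it imports more machinery than needed here. The paper's argument is lighter because it exploits the very definition of $\CC^{\ast}$-weak equivalence as a function-complex condition, turning the homotopy colimit statement into a dual homotopy limit statement in $\Box\Set$ where it is standard. One small point in your write-up: the comparison ${\bf L}\colim_I\to\hocolim_I$ you cite from Section~\ref{modelcategories} is stated there only for cubical sets, so strictly speaking you are invoking its analogue for the cubical model category $\Box\CC^{\ast}-\Spc$ (which does hold, e.g.\ by \cite[Theorem~19.4.2]{Hirschhorn:Modelcategories}); it would be worth saying so explicitly.
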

\begin{proof}
If $\Z$ is a cubical $\CC^{\ast}$-space there is a canonical isomorphism of cubical sets
\begin{equation*}
{\hom}_{\Box\CC^{\ast}-\Spc}(\underset{I}{\hocolim}\,\,{\bf X},\Z)=
\underset{I^{\op}}{\holim}\,\,{\hom}_{\Box\CC^{\ast}-\Spc}({\bf X},\Z).
\end{equation*}
The lemma can also be proven using general properties of homotopy colimits.
\end{proof}
\begin{corollary}
\label{levelprojectiveweakequivalenceimpliesprojectiveweakequivalence}
A map 
of cubical $\CC^{\ast}$-spaces which induces
$\CC^{\ast}$-weak equivalences of discrete cubical $\CC^{\ast}$-spaces 
in all cells is a $\CC^{\ast}$-weak equivalence.
\end{corollary}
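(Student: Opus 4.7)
The plan is to reduce the statement to the immediately preceding lemma on homotopy colimits by writing every cubical $\CC^{\ast}$-space as the homotopy colimit of its cells, regarded as discrete cubical $\CC^{\ast}$-spaces.

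First, for any cubical $\CC^{\ast}$-space $\X$, the assignment $n\mapsto\X_{n}$ defines a functor $\Box^{\op}\rightarrow\Box\CC^{\ast}-\Spc$ whose values are discrete cubical $\CC^{\ast}$-spaces. Parallel to the proof of Corollary \ref{corollary:singCstarweakequivalence} (via \cite[Corollary 18.7.5]{Hirschhorn:Modelcategories}) there is a natural pointwise weak equivalence
\begin{equation*}
\xymatrix{
\underset{\Box^{\op}}{\hocolim}\,\,\X_{n}\ar[r] & \X }
\end{equation*}
comparing the Bousfield-Kan style homotopy colimit with the actual colimit. The same construction is natural in $\X$, so a map $f\colon\X\rightarrow\Y$ fits into a commutative square whose vertical maps are pointwise weak equivalences and whose top horizontal map is the induced map on homotopy colimits.

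Second, under the hypothesis of the corollary, each map $\X_{n}\rightarrow\Y_{n}$ of discrete cubical $\CC^{\ast}$-spaces is a $\CC^{\ast}$-weak equivalence. Applying the preceding lemma to the natural transformation of $\Box^{\op}$-diagrams shows that
\begin{equation*}
\xymatrix{
\underset{\Box^{\op}}{\hocolim}\,\,\X_{n}\ar[r] & \underset{\Box^{\op}}{\hocolim}\,\,\Y_{n} }
\end{equation*}
is a $\CC^{\ast}$-weak equivalence.

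Third, since the homotopy invariant model structure on $\Box\CC^{\ast}-\Spc$ is a localization of the pointwise projective model, every pointwise weak equivalence is in particular a $\CC^{\ast}$-weak equivalence. The two-out-of-three axiom \textbf{CM} 2 applied to the commutative square from the first step then forces $f$ to be a $\CC^{\ast}$-weak equivalence, completing the argument.

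The main obstacle is the first step, namely the comparison $\hocolim_{\Box^{\op}}\X_{n}\rightarrow\X$; this is a standard but nontrivial Bousfield-Kan style computation, and is exactly the same tool already exploited in the proof of Corollary \ref{corollary:singCstarweakequivalence}. Once this is in hand, the remainder is a formal consequence of the preceding lemma and the two-out-of-three property.
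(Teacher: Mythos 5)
Your argument is correct and reconstructs the paper's intended proof: the corollary is stated without a separate proof precisely because it is meant to follow immediately from the preceding lemma on homotopy colimits via the decomposition you describe, combined with two-out-of-three. The only small imprecision is the phrase ``actual colimit'' in your first step: the natural pointwise weak equivalence $\hocolim_{\Box^{\op}}\X_{n}\rightarrow\X$ (the same comparison used via the Hirschhorn reference in the proof of Corollary~\ref{corollary:singCstarweakequivalence}) compares the homotopy colimit with the \emph{realization} (diagonal) of the $\Box^{\op}$-diagram of discrete cells, not with its strict colimit, which would itself be a discrete cubical $\CC^{\ast}$-space and in general is not $\X$.
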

\begin{lemma}
\label{lemma:projectiveCstarweakequivalencebetweenfibrantobjects}
Suppose $f\colon\X\rightarrow\Y$ is a map between projective $\CC^{\ast}$-fibrant cubical 
$\CC^{\ast}$-spaces.
The following statements are equivalent where in the third item we assume $\X$ is projective cofibrant.
\begin{itemize}
\item
$f$ is a $\CC^{\ast}$-weak equivalence.
\vspace{0.1cm}
\item
$f$ is a pointwise weak equivalence.
\vspace{0.1cm}
\item
$f$ is a cubical homotopy equivalence.
\end{itemize}
\end{lemma}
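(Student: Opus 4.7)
The plan is to prove the cycle of implications (2)~$\Rightarrow$~(1)~$\Rightarrow$~(3)~$\Rightarrow$~(2), where the middle implication is where the projective cofibrancy hypothesis on $\X$ enters and where most of the work sits.

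The implication (2)~$\Rightarrow$~(1) is immediate from the construction of the homotopy invariant projective model structure as a left Bousfield localization of the pointwise projective model structure: every pointwise weak equivalence is a $\CC^{\ast}$-weak equivalence. The implication (3)~$\Rightarrow$~(2) holds without any cofibrancy hypothesis on $\X$. If $f$ is a cubical homotopy equivalence, then evaluating at any $\CC^{\ast}$-algebra $A$ produces a cubical homotopy equivalence $\X(A)\to\Y(A)$ of cubical sets, since the evaluation functor is compatible with the cubical tensor of \eqref{cubicalCstarspacetensor}; cubical homotopy equivalences of cubical sets are weak equivalences by Theorem \ref{theorem:cubicalsetmodelstructure}.

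For (1)~$\Rightarrow$~(3) I first pass from (1) to (2) by invoking the standard fact that in a left Bousfield localization of a left proper combinatorial model category, a map between fibrant objects of the localization is a local weak equivalence if and only if it is a weak equivalence in the underlying model structure. Since $\X$ and $\Y$ are projective $\CC^{\ast}$-fibrant, they are fibrant in the localization, so (1) forces $f$ to be a pointwise weak equivalence; this also records the direct implication (1)~$\Rightarrow$~(2). To upgrade (2) to (3) I use that the homotopy invariant projective model structure is cubical: the object $\X\otimes\Box^{1}$ is a functorial cylinder object for the cofibrant $\X$ (as in the proof of Proposition \ref{proposition:fullandfaithfulembedding}) and $\Y^{\Box^{1}}$ is a functorial path object for the fibrant $\Y$. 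Factoring $f$ as $\X\overset{\sim}{\cof}\widetilde{\X}\overset{\sim}{\fib}\Y$, cofibrancy of $\X$ produces a section of the acyclic cofibration via the lifting axiom, and fibrancy of $\Y$ together with the cubical enrichment {\bf SM}7 upgrades this abstract data to a cubical homotopy inverse $g\colon\Y\to\X$ with cubical homotopies $gf\simeq \id_{\X}$ and $fg\simeq\id_{\Y}$.

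The main obstacle is this last upgrading step: turning an abstract weak equivalence, supplied by the general lifting-and-factoring machinery, into an honest cubical homotopy equivalence with explicit cubical inverse and homotopies. The key leverage is that the model structure is cubical, so that the pushout product of $\partial\Box^{1}\subset\Box^{1}$ against cofibrations and its right adjoint against fibrations behave correctly; this mirrors the classical Quillen argument that a weak equivalence between bifibrant objects in a simplicial model category is a simplicial homotopy equivalence, transported to the cubical setting via the structural results (cubicality, properness, and the cylinder/path-object identifications) already proved in this section.
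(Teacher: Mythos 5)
Your proof follows the same two ingredients as the paper: the Bousfield localization characterization of local weak equivalences between local objects supplies (1)$\Leftrightarrow$(2), and the cubical model structure with $\X\otimes\Box^{1}$ as a cylinder object for cofibrant $\X$ supplies (1)$\Leftrightarrow$(3), which the paper offloads directly to \cite[Theorem 1.2.10]{Hovey:Modelcategories}. One small slip in your last paragraph: it is \emph{fibrancy} of $\X$, not cofibrancy, that produces the retraction of the acyclic cofibration $\X\overset{\sim}{\cof}\widetilde{\X}$ by lifting against the fibration $\widetilde{\X}\to\ast$; cofibrancy of $\X$ is instead what guarantees $\X\otimes\Box^{1}$ is a cylinder object and, via {\bf SM}7, upgrades the abstract lifting data to the required cubical homotopies.
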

\begin{proof}
The equivalence between the first two items follows because the homotopy invariant model structure is a 
localization of the pointwise projective model structure.
Now if $\X$ is projective cofibrant, 
then $\X\otimes\Box^{1}$ is a cylinder object for $\X$ since we are dealing with a cubical model 
structure, 
cf.~\cite[II Lemma 3.5]{GJ:Modelcategories}.
Hence the first item is equivalent to the third by \cite[Theorem 1.2.10]{Hovey:Modelcategories}.
\end{proof}
\begin{remark}
We leave the formulation of Lemma \ref{lemma:projectiveCstarweakequivalencebetweenfibrantobjects} 
for maps between injective $\CC^{\ast}$-fibrant cubical $\CC^{\ast}$-spaces to the reader.
Note that no cofibrancy condition is then required in the third item since every cubical $\CC^{\ast}$-space 
is cofibrant in the injective homotopy invariant model structure. 
\end{remark}

A map between $\CC^{\ast}$-spaces is called a $\CC^{\ast}$-weak equivalence if the associated map 
of constant cubical $\CC^{\ast}$-spaces is a $\CC^{\ast}$-weak equivalence, 
and a cofibration if it is a monomorphism.  
Fibration of $\CC^{\ast}$-spaces are defined by the right lifting property.
\vspace{0.1in}

We are ready to state the analog in $\CC^{\ast}$-homotopy theory of \cite[Theorem B.4]{Jardine:MSS} which
can be verified by similar arguments using Lemma \ref{lemma:geometricrealizationpreservesmonomorphisms}
and Corollary \ref{corollary:singCstarweakequivalence}.
Further details are left to the interested reader.
In this setting, 
\begin{equation*}
\hom_{\CC^{\ast}-\Spc}(\X,\Y)_{n}\equiv\CC^{\ast}-\Spc(\X\otimes\Box^{n},\Y)
\end{equation*}
and
\begin{equation*}
\X^{K}\equiv\lim_{\Box^{n}\rightarrow K}\,\,\underline{\Hom}\bigl(C_{0}(\Box^{n}_{\ttop}),\X\bigr),
\end{equation*}
where $\X$, $\Y$ are $\CC^{\ast}$-spaces, 
and the limit is taken over the cell category of the cubical set $K$.
\begin{theorem}
\label{theorem:spacemodelstructure}
The classes of monomorphisms, fibrations and $\CC^{\ast}$-weak equivalence form a combinatorial, 
cubical and left proper model category on $\CC^{\ast}-\Spc$. 

The singular and geometric realization functors yield a Quillen equivalence:
\begin{equation*}
\xymatrix{
\vert\cdot\vert\colon\Box\CC^{\ast}-\Spc \ar@<3pt>[r] & 
\CC^{\ast}-\Spc\colon\Sing_{\Box}^{\bullet} \ar@<3pt>[l] }
\end{equation*}
\end{theorem}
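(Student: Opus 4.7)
The plan is to adapt the arguments establishing the homotopy invariant model structures on $\Box\CC^{\ast}-\Spc$, and then derive the Quillen equivalence using the explicit coend formula for $\vert\cdot\vert$ together with Corollary \ref{corollary:singCstarweakequivalence}.

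First I would establish the model structure on $\CC^{\ast}-\Spc$. The cofibrations are generated, up to cardinality, by the set of monomorphisms $\X\hookrightarrow\Y$ of $\CC^{\ast}$-spaces of cardinality bounded by a regular cardinal $\kappa$ chosen in parallel with the injective model structures of earlier sections; local presentability of $\CC^{\ast}-\Spc$ (inherited from the fact that $\CC^{\ast}-\Alg$ has a small skeleton) then ensures generation of all monomorphisms via Proposition \ref{proposition:injectiveliftings} applied sectionwise. To produce generating trivial cofibrations I would follow the cubical template: start from trivial cofibrations of bounded cardinality, then adjoin cubical mapping cylinder factorizations of the exact, matrix-invariant, and homotopy-invariant localizing maps, analogously to $J^{\E}_{\Box\CC^{\ast}-\Spc}$, $J^{\cyl(\K)}_{\Box\CC^{\ast}-\Spc}$, and the interval. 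Axioms CM1--CM3 are routine for this presheaf-style category. For CM4 and CM5 the key point is that $f\colon\X\to\Y$ in $\CC^{\ast}-\Spc$ is a $\CC^{\ast}$-weak equivalence if and only if $\Sing_{\Box}^{\bullet}f$ is so in $\Box\CC^{\ast}-\Spc$, which via Corollary \ref{corollary:singCstarweakequivalence} reduces both saturation and lifting to the cubical case already established in Theorem \ref{theorem:homotopyinvariantinjective}. Combinatoriality, left properness, and the cubical enrichment are inherited from $\Box\CC^{\ast}-\Spc$ by the same transfer device.

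Second, I would verify the Quillen adjunction $(\vert\cdot\vert, \Sing_{\Box}^{\bullet})$. By Lemma \ref{lemma:geometricrealizationpreservesmonomorphisms} the functor $\vert\cdot\vert$ preserves monomorphisms, hence cofibrations. For trivial cofibrations I would argue through the right adjoint: the sectionwise formula $\Sing_{\Box}^{n}(\X)(A)=\X\bigl(A\otimes C(\Box_{\ttop}^{n})\bigr)$ together with the standing compatibility of $C(\Box_{\ttop}^{\bullet})$ with monoidal products and matrix stabilization shows that $\Sing_{\Box}^{\bullet}$ sends fibrations between fibrant objects to $\CC^{\ast}$-fibrations of cubical $\CC^{\ast}$-spaces.

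Third, for the Quillen equivalence, I would check that the derived unit and counit are weak equivalences. The unit $\X\to\Sing_{\Box}^{\bullet}\vert\X\vert$ is a $\CC^{\ast}$-weak equivalence: by Lemma \ref{lemma:filteredandfinitecolimit} and Corollary \ref{levelprojectiveweakequivalenceimpliesprojectiveweakequivalence} it suffices to check this on $\X=A\otimes\Box^{n}$, where $\vert A\otimes\Box^{n}\vert=A\otimes C(\Box_{\ttop}^{n})$ and the map factors through Corollary \ref{corollary:singCstarweakequivalence}. Dually, for $\Y$ fibrant in $\CC^{\ast}-\Spc$, the counit $\vert\Sing_{\Box}^{\bullet}\Y\vert\to\Y$ is analyzed via the coend formula and Lemma \ref{lemma:geometricrealizationpreservesmonomorphisms}, expressing it as a colimit of $\CC^{\ast}$-weak equivalences and then invoking closure of weak equivalences under the relevant homotopy colimits in the homotopy invariant model structure.

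The hard part will be producing the explicit set of generating trivial cofibrations for the homotopy invariant model structure on $\CC^{\ast}-\Spc$ that is simultaneously small enough for the small object argument to converge and rich enough to detect all three localizing families (exactness, matrix invariance, homotopy invariance). The approach is to take the images under $\vert\cdot\vert$ of the cubical generators $J_{\Box\CC^{\ast}-\Spc}\cup J^{\cyl(\E)}_{\Box\CC^{\ast}-\Spc}\cup J^{\cyl(\K)}_{\Box\CC^{\ast}-\Spc}$ and the interval-based homotopy-invariance generators, then exploit that $\vert\cdot\vert$ preserves pushouts, monomorphisms, and the cubical tensor $-\otimes\Box^{n}$ in order to check that a map with the right lifting property with respect to this set is exactly a fibration in the claimed sense.
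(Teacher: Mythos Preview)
The paper does not give a proof of this theorem; it simply states that the result is the analog of \cite[Theorem B.4]{Jardine:MSS} and can be verified by the same arguments, with the two inputs being Lemma~\ref{lemma:geometricrealizationpreservesmonomorphisms} and Corollary~\ref{corollary:singCstarweakequivalence}. Your proposal is built on precisely these two ingredients, so you have identified the correct skeleton.

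Where your outline diverges from Jardine's argument is in the construction of generating trivial cofibrations. You propose to take images under $\vert\cdot\vert$ of the cubical generating sets $J_{\Box\CC^{\ast}-\Spc}\cup J^{\cyl(\E)}_{\Box\CC^{\ast}-\Spc}\cup J^{\cyl(\K)}_{\Box\CC^{\ast}-\Spc}\cup J^{\cyl(I)}_{\Box\CC^{\ast}-\Spc}$ and verify detection of fibrations by adjunction. This is a left-transfer style approach, and while plausible, it is not what Jardine does: in \cite[Appendix~B]{Jardine:MSS} one works directly with a bounded-cardinality argument on the target category, establishing the factorization axioms via a small object argument intrinsic to $\CC^{\ast}-\Spc$ (in your setting) rather than by pushing forward the cubical generators. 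The difficulty with your route is that $\vert\cdot\vert$ applied to a cubical trivial cofibration need not land in a convenient class, and checking that the right lifting property against $\vert J^{\ast}_{\Box\CC^{\ast}-\Spc}\vert$ characterizes fibrations requires knowing that $\Sing_{\Box}^{\bullet}$ detects fibrations, which is close to circular at this stage.

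A cleaner line, closer to Jardine's, is: define weak equivalences and cofibrations as stated, define fibrations by the right lifting property, and establish CM5 by a bounded cofibration argument in $\CC^{\ast}-\Spc$ itself. The equivalence of weak equivalences with $\Sing_{\Box}^{\bullet}$-weak equivalences (which you correctly deduce from Corollary~\ref{corollary:singCstarweakequivalence} and two-out-of-three) then makes the Quillen equivalence essentially formal once the model structure exists. Your treatment of the unit via reduction to $A\otimes\Box^{n}$ and of the counit via the coend description is sound and matches the expected argument.
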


Next we introduce unstable $\CC^{\ast}$-homotopy group (functors) $\pi^{\ast}_{n}$ for integers $n\geq 0$, 
and show that $f\colon\X\rightarrow\Y$ is a $\CC^{\ast}$-weak equivalence if and only if 
$\pi^{\ast}_0\X\rightarrow\pi^{\ast}_0\Y$ is a bijection and for every $0$-cell $x$ of $\X$ and $n\geq 1$
there is a group object isomorphism
\begin{equation}
\label{homtopygroupmap}
\xymatrix{
\pi^{\ast}_{n}(\X,x)\ar[r] & \pi^{\ast}_{n}\bigl(\Y,f(x)\bigr). }
\end{equation}
If $n\geq 2$, then $\pi^{\ast}_{n}(\X,x)$ takes values in abelian groups.
To achieve this we require the construction of a fibrant replacement functor in the 
$\CC^{\ast}$-projective model structure.
\vspace{0.1in}

Using the cubical mapping cylinder we may factor the constant function map of cubical 
$\CC^{\ast}$-spaces
\begin{equation*}
\xymatrix{
C(I,A)\ar[r] & A}
\end{equation*}
into a projective cofibration composed with a cubical homotopy equivalence 
\begin{equation}
\label{homotopycubicalmappingcylinderfactorization}
\xymatrix{
C(I,A)\ar[r] &
\cyl\bigl(C(I,A)\rightarrow A\bigr)\ar[r] & A.}
\end{equation}
Observe that $\cyl\bigl(C(I,A)\rightarrow A\bigr)$ is finitely presentable 
projective cofibrant and the maps in (\ref{homotopycubicalmappingcylinderfactorization}) are 
$\CC^{\ast}$-weak equivalences.
\begin{definition}
Let $J^{\cyl(I)}_{\Box\CC^{\ast}-\Spc}$ denote the set of pushout product maps from
\begin{equation*}
C(I,A)\otimes\Box^{n}
\coprod_{C(I,A)\otimes\partial\Box^{n}} 
\cyl\bigl(C(I,A)\rightarrow A\bigr)\otimes\partial\Box^{n}
\end{equation*}
to 
$\cyl\bigl(C(I,A)\rightarrow A\bigr)\otimes\Box^{n}$ indexed by $n\geq 0$ and $A\in\CC^{\ast}-\Alg$.
\end{definition}
\begin{lemma}
A matrix exact projective fibration whose codomain is $\CC^{\ast}$-projective fibrant is a 
$\CC^{\ast}$-projective fibration if and only if it has the right lifting property with respect to 
$J^{\cyl(I)}_{\Box\CC^{\ast}-\Spc}$.
\end{lemma}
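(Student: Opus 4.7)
The strategy is to recycle, mutatis mutandis, the argument of Proposition \ref{proposition:projectivematrixexactJ}. By construction, the $\CC^{\ast}$-projective model structure is the left Bousfield localization of the matrix exact projective model structure with respect to the set $\{C(I,A)\rightarrow A\}_{A\in\CC^{\ast}-\Alg}$. Since the latter model structure is left proper and cellular, the theory of generating acyclic cofibrations for a Bousfield localization (\cite{Hirschhorn:Modelcategories} 3.3.16) identifies a map with $\CC^{\ast}$-projective fibrant codomain as a $\CC^{\ast}$-projective fibration precisely when it has the right lifting property with respect to the union of the matrix exact generating set $J^{\K}_{\Box\CC^{\ast}-\Spc}$ and a set of pushout product maps encoding the localization. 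First I would verify that after applying the cubical mapping cylinder factorization (\ref{homotopycubicalmappingcylinderfactorization}) to replace $C(I,A)\rightarrow A$ by a projective cofibration followed by a cubical homotopy equivalence, these pushout products are exactly the members of $J^{\cyl(I)}_{\Box\CC^{\ast}-\Spc}$.

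For the forward direction I would observe that each map in $J^{\cyl(I)}_{\Box\CC^{\ast}-\Spc}$ is a projective cofibration (using the pushout product part of Lemma \ref{lemma:matrixexactprojectivemonoidal} together with the fact that $\partial\Box^{n}\subset\Box^{n}$ is a cofibration of cubical sets) and is $\CC^{\ast}$-acyclic, because $C(I,A)\rightarrow\cyl\bigl(C(I,A)\rightarrow A\bigr)$ is $\CC^{\ast}$-acyclic and the homotopy invariant projective model structure is cubical. Thus a $\CC^{\ast}$-projective fibration inherits the right lifting property against $J^{\cyl(I)}_{\Box\CC^{\ast}-\Spc}$.

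For the reverse direction, suppose $f\colon\X\rightarrow\Y$ is a matrix exact projective fibration with $\CC^{\ast}$-projective fibrant codomain enjoying the right lifting property with respect to $J^{\cyl(I)}_{\Box\CC^{\ast}-\Spc}$. By adjointness, the Yoneda isomorphism (\ref{functioncomplexofrepresentable}), and the construction of the cubical mapping cylinder, this lifting condition translates into the assertion that
\begin{equation*}
\minCDarrowwidth20pt
\begin{CD}
\X(A) @>>> \Y(A)\\
@VVV @VVV\\
\X\bigl(C(I,A)\bigr) @>>> \Y\bigl(C(I,A)\bigr)
\end{CD}
\end{equation*}
is homotopy cartesian in cubical sets for every $\CC^{\ast}$-algebra $A$, exactly as in the proof of Proposition \ref{proposition:projectiveexactJ}. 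Since $\Y$ is $\CC^{\ast}$-projective fibrant, the right-hand vertical map is a weak equivalence, whence so is $\X(A)\rightarrow\X\bigl(C(I,A)\bigr)$. Combined with $f$ being a matrix exact projective fibration, this forces $\X$ itself to be $\CC^{\ast}$-projective fibrant, and then $f$ is a $\CC^{\ast}$-projective fibration between $\CC^{\ast}$-projective fibrant objects, hence a $\CC^{\ast}$-projective fibration by the general Bousfield localization theory.

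The main obstacle is the reinterpretation of the right lifting property against $J^{\cyl(I)}_{\Box\CC^{\ast}-\Spc}$ as the homotopy cartesian square condition above; this is the step that mirrors in detail the analogous computation in Proposition \ref{proposition:projectiveexactJ}, and it is where the choice of the cubical mapping cylinder replacement for $C(I,A)\rightarrow A$ — rather than the localization map itself — becomes essential, since it ensures that the relevant pushout products are honest projective cofibrations between projective cofibrant objects.
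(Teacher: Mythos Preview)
Your proposal is correct and follows essentially the same route as the paper: both arguments reduce to the observation that the right lifting property against $J^{\cyl(I)}_{\Box\CC^{\ast}-\Spc}$ detects $\CC^{\ast}$-projective fibrancy among matrix exact projective fibrant objects, and then invoke Hirschhorn's general results on fibrations in a Bousfield localization (the paper cites 3.3.6, you cite 3.3.16). The paper is terser---it records the fibrant-object characterization and appeals directly to Hirschhorn---whereas you spell out the reverse direction via the homotopy cartesian square exactly as in Proposition~\ref{proposition:projectiveexactJ}; but the substance is the same.
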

\begin{proof}
The cubical function complex ${\hom}_{\Box\CC^{\ast}-\Spc}(\Z,-)$ preserves cubical homotopies, 
which are examples of pointwise weak equivalences.
Proposition \ref{proposition:miprojectivemonoidal} and a check using only the definitions reveal 
that a matrix exact projective fibrant cubical $\CC^{\ast}$-space $\Z$ is $\CC^{\ast}$-projective 
fibrant if and only if the map $\Z\rightarrow\ast$ has the right lifting property with respect to 
$J^{\ast}_{\Box\CC^{\ast}-\Spc}$.
This completes the proof by \cite[Proposition 3.3.6]{Hirschhorn:Modelcategories}.
\end{proof}
\begin{corollary}
\label{homotopyweaklyfinitelygenerated}
The $\CC^{\ast}$-projective model is weakly finitely generated by the set
\begin{equation*}
J^{\ast}_{\Box\CC^{\ast}-\Spc}\equiv 
J_{\Box\CC^{\ast}-\Spc}
\cup
J^{\cyl(\E)}_{\Box\CC^{\ast}-\Spc}
\cup
J^{\cyl(\K)}_{\Box\CC^{\ast}-\Spc}
\cup
J^{\cyl(I)}_{\Box\CC^{\ast}-\Spc}.
\end{equation*}
\end{corollary}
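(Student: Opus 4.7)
The plan is to verify the three conditions in Definition \ref{definition:weaklyfinitelygenerated} with the given set $J^{\ast}_{\Box\CC^{\ast}-\Spc}$ playing the role of the distinguished subset $J'$. For the first two conditions, I would observe that the generating projective cofibrations $\{A\otimes(\partial\Box^n\subset\Box^n)\}$ have finitely presentable domains and codomains by Example \ref{example:finitelypresentable}, and smallness of the domains of the generating $\CC^{\ast}$-acyclic projective cofibrations follows because every cubical $\CC^{\ast}$-space is small by Lemma \ref{lemma:finitelypresentable}.

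The third condition is the heart of the matter: I need to show that a map $f\colon\X\to\Y$ whose codomain $\Y$ is $\CC^{\ast}$-projective fibrant is a $\CC^{\ast}$-projective fibration precisely when it lies in $J^{\ast}_{\Box\CC^{\ast}-\Spc}$-inj. My strategy is to chain the four layers of localization: RLP against $J_{\Box\CC^{\ast}-\Spc}$ makes $f$ a pointwise projective fibration; adding RLP against $J^{\cyl(\E)}_{\Box\CC^{\ast}-\Spc}$ upgrades this to an exact projective fibration via Proposition \ref{proposition:projectiveexactJ}; Proposition \ref{proposition:projectivematrixexactJ} together with $J^{\cyl(\K)}_{\Box\CC^{\ast}-\Spc}$ then yields a matrix exact projective fibration; and the preceding lemma combined with $J^{\cyl(I)}_{\Box\CC^{\ast}-\Spc}$ completes the chain to a $\CC^{\ast}$-projective fibration. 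Each intermediate step requires fibrancy of the codomain at the appropriate localized level, but since $\CC^{\ast}$-projective fibrancy sits at the top of the tower it automatically supplies every intermediate fibrancy hypothesis.

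Finally, I would check that each member of $J^{\ast}_{\Box\CC^{\ast}-\Spc}$ has finitely presentable domain and codomain. The horn generators in $J_{\Box\CC^{\ast}-\Spc}$ are finitely presentable by Example \ref{example:finitelypresentable}. For the cylinder layers, the objects $s(\E)$, $t(\E)$, $\cyl^{A}_{\K}$ and $\cyl(C(I,A)\to A)$ are finite colimits involving representable $\CC^{\ast}$-algebras and tensors with $\Box^{1}$, hence finitely presentable by Lemma \ref{lemma:filteredandfinitecolimitfinitelypresentable} and the closure of ${\bf fp}\Box\CC^{\ast}-\Spc$ under finite colimits and tensoring with finitely presentable cubical sets. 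Taking pushout product maps with $\partial\Box^{n}\subset\Box^{n}$ preserves finite presentability.

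The main obstacle is really just bookkeeping: one must check that when chaining the four fibrancy criteria, the intermediate fibrancy hypothesis required by each proposition is actually available. This is not a genuine issue because $\CC^{\ast}$-projective fibrant implies matrix exact projective fibrant by definition, which implies exact projective fibrant, which in turn implies pointwise projective fibrant, so the characterizations cascade cleanly through the tower of Bousfield localizations.
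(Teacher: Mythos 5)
Your proposal is correct and matches the paper's implicit argument: the paper states this corollary with no explicit proof, relying exactly on the cascade of characterizations in Proposition \ref{proposition:projectiveexactJ}, Proposition \ref{proposition:projectivematrixexactJ}, and the lemma immediately preceding the corollary, together with the observation that $\CC^{\ast}$-projective fibrancy implies matrix exact, exact, and pointwise projective fibrancy in turn. One small citation quibble: for smallness of domains of $J$ you cite Lemma \ref{lemma:finitelypresentable} (local presentability), which does the job indirectly, though Lemma \ref{lemma:smallness} is the statement that directly asserts every cubical $\CC^{\ast}$-space is small.
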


Next we employ $J^{\ast}_{\Box\CC^{\ast}-\Spc}$ in order to explicate a fibrant replacement functor 
in the $\CC^{\ast}$-projective model structure by means of a routine small object argument as in the 
proof of Proposition \ref{proposition:projectiveexactreplacement}.
\begin{proposition}
\label{projective*replacement}
There exists a natural transformation 
\begin{equation*}
\xymatrix{
\id\ar[r] &\Ex^{J^{\ast}_{\Box\CC^{\ast}-\Spc}} }
\end{equation*}
of endofunctors of cubical $\CC^{\ast}$-spaces such that for every $\X$ the map 
\begin{equation*}
\xymatrix{
\X\ar[r] & \Ex^{J^{\ast}_{\Box\CC^{\ast}-\Spc}}\X }
\end{equation*}
is a $\CC^{\ast}$-weak equivalence with $\CC^{\ast}$-fibrant codomain. 
\end{proposition}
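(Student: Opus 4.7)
The plan is to apply Quillen's small object argument to the set $J^{\ast}_{\Box\CC^{\ast}-\Spc}$ from Corollary \ref{homotopyweaklyfinitelygenerated}, following the template established in the proof of Proposition \ref{proposition:projectiveexactreplacement} for the exact projective model. The key structural input is that every domain and codomain appearing in $J^{\ast}_{\Box\CC^{\ast}-\Spc}$ is finitely presentable, which guarantees that the argument terminates at a countable ordinal and yields a construction with the required naturality.

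Concretely, given a cubical $\CC^{\ast}$-space $\X$, I would imitate the single-step pushout construction from the proof of Theorem \ref{theorem:projectiveobjectwise}: form $\X^{0}$ as the pushout of $\X\leftarrow\coprod C_{\alpha}\rightarrow\coprod D_{\alpha}$, where the coproduct ranges over all commutative squares from some $j_{\alpha}\colon C_{\alpha}\rightarrow D_{\alpha}$ in $J^{\ast}_{\Box\CC^{\ast}-\Spc}$ to $\X\rightarrow\ast$. Iterating this countably produces a sequence $\X\rightarrow\X^{0}\rightarrow\X^{1}\rightarrow\cdots$ and I set $\Ex^{J^{\ast}_{\Box\CC^{\ast}-\Spc}}\X\equiv\colim_{n}\X^{n}$. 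Each map $\X^{n}\rightarrow\X^{n+1}$ is a pushout of a coproduct of maps in $J^{\ast}_{\Box\CC^{\ast}-\Spc}$, and since by construction (via the cubical mapping cylinder factorizations built in the previous subsections) every element of $J^{\ast}_{\Box\CC^{\ast}-\Spc}$ is a $\CC^{\ast}$-acyclic projective cofibration, so is each $\X^{n}\rightarrow\X^{n+1}$. The class of $\CC^{\ast}$-acyclic projective cofibrations is closed under transfinite composition, so $\X\rightarrow\Ex^{J^{\ast}_{\Box\CC^{\ast}-\Spc}}\X$ is a $\CC^{\ast}$-weak equivalence.

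To verify that $\Ex^{J^{\ast}_{\Box\CC^{\ast}-\Spc}}\X$ is $\CC^{\ast}$-projective fibrant, I would use that any map from a finitely presentable domain $C_{\alpha}$ into the countable colimit $\colim_{n}\X^{n}$ factors through some $\X^{n}$, and the corresponding lifting problem against $j_{\alpha}$ is resolved in $\X^{n+1}$, hence in the colimit. Thus $\Ex^{J^{\ast}_{\Box\CC^{\ast}-\Spc}}\X\rightarrow\ast$ has the right lifting property with respect to the entire set $J^{\ast}_{\Box\CC^{\ast}-\Spc}$. Combining Proposition \ref{proposition:projectiveexactJ} (for $J_{\Box\CC^{\ast}-\Spc}\cup J^{\cyl(\E)}_{\Box\CC^{\ast}-\Spc}$), Proposition \ref{proposition:projectivematrixexactJ} (for $J^{\cyl(\K)}_{\Box\CC^{\ast}-\Spc}$), and the lemma immediately preceding Corollary \ref{homotopyweaklyfinitelygenerated} (for $J^{\cyl(I)}_{\Box\CC^{\ast}-\Spc}$) — which jointly characterize $\CC^{\ast}$-projective fibrancy via precisely this right lifting property — one concludes that the codomain is $\CC^{\ast}$-projective fibrant.

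The only delicate point I foresee is functoriality and naturality: since the set of commutative squares indexing the pushout depends on $\X$, a map $\X\rightarrow\Y$ must induce a compatible map of indexing data and hence a canonical morphism $\Ex^{J^{\ast}_{\Box\CC^{\ast}-\Spc}}\X\rightarrow\Ex^{J^{\ast}_{\Box\CC^{\ast}-\Spc}}\Y$ intertwining the units. This is standard bookkeeping (one packages the indexing set as the comma category of $J^{\ast}_{\Box\CC^{\ast}-\Spc}$ over $\X\rightarrow\ast$), and this is the sense in which the argument is routine; the substantive content of the proposition lies entirely in the three characterizations of fibrancy already established.
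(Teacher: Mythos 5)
Your proposal is correct and follows essentially the same route as the paper's proof, which simply invokes Quillen's small object argument applied to $J^{\ast}_{\Box\CC^{\ast}-\Spc}$ in parallel with Proposition \ref{proposition:projectiveexactreplacement}. The additional detail you supply — countable iteration thanks to finite presentability of the (co)domains, the fact that every member of $J^{\ast}_{\Box\CC^{\ast}-\Spc}$ is a $\CC^{\ast}$-acyclic projective cofibration, and the lifting-property characterization of $\CC^{\ast}$-projective fibrancy via Corollary \ref{homotopyweaklyfinitelygenerated} — is precisely what the paper leaves implicit under "routine small object argument."
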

\begin{definition}
Let $(\X,x)$ be a pointed cubical $\CC^{\ast}$-space.
Define the $n$th $\CC^{\ast}$-homotopy group
\begin{equation*}
\xymatrix{
\pi^{\ast}_{n}(\X,x)\colon\CC^{\ast}-\Alg\ar[r] & \Set}
\end{equation*}
by
\begin{equation*}
\xymatrix{
A\ar@{|->}[r] & 
\pi^{\ast}_{n}(\X,x)(A)}
\equiv 
\begin{cases}
\pi_{0}\bigl(\Ex^{J^{\ast}_{\Box\CC^{\ast}-\Spc}}\X(A)\bigr)& n=0 \\
\pi_{n}\bigl(\Ex^{J^{\ast}_{\Box\CC^{\ast}-\Spc}}\X(A),x\bigr) & n>0.
\end{cases}
\end{equation*}
A cubical $\CC^{\ast}$-space $\X$ is $n$-connected if it is nonempty and $\pi^{\ast}_{i}(\X,x)$ 
is trivial for all $0\leq i\leq n$ and $x$.
Note that $\pi^{\ast}_{n}(\X,x)$ is a contravariant functor taking values in sets if $n=0$, groups if $n=1$ 
and abelian groups if $n\geq 2$.
\end{definition}
\begin{lemma}
A map $(\X,x)\rightarrow (\Y,y)$ is a $\CC^{\ast}$-weak equivalence if and only if for every 
integer $n\geq 0$ there are naturally induced isomorphisms between $\CC^{\ast}$-homotopy groups
\begin{equation*}
\xymatrix{
\pi^{\ast}_{n}(\X,x)\ar[r] & \pi^{\ast}_{n}(\Y,y).}
\end{equation*}
\end{lemma}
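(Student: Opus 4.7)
The plan is to reduce the statement to the standard fact that a map of Kan cubical sets is a weak equivalence if and only if it induces bijections on all homotopy groups, by passing to the fibrant replacement functor of Proposition \ref{projective*replacement} and invoking Lemma \ref{lemma:projectiveCstarweakequivalencebetweenfibrantobjects}.

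First I would observe that the natural map $\X \to \Ex^{J^{\ast}_{\Box\CC^{\ast}-\Spc}}\X$ is a $\CC^{\ast}$-weak equivalence with $\CC^{\ast}$-projective fibrant codomain, so by the two-out-of-three axiom $f\colon(\X,x)\to(\Y,y)$ is a $\CC^{\ast}$-weak equivalence if and only if its fibrant replacement $\Ex^{J^{\ast}_{\Box\CC^{\ast}-\Spc}}f$ is. Moreover, by the very definition of $\pi^{\ast}_n$, the map $f$ induces an isomorphism on $\pi^{\ast}_n$ for all $n\geq 0$ if and only if $\Ex^{J^{\ast}_{\Box\CC^{\ast}-\Spc}}f$ does. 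Hence it suffices to verify the lemma for maps between $\CC^{\ast}$-projective fibrant pointed cubical $\CC^{\ast}$-spaces.

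Next I would apply Lemma \ref{lemma:projectiveCstarweakequivalencebetweenfibrantobjects} to reduce the statement further: a map between $\CC^{\ast}$-projective fibrant objects is a $\CC^{\ast}$-weak equivalence if and only if it is a pointwise weak equivalence, that is, for every $\CC^{\ast}$-algebra $A$ the induced map
\begin{equation*}
\Ex^{J^{\ast}_{\Box\CC^{\ast}-\Spc}}\X(A) \longrightarrow \Ex^{J^{\ast}_{\Box\CC^{\ast}-\Spc}}\Y(A)
\end{equation*}
is a weak equivalence of Kan cubical sets (the Kan condition here is automatic because projective fibrancy entails sectionwise Kan fibrancy over the terminal object). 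Now invoke Theorem \ref{theorem:cubicalsetmodelstructure}: via the Quillen equivalence with simplicial sets, a map of Kan cubical sets is a weak equivalence if and only if it induces a bijection on $\pi_0$ and an isomorphism on $\pi_n$ at every choice of basepoint for every $n\geq 1$. Unwinding the definition of $\pi^{\ast}_n(-, -)(A)$ identifies these conditions with the requirement that $\pi^{\ast}_n(f)$ be an isomorphism for every $n\geq 0$ and every basepoint.

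The only real subtlety, and the step most likely to require care, is the basepoint bookkeeping: the lemma as stated mentions a single basepoint $x$, but the cubical set $\Ex^{J^{\ast}_{\Box\CC^{\ast}-\Spc}}\X(A)$ may be disconnected, and so a genuine pointwise weak equivalence requires isomorphisms on $\pi_n$ at every component. The standing convention (for pointed cubical $\CC^{\ast}$-spaces $(\X,x)$, $\pi^{\ast}_n(\X,x)$ is understood for every $0$-cell of every $\X(A)$) makes this harmless, and the converse direction reduces to noting that if $\pi^{\ast}_0(f)$ is a bijection and $\pi^{\ast}_n(f)$ is an isomorphism at every basepoint, then each $\Ex^{J^{\ast}_{\Box\CC^{\ast}-\Spc}}f(A)$ is a weak equivalence of cubical sets. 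With this convention in place the forward implication is automatic, and the reverse implication follows by reassembling the three equivalences above.
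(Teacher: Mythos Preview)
Your argument is correct and follows essentially the same route as the paper's proof: both reduce via the fibrant replacement of Proposition~\ref{projective*replacement} to the claim that $\Ex^{J^{\ast}_{\Box\CC^{\ast}-\Spc}}f$ is a pointwise weak equivalence, and then read off the conclusion from the definition of $\pi^{\ast}_n$. The paper phrases the key reduction as an appeal to the Whitehead theorem for localizations of model categories, whereas you invoke its concrete instance Lemma~\ref{lemma:projectiveCstarweakequivalencebetweenfibrantobjects}; your added basepoint discussion is extra care the paper leaves implicit.
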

\begin{proof}
Using the properties of the natural transformation $\id\rightarrow\Ex^{J^{\ast}_{\Box\CC^{\ast}-\Spc}}$ appearing 
in Proposition \ref{projective*replacement} and the Whitehead theorem for localizations of model categories,
it follows that $\X\rightarrow\Y$ is a $\CC^{\ast}$-weak equivalence if and only if
\begin{equation*}
\xymatrix{
\Ex^{J^{\ast}_{\Box\CC^{\ast}-\Spc}}\X\ar[r] & 
\Ex^{J^{\ast}_{\Box\CC^{\ast}-\Spc}}\Y}
\end{equation*}
is a pointwise weak equivalence.
\end{proof}

Next we show a characterization of the class of $\CC^{\ast}$-weak equivalences.
\begin{proposition}
\label{proposition:charactericationprojective*weakequivalences}
The class of $\CC^{\ast}$-weak equivalences is the smallest class of maps 
${\bf \ast-weq}$ of cubical $\CC^{\ast}$-spaces which satisfies the following properties. 
\begin{itemize}
\item
${\bf \ast-weq}$ is saturated.
\item
${\bf \ast-weq}$ contains the class of exact projective weak equivalences.
\item
${\bf \ast-weq}$ contains the elementary matrix invariant weak equivalences 
\begin{equation*}
\xymatrix{
A\otimes\K\ar[r] & A.}
\end{equation*}
and the elementary $\CC^{\ast}$-weak equivalences
\begin{equation*}
\xymatrix{
C(I,A)\ar[r] & A.}
\end{equation*}
\item
Suppose there is a pushout square of cubical $\CC^{\ast}$-spaces where $f$ is in ${\bf \ast-weq}$:
\begin{equation*}
\minCDarrowwidth20pt
\begin{CD}
\X @>g>> \Z \\
@VfVV @VVhV \\
\Y @>>> \W
\end{CD}
\end{equation*}
If $g$ is a projective cofibration, 
then $h$ is in ${\bf \ast-weq}$.
If $f$ is a projective cofibration, 
then $h$ is a projective cofibration contained in ${\bf \ast-weq}$.
\item
Suppose ${\bf X}\colon I\rightarrow\Box\CC^{\ast}-\Spc$ is a small filtered diagram such that for every 
$i\rightarrow j$, 
the induced map ${\bf X}(i)\rightarrow{\bf X}(j)$ is a map in ${\bf \ast-weq}$.
Then the induced map
\begin{equation*}
\xymatrix{
{\bf X}(i)\ar[r] & \underset{j\in i\downarrow I}{\colim}\,\,{\bf X}(j)}
\end{equation*}
is in ${\bf \ast-weq}$.
\end{itemize}  
\end{proposition}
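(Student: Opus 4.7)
The plan is to prove the two implications separately: first that the class of $\CC^{\ast}$-weak equivalences indeed satisfies the five listed closure properties, and second that any such class must contain every $\CC^{\ast}$-weak equivalence. The forward direction is essentially bookkeeping from what has already been established. Saturation is axiom {\bf CM} 2 in the homotopy invariant model structure. Containment of the exact projective weak equivalences holds because the $\CC^{\ast}$-projective model structure is a left Bousfield localization of the exact projective one, so every weak equivalence of the latter is a weak equivalence of the former. The two families of elementary maps are by construction among the maps inverted in the localization. Closure under cobase change along projective cofibrations is a consequence of left properness (and of the general fact that acyclic cofibrations are stable under pushout). Closure under filtered colimits follows from Corollary~\ref{homotopyweaklyfinitelygenerated} combined with Lemma~\ref{lemma:afgmodelcategories}, since the $\CC^{\ast}$-projective model is weakly finitely generated.

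For the minimality direction, let ${\bf \ast-weq}$ be any class satisfying the five properties. The main technical step is to prove that every map in the generating set
\begin{equation*}
J^{\ast}_{\Box\CC^{\ast}-\Spc} = J_{\Box\CC^{\ast}-\Spc} \cup J^{\cyl(\E)}_{\Box\CC^{\ast}-\Spc} \cup J^{\cyl(\K)}_{\Box\CC^{\ast}-\Spc} \cup J^{\cyl(I)}_{\Box\CC^{\ast}-\Spc}
\end{equation*}
lies in ${\bf \ast-weq}$. For $J_{\Box\CC^{\ast}-\Spc}$ this is immediate: its elements are pointwise acyclic projective cofibrations, hence exact projective weak equivalences, so property~2 applies. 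For $J^{\cyl(\E)}_{\Box\CC^{\ast}-\Spc}$ the same holds because these pushout-product maps are, by construction and by Theorem~\ref{theorem:exactprojective}, exact projective weak equivalences. The delicate case is $J^{\cyl(\K)}_{\Box\CC^{\ast}-\Spc}$ and $J^{\cyl(I)}_{\Box\CC^{\ast}-\Spc}$, whose elements arise from cylinder factorizations of the elementary maps in property~3. I would first argue that each cylinder factorization
\begin{equation*}
A\otimes\K \to \cyl^{A}_{\K} \to A, \qquad C(I,A) \to \cyl\bigl(C(I,A)\to A\bigr) \to A
\end{equation*}
consists of maps in ${\bf \ast-weq}$: the composite lies in ${\bf \ast-weq}$ by property~3, the second factor is a pointwise weak equivalence (being a retraction off a cylinder) and so lies in ${\bf \ast-weq}$ by property~2, hence the first factor lies in ${\bf \ast-weq}$ by saturation. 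Then the pushout-product structure of the generators, combined with property~4 applied iteratively along the filtration of $\partial\Box^{n}\subset\Box^{n}$ by cell attachments, promotes this to the full generating set.

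Once $J^{\ast}_{\Box\CC^{\ast}-\Spc}\subseteq{\bf \ast-weq}$, the small object argument producing the fibrant replacement $\X\to\Ex^{J^{\ast}_{\Box\CC^{\ast}-\Spc}}\X$ of Proposition~\ref{projective*replacement} builds this map as a transfinite composition of pushouts along projective cofibrations of coproducts of maps in $J^{\ast}_{\Box\CC^{\ast}-\Spc}$. Properties~4 and~5, applied transfinitely, therefore force $\X\to\Ex^{J^{\ast}_{\Box\CC^{\ast}-\Spc}}\X$ to lie in ${\bf \ast-weq}$ for every $\X$. To finish, given a $\CC^{\ast}$-weak equivalence $f\colon\X\to\Y$, contemplate the naturality square with horizontal fibrant replacements. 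The induced map $\Ex^{J^{\ast}_{\Box\CC^{\ast}-\Spc}}f$ is a $\CC^{\ast}$-weak equivalence between $\CC^{\ast}$-projective fibrant objects, hence a pointwise weak equivalence by Lemma~\ref{lemma:projectiveCstarweakequivalencebetweenfibrantobjects}, hence an exact projective weak equivalence, hence in ${\bf \ast-weq}$ by property~2. Two applications of saturation along the square then yield $f\in{\bf \ast-weq}$.

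The main obstacle is the verification that the pushout-product generators in $J^{\cyl(\K)}_{\Box\CC^{\ast}-\Spc}$ and $J^{\cyl(I)}_{\Box\CC^{\ast}-\Spc}$ belong to ${\bf \ast-weq}$; this requires combining the elementary maps of property~3 with a careful inductive use of properties~4 and~5 to handle the tensor factors $\Box^{n}$ and $\partial\Box^{n}$. Everything else is a formal manipulation of the localization machinery and the small object argument once this step is secured.
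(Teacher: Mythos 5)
Your argument takes a genuinely different route from the paper's. The paper builds a fibrant replacement $(-)^{f}_{C(I)}$ from the elementary homotopy maps $C(I,A)\rightarrow A$ alone, and then asserts that the induced comparison between the replaced objects is already a matrix exact projective weak equivalence and therefore covered by conditions two and three. You instead use the full replacement $\Ex^{J^{\ast}_{\Box\CC^{\ast}-\Spc}}$ of Proposition~\ref{projective*replacement}, arrive at a $\CC^{\ast}$-weak equivalence between $\CC^{\ast}$-projective fibrant objects, and identify it via Lemma~\ref{lemma:projectiveCstarweakequivalencebetweenfibrantobjects} with a pointwise --- hence exact projective --- weak equivalence. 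Your terminal reduction is cleaner than the paper's, but in exchange you must place the entire generating set $J^{\ast}_{\Box\CC^{\ast}-\Spc}$ in ${\bf \ast-weq}$, and the two pushout-product families $J^{\cyl(\K)}_{\Box\CC^{\ast}-\Spc}$ and $J^{\cyl(I)}_{\Box\CC^{\ast}-\Spc}$ are exactly the part you leave unresolved.

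The missing idea there is a gluing lemma for ${\bf \ast-weq}$. Your saturation trick correctly places $A\otimes\K\rightarrow\cyl^{A}_{\K}$ and $C(I,A)\rightarrow\cyl\bigl(C(I,A)\rightarrow A\bigr)$ in ${\bf \ast-weq}$; write $i$ for either of these cofibrations. To put the pushout product of $i$ with $\partial\Box^{n}\subset\Box^{n}$ into ${\bf \ast-weq}$, you need $i\otimes\Box^{n}$ and $i\otimes\partial\Box^{n}$ to lie in ${\bf \ast-weq}$, after which one cobase change (property four) and one two-out-of-three finish. For $i\otimes\Box^{n}$, compare along the projections $\X\otimes\Box^{n}\rightarrow\X$: these are cubical homotopy equivalences, hence pointwise weak equivalences covered by property two, so two applications of two-out-of-three suffice. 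The delicate case is $i\otimes\partial\Box^{n}$, where the cell filtration of $\partial\Box^{n}$ calls for a gluing lemma at each stage, and this is \emph{not} a formal consequence of properties one through five. It does hold in the case at hand, but only because the attaching maps in the cell structure of $\partial\Box^{n}$ are monomorphisms of cubical sets, so that after tensoring all the horizontal maps in the gluing cube become projective cofibrations; one can then factor the induced map of pushouts through the pushout of the middle vertical map along one of those cofibrations, exhibit each factor as a cobase change of a ${\bf \ast-weq}$-map along a cofibration covered by property four, and conclude by saturation. Without noticing this extra rigidity, the ``careful inductive use of properties four and five'' you appeal to does not close. Once this is secured, the remainder of your proof is correct.
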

\vspace{0.1in}

It is clear that the class of $\CC^{\ast}$-weak equivalences satisfies the first, second and third conditions in 
Proposition \ref{proposition:charactericationprojective*weakequivalences}.
\vspace{0.1in}

The first part of the fourth item holds because the homotopy invariant projective model structure is left proper, 
while the second part holds because acyclic cofibrations are closed under pushouts in any model structure.
\vspace{0.1in}

If ${\bf X}\rightarrow{\bf Y}$ is a map of diagrams as in the fifth part, 
\cite[Proposition 17.9.1]{Hirschhorn:Modelcategories} implies there is an induced $\CC^{\ast}$-weak equivalence
\begin{equation*}
\xymatrix{
\underset{i\in I}{\colim}\,\,{\bf X}(i)\ar[r] & 
\underset{i\in I}{\colim}\,\,{\bf Y}(i).}
\end{equation*}

Now consider the small filtered undercategory $i\downarrow I$ with objects the maps $i\rightarrow j$ in $I$, 
and with maps the evident commutative triangles of objects.
Applying the above to ${\bf X}(i)\rightarrow{\bf X}$ implies the last item.
Note that in the formulation of the last item we may replace colimits by homotopy colimits.
\vspace{0.1in}
 
The proof of Proposition \ref{proposition:charactericationprojective*weakequivalences} makes use 
of a functorial fibrant replacement functor in the homotopy invariant projective model structure.
Denote by
\begin{equation*}
\xymatrix{
\id_{\Box\CC^{\ast}-\Spc}\ar[r] & (-)^{f}_{C(I)}}
\end{equation*}
the fibrant replacement functor obtained by applying the small object argument to the set of 
(isomorphism classes of) elementary $\CC^{\ast}$-weak equivalences. 
\vspace{0.1in}

With these preliminaries taken care of we are ready to begin the proof.
\vspace{0.1in}

\begin{proof}(of Proposition \ref{proposition:charactericationprojective*weakequivalences}.)
Every elementary weak equivalence $A\rightarrow C(I,A)$ is contained in ${\bf \ast-weq}$.
We shall prove that any $\CC^{\ast}$-weak equivalence $\X\rightarrow\Y$ can be constructed 
from elementary weak equivalences using constructions as in the statement of the proposition.
\vspace{0.1in}

There is a commutative diagram:
\begin{equation*}
\minCDarrowwidth20pt
\begin{CD}
\X @>>> \Y \\
@VVV @VVV \\
\X^{f}_{C(I)} @>>> \Y^{f}_{C(I)}
\end{CD}
\end{equation*}
The vertical maps are constructed out of direct colimits of pushouts of elementary weak equivalences.
The third and fourth items imply that the vertical maps are contained in ${\bf \ast-weq}$.
On the other hand, 
saturation for $\CC^{\ast}$-weak equivalences and the defining property of the fibrant replacement functor 
imply the lower horizontal map is a projective $\CC^{\ast}$-weak equivalence. 
This implies that it is a matrix exact projective weak equivalence.
Since ${\bf \ast-weq}$ contains all matrix exact projective weak equivalences according to the second and third conditions, 
the lower horizontal map is contained in ${\bf \ast-weq}$.
Thus saturation, 
or the two-out-of-three property, 
for the class ${\bf \ast-weq}$ which holds by the first item implies the $\CC^{\ast}$-weak equivalence $\X\rightarrow\Y$ is 
contained in ${\bf \ast-weq}$.
\end{proof}
\vspace{0.1in}

In order to construct a fibrant replacement functor for the injective homotopy invariant model structure we shall 
proceed a bit differently.
A flasque cubical $\CC^{\ast}$-space $\Z$ is called quasifibrant if the maps $\Z(A)\rightarrow\Z(A\otimes\K)$ 
- corresponding to matrix invariance - 
and $\Z(A)\rightarrow\Z\bigl(C(I,A)\bigr)$
- corresponding to homotopy invariance - 
are weak equivalences for every $\CC^{\ast}$-algebra $A$.
We note that every $\CC^{\ast}$-projective fibrant cubical $\CC^{\ast}$-space is quasifibrant.
\vspace{0.1in}

For a $\CC^{\ast}$-space $\X$, 
we set 
\begin{equation*}
(\Ex^{\cyl(\E,\K)}_{\Sing_{\Box}^{\bullet}}\X)_{0}\equiv\Sing_{\Box}^{\bullet}\X 
\end{equation*}
and form inductively pushout diagrams
\begin{equation*}
\xymatrix{
\coprod_{\alpha_{n}} s\alpha \ar[r]\ar[d] & \Sing_{\Box}^{\bullet}(\Ex^{\cyl(\E,\K)}_{\Sing_{\Box}^{\bullet}}\X)_{n}\ar[d] \\
\coprod_{\alpha_{n}} t\alpha \ar[r]       & (\Ex^{\cyl(\E,\K)}_{\Sing_{\Box}^{\bullet}}\X)_{n+1}  }
\end{equation*}
indexed by the set $\alpha_{n}$ of all commutative diagrams
\begin{equation*}
\xymatrix{
s\alpha \ar[r]\ar[d] & \Sing_{\Box}^{\bullet}(\Ex^{\cyl(\E,\K)}_{\Sing_{\Box}^{\bullet}}\X)_{n}\ar[d] \\
t\alpha \ar[r]       & \ast }
\end{equation*}
where $s\alpha\rightarrow t\alpha$ is member of $J^{\cyl(\E)}_{\Box\CC^{\ast}-\Spc}\cup J^{\cyl(\K)}_{\Box\CC^{\ast}-\Spc}$.
There is an induced map from $\X$ to the colimit $\Ex^{\cyl(\E,\K)}_{\Sing_{\Box}^{\bullet}}\X$ of the sequential diagram of
alternating injective acyclic $\CC^{\ast}$-weak equivalences according to Example \ref{example:monomorphism}, Corollary 
\ref{corollary:singCstarweakequivalence} and $J^{\cyl(\E)}_{\Box\CC^{\ast}-\Spc}\cup J^{\cyl(\K)}_{\Box\CC^{\ast}-\Spc}$-acyclic
cofibrations:
\begin{equation}
\label{equation:quasifibranttower}
\xymatrix{ 
\cdots\ar[r] & (\Ex^{\cyl(\E,\K)}_{\Sing_{\Box}^{\bullet}}\X)_{n}
\ar[r] & \Sing_{\Box}^{\bullet}(\Ex^{\cyl(\E,\K)}_{\Sing_{\Box}^{\bullet}}\X)_{n}\ar[r] &
(\Ex^{\cyl(\E,\K)}_{\Sing_{\Box}^{\bullet}}\X)_{n+1} \ar[r] & \cdots }
\end{equation}
\begin{lemma}
\label{lemma:quasifibrant}
There is an endofunctor $\Ex^{\cyl(\E,\K)}_{\Sing_{\Box}^{\bullet}}$ of $\Box\CC^{\ast}-\Spc$ and a natural transformation
\begin{equation*}
\xymatrix{
\id_{\Box\CC^{\ast}-\Spc}\ar[r] & \Ex^{\cyl(\E,\K)}_{\Sing_{\Box}^{\bullet}} }
\end{equation*}
such that $\Ex^{\cyl(\E,\K)}_{\Sing_{\Box}^{\bullet}}\X$ is quasifibrant for every cubical $\CC^{\ast}$-space $\X$ and the map 
\begin{equation*}
\xymatrix{
\X\ar[r] & \Ex^{\cyl(\E,\K)}_{\Sing_{\Box}^{\bullet}}\X }
\end{equation*}
is an injective acyclic $\CC^{\ast}$-weak equivalence.
\end{lemma}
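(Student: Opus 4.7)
The plan is to verify the two claims of the lemma for the explicit transfinite construction in (\ref{equation:quasifibranttower}): that the structural map $\X \to \Ex^{\cyl(\E,\K)}_{\Sing_{\Box}^{\bullet}}\X$ is simultaneously a monomorphism and a $\CC^{\ast}$-weak equivalence, and that the codomain is quasifibrant. Functoriality is inherited from that of $\Sing_{\Box}^{\bullet}$ together with the canonical small-object pushout step, so the natural transformation $\id \to \Ex^{\cyl(\E,\K)}_{\Sing_{\Box}^{\bullet}}$ comes for free.

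For the first claim, each link in the sequence (\ref{equation:quasifibranttower}) is simultaneously a monomorphism and a $\CC^{\ast}$-weak equivalence. The comparison maps $\Y \to \Sing_{\Box}^{\bullet}(\Y)$ are monomorphisms by Example \ref{example:monomorphism} and $\CC^{\ast}$-weak equivalences by Corollary \ref{corollary:singCstarweakequivalence}. The pushouts along coproducts of maps in $J^{\cyl(\E)}_{\Box\CC^{\ast}-\Spc} \cup J^{\cyl(\K)}_{\Box\CC^{\ast}-\Spc}$ are monomorphisms because the generators are pointwise monomorphisms, and they are acyclic cofibrations in the exact respectively matrix invariant projective model structures, hence acyclic cofibrations in the (coarser) injective homotopy invariant $\CC^{\ast}$-model structure. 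Since monomorphisms and acyclic cofibrations in that structure are both closed under pushout and transfinite composition, the colimit map inherits both properties.

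Writing $\Z \equiv \Ex^{\cyl(\E,\K)}_{\Sing_{\Box}^{\bullet}}\X$, flasqueness and matrix invariance follow from finite presentability. Since the domains and codomains of $J^{\cyl(\E)}$ and $J^{\cyl(\K)}$ are finitely presentable, any lifting problem into the filtered colimit $\Z$ factors through some stage $(\Ex^{\cyl(\E,\K)}_{\Sing_{\Box}^{\bullet}}\X)_n$ and is then resolved by the immediately following pushout. Combined with the characterizations underpinning Propositions \ref{proposition:projectiveexactJ} and \ref{proposition:projectivematrixexactJ}, this yields that $\Z(0)$ is contractible, $\Z$ is flasque, and the comparison $\Z(A) \to \Z(A\otimes\K)$ is a weak equivalence of cubical sets for every $\CC^{\ast}$-algebra $A$.

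The main obstacle is homotopy invariance, since the $J^{\cyl(\E)} \cup J^{\cyl(\K)}$ pushouts may disrupt it after every application of $\Sing_{\Box}^{\bullet}$. I would use the preliminary fact that $\Sing_{\Box}^{\bullet}(\Y)(A) \to \Sing_{\Box}^{\bullet}(\Y)(C(I,A))$ is a weak equivalence of cubical sets for every cubical $\CC^{\ast}$-space $\Y$, implemented by the cubical homotopy equivalence arising from the contractible cocubical $\CC^{\ast}$-algebra $C(\Box_{\ttop}^{\bullet})$ in the standard Kan-style fashion. Since $\Sing_{\Box}^{\bullet}$ is applied cofinally often in (\ref{equation:quasifibranttower}), the map $\Z(A) \to \Z(C(I,A))$ is computed as the filtered colimit of the cofinal sub-tower of maps $\Sing_{\Box}^{\bullet}((\Ex^{\cyl(\E,\K)}_{\Sing_{\Box}^{\bullet}}\X)_n)(A) \to \Sing_{\Box}^{\bullet}((\Ex^{\cyl(\E,\K)}_{\Sing_{\Box}^{\bullet}}\X)_n)(C(I,A))$, each of which is a weak equivalence; because filtered colimits of weak equivalences of cubical sets are weak equivalences, homotopy invariance of $\Z$ follows, completing the verification of quasifibrancy.
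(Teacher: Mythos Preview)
Your proposal is correct and follows essentially the same approach as the paper. The paper's proof is very terse---it invokes \cite[Proposition 17.9.1]{Hirschhorn:Modelcategories} for the acyclic injective cofibration claim, says ``homotopy invariance holds on account of the singular functor,'' and notes that the right lifting property with respect to $J^{\cyl(\E)}_{\Box\CC^{\ast}-\Spc}\cup J^{\cyl(\K)}_{\Box\CC^{\ast}-\Spc}$ follows because the (co)domains preserve sequential colimits---and your argument simply unpacks each of these steps, including the cofinality argument that makes precise why the interleaved applications of $\Sing_{\Box}^{\bullet}$ force homotopy invariance of the colimit.
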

\begin{proof}
The natural transformation exists by naturality of the map $\X\rightarrow\Ex^{\cyl(\E,\K)}_{\Sing_{\Box}^{\bullet}}\X$ from $\X$ 
to the colimit of (\ref{equation:quasifibranttower}), 
and by \cite[Proposition 17.9.1]{Hirschhorn:Modelcategories} it is an injective acyclic $\CC^{\ast}$-weak equivalence. 
To show quasifibrancy, 
note that homotopy invariance holds on account of the singular functor and that $\Ex^{\cyl(\E,\K)}_{\Sing_{\Box}^{\bullet}}\X$
has the right lifting property with respect to $J^{\cyl(\E)}_{\Box\CC^{\ast}-\Spc}\cup J^{\cyl(\K)}_{\Box\CC^{\ast}-\Spc}$ since
the domains and codomains of maps in the latter set preserve sequential colimits.
\end{proof}
\begin{corollary}
Let 
\begin{equation*}
\xymatrix{
\id_{\Box\CC^{\ast}-\Spc}\ar[r] & \RRR }
\end{equation*}
denote a fibrant replacement functor in the pointwise injective model structure.
Then 
\begin{equation*}
\xymatrix{
\id_{\Box\CC^{\ast}-\Spc}\ar[r] & \RRR\Ex^{\cyl(\E,\K)}_{\Sing_{\Box}^{\bullet}} }
\end{equation*}
is a fibrant replacement functor in the injective homotopy invariant model structure.
\end{corollary}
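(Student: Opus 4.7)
The assertion consists of two parts: that the natural transformation $\X\to\RRR\Ex^{\cyl(\E,\K)}_{\Sing_{\Box}^{\bullet}}\X$ is an injective acyclic $\CC^{\ast}$-weak equivalence, and that its codomain is $\CC^{\ast}$-injective fibrant. The plan is to handle these in turn, obtaining the first by composing two already-identified maps and the second by transporting the quasifibrancy of $\Ex^{\cyl(\E,\K)}_{\Sing_{\Box}^{\bullet}}\X$ across a pointwise weak equivalence.

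For the first part, Lemma \ref{lemma:quasifibrant} shows that $\X\to\Ex^{\cyl(\E,\K)}_{\Sing_{\Box}^{\bullet}}\X$ is a monomorphism which is an injective $\CC^{\ast}$-weak equivalence. By definition of the pointwise injective fibrant replacement $\RRR$, the map $\Ex^{\cyl(\E,\K)}_{\Sing_{\Box}^{\bullet}}\X\to\RRR\Ex^{\cyl(\E,\K)}_{\Sing_{\Box}^{\bullet}}\X$ is a pointwise acyclic cofibration, so in particular a monomorphism and a pointwise weak equivalence, hence an injective acyclic $\CC^{\ast}$-weak equivalence. Since monomorphisms and $\CC^{\ast}$-weak equivalences are both closed under composition by the two-out-of-three property, the composite $\X\to\RRR\Ex^{\cyl(\E,\K)}_{\Sing_{\Box}^{\bullet}}\X$ has the required properties.

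For the second part, I will unpack what it means to be $\CC^{\ast}$-injective fibrant: the object must be (i) injective fibrant, (ii) flasque, (iii) matrix invariant, and (iv) homotopy invariant in the sense that $C(I,A)\to A$ induces a weak equivalence after applying the functor. Property (i) holds by the construction of $\RRR$. For (ii)--(iv), the key point is that the map $\Ex^{\cyl(\E,\K)}_{\Sing_{\Box}^{\bullet}}\X\to\RRR\Ex^{\cyl(\E,\K)}_{\Sing_{\Box}^{\bullet}}\X$ is a pointwise weak equivalence, while Lemma \ref{lemma:quasifibrant} asserts that $\Ex^{\cyl(\E,\K)}_{\Sing_{\Box}^{\bullet}}\X$ is quasifibrant, which packages precisely (ii)--(iv). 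Since each of these conditions is phrased as requiring certain squares of cubical sets to be weakly contractible or homotopy cartesian, and such properties are detected along a pointwise weak equivalence (using right properness of cubical sets and the fact that $\RRR$ produces Kan complex values at each $\CC^{\ast}$-algebra, so that the relevant pullbacks compute homotopy pullbacks), the three properties transfer from $\Ex^{\cyl(\E,\K)}_{\Sing_{\Box}^{\bullet}}\X$ to $\RRR\Ex^{\cyl(\E,\K)}_{\Sing_{\Box}^{\bullet}}\X$.

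The main obstacle is verifying the transfer of flasqueness in (ii), since this requires one to show that applying $\RRR$ levelwise to a homotopy cartesian square of cubical sets produces another homotopy cartesian square. This follows from standard properties of homotopy pullbacks in cubical sets once one observes that the comparison map into the homotopy pullback is preserved by pointwise weak equivalence, and that the fibrancy provided by $\RRR$ ensures the strict pullback computes the homotopy pullback in the target. The transfer of (iii) and (iv) is then formal from the corresponding weak equivalences $\Ex^{\cyl(\E,\K)}_{\Sing_{\Box}^{\bullet}}\X(A)\simeq\RRR\Ex^{\cyl(\E,\K)}_{\Sing_{\Box}^{\bullet}}\X(A)$ and the two-out-of-three property for weak equivalences of cubical sets.
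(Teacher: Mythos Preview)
Your proposal is correct and follows essentially the same approach as the paper: both decompose the composite map using Lemma~\ref{lemma:quasifibrant} and the defining property of $\RRR$, then verify injective fibrancy, matrix invariance, and homotopy invariance directly, and finally transfer flasqueness along the pointwise weak equivalence $\Ex^{\cyl(\E,\K)}_{\Sing_{\Box}^{\bullet}}\X\to\RRR\Ex^{\cyl(\E,\K)}_{\Sing_{\Box}^{\bullet}}\X$. The only cosmetic difference is that the paper cites \cite[Proposition~13.3.13]{Hirschhorn:Modelcategories} for the invariance of homotopy cartesian squares under pointwise weak equivalence, whereas you sketch this by hand via right properness and fibrancy.
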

\begin{proof}
For every cubical $\CC^{\ast}$-space $\X$ the composite map 
\begin{equation*}
\xymatrix{
\X\ar[r] & 
\Ex^{\cyl(\E,\K)}_{\Sing_{\Box}^{\bullet}}\X\ar[r] & 
\RRR(\Ex^{\cyl(\E,\K)}_{\Sing_{\Box}^{\bullet}}\X) }
\end{equation*}
is an injective acyclic $\CC^{\ast}$-weak equivalence by Lemma \ref{lemma:quasifibrant} and the defining property of $\RRR$.
Moreover,
$\RRR(\Ex^{\cyl(\E,\K)}_{\Sing_{\Box}^{\bullet}}\X)$ is clearly injective fibrant, matrix invariant and homotopy invariant.
To show that it is flasque, note that for every exact square $\E$ the diagram
\begin{equation}
\label{equation:Exdiagram}
\Ex^{\cyl(\E,\K)}_{\Sing_{\Box}^{\bullet}}(\E)\equiv
\minCDarrowwidth20pt
\begin{CD}
\Ex^{\cyl(\E,\K)}_{\Sing_{\Box}^{\bullet}}(A) @>>> \Ex^{\cyl(\E,\K)}_{\Sing_{\Box}^{\bullet}}(E) \\
@VVV @VVV\\
\ast @>>> \Ex^{\cyl(\E,\K)}_{\Sing_{\Box}^{\bullet}}(B)
\end{CD}
\end{equation}
is homotopy cartesian due to Lemma \ref{lemma:quasifibrant}.
Applying the pointwise injective fibrant replacement functor $\RRR$ yields a pointwise weak equivalence between 
(\ref{equation:Exdiagram}) and:  
\begin{equation}
\label{equation:RRRExdiagram}
\RRR\Ex^{\cyl(\E,\K)}_{\Sing_{\Box}^{\bullet}}(\E)\equiv
\minCDarrowwidth20pt
\begin{CD}
\RRR\Ex^{\cyl(\E,\K)}_{\Sing_{\Box}^{\bullet}}(A) @>>> \RRR\Ex^{\cyl(\E,\K)}_{\Sing_{\Box}^{\bullet}}(E) \\
@VVV @VVV\\
\ast @>>> \RRR\Ex^{\cyl(\E,\K)}_{\Sing_{\Box}^{\bullet}}(B)
\end{CD}
\end{equation}
It follows that (\ref{equation:RRRExdiagram}) is homotopy cartesian \cite[Proposition 13.3.13]{Hirschhorn:Modelcategories}.
\end{proof}

Next we note the homotopy invariant projective model structure is compatible with the monoidal structure.
The proof is analogous to the proof of Proposition \ref{proposition:miprojectivemonoidal}, 
using that for $\CC^{\ast}$-algebras $A$ and $B$,
$\cyl\bigl(C(I,A)\rightarrow A\bigr)\otimes B=
\cyl\bigl(C(I,A\otimes B)\rightarrow A\otimes B\bigr)$.
\begin{proposition}
\label{proposition:hiprojectivemonoidal}
The homotopy invariant projective model structure on $\Box\CC^{\ast}-\Spc$ is monoidal.
\end{proposition}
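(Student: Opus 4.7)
The plan is to mimic the proof of Proposition \ref{proposition:miprojectivemonoidal} verbatim, replacing the generating set $J^{\cyl(\K)}_{\Box\CC^{\ast}-\Spc}$ used for matrix invariance by the generating set $J^{\cyl(I)}_{\Box\CC^{\ast}-\Spc}$ used for homotopy invariance, together with the other pieces making up $J^{\ast}_{\Box\CC^{\ast}-\Spc}$ of Corollary \ref{homotopyweaklyfinitelygenerated}. Concretely, by Lemma \ref{lemma:pprojectivemonoidal} we already know the pushout product of two projective cofibrations is a projective cofibration, and by Proposition \ref{proposition:miprojectivemonoidal} the pushout product is a matrix exact acyclic projective cofibration if either factor is. It therefore suffices to show that if $i\colon\X\cof\Y$ is a projective cofibration and $j\colon\U\cof\V$ is a $\CC^{\ast}$-acyclic projective cofibration of the form appearing in $J^{\cyl(I)}_{\Box\CC^{\ast}-\Spc}$, then $i\Box j$ is a $\CC^{\ast}$-acyclic projective cofibration.

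By the usual cell-induction argument (retracts of transfinite compositions of cobase changes), one reduces to the case $i=B\otimes(\partial\Box^{m}\subset\Box^{m})$ and $j$ the pushout product of $C(I,A)\cof\cyl\bigl(C(I,A)\rightarrow A\bigr)$ with $\partial\Box^{n}\subset\Box^{n}$. The key point is the identification
\begin{equation*}
\cyl\bigl(C(I,A)\rightarrow A\bigr)\otimes B
=
\cyl\bigl(C(I,A\otimes B)\rightarrow A\otimes B\bigr),
\end{equation*}
which follows because the cubical mapping cylinder is defined by a pushout that is preserved by $-\otimes B$, together with the isomorphism $C(I,A)\otimes B = C(I,A\otimes B)$ coming from the fact that tensoring with a $\CC^{\ast}$-algebra commutes with $C(I,-)$ for the monoidal product under consideration. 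Consequently, tensoring $j$ with $B$ produces (up to canonical isomorphism) a map in the set $J^{\cyl(I)}_{\Box\CC^{\ast}-\Spc}$, and pushout-producting with $\partial\Box^{m}\subset\Box^{m}$ yields, using the closed monoidal structure on $\Box\Set$, again such a map (or a finite composition of cobase changes thereof).

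Having identified $i\Box j$ with a member of $J^{\ast}_{\Box\CC^{\ast}-\Spc}$-cell, it is automatically a $\CC^{\ast}$-acyclic projective cofibration. The equivalent reformulation in terms of the internal hom and of cubical function complexes then follows formally by adjunction, exactly as in Lemma \ref{lemma:matrixexactprojectivemonoidal}. The main obstacle, such as it is, lies in the verification of the displayed cylinder identity; once that is in hand, everything else is bookkeeping with pushout products and the small object argument. Cofibrancy of the unit $\C$ was already recorded in Lemma \ref{lemma:projectivecofibrant}, so together with the pushout product axiom this gives monoidality of the model structure in the sense of \cite[\S4.2]{Hovey:Modelcategories}, and hence a derived monoidal structure on $\HH$.
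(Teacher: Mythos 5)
Your proposal correctly identifies the crux of the argument — the identity $\cyl\bigl(C(I,A)\rightarrow A\bigr)\otimes B=\cyl\bigl(C(I,A\otimes B)\rightarrow A\otimes B\bigr)$ — and the correct overall strategy of transposing the proof of Proposition \ref{proposition:miprojectivemonoidal} to the homotopy-invariant setting. This is exactly the input the paper flags before stating the result. However, there is a real gap in the reduction step you use to get there.

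You assert that, having the pointwise and matrix-exact pushout product axioms, ``it therefore suffices'' to check the pushout product $i\Box j$ for $j$ a generator in $J^{\cyl(I)}_{\Box\CC^{\ast}-\Spc}$ and then argue by cell induction. That reduction is not available: the $\CC^{\ast}$-projective model structure is only \emph{weakly finitely generated}, meaning $J^{\ast}_{\Box\CC^{\ast}-\Spc}$ merely detects fibrant objects (and fibrations with fibrant codomain), not the full class of fibrations. Consequently, a general $\CC^{\ast}$-acyclic projective cofibration need not be a retract of a relative $J^{\ast}_{\Box\CC^{\ast}-\Spc}$-cell complex, so cell induction over $J^{\cyl(I)}_{\Box\CC^{\ast}-\Spc}$ (or over $J^{\ast}_{\Box\CC^{\ast}-\Spc}$) does not exhaust all $j$ that you need to treat. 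This is precisely why the paper's actual route in Proposition \ref{proposition:miprojectivemonoidal} never reduces the acyclic cofibration $j$ to a generator. Instead it proves the internal-hom fibrancy statement (Lemma \ref{lemma:internalhommatrixexactfibrant}, and here the analogue is Lemma \ref{lemma:projectiveinternalhomprojectiveCstarfibrant}) by checking the right lifting property against $J^{\ast}_{\Box\CC^{\ast}-\Spc}$ — which \emph{is} legitimate, since that lifting property characterizes fibrant objects — and that adjointness computation is exactly where the pushout-product-with-generators argument and the cylinder identity appear. From internal-hom fibrancy one deduces the boxing lemma (that $\W\otimes-$ preserves $\CC^{\ast}$-weak equivalences for $\W$ projective cofibrant; see Lemma \ref{lemma:matrixinvariantboxing} and Lemma \ref{lemma:hiprojectiveQuillenadjunction}), and only then the pushout product axiom, by fixing a generating cofibration $i$ and applying \cite[Corollary 1.1.11]{Hovey:Modelcategories} with saturation to an \emph{arbitrary} $\CC^{\ast}$-acyclic cofibration $j$. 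So your cylinder-identity computation belongs inside the proof of internal-hom fibrancy, not at the top level; inserting that intermediate step closes the gap.
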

\begin{lemma}
\label{lemma:projectiveinternalhomprojectiveCstarfibrant}
If $\X$ is projective cofibrant and $\Z$ is $\CC^{\ast}$-projective fibrant, 
then the internal hom object $\underline{\Hom}(\X,\Z)$ is $\CC^{\ast}$-projective fibrant.
\end{lemma}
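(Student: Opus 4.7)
The plan is to mirror the pattern of the analogous Lemmas \ref{lemma:internalhomexactfibrant} and \ref{lemma:internalhommatrixexactfibrant}. First observe that by Lemma \ref{lemma:internalhommatrixexactfibrant} the internal hom $\underline{\Hom}(\X,\Z)$ is already matrix exact projective fibrant, since every $\CC^{\ast}$-projective fibrant object is matrix exact projective fibrant. In light of Corollary \ref{homotopyweaklyfinitelygenerated}, it remains to verify that $\underline{\Hom}(\X,\Z)\to\ast$ has the right lifting property with respect to $J^{\cyl(I)}_{\Box\CC^{\ast}-\Spc}$.

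By Lemma \ref{lemma:pprojectivemonoidal}(ii), projective cofibrations are retracts of relative $I_{\Box\CC^{\ast}-\Spc}$-cell complexes, so it suffices to treat the case when $\X$ is a generating cofibration $B\otimes(\partial\Box^{n}\subset\Box^{n})$. By the adjunction between $\otimes$ and $\underline{\Hom}$, this reduces to showing that for all $\CC^{\ast}$-algebras $A$, $B$ and integers $m,n\geq 0$ the pushout product map of
\begin{equation*}
j^{A}_{I}\colon C(I,A)\otimes\Box^{m}\coprod_{C(I,A)\otimes\partial\Box^{m}}\cyl\bigl(C(I,A)\rightarrow A\bigr)\otimes\partial\Box^{m}\rightarrow\cyl\bigl(C(I,A)\rightarrow A\bigr)\otimes\Box^{m}
\end{equation*}
with $B\otimes(\partial\Box^{n}\subset\Box^{n})$ is a composition of pushouts of maps in $J^{\ast}_{\Box\CC^{\ast}-\Spc}$.

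The key identity is $C(I,A)\otimes B=C(I,A\otimes B)$, which follows from the isomorphism $C(I,A)\cong C(I)\otimes A$ highlighted in the introductory discussion of the homotopy invariant model structure, together with associativity of the monoidal product. Since the cubical mapping cylinder is defined by a pushout and the functor $-\otimes B$ preserves colimits (being a left adjoint in the closed monoidal structure on $\Box\CC^{\ast}-\Spc$), this lifts to an isomorphism $\cyl\bigl(C(I,A)\rightarrow A\bigr)\otimes B\cong\cyl\bigl(C(I,A\otimes B)\rightarrow A\otimes B\bigr)$. Using associativity and symmetry of the pushout product, the map in question is isomorphic to the pushout product of $j^{A\otimes B}_{I}$ (for the same integer $m$) with the pushout product $(\partial\Box^{m}\subset\Box^{m})\square(\partial\Box^{n}\subset\Box^{n})$, exactly analogous to the step at the end of the proof of Lemma \ref{lemma:internalhomexactfibrant}. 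The cubical factor is a monomorphism built by attaching cells, and distributing the pushout product over its cell decomposition expresses the whole map as an iterated pushout of maps $j^{A\otimes B}_{I}$ in $J^{\cyl(I)}_{\Box\CC^{\ast}-\Spc}$.

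The main obstacle is purely bookkeeping: one must be careful that the cubical mapping cylinder commutes with $-\otimes B$ on the nose (this is where the closed monoidal structure is essential) so that the generator $j^{A}_{I}$ is transported to $j^{A\otimes B}_{I}$ under tensoring. Once this identity is in hand, the remaining manipulations with pushout products are formal, exactly as in the parallel arguments for the exact and matrix invariant cases already recorded.
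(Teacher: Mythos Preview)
Your proof is correct and follows the same approach the paper intends: the paper states this lemma without an explicit proof, relying on the analogy with Lemmas \ref{lemma:internalhomexactfibrant} and \ref{lemma:internalhommatrixexactfibrant} together with the key identity $\cyl\bigl(C(I,A)\rightarrow A\bigr)\otimes B=\cyl\bigl(C(I,A\otimes B)\rightarrow A\otimes B\bigr)$ highlighted just before Proposition \ref{proposition:hiprojectivemonoidal}, which is exactly the identity you invoke. One small imprecision: your description ``the pushout product of $j^{A\otimes B}_{I}$ (for the same integer $m$) with the pushout product $(\partial\Box^{m}\subset\Box^{m})\square(\partial\Box^{n}\subset\Box^{n})$'' double-counts the $\Box^{m}$ factor; what you want is the pushout product of the underlying cofibration $C(I,A\otimes B)\rightarrow\cyl\bigl(C(I,A\otimes B)\rightarrow A\otimes B\bigr)$ with $(\partial\Box^{m}\subset\Box^{m})\square(\partial\Box^{n}\subset\Box^{n})$, and then the cellular decomposition of the latter monomorphism expresses the result as iterated pushouts of maps in $J^{\cyl(I)}_{\Box\CC^{\ast}-\Spc}$, exactly as in the proof of Lemma \ref{lemma:internalhomexactfibrant}.
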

\begin{lemma}
\label{lemma:hiprojectiveQuillenadjunction}
If $\X$ is a projective cofibrant cubical $\CC^{\ast}$-space, 
then 
\begin{equation*}
\bigl(-\otimes\X,\underline{\Hom}(\X,-)\bigr)
\end{equation*}
is a Quillen adjunction for the homotopy invariant projective model structure on $\Box\CC^{\ast}-\Spc$.
\end{lemma}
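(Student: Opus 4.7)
The plan is to verify the standard criterion for a Quillen adjunction by checking that the left adjoint $-\otimes\X$ preserves projective cofibrations and projective $\CC^{\ast}$-acyclic cofibrations. The adjunction $\bigl(-\otimes\X,\underline{\Hom}(\X,-)\bigr)$ itself has already been recorded in Section \ref{subsection:CC-spaces}, so only the model-categorical compatibility remains to be verified.

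First I would note that since $\X$ is projective cofibrant, the unique map $\emptyset\rightarrow\X$ is a projective cofibration. Given any projective cofibration $f\colon\Y\rightarrow\Z$, the pushout product of $f$ with $\emptyset\rightarrow\X$ identifies with the map $f\otimes\X\colon\Y\otimes\X\rightarrow\Z\otimes\X$, because $\emptyset\otimes\Z=\emptyset$ and hence the relevant pushout collapses to $\Y\otimes\X$. By Proposition \ref{proposition:hiprojectivemonoidal}, which asserts that the homotopy invariant projective model structure is monoidal, this pushout product is again a projective cofibration; moreover, if $f$ happens to be a projective $\CC^{\ast}$-weak equivalence then the same proposition forces $f\otimes\X$ to be projective $\CC^{\ast}$-acyclic.

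This verifies the two conditions required of a left Quillen functor, so the pair $\bigl(-\otimes\X,\underline{\Hom}(\X,-)\bigr)$ is a Quillen adjunction. Alternatively, one could argue via the right adjoint: Lemma \ref{lemma:projectiveinternalhomprojectiveCstarfibrant} provides the key calculation that $\underline{\Hom}(\X,\Z)$ is $\CC^{\ast}$-projective fibrant whenever $\Z$ is, and combining this with the pointwise projective statement of Lemma \ref{lemma:pprojectivemonoidal}(ii) (applied to the projective cofibration $\emptyset\rightarrow\X$) shows that $\underline{\Hom}(\X,-)$ preserves projective fibrations and acyclic projective fibrations, which is the dual characterization of a Quillen adjunction.

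There is no real obstacle here: the statement is essentially a formal consequence of the monoidal structure established in Proposition \ref{proposition:hiprojectivemonoidal}. The only subtlety worth flagging is ensuring that the relevant pushout product really does reduce to $f\otimes\X$ when one of the inputs has domain $\emptyset$, which uses that $\otimes$ preserves colimits in each variable (a direct consequence of its closed structure recorded in Section \ref{subsection:CC-spaces}).
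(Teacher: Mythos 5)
Your primary argument is correct and is the standard formal deduction: since $\emptyset\to\X$ is a projective cofibration (by cofibrancy of $\X$), and $\otimes$ preserves colimits in each variable, the pushout product of any projective cofibration $f$ with $\emptyset\to\X$ collapses to $f\otimes\X$, so Proposition \ref{proposition:hiprojectivemonoidal} immediately delivers both required preservation properties. The paper itself gives no proof of this lemma and clearly regards it as this same one-line consequence of monoidalness, so you have matched the intended reasoning.

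One small caution about your alternative route: Lemma \ref{lemma:pprojectivemonoidal}(ii) only gives that $\underline{\Hom}(\X,-)$ preserves \emph{pointwise} projective fibrations and acyclic fibrations, not projective $\CC^{\ast}$-fibrations, so saying this directly supplies "the dual characterization of a Quillen adjunction" for the homotopy invariant model structure is a slight overstatement. What actually closes that gap is the standard Bousfield localization criterion: if $(F,G)$ is Quillen for the unlocalized model and $G$ preserves locally fibrant objects (here by Lemma \ref{lemma:projectiveinternalhomprojectiveCstarfibrant}), then $(F,G)$ is Quillen for the localization. Your ingredients are the right ones, but the glue is that localization criterion rather than a literal claim that $\underline{\Hom}(\X,-)$ preserves $\CC^{\ast}$-projective fibrations.
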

The above has the following consequence.
\begin{lemma}
\label{lemma:hiprojectivemonoidal2}
The following statements hold and are equivalent.
\begin{itemize}
\item
If $i\colon\X\cof\Y$ and $j\colon\U\cof\V$ are projective cofibrations and either $i$ or $j$ is a 
$\CC^{\ast}$-weak equivalence, 
then so is
\begin{equation*}
\xymatrix{
\X\otimes\V\coprod_{\X\otimes\U}\Y\otimes\U\ar[r] & \Y\otimes\V.}
\end{equation*}
\item
If $j\colon\U\cof\V$ is a projective cofibration and $k\colon\Z\fib\W$ is a projective $\CC^{\ast}$-fibration, 
then the pullback map
\begin{equation*}
\xymatrix{
\underline{\Hom}(\V,\Z)\ar[r] &
\underline{\Hom}(\V,\W)\times_{\underline{\Hom}(\U,\W)}\underline{\Hom}(\U,\Z)}
\end{equation*}
is a projective $\CC^{\ast}$-fibration which is $\CC^{\ast}$-acyclic if either $j$ or $k$ is.
\item
With the same assumptions as in the previous item, the induced map
\begin{equation*}
\xymatrix{
{\hom}_{\Box\CC^{\ast}-\Spc}(\V,\Z)\ar[r] &
{\hom}_{\Box\CC^{\ast}-\Spc}(\V,\W)
\times_{{\hom}_{\Box\CC^{\ast}-\Spc}(\U,\W)}{\hom}_{\Box\CC^{\ast}-\Spc}(\U,\Z)}
\end{equation*}
is a Kan fibration which is a weak equivalence of cubical sets if in addition either $j$ or $k$ is $\CC^{\ast}$-acyclic.
\end{itemize} 
\end{lemma}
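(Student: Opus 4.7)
The plan is to observe that assertion (i) is precisely the pushout-product axiom for the homotopy invariant projective model structure, whose validity is the content of Proposition \ref{proposition:hiprojectivemonoidal}. What remains is then the standard three-way equivalence of (i), (ii), and (iii), which I would establish by the same pattern of adjunction arguments used in the matrix invariant case, cf.~Lemma \ref{lemma:matrixexactprojectivemonoidal}.

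For (i) $\Leftrightarrow$ (ii) I would invoke the two-variable adjunction between $\otimes$ and $\underline{\Hom}$ from \S\ref{subsection:CC-spaces}. A lifting problem for the pushout product of $i$ and $j$ against a map $\Z\rightarrow\W$ transposes under adjunction into a lifting problem for $i$ against the pullback map in (ii), and dually with $j$. Feeding in the generating projective cofibrations and generating $\CC^{\ast}$-acyclic projective cofibrations on one side then translates the corresponding cofibration and fibration claims on the other.

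For (ii) $\Leftrightarrow$ (iii) I would use the identities $\hom_{\Box\CC^{\ast}-\Spc}(\X,\Y)=\underline{\Hom}(\X,\Y)(\C)$ and $\underline{\Hom}(\X,\Y)(A)=\hom_{\Box\CC^{\ast}-\Spc}\bigl(\X,\Y(-\otimes A)\bigr)$ from the same section. Since a map of cubical $\CC^{\ast}$-spaces is a projective fibration precisely when it is a pointwise Kan fibration, (ii) yields (iii) upon evaluating at $\C$. For the converse I would apply (iii) to the induced maps $\Z(-\otimes A)\rightarrow\W(-\otimes A)$ for each $\CC^{\ast}$-algebra $A$, invoking Lemma \ref{lemma:projectiveinternalhomprojectiveCstarfibrant} to see that these remain projective $\CC^{\ast}$-fibrations between $\CC^{\ast}$-projective fibrant objects.

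The main obstacle is that projective $\CC^{\ast}$-fibrations are not characterized by a lifting property against a simple set of generating maps but only through the auxiliary set $J^{\ast}_{\Box\CC^{\ast}-\Spc}$ from Corollary \ref{homotopyweaklyfinitelygenerated} together with the matrix exact and flasque conditions. Controlling these under the $\otimes$-$\underline{\Hom}$ transposition is precisely what Lemmas \ref{lemma:projectiveinternalhomprojectiveCstarfibrant} and \ref{lemma:hiprojectiveQuillenadjunction} are designed to handle, so once those are in hand the three-way equivalence becomes essentially formal.
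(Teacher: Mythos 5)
Your proposal is correct and follows essentially the route the paper intends: the lemma is stated without proof as a consequence of Proposition \ref{proposition:hiprojectivemonoidal}, Lemma \ref{lemma:projectiveinternalhomprojectiveCstarfibrant} and Lemma \ref{lemma:hiprojectiveQuillenadjunction}, and your plan recovers exactly that deduction. Two refinements are worth flagging. First, your concern in the final paragraph about the absence of a tidy generating set for the projective $\CC^{\ast}$-fibrations is misplaced for the implication (i) $\Leftrightarrow$ (ii): since fibrations in any localized model structure are \emph{defined} by the right lifting property against acyclic cofibrations, the two-variable adjunction between $\otimes$ and $\underline{\Hom}$ transposes lifting problems directly, so (i) $\Leftrightarrow$ (ii) is purely formal and no generating set enters. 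The auxiliary set $J^{\ast}_{\Box\CC^{\ast}-\Spc}$ is irrelevant here. Second, in your route for (iii) $\Rightarrow$ (ii), applying (iii) at each $A$-section via $\underline{\Hom}(\X,\Y)(A)=\hom_{\Box\CC^{\ast}-\Spc}\bigl(\X,\Y(-\otimes A)\bigr)$ only yields that the map in (ii) is a pointwise Kan fibration and pointwise weak equivalence; to upgrade ``projective fibration'' to ``projective $\CC^{\ast}$-fibration'' you additionally need that both source and target are $\CC^{\ast}$-projective fibrant and then invoke \cite[Proposition 3.3.16]{Hirschhorn:Modelcategories}, and you need Lemma \ref{lemma:hiprojectiveQuillenadjunction} to certify that $\Z(-\otimes A)\rightarrow\W(-\otimes A)=\underline{\Hom}(A,\Z)\rightarrow\underline{\Hom}(A,\W)$ is itself a projective $\CC^{\ast}$-fibration so that (iii) applies. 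These are exactly the lemmas you cite, so the argument closes, but the chain of dependencies is a little more involved than the sketch suggests; a cleaner path is to derive (ii) from (i) formally, then (iii) by evaluating at $\C$, and only use the sections argument together with Corollary \ref{corollary:weakequivalencesfibrantobjects} for the converse implication back to (i).
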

\begin{lemma}
\label{lemma:projectiveCstarweakequivalenceinternalhom}
A map of cubical $\CC^{\ast}$-spaces $\X\rightarrow\Y$ is a $\CC^{\ast}$-weak equivalence 
if and only if for every projective $\CC^{\ast}$-fibrant $\Z$ the induced map of internal hom objects
\begin{equation}
\label{inducedinternalhommap}
\xymatrix{
\underline{\Hom}(\QQ\Y,\Z)\ar[r] & \underline{\Hom}(\QQ\X,\Z)}
\end{equation}
is a pointwise weak equivalence.
\end{lemma}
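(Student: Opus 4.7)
(Plan for Lemma \ref{lemma:projectiveCstarweakequivalenceinternalhom}.)
The plan is to reduce both directions to Corollary \ref{corollary:weakequivalencesfibrantobjects} applied to the homotopy invariant projective model structure, interpolating through the formula
\begin{equation*}
\underline{\Hom}(\X,\Z)(A)={\hom}_{\Box\CC^{\ast}-\Spc}\bigl(\X,\Z(-\otimes A)\bigr)
\end{equation*}
established in Section \ref{subsection:CC-spaces}. The corollary is available because the projective homotopy invariant model admits a cofibrant replacement functor $\QQ$ and is a cubical model structure.

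For the ``only if'' direction, assume $\X\rightarrow\Y$ is a $\CC^{\ast}$-weak equivalence and fix a $\CC^{\ast}$-projective fibrant $\Z$. For every $\CC^{\ast}$-algebra $A$, the representable $A$ is projective cofibrant by Lemma \ref{lemma:projectivecofibrant}, so Lemma \ref{lemma:projectiveinternalhomprojectiveCstarfibrant} shows that $\underline{\Hom}(A,\Z)=\Z(-\otimes A)$ is again $\CC^{\ast}$-projective fibrant. Applying Corollary \ref{corollary:weakequivalencesfibrantobjects} to the $\CC^{\ast}$-weak equivalence $\QQ\X\rightarrow\QQ\Y$ with fibrant target $\Z(-\otimes A)$ yields a weak equivalence of cubical sets
\begin{equation*}
{\hom}_{\Box\CC^{\ast}-\Spc}\bigl(\QQ\Y,\Z(-\otimes A)\bigr)\longrightarrow{\hom}_{\Box\CC^{\ast}-\Spc}\bigl(\QQ\X,\Z(-\otimes A)\bigr),
\end{equation*}
which by the displayed formula is the value at $A$ of the map $\underline{\Hom}(\QQ\Y,\Z)\rightarrow\underline{\Hom}(\QQ\X,\Z)$. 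Varying $A$ over $\CC^{\ast}-\Alg$ gives the required pointwise weak equivalence.

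For the ``if'' direction, specialize to $A=\C$. Since $\C$ is the monoidal unit, the evaluation formula together with $\underline{\Hom}(\C,\Z)=\Z$ gives the identification $\underline{\Hom}(\X,\Z)(\C)={\hom}_{\Box\CC^{\ast}-\Spc}(\X,\Z)$. Hence the hypothesis provides a weak equivalence ${\hom}_{\Box\CC^{\ast}-\Spc}(\QQ\Y,\Z)\rightarrow{\hom}_{\Box\CC^{\ast}-\Spc}(\QQ\X,\Z)$ of cubical sets for every $\CC^{\ast}$-projective fibrant $\Z$. Invoking Corollary \ref{corollary:weakequivalencesfibrantobjects} in the opposite direction then forces $\X\rightarrow\Y$ to be a $\CC^{\ast}$-weak equivalence.

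The only subtle point is to ensure that the cubical structure on $\underline{\Hom}(\QQ\Y,\Z)(A)$ coming from the internal hom matches the cubical structure on the function complex ${\hom}_{\Box\CC^{\ast}-\Spc}\bigl(\QQ\Y,\Z(-\otimes A)\bigr)$ used by Corollary \ref{corollary:weakequivalencesfibrantobjects}; this is immediate from the naturality of the identification recalled in Section \ref{subsection:CC-spaces} and does not pose a real obstacle. No cellular or small object argument is needed beyond what is packaged into Corollary \ref{corollary:weakequivalencesfibrantobjects} and Lemma \ref{lemma:projectiveinternalhomprojectiveCstarfibrant}.
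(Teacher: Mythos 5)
Your proof is correct and follows essentially the same route as the paper: both arguments turn on the identification $\underline{\Hom}(\QQ\Y,\Z)(A)={\hom}_{\Box\CC^{\ast}-\Spc}\bigl(\QQ\Y,\underline{\Hom}(A,\Z)\bigr)$ together with the fact that $\underline{\Hom}(A,\Z)$ is $\CC^{\ast}$-projective fibrant (Lemmas \ref{lemma:projectivecofibrant} and \ref{lemma:projectiveinternalhomprojectiveCstarfibrant}), and then appeal to the characterization of $\CC^{\ast}$-weak equivalences via cubical function complexes into fibrant objects. One small wording fix: Corollary \ref{corollary:weakequivalencesfibrantobjects} should be applied directly to $\X\rightarrow\Y$ rather than to $\QQ\X\rightarrow\QQ\Y$, as the corollary already inserts the cofibrant replacement $\QQ$ and your phrasing would formally produce the spurious double replacement $\QQ\QQ\X$.
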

\begin{proof}
Lemma \ref{lemma:projectiveinternalhomprojectiveCstarfibrant} implies (\ref{inducedinternalhommap}) 
is a map between $\CC^{\ast}$-projective fibrant objects.
Hence, 
by Lemma \ref{lemma:projectiveCstarweakequivalencebetweenfibrantobjects},  
(\ref{inducedinternalhommap}) is a $\CC^{\ast}$-weak equivalence if and only if for every 
$\CC^{\ast}$-algebra $A$ the induced map 
\begin{equation*}
\xymatrix{
\underline{\Hom}(\QQ\Y,\Z)(A)\ar[r] & \underline{\Hom}(\QQ\X,\Z)(A), }
\end{equation*}
or equivalently 
\begin{equation*}
\xymatrix{
\hom_{\Box\CC^{\ast}-\Spc}\bigl(\QQ\Y,\underline{\Hom}(A,\Z)\bigr) \ar[r] & 
\hom_{\Box\CC^{\ast}-\Spc}\bigl(\QQ\X,\underline{\Hom}(A,\Z)\bigr) }
\end{equation*}
is a weak equivalence of cubical sets.
Since every $\CC^{\ast}$-algebra is projective cofibrant according to Lemma \ref{lemma:projectivecofibrant}, 
the internal hom object $\underline{\Hom}(A,\Z)$ is $\CC^{\ast}$-projective fibrant again by  
Lemma \ref{lemma:projectiveinternalhomprojectiveCstarfibrant}. 
\end{proof}

The next result follows now from \cite[Theorem 4.3.2]{Hovey:Modelcategories}.
\begin{corollary}
\label{corollary:Cstarhomotopycategory}
The total derived adjunction of $(\otimes,\underline{Hom})$ gives a closed symmetric monoidal structure 
on the unstable $\CC^{\ast}$-homotopy category $\HH$.
The associativity, commutativity and unit isomorphisms are derived from the corresponding isomorphisms 
in $\Box\CC^{\ast}-\Spc$.
\end{corollary}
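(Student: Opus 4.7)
The plan is to invoke Hovey's Theorem 4.3.2 \cite[Theorem 4.3.2]{Hovey:Modelcategories}, which states that if $\M$ is a monoidal model category whose unit is cofibrant, then the homotopy category $\Ho(\M)$ inherits a closed symmetric monoidal structure via the total derived functors of $\otimes$ and $\underline{\Hom}$, and the coherence isomorphisms descend from those of $\M$. So the work reduces to verifying the hypotheses of this theorem for the homotopy invariant projective model structure on $\Box\CC^{\ast}-\Spc$.

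First I would recall that $(\Box\CC^{\ast}-\Spc,\otimes,\underline{\Hom})$ is a closed symmetric monoidal category with unit the representable cubical $\CC^{\ast}$-space $\C$; this is the content of the discussion in Section \ref{subsection:CC-spaces}, where the symmetric monoidal product was constructed via left Kan extension from a symmetric monoidal product on $\CC^{\ast}-\Alg$ and the coherence isomorphisms (associator, symmetry, unitors) were recorded. Next, I would invoke Lemma \ref{lemma:projectivecofibrant}, which shows that every $\CC^{\ast}$-algebra, and in particular $\C$, is projective cofibrant; since the homotopy invariant projective model structure has the same cofibrations as the pointwise projective model structure, $\C$ remains cofibrant in the localized structure. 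Then the pushout product axiom is precisely Proposition \ref{proposition:hiprojectivemonoidal}, which asserts that the homotopy invariant projective model structure is monoidal.

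With these three inputs in hand, Hovey's theorem applies directly and produces a closed symmetric monoidal structure on $\HH$ whose tensor product is the total left derived functor ${\bf L}(-\otimes -)$ and whose internal hom is the total right derived functor ${\bf R}\underline{\Hom}(-,-)$. The fact that $\C$ is cofibrant ensures that the unit of the derived tensor structure is (isomorphic to) the image of $\C$ in $\HH$, so no unit correction is needed. The associativity, symmetry and unit isomorphisms on $\HH$ are obtained by deriving the corresponding natural isomorphisms on $\Box\CC^{\ast}-\Spc$ between cofibrant replacements, and coherence is automatic from coherence in $\Box\CC^{\ast}-\Spc$.

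There is no real obstacle here since all the hard work has already been done: the monoidal axiom was established in Proposition \ref{proposition:hiprojectivemonoidal}, cofibrancy of the unit in Lemma \ref{lemma:projectivecofibrant}, and closure in the setup of Section \ref{subsection:CC-spaces}. The only subtlety worth flagging is that Hovey's pushout product formulation of monoidalness is equivalent to the internal hom version used in Lemma \ref{lemma:hiprojectivemonoidal2}, so one could equivalently cite that lemma; this equivalence is standard by adjunction, as indicated in the statement of Lemma \ref{lemma:hiprojectivemonoidal2}.
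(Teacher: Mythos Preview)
Your proposal is correct and takes essentially the same approach as the paper, which simply cites \cite[Theorem 4.3.2]{Hovey:Modelcategories} and relies on Proposition \ref{proposition:hiprojectivemonoidal} (monoidalness) and Lemma \ref{lemma:projectivecofibrant} (cofibrancy of $\C$) established earlier. You have spelled out the verification of the hypotheses in more detail than the paper does, but the argument is the same.
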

\begin{remark}
Comparing with the corresponding derived adjunction obtained from the closed monoidal structure on 
$\Delta\CC^{\ast}-\Spc$ and the Quillen equivalent homotopy invariant model structure, 
we get compatible closed symmetric monoidal structures on the unstable $\CC^{\ast}$-homotopy category.
\end{remark}
\newpage

\subsection{Pointed model structures}
\label{subsection:pointedmodelstructures}
It is straightforward to show the results for the model structures on $\Box\CC^{\ast}-\Spc$ have 
analogs for the categories $\Box\CC^{\ast}-\Spc_{0}$ of pointed cubical $\CC^{\ast}$-spaces and 
$\Delta\CC^{\ast}-\Spc_{0}$ of pointed simplicial $\CC^{\ast}$-spaces.
Let $\HH^{\ast}$ denote the unstable pointed $\CC^{\ast}$-homotopy category.
In this section we identify a set of compact generators for $\HH^{\ast}$, 
formulate Brown representability for $\HH^{\ast}$ and compute Kasparov's $KK$-groups of 
$\CC^{\ast}$-algebras as maps in $\HH^{\ast}$.
To prove this result we use the simplicial category $\Delta\CC^{\ast}-\Spc_{0}$.

The next observation will be used in the context of cubical $\CC^{\ast}$-spectra.
\begin{lemma}
\label{lemma:pointedprojectiveinternalhomprojectiveCstarfibrant}
Suppose $\X$ is projective cofibrant and $\Z$ $\CC^{\ast}$-projective fibrant in $\Box\CC^{\ast}-\Spc_{0}$.
Then the pointed internal hom object $\underline{\Hom}_{0}(\X,\Z)$ is $\CC^{\ast}$-projective fibrant.
\end{lemma}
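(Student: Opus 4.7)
The plan is to transpose the proof of the unpointed analogue (Lemma \ref{lemma:projectiveinternalhomprojectiveCstarfibrant}, together with its building blocks Lemmas \ref{lemma:internalhomexactfibrant} and \ref{lemma:internalhommatrixexactfibrant}) to the pointed setting. As remarked repeatedly in the preceding sections, every one of the pointwise, exact projective, matrix invariant projective and homotopy invariant projective model structures on $\Box\CC^{\ast}-\Spc$ admits a pointed counterpart on $\Box\CC^{\ast}-\Spc_{0}$ with the evident generating sets—obtained by applying the free pointed functor, which is left Quillen—and the pointed $\CC^{\ast}$-projective model is weakly finitely generated by the pointed analogue $J^{\ast}_{\Box\CC^{\ast}-\Spc_{0}}$ of Corollary \ref{homotopyweaklyfinitelygenerated} and monoidal with respect to the smash product $\otimes$ and the pointed internal hom $\underline{\Hom}_{0}$, by the pointed version of Proposition \ref{proposition:hiprojectivemonoidal}. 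The first step is simply to record these transfers.

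With monoidality in hand, the cleanest route is to apply the pointed analogue of Lemma \ref{lemma:hiprojectivemonoidal2}(ii) to the projective cofibration $\ast\cof\X$ (which encodes the cofibrancy of $\X$) and the $\CC^{\ast}$-projective fibration $\Z\fib\ast$ (which encodes the fibrancy of $\Z$). This produces a $\CC^{\ast}$-projective fibration
\begin{equation*}
\underline{\Hom}_{0}(\X,\Z)\longrightarrow
\underline{\Hom}_{0}(\ast,\Z)\times_{\underline{\Hom}_{0}(\ast,\ast)}\underline{\Hom}_{0}(\X,\ast).
\end{equation*}
The three internal hom objects on the right collapse to $\Z$, $\ast$ and $\ast$ respectively, so the map is simply $\underline{\Hom}_{0}(\X,\Z)\rightarrow\Z$. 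Composing with $\Z\fib\ast$ exhibits $\underline{\Hom}_{0}(\X,\Z)\rightarrow\ast$ as a $\CC^{\ast}$-projective fibration, which is the desired conclusion. Alternatively, one may argue by direct adjointness as in the proofs of Lemmas \ref{lemma:internalhomexactfibrant} and \ref{lemma:internalhommatrixexactfibrant}: test against each $j$ in $J^{\ast}_{\Box\CC^{\ast}-\Spc_{0}}$ by transposing to the pushout product of $j$ with a generating projective cofibration. The identifications $\cyl(\E)\otimes A_{+}=\cyl(\E\otimes A)$, $\cyl_{\K}^{B}\otimes A_{+}=\cyl_{\K}^{B\otimes A}$ and $\cyl\bigl(C(I,B)\rightarrow B\bigr)\otimes A_{+}=\cyl\bigl(C(I,B\otimes A)\rightarrow B\otimes A\bigr)$ show that this pushout product is a composition of pushouts of members of the corresponding $J^{\cyl(-)}$ class.

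The main obstacle is not conceptual but organizational: one must confirm that each structural ingredient—the cubical enrichment, weakly finite generation by an explicit $J^{\ast}$, and the monoidal pushout product axiom—transfers from $\Box\CC^{\ast}-\Spc$ to $\Box\CC^{\ast}-\Spc_{0}$. This is routine because the smash product is computed as a quotient of the tensor product, the free pointed functor is left Quillen with right adjoint the forgetful functor, and the cubical mapping cylinder, being a finite colimit, commutes appropriately with smashing against $A_{+}$. Once these are in place, both arguments above go through without change.
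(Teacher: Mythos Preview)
Your argument is valid, but the paper takes a shorter route that avoids re-establishing the monoidal model structure in the pointed category. The paper simply expresses the pointed internal hom as a pullback
\[
\xymatrix{
\underline{\Hom}_{0}(\X,\Z) \ar[r]\ar[d] & \underline{\Hom}(\X,\Z)\ar[d] \\
\ast\ar[r] & \underline{\Hom}(\ast,\Z) }
\]
of \emph{unpointed} internal homs, observes that the right vertical map is a $\CC^{\ast}$-projective fibration by the already-proven unpointed Lemma~\ref{lemma:hiprojectivemonoidal2}, and then invokes the fact that fibrations are stable under pullback. This is a three-line argument that needs no transfer of the pushout-product axiom or of the $J^{\cyl(-)}$ analysis to $\Box\CC^{\ast}-\Spc_{0}$. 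Your approach, by contrast, front-loads all of that transfer work; it is perfectly sound and perhaps conceptually more uniform, but it is organizationally heavier for this particular lemma.

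One small slip in your computation: in the pointed category $\ast$ is the zero object, so $\underline{\Hom}_{0}(\ast,\Z)\cong\ast$, not $\Z$ (smashing with $\ast$ kills everything, hence by adjunction $\underline{\Hom}_{0}(\ast,-)$ is constant at $\ast$). Thus the target of your pullback map is already $\ast$, and you do not need the extra composition with $\Z\fib\ast$. This only makes your argument cleaner, not wrong.
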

\begin{proof}
There is a pullback diagram of cubical $\CC^{\ast}$-spaces:
\begin{equation*}
\xymatrix{
\underline{\Hom}_{0}(\X,\Z) \ar[r]\ar[d] & \underline{\Hom}(\X,\Z)\ar[d] \\
\ast\ar[r] & \underline{\Hom}(\ast,\Z) }
\end{equation*}
By monoidalness in the form of Lemma \ref{lemma:hiprojectivemonoidal2} the right vertical map is a $\CC^{\ast}$-projective fibration.
Now use that fibrations pull back to fibrations in every model structure.
\end{proof}

Suppose $\M$ is a pointed model category.  
Recall that $\GG$ is a set of weak generators for $\Ho(\M)$ if for every nontrivial $\Y\in\Ho(\M)$
there is an $\X\in\GG$ such that $\Ho(\M)(\Sigma^{n}\X,\Y)$ is nontrivial.  
An object $\X\in\Ho(\M)$ is called small if, 
for every set $\{\X_{\alpha}\}_{\alpha \in\lambda}$ of objects of $\Ho(\M)$, 
there is a naturally induced isomorphism
\begin{equation*}
\xymatrix{
\underset{\lambda'\subseteq \lambda, \vert\lambda'\vert<\infty}{\colim}
\Ho(\M)(\X,\coprod _{\alpha\in\lambda'}\X_{\alpha}) 
\ar[r] & 
\Ho(\M)(\X,\coprod _{\alpha \in \lambda}X_{\alpha}). }
\end{equation*}
By \cite[Section~7.3]{Hovey:Modelcategories} the cofibers of the generating cofibrations in any 
cofibrantly generated model category $\M$ form a set of weak generators for $\Ho(\M)$.  
However,
it is more subtle to decide whether these weak generators are small in $\Ho(\M)$.
The argument given in~\cite[Section~7.4]{Hovey:Modelcategories} relies not only on smallness
properties of the domains and codomains of the generating cofibrations of $\M$, 
but also on detailed knowledge of the generating trivial cofibrations.  
For further details we refer to the proof of the analogous stable result Theorem \ref{theorem:stableweakgenerators}.
\begin{theorem}
\label{theorem:unstableweakgenerators}
The cofibers of the generating projective cofibrations 
\begin{equation*}
\{A\otimes(\partial\Box^{n}\subset\Box^{n})_{+}\}_{A}^{n\geq 0}
\end{equation*}
form a set of compact generators for the pre-triangulated homotopy category $\HH^{\ast}$ 
of the homotopy invariant model structure on pointed cubical $\CC^{\ast}$-spaces.
\end{theorem}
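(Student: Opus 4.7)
The plan is to verify the two defining properties of compact generators for the pre-triangulated category $\HH^{\ast}$: that the cofibers generate and that they are compact. Throughout, write $G_{A,n}$ for the cofiber of $A\otimes(\partial\Box^{n}\subset\Box^{n})_{+}$ in the pointed homotopy invariant projective model structure, whose existence and properties are the pointed analogues of what was established for $\Box\CC^{\ast}-\Spc$ (cf.\ Theorem \ref{theorem:homotopyinvariantinjective} and the pointed variants of Proposition \ref{proposition:miprojectivemonoidal} and Corollary \ref{homotopyweaklyfinitelygenerated}).

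For the generation step, I would invoke the standard argument of \cite[Section~7.3]{Hovey:Modelcategories} which applies in any cofibrantly generated pointed model category. Concretely, a map $f\colon\X\to\Y$ between $\CC^{\ast}$-projective fibrant pointed cubical $\CC^{\ast}$-spaces is a $\CC^{\ast}$-weak equivalence (hence an isomorphism in $\HH^{\ast}$) if and only if it has the right lifting property with respect to every generating projective cofibration $A\otimes(\partial\Box^{n}\subset\Box^{n})_{+}$; by adjunction this translates into the vanishing of obstructions expressed as maps out of $G_{A,n}$ and their desuspensions, so if all the sets $\HH^{\ast}(\Sigma^{m}G_{A,n},f)$ are bijections then $f$ is an isomorphism. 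This shows the family is a set of weak generators.

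For the compactness step, note that $G_{A,n}$ is projective cofibrant as a pushout along a projective cofibration, and finitely presentable because $A\otimes\Box^{n}$ and $A\otimes\partial\Box^{n}$ are so by Example \ref{example:finitelypresentable} and ${\bf fp}\Box\CC^{\ast}-\Spc_{0}$ is closed under finite colimits. Given a set $\{\X_{\alpha}\}_{\alpha\in\lambda}$, choose $\CC^{\ast}$-projective fibrant replacements $\X_{\alpha}\to R\X_{\alpha}$. The coproduct $\coprod_{\alpha\in\lambda}R\X_{\alpha}$ is the filtered colimit over finite $\lambda'\subseteq\lambda$ of the finite coproducts $\coprod_{\alpha\in\lambda'}R\X_{\alpha}$. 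By Corollary \ref{homotopyweaklyfinitelygenerated} the $\CC^{\ast}$-projective model is weakly finitely generated, so Lemma \ref{lemma:afgmodelcategories} gives that fibrant objects are closed under filtered colimits; hence the coproduct is already $\CC^{\ast}$-projective fibrant and represents the coproduct in $\HH^{\ast}$. Proposition \ref{proposition:homotopyclassesofmaps} then identifies $\HH^{\ast}(G_{A,n},-)$ with $\pi_{0}$ of the pointed cubical function complex, and finite presentability of $G_{A,n}$ gives a natural isomorphism
\begin{equation*}
\underset{\lambda'}{\colim}\,\,{\hom}_{\Box\CC^{\ast}-\Spc_{0}}\bigl(G_{A,n},\coprod_{\alpha\in\lambda'}R\X_{\alpha}\bigr)
\xrightarrow{\cong}
{\hom}_{\Box\CC^{\ast}-\Spc_{0}}\bigl(G_{A,n},\coprod_{\alpha\in\lambda}R\X_{\alpha}\bigr),
\end{equation*}
and applying $\pi_{0}$ yields the required commutation of $\HH^{\ast}(G_{A,n},-)$ with arbitrary coproducts.

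The main obstacle is the compactness argument rather than weak generation: one must verify that the coproduct in $\Box\CC^{\ast}-\Spc_{0}$ of $\CC^{\ast}$-projective fibrant objects is again fibrant, so that it actually models the coproduct in $\HH^{\ast}$ and its hom sets can be computed via cubical function complexes. This is precisely where weak finite generation of the model structure is essential, and where having an explicit set $J^{\ast}_{\Box\CC^{\ast}-\Spc}$ of generating trivial cofibrations with finitely presentable domains and codomains (Corollary \ref{homotopyweaklyfinitelygenerated}) does the work; without this input, the coproduct in $\HH^{\ast}$ would require an additional fibrant replacement that could destroy the finite presentability reduction.
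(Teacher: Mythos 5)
Your generation step is correct and matches the paper, which cites Hovey \S7.3 for weak generation. The gap is in the compactness step. You assert that $\coprod_{\alpha\in\lambda}R\X_{\alpha}$ is $\CC^{\ast}$-projective fibrant because it is the filtered colimit of the finite coproducts $\coprod_{\alpha\in\lambda'}R\X_{\alpha}$ and Lemma \ref{lemma:afgmodelcategories} says fibrant objects are closed under filtered colimits. But the objects appearing in that filtered system are finite \emph{wedges} of fibrant pointed cubical $\CC^{\ast}$-spaces, and such wedges are not fibrant: already a wedge of two Kan complexes in $\Box\Set_{\ast}$ typically fails the horn-filling condition, and fibrancy in $\Box\CC^{\ast}-\Spc_{0}$ is detected sectionwise. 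So Lemma \ref{lemma:afgmodelcategories} does not apply to your filtered system, and without fibrancy of $\coprod R\X_{\alpha}$ you cannot invoke Proposition \ref{proposition:homotopyclassesofmaps} to compute $\HH^{\ast}(G_{A,n},-)$ as $\pi_{0}$ of the cubical function complex into it.

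The paper defers to the proof of the stable analog, Theorem \ref{theorem:stableweakgenerators}, which sidesteps this by a transfinite induction on the indexing ordinal $\lambda$: at a limit ordinal one writes $\coprod_{\alpha<\lambda}\F_{\alpha}=\colim_{\beta<\lambda}\coprod_{\alpha<\beta}\F_{\alpha}$, applies the functorial fibrant replacement $\RRR$ to each initial-segment coproduct $\coprod_{\alpha<\beta}\F_{\alpha}$ (which is generally \emph{not} fibrant), and then Lemma \ref{lemma:afgmodelcategories} applies to the filtered colimit of the genuinely fibrant objects $\RRR\coprod_{\alpha<\beta}\F_{\alpha}$. Finite presentability of $\E$ together with the induction hypothesis then factor any map through a finite subcoproduct. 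A non-inductive repair of your argument exists and is close to what you wrote: replace the coproduct of the fibrant replacements by the filtered colimit $\colim_{\lambda'}\RRR\bigl(\coprod_{\alpha\in\lambda'}\X_{\alpha}\bigr)$ of fibrant replacements of the \emph{finite coproducts}, which is fibrant by Lemma \ref{lemma:afgmodelcategories} and weakly equivalent to the full coproduct, and run the finite presentability argument against this object. In either version you should also take cofibrant replacements of the $\X_{\alpha}$ before forming the coproduct, as the paper does (``we may assume $\F_{\alpha}$ is bifibrant''), to ensure the model-categorical coproduct actually computes the coproduct in $\HH^{\ast}$.
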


Next we formulate Brown representability for contravariant functors from the pointed homotopy 
category of $\CC^{\ast}$-spaces to pointed sets.  
\begin{theorem}
\label{theorem:unstablebrown}
Suppose the contravariant functor $\F$ from $\HH^{\ast}$ to $\Set_{\ast}$ satisfies the
following properties. 
\begin{itemize}
\item
$\F(0)$ is the one-point set.
\item
For every set $\{\X_{\alpha}\}$ of objects in $\Box\CC^{\ast}-\Spc_{0}$ there is a naturally induced 
bijective map
\begin{equation*}
\xymatrix{
\F(\bigvee\X_{\alpha})\ar[r] & \prod \F(\X_{\alpha}). }
\end{equation*}
\item
For every pointed projective cofibration $\X\rightarrow\Y$ and pushout diagram 
\begin{equation*}
\xymatrix{
\X\ar[r]\ar[d] & \Y \ar[d] \\
\Z\ar[r] & \Z\cup_{\X}\Y }
\end{equation*}
there is a naturally induced surjective map
\begin{equation*}
\xymatrix{
\F(\Z\cup_{\X}\Y)\ar[r] & \F(\Z)\times_{\F(\X)}\F(\Y). }
\end{equation*}
\end{itemize}
Then there exists a pointed cubical $\CC^{\ast}$-space $\W$ and a natural isomorphism
\begin{equation*}
\HH^{\ast}(-,\W)=\F(-).
\end{equation*} 
\end{theorem}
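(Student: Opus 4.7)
The plan is to follow the classical Brown representability strategy, inductively building a representing object out of the compact generators supplied by Theorem \ref{theorem:unstableweakgenerators}. Write $\GG$ for the set of cofibers of the generating projective cofibrations $\{A\otimes(\partial\Box^{n}\subset\Box^{n})_{+}\}$, which by that theorem compactly generate $\HH^{\ast}$. The wedge hypothesis combined with $\F(0)=\ast$ will give us the book-keeping we need to turn elements of $\F$ into maps into a cleverly chosen homotopy colimit.

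First I would construct $\W$ as the homotopy colimit of a sequence
\begin{equation*}
\W_{0}\rightarrow\W_{1}\rightarrow\cdots\rightarrow\W_{n}\rightarrow\cdots
\end{equation*}
of pointed cubical $\CC^{\ast}$-spaces carrying compatible classes $u_{n}\in\F(\W_{n})$. Set $\W_{0}\equiv\bigvee_{(\X,x)}\X$, where the wedge ranges over all pairs $(\X,x)$ with $\X\in\GG$ and $x\in\F(\X)$; the wedge axiom produces a tautological class $u_{0}\in\F(\W_{0})$ whose component on the summand indexed by $(\X,x)$ is $x$. This guarantees that $\HH^{\ast}(\X,\W_{0})\rightarrow\F(\X)$ is surjective for every $\X\in\GG$. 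Inductively, given $(\W_{n},u_{n})$, form
\begin{equation*}
\K_{n}\equiv\bigvee_{\X\in\GG}\,\,\bigvee_{f\in\ker\,(\HH^{\ast}(\X,\W_{n})\rightarrow\F(\X))}\X
\end{equation*}
with its canonical map $\K_{n}\rightarrow\W_{n}$, and let $\W_{n+1}$ be the mapping cone (a projective cofibration pushout along $\K_{n}\rightarrow\cone(\K_{n})$). The Mayer--Vietoris hypothesis applied to this pushout produces a class $u_{n+1}\in\F(\W_{n+1})$ restricting to $u_{n}$, because the element $(0,u_{n})\in\F(\cone(\K_{n}))\times\F(\W_{n})$ has matching restrictions in $\F(\K_{n})$ by construction of $\K_{n}$.

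Set $\W\equiv\hocolim_{n}\W_{n}$. The wedge and pushout axioms applied to the telescope diagram yield $u\in\F(\W)$ restricting to each $u_{n}$, hence a natural transformation
\begin{equation*}
\eta\colon\HH^{\ast}(-,\W)\longrightarrow\F(-)
\end{equation*}
on $\HH^{\ast}$. For $\X\in\GG$, compactness of $\X$ implies $\HH^{\ast}(\X,\W)=\colim_{n}\HH^{\ast}(\X,\W_{n})$; surjectivity of $\eta_{\X}$ is immediate from the construction of $\W_{0}$, while injectivity follows because any $f\in\HH^{\ast}(\X,\W_{n})$ with $\eta(f)=0$ is killed by the attaching procedure used to build $\W_{n+1}$. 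Thus $\eta_{\X}$ is bijective for every compact generator. The final step is to extend this from $\GG$ to all of $\HH^{\ast}$: one shows that the class of $\X$ for which $\eta_{\X}$ is bijective contains $\GG$ and is closed under arbitrary coproducts and homotopy cofibers (using the wedge and Mayer--Vietoris axioms for $\F$, and the fact that $\HH^{\ast}(-,\W)$ has these same properties tautologically), hence coincides with all of $\HH^{\ast}$ since $\GG$ compactly generates.

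The main obstacle will be executing the last step rigorously in the unstable setting. In a triangulated category one could invoke Neeman's machine directly, but $\HH^{\ast}$ is only pre-triangulated, so I will need to argue by transfinite cellular induction: every object of $\Box\CC^{\ast}-\Spc_{0}$ is built from the domains and codomains of the generating projective cofibrations by (transfinite) cell attachments, and the class of spaces for which $\eta$ is an isomorphism must be shown to be closed under each such attachment. A related technical point is the possible failure of injectivity of $\eta_{\X}$ for non-compact $\X$ arising from a $\varprojlim^{1}$ obstruction along the telescope; this is handled by passing from $\eta$ being an isomorphism on generators to an isomorphism on their finite homotopy colimits, and then using compactness to control the transfinite steps. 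With the hypotheses on $\F$ in hand all of these verifications are mechanical, but the bookkeeping for the cellular induction is where the argument is most delicate.
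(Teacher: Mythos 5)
Your argument takes a genuinely different route from the paper. The paper's proof is one sentence: it observes that Theorem~\ref{theorem:unstableweakgenerators} and left properness of the homotopy invariant projective model structure verify the hypotheses of Jardine's representability theorem for pointed model categories \cite{Jardine:representabilityformodelcategories}, and invokes that result as a black box. You instead attempt to re-derive Brown representability by hand via the classical iterative construction of a universal pair $(\W,u)$.

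The first two-thirds of your construction are correct. The choice $\W_{0}=\bigvee_{(\X,x)}\X$, the tautological class $u_{0}$ from the wedge axiom, the inductive cone-off of the kernel $\K_{n}$, and the verification that $\eta_{\X}$ is bijective for each compact generator $\X\in\GG$ (using compactness to factor any map $\X\rightarrow\W$ through a finite stage) all follow the standard script and are sound. One small point you elide: for the telescope and cone constructions to fit the third hypothesis of the theorem, you rely on $\HH^{\ast}(-,\W)$ also satisfying the Mayer--Vietoris property, and you call this ``tautological.'' It isn't — it depends on \emph{left properness} of the $\CC^{\ast}$-projective model structure, which is precisely what makes pushout squares along cofibrations compute homotopy pushouts, and hence makes $\HH^{\ast}(-,\W)$ carry them to weak pullbacks of sets. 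The paper's proof and the remark following it flag this explicitly; your argument uses it silently.

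The genuine gap is in the final extension step, and you are right that this is where the argument is most delicate — but you do not actually close it. The claim that ``the class of $\X$ for which $\eta_{\X}$ is bijective contains $\GG$ and is closed under arbitrary coproducts and homotopy cofibers, hence coincides with $\HH^{\ast}$'' is the \emph{triangulated} version of the argument (Neeman's localization technique). In the unstable, merely pre-triangulated category $\HH^{\ast}$ there is no long exact sequence for an arbitrary cofiber sequence, so closure under homotopy cofibers does not follow from the Mayer--Vietoris surjectivity by a five-lemma chase: one obtains surjectivity of $\eta$ on the cofiber, but injectivity requires a separate argument (the classical route being to form a second universal pair $(\W',u')$ incorporating $\X$, prove $\W\rightarrow\W'$ is a weak equivalence by testing against the generators, and then translate both surjectivity and injectivity of $\eta_{\X}$ into statements about that equivalence). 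Similarly the transfinite cellular induction you gesture at has to contend with a $\lim^{1}$ obstruction at each limit ordinal, and your remark that this is ``handled by passing to finite homotopy colimits'' and that ``all of these verifications are mechanical'' papers over exactly the content that makes unstable Brown representability non-formal. This is, in effect, what Jardine's theorem packages; a complete self-contained proof would need to reproduce that content rather than assert it.
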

\begin{proof}
Theorem \ref{theorem:unstableweakgenerators} and left properness imply the $\CC^{\ast}$-projective 
model structure on $\Box\CC^{\ast}-\Spc_{0}$ satisfies the assumptions in Jardine's representability 
theorem for pointed model categories \cite{Jardine:representabilityformodelcategories}. 
\end{proof}
\begin{remark}
Left properness ensures the contravariant pointed set valued functor $\HH^{\ast}(-,\W)$ satisfies the 
conditions in the formulation of Theorem \ref{theorem:unstablebrown}.
\end{remark}
\begin{lemma}
\label{lemma:Cstarspacecofibrantadjunction}
If $\X$ is a projective cofibrant pointed $\CC^{\ast}$-space,
then there is a Quillen map 
\begin{equation*}
\xymatrix{
\X\otimes-\colon \Box\CC^{\ast}-\Spc_{0}\ar@<3pt>[r] & 
\Box\CC^{\ast}-\Spc_{0} \colon \underline{\Hom}(\X,-) \ar@<3pt>[l]}
\end{equation*}
of the homotopy invariant model structure. 
\end{lemma}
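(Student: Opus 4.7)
The plan is to deduce the Quillen adjunction from the monoidal structure on the homotopy invariant model structure. First, I would record the pointed analog of Proposition \ref{proposition:hiprojectivemonoidal}: the homotopy invariant projective model structure on $\Box\CC^{\ast}-\Spc_{0}$ is monoidal with respect to the smash product $\otimes$ (inherited from the unpointed setting as explained in \S\ref{subsection:CC-spaces}). The proof of the pointed monoidalness runs identically to that of Proposition \ref{proposition:miprojectivemonoidal} and Proposition \ref{proposition:hiprojectivemonoidal}, using that smashing with a $\CC^{\ast}$-algebra commutes with the cubical mapping cylinder constructions on the test squares $\E$, on the corner inclusions $A\otimes\K\rightarrow A$, and on the interval constant-function maps $C(I,A)\rightarrow A$.

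Granting this, the proof of the lemma is a formal consequence. By hypothesis $0\rightarrow\X$ is a projective cofibration in $\Box\CC^{\ast}-\Spc_{0}$, and given any projective cofibration $j\colon\U\rightarrow\V$ the pushout product axiom supplies a projective cofibration
\begin{equation*}
(0\otimes\V)\coprod_{0\otimes\U}(\X\otimes\U)\longrightarrow \X\otimes\V,
\end{equation*}
which is $\CC^{\ast}$-acyclic provided $j$ is. Since $0\otimes\Y=0$ for every pointed cubical $\CC^{\ast}$-space $\Y$ (the initial object is absorbing for the smash product), this pushout product map is simply $\X\otimes j\colon\X\otimes\U\rightarrow\X\otimes\V$. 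Hence $\X\otimes-$ carries projective cofibrations to projective cofibrations and $\CC^{\ast}$-acyclic projective cofibrations to $\CC^{\ast}$-acyclic projective cofibrations.

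Finally, the adjunction $(\X\otimes-,\underline{\Hom}(\X,-))$ on $\Box\CC^{\ast}-\Spc_{0}$ follows from the closed symmetric monoidal structure on pointed cubical $\CC^{\ast}$-spaces recorded in \S\ref{subsection:CC-spaces}. Combined with the preservation properties just established, this yields a Quillen adjunction in the sense of \cite[Definition 1.3.1]{Hovey:Modelcategories}. The only subtlety is confirming the pointed monoidalness, and for this I would essentially reuse, line for line, the arguments given for Lemma \ref{lemma:internalhommatrixexactfibrant} and Lemma \ref{lemma:projectiveinternalhomprojectiveCstarfibrant}, now in the pointed setting, invoking Lemma \ref{lemma:pointedprojectiveinternalhomprojectiveCstarfibrant} at the appropriate step so that internal hom objects into $\CC^{\ast}$-projective fibrant targets remain $\CC^{\ast}$-projective fibrant. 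This last verification is the only nontrivial ingredient; everything else is a formal consequence of the pushout product axiom.
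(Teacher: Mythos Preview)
Your proposal is correct and follows exactly the approach the paper has in mind: the paper states this lemma without proof, treating it as the pointed analog of Lemma~\ref{lemma:hiprojectiveQuillenadjunction}, which in turn is an immediate consequence of the monoidal model structure (Proposition~\ref{proposition:hiprojectivemonoidal}) via the pushout product axiom applied to $0\rightarrow\X$. Your explicit unwinding of the pushout product with the initial object is the right way to make this formal, and your remark that the pointed monoidalness is established by rerunning the unpointed arguments matches the paper's blanket assertion at the start of \S\ref{subsection:pointedmodelstructures}.
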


\begin{lemma}
\label{lemma:unstablerepresentability}
Suppose $\X$ is a projective $\CC^{\ast}$-fibrant pointed cubical $\CC^{\ast}$-space.
Then for every $\CC^{\ast}$-algebra $A$ and integer $n\geq 0$ there is a natural isomorphism
\begin{equation*}
\pi_{n}\X(A)=
\HH^{\ast}(A\otimes S^{n},\X).
\end{equation*}
\end{lemma}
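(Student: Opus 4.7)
The plan is to reduce the statement to the standard identification of the homotopy groups of a pointed Kan cubical set $K$ with $\pi_0{\hom}_{\Box\Set_{\ast}}(S^n,K)$, using the closed cubical enrichment of $\Box\CC^{\ast}-\Spc_{0}$ together with the Yoneda isomorphism (\ref{functioncomplexofrepresentable}).

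First I would verify that $A\otimes S^{n}$ is projective cofibrant in $\Box\CC^{\ast}-\Spc_{0}$. The representable $A$ is projective cofibrant by the pointed analog of Lemma \ref{lemma:projectivecofibrant}, the cubical sphere $S^{n}\equiv\Box^{n}/\partial\Box^{n}$ is a cofibrant object of $\Box\Set_{\ast}$ in the model structure of Remark \ref{remark:pointedcubicalsets}, and the pointed homotopy invariant projective model structure is monoidal in the sense of Proposition \ref{proposition:hiprojectivemonoidal}, so its pushout-product axiom implies $A\otimes S^{n}$ is projective cofibrant.

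Second, since $\X$ is $\CC^{\ast}$-projective fibrant and $A\otimes S^{n}$ is projective cofibrant, the pointed analog of Proposition \ref{proposition:homotopyclassesofmaps} applied to the cubical model category $\Box\CC^{\ast}-\Spc_{0}$ yields
\begin{equation*}
\HH^{\ast}(A\otimes S^{n},\X)=\pi_{0}{\hom}_{\Box\CC^{\ast}-\Spc_{0}}(A\otimes S^{n},\X).
\end{equation*}
Third, combining the pointed tensor-cotensor adjunction (the pointed version of (\ref{0-celladjunction}) at the level of cubical function complexes) with the Yoneda identification ${\hom}_{\Box\CC^{\ast}-\Spc_{0}}(A,\X)=\X(A)$ produces a chain of natural isomorphisms of pointed cubical sets
\begin{equation*}
{\hom}_{\Box\CC^{\ast}-\Spc_{0}}(A\otimes S^{n},\X)\cong{\hom}_{\Box\Set_{\ast}}\bigl(S^{n},{\hom}_{\Box\CC^{\ast}-\Spc_{0}}(A,\X)\bigr)\cong{\hom}_{\Box\Set_{\ast}}\bigl(S^{n},\X(A)\bigr).
\end{equation*}

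Finally, projective fibrancy of $\X$ guarantees that $\X(A)$ is a Kan fibrant pointed cubical set, so taking path components recovers the homotopy group, $\pi_{0}{\hom}_{\Box\Set_{\ast}}(S^{n},\X(A))=\pi_{n}\X(A)$, via the standard $n$-sphere description legitimized by Theorem \ref{theorem:cubicalsetmodelstructure} and the Quillen equivalence with simplicial sets. Naturality in $A$ follows from naturality of each step. The main obstacle is largely bookkeeping: one has to confirm that the pointed analogs of Lemma \ref{lemma:projectivecofibrant}, Proposition \ref{proposition:hiprojectivemonoidal}, Proposition \ref{proposition:homotopyclassesofmaps} and the Yoneda isomorphism (\ref{functioncomplexofrepresentable}) all hold in $\Box\CC^{\ast}-\Spc_{0}$, and that the $n$-sphere coordinate $S^{n}$ used to define the left-hand side matches the cubical $n$-sphere $\Box^{n}/\partial\Box^{n}$ through which homotopy groups of Kan complexes are represented.
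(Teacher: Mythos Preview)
Your proposal is correct and follows essentially the same strategy as the paper: Yoneda plus the cubical tensor--cotensor adjunction plus Proposition~\ref{proposition:homotopyclassesofmaps}. The only organizational difference is that the paper moves $S^{n}$ to the right, writing $\pi_{n}\X(A)=\pi_{n}\hom_{\Box\CC^{\ast}-\Spc_{0}}(A,\X)=\Box\CC^{\ast}-\Spc_{0}(A,\Omega_{S^{1}}^{n}\X)/\!\simeq$ and then invoking Proposition~\ref{proposition:homotopyclassesofmaps} with the cofibrant $A$ and the fibrant $\Omega_{S^{1}}^{n}\X$ (fibrancy via Lemma~\ref{lemma:pointedprojectiveinternalhomprojectiveCstarfibrant}), whereas you keep $S^{n}$ on the left and invoke the same proposition with the cofibrant $A\otimes S^{n}$ and the fibrant $\X$; these are adjoint formulations of the same argument.
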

\begin{proof}
Let $\simeq$ be the equivalence relation generated by cubical homotopy equivalence. 
Since $\Omega_{S^{1}}\X$ is projective $\CC^{\ast}$-fibrant by the assumption on $\X$,
the Yoneda lemma and Proposition \ref{proposition:homotopyclassesofmaps} imply there are isomorphisms 
\begin{eqnarray*}
\pi_{n}\X(A) &=& \pi_{n}\hom_{\Box\CC^{\ast}-\Spc_{0}}(A,\X)\\
&=& \Box\CC^{\ast}-\Spc_{0}(A,\Omega_{S^{1}}^{n}\X)/\simeq \\
&=& \HH^{\ast}(A\otimes S^{n},\X).
\end{eqnarray*}
\end{proof}

In the next theorem we use unstable $\CC^{\ast}$-homotopy theory to represent Kasparov's $KK$-groups.
The proof we give makes extensive use of $K$-theoretic techniques which are couched in simplicial sets;
it carries over to the cubical setting in the likely event that the cubical nerve furnishes an equivalent way of constructing $K$-theory. 
Section \ref{subsection:KtheoryofCstaralgebras} gives a fuller review of the $K$-theory machinery behind categories with cofibrations and 
weak equivalences.  
\begin{theorem}
\label{theorem:unstablerepresentabilityKK}
Let $F$ be a $\CC^{\ast}$-algebra.
The pointed simplicial $\CC^{\ast}$-space
\begin{equation*}
\xymatrix{
F^{\Rep}\colon\CC^{\ast}-\Alg\ar[r] & \Delta\Set_{0}}
\end{equation*}
defined by 
\begin{equation*}
F^{\Rep}(E)\equiv
K\bigl(\Rep(F,E)\bigr)
=\Omega\vert\,N\htp S_{\bullet}\Rep(F,E)\,\vert
\end{equation*}
is projective $\CC^{\ast}$-fibrant.  
For $n\geq 0$ there is a natural isomorphism
\begin{equation*}
KK_{n-1}(F,E)=\HH^{\ast}(E\otimes S^{n},F^{\Rep}).
\end{equation*}
\end{theorem}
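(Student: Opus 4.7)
The plan is to proceed in two stages. First, I would verify that $F^{\Rep}$ is projective $\CC^{\ast}$-fibrant by checking each of the four defining fibrancy conditions in turn. Second, once fibrancy is in hand, I would invoke Lemma \ref{lemma:unstablerepresentability} to convert the assertion into a computation of the homotopy groups $\pi_{n}F^{\Rep}(E)$, which should agree with $KK_{n-1}(F,E)$ by standard $K$-theoretic identifications of Kasparov's groups.

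For the fibrancy check, I would begin with the pointwise projective fibrancy of $F^{\Rep}$: since $K(\Rep(F,E))$ has the form $\Omega\vert N\htp S_{\bullet}\Rep(F,E)\vert$, it is a loop space of a geometric realization and therefore automatically a Kan complex, so $F^{\Rep}(E)$ is fibrant for every $E$. Next, for flasqueness I would apply the Waldhausen fibration theorem to a short exact sequence $0\to A\to E\to B\to 0$: one must verify that $\Rep(F,-)$ takes this sequence to a sequence of Waldhausen categories satisfying the hypotheses of the fibration theorem (cylinder/extension/saturation axioms together with the requisite exactness), yielding the homotopy fiber sequence
\begin{equation*}
F^{\Rep}(A)\rightarrow F^{\Rep}(E)\rightarrow F^{\Rep}(B),
\end{equation*}
and the value $F^{\Rep}(0)$ is contractible since $\Rep(F,0)$ is trivial. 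For matrix invariance of $F^{\Rep}$, I would use the Morita invariance of $\Rep(F,-)$ under the corner embedding $A\to A\otimes\K$ determined by a rank-one projection, which translates to a weak equivalence at the level of $K$-theory via an additivity/cofinality argument. For homotopy invariance, the evaluation $\ev^{A}_{0}\colon C(I,A)\to A$ induces a functor between Waldhausen categories of representations that is a homotopy equivalence on $K$-theory because $C(I,A)\to A$ is a homotopy equivalence of $\CC^{\ast}$-algebras, and $\Rep(F,-)$ sends homotopy equivalences to $K$-theory equivalences.

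Once $F^{\Rep}$ is projective $\CC^{\ast}$-fibrant, Lemma \ref{lemma:unstablerepresentability} gives for $n\geq 0$ a natural isomorphism
\begin{equation*}
\HH^{\ast}(E\otimes S^{n},F^{\Rep})=\pi_{n}F^{\Rep}(E)=\pi_{n}\Omega\vert N\htp S_{\bullet}\Rep(F,E)\vert=\pi_{n+1}\vert N\htp S_{\bullet}\Rep(F,E)\vert.
\end{equation*}
It remains to identify the right-hand side with $KK_{n-1}(F,E)$. This identification is the content of the fundamental theorem relating Waldhausen $K$-theory of the category of Cuntz pairs (or Kasparov bimodules) to Kasparov's bivariant groups, together with the Bott periodicity shift accounting for the ``$n-1$'' in the indexing.

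The main obstacle will be the flasqueness property: one must check with some care that $\Rep(F,-)$ satisfies the axiomatic input (cylinder functor, extension axiom, saturation) needed to invoke the Waldhausen fibration theorem for the short exact sequences of $\CC^{\ast}$-algebras of Section 3.2, and that the split hypothesis built into our choice of admissible exact sequences (completely positive splittings for the minimal tensor product, or arbitrary exact sequences for the maximal tensor product) is exactly what makes the induced sequence of Waldhausen categories split-exact in the sense required. The subsidiary identification of $\pi_{*}$ of the representation $K$-theory with $KK_{*-1}$ is comparatively standard but requires citing the appropriate machinery relating Waldhausen $K$-theory and bivariant operator $K$-theory.
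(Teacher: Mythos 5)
Your proposal follows the paper's two-stage strategy: establish projective $\CC^{\ast}$-fibrancy of $F^{\Rep}$, then invoke Lemma \ref{lemma:unstablerepresentability} together with Kandelaki's identification $\pi_{n}K\bigl(\Rep(F,E)\bigr)=KK_{n-1}(F,E)$. In fact your enumeration of the four fibrancy conditions (projective fibrancy, flasqueness, matrix invariance, homotopy invariance) is more explicit than the paper's account, which only spells out the first two and leaves matrix- and homotopy-invariance to be covered implicitly by Kandelaki's results.

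There is, however, a genuine gap at exactly the spot you flag as the ``main obstacle.'' You say one must ``check with some care'' that $\Rep(F,-)$ satisfies the cylinder, extension, and saturation axioms needed for Waldhausen's fibration theorem. But this check fails as stated: $\Rep(A,B)$ is an additive category with cofibrations the split monomorphisms and weak equivalences the isomorphisms, and in that setting there is no cylinder functor. The essential technical move the paper makes --- which your proposal omits --- is to \emph{replace} $\Rep(A,B)$ by the bounded chain-complex category $\Ch^{\bb}\bigl(\Rep(A,B)\bigr)$. The inclusion of $\Rep(A,B)$ into $\Ch^{\bb}\bigl(\Rep(A,B)\bigr)$ as complexes concentrated in a single degree induces a $K$-theory equivalence by the Gillet--Waldhausen theorem \cite[Theorem 1.11.7]{TT}, and the chain-complex category does carry a mapping-cylinder functor and satisfies the extension and saturation axioms (the surrounding Lemmas \ref{lemma:cylinderaxiom}, \ref{lemma:extensionaxiom}, \ref{lemma:saturationaxiom} verify this). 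Only then can the fibration theorem \cite[Theorem 1.6.4]{Waldhausen:LNM1126} be applied --- to $\Ch^{\bb}\bigl(\Rep(F,E)\bigr)$ equipped with the two categories of weak equivalences $\htp$ and $\widetilde{\htp}$ coming from $E$ and from $B$ respectively --- and the resulting homotopy cartesian square has fiber identified with $\Ch^{\bb}\bigl(\Rep(F,A)\bigr)$. Without this replacement, the appeal to the fibration theorem directly for $\Rep(F,-)$ would stall on the missing cylinder functor, so this is not a detail to be checked but a step to be introduced.
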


Here $\Rep(A,B)$ is the idempotent complete additive category of representations between $\CC^{\ast}$-algebras $A$ and $B$.
It is a category with cofibrations the maps which are split monomorphisms and weak equivalences the isomorphisms.
Now passing to the $K$-theory of $\Rep(A,B)$ by using a fibrant geometric realization functor we get a pointed simplicial 
$\CC^{\ast}$-space $F^{\Rep}$ for every $\CC^{\ast}$-algebra $F$.
Next we briefly outline the part of the proof showing $F^{\Rep}$ is exact projective fibrant:
It is projective fibrant by construction (every simplicial abelian group is fibrant).
To show it is flasque we shall trade $\Rep(A,B)$ for the category $\Ch^{\bb}\bigl(\Rep(A,B)\bigr)$ of bounded chain complexes. 
The extra information gained by passing to chain complexes allows us to finish the proof.

The canonical inclusion of $\Rep(A,B)$ into $\Ch^{\bb}\bigl(\Rep(A,B)\bigr)$ as chain complexes 
of length one induces an equivalence in $K$-theory \cite[Theorem 1.11.7]{TT}. 
Thus we may assume $\Rep(A,B)$ acquires a cylinder functor and satisfies the cylinder, 
extension and saturation axioms.
Applying the fibration theorem \cite[Theorem 1.6.4]{Waldhausen:LNM1126} furnishes for every 
short exact sequence (\ref{ses}) of $\CC^{\ast}$-algebras with a completely positive splitting 
the desired homotopy fiber sequence 
\begin{equation*}
\xymatrix{
F^{\Rep}(A)\ar[r] & F^{\Rep}(E)\ar[r] & F^{\Rep}(B).}
\end{equation*}
The second part of Theorem \ref{theorem:unstablerepresentabilityKK} follows by combining the first 
part with Lemma \ref{lemma:unstablerepresentability} and work of Kandelaki \cite{Kandelaki:KKRep}.
\vspace{0.1in}

To prepare ground for the proof of Theorem \ref{theorem:unstablerepresentabilityKK} we shall recall 
some notions from \cite{Kandelaki:KKRep} and \cite{Kasparov:Hilbertmodules}.
In particular, 
we shall consider categories enriched in the symmetric monoidal category of $\CC^{\ast}$-algebras,
a.k.a.~$\CC^{\ast}$-categories.
The category of Hilbert spaces and bounded linear maps is an example.
Every unital $\CC^{\ast}$-algebra defines a $\CC^{\ast}$-category with one object and with the elements of the algebra as maps. 
\vspace{0.1in}

If $B$ is a $\CC^{\ast}$-algebra,  
then a Hilbert $B$-module $\Hilbert$ consists of a countably generated right Hilbert module over $B$ equipped with an 
inner product $\langle\,\,\mid\,\,\rangle\colon\Hilbert\times\Hilbert\rightarrow B$.
Denote by $\Hilbert(B)$ the additive $\CC^{\ast}$-category of Hilbert $B$-modules with respect to sums of Hilbert
modules and by $\K(B)$ its $\CC^{\ast}$-ideal of compact maps.
Next we consider pairs $(\Hilbert,\rho)$ where $\Hilbert\in\Hilbert(B)$ and 
$\rho\colon A\rightarrow\mathcal{L}(\Hilbert)$ is a $\ast$-homomorphism.
Here $\mathcal{L}(\Hilbert)$ is the algebra of linear operators on $\Hilbert$ which admit an adjoint with respect 
to the inner product. 
A map $(\Hilbert,\rho)\rightarrow (\Hilbert',\rho')$ consists of a map $f\colon\Hilbert\rightarrow\Hilbert'$ in 
$\Hilbert(B)$ such that $f\rho(a)-\rho'(a)f$ is in $\K(B)(\Hilbert,\Hilbert')$ for all $a\in A$.
This defines the structure of an additive $\CC^{\ast}$-category inherited from $\Hilbert(B)$.
Let $\Rep(A,B)$ denote its universal pseudoabelian $\CC^{\ast}$-category.
Its objects are triples $(\Hilbert,\rho,p)$ where $p\colon (\Hilbert,\rho)\rightarrow (\Hilbert,\rho)$
satisfies $p=p^{\ast}$ and $p^{2}=p$, and maps $(\Hilbert,\rho,p)\rightarrow (\Hilbert',\rho',p')$ 
consists of maps of pairs $f\colon (\Hilbert,\rho)\rightarrow (\Hilbert',\rho')$ as above, 
subject to the relation $fp=p'f=f$.
We note that triples are added according to the formula
$(\Hilbert,\rho,p)\oplus (\Hilbert',\rho',p')\equiv (\Hilbert\oplus\Hilbert',\rho\oplus\rho',p\oplus p')$.
\vspace{0.1in}

Let $\Ch^{\bb}\bigl(\Rep(A,B)\bigr)$ be the chain complex category of bounded chain complexes 
$E^{\bb}\colon 0\rightarrow E_{m}\rightarrow\cdots\rightarrow E_{n}\rightarrow 0$ in the additive 
category $\Rep(A,B)$.
It acquires the structure of a category with cofibrations and weak equivalences 
$\htp\Ch^{\bb}\bigl(\Rep(A,B)\bigr)$ in the sense of Waldhausen \cite{Waldhausen:LNM1126} 
with cofibrations the degreewise split monomorphisms and weak equivalences the maps whose 
mapping cones are homotopy equivalent to acyclic complexes in $\Ch^{\bb}\bigl(\Rep(A,B)\bigr)$.
\vspace{0.1in}

If $f\colon E^{\bb}\rightarrow F^{\bb}$ is a map in $\Ch^{\bb}\bigl(\Rep(A,B)\bigr)$, 
let $T(f)$ be the bounded chain complex given
\begin{equation*}
T(f)_{p}\equiv E_{p}\oplus E_{p-1}\oplus F_{p}.
\end{equation*}
The boundary maps of $T(f)$ are determined by the matrix: 
\[ 
\left( \begin{array}{ccc}
d_{E^{\bb}} & -\id & 0 \\
0 & -d_{E^{\bb}} & 0 \\
0 & f & d_{F^{\bb}} \end{array} 
\right)\] 
There exist natural inclusions of direct summands $i_{E^{\bb}}\colon E^{\bb}\subset T(f)$ and 
$i_{F^{\bb}}\colon F^{\bb}\subset T(f)$.
These maps fit into the commutative diagram:
\begin{equation*}
\xymatrix{
E^{\bb}\ar[r]^-{i_{E^{\bb}}}\ar[dr]_-{f} & T(F)\ar[d]^-{\pi} & F^{\bb}\ar[l]_-{i_{F^{\bb}}}\ar@{=}[dl] \\
& F^{\bb}  }
\end{equation*}
Here $\pi$ is defined degreewise by $\pi_{p}\equiv(f,0,\id)$.
Standard chain complex techniques imply $\htp\Ch^{\bb}\bigl(\Rep(A,B)\bigr)$ satisfies the cylinder axioms 
\cite[\S1.6]{Waldhausen:LNM1126}.
\begin{lemma}
\label{lemma:cylinderaxiom}
If $f\colon E^{\bb}\rightarrow F^{\bb}$ is a chain map, 
then $\pi$ is a chain homotopy equivalence, 
$i_{E^{\bb}}\oplus i_{F^{\bb}}$ is a degreewise split monomorphisms and
$T(0\rightarrow F^{\bb})=F^{\bb}$, $\pi=i_{F^{\bb}}=\id_{F^{\bb}}$.  
\end{lemma}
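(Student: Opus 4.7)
The plan is to verify each of the three assertions by direct matrix computation, treating the degreewise structure of the mapping cylinder $T(f)$ as a $3\times 3$ triangular system with respect to the splitting $T(f)_{p}=E_{p}\oplus E_{p-1}\oplus F_{p}$.

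First I would dispatch the degenerate case $T(0\to F^{\bb})=F^{\bb}$. When $E^{\bb}=0$ every summand $E_{p}\oplus E_{p-1}$ vanishes, so $T(f)_{p}$ collapses to $F_{p}$, the boundary matrix reduces to the lone entry $d_{F^{\bb}}$, and from the definitions both $\pi$ and $i_{F^{\bb}}$ restrict to $\id_{F^{\bb}}$. Next I would verify that $i_{E^{\bb}}\oplus i_{F^{\bb}}$ is a degreewise split monomorphism: in degree $p$ this is the inclusion $E_{p}\oplus F_{p}\hookrightarrow E_{p}\oplus E_{p-1}\oplus F_{p}$ of the outer summands, which is evidently split by projection onto the first and third coordinates.

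The substantive claim is that $\pi$ is a chain homotopy equivalence. Setting $s\equiv i_{F^{\bb}}$, the identity $\pi\circ s=\id_{F^{\bb}}$ is immediate from the definitions. To produce a chain homotopy $h\colon T(f)_{p}\to T(f)_{p+1}$ between $s\pi$ and $\id_{T(f)}$ I would try the formula $h(e,e',y)\equiv(0,-e,0)$, up to a global sign fixed by convention, and verify $dh+hd=\id_{T(f)}-s\pi$ by expanding the boundary matrix directly. The three off-diagonal entries $-\id$, $-d_{E}$ and $f$ of $d_{T(f)}$ carry the entire content: the $-\id$ entry returns the $E_{p}$-coordinate, the $-d_{E}$ terms arising from $dh$ and $hd$ cancel in the middle summand, and the $f$ entry produces the term $f_{p}(e)$ in $F_{p}$ that compensates for $s\pi(e,e',y)=(0,0,f_{p}(e)+y)$.

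The main obstacle is purely bookkeeping with signs in the expansion of $dh+hd$; once these are pinned down the three assertions together supply $\htp\Ch^{\bb}\bigl(\Rep(A,B)\bigr)$ with the cylinder functor required by the cylinder axiom of \cite[\S1.6]{Waldhausen:LNM1126}. No additional structure of $\Rep(A,B)$ is used beyond additivity, so the verification proceeds uniformly in the $\CC^{\ast}$-algebras $A$ and $B$.
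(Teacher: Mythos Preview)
Your proposal is correct and supplies precisely the standard verification the paper omits: the paper gives no proof of this lemma, merely stating beforehand that ``standard chain complex techniques imply $\htp\Ch^{\bb}\bigl(\Rep(A,B)\bigr)$ satisfies the cylinder axioms \cite[\S1.6]{Waldhausen:LNM1126}.'' Your explicit homotopy $h(e,e',y)=(0,-e,0)$ works on the nose with the boundary matrix given in the paper (one computes $dh+hd=(e,e',-f(e))=\id_{T(f)}-s\pi$), so there is no residual sign ambiguity to resolve.
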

Moreover, 
the next lemma shows that $\htp\Ch^{\bb}\bigl(\Rep(A,B)\bigr)$ satisfies the extension axiom formulated in 
\cite[\S1.2]{Waldhausen:LNM1126}.
\begin{lemma}
\label{lemma:extensionaxiom}
Suppose 
\begin{equation*}
\xymatrix{
B^{\bb}\ar[r]\ar[d] & E^{\bb}\ar[r]\ar[d] & A^{\bb}\ar[d] \\
\widetilde{B}^{\bb}\ar[r] & \widetilde{E}^{\bb}\ar[r] & \widetilde{A}^{\bb} }
\end{equation*}
is a map of cofibration sequences in $\Ch^{\bb}\bigl(\Rep(A,B)\bigr)$.
If the left and right vertical maps are weak equivalences,
then so is the middle vertical map. 
\end{lemma}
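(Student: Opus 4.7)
The plan is to exploit the fact that cofibrations in this setup are degreewise split monomorphisms so that the two rows of the diagram are degreewise split short exact sequences of bounded chain complexes in $\Rep(A,B)$. Applying the mapping cone functor to the three vertical maps $f_{B}$, $f_{E}$, $f_{A}$ then yields a third sequence
\begin{equation*}
\xymatrix{ C(f_{B})\ar[r] & C(f_{E})\ar[r] & C(f_{A}) }
\end{equation*}
which inherits the chosen degreewise splittings of the rows, and hence is again a degreewise split short exact sequence in $\Ch^{\bb}\bigl(\Rep(A,B)\bigr)$; this is a formal consequence of the identification $C(g)_{n}=X_{n-1}\oplus Y_{n}$ for a chain map $g\colon X^{\bb}\rightarrow Y^{\bb}$.

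Next I would translate the hypothesis on weak equivalences into the statement that $C(f_{B})$ and $C(f_{A})$ are contractible, i.e.\ that the identity maps of these mapping cones are chain homotopic to zero. This is the point at which the assumption that $f_{B}$ and $f_{A}$ are weak equivalences is used, via the description of weak equivalences as those maps whose mapping cones are homotopy equivalent to acyclic complexes in the bounded chain complex category of the additive, pseudoabelian category $\Rep(A,B)$; on such cones contractibility is the correct formulation of ``acyclic up to homotopy.''

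The main work, and the essential obstacle, lies in the following elementary but crucial reduction: in a degreewise split short exact sequence $0\rightarrow X\rightarrow Y\rightarrow Z\rightarrow 0$ of bounded chain complexes in any additive category, if $X$ and $Z$ are contractible, then so is $Y$. This is proved by choosing degreewise splittings so as to identify $Y_{n}\cong X_{n}\oplus Z_{n}$ with twisted differential
\begin{equation*}
d_{Y}=\begin{pmatrix} d_{X} & \tau \\ 0 & d_{Z}\end{pmatrix}, \qquad d_{X}\tau+\tau d_{Z}=0,
\end{equation*}
and then exhibiting an explicit contracting homotopy of the form
\begin{equation*}
h_{Y}=\begin{pmatrix} h_{X} & -h_{X}\tau h_{Z} \\ 0 & h_{Z}\end{pmatrix}
\end{equation*}
from given contractions $h_{X}$ and $h_{Z}$ of $X$ and $Z$; a straightforward verification, using the compatibility relation $d_{X}\tau=-\tau d_{Z}$, yields $d_{Y}h_{Y}+h_{Y}d_{Y}=\id_{Y}$. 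Since only sums, compositions and components of morphisms already present in $\Rep(A,B)$ are used, the construction remains internal to $\Ch^{\bb}\bigl(\Rep(A,B)\bigr)$.

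Applying this contractibility reduction to the cone sequence $C(f_{B})\rightarrow C(f_{E})\rightarrow C(f_{A})$ shows $C(f_{E})$ is contractible, hence homotopy equivalent to an acyclic complex, so $f_{E}$ is a weak equivalence as required. The one point that warrants care is verifying that the mapping cone construction sends a degreewise split short exact sequence of chain maps (arising from a map of degreewise split short exact sequences of complexes) to a degreewise split short exact sequence; but this is purely formal since the cone is a direct sum in each degree of shifts of the source and target, and splittings in each degree assemble componentwise.
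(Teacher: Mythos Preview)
Your argument is correct. The paper does not actually supply a proof of this lemma; it is stated alongside the cylinder and saturation axioms as a standard fact about bounded chain complexes over the additive idempotent-complete category $\Rep(A,B)$, with the surrounding text attributing these properties to ``standard chain complex techniques.'' What you have written is precisely such a verification: pass to mapping cones of the vertical maps, observe that the resulting sequence is again degreewise split (since cones are degreewise direct sums and the rows are degreewise split by the definition of cofibration), and reduce to showing that the middle term of a degreewise split short exact sequence of bounded complexes with contractible ends is itself contractible. Your explicit contracting homotopy does the job; the identity $d_{Y}h_{Y}+h_{Y}d_{Y}=\id_{Y}$ follows from $d_{X}\tau=-\tau d_{Z}$ and the contraction relations for $h_{X}$ and $h_{Z}$ by a short computation.

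The only point worth making slightly more explicit is your passage from ``mapping cone homotopy equivalent to an acyclic complex'' to ``mapping cone contractible.'' In the present setting, where $\Rep(A,B)$ is additive and idempotent complete and carries only the split exact structure, a bounded acyclic complex is automatically contractible, so the two conditions coincide and the weak equivalences are exactly the chain homotopy equivalences. You flag this, but it would strengthen the write-up to state it outright rather than leave it as an aside.
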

Next we note that $\Ch^{\bb}\bigl(\Rep(A,B)\bigr)$ satisfies the saturation axiom 
\cite[\S1.2]{Waldhausen:LNM1126}.
\begin{lemma}
\label{lemma:saturationaxiom}
If $f$ and $g$ are composable maps in $\Ch^{\bb}\bigl(\Rep(A,B)\bigr)$ and two of the maps 
$f$, $g$ and $fg$ are weak equivalences, 
then the third map is a weak equivalence.
\end{lemma}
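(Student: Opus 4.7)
The plan is to apply the octahedral axiom in the bounded homotopy category of $\Rep(A,B)$, which is triangulated because $\Rep(A,B)$ is additive and idempotent complete by construction. Write $\NN$ for the class of bounded complexes chain homotopy equivalent to an acyclic complex. By the definition stated immediately before the lemma, a chain map $\phi$ in $\Ch^{\bb}\bigl(\Rep(A,B)\bigr)$ is a weak equivalence precisely when $\cone(\phi)\in\NN$, so the saturation axiom translates into a two-out-of-three statement about the $\NN$-membership of the mapping cones of $f$, $g$, and $gf$.

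Next, for composable chain maps $f\colon E^{\bb}\rightarrow F^{\bb}$ and $g\colon F^{\bb}\rightarrow G^{\bb}$, I would invoke the octahedral axiom, which furnishes a distinguished triangle
\[
\cone(f)\longrightarrow\cone(gf)\longrightarrow\cone(g)\longrightarrow\cone(f)[1]
\]
in the bounded homotopy category of $\Rep(A,B)$. Observing that $\NN$ is a thick subcategory — closed under shifts, retracts, and under passage to the third vertex of any distinguished triangle whose other two vertices already lie in $\NN$ — the three cases of two-out-of-three for the weak equivalences follow at once: whenever any two of $\cone(f)$, $\cone(g)$, and $\cone(gf)$ lie in $\NN$, so does the third.

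The main obstacle, though entirely standard, is justifying the two structural ingredients in the pseudoabelian setting of $\Rep(A,B)$: namely that its bounded homotopy category is triangulated and that $\NN$ is thick. Idempotent completeness, which is built into the construction of $\Rep(A,B)$ used throughout this section, is exactly what guarantees the triangulated structure on the bounded homotopy category via the mapping cone. Thickness of $\NN$ then reduces to two easy observations: shifts and direct summands of complexes chain homotopy equivalent to acyclic ones are again of the same form, and a diagram chase on the octahedral triangle shows that if two of its vertices are acyclic up to chain homotopy, then so is the third. I would briefly record these two points before concluding, rather than appeal to the full derived category machinery.
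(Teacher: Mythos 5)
The paper states this lemma without proof, so there is no ``paper's own proof'' to compare against; what follows is an assessment of your argument on its own terms. Your overall strategy --- pass to the bounded homotopy category $K^{b}\bigl(\Rep(A,B)\bigr)$, observe that ``weak equivalence'' means ``cone lies in $\NN$'' where $\NN$ is the closure under homotopy equivalence of the acyclic complexes, and then apply the octahedral axiom to the triangle $\cone(f)\rightarrow\cone(gf)\rightarrow\cone(g)\rightarrow\cone(f)[1]$ --- is exactly the standard way to establish the saturation axiom here, and it will go through. Two points, however, need attention.

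First, your explanation of \emph{why} $K^{b}\bigl(\Rep(A,B)\bigr)$ is triangulated is off: the bounded homotopy category of \emph{any} additive category carries a triangulated structure with the mapping cone as the shift; idempotent completeness of $\Rep(A,B)$ is not what is responsible for this. (Idempotent completeness of $\Rep(A,B)$ matters elsewhere --- for instance it makes $K^{b}$ itself idempotent complete and is useful when invoking Thomason--Trobaugh --- but it is not the source of the triangulation.) Second, and this is the real gap, the heart of the argument is the claim that $\NN$ is closed under passage to the third vertex of a distinguished triangle, and you do not actually prove it: you assert that ``a diagram chase on the octahedral triangle'' shows two-out-of-three for $\NN$, but this is precisely the statement in question, so as written the argument is circular. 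What one needs is the concrete fact that the mapping cone of any chain map between acyclic bounded complexes is again acyclic (and that the class is stable under shifts and homotopy equivalence); combined with rotation this yields closure under third vertices. That fact requires saying what ``acyclic'' means in the merely additive, non-abelian category $\Rep(A,B)$. In the Thomason--Trobaugh framework the paper is implicitly using, one embeds $\Rep(A,B)$ into an abelian category and takes ``acyclic'' to mean exact there, whereupon closure under cones follows from the long exact sequence in homology. You should make this explicit --- both the meaning of ``acyclic'' and the cone-closure argument --- instead of deferring to a diagram chase that, as stated, has no content independent of the conclusion you are trying to reach.
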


Suppose $F$ is a $\CC^{\ast}$-algebra. 
Applying the functor $\Ch^{\bb}\bigl(\Rep(F,-)\bigr)$ to a completely positive split short exact sequence
$0\rightarrow A\rightarrow E\rightarrow B\rightarrow 0$ yields functors
\begin{equation*}
\xymatrix{
\Ch^{\bb}\bigl(\Rep(F,A)\bigr)\ar[r] & 
\Ch^{\bb}\bigl(\Rep(F,E)\bigr)\ar[r] & 
\Ch^{\bb}\bigl(\Rep(F,B)\bigr).}
\end{equation*}
Denote by $\widetilde{\htp}\Ch^{\bb}\bigl(\Rep(F,E)\bigr)$ the category $\Ch^{\bb}\bigl(\Rep(F,E)\bigr)$ 
with cofibrations degreewise split monomorphisms and weak equivalences the chain maps with mapping cones
homotopy equivalent to acyclic complexes in $\Ch^{\bb}\bigl(\Rep(F,B)\bigr)$.
It inherits a cylinder functor from ${\htp}\Ch^{\bb}\bigl(\Rep(F,E)\bigr)$.
\begin{lemma}
\label{lemma:extensionsaturationcylinder}
The category $\widetilde{\htp}\Ch^{\bb}\bigl(\Rep(F,E)\bigr)$ satisfies the extension and saturation 
axioms and acquires a cylinder functor satisfying the cylinder axioms. 
\end{lemma}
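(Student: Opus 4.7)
The plan is to reduce all three claims to the corresponding statements for $\htp\Ch^{\bb}\bigl(\Rep(F,B)\bigr)$ via the additive functor $p_{\ast}\colon\Ch^{\bb}\bigl(\Rep(F,E)\bigr)\rightarrow\Ch^{\bb}\bigl(\Rep(F,B)\bigr)$ induced by the quotient map $E\rightarrow B$. Since the mapping cone construction and chain homotopies are preserved by any additive functor, a chain map $f$ in $\Ch^{\bb}\bigl(\Rep(F,E)\bigr)$ is, by the very definition of $\widetilde{\htp}$, a weak equivalence in $\widetilde{\htp}\Ch^{\bb}\bigl(\Rep(F,E)\bigr)$ exactly when $p_{\ast}f$ is a weak equivalence in $\htp\Ch^{\bb}\bigl(\Rep(F,B)\bigr)$. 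All the other structure (cofibrations, sums, cylinder) is untouched by the redefinition, so only the weak equivalences need monitoring.

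First I would verify the saturation axiom. If two of $f$, $g$, $gf$ are weak equivalences in $\widetilde{\htp}$, then functoriality of $p_{\ast}$ shows the corresponding two of $p_{\ast}f$, $p_{\ast}g$, $p_{\ast}(gf)=p_{\ast}g\circ p_{\ast}f$ are weak equivalences in $\htp\Ch^{\bb}\bigl(\Rep(F,B)\bigr)$. Lemma \ref{lemma:saturationaxiom} applied to the pair $(F,B)$ yields the third, so the $p_{\ast}$-preimage of that map is a weak equivalence in $\widetilde{\htp}$.

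Next I would check the extension axiom. Because $p_{\ast}$ is additive, it preserves degreewise split monomorphisms, hence cofibrations, and sends cofibration sequences to cofibration sequences. Applying $p_{\ast}$ to the given commutative diagram produces a map of cofibration sequences in $\Ch^{\bb}\bigl(\Rep(F,B)\bigr)$ whose left and right columns are weak equivalences; Lemma \ref{lemma:extensionaxiom} with $(F,B)$ in place of $(A,B)$ then delivers the middle column as a weak equivalence in $\htp\Ch^{\bb}\bigl(\Rep(F,B)\bigr)$, which is exactly what is needed in $\widetilde{\htp}$.

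Finally, for the cylinder functor inherited from $\htp\Ch^{\bb}\bigl(\Rep(F,E)\bigr)$ the defining formula for $T(f)$, $i_{E^{\bb}}$, $i_{F^{\bb}}$ and $\pi$ consists entirely of direct sums and identity/boundary maps, so $p_{\ast}T(f)=T(p_{\ast}f)$ and $p_{\ast}$ carries the structural maps to their namesakes over $B$. The identities $T(0\rightarrow F^{\bb})=F^{\bb}$ together with $\pi=i_{F^{\bb}}=\id_{F^{\bb}}$ and the assertion that $i_{E^{\bb}}\oplus i_{F^{\bb}}$ is a degreewise split monomorphism are inherited verbatim from Lemma \ref{lemma:cylinderaxiom}; the remaining axiom, that $\pi\colon T(f)\rightarrow F^{\bb}$ is a weak equivalence in $\widetilde{\htp}$, holds because $\pi$ is already a chain homotopy equivalence by Lemma \ref{lemma:cylinderaxiom} and any additive $p_{\ast}$ preserves chain homotopy equivalences. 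The step that requires the most care, and which I view as the main obstacle, is the bookkeeping that ensures $p_{\ast}$ really commutes with the cylinder construction on the nose and that it sends the cofibration class of degreewise split monomorphisms into itself; once this is in hand the three axioms transport mechanically from the previous three lemmas, applied with the pair $(F,B)$ rather than $(F,E)$.
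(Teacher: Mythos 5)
Your proposal is correct and is the natural argument; the paper does not spell out a proof of this lemma but merely asserts it after noting that the cylinder functor is inherited from $\htp\Ch^{\bb}\bigl(\Rep(F,E)\bigr)$, and your reduction via the additive functor $p_{\ast}$ induced by $E\rightarrow B$ is precisely what makes the assertion go through. The one observation worth making explicit, which you do correctly use, is that by definition a map in $\widetilde{\htp}\Ch^{\bb}\bigl(\Rep(F,E)\bigr)$ is a weak equivalence iff its image under $p_{\ast}$ is a weak equivalence in $\htp\Ch^{\bb}\bigl(\Rep(F,B)\bigr)$; since $p_{\ast}$ is additive it commutes with mapping cones and cylinders and preserves degreewise split monomorphisms, so all three axioms transport along $p_{\ast}$ from Lemmas \ref{lemma:cylinderaxiom}, \ref{lemma:extensionaxiom} and \ref{lemma:saturationaxiom} applied to the pair $(F,B)$.
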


Define ${\htp}\Ch^{\bb}\bigl(\Rep(F,E)\bigr)^{\widetilde{\htp}}$ to be the full subcategory of 
${\htp}\Ch^{\bb}\bigl(\Rep(F,E)\bigr)$ whose objects are $E^{\bb}$ such that $0\rightarrow E^{\bb}$
is a weak equivalence in $\widetilde{\htp}\Ch^{\bb}\bigl(\Rep(F,E)\bigr)$.
It acquires the structure of a category with cofibrations and weak equivalences inherited from 
$\htp\Ch^{\bb}\bigl(\Rep(F,E)\bigr)$.
With these definitions there are equivalences
\begin{equation*}
{\htp}\Ch^{\bb}\bigl(\Rep(F,E)\bigr)^{\widetilde{\htp}}\simeq\htp\Ch^{\bb}\bigl(\Rep(F,A)\bigr)
\end{equation*}
and 
\begin{equation*}
\widetilde{\htp}\Ch^{\bb}\bigl(\Rep(F,E)\bigr)\simeq\htp\Ch^{\bb}\bigl(\Rep(F,B)\bigr).
\end{equation*}
Clearly every weak equivalence in $\htp\Ch^{\bb}\bigl(\Rep(F,E)\bigr)$ is also a weak equivalence in 
$\widetilde{\htp}\Ch^{\bb}\bigl(\Rep(F,E)\bigr)$.
Thus by \cite[Theorem 1.6.4]{Waldhausen:LNM1126} there is a homotopy cartesian square:
\begin{equation*}
\label{homotopycartesiansquare}
\xymatrix{
\htp\Ch^{\bb}\bigl(\Rep(F,A)\bigr)\ar[r]\ar[d] & 
\widetilde{\htp}\Ch^{\bb}\bigl(\Rep(F,E)\bigr)^{\widetilde{\htp}}\simeq\ast\ar[d]\\
\htp\Ch^{\bb}\bigl(\Rep(F,E)\bigr)\ar[r] &
\htp\Ch^{\bb}\bigl(\Rep(F,B)\bigr) }
\end{equation*}
This implies $F^{\Rep}$ is flasque. 
Theorem \ref{theorem:unstablerepresentabilityKK} follows now simply by combining the isomorphism
\begin{equation*}
\pi_{n}F^{\Rep}(E)=KK_{n-1}(F,E)
\end{equation*}
for $n\geq 0$ \cite[Theorem 1.2]{Kandelaki:KKRep} and Lemma \ref{lemma:unstablerepresentability}.
\vspace{0.1in}

The results in \cite{Kandelaki:KKRep} employed in the above hold equivariantly.
Thus we may infer:  
\begin{theorem}
\label{theorem:unstablerepresentabilityequivariantKK}
Let $F$ be a $\Group-\CC^{\ast}$-algebra where $\Group$ is compact second countable.
Then the pointed simplicial $\Group-\CC^{\ast}$-space
\begin{equation*}
\xymatrix{
F^{\Rep}\colon\Group-\CC^{\ast}-\Alg\ar[r] & \Delta\Set_{0}}
\end{equation*}
defined by 
\begin{equation*}
F^{\Group-\Rep}(E)\equiv
K\bigl(\Group-\Rep(F,E)\bigr)=
\Omega\vert\,N\htp S_{\bullet}\Group-\Rep(F,E)\,\vert
\end{equation*}
is projective $\Group-\CC^{\ast}$-fibrant.  
For $n\geq 0$ there is a natural isomorphism 
\begin{equation*}
\Group-KK_{n-1}(F,E)=\Group-\HH^{\ast}(E\otimes S^{n},F^{\Group-\Rep}).
\end{equation*}
Here, 
the left hand side denotes the $\Group$-equivariant Kasparov $KK$-groups and the right hand side maps in the 
unstable pointed $G$-equivariant $\CC^{\ast}$-homotopy category.
\end{theorem}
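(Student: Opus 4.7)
The plan is to mimic the proof of Theorem \ref{theorem:unstablerepresentabilityKK} in the equivariant setting, exploiting the fact that all the ingredients used in the non-equivariant case possess $\Group$-equivariant refinements that behave formally in the same way. First I would establish that $F^{\Group-\Rep}$ is projective $\Group-\CC^{\ast}$-fibrant by separately checking the three conditions of pointwise projective fibrancy, flasqueness, matrix invariance and homotopy invariance. Pointwise projective fibrancy is automatic from the construction since $\Omega\vert\,N\htp S_{\bullet}\Group-\Rep(F,E)\,\vert$ is a loop space and hence Kan. Matrix invariance and homotopy invariance are built into Kandelaki's $\Group$-equivariant construction, reflecting Morita--Rieffel invariance of equivariant $KK$ with respect to $\Group$-equivariant compact operators on separable Hilbert spaces and Kasparov's homotopy invariance axiom.

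Next I would verify flasqueness by introducing $\Ch^{\bb}\bigl(\Group-\Rep(F,E)\bigr)$, the bounded chain complex category of the additive $\CC^{\ast}$-category of equivariant representations, equipped with the Waldhausen structure having degreewise split monomorphisms as cofibrations and chain maps with mapping cones homotopy equivalent to acyclic complexes as weak equivalences. Since the cylinder, extension and saturation axioms (the equivariant analogs of Lemmas \ref{lemma:cylinderaxiom}, \ref{lemma:extensionaxiom} and \ref{lemma:saturationaxiom}) follow from formal chain-complex manipulations preserving the $\Group$-action, together with Lemma \ref{lemma:extensionsaturationcylinder}, I can apply Waldhausen's fibration theorem \cite[Theorem 1.6.4]{Waldhausen:LNM1126} to the sequence associated to a completely positive equivariantly split short exact sequence $0\rightarrow A\rightarrow E\rightarrow B\rightarrow 0$ in $\Group-\CC^{\ast}-\Alg$. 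This produces a homotopy fiber sequence
\begin{equation*}
\xymatrix{
F^{\Group-\Rep}(A)\ar[r] & F^{\Group-\Rep}(E)\ar[r] & F^{\Group-\Rep}(B), }
\end{equation*}
establishing flasqueness.

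The second assertion then follows by combining the equivariant analog of Lemma \ref{lemma:unstablerepresentability}, which gives
\begin{equation*}
\pi_{n}F^{\Group-\Rep}(E)=\Group-\HH^{\ast}(E\otimes S^{n},F^{\Group-\Rep})
\end{equation*}
for projective $\Group-\CC^{\ast}$-fibrant targets, with the equivariant generalization of \cite[Theorem 1.2]{Kandelaki:KKRep} identifying $\pi_{n}F^{\Group-\Rep}(E)$ with $\Group-KK_{n-1}(F,E)$. The main obstacle is the invocation of this last identification, since compactness and second countability of $\Group$ are precisely what guarantee that the pseudoabelian completion of the $\CC^{\ast}$-category of equivariant Hilbert $B$-modules carries enough structure for Kandelaki's representability argument to go through: one needs an equivariant absorption theorem securing matrix invariance with respect to $\Group$-compact operators on a separable $\Group$-Hilbert space, and this is where the hypotheses on $\Group$ enter in an essential way. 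The remaining steps, notably the equivariant versions of the model categorical enhancements from Section \ref{subsection:GG-CC-spaces}, are routine transcriptions of their non-equivariant counterparts.
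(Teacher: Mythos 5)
Your proposal is correct and takes essentially the same approach as the paper, which simply observes that the results from \cite{Kandelaki:KKRep} used in the proof of Theorem \ref{theorem:unstablerepresentabilityKK} hold equivariantly and then infers the statement. You have usefully spelled out the details (equivariant fibration theorem, equivariant analog of Lemma \ref{lemma:unstablerepresentability}, and where the hypotheses on $\Group$ enter), but the route is the one the paper intends.
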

\newpage

\subsection{Base change}
\label{subsection:basechange}
For every $\CC^{\ast}$-algebra $A$ the slice category $\Box\CC^{\ast}-\Spc\downarrow A$ consists of
cubical $\CC^{\ast}$-spaces together with a map to $A$.
Maps in $\Box\CC^{\ast}-\Spc\downarrow A$ are maps in $\Box\CC^{\ast}-\Spc$ which are compatible with 
the given maps to $A$.
We claim $\Box\CC^{\ast}-\Spc\downarrow A$ acquires the exact same four types of model structures as 
$\Box\CC^{\ast}-\Spc$ by defining the relevant homotopical data via the forgetful functor
\begin{equation*}
\xymatrix{
\Box\CC^{\ast}-\Spc\downarrow A\ar[r] & \Box\CC^{\ast}-\Spc.}
\end{equation*}
In the slice category setting the model structures on $\Box\CC^{\ast}-\Spc$ correspond to the trivial 
$\CC^{\ast}$-algebra.
More generally, we have the following result.
\begin{lemma}
\label{lemma:modelstructuresonundercategories}
For any of the pointwise, exact, matrix invariant and homotopy invariant model structures on 
$\Box\CC^{\ast}-\Spc$ the slice category $\Box\CC^{\ast}-\Spc\downarrow \X$ has a corresponding 
combinatorial and weakly finitely generated left proper model structure where a map $f$ is a weak equivalence 
(respectively cofibration, fibration) in $\Box\CC^{\ast}-\Spc\downarrow \X$ if and only if $f$ 
is a weak equivalence (respectively cofibration, fibration) in $\Box\CC^{\ast}-\Spc$.
\end{lemma}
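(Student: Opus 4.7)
The plan is to transfer each of the four model structures along the forgetful functor $U \colon \Box\CC^{\ast}-\Spc\downarrow\X \to \Box\CC^{\ast}-\Spc$, declaring a map in the slice category to be a weak equivalence, cofibration, or fibration precisely when its $U$-image has the corresponding property. This is the standard slice model structure. First I would verify that $\Box\CC^{\ast}-\Spc\downarrow\X$ is bicomplete, with (co)limits computed on the underlying cubical $\CC^{\ast}$-spaces and equipped with the canonically induced structure map to $\X$. The two-out-of-three and retract axioms transfer immediately from $\Box\CC^{\ast}-\Spc$. The lifting axiom requires only the observation that any lift of a square produced in $\Box\CC^{\ast}-\Spc$ is automatically a morphism in the slice category: post-composing with the structure map of the lower-right corner recovers the structure maps on the two middle objects. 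Factorizations transfer by factoring $U$-images and equipping the intermediate object with the composite structure map.

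For the combinatorial and weakly finitely generated properties, let $I$ be a set of generating cofibrations and $J'$ a distinguished set as in Definition~\ref{definition:weaklyfinitelygenerated} for the ambient model structure. Form
\begin{equation*}
I/\X \equiv \{(i \colon A \to B,\, f \colon B \to \X) : i \in I\}
\end{equation*}
and analogously $J'/\X$; local smallness of $\Box\CC^{\ast}-\Spc$ makes these genuine sets. Since $U$ is left adjoint to $(-) \times \X$ and therefore preserves colimits, a direct adjunction argument shows that a slice map has the right lifting property with respect to $I/\X$ (respectively $J'/\X$) if and only if its $U$-image has the right lifting property with respect to $I$ (respectively $J'$). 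Quillen's small object argument then runs in the slice category, producing functorial factorizations. The finitely presentable objects in $\Box\CC^{\ast}-\Spc\downarrow\X$ are exactly those $(B \to \X)$ with $B$ finitely presentable in $\Box\CC^{\ast}-\Spc$, so the (co)domains of $I/\X$ and $J'/\X$ remain finitely presentable. Local presentability of the slice follows from Lemma~\ref{lemma:finitelypresentable} together with the fact that slices of locally presentable categories are locally presentable.

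Left properness transfers at once because pushouts in the slice category coincide with pushouts in $\Box\CC^{\ast}-\Spc$ carrying the induced structure map, so a slice pushout of a weak equivalence along a cofibration is, after applying $U$, a pushout of the same datum in $\Box\CC^{\ast}-\Spc$, hence a weak equivalence by left properness of the ambient structure. I do not anticipate a real obstacle at any single step; the main subtlety is simply the uniform treatment of all four cases, in particular verifying that each of the exact, matrix invariant and homotopy invariant slice structures coincides with the Bousfield localization of the pointwise slice structure with respect to $U$-preimages of the localizing sets---which follows since both descriptions yield the same cofibrations and the same fibrant objects.
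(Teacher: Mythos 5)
Your proof follows essentially the same route as the paper's, which invokes Hirschhorn's theorem on slice model structures, derives left properness from the fact that pushouts are formed on underlying objects, and asserts that the generating (acyclic) cofibrations are the generating ones placed over $\X$. Your direct verification of the axioms is sound, as is the adjunction argument identifying $I/\X$-lifting with $I$-lifting after applying $U$.

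Two caveats. First, limits in the over-category are \emph{not} computed on underlying objects; this is in fact inconsistent with your (correct) observation that $U$ is a left adjoint, hence preserves colimits only. The product of $(A\to\X)$ and $(B\to\X)$ in the slice is the fiber product $A\times_\X B\to\X$, not $A\times B$ with some induced map. This slip does not affect the rest of the argument. Second, and more substantively, the weakly finitely generated claim deserves more care than you (or, for that matter, the paper) supply. A fibrant object of $\Box\CC^{\ast}-\Spc\downarrow\X$ is a pair $(D\to\X)$ for which $D\to\X$ is a fibration, and this does not force $D$ itself to be fibrant unless $\X$ is. For the pointwise projective structure the point is harmless, since there $J'=J$ detects fibrations for arbitrary codomain. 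But for the exact, matrix invariant and homotopy invariant structures, the ambient $J'$-inj condition detects fibrations only when the codomain is a genuine fibrant object, so the asserted transfer of the detecting set to $J'/\X$ is not automatic and requires either a supplementary argument (e.g.\ a reduction to fibrant $\X$, which is not available for the intended applications since $\CC^{\ast}$-algebras are generally not $\CC^{\ast}$-projective fibrant) or a different choice of detecting set in the slice. The paper's proof does not address this point either.
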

\begin{proof}
The existence of the model structure follows from \cite[Theorem 7.6.5]{Hirschhorn:Modelcategories}.
Since pushouts are formed by taking pushouts of the underlying maps in $\Box\CC^{\ast}-\Spc$, 
it follows that $\Box\CC^{\ast}-\Spc\downarrow \X$ is left proper since $\Box\CC^{\ast}-\Spc$ is so.
With these definitions it is trivial to check that the (acyclic) cofibrations are generated by 
generating (acyclic) cofibrations over $\X$.
\end{proof}
\begin{remark}
\label{remark:pointedmodelstructuresonundercategories}
There is a straightforward analog of Lemma \ref{lemma:modelstructuresonundercategories} for 
pointed cubical and pointed simplicial $\CC^{\ast}$-spaces.
We leave the formulation of Brown representability in this setting to the reader. 
\end{remark}
If $f\colon\X\rightarrow\Y$ is a map between cubical $\CC^{\ast}$-spaces,
there is an induced Quillen pair between the corresponding slice categories:
\begin{equation}
\label{equation:Quillensliceadjunction}
\xymatrix{
f_{!}\colon\Box\CC^{\ast}-\Spc\downarrow\X
\ar@<3pt>[r] & 
\Box\CC^{\ast}-\Spc\downarrow\Y\colon f^{\ast} \ar@<3pt>[l].}
\end{equation}
The left adjoint is defined by $(\Z\rightarrow\X)\mapsto (\Z\rightarrow\X\rightarrow\Y)$ 
and the right adjoint by $(\Z\rightarrow\Y)\mapsto (\Z\times_{\Y}\X\rightarrow\X)$.
When $f$ is a weak equivalence between fibrant objects, 
then the adjunction (\ref{equation:Quillensliceadjunction}) is a Quillen equivalence, 
but without the fibrancy condition this may fail.
\vspace{0.1in}

For objects $\X$ and $\Y$ of $\Box\CC^{\ast}-\Spc$ let $\X\downarrow\Box\CC^{\ast}-\Spc\downarrow\Y$
be the category of objects of $\Box\CC^{\ast}-\Spc$ under $\X$ and over $\Y$ in which an object is a
diagram $\X\rightarrow\Z\rightarrow\Y$ of maps of cubical $\CC^{\ast}$-spaces.
A map from $\X\rightarrow\Z\rightarrow\Y$ to $\X\rightarrow\W\rightarrow\Y$ consists of a map 
$f\colon\Z\rightarrow\W$ such that the obvious diagram commutes.
The next result can be proved using similar arguments as in the proof of Lemma
\ref{lemma:modelstructuresonundercategories} and there are direct analogs for pointed cubical and 
pointed simplicial $\CC^{\ast}$-spaces which we leave implicit.
\begin{lemma}
\label{lemma:modelstructuresonoverundercategories}
For any of the pointwise, exact, matrix invariant and homotopy invariant model structures on 
$\Box\CC^{\ast}-\Spc$ and for every pair of cubical $\Box\CC^{\ast}$-spaces $\X$ and $\Y$ the 
category $\X\downarrow\Box\CC^{\ast}-\Spc\downarrow\Y$ has a corresponding combinatorial and weakly finitely generated 
left proper model structure where $f$ is a weak equivalence (respectively cofibration, fibration) in 
$\X\downarrow\Box\CC^{\ast}-\Spc\downarrow\Y$ if and only if $\Z\rightarrow\W$ is a weak equivalences 
(respectively cofibration, fibration) in $\Box\CC^{\ast}-\Spc$.
\end{lemma}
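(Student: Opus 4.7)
The plan is to reduce the statement to Lemma \ref{lemma:modelstructuresonundercategories} by recognizing the bi-slice as an iterated slice. For fixed $\X,\Y\in\Box\CC^{\ast}-\Spc$, there is a canonical isomorphism of categories
\begin{equation*}
\X\downarrow\Box\CC^{\ast}-\Spc\downarrow\Y \;\cong\; (\X\to\Y)\downarrow(\Box\CC^{\ast}-\Spc\downarrow\Y),
\end{equation*}
where the right hand side is the under-category of the fixed object $(\X\to\Y)$ inside the over-category $\Box\CC^{\ast}-\Spc\downarrow\Y$. First I would invoke Lemma \ref{lemma:modelstructuresonundercategories} to lift any of the four model structures from $\Box\CC^{\ast}-\Spc$ to $\Box\CC^{\ast}-\Spc\downarrow\Y$. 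Then I would apply the dual half of \cite[Theorem 7.6.5]{Hirschhorn:Modelcategories} (for under-categories rather than over-categories; the proof is symmetric and is given in the same reference) to the resulting model category with the distinguished object $(\X\to\Y)$.

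The transfer of the structural properties is straightforward. Limits and colimits in $\X\downarrow\Box\CC^{\ast}-\Spc\downarrow\Y$ are computed by taking the underlying (co)limit in $\Box\CC^{\ast}-\Spc$ and equipping it with the tautologically induced maps from $\X$ and to $\Y$; bicompleteness follows. Left properness is immediate because pushouts are detected on the underlying cubical $\CC^{\ast}$-spaces, and $\Box\CC^{\ast}-\Spc$ is left proper in each of the pointwise, exact, matrix invariant and homotopy invariant model structures. Combinatoriality is preserved since slice and coslice constructions of a locally presentable category are again locally presentable, and the generating (acyclic) cofibrations in the bi-slice are obtained by forming the pushout $\X\coprod_{\emptyset}(-)$ applied to the generating (acyclic) cofibrations of $\Box\CC^{\ast}-\Spc\downarrow\Y$, equipped with the tautological map to $\Y$; these retain the same domains and codomains up to coproduct with $\X$ in an absolute sense, but in the under-category their role as generators is unchanged.

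The main obstacle will be the verification that the new model structure is weakly finitely generated in the sense of Definition \ref{definition:weaklyfinitelygenerated}. The subtlety is that smallness and finite presentability must be interpreted relative to the bi-slice rather than $\Box\CC^{\ast}-\Spc$. Here I would argue that the forgetful functor $\X\downarrow\Box\CC^{\ast}-\Spc\downarrow\Y\rightarrow\Box\CC^{\ast}-\Spc$ creates filtered colimits, so an object of the bi-slice is finitely presentable (respectively small) if and only if its underlying cubical $\CC^{\ast}$-space is. The subset $J'$ of finitely presentable generating trivial cofibrations detecting fibrations with fibrant codomain in $\Box\CC^{\ast}-\Spc$ then yields a corresponding subset $J'_{\X,\Y}$ in the bi-slice by forming the said pushouts. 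Detection of fibrations with fibrant codomain follows because a map in the bi-slice is a fibration (respectively has fibrant codomain) if and only if the underlying map in $\Box\CC^{\ast}-\Spc$ is so, and the right lifting property against $J'_{\X,\Y}$ reduces by adjunction to the right lifting property of the underlying map against $J'$.
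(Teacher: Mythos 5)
Your reduction of the bi-slice to the iterated slice $(\X\to\Y)\downarrow(\Box\CC^{\ast}-\Spc\downarrow\Y)$ is sound, and it is essentially the same route the paper takes (the paper implicitly invokes \cite[Theorem 7.6.5]{Hirschhorn:Modelcategories} directly for the under-over category; iterating the slice construction amounts to the same thing). The treatment of left properness and combinatoriality is fine.

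However, there is a genuine gap in the verification of weak finite generation. You claim: ``the forgetful functor $\X\downarrow\Box\CC^{\ast}-\Spc\downarrow\Y\rightarrow\Box\CC^{\ast}-\Spc$ creates filtered colimits, so an object of the bi-slice is finitely presentable (respectively small) if and only if its underlying cubical $\CC^{\ast}$-space is.'' Creation of filtered colimits by a faithful but non-full functor does not give this biconditional, and in fact both directions fail. For ``only if'': the initial object $(\X\xrightarrow{\id}\X\to\Y)$ of the bi-slice is always finitely presentable, but its underlying object $\X$ need not be. For ``if'': in an undercategory one can have a filtered diagram whose underlying objects converge while the structure maps from the distinguished object only stabilize in the colimit, so $\Hom(Z,W_{i})$ is eventually empty while $\Hom(Z,\colim W_{i})$ is not; a concrete example in $\mathbb{N}\downarrow\Set$ is given by $W_{i}=\mathbb{N}/\{0\sim\cdots\sim i\}$ with quotient structure maps and $Z=(\{a,b\},n\mapsto a)$, which has finitely presentable underlying set but is not finitely presentable in the coslice. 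Moreover, even taken at face value the claimed biconditional would \emph{undermine} your argument rather than support it: the domains and codomains of $J'_{\X,\Y}$ have underlying objects of the form $\X\coprod(-)$, which are not finitely presentable in $\Box\CC^{\ast}-\Spc$ unless $\X$ is --- and the lemma places no such hypothesis on $\X$.

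The fix is to route everything through the adjunction rather than through the underlying objects. Write $F=(\X\to\Y)\coprod(-)\colon\Box\CC^{\ast}-\Spc\downarrow\Y\to\X\downarrow\Box\CC^{\ast}-\Spc\downarrow\Y$ for the left adjoint of the forgetful $U$. Since $U$ preserves filtered colimits, the natural isomorphism
\begin{equation*}
\Hom_{\X\downarrow\Box\CC^{\ast}-\Spc\downarrow\Y}\bigl(F(W),-\bigr)\cong\Hom_{\Box\CC^{\ast}-\Spc\downarrow\Y}\bigl(W,U(-)\bigr)
\end{equation*}
shows that $F$ carries finitely presentable (respectively small) objects to finitely presentable (respectively small) objects of the bi-slice. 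The generating (acyclic) cofibrations $I_{\X,\Y}$ and $J'_{\X,\Y}$ are $F$ applied to the generating sets of the slice, whose domains and codomains are finitely presentable there (finite presentability does pass along the slice forgetful $\Box\CC^{\ast}-\Spc\downarrow\Y\to\Box\CC^{\ast}-\Spc$, since filtered colimits commute with the finite limit computing the hom sets in the over-category). By adjunction again, a map in the bi-slice has the right lifting property against $J'_{\X,\Y}$ if and only if $U$ of it has the right lifting property against $J'$, and ``has fibrant codomain'' is likewise detected by $U$; so $J'_{\X,\Y}$ detects fibrations with fibrant codomain. With this replacement your argument closes.
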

\vspace{0.1in}

The $K$-theory of a $\CC^{\ast}$-algebra or more generally of a cubical $\Box\CC^{\ast}$-space $\Z$ uses 
the homotopy theory of the retract category $(\Z,\Box\CC^{\ast}-\Spc,\Z)$ with objects triples 
$(\X,i\colon\Z\rightarrow\X,r\colon\X\rightarrow\Z)$ where $ri=\id$ and maps 
$f\colon(\X,i\colon\Z\rightarrow\X,r\colon\X\rightarrow\Z)\rightarrow
(\X,j\colon\Z\rightarrow\Y,s\colon\Y\rightarrow\Z)$ respecting the retractions and sections.
\vspace{0.1in}

We have the following variant of Lemma \ref{lemma:modelstructuresonoverundercategories}.

\begin{lemma}
\label{lemma:modelstructuresonretractivecategories}
For any of the pointwise, exact, matrix invariant and homotopy invariant model structures on 
$\Box\CC^{\ast}-\Spc$ the retract category $(\Z,\Box\CC^{\ast}-\Spc,\Z)$ of a cubical $\Box\CC^{\ast}$-space 
$\Z$ has a corresponding combinatorial and weakly finitely generated left proper cubical model structure where $f$ 
is declared a weak equivalence (respectively cofibration, fibration) in $(\Z,\Box\CC^{\ast}-\Spc,\Z)$ if and only 
if $\X\rightarrow\Y$ is a weak equivalences (respectively cofibration, fibration) in $\Box\CC^{\ast}-\Spc$.
\end{lemma}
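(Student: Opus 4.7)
The plan is to identify $(\Z,\Box\CC^{\ast}-\Spc,\Z)$ with the under-category $\id_{\Z}\downarrow(\Box\CC^{\ast}-\Spc\downarrow\Z)$: an object $(\X,i,r)$ corresponds to the map $i\colon(\id_{\Z}\colon\Z\to\Z)\to(r\colon\X\to\Z)$ of objects in $\Box\CC^{\ast}-\Spc\downarrow\Z$, and the composition condition $ri=\id_{\Z}$ is precisely what makes $i$ a map over $\Z$. First apply Lemma \ref{lemma:modelstructuresonundercategories} to equip $\Box\CC^{\ast}-\Spc\downarrow\Z$ with the combinatorial, weakly finitely generated, left proper model structure created by the forgetful functor. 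Then invoke \cite[Theorem 7.6.5]{Hirschhorn:Modelcategories} a second time, to the under-category of the object $\id_{\Z}$ in this slice, to produce the model structure on $(\Z,\Box\CC^{\ast}-\Spc,\Z)$ in which weak equivalences, cofibrations, and fibrations are detected on underlying maps in $\Box\CC^{\ast}-\Spc$.

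Combinatoriality, weak finite generation, and left properness descend formally. Generating (acyclic) cofibrations are obtained by taking coproducts with $\Z$ of the generators of $\Box\CC^{\ast}-\Spc$; the relevant finite presentability recorded in Example \ref{example:finitelypresentable} is preserved under this operation, and any set $J'$ detecting fibrancy in the underlying category lifts to such a set in the retract category because fibrations are detected on underlying objects. Pushouts in $(\Z,\Box\CC^{\ast}-\Spc,\Z)$ are computed as pushouts in $\Box\CC^{\ast}-\Spc$ equipped with the canonically induced section and retraction, so the defining square for left properness depends only on the underlying pushout square and the property is inherited.

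For the cubical enrichment, define the tensor with a cubical set $K$ by
\begin{equation*}
(\X,i,r)\otimes_{\Z}K\equiv (\X\otimes K)\coprod_{\Z\otimes K}\Z,
\end{equation*}
where $\Z\otimes K\to\Z$ is induced by the terminal map $K\to\Box^{0}$ and the isomorphism $\Z\otimes\Box^{0}\cong\Z$, with the evident section from the $\Z$-summand and retraction assembled from $r\otimes\id_{K}$ and $\id_{\Z}$; dually, set $(\X,i,r)^{K}\equiv \X^{K}\times_{\Z^{K}}\Z$. By a diagram chase the pushout-product in the retract category of a cofibration with a cofibration of cubical sets is identified with a pushout of the ambient pushout-product in $\Box\CC^{\ast}-\Spc$, which is an (acyclic) cofibration by the relevant monoidal structure result in the case at hand (Lemmas \ref{lemma:pprojectivemonoidal}, \ref{lemma:exactprojectivemonoidal}, \ref{lemma:matrixexactprojectivemonoidal}, or \ref{lemma:hiprojectivemonoidal2}).

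The main obstacle is the bookkeeping in this last diagram chase, which has to track the section and retraction data through the pushout defining $\otimes_{\Z}$ and relies on $\Z\otimes-$ preserving the relevant classes of cofibrations (Lemmas \ref{lemma:pprojectiveboxing} and \ref{lemma:exactprojectiveboxing}). Once this is in place, the pushout-product axiom for $(\Z,\Box\CC^{\ast}-\Spc,\Z)$ follows directly, completing the verification that the model structure is cubical.
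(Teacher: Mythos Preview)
Your proposal is correct and aligns with the paper's approach: the paper offers no detailed proof, treating the lemma as a variant of Lemma~\ref{lemma:modelstructuresonoverundercategories} and only spelling out the cubical tensor, cotensor, and function complex (the last via \cite[II.~2 Proposition~6]{Quillen:Homotopicalalgebra}), all of which match your definitions exactly.

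One small correction to your final paragraph: you invoke Lemmas~\ref{lemma:pprojectiveboxing} and~\ref{lemma:exactprojectiveboxing}, but those concern the monoidal product and require $\Z$ to be projective cofibrant, which is not assumed here. What you actually need is the cubical {\bf SM}7 axiom for the ambient category (Lemma~\ref{lemma:pprojectivecubical} and its localized analogs, via Theorems~\ref{theorem:exactprojective} and~\ref{theorem:matrixexactprojective}); this suffices on its own, since the underlying map of the retract-category pushout-product is a pushout of the ambient pushout-product, and cofibrations are stable under pushout. No hypothesis on $\Z$ is required.
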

\vspace{0.1in}

Since it is perhaps not completely obvious we define the cubical structure of $(\Z,\Box\CC^{\ast}-\Spc,\Z)$.
If $K$ is a cubical set the tensor  
\begin{equation*}
(\X,i\colon\Z\rightarrow\X,r\colon\X\rightarrow\Z)\otimes K
\end{equation*}
is defined as the pushout of the diagram
\begin{equation*}
\xymatrix{
\Z\cong
\Z\otimes\Box^{0} &
\Z\otimes K \ar[l] \ar[r] &
\X\otimes K, }
\end{equation*}
while the cotensor 
\begin{equation*}
(\X,i\colon\Z\rightarrow\X,r\colon\X\rightarrow\Z)^{K}
\end{equation*}
is defined as the pullback of the diagram
\begin{equation*}
\xymatrix{
\Z\cong
\Z^{\Box^{0}} \ar[r] &
\Z^{K}  &
\X^{K}. \ar[l] }
\end{equation*}
The cubical function complex 
\begin{equation*}
{\hom}_{(\Z,\Box\CC^{\ast}-\Spc,\Z)}
\bigl((\X,i\colon\Z\rightarrow\X,r\colon\X\rightarrow\Z),(\X',i'\colon\Z\rightarrow\X',r'\colon\X'\rightarrow\Z)\bigr)
\end{equation*}
of $\X$ and $\X'$ is the subcomplex of ${\hom}_{\Box\CC^{\ast}-\Spc}(\X,\X')$ comprising maps which respect the retraction and section 
\cite[II.~2 Proposition 6]{Quillen:Homotopicalalgebra}.
\begin{remark}
We leave implicit the formulations of the corresponding equivariant results in this section.
Several functoriality questions arise when the groups vary.
\end{remark}

\newpage
\section{Stable $\CC^{\ast}$-homotopy theory}
\label{section:stableCstarhomotopytheory}
Stable homotopy theory in the now baroque formulation of spectra is bootstrapped to represent all 
generalized homology and cohomology theories for topological spaces.
We are interested in an analogous theory for cubical $\CC^{\ast}$-spaces which captures suitably 
defined cohomology and homology theories in one snap maneuver.
The mixing of $\CC^{\ast}$-algebras and cubical sets in $\Box\CC^{\ast}-\Spc$ allows us to vary 
the suspension coordinate in a manner which is out of reach in the more confined settings of 
$\CC^{\ast}-\Alg$ and $\Box\Set$.
Indeed the ``circle'' $C$ we will be using is the tensor product $S^{1}\otimes C_0(\R)$ of the standard 
cubical set model $\Box^{1}/\partial\Box^{1}$ for the topological circle and the $\CC^{\ast}$-algebra 
of complex-valued continuous functions on the real numbers which vanish at infinity.
In the modern formulation of stable homotopy theory the use of symmetric spectra obviate ordinary 
spectra by solving the problem of finding a monoidal model structure which is Quillen equivalent to 
the stable model structure.
To set up the stable $\CC^{\ast}$-homotopy theory we consider symmetric spectra of pointed cubical 
$\CC^{\ast}$-spaces with respect to $C$.
A great deal of the results can be proved by referring to the works of Hovey \cite{Hovey:spectra} and 
Jardine \cite{Jardine:MSS},
a strategy we will follow to a large extend. 
Another valuable viewpoint which offers considerable flexibility and opens up some new subjects to 
explore is to consider model structures on enriched functors from the subcategory 
${\bf fp}\Box\CC^{\ast}-\Spc$ of finitely presentable cubical $\CC^{\ast}$-spaces into 
$\Box\CC^{\ast}-\Spc$. 
This falls into the realms of \cite{DRO:general}.

\subsection{$\CC^{\ast}$-spectra}
\label{subsection:spectra}
We start out by adapting the definition of spectra to our setting.
\begin{definition}
\label{definition:Cstarspectra}
The category $\Spt_{C}$ of cubical $\CC^{\ast}$-spectra consists of sequences 
$\E\equiv(\E_{n})_{n\geq 0}$ of pointed cubical $\CC^{\ast}$-spaces equipped with structure maps 
$\sigma_{n}^{\E}\colon\Sigma_{C}\E_{n}\rightarrow\E_{n+1}$ where $\Sigma_{C}\equiv C\otimes-$ is 
the suspension functor.
A map $f\colon\E\rightarrow\F$ of cubical $\CC^{\ast}$-spectra consists of compatible maps of pointed 
cubical $\CC^{\ast}$-spaces $f_{n}\colon\E_{n}\rightarrow\F_{n}$ in the sense that the diagrams
\begin{equation*}
\xymatrix{
\Sigma_{C}\E_{n}\ar[d]_-{\Sigma_{C}\otimes f_{n}}\ar[r]^-{\sigma_{n}^{\E}} & \E_{n+1}\ar[d]^-{f_{n+1}} \\
\Sigma_{C}\F_{n}\ar[r]^-{\sigma_{n}^{\F}} & \F_{n+1} }
\end{equation*}
commute for all $n\geq 0$.
\end{definition}
What follows is a list of examples of cubical $\CC^{\ast}$-spectra we will be working with.
\begin{example}
The suspension cubical $\CC^{\ast}$-spectrum of a $\CC^{\ast}$-space $\X$ is given by 
\begin{equation*}
\xymatrix{
\Sigma^{\infty}_{C}\X\equiv\{n\ar@{|->}[r] & C^{\otimes n}\otimes\X\}}
\end{equation*}
with structure maps the canonical isomorphisms 
$\Sigma_{C}C^{\otimes n}\otimes\X\rightarrow C^{\otimes (n+1)}\otimes\X$.
The sphere spectrum is the suspension cubical $\CC^{\ast}$-spectrum $\Sigma^{\infty}_{C}\C$ 
of the complex numbers. 
\end{example}
\begin{example}
If $\E$ is a cubical $\CC^{\ast}$-spectrum and $\X$ is a pointed cubical $\CC^{\ast}$-space,
there is a cubical $\CC^{\ast}$-spectrum $\E\wedge\X$ with $n$th level $\E_{n}\otimes\X$ and 
structure maps $\sigma_{n}^{\E}\otimes\X$.
The suspension $\E\wedge C$ of $\E$ is left adjoint to the $C$-loops cubical $\CC^{\ast}$-spectrum 
$\Omega_{C}\E$ of $\E$ defined by setting 
$(\Omega_{C}\E)_{n}\equiv\Omega_{C}(\E_{n})=\underline{\Hom}(C,\E_{n})$
and with structure maps 
$\sigma_{n}^{\Omega_{C}\E}\colon C\otimes\Omega_{C}(\E_{n})\rightarrow\Omega_{C}(\E_{n+1})$
adjoint to the composite 
$C\otimes\Omega_{C}(\E_{n})\otimes C\rightarrow C\otimes\E_{n}\rightarrow\E_{n+1}$.
\end{example}
\begin{example}
The fake suspension $\Sigma_{C}\E$ of $\E$ has $n$th level $C\otimes\E_{n}$ and structure maps 
$\sigma_{n}^{\Sigma_{C}\E}\equiv C\otimes\sigma_{n}^{\E}$.
We note that $\Sigma_{C}\colon\Spt_{C}\rightarrow\Spt_{C}$ is left adjoint to the fake $C$-loops functor 
$\Omega_{C}^{\ell}$ defined by $\Omega_{C}^{\ell}(\E)\equiv\Omega_{C}(\E_{n})$ and with structure maps 
adjoint to the maps 
$\Omega_{C}(\widetilde\sigma_{n}^{\E})\colon\Omega_{C}(\E_{n})\rightarrow\Omega_{C}^{2}(\E_{n+1})$. 
It is important to note that the adjoint of the structure map $\sigma_{n}^{\Omega_{C}\E}$ differs from 
$\Omega_{C}(\widetilde\sigma_{n}^{\E})$ by a twist of loop factors.
In particular, 
the fake $C$-loops functor is not isomorphic to the $C$-loops functor.
\end{example}
\begin{example}
If $\X$ is a pointed cubical $\CC^{\ast}$-space, 
denote by $\hom_{\Spt_{C}}(\X,\E)$ the cubical $\CC^{\ast}$-spectrum 
$\hom_{\Spt_{C}}(K,\E)_{n}\equiv\hom_{\Box\CC^{\ast}-\Spc_{0}}(K,\E_{n})$ 
with structure maps adjoint to the composite maps 
$C\otimes\hom_{\Box\CC^{\ast}-\Spc_{0}}(K,\E_{n})\otimes K\rightarrow C\otimes\E_{n}\rightarrow\E_{n+1}$.
With these definitions there is a natural bijection
\begin{equation*}
\Spt_{C}(\E\wedge K,\F)=
\Spt_{C}\bigl(\E,\hom_{\Spt_{C}}(K,\F)\bigr).
\end{equation*}
The function complex $\hom_{\Spt_{C}}(\E,\F)$ of cubical $\CC^{\ast}$-spectra $\E$ and $\F$ are defined 
in level $n$ as all maps $\E\wedge\Box_{+}^{n}\rightarrow\F$ of cubical $\CC^{\ast}$-spectra. 
\end{example}
\begin{example}
\label{example:Cstarshift}
The $m$th shift $\E[m]$ of a cubical $\CC^{\ast}$-spectrum $\E$ is defined by 
\begin{eqnarray*}
\E[m]_{n}\equiv 
\begin{cases} 
\E_{m+n} & m+n\geq 0 \\ 
\ast & m+n<0.
\end{cases}
\end{eqnarray*}
The structure maps are reindexed accordingly.
\end{example}
\begin{example}
\label{example:layerfiltration}
The layer filtration of $\E$ is obtained from the cubical $\CC^{\ast}$-spectra $L_{m}\E$
defined by 
\begin{eqnarray*}
(L_{m}\E)_{n}\equiv
\begin{cases}
\E_{n} & n\leq m \\
C^{\otimes n-m}\otimes\E_{m} & n>m.
\end{cases}
\end{eqnarray*}
There is a canonical map $\Sigma^{\infty}_{C}\E_{m}[-m]\rightarrow L_{m}\E$ 
and $L_{m+1}\E$ is the pushout of the diagram:
\begin{equation*}
\xymatrix{
\Sigma^{\infty}_{C}(\E_{m+1})[-m-1] &
\Sigma^{\infty}_{C}(C\otimes\E_{m})[-m-1]
\ar[l]\ar[r] &
L_{m}\E }
\end{equation*}
Observe that $\E$ and ${\colim}\,\,L_{m}\E$ are isomorphic.
\end{example}

A map $f\colon\E\rightarrow\F$ is a level weak equivalence (respectively fibration) if 
$f_{n}\colon\E_{n}\rightarrow\F_{n}$ is a $\CC^{\ast}$-weak equivalence 
(respectively projective $\CC^{\ast}$-fibration).
And $f$ is a projective cofibration if $f_{0}$ and the maps
\begin{equation*}
\xymatrix{
\E_{n+1}\coprod_{\Sigma_{C}\E_{n}}\Sigma_{C}\F_{n}\ar[r] & \F_{n+1} } 
\end{equation*}
are projective cofibrations for all $n\geq 0$.
By the results in \cite[\S1]{Hovey:spectra} we have:
\begin{proposition}
\label{proposition:strictmodelstructures}
The level weak equivalences, projective cofibrations and level fibrations furnish a combinatorial, 
cubical and left proper Quillen equivalent model structure on $\Spt_{C}$.
\end{proposition}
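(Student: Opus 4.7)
The plan is to invoke the general machinery of Hovey \cite{Hovey:spectra} for constructing level model structures on spectrum categories, applied with base $\Box\CC^{\ast}-\Spc_{0}$ equipped with the homotopy invariant projective model structure and suspension endofunctor $\Sigma_{C}=C\otimes-$. All the hypotheses needed to run that machinery have been verified earlier in the paper: the base is combinatorial and cubical (pointed analogue of the main result of the homotopy invariant section), left proper (pointed analogue of Lemma \ref{lemma:projectiveexactleftproper} together with the localization), and $\Sigma_{C}$ is a left Quillen endofunctor since $C=S^{1}\otimes C_{0}(\R)$ is projective cofibrant and the model structure is monoidal by Proposition \ref{proposition:hiprojectivemonoidal}.

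First I would identify explicit generating sets. Let $F_{m}\colon\Box\CC^{\ast}-\Spc_{0}\rightarrow\Spt_{C}$ be the left adjoint to the evaluation functor $\Ev_{m}$; concretely $F_{m}\X$ has $\ast$ below level $m$ and $C^{\otimes(n-m)}\otimes\X$ at level $n\geq m$ with the obvious structure maps. Set
\begin{equation*}
I_{\Spt_{C}}\equiv\{F_{m}(i) \mid i\in I_{\Box\CC^{\ast}-\Spc_{0}},\,m\geq 0\},\qquad
J_{\Spt_{C}}\equiv\{F_{m}(j) \mid j\in J^{\ast}_{\Box\CC^{\ast}-\Spc_{0}},\,m\geq 0\}.
\end{equation*}
Since $\Ev_{m}$ preserves and detects level weak equivalences and level fibrations, adjointness yields that a map in $\Spt_{C}$ is a level acyclic fibration (resp.\ level fibration) iff it has the right lifting property with respect to $I_{\Spt_{C}}$ (resp.\ $J_{\Spt_{C}}$), and the characterization of projective cofibrations in the statement is exactly the condition that $f$ lies in $I_{\Spt_{C}}$-cof, by a standard skeletal induction using the pushout squares defining $L_{m}\E$ in Example \ref{example:layerfiltration}.

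Next I would verify the model axioms {\bf CM} 1--{\bf CM} 5. Bicompleteness holds since limits and colimits in $\Spt_{C}$ are formed levelwise. Two-out-of-three, retracts and the lifting axiom all reduce levelwise to the corresponding statements in $\Box\CC^{\ast}-\Spc_{0}$. The factorization axiom is produced by the small object argument applied to $I_{\Spt_{C}}$ and $J_{\Spt_{C}}$; smallness of the domains is inherited levelwise from $\Box\CC^{\ast}-\Spc_{0}$ via Lemma \ref{lemma:finitelypresentable}. This also shows combinatoriality: $\Spt_{C}$ is locally presentable as a category of sequences with coherence data over a locally presentable category, and the cofibrations and acyclic cofibrations are generated by the two small sets above. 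For left properness, pushouts along projective cofibrations are computed levelwise and level cofibrations are in particular levelwise cofibrations in $\Box\CC^{\ast}-\Spc_{0}$, so left properness transfers from the base by Lemma \ref{lemma:projectiveexactleftproper} (pointed version). The cubical structure comes from the levelwise tensor $(\E\otimes K)_{n}\equiv\E_{n}\otimes K$ with structure maps $\sigma_{n}^{\E}\otimes K$; the pushout-product axiom for this tensor against $\partial\Box^{n}\subset\Box^{n}$ and $\sqcap^{n}_{(\alpha,i)}\subset\Box^{n}$ reduces to the cubical pushout-product axiom for the base established in Lemma \ref{lemma:hiprojectivemonoidal2}, checked one generator at a time using the isomorphism $F_{m}(\X)\otimes K\cong F_{m}(\X\otimes K)$.

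Finally, ``Quillen equivalent'' refers to Quillen equivalence of the projective level model structure with the companion injective level model structure (monomorphisms as cofibrations, level weak equivalences as weak equivalences), defined analogously to Theorem \ref{theorem:injectiveobjectwise}, and with the simplicial analogue of $\Spt_{C}$; both equivalences are induced by identity respectively the singular/realization adjunction, and since the weak equivalences agree on the nose and are defined levelwise, the Quillen equivalence follows immediately from the unstable Quillen equivalences (Lemmas \ref{lemma:projectiveBoxDelta}, \ref{lemma:homotopyinvariantBoxDelta}, and the injective/projective comparison of Proposition \ref{proposition:homotopyinvariantprojectiveinjective}) applied levelwise. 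I do not expect any single step here to be a serious obstacle; the only subtle point is that the nontrivial structure maps $\sigma_{n}^{\E}$ force one to verify that $I_{\Spt_{C}}$-cell complexes really do agree with the iterated-pushout description of projective cofibrations, which is where Example \ref{example:layerfiltration} and the standard transfinite induction from \cite[\S1]{Hovey:spectra} are genuinely used.
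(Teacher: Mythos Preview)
Your approach is correct and is exactly what the paper does: it simply cites \cite[\S1]{Hovey:spectra} without further detail, so your elaboration is a faithful unpacking of that reference. One small technical slip: to generate \emph{all} level fibrations (not only those with level fibrant codomain) you must apply $F_{m}$ to a full set $J$ of generating acyclic cofibrations for the homotopy invariant projective model, not to the smaller set $J^{\ast}_{\Box\CC^{\ast}-\Spc_{0}}$, which by Corollary~\ref{homotopyweaklyfinitelygenerated} only detects fibrations with fibrant target; the localized model is combinatorial so such a $J$ exists, and the rest of your argument goes through unchanged.
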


Our next objective is to define the stable model structure as a Bousfield localization of the 
level model structure. 
The fibrant objects in the localized model structure have been apprehended as $\Omega$-spectra 
since the days of yore.
In our setting this amounts to the following definition.
\begin{definition}
\label{definition:Cstarstablefibrant}
A cubical $\CC^{\ast}$-spectrum $\Z$ is stably fibrant if it is level fibrant and all the adjoints 
$\widetilde\sigma_{n}^{\Z}\colon\Z_{n}\rightarrow\underline{\Hom}(C,\Z_{n+1})$ of its structure maps 
are $\CC^{\ast}$-weak equivalences.
\end{definition}

The stably fibrant cubical $\CC^{\ast}$-spectra determine the stable weak equivalences of cubical 
$\CC^{\ast}$-spectra.
Stable fibrations are maps having the right lifting property with respect to all maps which are 
projective cofibrations and stable weak equivalences.
\begin{definition}
\label{definition:Cstarstableweakequivalence}
A map $f\colon\E\rightarrow\F$ of cubical $\CC^{\ast}$-spectra is a stable weak equivalence if for 
every stably fibrant $\Z$ taking a cofibrant replacement $\QQ f\colon\QQ\E\rightarrow\QQ\F$ of $f$
in the level model structure on $\Spt_{C}$ yields a weak equivalence of pointed cubical sets
\begin{equation*}
\xymatrix{
\hom_{\Spt_{C}}(\QQ f,\Z)\colon
\hom_{\Spt_{C}}(\QQ\F,\Z)\ar[r] & \hom_{\Spt_{C}}(\QQ\E,\Z).}
\end{equation*}
\end{definition}
\begin{example}
\label{example:stableweakequivalence}
The map $\Sigma^{\infty}_{C}\E_{m}[-m]\rightarrow L_{m}\E$ mentioned in 
Example \ref{example:layerfiltration} is a stable weak equivalence.
\end{example}
By specializing the collection of results in \cite[\S3]{Hovey:spectra} to our setting we have:
\begin{theorem}
\label{theorem:Cstarstablemodelstructure}
The classes of stable weak equivalences and projective cofibrations define a combinatorial, cubical 
and left proper model structure on $\Spt_{C}$.
Let $\SH^{\ast}$ denote the associated stable homotopy category of $\CC^{\ast}$-algebras. 
\end{theorem}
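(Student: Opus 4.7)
The plan is to obtain the stable model structure as a left Bousfield localization of the level model structure of Proposition \ref{proposition:strictmodelstructures}, following the stabilization recipe of Hovey \cite{Hovey:spectra}, and then transfer formal properties (combinatoriality, cubicality, left properness) through the localization. By Proposition \ref{proposition:strictmodelstructures} the level projective model structure on $\Spt_{C}$ is combinatorial, cubical and left proper, so Jeff Smith's theorem (or the cellular model category machinery in \cite{Hirschhorn:Modelcategories}, which applies here since the pointed analog of Proposition \ref{proposition:cellular} yields cellularity level-wise) guarantees that the Bousfield localization with respect to any set of maps exists and is again combinatorial and left proper.

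First I would identify the correct localizing set. Concretely, let $\mathcal{L}$ consist of the maps
\begin{equation*}
\Sigma^{\infty}_{C}(C\otimes\X_{+})[-n-1]\longrightarrow\Sigma^{\infty}_{C}\X_{+}[-n]
\end{equation*}
adjoint to the identity of $C\otimes\X$, where $\X$ ranges over the (co)domains of the generating projective cofibrations of $\Box\CC^{\ast}-\Spc_{0}$ and $n\geq 0$. The key identification is that a level fibrant $\Z$ is $\mathcal{L}$-local if and only if for every such $\X$ and every $n\geq 0$ the canonical map
\begin{equation*}
{\hom}_{\Box\CC^{\ast}-\Spc_{0}}(\X,\Z_{n})\longrightarrow{\hom}_{\Box\CC^{\ast}-\Spc_{0}}\bigl(\X,\underline{\Hom}(C,\Z_{n+1})\bigr)
\end{equation*}
is a weak equivalence of cubical sets, which by Lemma \ref{lemma:projectiveCstarweakequivalencebetweenfibrantobjects} and Lemma \ref{lemma:pointedprojectiveinternalhomprojectiveCstarfibrant} is equivalent to $\widetilde\sigma_{n}^{\Z}$ being a $\CC^{\ast}$-weak equivalence for all $n$; that is, $\Z$ is stably fibrant in the sense of Definition \ref{definition:Cstarstablefibrant}. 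Since the cofibrations in a Bousfield localization coincide with those of the underlying model structure and the weak equivalences are determined by the cofibrations together with the fibrant objects, this identifies the resulting weak equivalences with the stable weak equivalences of Definition \ref{definition:Cstarstableweakequivalence} (after passing to cofibrant replacement, as in the proof of Theorem \ref{theorem:exactprojective}).

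Once the localization is in hand, the stated properties follow by inheritance. Combinatoriality transfers since Bousfield localization of a combinatorial model structure is combinatorial. Left properness transfers because left properness is preserved under left Bousfield localization of left proper model structures. The cubical structure is preserved because the level model structure is cubical and the localizing maps are stable under smashing with cofibrations of pointed cubical sets, so the pushout-product axiom with respect to cofibrations of $\Box\Set_{\ast}$ (cf.~Lemma \ref{lemma:hiprojectivemonoidal2}) passes through the localization by a formal check on the generating (trivial) cofibrations, analogous to Proposition \ref{proposition:almostfinitelygeneratedpreservedunderlocalization}.

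The main obstacle, and the step where I would spend the most care, is the identification of the $\mathcal{L}$-local objects with the stably fibrant spectra and the consequent identification of $\mathcal{L}$-local equivalences with the stable equivalences defined via $\hom_{\Spt_{C}}(\QQ f,\Z)$. Because the ambient category is built out of cubical $\CC^{\ast}$-spaces rather than plain cubical sets, one has to check that the homotopy function complex used in Hirschhorn's localization machinery is naturally equivalent to the pointed cubical function complex $\hom_{\Spt_{C}}(-,-)$ on cofibrant-fibrant objects; this uses that the level model structure is cubical (Proposition \ref{proposition:strictmodelstructures}) together with Proposition \ref{proposition:homotopyclassesofmaps} and Remark \ref{remark:cubicallocalization} applied level-wise. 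Everything else (saturation, closure under retracts, the two-out-of-three property, functorial factorizations) is automatic from the localization theorem.
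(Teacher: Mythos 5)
Your proposal is correct and takes essentially the same route as the paper: the paper's entire ``proof'' is the preceding sentence citing \cite[\S3]{Hovey:spectra}, and that reference constructs the stable model structure precisely as the left Bousfield localization of the level model structure at the set of maps $\Fr_{n+1}(C\otimes\X)\to\Fr_{n}\X$ (equivalently, your $\Sigma^{\infty}_{C}(C\otimes\X)[-n-1]\to\Sigma^{\infty}_{C}\X[-n]$), with the identification of the local objects as $\Omega$-spectra being Hovey's Theorem~3.4 and the preservation of properness and the enrichment under localization handled just as you indicate. One small notational slip: the $\X$ in your localizing set already lives in $\Box\CC^{\ast}-\Spc_{0}$, so the added subscript $(-)_{+}$ is spurious, but this does not affect the argument.
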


Denote by $\iota_{\E}\colon\E\rightarrow\Theta\E\equiv\Omega_{C}^{\ell}\E[1]$ the natural map where 
$(\Theta\E)_{n}\equiv\underline{\Hom}(C,\E_{n+1})$ and
$\sigma_{n}^{\Theta\E}\equiv\underline{\Hom}(C,\widetilde\sigma_{n}^{E})$.
The stabilization $\Theta^{\infty}\E$ of $\E$ is the colimit of the diagram:
\begin{equation*}
\xymatrix{
\E\ar[r]^-{\iota_{\E}} & 
\Theta\E \ar[r]^-{\Theta\iota_{\E}} &
\Theta^{2}\E\ar[r]^-{\Theta^{2}\iota_{\E}} & \cdots}
\end{equation*}
By \cite[Proposition 4.6]{Hovey:spectra} we get the following result because $\underline{\Hom}(C,-)$ 
preserves sequential colimits according to Example \ref{example:internalhomfinitelypresentable} 
and the projective homotopy invariant model structure on $\Box\CC^{\ast}-\Spc_{0}$ is almost 
finitely generated.
\begin{lemma}
\label{lemma:stabilizationstablefibrant}
The stabilization of every level fibrant cubical $\CC^{\ast}$-spectrum is stably fibrant.
\end{lemma}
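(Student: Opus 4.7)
The plan is to verify directly the two conditions of stable fibrancy for $\Theta^{\infty}\E$ from Definition \ref{definition:Cstarstablefibrant}: level fibrancy, and that every adjoint structure map is a $\CC^{\ast}$-weak equivalence.

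First I would establish level fibrancy. By iterated application of the tensor-hom adjunction, each term $(\Theta^{k}\E)_{n}$ is isomorphic to $\underline{\Hom}(C^{\otimes k},\E_{n+k})$. Since $C = S^{1}\otimes C_{0}(\R)$ is projective cofibrant (pointed analog of Lemma \ref{lemma:projectivecofibrant}), so is each tensor power $C^{\otimes k}$, and $\E_{n+k}$ is $\CC^{\ast}$-projective fibrant by hypothesis; hence Lemma \ref{lemma:pointedprojectiveinternalhomprojectiveCstarfibrant} gives that $(\Theta^{k}\E)_{n}$ is $\CC^{\ast}$-projective fibrant for every $k$ and $n$. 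Since the pointed homotopy invariant projective model structure is weakly finitely generated by Corollary \ref{homotopyweaklyfinitelygenerated}, fibrant objects are closed under filtered colimits by Lemma \ref{lemma:afgmodelcategories}, so $(\Theta^{\infty}\E)_{n} = \colim_{k}(\Theta^{k}\E)_{n}$ is $\CC^{\ast}$-projective fibrant.

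Next I would verify the $\Omega$-spectrum condition. The crucial input is that $\underline{\Hom}(C,-)$ is a finitely presentable endofunctor, by Example \ref{example:internalhomfinitelypresentable} together with Corollary \ref{corollary:internalhomfinitelypresentable}, and hence commutes with the sequential colimit defining $\Theta^{\infty}\E$. This yields
\begin{equation*}
\underline{\Hom}\bigl(C,(\Theta^{\infty}\E)_{n+1}\bigr)
= \colim_{k}\underline{\Hom}\bigl(C,(\Theta^{k}\E)_{n+1}\bigr)
= \colim_{k}(\Theta^{k+1}\E)_{n}.
\end{equation*}
Under this identification, the adjoint structure map $\widetilde{\sigma}_{n}^{\Theta^{\infty}\E}$ becomes the shift comparison map from $\colim_{k}(\Theta^{k}\E)_{n}$ to $\colim_{k}(\Theta^{k+1}\E)_{n}$ induced by the transition maps of the defining tower, and this is an isomorphism of filtered colimits, in particular a $\CC^{\ast}$-weak equivalence.

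The principal obstacle is the careful bookkeeping of structure maps and their adjoints through the iterated $\Theta$ construction, in view of the warning after the introduction of the fake $C$-loops functor that $\Omega_{C}^{\ell}$ and $\Omega_{C}$ differ by a twist of loop coordinates. One must verify that after commuting $\underline{\Hom}(C,-)$ past the sequential colimit the resulting map is genuinely the colimit shift induced by the tower, rather than a twisted variant. Once this coherent identification is set up, both conditions reduce to the two formal ingredients already recorded, namely that $\underline{\Hom}(C,-)$ preserves filtered colimits and that fibrant objects in a weakly finitely generated model category are closed under filtered colimits.
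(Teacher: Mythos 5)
Your proposal is correct and amounts to unpacking the paper's one-line citation of \cite[Proposition 4.6]{Hovey:spectra}: the paper invokes that result directly, supplying exactly the two inputs you isolate, namely that $\underline{\Hom}(C,-)$ preserves sequential colimits (Example \ref{example:internalhomfinitelypresentable}) and that the pointed projective homotopy invariant model structure is almost finitely generated (Corollary \ref{homotopyweaklyfinitelygenerated} together with Lemma \ref{lemma:afgmodelcategories}). Your direct verification reproduces Hovey's argument rather than outsourcing it, but the decomposition of the problem and the two formal ingredients are the same.

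On the one concern you flag, the twist, the resolution is already latent in the definitions: $\Theta = \Omega_{C}^{\ell}(-)[1]$ is built from the \emph{fake} $C$-loops functor, not the true loop functor $\Omega_{C}$, and the warning in the text about the discrepancy by a twist concerns only $\Omega_{C}$. For the fake loop functor, one verifies by induction that the level-$n$ adjoint structure map of $\Theta^{k}\E$ is $\underline{\Hom}(C^{\otimes k},\widetilde{\sigma}_{n+k}^{\E})$, and this is literally the same map as the level-$n$ component of the tower transition map $\Theta^{k}\iota_{\E}\colon\Theta^{k}\E\to\Theta^{k+1}\E$. Hence, after commuting $\underline{\Hom}(C,-)$ past the sequential colimit, the adjoint structure map of $\Theta^{\infty}\E$ is genuinely the shift-by-one comparison between two cofinal filtered colimits and thus an isomorphism, with no residual twist to account for.
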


Now let $f_{\E}\colon\E\rightarrow\RRR\E$ be a natural level fibrant replacement for $\E$ in $\Spt_{C}$, 
meaning that $f_{\E}$ a level weak equivalence and projective cofibration and $\RRR\E$ is level fibrant.  
Lemma \ref{lemma:stabilizationstablefibrant} motivates the next definition.
\begin{definition}
\label{definition:stablefibrantreplacement}
Let $\widetilde\iota_{\E}\colon\E\rightarrow\Theta^{\infty}\RRR\E$ be the composite of $f_{\E}$ and 
$\iota_{\RRR\E}^{\infty}\colon\RRR\E\rightarrow\Theta^{\infty}\RRR\E$.
\end{definition}

We have the following convenient characterization of stable weak equivalences given by 
\cite[Theorem 4.12]{Hovey:spectra} and a corollary which shows that certain stable maps 
can be approximated by unstable maps \cite[Corollary 4.13]{Hovey:spectra}.
\begin{theorem}
\label{theorem:stableweakequivalencesandstabilization}
A map $f\colon\E\rightarrow\F$ is a stable weak equivalence if and only if the induced map
$\widetilde\iota_{\E}(f)\colon\Theta^{\infty}\RRR\E\rightarrow\Theta^{\infty}\RRR\F$ is a 
level weak equivalence.
\end{theorem}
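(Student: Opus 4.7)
The plan is to decompose the claim into two assertions: first, that $\widetilde\iota_{\E}\colon\E\rightarrow\Theta^{\infty}\RRR\E$ is always a stable weak equivalence, and second, that a stable weak equivalence between stably fibrant cubical $\CC^{\ast}$-spectra is already a level weak equivalence. Granting both, the theorem follows by naturality applied to the square with horizontal maps $\widetilde\iota_{\E}$ and $\widetilde\iota_{\F}$: by two-out-of-three $f$ is a stable weak equivalence iff $\Theta^{\infty}\RRR f$ is so, and the latter occurs exactly when $\Theta^{\infty}\RRR f$ is a level weak equivalence by the second assertion combined with Lemma \ref{lemma:stabilizationstablefibrant}.

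To obtain the first assertion I would first establish that $\iota_{\E}\colon\E\rightarrow\Theta\E$ is a stable weak equivalence whenever $\E$ is level fibrant. Given a stably fibrant $\Z$, one must show that the induced map on function complexes $\hom_{\Spt_{C}}(\QQ\Theta\E,\Z)\rightarrow\hom_{\Spt_{C}}(\QQ\E,\Z)$ is a weak equivalence of pointed cubical sets. Using the layer filtration of Example \ref{example:layerfiltration} together with Example \ref{example:stableweakequivalence}, this reduces, via closure of stable weak equivalences under sequential colimits along projective cofibrations, to verifying the assertion on shifted suspension spectra $\Sigma_{C}^{\infty}\X[-m]$, where the statement unwinds to saying that the adjoint structure maps $\widetilde\sigma_{n}^{\Z}$ of Definition \ref{definition:Cstarstablefibrant} are $\CC^{\ast}$-weak equivalences, which is precisely the stable fibrancy hypothesis on $\Z$. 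To pass from $\iota_{\E}$ to the transfinite composite $\iota_{\RRR\E}^{\infty}$, I would invoke Lemma \ref{lemma:afgmodelcategories} together with the finite presentability of $\underline{\Hom}(C,-)$ from Example \ref{example:internalhomfinitelypresentable}: these guarantee that $\Theta$ preserves level equivalences between level fibrant objects and that the sequential colimit defining $\Theta^{\infty}$ commutes, up to stable weak equivalence, with the formation of function complexes into stably fibrant targets. Precomposition with the level acyclic cofibration $f_{\E}$ then yields that $\widetilde\iota_{\E}$ is a stable weak equivalence. For the second assertion, stably fibrant objects are exactly the fibrant objects of the stable model structure in Theorem \ref{theorem:Cstarstablemodelstructure}, and a standard Bousfield localization argument then forces any stable weak equivalence between them to be a weak equivalence in the underlying level model structure, i.e.~a level weak equivalence.

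The principal technical obstacle is the first step: identifying $\iota_{\E}$ as a stable weak equivalence for level fibrant $\E$ demands care in disentangling the fake loop functor $\Omega_{C}^{\ell}$ from the genuine loop functor $\Omega_{C}$ and in aligning the various adjoint structure maps in the presence of the twist of loop coordinates which distinguishes them. Once this alignment is in place the remainder of the argument is largely formal and mirrors the Bousfield--Friedlander style treatment of spectra in \cite{Hovey:spectra}, now carried out in the cubical $\CC^{\ast}$-setting.
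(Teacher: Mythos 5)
Your decomposition — (a) $\widetilde\iota_{\E}$ is always a stable weak equivalence, and (b) a stable weak equivalence between stably fibrant spectra is a level weak equivalence, combined via two-out-of-three and Lemma \ref{lemma:stabilizationstablefibrant} — is exactly the structure of \cite[Theorem 4.12]{Hovey:spectra}, which the paper simply cites for this result. Assertion (b) is indeed formal Bousfield localization theory and your deduction of the theorem from (a) and (b) is fine.

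The genuine gap is in (a), at precisely the point you yourself identify as the principal obstacle and then defer. You claim that on a shift desuspension spectrum $\Fr_{m}\X=\Sigma_{C}^{\infty}\X[-m]$ the statement ``unwinds to saying that the adjoint structure maps $\widetilde\sigma_{n}^{\Z}$ are $\CC^{\ast}$-weak equivalences.'' That unwinding is not an adjunction formality. The spectrum $\Theta\Fr_{m}\X=\Omega_{C}^{\ell}(\Fr_{m}\X)[1]$ is \emph{not} itself a shift desuspension spectrum: its $n$th level is $\underline{\Hom}\bigl(C,C^{\otimes(n+1-m)}\otimes\X\bigr)$ for $n\geq m-1$, and its structure maps are images under $\underline{\Hom}(C,-)$ of the adjoint structure maps of $(\Fr_{m}\X)[1]$ rather than free ones. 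Consequently $\hom_{\Spt_{C}}(\Theta\Fr_{m}\X,\Z)$ is not computed by a single application of $\Fr_{m}\dashv\Ev_{m}$; one must analyze the limit over levels imposed by structure-map compatibility, and only after that analysis do the maps $\widetilde\sigma_{n}^{\Z}$ and the twist separating $\Omega_{C}^{\ell}$ from $\Omega_{C}$ actually appear. That analysis is the mathematical content that \cite{Hovey:spectra} supplies and that your sketch presupposes. A secondary gap lies in the layer-filtration reduction itself: passing from $\iota_{\E}$ to $\iota_{\Fr_{m}\E_{m}}$ needs $\Theta$ to carry the stable weak equivalence $\Fr_{m}\E_{m}\rightarrow L_{m}\E$ of Example \ref{example:stableweakequivalence} to a stable weak equivalence, and since $\Omega_{C}^{\ell}$ is not evidently a right Quillen endofunctor for the stable model structure this does not follow from closure under sequential colimits alone but requires its own cofinality argument comparing the two spectra in levels $\geq m$.
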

\begin{corollary}
\label{corollary:finitelypresentablecofibrantcolimit}
If $\X$ is a finitely presentable cofibrant cubical $\CC^{\ast}$-space and $\F$ is level fibrant, 
then there is a canonical isomorphism
\begin{equation}
\label{equation:SHcolimitH}
\SH^{\ast}(\Sigma^{\infty}_{C}\X,\F)=
\underset{n}{\colim}\,\,\HH^{\ast}(\X,\Omega_{C}^{n}\F_{n}).
\end{equation}
In addition, 
if $\F$ is stably fibrant so that all of the transition maps in the directed system in (\ref{equation:SHcolimitH}) 
are isomorphisms, 
then there is a canonical isomorphism
\begin{equation*}
\SH^{\ast}(\Sigma^{\infty}_{C}\X,\F)=
\HH^{\ast}(\X,\F_{0}).
\end{equation*}
\end{corollary}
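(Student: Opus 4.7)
My plan is to build a stably fibrant replacement of $\F$ and then peel off the isomorphism from the adjunction between the suspension spectrum functor and evaluation at level zero, using finite presentability to commute the mapping space with the colimit defining stabilization.

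First, since $\F$ is assumed level fibrant, I would apply $\Theta^{\infty}$ directly (bypassing the $\RRR$ in Definition \ref{definition:stablefibrantreplacement}) to produce a stably fibrant object $\Theta^{\infty}\F$ together with a natural map $\F\to\Theta^{\infty}\F$. Lemma \ref{lemma:stabilizationstablefibrant} ensures $\Theta^{\infty}\F$ is stably fibrant, and Theorem \ref{theorem:stableweakequivalencesandstabilization} applied to $\id_{\F}$ shows that $\F\to\Theta^{\infty}\F$ is a stable weak equivalence. Since $\X$ is projective cofibrant, the suspension spectrum $\Sigma^{\infty}_{C}\X$ is projective cofibrant in the stable model structure of Theorem \ref{theorem:Cstarstablemodelstructure}, so
\begin{equation*}
\SH^{\ast}(\Sigma^{\infty}_{C}\X,\F)=[\Sigma^{\infty}_{C}\X,\Theta^{\infty}\F]^{\mathrm{lvl}},
\end{equation*}
where the right-hand side denotes homotopy classes in the level model structure (stably fibrant objects are in particular level fibrant, and the two model structures share cofibrations and cylinder objects).

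Next I would invoke the adjunction $(\Sigma^{\infty}_{C},\Ev_{0})$, whose unit and counit are strict enough that on cofibrant/fibrant pairs in the cubical level model structure of Proposition \ref{proposition:strictmodelstructures} one obtains
\begin{equation*}
[\Sigma^{\infty}_{C}\X,\Theta^{\infty}\F]^{\mathrm{lvl}}=\HH^{\ast}\bigl(\X,(\Theta^{\infty}\F)_{0}\bigr).
\end{equation*}
By construction $(\Theta^{\infty}\F)_{0}=\colim_{n}\Omega_{C}^{n}\F_{n}$, where each term is projective $\CC^{\ast}$-fibrant (because $\F_{n}$ is projective $\CC^{\ast}$-fibrant and $C=S^{1}\otimes C_{0}(\R)$ is projective cofibrant, so Lemma \ref{lemma:pointedprojectiveinternalhomprojectiveCstarfibrant} applies), and the transition maps $\Omega_{C}^{n}\F_{n}\to\Omega_{C}^{n+1}\F_{n+1}$ are maps between fibrant objects. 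Now I would use finite presentability of $\X$ together with the fact that the homotopy invariant projective model structure is weakly finitely generated: Lemma \ref{lemma:afgmodelcategories} guarantees that filtered colimits of $\CC^{\ast}$-fibrant objects are $\CC^{\ast}$-fibrant and that filtered colimits preserve $\CC^{\ast}$-weak equivalences, so the colimit is already fibrant and a calculation with cubical function complexes gives
\begin{equation*}
\HH^{\ast}\bigl(\X,\underset{n}{\colim}\,\,\Omega_{C}^{n}\F_{n}\bigr)=\underset{n}{\colim}\,\,\HH^{\ast}(\X,\Omega_{C}^{n}\F_{n}).
\end{equation*}
Chaining these identifications yields the first formula.

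For the ``in addition'' clause, if $\F$ is already stably fibrant then each adjoint structure map $\widetilde{\sigma}_{n}^{\F}\colon\F_{n}\to\Omega_{C}\F_{n+1}$ is a $\CC^{\ast}$-weak equivalence by Definition \ref{definition:Cstarstablefibrant}; iterating and applying $\Omega_{C}^{n}$ (which preserves $\CC^{\ast}$-weak equivalences between $\CC^{\ast}$-projective fibrant objects by Lemma \ref{lemma:projectiveCstarweakequivalencebetweenfibrantobjects} combined with Lemma \ref{lemma:pointedprojectiveinternalhomprojectiveCstarfibrant}) shows each map $\F_{0}\to\Omega_{C}^{n}\F_{n}$ is a $\CC^{\ast}$-weak equivalence, so the transition maps in the directed system become isomorphisms in $\HH^{\ast}$ and the colimit collapses to $\HH^{\ast}(\X,\F_{0})$. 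The main obstacle is the commutation of $\HH^{\ast}(\X,-)$ past the filtered colimit in the middle of the argument; this is where finite presentability of $\X$ is essential and where one must carefully leverage the weakly finitely generated structure rather than argue purely set-theoretically via the Yoneda lemma.
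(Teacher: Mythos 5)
Your overall strategy is sound and matches the standard argument that the paper implicitly invokes by citing Hovey's Corollary~4.13: build the stable fibrant replacement $\Theta^{\infty}\F$, transit to the level model structure, apply the $(\Fr_0,\Ev_0)$ adjunction, and commute $\HH^{\ast}(\X,-)$ past the filtered colimit by finite presentability and weak finite generation. Most of the steps are correctly justified.

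There is, however, one genuine gap. You justify the claim that $\F\rightarrow\Theta^{\infty}\F$ is a stable weak equivalence by ``Theorem \ref{theorem:stableweakequivalencesandstabilization} applied to $\id_{\F}$.'' That application is vacuous: the theorem says $f$ is a stable weak equivalence if and only if $\Theta^{\infty}\RRR f$ is a level weak equivalence, and for $f=\id_{\F}$ both sides are trivially true, giving no information about the map $\F\rightarrow\Theta^{\infty}\F$. This claim must be argued separately; the conclusion is true but needs a different source. Either observe that $\F\rightarrow\Theta^{\infty}\F$ induces isomorphisms on all bigraded stable homotopy groups (essentially by construction, via (\ref{pq--p-q})) and invoke Lemma \ref{lemma:stableweakequivalence-homotopy}, or argue that $\widetilde\iota_{\F}\colon\F\rightarrow\Theta^{\infty}\RRR\F$ is a stable fibrant replacement, that $\Theta^{\infty}\F\rightarrow\Theta^{\infty}\RRR\F$ is a level weak equivalence since $\F\rightarrow\RRR\F$ is a level weak equivalence between level fibrant objects and filtered colimits preserve weak equivalences by Lemma \ref{lemma:afgmodelcategories}, and then conclude by two-out-of-three. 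A secondary, minor point: the fact that $\Omega_{C}$ preserves $\CC^{\ast}$-weak equivalences between fibrant objects follows from $\Omega_{C}=\underline{\Hom}(C,-)$ being right Quillen (Lemma \ref{lemma:Cstarspacecofibrantadjunction}) together with Ken Brown's lemma, rather than from Lemma \ref{lemma:projectiveCstarweakequivalencebetweenfibrantobjects} as you cite. With those repairs the proof goes through.
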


Next we characterize an important class of stable fibrations.
\begin{lemma}
\label{lemma:Cstarstablecharacterization}
A level fibration $f\colon\E\rightarrow\F$ with a stably fibrant target is a stable fibration if and only if
the diagram  
\begin{equation*}
\xymatrix{
\E \ar[r]\ar[d]_-{f} & \Theta^{\infty}\RRR\E\ar[d]^-{\widetilde\iota_{\E}(f)}\\
\F \ar[r] & \Theta^{\infty}\RRR\F}
\end{equation*}
is a homotopy pullback in the levelwise model structure on $\CC^{\ast}-\Spc_{C}$.
\end{lemma}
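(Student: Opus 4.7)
The plan is to prove the two directions using the characterization of stable weak equivalences via stabilization from Theorem~\ref{theorem:stableweakequivalencesandstabilization} together with Lemma~\ref{lemma:stabilizationstablefibrant}, which tells us that the stabilization of a level fibrant cubical $\CC^\ast$-spectrum is stably fibrant.

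For the forward implication, suppose $f\colon\E\to\F$ is a stable fibration and $\F$ is stably fibrant. Since the terminal object is trivially stably fibrant, $\F\to\ast$ is a stable fibration, and hence so is the composite $\E\to\F\to\ast$; therefore $\E$ is also stably fibrant. Lemma~\ref{lemma:stabilizationstablefibrant} implies $\Theta^\infty\RRR\E$ and $\Theta^\infty\RRR\F$ are stably fibrant, and by construction $\widetilde\iota_\E$ and $\widetilde\iota_\F$ are stable weak equivalences. Now I would invoke the standard fact that a stable weak equivalence between stably fibrant cubical $\CC^\ast$-spectra is a level weak equivalence; this follows from Theorem~\ref{theorem:stableweakequivalencesandstabilization} since in this situation both $\widetilde\iota_\E(\widetilde\iota_\E)$ and $\widetilde\iota_\E$ are level weak equivalences, so by two-out-of-three the original map is too. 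Thus both horizontal maps in the square are level weak equivalences, and the square is trivially a homotopy pullback in the level model structure.

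For the reverse implication, assume the square is homotopy cartesian in the level model structure. Since $\F$ is stably fibrant, $\widetilde\iota_\F\colon\F\to\Theta^\infty\RRR\F$ is a level weak equivalence by the same argument as above. The standard property of homotopy cartesian squares (a square in which one vertical map is a weak equivalence has the parallel map a weak equivalence) forces $\widetilde\iota_\E\colon\E\to\Theta^\infty\RRR\E$ to be a level weak equivalence as well. Combining this with Lemma~\ref{lemma:stabilizationstablefibrant} and Definition~\ref{definition:Cstarstablefibrant}, it follows that $\E$ is stably fibrant. Then $f\colon\E\to\F$ is a level fibration between stably fibrant objects, and by the general fact that in a left Bousfield localization the fibrations between fibrant objects coincide with the fibrations of the underlying model structure (see \cite{Hirschhorn:Modelcategories} or \cite{Barwick}), $f$ is a stable fibration.

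The key technical step, which is the main obstacle, is verifying that stable weak equivalences between stably fibrant objects are level weak equivalences — equivalently, that level fibrations between stably fibrant objects are stable fibrations. This is a standard output of Bousfield localization theory for the stable model structure of Theorem~\ref{theorem:Cstarstablemodelstructure}, but it must be combined carefully with Theorem~\ref{theorem:stableweakequivalencesandstabilization} and the fact that $\Theta^\infty$ applied to a level weak equivalence between level fibrant objects is again a level weak equivalence, which in turn relies on $\underline{\Hom}(C,-)$ preserving the relevant weak equivalences (as in Example~\ref{example:internalhomfinitelypresentable}) and on filtered colimits preserving weak equivalences in the weakly finitely generated homotopy invariant model structure.
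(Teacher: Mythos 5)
Your proposal is correct and in substance unwinds the argument that the paper compresses into a single citation: the paper's proof says ``see Corollary~\ref{characterization:exactprojectivefibration},'' whose own proof is the one-liner ``follows from~\cite[Proposition~2.32]{Barwick} and Proposition~\ref{proposition:projectiveexactreplacement}.'' That Barwick proposition is precisely the general Bousfield-localization fact you are re-deriving, so your route through ``$\E$ is stably fibrant'' plus ``fibrations between local fibrant objects agree with underlying fibrations'' reproduces its proof in the case at hand rather than treating it as a black box. The trade-off is clear: the paper's citation is terse but opaque; your version is self-contained and makes visible the roles of Theorem~\ref{theorem:stableweakequivalencesandstabilization}, Lemma~\ref{lemma:stabilizationstablefibrant}, and the weak-finite-generation hypotheses that make $\Theta^{\infty}$ behave well.

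Two small slips worth cleaning up. First, in the backward direction you invoke the property that ``a square in which one vertical map is a weak equivalence has the parallel map a weak equivalence''; in the relevant square the maps $\widetilde\iota_{\E}$ and $\widetilde\iota_{\F}$ are the \emph{horizontal} maps, and the fact you want is that in a homotopy cartesian square, if the bottom horizontal map is a weak equivalence then so is the top one. Second, the phrase ``both $\widetilde\iota_{\E}(\widetilde\iota_{\E})$ and $\widetilde\iota_{\E}$ are level weak equivalences'' is garbled: what you mean is that $\widetilde\iota_{\E}$, $\widetilde\iota_{\F}$, and $\Theta^{\infty}\RRR(g)$ are all level weak equivalences, whence two-out-of-three applied to the naturality square yields that $g$ is a level weak equivalence. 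It is also worth making explicit in the backward direction why $\widetilde\iota_{\E}$ being a level weak equivalence implies $\E$ is stably fibrant: one needs both level fibrancy of $\E$ (which follows from $f$ being a level fibration into the level fibrant $\F$) and the fact that $\underline{\Hom}(C,-)$ preserves weak equivalences between fibrant objects, so the adjoint structure maps of $\E$ can be compared against those of $\Theta^{\infty}\RRR\E$ level by level.
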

\begin{proof}
See the proof of Corollary \ref{characterization:exactprojectivefibration}.
\end{proof}
\begin{lemma}
\label{lemma:loopspreservesstableweakequivalencesoffibrantobjects}
The loop functor $\E\mapsto\Omega_{C}\E$ preserves stable weak equivalences between 
level fibrant objects.
\end{lemma}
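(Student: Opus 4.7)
The plan is to leverage the characterization of stable weak equivalences provided by Theorem \ref{theorem:stableweakequivalencesandstabilization}, together with the fact that $\Omega_C$ is well-behaved on level fibrant spectra because $C = S^1 \otimes C_0(\R)$ is projective cofibrant. Given $f\colon\E\rightarrow\F$ a stable weak equivalence between level fibrant cubical $\CC^{\ast}$-spectra, our goal reduces to showing that $\Omega_{C} f$ satisfies the criterion of Theorem \ref{theorem:stableweakequivalencesandstabilization}, i.e.~that $\widetilde\iota_{\Omega_{C}\E}(\Omega_{C} f)\colon\Theta^{\infty}\RRR(\Omega_{C}\E)\rightarrow\Theta^{\infty}\RRR(\Omega_{C}\F)$ is a level weak equivalence.

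First I would verify that $C$ is projective cofibrant in $\Box\CC^{\ast}-\Spc_{0}$. Since $C_{0}(\R)$ is a $\CC^{\ast}$-algebra, it is projective cofibrant by Lemma \ref{lemma:projectivecofibrant}; tensoring with the cofibrant pointed cubical set $S^{1}$ preserves this property by the pointed analog of Lemma \ref{lemma:projectivecofibrant}. Combined with Lemma \ref{lemma:pointedprojectiveinternalhomprojectiveCstarfibrant} this gives that $\Omega_{C}$ sends $\CC^{\ast}$-projective fibrant objects to $\CC^{\ast}$-projective fibrant objects. Hence $\Omega_{C}\E$ and $\Omega_{C}\F$ are level fibrant, so we may take $\RRR(\Omega_{C}\E) = \Omega_{C}\E$ and $\RRR(\Omega_{C}\F) = \Omega_{C}\F$. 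Moreover, Ken Brown's lemma applied to the right Quillen functor $\underline{\Hom}_{0}(C,-)$ of the homotopy invariant model structure (an instance of Lemma \ref{lemma:hiprojectivemonoidal2} together with the cofibrancy of $C$) implies $\Omega_{C}$ preserves $\CC^{\ast}$-weak equivalences between $\CC^{\ast}$-projective fibrant pointed cubical $\CC^{\ast}$-spaces, and hence level weak equivalences between level fibrant cubical $\CC^{\ast}$-spectra.

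Second, I would identify $\Theta^{\infty}(\Omega_{C}\E)$ with $\Omega_{C}(\Theta^{\infty}\E)$ up to natural isomorphism. At the level of individual entries one has $\Theta(\Omega_{C}\E)_{n} = \underline{\Hom}\bigl(C,\underline{\Hom}(C,\E_{n+1})\bigr) = \underline{\Hom}(C\otimes C,\E_{n+1}) = \Omega_{C}(\Theta\E)_{n}$, and one checks that the structure maps agree (the twist appearing in the definition of $\Omega_{C}^{\ell}$ versus $\Omega_{C}$ matches on both sides). Iterating yields a natural isomorphism $\Theta^{k}\Omega_{C}\cong\Omega_{C}\Theta^{k}$, and passing to the sequential colimit defining $\Theta^{\infty}$ gives $\Theta^{\infty}\Omega_{C}\E\cong\Omega_{C}\Theta^{\infty}\E$; here I use that $\Omega_{C} = \underline{\Hom}(C,-)$ commutes with filtered colimits by Example \ref{example:internalhomfinitelypresentable}.

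Finally, combining these ingredients: since $\E$ and $\F$ are already level fibrant, Theorem \ref{theorem:stableweakequivalencesandstabilization} gives that $\Theta^{\infty}\E\rightarrow\Theta^{\infty}\F$ is a level weak equivalence between (stably, hence level) fibrant spectra (Lemma \ref{lemma:stabilizationstablefibrant}). Applying $\Omega_{C}$ and invoking the preservation property from the first step yields a level weak equivalence $\Omega_{C}\Theta^{\infty}\E\rightarrow\Omega_{C}\Theta^{\infty}\F$, which under the natural isomorphism of the second step identifies with $\Theta^{\infty}\Omega_{C}\E\rightarrow\Theta^{\infty}\Omega_{C}\F$. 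A second application of Theorem \ref{theorem:stableweakequivalencesandstabilization} then gives that $\Omega_{C} f$ is a stable weak equivalence. The main obstacle I anticipate is bookkeeping the structure maps in the identification $\Theta^{\infty}\Omega_{C}\cong\Omega_{C}\Theta^{\infty}$, since $\Omega_{C}^{\ell}$ (and therefore $\Theta$) incorporates a twist that does not literally coincide with $\Omega_{C}$; this must be handled carefully either by producing an explicit natural isomorphism involving the symmetry of the monoidal product on $C^{\otimes k}$, or by noting that the two spectra are related by an evident twist which is itself a level weak equivalence, so the conclusion survives.
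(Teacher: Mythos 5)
Your proposal is correct and follows essentially the same route as the paper, which simply records the levelwise identification $\Theta^{\infty}(\Omega_{C}\E)_{n}=\Omega_{C}(\Theta^{\infty}\E)_{n}$ for level fibrant $\E$ and leaves the rest implicit; you fill in the remaining details (cofibrancy of $C$, Ken Brown's lemma for $\Omega_C$, $\Theta^\infty$ preserving level fibrancy via Lemma \ref{lemma:stabilizationstablefibrant}, and Theorem \ref{theorem:stableweakequivalencesandstabilization}) correctly. The twist bookkeeping you flag at the end is a genuine but mild subtlety, and your resolution is fine — in fact, since only a levelwise natural identification of cubical $\CC^{\ast}$-spaces compatible with the map $f$ is needed (not an isomorphism of spectra), the twist intervenes only as an automorphism of $C^{\otimes k}$ and so is harmless.
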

\begin{proof}
If $\E$ is level fibrant there is an isomorphism 
$\Theta^{\infty}(\Omega_{C}\E)_{n}=\Omega_{C}(\Theta^{\infty}\E)_{n}$.
\end{proof}

Next we seek an interpretation of stable weak equivalences in terms of 
bigraded stable homotopy groups $\pi_{p,q}$ for integers $p,q\in\ZZ$. 
Suppose $\E$ is level fibrant and consider the sequential diagram:
\begin{equation*}
\xymatrix{
\cdots\ar[r] &
\E_{n}\ar[r]^-{\widetilde\sigma_{C}} & \Omega_{C}\E_{n+1}\ar[r]^-{\Omega_{C}\widetilde\sigma_{C}} & 
\Omega_{C}^{2}\E_{n+2}\ar[r]^-{\Omega_{C}^{2}\widetilde\sigma_{C}} & \cdots}
\end{equation*}
In $A$-sections, 
the homotopy group $\pi_{p}\Theta^{\infty}\E_{n}(A)$ is isomorphic to the colimit of the sequential 
diagram:
\begin{equation*}
\xymatrix{
\cdots\ar[r] &
\pi_{p}(\E_{n})(A)\ar[r]^-{\pi_{p}(\widetilde\sigma_{C})(A)} & 
\pi_{p}(\Omega_{C}\E_{n+1})(A)\ar[r]^-{\pi_{p}(\Omega_{C}\widetilde\sigma_{C})(A)} & 
\pi_{p}(\Omega_{C}^{2}\E_{n+2})(A)\ar[r] \ar[r]^-{\pi_{p}(\Omega_{C}^{2}\widetilde\sigma_{C})(A)} & \cdots}
\end{equation*}
Passing to the homotopy category associated with the stable model structure on $\Spt_{C}\downarrow A$, 
we can recast the latter diagram as:
\begin{equation*}
\xymatrix{
\cdots\ar[r] &
[S^p,\E_{n}\vert A]\ar[r] & [S^p\otimes C,\E_{n+1}\vert A]\ar[r] & 
[S^p\otimes C^{\otimes 2},\E_{n+2}\vert A]\ar[r] &\cdots}
\end{equation*}
In the stable homotopy category $\SH^{\ast}_{A}$ of pointed cubical $\CC^{\ast}$-spaces over $A$, 
where there is no need to impose fibrancy, 
one obtains from the definition $C=S^{1}\otimes C_{0}(\R)$ an alternate description of the homotopy 
groups as the colimit of the sequential diagram:
\begin{equation*}
\xymatrix{
\cdots\ar[r] &
[S^{p},\E_{n}\vert A]\ar[r] & [S^{p+1}\otimes C_{0}(\R),\E_{n+1}\vert A]\ar[r] & 
[S^{p+2}\otimes C_{0}(\R^{2}),\E_{n+2}\vert A]\ar[r] &\cdots}
\end{equation*}
\begin{definition}
Let $\E$ be a cubical $\CC^{\ast}$-spectrum.
The degree $p$ and weight $q$ stable homotopy group $\pi_{p,q}\E$ is defined in $A$-sections by
\begin{equation*}
\xymatrix{
\pi_{p,q}\E(A)\equiv
{\colim}\,\,
\bigl(
[S^{p+n}\otimes C_{0}(\R^{q+n}),\E_{n}\vert A]\ar[r] & 
[S^{p+n+1}\otimes C_{0}(\R^{q+n+1}),\E_{n+1}\vert A]\ar[r] & \cdots\bigr). }
\end{equation*}
\end{definition}
In $A$-sections there are natural isomorphisms
\begin{equation}
\label{pq--p-q}
\pi_{p,q}\E(A)=\pi_{p-q}\Theta^{\infty}\RRR\E_{-q}(A).
\end{equation}
\begin{lemma}
Define $\Omega_{S^{1}}(-)\equiv\underline{\Hom}(S^{1},-)$ and 
$\Omega_{C_{0}(\R)}(-)\equiv\underline{\Hom}(C_{0}(\R),-)$.
For every cubical $\CC^{\ast}$-spectrum $\E$ there are isomorphisms
\begin{eqnarray*}
\pi_{p,q}\E= 
\begin{cases}
\pi_0\Omega^{p-q}_{S^{1}}\Theta^{\infty}(\RRR\E[-q])_0 & p\geq q \\
\pi_0\Omega^{q-p}_{C_{0}(\R)}\Theta^{\infty}(\RRR\E[-p])_0 & p\leq q.
\end{cases}
\end{eqnarray*}
\end{lemma}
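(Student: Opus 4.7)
The plan is to bootstrap off the natural isomorphism (\ref{pq--p-q}) by splitting the suspension coordinate $C=S^{1}\otimes C_{0}(\R)$ into its two factors, and then rearranging the resulting colimit of loop objects.

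I would use three compatibilities throughout. From the closed monoidal structure on $\Box\CC^{\ast}-\Spc_{0}$ invoked in Corollary \ref{corollary:Cstarhomotopycategory} and the factorization $C=S^{1}\otimes C_{0}(\R)$ one obtains $\Omega_{C}=\Omega_{S^{1}}\circ\Omega_{C_{0}(\R)}=\Omega_{C_{0}(\R)}\circ\Omega_{S^{1}}$ as endofunctors of pointed cubical $\CC^{\ast}$-spaces. Example \ref{example:internalhomfinitelypresentable} and Lemma \ref{lemma:afgmodelcategories} imply each of $\Omega_{S^{1}}$, $\Omega_{C_{0}(\R)}$ and $\Omega_{C}$ commutes with the filtered colimit defining $\Theta^{\infty}$. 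Finally, for a projective $\CC^{\ast}$-fibrant $\X$ and integer $m\geq 0$, Lemma \ref{lemma:unstablerepresentability} together with the $(S^{m}\otimes -,\Omega_{S^{1}}^{m})$-adjunction yields $\pi_{m}\X(A)=\pi_{0}\Omega_{S^{1}}^{m}\X(A)$. Unwinding (\ref{pq--p-q}) at the stably fibrant model $\Theta^{\infty}\RRR\E$ supplied by Lemma \ref{lemma:stabilizationstablefibrant}, and using the identification
\begin{equation*}
\bigl(\Theta^{\infty}(\RRR\E[-r])\bigr)_{0}\cong\underset{n}{\colim}\,\,\Omega_{C}^{n+r}\RRR\E_{n}
\end{equation*}
that is immediate from Example \ref{example:Cstarshift}, I obtain for $n$ sufficiently large a natural chain of isomorphisms
\begin{equation*}
\pi_{p,q}\E(A)\cong\pi_{p+n}\Omega_{C_{0}(\R)}^{q+n}\RRR\E_{n}(A)=\pi_{0}\Omega_{S^{1}}^{p+n}\Omega_{C_{0}(\R)}^{q+n}\RRR\E_{n}(A).
\end{equation*}

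In the case $p\geq q$, I would split $\Omega_{S^{1}}^{p+n}=\Omega_{S^{1}}^{p-q}\circ\Omega_{S^{1}}^{q+n}$ and recombine to form $\Omega_{S^{1}}^{p-q}\circ\Omega_{C}^{q+n}$; pulling $\Omega_{S^{1}}^{p-q}$ past the colimit in $n$ and re-indexing by $k=q+n$ delivers $\pi_{0}\Omega_{S^{1}}^{p-q}\Theta^{\infty}(\RRR\E[-q])_{0}(A)$. The case $p\leq q$ is handled dually: split $\Omega_{C_{0}(\R)}^{q+n}=\Omega_{C_{0}(\R)}^{q-p}\circ\Omega_{C_{0}(\R)}^{p+n}$, recombine to form $\Omega_{C_{0}(\R)}^{q-p}\circ\Omega_{C}^{p+n}$, pull the first factor past the colimit, and re-index by $k=p+n$ to obtain $\pi_{0}\Omega_{C_{0}(\R)}^{q-p}\Theta^{\infty}(\RRR\E[-p])_{0}(A)$. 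The two formulas manifestly agree when $p=q$.

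The main obstacle I anticipate is not the final rearrangement, which is formal, but the careful justification of the three compatibilities recorded above; once they are in place the proof reduces to a bookkeeping exercise exhibiting the two claimed expressions as dual factorizations of the same colimit through the $S^{1}$- and $C_{0}(\R)$-coordinates of the circle $C$.
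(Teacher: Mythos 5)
Your argument is correct and proceeds along essentially the same lines as the paper's proof: both manipulate the defining colimit for $\pi_{p,q}\E$ by splitting the suspension coordinate $C$ into its $S^{1}$ and $C_{0}(\R)$ factors, re-indexing the colimit to absorb the balanced part $C^{\otimes(q+n)}$ (resp.\ $C^{\otimes(p+n)}$) into $\Theta^{\infty}$ of a shifted spectrum, and peeling off the excess $S^{p-q}$ (resp.\ $C_{0}(\R^{q-p})$) as a loop functor; the paper phrases this directly with representable homotopy classes $[\,\cdot\,,\,\cdot\,\vert A]$, while you phrase it with the adjoint $\Omega$-language and the commutation $\Omega_{C}=\Omega_{S^{1}}\circ\Omega_{C_{0}(\R)}$. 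One cosmetic point: the display $\pi_{p,q}\E(A)\cong\pi_{p+n}\Omega_{C_{0}(\R)}^{q+n}\RRR\E_{n}(A)$ should not be read as holding for any individual ``$n$ sufficiently large'' (the transition maps need not stabilize at finite stage for a general $\E$); the correct statement is that $\pi_{p,q}\E(A)$ is the colimit over $n$ of these groups, which is indeed how you treat it in the subsequent loop-rearrangement and re-indexing step, so the proof goes through once the wording is tightened.
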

\begin{proof}
There are isomorphisms
\begin{align*}
{\colim}\,\,[S^{p+n}\otimes C_{0}(\R^{q+n}),\E_{n}\vert A]
& =
{\colim}\,\,[S^{p-q+n}\otimes C_{0}(\R^{n}),\E[-q]_{n}\vert A] \\
& = 
{\colim}\,\,[S^{n}\otimes C_{0}(\R^{n}),\Omega^{p-q}_{S^{1}}\RRR\E[-q]_{n}\vert A]
\end{align*}
if $p\geq q$, 
and 
\begin{align*}
{\colim}\,\,[S^{p+n}\otimes C_{0}(\R^{q+n}),\E_{n}\vert A]
& = 
{\colim}\,\,[S^{n}\otimes C_{0}(\R^{q-p+n}),\E[-p]_{n}\vert A] \\
& = 
{\colim}\,\,[S^{n}\otimes C_{0}(\R^{n}),\Omega^{q-p}_{C_{0}(\R)}\RRR\E[-p]_{n}\vert A]
\end{align*}
if $q\geq p$.
\end{proof}

We are ready to give an algebraic characterization of stable weak equivalences.
\begin{lemma}
\label{lemma:stableweakequivalence-homotopy}
The map $\E\rightarrow\F$ is a stable weak equivalence if and only if there is an induced isomorphism of 
bigraded stable homotopy groups
\begin{equation*}
\pi_{p,q}\E=\pi_{p,q}\F.
\end{equation*}
\end{lemma}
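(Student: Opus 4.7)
The plan is to reduce the statement to Theorem \ref{theorem:stableweakequivalencesandstabilization}, which identifies stable weak equivalences with level weak equivalences of the stabilizations $\Theta^{\infty}\RRR(\cdot)$. By Lemma \ref{lemma:stabilizationstablefibrant} both $\Theta^{\infty}\RRR\E$ and $\Theta^{\infty}\RRR\F$ are stably fibrant, in particular level fibrant; so each level is projective $\CC^{\ast}$-fibrant and its $A$-sections are Kan complexes. Testing whether $\Theta^{\infty}\RRR f$ is a level weak equivalence therefore amounts to verifying $\pi_{s}$-isomorphisms in every $A$-section of every level, and the task reduces to matching these homotopy groups with the bigraded $\pi_{p,q}$.

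For the forward implication, suppose $f$ is a stable weak equivalence. Then $\Theta^{\infty}\RRR f$ is a level weak equivalence, so each $(\Theta^{\infty}\RRR f)_{n}(A)$ is a weak equivalence of Kan complexes. The identity \eqref{pq--p-q}, reading $\pi_{p,q}\E(A)=\pi_{p-q}\Theta^{\infty}\RRR\E_{-q}(A)$, transports this into an isomorphism on $\pi_{p,q}$ whenever $q\leq 0$ and $p\geq q$. The remaining bigradings are reached via the alternative expressions
\begin{equation*}
\pi_{p,q}\E=\pi_{0}\Omega^{p-q}_{S^{1}}\Theta^{\infty}(\RRR\E[-q])_{0}\quad (p\geq q),
\qquad
\pi_{p,q}\E=\pi_{0}\Omega^{q-p}_{C_{0}(\R)}\Theta^{\infty}(\RRR\E[-p])_{0}\quad (p\leq q),
\end{equation*}
noting that the shift functor preserves level weak equivalences, that stabilization preserves them levelwise, and that the loop functors $\Omega_{S^{1}}$ and $\Omega_{C_{0}(\R)}$ preserve $\CC^{\ast}$-weak equivalences between $\CC^{\ast}$-fibrant objects (this last point is the pointed analog of Lemma \ref{lemma:loopspreservesstableweakequivalencesoffibrantobjects}).

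For the reverse implication, assume $\pi_{p,q}\E\to\pi_{p,q}\F$ is an isomorphism for every $(p,q)\in\ZZ^{2}$. Specializing \eqref{pq--p-q} to $(p,q)=(s-n,-n)$ with $s,n\geq 0$ yields an isomorphism $\pi_{s}\Theta^{\infty}\RRR\E_{n}(A)\cong\pi_{s}\Theta^{\infty}\RRR\F_{n}(A)$ at the canonical basepoint for all $s,n\geq 0$ and all $\CC^{\ast}$-algebras $A$. To upgrade this to a weak equivalence of cubical sets I use stable fibrancy of $\Theta^{\infty}\RRR\E$: the adjoint structure maps furnish $\CC^{\ast}$-weak equivalences $\Theta^{\infty}\RRR\E_{n}\simeq\Omega_{C}\Theta^{\infty}\RRR\E_{n+1}$, so each section $\Theta^{\infty}\RRR\E_{n}(A)$ is a loop space. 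Consequently it is a group-like $H$-space whose path components are weakly equivalent via translation by $\pi_{0}$-elements, and a basepoint-wise isomorphism on $\pi_{s}$ automatically holds at every basepoint. Thus $(\Theta^{\infty}\RRR f)_{n}(A)$ is a weak equivalence for every $n$ and $A$, making $\Theta^{\infty}\RRR f$ a level weak equivalence, and Theorem \ref{theorem:stableweakequivalencesandstabilization} delivers the conclusion.

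The main obstacle I anticipate is the $H$-space argument in the reverse direction: the definition of $\pi_{p,q}$ only records information at a distinguished basepoint per section, whereas the Whitehead criterion for Kan complexes requires agreement at every basepoint. Legitimising the reduction to a single basepoint uses stable fibrancy in an essential way and is the crux of converting the sectionwise collection of bigraded-homotopy-group isomorphisms into a genuine level weak equivalence; the remaining work is bookkeeping across the four quadrants of $\ZZ^{2}$ via the shift and loop-space formulas recorded above.
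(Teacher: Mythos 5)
Your proposal is correct and follows essentially the same route as the paper: both directions reduce the claim to Theorem \ref{theorem:stableweakequivalencesandstabilization} and the identification (\ref{pq--p-q}), with the shift functor handling the bigradings. The one place your argument goes beyond the paper's terse write-up is the grouplike $H$-space discussion in the converse, which cleanly justifies passing from basepoint-wise $\pi_s$-isomorphisms to genuine level weak equivalences; the paper leaves this implicit.
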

\begin{proof}
A stable equivalence between $\E$ and $\F$ induces for every integer $m\in\ZZ$ a level weak equivalence
$\Theta^{\infty}\RRR\E[m]\rightarrow\Theta^{\infty}\RRR\F[m]$.
Hence, 
in all sections, 
the induced maps between the bigraded stable homotopy groups of $\E$ and $\F$ are isomorphisms 
(\ref{pq--p-q}).
Conversely, 
if $\pi_{p,q}\E\rightarrow\pi_{p,q}\F$ is an isomorphism of presheaves for $p\geq q\leq 0$, 
then again by (\ref{pq--p-q}) there is a level weak equivalence 
$\Theta^{\infty}\RRR\E\rightarrow\Theta^{\infty}\RRR\F$.
\end{proof}

The usual approach verifies that stable homotopy groups preserve colimits in the following sense. 
\begin{lemma}
Let $\{\E^{i}\}_{i\in I}$ be a filtered diagram of cubical $\CC^{\ast}$-spectra.
Then the natural map 
\begin{equation*}
\xymatrix{
\colim_{i\in I}\, \pi_{p,q}\E^{i}\ar[r] &
\pi_{p,q}(\hocolim_{i\in I}\, \E^{i}) }
\end{equation*}
is an isomorphism for all $p,q\in\ZZ$.
\end{lemma}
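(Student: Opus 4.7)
The plan is to first replace the homotopy colimit by an ordinary colimit using weakly finitely generated machinery, then exploit the fact that $\pi_{p,q}$ is itself a filtered colimit of maps out of finitely presentable objects.

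First, I would argue that the stable model structure on $\Spt_{C}$ is weakly finitely generated. The $\CC^{\ast}$-projective model structure on $\Box\CC^{\ast}-\Spc_{0}$ is weakly finitely generated by Corollary \ref{homotopyweaklyfinitelygenerated}, the induced level model structure on $\Spt_{C}$ inherits this property in the obvious manner, and the stable model structure is the Bousfield localization of the level structure at the cofibrations $\Sigma^{\infty}_{C}\E_{m}[-m]\to L_{m}\E$ of Example \ref{example:stableweakequivalence} with finitely presentable domain and codomain; so Proposition \ref{proposition:almostfinitelygeneratedpreservedunderlocalization} (in its weakly finitely generated variant) applies. By the remark following Lemma \ref{lemma:afgmodelcategories}, the canonical comparison map $\hocolim_{i}\E^{i}\to\colim_{i}\E^{i}$ is then a stable weak equivalence, and Lemma \ref{lemma:stableweakequivalence-homotopy} reduces the statement to the strict identification
\begin{equation*}
\colim_{i\in I}\,\pi_{p,q}\E^{i}\;\xrightarrow{\;\cong\;}\;\pi_{p,q}\bigl(\colim_{i\in I}\E^{i}\bigr).
\end{equation*}

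Next I would unwind the definition. In $A$-sections both sides are a double filtered colimit, so commuting the two colimits it suffices to prove that for every fixed $n$, $p$, $q$ and every $\CC^{\ast}$-algebra $A$ the natural map
\begin{equation*}
\colim_{i}\,[S^{p+n}\otimes C_{0}(\R^{q+n}),\E^{i}_{n}\,\vert\,A]\;\longrightarrow\;[S^{p+n}\otimes C_{0}(\R^{q+n}),\colim_{i}\E^{i}_{n}\,\vert\,A]
\end{equation*}
is a bijection, the brackets denoting morphisms in the homotopy category associated to the pointed homotopy invariant projective model structure on $\Box\CC^{\ast}-\Spc_{0}\downarrow A$ (Lemma \ref{lemma:modelstructuresonundercategories} and Remark \ref{remark:pointedmodelstructuresonundercategories}). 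The source object $S^{p+n}\otimes C_{0}(\R^{q+n})$ is finitely presentable by Example \ref{example:finitelypresentable}, and it is compact in the associated homotopy category (Theorem \ref{theorem:unstableweakgenerators}). Since the pointed homotopy invariant structure is weakly finitely generated, Lemma \ref{lemma:afgmodelcategories} guarantees that a filtered colimit of fibrant replacements $\E^{i}_{n}\to\Ex^{J^{\ast}_{\Box\CC^{\ast}-\Spc}}\E^{i}_{n}$ is again a fibrant replacement of $\colim_{i}\E^{i}_{n}$, while Proposition \ref{proposition:homotopyclassesofmaps} together with the cubical exponential law identifies homotopy classes of maps with $\pi_{0}$ of a function complex. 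The claimed bijection then follows from the standard fact that both $\pi_{0}$ and the cubical function complex out of a finitely presentable cofibrant object commute with filtered colimits of cubical sets.

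The main obstacle is the passage from homotopy colimit to strict colimit for cubical $\CC^{\ast}$-spectra; once one is past that point, the compactness argument at each level is routine. In particular, one has to verify that the Bousfield localization producing the stable model structure preserves the weakly finitely generated property, which is where the analogue of Proposition \ref{proposition:almostfinitelygeneratedpreservedunderlocalization} plays its decisive role, and where the finite presentability of the localizing maps in Example \ref{example:stableweakequivalence} (ultimately a consequence of Example \ref{example:internalhomfinitelypresentable}) is used.
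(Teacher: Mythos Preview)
Your overall strategy is sound and lands at the same endpoint as the paper, but there is one imprecision and one genuine difference in route worth flagging.

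The imprecision: you describe the stable model structure as the localization of the level structure at the maps $\Sigma^{\infty}_{C}\E_{m}[-m]\to L_{m}\E$ from Example \ref{example:stableweakequivalence}. Those maps are indexed by \emph{all} spectra $\E$, so they do not form a set, and their domains and codomains are not finitely presentable unless the $\E_{m}$ are. The correct localizing set consists of maps of the form $\Fr_{n+1}(C\otimes\X)\to\Fr_{n}\X$ with $\X$ a (co)domain of a generating cofibration (compare the argument for symmetric spectra in Theorem \ref{theorem:Cstarsymmetricprojectivestable}); these do have finitely presentable (co)domains, and then your appeal to the weakly-finitely-generated variant of Proposition \ref{proposition:almostfinitelygeneratedpreservedunderlocalization} goes through. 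The paper records this conclusion immediately after the lemma, so you are not being circular.

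The difference in route: the paper does not invoke the abstract ``weakly finitely generated $\Rightarrow$ $\hocolim\simeq\colim$'' principle. Instead it replaces the diagram by one of stably fibrant spectra and verifies \emph{directly} that the filtered colimit remains stably fibrant: each level is a filtered colimit of $\CC^{\ast}$-projective fibrant spaces, Lemma \ref{lemma:pointedprojectiveinternalhomprojectiveCstarfibrant} makes $\underline{\Hom}(C,\E^{i}_{n+1})$ fibrant so that the adjoint structure maps are \emph{pointwise} weak equivalences, and finite presentability of $C$ lets one pass to the colimit. The compactness step is then a single observation that $\hom_{\Box\CC^{\ast}-\Spc_{0}}\bigl(S^{p}\otimes C_{0}(\R^{p-q}),-\bigr)$ commutes with filtered colimits. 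Your levelwise argument via fibrant replacements and Proposition \ref{proposition:homotopyclassesofmaps} achieves the same thing with a bit more bookkeeping; the paper's version is shorter because working with stably fibrant spectra collapses the outer colimit in the definition of $\pi_{p,q}$.
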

\begin{proof}
Without loss of generality we may assume the diagram consists of stably fibrant $\CC^{\ast}$-spectra.
Then the colimit $\E$ of the diagram - where $\E_{n}=\colim_{i\in I}\E^{i}_{n}$ - is stably fibrant:
Each $\E_{n}$ is a filtered colimit of $\CC^{\ast}$-projective fibrant pointed cubical $\CC^{\ast}$-spaces. 
Lemma \ref{lemma:pointedprojectiveinternalhomprojectiveCstarfibrant} shows that $\underline{\Hom}(C,\E^{i}_{n+1})$ 
is $\CC^{\ast}$-projective fibrant.
It follows that the map $\E^{i}_{n}\rightarrow\underline{\Hom}(C,\E^{i}_{n+1})$ is a pointwise weak equivalence.
Using that $\underline{\Hom}(C,-)$ commutes with filtered colimits we conclude that the adjoints of the structure maps 
\begin{equation*}
\xymatrix{
\E_{n}=\colim_{i\in I}\E^{i}_{n}\ar[r] &
\colim_{i\in I}\underline{\Hom}(C,\E^{i}_{n+1})=\Omega_{C}\E_{n+1} }
\end{equation*}
are pointwise weak equivalence, and thus $\CC^{\ast}$-weak equivalences.
\\ \indent
There is a natural isomorphism between  
\begin{equation*}
\colim_{i\in I}\, \pi_{0}{\hom}_{\Box\CC^{\ast}-\Spc_{0}}\bigl(S^{p}\otimes C_{0}(\R^{p-q}),\E^{i}\bigr)
\end{equation*}
and
\begin{equation*}
\pi_{0}{\hom}_{\Box\CC^{\ast}-\Spc_{0}}\bigl(S^{p}\otimes C_{0}(\R^{p-q}),\colim_{i\in I}\, \E^{i}\bigr) 
\end{equation*}
since the functor 
\begin{equation*}
{\hom}_{\Box\CC^{\ast}-\Spc_{0}}\bigl(S^{p}\otimes C_{0}(\R^{p-q}),-\bigr) 
\end{equation*}
commutes with filtered colimits.
Now suppose there is a stable weak equivalence $\E^{i}\rightarrow\F^{i}$ between stably fibrant $\CC^{\ast}$-spectra for every $i\in I$.
Then levelwise there are pointwise weak equivalences $\E^{i}_{n}\rightarrow\F^{i}_{n}$ so that the map 
\begin{equation*}
\xymatrix{
\colim_{i\in I}\, \E^{i}_{n}\ar[r] &
\colim_{i\in I}\, \F^{i}_{n} }
\end{equation*}
is a pointwise weak equivalence, 
which implies $\E\rightarrow\F$ is a stable weak equivalence.
Hence the homotopy colimit of $\{\E^{i}\}_{i\in I}$ maps by a natural stable weak equivalence to $\E$ and we are done.
\end{proof}

The next lemma dealing with the cyclic permutation condition on the circle $C$ is a key input in stable $\CC^{\ast}$-homotopy theory.
We shall refer to this lemma when comparing $\CC^{\ast}$-spectra with $\CC^{\ast}$-symmetric spectra.
It ensures that the stable $\CC^{\ast}$-homotopy category inherits a symmetric monoidal product.
\begin{lemma}
\label{lemma:cyclicpermutationcondition}
The circle $C$ satisfies the cyclic permutation condition. 
That is, 
there exists a homotopy $C\otimes C\otimes C\otimes C(I)\rightarrow C\otimes C\otimes C$
from the cyclic permutation to the identity on $C^{\otimes 3}$. 
\end{lemma}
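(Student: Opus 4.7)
The plan is to decompose $C\otimes C\otimes C$ into its cubical and $\CC^{\ast}$-algebra tensor factors and produce the homotopy separately on each. Using the symmetric monoidal structure of $\otimes$ on $\Box\CC^{\ast}-\Spc$ and the isomorphism $C_{0}(\R)\otimes C_{0}(\R)\otimes C_{0}(\R)\cong C_{0}(\R^{3})$, valid for either the maximal or minimal tensor product, one identifies $C^{\otimes 3}\cong S^{1}\otimes S^{1}\otimes S^{1}\otimes C_{0}(\R^{3})$. Under this identification the cyclic permutation $\tau_{c}$ of $C^{\otimes 3}$ factors as $\sigma\otimes\rho$, where $\sigma$ cyclically permutes the three $S^{1}$-factors and $\rho$ cyclically permutes the three coordinates of $\R^{3}$.

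For the factor $\rho$ on $C_{0}(\R^{3})$, the cyclic permutation matrix $R\in SO(3)$ is the rotation by $2\pi/3$ about the axis $(1,1,1)^{T}/\sqrt{3}$. The path $\gamma(t)\equiv\exp\bigl(\frac{2\pi t}{3}J\bigr)$ in $SO(3)$, for $t\in I=[0,1]$ with $J$ generating the rotation subgroup about this axis, connects the identity to $R$. The map $\Phi\colon I\times\R^{3}\to\R^{3}$, $(t,x)\mapsto\gamma(t)x$, is continuous and proper in $x$ since each $\gamma(t)$ is an isometry, so pullback along $\Phi$ defines a $\ast$-homomorphism $\Phi^{\ast}\colon C_{0}(\R^{3})\to C_{0}(I\times\R^{3})=C(I)\otimes C_{0}(\R^{3})$. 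By the discussion of homotopies of $\ast$-homomorphisms in Section \ref{subsection:homotopyinvariantmodelstructures}, this is precisely a $C(I)$-homotopy from $\id_{C_{0}(\R^{3})}$ to $\rho$.

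For the factor $\sigma$ on $S^{1}\otimes S^{1}\otimes S^{1}$, the cyclic permutation is a degree $+1$ self-map of the cubical $3$-sphere (as $(1\,2\,3)$ is an even permutation in $\mathfrak{S}_{3}$), hence cubically homotopic to the identity; this yields a map $(S^{1}\otimes S^{1}\otimes S^{1})\otimes\Box^{1}\to S^{1}\otimes S^{1}\otimes S^{1}$ interpolating between $\id$ and $\sigma$. To convert this $\Box^{1}$-homotopy into a $C(I)$-homotopy, use that the identity $\ast$-homomorphism $C(I)\to C(I)$ provides a canonical $1$-cell of the cubical set $\Sing_{\Box}^{\bullet}C(I)(\C)$, i.e.~a natural map $\Box^{1}\to\Sing_{\Box}^{\bullet}C(I)(\C)$. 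Since $S^{1}\otimes S^{1}\otimes S^{1}$ is a constant cubical $\CC^{\ast}$-space, the singular/realization adjunction allows the $\Box^{1}$-homotopy to be pulled back along this map to yield the desired $C(I)$-homotopy from $\id$ to $\sigma$.

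Finally, concatenate the two $C(I)$-homotopies via the $\ast$-homomorphism $C(I)\to C(I)\otimes C(I)$ induced by the standard gluing of two intervals $I\sqcup I\to I$: run the $\rho$-homotopy during the first half of the interval and the $\sigma$-homotopy during the second, acting by the identity on the other tensor factor at each stage. The composite then furnishes a map $C^{\otimes 3}\otimes C(I)\to C^{\otimes 3}$ interpolating between the identity and $\tau_{c}=\sigma\otimes\rho$. The main obstacle is the interval-comparison step of the third paragraph: the cubical homotopy for $\sigma$ is naturally parameterized by $\Box^{1}$, and transporting it to a $C(I)$-parameterization requires a careful use of the singular/realization adjunction, since $\Box^{1}$ and $C(I)$ live on categorically distinct sides of the theory (cubical sets vs.~$\CC^{\ast}$-algebras) and interact only through these functors.
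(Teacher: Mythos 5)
Your decomposition $C^{\otimes 3}\cong S^{3}\otimes C_{0}(\R^{3})$ and the two-factor strategy match the paper's proof exactly. For $C_{0}(\R^{3})$ you run a one-parameter subgroup of $SO(3)$ through the cyclic-permutation rotation; the paper instead interpolates affinely via the matrix
\[
\left( \begin{array}{ccc}
t & 1-t & 0 \\
0 & t & 1-t \\
1-t & 0 & t \end{array} \right),
\]
whose determinant $t^{3}+(1-t)^{3}$ is nowhere zero on $[0,1]$, so the induced map is proper for every $t$ as well. Both paths work, and the difference is cosmetic. For $S^{3}$ both you and the paper invoke the degree-one observation.

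The interval-conversion step in your third paragraph does not hold up as stated, and this is the only real issue. The $1$-cell of $\Sing_{\Box}^{\bullet}C(I)(\C)$ given by the identity corresponds, by Yoneda, to a map of cubical sets $\Box^{1}\rightarrow\Sing_{\Box}^{\bullet}C(I)(\C)$, which points the wrong way: to produce $S^{3}\otimes C(I)\rightarrow S^{3}$ from the given $S^{3}\otimes\Box^{1}\rightarrow S^{3}$ by precomposition one would need a map $C(I)\rightarrow\Box^{1}$ in $\Box\CC^{\ast}-\Spc_{0}$ sending $\ev_{0}$ and $\ev_{1}$ to the two endpoints of $\Box^{1}$. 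No such map exists: a map from the representable $C(I)$ to the constant cubical $\CC^{\ast}$-space $\Box^{1}$ is, by Yoneda and the fact that $\CC^{\ast}-\Alg(C(I),A)$ is discrete, determined by the image of $\id_{C(I)}$, hence constant, so it cannot separate the two endpoints. You correctly identify this as the delicate spot, but the singular/realization adjunction does not supply the bridge. The paper's own proof is also silent on this conversion; it merely records the degree-one fact and the matrix homotopy and concludes. The tacit justification is that the cyclic permutation condition, as actually consumed by \cite[Theorem 10.1]{Hovey:spectra} and \cite[Lemma 3.14]{Jardine:MSS}, is a statement about left homotopy classes between cofibrant objects, and there any cylinder object works; a $\Box^{1}$-homotopy on the cubical factor and a $C(I)$-homotopy on the $\CC^{\ast}$-algebra factor each already certify the required identification in $\HH^{\ast}$, without a literal reparametrization to a single $C(I)$-interval. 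If you want to keep the lemma phrased strictly as a $C(I)$-homotopy, the honest repair is the standard model-categorical argument that left homotopies between maps out of a cofibrant object are independent of the choice of cylinder object, applied to the cofibrant $C^{\otimes 3}$ with the cylinder $C^{\otimes 3}\otimes\Box^{1}_{+}$ replaced by $C^{\otimes 3}\otimes C(I)$.
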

\begin{proof}
In $\Box\CC^{\ast}-\Spc_{0}$ there is an isomorphism $C^{\otimes 3}=S^3\otimes C_{0}(\R^3)$.
Clearly the cyclic permutation condition holds for the cubical $3$-sphere since
$(321)\colon S^3\rightarrow S^3$, 
$x\otimes y\otimes z\mapsto y\otimes z\otimes x$,  
has degree one. 
Using the isomorphism $C_{0}(\R^3)\otimes C(I)=C_{0}(\R^3\times I)$ the claim for $C_{0}(\R^3)$ 
follows by defining a homotopy $C_{0}(\R^3\times I)\rightarrow C_{0}(\R^3)$ in terms of the matrix: 
\[ \left( \begin{array}{ccc}
t & 1-t & 0 \\
0 & t & 1-t \\
1-t & 0 & t \end{array} \right)\] 
These two observations imply that $C$ satisfies the cyclic permutation condition.
\end{proof}
\begin{lemma}
\label{lemma:compositionstableweakequivalence}
For every cubical $\CC^{\ast}$-space $\X$ there is a stable weak equivalence
\begin{equation*}
\xymatrix{
\Sigma^{\infty}_{C}\X\ar[r] & 
\Omega_{C}(\Sigma^{\infty}_{C}\X\wedge C)\ar[r] &
\Omega_{C}\RRR(\Sigma^{\infty}_{C}\X\wedge C). }
\end{equation*}
\end{lemma}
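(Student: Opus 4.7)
The plan is to exhibit the composite as the derived unit of the Quillen adjunction $(-\wedge C,\Omega_{C})$ on $\Spt_{C}$, and then to invoke the cyclic permutation condition on $C$ to conclude that this adjunction is a Quillen self-equivalence of the stable model structure. With that in hand the composite is a stable weak equivalence by the usual derived-unit criterion, once a mild bookkeeping step is carried out to pass between the level fibrant replacement $\RRR$ appearing in the statement and the stable fibrant replacement $\RRR^{\,\sst}$ used on the derived unit side.

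First I would record that $\Spt_{C}$ carries the combinatorial, cubical, left proper stable model structure of Theorem~\ref{theorem:Cstarstablemodelstructure}, and that $(-\wedge C,\Omega_{C})$ is a Quillen adjunction on this structure (the stabilization is set up precisely so as to invert $-\wedge C$). The object $C=S^{1}\otimes C_{0}(\R)$ is projective cofibrant and satisfies the cyclic permutation condition by Lemma~\ref{lemma:cyclicpermutationcondition}. Feeding these ingredients into the stabilization machinery of [Hovey:spectra, \S\S9--10] yields that the Quillen pair $(-\wedge C,\Omega_{C})$ is a Quillen equivalence on the stable model structure. Consequently, for any cofibrant spectrum $\E$ the derived unit
\begin{equation*}
\E\longrightarrow\Omega_{C}\RRR^{\,\sst}(\E\wedge C)
\end{equation*}
is a stable weak equivalence, where $\RRR^{\,\sst}$ denotes a stable fibrant replacement. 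Specializing to $\E=\Sigma^{\infty}_{C}\X$ (using a projective cofibrant replacement of $\X$ if necessary, which does not affect the stable homotopy type by Lemma~\ref{lemma:stableweakequivalence-homotopy}) gives one of the two maps we need.

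Next I would bridge the gap between $\RRR$ and $\RRR^{\,\sst}$. The canonical map $\RRR(\Sigma^{\infty}_{C}\X\wedge C)\rightarrow\RRR^{\,\sst}(\Sigma^{\infty}_{C}\X\wedge C)$ is a stable weak equivalence between level fibrant cubical $\CC^{\ast}$-spectra, so by Lemma~\ref{lemma:loopspreservesstableweakequivalencesoffibrantobjects} the induced map $\Omega_{C}\RRR(\Sigma^{\infty}_{C}\X\wedge C)\rightarrow\Omega_{C}\RRR^{\,\sst}(\Sigma^{\infty}_{C}\X\wedge C)$ is again a stable weak equivalence. Composing with the derived unit identifies the two maps in the statement of the lemma with a stable weak equivalence followed by a stable weak equivalence, hence their composite is stably weak equivalent. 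Two-out-of-three (Theorem~\ref{theorem:Cstarstablemodelstructure}, applied in $\Ho\Spt_{C}$) then delivers the desired conclusion.

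The main obstacle is the first step: verifying that Hovey's Quillen equivalence theorem applies in our setting. This requires matching the hypotheses of [Hovey:spectra, \S\S9--10] against the properties established for the homotopy invariant projective model structure on pointed cubical $\CC^{\ast}$-spaces in Section~\ref{section:Cstarhomotopytheory}: cofibrant generation and the weakly finitely generated replacement set, left properness, the cubical enrichment, and monoidalness (Proposition~\ref{proposition:hiprojectivemonoidal}), together with projective cofibrancy of $C$ and Lemma~\ref{lemma:cyclicpermutationcondition}. A more pedestrian alternative, avoiding Hovey's general theorem, is to apply Theorem~\ref{theorem:stableweakequivalencesandstabilization} directly: both stabilizations $\Theta^{\infty}\RRR$ of the source and target can be written level-wise as filtered colimits of the form $\colim_{k}\Omega_{C}^{k}\RRR(C^{\otimes(n+k)}\otimes\X)$, the induced map between them is a cofinal inclusion up to a permutation twist of $C$ factors, and the cyclic permutation condition together with Lemma~\ref{homotopyimpliesweakequivalence} trivializes this twist in the colimit.
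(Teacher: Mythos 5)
Your ``pedestrian alternative'' is essentially the paper's proof, which consists of the single line ``Using Lemma \ref{lemma:cyclicpermutationcondition} this follows as in the proof of \cite[Lemma 3.14]{Jardine:MSS}'' --- and Jardine's Lemma 3.14 carries out precisely the stabilization-colimit analysis you sketch, trivializing the cyclic permutation of $C$-factors by a homotopy. Your primary route is genuinely different: it deduces the lemma from the a posteriori fact that $(-\wedge C,\Omega_{C})$ is a Quillen self-equivalence of $\Spt_{C}$, identifies the composite with the derived unit (modulo your $\RRR$ vs.\ $\RRR^{\,\sst}$ bridge, which is correct via Lemma \ref{lemma:loopspreservesstableweakequivalencesoffibrantobjects}), and then applies the derived-unit criterion. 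This is logically sound because Hovey's theorem for $Sp^{\mathbb{N}}$ under the cyclic permutation axiom is an external result with an independent proof, but it runs the paper's own development in reverse --- this lemma, via Theorem \ref{theorem:compositionstableweakequivalence} and Corollary \ref{corollary:patched}, is exactly what the paper uses to see that $-\wedge C$ preserves and reflects stable weak equivalences, which is the heart of the Quillen-equivalence claim --- and it requires checking hypotheses (cellularity and a monoid-axiom-type condition for the homotopy invariant projective structure on $\Box\CC^{\ast}-\Spc_{0}$) that the paper records only fragmentarily (e.g.\ Lemma \ref{lemma:pprojectivemonoid} is stated for the pointwise structure).

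One point your parenthetical waves off deserves scrutiny: the derived-unit criterion needs the source cofibrant, and $\Sigma^{\infty}_{C}\X$ is projective cofibrant in $\Spt_{C}$ only when $\X$ is projective cofibrant. Replacing $\X$ by $\QQ\X$ produces a different map, and identifying the two requires that $C^{\otimes n}\otimes(\QQ\X\to\X)$ be a level $\CC^{\ast}$-weak equivalence, which is not automatic from monoidalness alone --- Ken Brown's lemma gives only the cofibrant-to-cofibrant case. Jardine sidesteps this because he works with injective cofibrations, where every object is cofibrant, and the paper's citation inherits that tacit convention; so either add a cofibrancy hypothesis on $\X$, pass to the (Quillen equivalent) injective homotopy invariant structure, or explain explicitly why the reduction to $\QQ\X$ is harmless in the projective setting.
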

\begin{proof}
Using Lemma \ref{lemma:cyclicpermutationcondition} this follows as in the proof of
\cite[Lemma 3.14]{Jardine:MSS}.
\end{proof}
\begin{theorem}
\label{theorem:compositionstableweakequivalence}
For every cubical $\CC^{\ast}$-spectrum $\E$ there is a stable weak equivalence
\begin{equation*}
\xymatrix{
\E\ar[r] & 
\Omega_{C}(\E\wedge C)\ar[r] &
\Omega_{C}\RRR(\E\wedge C). }
\end{equation*}
\end{theorem}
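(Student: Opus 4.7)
The plan is to bootstrap from Lemma \ref{lemma:compositionstableweakequivalence}, which already establishes the result for suspension $\CC^{\ast}$-spectra, to arbitrary cubical $\CC^{\ast}$-spectra using the layer filtration of Example \ref{example:layerfiltration}. The guiding principle is that both functors $\E \mapsto \E$ and $\E \mapsto \Omega_{C}\RRR(\E \wedge C)$ should be shown to preserve the relevant colimits and cofiber sequences up to stable weak equivalence, so that the map of interest, which we know is a stable weak equivalence on the building blocks, remains one after assembly.

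First I would upgrade Lemma \ref{lemma:compositionstableweakequivalence} from suspension spectra $\Sigma_{C}^{\infty}\X$ to shifted suspension spectra $\Sigma_{C}^{\infty}\X[-m]$. Both $-\wedge C$ and $\Omega_{C}$ commute with the shift functor of Example \ref{example:Cstarshift}, so the claim reduces formally to the unshifted case. In particular, the map
\begin{equation*}
\Sigma_{C}^{\infty}\X[-m] \longrightarrow \Omega_{C}\RRR\bigl(\Sigma_{C}^{\infty}\X[-m] \wedge C\bigr)
\end{equation*}
is a stable weak equivalence for every pointed cubical $\CC^{\ast}$-space $\X$ and every $m \geq 0$.

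Next I would invoke Example \ref{example:layerfiltration} to write $\E = \colim_{m}\, L_{m}\E$, where the stable weak equivalence $\Sigma_{C}^{\infty}\E_{m}[-m] \to L_{m}\E$ of Example \ref{example:stableweakequivalence} combined with the pushout description
\begin{equation*}
\xymatrix{
\Sigma_{C}^{\infty}(C\otimes\E_{m})[-m-1] \ar[r]\ar[d] & \Sigma_{C}^{\infty}(\E_{m+1})[-m-1] \ar[d] \\
L_{m}\E \ar[r] & L_{m+1}\E
}
\end{equation*}
displays each $L_{m+1}\E$ as obtained from $L_{m}\E$ by gluing in shifted suspension spectra. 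A standard induction on $m$, using left properness of the stable model structure (Theorem \ref{theorem:Cstarstablemodelstructure}) and the fact that homotopy pushouts are preserved by both functors (the identity trivially, and $\Omega_{C}\RRR(- \wedge C)$ because $-\wedge C$ is left Quillen and $\Omega_{C}\RRR$ preserves homotopy pullbacks, hence homotopy pushouts in a stable context via Lemma \ref{lemma:Cstarstablecharacterization}), reduces the claim for $L_{m}\E$ to the claim for shifted suspension spectra established in the first step.

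Finally, passage from the layers $L_{m}\E$ to $\E = \colim_{m} L_{m}\E$ uses the characterization of stable weak equivalences by bigraded stable homotopy groups (Lemma \ref{lemma:stableweakequivalence-homotopy}) together with the fact that these groups commute with filtered colimits. The identity functor plainly preserves filtered colimits; for $\Omega_{C}\RRR(- \wedge C)$, the smash product $-\wedge C$ preserves all colimits as a left adjoint, the level fibrant replacement $\RRR$ commutes with filtered colimits up to stable weak equivalence (the stable model structure being built from weakly finitely generated ingredients), and $\Omega_{C} = \underline{\Hom}(C, -)$ commutes with filtered colimits because $C$ is finitely presentable by Example \ref{example:internalhomfinitelypresentable}. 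Thus the map at stage $\E$ is a filtered colimit of stable weak equivalences, hence a stable weak equivalence.

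I expect the main obstacle to be the pushout step: one needs to know that applying $\Omega_{C}\RRR(- \wedge C)$ to the pushout square defining $L_{m+1}\E$ produces a homotopy pushout in the stable model structure, which requires left properness together with a cube lemma argument to glue the three known stable weak equivalences (at $L_{m}\E$ and at the two shifted suspension corners) into a stable weak equivalence at $L_{m+1}\E$. The cyclic permutation property of $C$ established in Lemma \ref{lemma:cyclicpermutationcondition}, which already powered Lemma \ref{lemma:compositionstableweakequivalence}, is what makes $-\wedge C$ and $\Omega_{C}$ behave as mutually inverse equivalences after stabilization and underlies the compatibility needed at each stage.
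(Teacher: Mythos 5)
Your overall route is the same as the paper's: establish the claim for shifted suspension spectra via Lemma \ref{lemma:compositionstableweakequivalence}, assemble along the layer filtration, and pass to the colimit. You also arrive at essentially the paper's Lemma \ref{lemma:concludecompositionstableweakequivalence} for the colimit step. So the proposal is correct in outline, but the middle step takes an unnecessary detour.

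Specifically, the inductive gluing argument via the pushout square defining $L_{m+1}\E$ is not needed. You already record that $\Sigma_{C}^{\infty}\E_{m}[-m] \rightarrow L_{m}\E$ is a stable weak equivalence (Example \ref{example:stableweakequivalence}); once you know that both the identity and $\Omega_{C}\RRR(-\wedge C)$ preserve stable weak equivalences (the latter by combining Corollary \ref{corollary:patched} for $-\wedge C$, the natural level equivalence $\id\rightarrow\RRR$, and Lemma \ref{lemma:loopspreservesstableweakequivalencesoffibrantobjects} for $\Omega_{C}$ on level fibrant objects), the claim at $L_{m}\E$ follows immediately from the claim at $\Sigma_{C}^{\infty}\E_{m}[-m]$ by two-out-of-three in the commuting square
\begin{equation*}
\xymatrix{
\Sigma_{C}^{\infty}\E_{m}[-m] \ar[r]\ar[d] & L_{m}\E \ar[d] \\
\Omega_{C}\RRR(\Sigma_{C}^{\infty}\E_{m}[-m]\wedge C) \ar[r] & \Omega_{C}\RRR(L_{m}\E\wedge C), }
\end{equation*}
with no induction, no appeal to left properness, and no discussion of whether $\Omega_{C}\RRR(-\wedge C)$ preserves homotopy pushouts. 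Moreover, your justification for that last preservation property is shaky: Lemma \ref{lemma:Cstarstablecharacterization} is a characterization of stable fibrations with stably fibrant codomain, not a statement that homotopy pullbacks coincide with homotopy pushouts; the identification you want comes from stability of the model structure as a general phenomenon, a fact that is in the background but is not what that lemma says. The paper's direct transfer sidesteps all of this, so I would simply excise the pushout induction. The remainder of your proposal — the reduction to shifted suspension spectra via compatibility of $-\wedge C$ and $\Omega_{C}$ with the shift functor, and the passage to the colimit using finite presentability of $C$ together with weak finite generation so that $\Omega_{C}\RRR(-\wedge C)$ commutes with filtered colimits up to stable weak equivalence — matches the paper.
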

\begin{proof}
The idea is to reduce the proof to Lemma \ref{lemma:compositionstableweakequivalence}
using the layer filtration.
Indeed, since the shift functor preserves stable weak equivalences,
Lemma \ref{lemma:compositionstableweakequivalence} implies there are stable weak equivalences
\begin{equation*}
\xymatrix{
\Sigma^{\infty}_{C}\E_{n}[-n]\ar[r] & 
\Omega_{C}\RRR(\Sigma^{\infty}_{C}\E_{n}[-n]\wedge C). }
\end{equation*}
Clearly the functors $-\wedge C$ and $\Omega_{C}\RRR(-\wedge C)$ preserve stable weak equivalences 
and there is a stable weak equivalence $\Sigma^{\infty}_{C}\E_{n}[-n]\rightarrow L_{n}\E$.
To conclude the map in question is a stable weak equivalence we refer to the next lemma,
which follows using the arguments in \cite[Lemma 3.12, pp.~498-499]{Jardine:MSS}.
\end{proof}
\begin{lemma}
\label{lemma:concludecompositionstableweakequivalence}
If $\E_{0}\rightarrow\E_{1}\rightarrow\cdots$ is a sequential diagram of cubical $\CC^{\ast}$-spectra
such that $\E_{i}\rightarrow\Omega_{C}\RRR(\E_{i}\wedge C)$ is a stable weak equivalence,
then so is 
${\colim}\,\,\E_{i}\rightarrow\Omega_{C}\RRR\bigl(({\colim}\,\,\E_{i})\wedge C\bigr)$.
\end{lemma}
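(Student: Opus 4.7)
The plan is to reduce the claim to an assertion about bigraded stable homotopy groups via Lemma \ref{lemma:stableweakequivalence-homotopy} and then exploit the fact that the constructions involved commute with filtered colimits up to stable weak equivalence. First I would invoke the characterization of stable weak equivalences in terms of the bigraded invariants $\pi_{p,q}$ and recall that these preserve filtered colimits. Together with the fact that filtered colimits in $\Spt_{C}$ preserve stable weak equivalences (because the levelwise model structure is cubical and weakly finitely generated, so Lemma \ref{lemma:afgmodelcategories} applies level by level), the hypothesis gives a stable weak equivalence
\begin{equation*}
\underset{i}{\colim}\,\E_{i}\longrightarrow \underset{i}{\colim}\,\Omega_{C}\RRR(\E_{i}\wedge C).
\end{equation*}

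Next I would analyze the right-hand side. Since $-\wedge C$ is a left adjoint it commutes with colimits, so $(\colim_{i}\E_{i})\wedge C=\colim_{i}(\E_{i}\wedge C)$. For the level fibrant replacement, the levelwise model structure is weakly finitely generated, and applying Lemma \ref{lemma:afgmodelcategories} one more time yields a level (hence stable) weak equivalence
\begin{equation*}
\underset{i}{\colim}\,\RRR(\E_{i}\wedge C)\longrightarrow \RRR\bigl((\underset{i}{\colim}\,\E_{i})\wedge C\bigr),
\end{equation*}
because filtered colimits of level acyclic cofibrations are level acyclic cofibrations and filtered colimits of level fibrant objects remain level fibrant. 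Finally, Example \ref{example:internalhomfinitelypresentable} says that $\Omega_{C}=\underline{\Hom}(C,-)$ is finitely presentable, and so it commutes with sequential colimits on the nose. Splicing these maps produces the required zig-zag of stable weak equivalences between $\colim_{i}\Omega_{C}\RRR(\E_{i}\wedge C)$ and $\Omega_{C}\RRR\bigl((\colim_{i}\E_{i})\wedge C\bigr)$, and composing with the hypothesis map finishes the proof by saturation.

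The main obstacle is the middle step: verifying that the levelwise fibrant replacement $\RRR$ commutes with sequential colimits up to level weak equivalence. This is where the weak finiteness of the generators enters essentially; one must check that the generating level acyclic cofibrations (pulled from $J^{\ast}_{\Box\CC^{\ast}-\Spc}$ in each level, suitably shifted as in the usual construction for $\Spt_{C}$) have finitely presentable domains and codomains, so that the small object argument defining $\RRR$ interacts well with filtered colimits. Once this is in place, everything else is formal, since $-\wedge C$ preserves colimits on categorical grounds and $\Omega_{C}$ preserves sequential colimits because $C$ is finitely presentable.
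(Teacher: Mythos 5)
Your argument is correct and follows essentially the same strategy as the Jardine argument the paper cites for this lemma: commute filtered colimits past $-\wedge C$, the level fibrant replacement, and $\Omega_{C}$, then use that stable weak equivalences are closed under filtered colimits. The only slip is in your parenthetical justification for why filtered colimits preserve \emph{stable} weak equivalences: applying Lemma \ref{lemma:afgmodelcategories} ``level by level'' only yields closure of level weak equivalences under filtered colimits, whereas the hypothesis maps $\E_{i}\rightarrow\Omega_{C}\RRR(\E_{i}\wedge C)$ are merely stable weak equivalences. What you actually need is that the \emph{stable} model structure on $\Spt_{C}$ is weakly finitely generated (Lemma \ref{lemma:spectraweaklyfinitelygenerated}), or equivalently the characterization of stable weak equivalences via $\pi_{p,q}$ together with the fact that these commute with filtered colimits---both of which you mention anyway, so the conclusion stands. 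The remaining steps are sound: $-\wedge C$ preserves colimits for categorical reasons, the level fibrant replacement commutes with sequential colimits up to level weak equivalence because it is built by the small object argument from maps with finitely presentable domains and codomains, $\Omega_{C}$ commutes with sequential colimits since $C$ is finitely presentable, and $\Omega_{C}$ preserves level weak equivalences between level fibrant objects by Ken Brown's lemma.
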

We include the next results for completeness.
The proofs can be patched from the arguments in \cite[\S3.4]{Jardine:MSS}. 
\begin{corollary}
\label{corollary:patched}
Let $\E$ and $\F$ be cubical $\CC^{\ast}$-spectra.
\begin{itemize}
\item 
If $\E$ is level fibrant the evaluation map $\Omega_{C}\E\wedge C\rightarrow\E$
is a stable weak equivalence.
\item
A map $\E\wedge C\rightarrow\F$ is a stable weak equivalence if and only if
$\E\rightarrow\Omega_{C}\F\rightarrow\Omega_{C}\RRR\F$ is a stable weak equivalence.
\item
A map $\E\rightarrow\F$ is a stable weak equivalence if and only if
$\E\wedge C\rightarrow\F\wedge C$ is so.
\item
There are natural stable weak equivalences between $\Sigma_{C}\E$, $\E[1]$ and $\E\wedge C$.
\item
If $\E$ is level fibrant there are natural stable weak equivalences between 
$\Omega_{C}^{\ell}\E$, $\E[-1]$ and $\Omega_{C}\E$.
\end{itemize}
\end{corollary}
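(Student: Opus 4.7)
The plan is to derive all five items from Theorem \ref{theorem:compositionstableweakequivalence}, the adjunction $(-\wedge C)\dashv\Omega_{C}$, Lemma \ref{lemma:loopspreservesstableweakequivalencesoffibrantobjects}, the bigraded homotopy group criterion in Lemma \ref{lemma:stableweakequivalence-homotopy}, and Lemma \ref{lemma:cyclicpermutationcondition} to handle the necessary permutations; the overall strategy is parallel to Jardine's in \cite[\S3.4]{Jardine:MSS}.

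For (i), I would observe that the evaluation map $\Omega_{C}\E\wedge C\to\E$ is the counit of the smash-loop adjunction. Applying Theorem \ref{theorem:compositionstableweakequivalence} with $\Omega_{C}\E$ in place of $\E$ produces a stable equivalence $\Omega_{C}\E\to\Omega_{C}\RRR(\Omega_{C}\E\wedge C)$ whose composition with $\Omega_{C}$ of the evaluation is the identity up to the level weak equivalence inserted by $\RRR$, using that $\E$ is already level fibrant, and two-out-of-three then yields (i). Item (ii) is immediate from adjointness: a map $\E\wedge C\to\F$ transposes to $\E\to\Omega_{C}\F$, and composition with the stable equivalence $\E\to\Omega_{C}\RRR(\E\wedge C)$ of Theorem \ref{theorem:compositionstableweakequivalence} fits into a commutative triangle with vertex $\Omega_{C}\RRR\F$; two-out-of-three, combined with Lemma \ref{lemma:loopspreservesstableweakequivalencesoffibrantobjects} to propagate equivalences across $\Omega_{C}\RRR$, gives both directions. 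Item (iii) follows from Lemma \ref{lemma:stableweakequivalence-homotopy}: the definition of $\pi_{p,q}$ via the telescope of $[S^{p+n}\otimes C_{0}(\R^{q+n}),\E_{n}\,\vert\,A]$ implies $\pi_{p,q}(\E\wedge C)\cong\pi_{p-1,q-1}\E$, so smashing with $C$ is an isomorphism on bigraded stable homotopy groups and therefore both preserves and reflects stable equivalences.

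For (iv), the comparison $\Sigma_{C}\E\to\E\wedge C$ is induced levelwise by the symmetry of the monoidal product, and its compatibility with the two families of structure maps is verified up to homotopy using Lemma \ref{lemma:cyclicpermutationcondition}. The remaining comparison $\Sigma_{C}\E\to\E[1]$ is the $\CC^{\ast}$-spectrum map whose $n$-th level is the structure map $\sigma_{n}^{\E}\colon C\otimes\E_{n}\to\E_{n+1}$, and it is a stable equivalence by cofinality of the resulting system in the definition of $\pi_{p,q}$. Item (v) is the adjoint statement: apply $\Omega_{C}$ levelwise to (iv), invoke Lemma \ref{lemma:loopspreservesstableweakequivalencesoffibrantobjects} under the level fibrancy hypothesis, and resolve the coordinate-swap between the adjoint structure maps of $\Omega_{C}^{\ell}\E$ and $\Omega_{C}\E$ via the cyclic permutation condition.

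The hard part will be item (iv). Although $\Sigma_{C}\E$, $\E\wedge C$, and $\E[1]$ are all morally ``the suspension'' of $\E$, in the $\CC^{\ast}$-setting they differ by nontrivial placements of the $C$-factor and by a reindexing; passing between them requires a delicate comparison of structure maps, and the twist between $C\otimes\E_{n}$ and $\E_{n}\otimes C$ is precisely where Lemma \ref{lemma:cyclicpermutationcondition} is indispensable, since it replaces a naive symmetry of the three-fold smash factor with a coherent homotopy that descends to an isomorphism in $\SH^{\ast}$. Once (iv) is established, (v) follows by adjointness under the level fibrancy hypothesis, and the other three items are formal consequences of Theorem \ref{theorem:compositionstableweakequivalence} and the $(-\wedge C)\dashv\Omega_{C}$ adjunction.
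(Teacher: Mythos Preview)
Your proposal is correct and matches the paper's approach exactly: the paper gives no argument beyond the sentence ``The proofs can be patched from the arguments in \cite[\S3.4]{Jardine:MSS},'' and you have outlined precisely that patching, invoking Theorem~\ref{theorem:compositionstableweakequivalence}, the $(-\wedge C)\dashv\Omega_{C}$ adjunction, Lemma~\ref{lemma:loopspreservesstableweakequivalencesoffibrantobjects}, and the cyclic permutation condition Lemma~\ref{lemma:cyclicpermutationcondition} at the expected places. One small caution: your direct bigraded-homotopy-group argument for item (iii) is not as immediate from the colimit definition as you suggest, since cancelling the extra $C$-factor on the target side of $[S^{p+n}\otimes C_{0}(\R^{q+n}),\E_{n}\otimes C\,\vert\,A]$ requires an unstable adjunction that is not available on the nose; the cleaner route, and the one Jardine takes, is to deduce (iii) from (ii) together with Theorem~\ref{theorem:compositionstableweakequivalence} and two-out-of-three.
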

\vspace{0.1in}

As for the circle $C$,
we may define categories of $S^{1}$-spectra $\Spt_{S^{1}}$ and $C_{0}(\R)$-spectra 
$\Spt_{C_{0}(\R)}$ of pointed cubical $\CC^{\ast}$-spaces and construct level and 
stable model structures.  
In particular,
a map $\E\rightarrow\F$ of cubical $S^{1}$-spectra is a stable weak equivalence
if and only if $\Theta^{\infty}\RRR\E\rightarrow\Theta^{\infty}\RRR\F$ is a 
pointwise level weak equivalence, 
i.e.~all the induced maps 
$\pi_{p}\Theta^{\infty}\RRR\E(A)\rightarrow\pi_{p}\Theta^{\infty}\RRR\F(A)$ 
of homotopy groups are isomorphisms.
Here $\Theta^{\infty}$ is defined as above using $\underline{\Hom}(S^{1},-)$ and $\RRR$ 
is a fibrant replacement functor in the level model structure on $\Spt_{S^{1}}$.
The group $\pi_{p}\Theta^{\infty}\RRR\E(A)$ is isomorphic to the colimit of the sequential diagram
\begin{equation}
\label{stablehomotopygroupspresheavesinAsection}
\xymatrix{
\cdots\ar[r] &
[S^p,\E_{n}\vert A]\ar[r] & 
[S^{p+1},\E_{n+1}\vert A]\ar[r] & 
[S^{p+2},\E_{n+2}\vert A]\ar[r] &\cdots}
\end{equation}
in the homotopy category $\HH(\Box\CC^{\ast}-\Spc_{0}\downarrow A)$.
Define $\pi_{p}\E$ in $A$-sections to be the colimit of (\ref{stablehomotopygroupspresheavesinAsection}).
Thus $\E\rightarrow\F$ is a stable weak equivalence if and only if there are induced isomorphisms
$\pi_{p}\E=\pi_{p}\F$ for every integer $p$.
Every level fiber sequence $\F\rightarrow\E\rightarrow\E'$ can be functorially replaced up to level
weak equivalence by a fiber sequence of level fibrant cubical $S^{1}$-spectra, so that in $A$-sections 
we get a level fiber sequence $\RRR\F(A)\rightarrow\RRR\E(A)\rightarrow\RRR\E'(A)$.  
This implies there is a long exact sequence: 
\begin{equation}
\label{longexactsequence}
\xymatrix{
\cdots\ar[r] &
\pi_{p+1}\E'\ar[r] &
\pi_{p}\F\ar[r] &
\pi_{p}\E\ar[r] &
\pi_{p}\E'\ar[r] &
\cdots }
\end{equation}
Applying the proof of \cite[Corollary 3.6]{Jardine:MSS} to our setting we get the next result.
\begin{lemma}
\label{lemma:cofibersequenceinducelongexactsequence}
Every cofiber sequence $\E\rightarrow\E'\rightarrow\E'/\E$ induces a natural long exact sequence:
\begin{equation*}
\xymatrix{
\cdots\ar[r] &
\pi_{p+1}\E'/\E\ar[r] &
\pi_{p}\E\ar[r] &
\pi_{p}\E'\ar[r] &
\pi_{p}\E'/\E\ar[r] &
\cdots }
\end{equation*}
\end{lemma}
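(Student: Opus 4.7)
The plan is to adapt Jardine's argument for the motivic case to the present setting, reducing the cofiber long exact sequence to the already established fiber long exact sequence \eqref{longexactsequence}. First I would use the level projective model structure on $\Spt_{S^{1}}$ to factor $\E\rightarrow\E'$ functorially as a projective cofibration $\E\cof\widetilde{\E}'$ composed with a level acyclic fibration $\widetilde{\E}'\overset{\sim}{\twoheadrightarrow}\E'$. Since $\pi_{p}$ is a homotopy functor (stable homotopy groups are invariant under stable weak equivalences, hence under level weak equivalences), this reduces the problem to the case where $\E\rightarrow\E'$ is a genuine level cofibration with level cofiber $\E'/\E$ agreeing with the homotopy cofiber in $\Ho(\Spt_{S^{1}})$.

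Next I would analyze each level. For every $n\geq 0$ the sequence $\E_{n}\cof\widetilde{\E}'_{n}\rightarrow\widetilde{\E}'_{n}/\E_{n}$ is a cofibration of pointed cubical $\CC^{\ast}$-spaces, so the Puppe construction produces
\begin{equation*}
\E_{n}\longrightarrow\widetilde{\E}'_{n}\longrightarrow\widetilde{\E}'_{n}/\E_{n}\longrightarrow\E_{n}\wedge S^{1}\longrightarrow\widetilde{\E}'_{n}\wedge S^{1}\longrightarrow\cdots
\end{equation*}
Applying the functor $\Theta^{\infty}\RRR$ level-wise and using that $S^{1}$ satisfies the cyclic permutation condition trivially (every permutation of $S^{3}$ has degree $\pm 1$, and the three-cycle is even), the $S^{1}$-spectrum analog of Theorem \ref{theorem:compositionstableweakequivalence} and Corollary \ref{corollary:patched} shows that $-\wedge S^{1}$ and $\Omega_{S^{1}}(-)$ become inverse autoequivalences on the stable homotopy category. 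Consequently the cofiber sequence $\E\rightarrow\E'\rightarrow\E'/\E$ is stably weakly equivalent to a fiber sequence $\Omega_{S^{1}}(\E'/\E)\rightarrow\E\rightarrow\E'$ after shifting.

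Finally, applying the long exact sequence \eqref{longexactsequence} for level fiber sequences to this rotated sequence and using the identification $\pi_{p+1}\E'/\E=\pi_{p}\Omega_{S^{1}}(\E'/\E)$ (valid since $\Omega_{S^{1}}$ decreases the $\pi_{p}$-degree by one, as follows from the definition of $\pi_{p}$ as the colimit of the directed system in \eqref{stablehomotopygroupspresheavesinAsection}) assembles into the desired long exact sequence. Naturality in the cofiber sequence is inherited from the functoriality of the factorization and of the fibrant/stabilization replacements.

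The main obstacle will be keeping the shift, loop, and stabilization functors compatibly aligned when identifying the connecting homomorphism $\pi_{p+1}\E'/\E\rightarrow\pi_{p}\E$ with the boundary map of the Puppe sequence; this requires careful use of the fact that $\Omega_{S^{1}}$ preserves stable weak equivalences between level fibrant objects (the $S^{1}$-analog of Lemma \ref{lemma:loopspreservesstableweakequivalencesoffibrantobjects}) and that the natural map $\Omega_{S^{1}}\RRR(-\wedge S^{1})\rightarrow\id$ is a stable weak equivalence, both of which are consequences of the $S^{1}$-version of Theorem \ref{theorem:compositionstableweakequivalence}.
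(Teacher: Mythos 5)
Your argument follows the same route as the proof the paper cites (Jardine's Corollary 3.6 in [MSS]): reduce to the already-established long exact sequence \eqref{longexactsequence} for level fiber sequences by identifying the given cofiber sequence with a fiber sequence up to stable weak equivalence, using the invertibility of $\Sigma_{S^{1}}$ and the stabilization functor $\Theta^{\infty}$. The bookkeeping you flag as the main obstacle is precisely the content of the $S^{1}$-spectrum analog of Jardine's Lemma 3.9 and Corollary 3.10 (which the paper records in its $C$-spectrum form as the unnamed corollary at the end of \S\ref{subsection:bispectra}), so your proposal is correct and takes essentially the same approach as the paper.
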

We end this section with a useful result which reduces questions about $\SH^{\ast}$ to properties 
of a set of well-behaved compact generators.
It is the stable analog of Theorem \ref{theorem:unstableweakgenerators}.
For completeness we indicate the proof following \cite[Section~7.4]{Hovey:Modelcategories}.
\begin{theorem}
\label{theorem:stableweakgenerators}
The cofibers of the generating projective cofibrations 
\begin{equation*}
\Fr_{m}\bigl(A\otimes(\partial\Box^{n}\subset\Box^{n})_{+}\bigr)
\end{equation*}
form a set of compact generators for the stable homotopy category $\SH^{\ast}$.
\end{theorem}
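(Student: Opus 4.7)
The plan is to mirror Hovey's argument in \cite[\S 7.4]{Hovey:Modelcategories}, bootstrapping the unstable generation statement of Theorem \ref{theorem:unstableweakgenerators} along the Quillen adjunction $(\Fr_{m},\Ev_{m})$ between $\Box\CC^{\ast}-\Spc_{0}$ and $\Spt_{C}$. The left adjoint $\Fr_{m}$ sends $\X$ to the cubical $\CC^{\ast}$-spectrum whose $n$th level is $C^{\otimes(n-m)}\otimes\X$ for $n\geq m$ and trivial below, and commutes with cofibers so that the proposed generators really are $\Fr_{m}\X$ for the cofibers $\X$ of $A\otimes(\partial\Box^{n}\subset\Box^{n})_{+}$. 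The computational engine is the formula
\begin{equation*}
\SH^{\ast}(\Fr_{m}\X,\F)=\underset{n}{\colim}\,\,\HH^{\ast}(\X,\Omega_{C}^{n}\RRR\F_{m+n}),
\end{equation*}
valid for finitely presentable projective cofibrant $\X$, obtained from Corollary \ref{corollary:finitelypresentablecofibrantcolimit} combined with the layer filtration of Example \ref{example:layerfiltration} and Theorem \ref{theorem:stableweakequivalencesandstabilization}.

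For weak generation, let $\F$ be nonzero in $\SH^{\ast}$. Then some level $(\Theta^{\infty}\RRR\F)_{m}$ of its stably fibrant replacement is not $\CC^{\ast}$-weakly contractible, so by Theorem \ref{theorem:unstableweakgenerators} there exist an unstable generator $\X$ and an integer $k\geq 0$ with $\HH^{\ast}(\X\otimes S^{k},(\Theta^{\infty}\RRR\F)_{m})$ nonzero. Since the colimit in the displayed formula is stationary on stably fibrant targets, this produces a nonzero element of $\SH^{\ast}(\Fr_{m}(\X\otimes S^{k}),\F)$, and $\Fr_{m}(\X\otimes S^{k})$ is a suspension of $\Fr_{m}\X$ in the triangulated category $\SH^{\ast}$. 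Thus the shifts of the proposed set detect nontriviality.

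For compactness of $\Fr_{m}\X$, we must check that for any set $\{\F_{\alpha}\}_{\alpha\in\lambda}$ the canonical map from $\colim_{\lambda'\subseteq\lambda,\,|\lambda'|<\infty}\SH^{\ast}(\Fr_{m}\X,\bigvee_{\alpha\in\lambda'}\F_{\alpha})$ to $\SH^{\ast}(\Fr_{m}\X,\bigvee_{\alpha\in\lambda}\F_{\alpha})$ is bijective. By the displayed formula the question reduces to showing that, after applying $\HH^{\ast}(\X,\Omega_{C}^{n}(-)_{m+n})$ and passing to the colimit over $n$, finite subsets of the wedge exhaust the whole wedge. The three ingredients required are: the finite presentability of $\X$ in $\Box\CC^{\ast}-\Spc_{0}$, ensuring $\HH^{\ast}(\X,-)$ commutes with filtered colimits of $\CC^{\ast}$-projective fibrant objects via Lemma \ref{lemma:afgmodelcategories}; the finite presentability of $\underline{\Hom}(C,-)$ from Example \ref{example:internalhomfinitelypresentable}; and the fact that the stable model structure is weakly finitely generated (the spectrum level analog of Proposition \ref{proposition:almostfinitelygeneratedpreservedunderlocalization}), which allows one to interchange filtered colimits with stably fibrant replacement and with $\Theta^{\infty}$.

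The main obstacle is the last ingredient: an arbitrary wedge of stably fibrant spectra need not itself be stably fibrant, and the functor $\Theta^{\infty}\RRR$ involves a transfinite interaction with $\Omega_{C}$. The clean way to handle this, following Hovey, is to observe that $\Theta^{\infty}$ is itself built as a filtered colimit and that the relevant colimits can be interchanged by a careful application of the long exact sequences in Lemma \ref{lemma:cofibersequenceinducelongexactsequence}, thereby reducing infinite wedges to filtered colimits of finite wedges where the unstable compactness of $\X$ suffices.
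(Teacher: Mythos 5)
Your weak-generation half is fine and is essentially the paper's argument spelled out: the paper simply cites Hovey \cite[Section~7.3]{Hovey:Modelcategories}, whereas you trace the detection through levels of $\Theta^{\infty}\RRR\F$ and Theorem \ref{theorem:unstableweakgenerators}; both are valid.

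The compactness half, however, stops exactly where the work begins. You correctly name the three ingredients and correctly identify the obstacle (an infinite wedge of stably fibrant spectra is not stably fibrant, and a stably fibrant replacement does not visibly commute with infinite coproducts), but the resolution you propose --- reducing to finite wedges via ``a careful application of the long exact sequences in Lemma \ref{lemma:cofibersequenceinducelongexactsequence}'' --- is not the right tool and is not what Hovey does. The long exact sequence of a cofiber sequence does not let you commute $\SH^{\ast}(\E,-)$ past an arbitrary coproduct, and there is no convenient cofiber sequence comparing a finite sub-wedge with the full wedge whose third term you control. What actually closes the gap is a transfinite induction on the indexing ordinal $\lambda$: after replacing each $\F_{\alpha}$ by a bifibrant object, write $\coprod_{\alpha<\lambda}\F_{\alpha}$ as the filtered colimit $\colim_{\beta<\lambda}\coprod_{\alpha<\beta}\F_{\alpha}$; apply the level fibrant replacement $\RRR$ to each stage and observe that $\colim_{\beta<\lambda}\RRR\coprod_{\alpha<\beta}\F_{\alpha}$ is stably fibrant because a filtered colimit of stably fibrant cubical $\CC^{\ast}$-spectra is stably fibrant (Lemma \ref{lemma:spectraweaklyfinitelygenerated}); this gives a stably fibrant replacement of the full coproduct as a $\lambda$-indexed filtered colimit. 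Then the finite presentability of $\E=\Fr_{m}\X$ \emph{in $\Spt_{C}$} --- not merely the unstable finite presentability of $\X$, which is what your last sentence invokes --- produces a factoring of any map $\E\rightarrow\coprod_{\alpha<\lambda}\F_{\alpha}$ through some $\RRR\coprod_{\alpha<\beta}\F_{\alpha}$ with $\beta<\lambda$, and the transfinite induction hypothesis finishes the limit-ordinal case (the successor-ordinal case being handled by finality of finite subsets containing $\lambda$, and injectivity being trivial by retraction). Your proposal asserts that ``the unstable compactness of $\X$ suffices'' without carrying out this factoring and induction, so there is a genuine gap at the step that carries all the weight.
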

\begin{proof}
By \cite[Section~7.3]{Hovey:Modelcategories} the cofibers form a set of weak generators.
It remains to show that the cofibrant and finitely presentable cofibers are compact in $\SH^{\ast}$.
\\ \indent
Suppose $\E$ is cofibrant and finitely presentable in $\Spt_{C}$. 
Let $\lambda$ be an ordinal and denote by $\lambda^{\fin}$ its set of finite subsets.
Now for every $\lambda$-indexed collection of cubical $\CC^{\ast}$-spectra we need to show that the canonical map 
\begin{equation}
\label{equation:generators}
\xymatrix{
\colim_{\alpha^{\fin}} \SH^{\ast}(\E,\coprod_{\alpha\in\alpha^{\fin}}\F_{\alpha})\ar[r] &
\SH^{\ast}(\E,\coprod_{\alpha<\lambda}\F_{\alpha}) }
\end{equation}
is an isomorphism.
Injectivity of the map (\ref{equation:generators}) holds because the inclusion of every finite subcoproduct has a retraction.
To prove surjectivity we use transfinite induction.
The subcategory of finite subsets of $\lambda+1$ containing $\lambda$ is final in the category of finite subsets 
of $\lambda+1$. 
Thus the successor ordinal case holds and we may assume $\lambda$ is a limit ordinal such that (\ref{equation:generators})
is surjective for every $\beta<\lambda$.
Without loss of generality we may assume $\F_{\alpha}$ is bifibrant and hence that $\coprod_{\alpha<\lambda}\F_{\alpha}$
is cofibrant.
Since a filtered colimit of stably fibrant cubical $\CC^{\ast}$-spectra is stably fibrant applying a fibrant replacement 
functor $\RRR$ yields a weak equivalence
\begin{equation*}
\xymatrix{
\coprod_{\alpha<\lambda}\F_{\alpha}=
\colim_{\beta<\lambda} \coprod_{\alpha<\beta}\F_{\alpha}\ar[r] &
\colim_{\beta<\lambda} \RRR\coprod_{\alpha<\beta}\F_{\alpha}. } 
\end{equation*}
Thus by finite presentability of $\E$ every map to $\coprod_{\alpha<\lambda}\F_{\alpha}$ factors through 
$\RRR\coprod_{\alpha<\beta}\F_{\alpha}$ for some ordinal $\beta<\lambda$ for which surjectivity of 
(\ref{equation:generators}) holds by the transfinite induction assumption.
\end{proof}

On account of Corollary \ref{homotopyweaklyfinitelygenerated} it follows that the stable model structure on $\Spt_{C}$ is 
weakly finitely generated.
By applying Lemma \ref{lemma:afgmodelcategories} we deduce the following useful result.
\begin{lemma}
\label{lemma:spectraweaklyfinitelygenerated}
In the stable model structure on $\Spt_{C}$ the classes of
\begin{itemize}
\item 
acyclic fibrations
\item
fibrations with fibrant codomains
\item
fibrant objects
\item
stable weak equivalences
\end{itemize}
are closed under filtered colimits.
\end{lemma}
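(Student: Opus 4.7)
The plan is to deduce the conclusion immediately from Lemma \ref{lemma:afgmodelcategories} after establishing that the stable model structure on $\Spt_{C}$ is weakly finitely generated in the sense of Definition \ref{definition:weaklyfinitelygenerated}. So the real content is unwinding the definitions and exhibiting an explicit finite generating set $J'$ for the stable acyclic cofibrations that detect stable fibrations with fibrant codomain.

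First I would describe the generating cofibrations $I_{\Spt_C}$ of the stable model structure, which coincide with those of the level projective structure since the stable structure is its Bousfield localization and localization does not enlarge the class of cofibrations. These are the maps $\Fr_{m}\bigl(A\otimes(\partial\Box^{n}\subset\Box^{n})_{+}\bigr)$ with $A\in\CC^{\ast}-\Alg$ and $m,n\geq 0$; each has finitely presentable domain and codomain because $\Fr_{m}$ is left adjoint to evaluation $\Ev_{m}$, which preserves filtered colimits, and the corresponding pointed cubical $\CC^{\ast}$-spaces $A\otimes(\partial\Box^{n}\subset\Box^{n})_{+}$ are finitely presentable by Example \ref{example:finitelypresentable}. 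Next I would construct $J'_{\Spt_C}$ as the union of two pieces: the levelwise images $\Fr_{m}(J^{\ast}_{\Box\CC^{\ast}-\Spc})$ of the weakly finitely generating acyclic cofibrations supplied by Corollary \ref{homotopyweaklyfinitelygenerated}, together with pushout-product maps built from a cylinder factorization of the stabilization morphisms $\zeta_{m}\colon \Fr_{m+1}(C\otimes A\otimes\Box^{n}_{+})\to\Fr_{m}(A\otimes\Box^{n}_{+})$ against $\partial\Box^{k}\subset\Box^{k}$, just as in the proof of Proposition \ref{proposition:almostfinitelygeneratedpreservedunderlocalization}. Both families clearly consist of finitely presentable maps.

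The verification that $J'_{\Spt_C}$ detects stable fibrations among maps with stably fibrant codomain proceeds in two parts. A map with the RLP against $\Fr_{m}(J^{\ast}_{\Box\CC^{\ast}-\Spc})$ is a level fibration by the adjunction between $\Fr_m$ and $\Ev_m$ and Corollary \ref{homotopyweaklyfinitelygenerated}. Once level fibrancy is secured, the RLP against the stabilization pushout-products exactly expresses, via Lemma \ref{lemma:Cstarstablecharacterization}, that the square
\begin{equation*}
\xymatrix{
\E \ar[r]\ar[d] & \Theta^{\infty}\RRR\E\ar[d]\\
\F \ar[r] & \Theta^{\infty}\RRR\F
}
\end{equation*}
is homotopy cartesian in the level projective model structure, which under the hypothesis that $\F$ is stably fibrant characterises stable fibrations. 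With weak finite generation in hand, Lemma \ref{lemma:afgmodelcategories} applied to $\Spt_{C}$ gives all four closure statements at once.

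The main obstacle, and the only point where care is really needed, is the detection statement for $J'_{\Spt_C}$: one must ensure that the additional pushout-product maps built from the $\zeta_{m}$ are genuine stable acyclic cofibrations (which follows because the stable model structure is cubical and each $\zeta_{m}$ is a stable weak equivalence by Example \ref{example:stableweakequivalence} and Corollary \ref{corollary:patched}), and that their domains and codomains remain finitely presentable after the cylinder construction. Everything else is a formal consequence of Lemma \ref{lemma:afgmodelcategories}, since filtered colimits in $\Spt_{C}$ are computed levelwise and the four closure properties for the level projective model are inherited from the analogous statement for $\Box\CC^{\ast}-\Spc_{0}$.
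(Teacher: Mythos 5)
Your proposal follows the paper's argument exactly: the paper establishes (in the sentence immediately preceding the lemma, citing Corollary \ref{homotopyweaklyfinitelygenerated}) that the stable model structure on $\Spt_{C}$ is weakly finitely generated and then invokes Lemma \ref{lemma:afgmodelcategories}. You have simply unpacked the weak-finite-generation verification in more detail — identifying $\Fr_{m}(I_{\Box\CC^{\ast}-\Spc_{0}})$ as the generating cofibrations and assembling $J'_{\Spt_{C}}$ from $\Fr_{m}(J^{\ast}_{\Box\CC^{\ast}-\Spc})$ together with stabilization pushout-products, in the spirit of Proposition \ref{proposition:almostfinitelygeneratedpreservedunderlocalization} — which is precisely the content implicit in the paper's terse remark.
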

\newpage

\subsection{Bispectra}
\label{subsection:bispectra}
\begin{definition}
\label{definition:Cstarbispectra}
Let $m,n\geq 0$ be integers.
A cubical $\CC^{\ast}$-bispectrum $\E$ consists of pointed cubical $\CC^{\ast}$-spaces $\E_{m,n}$ 
together with structure maps 
\begin{equation*}
\xymatrix{
\sigma_{h}\colon S^{1}\otimes\E_{m,n}\ar[r] & \E_{m+1,n}}
\end{equation*} 
and
\begin{equation*}
\xymatrix{
\sigma_{v}\colon C_{0}(\R)\otimes\E_{m,n}\ar[r] & \E_{m,n+1}.} 
\end{equation*}
In addition, 
the structure maps are required to be compatible in the sense that the following diagrams commute:
\begin{equation*}
\xymatrix{
S^{1}\otimes C_{0}(\R)\otimes\E_{m,n} \ar[rr]^-{\tau\otimes\E_{m,n}}
\ar[d]_-{S^{1}\otimes\sigma_{v}} & & C_{0}(\R)\otimes S^{1}\otimes\E_{m,n}
\ar[d]^-{C_{0}(\R)\otimes\sigma_{h}} \\ 
S^{1}\otimes\E_{m,n+1}\ar[r]^-{\sigma_{h}} & \E_{m+1,n+1} & 
C_{0}(\R)\otimes\E_{m+1,n}\ar[l]_-{\sigma_{v}} }
\end{equation*} 
Here, $\tau$ flips the copies of $S^{1}$ and $C_{0}(\R)$. 
Let $\Spt_{S^{1},C_{0}(\R)}$ denote the category of cubical $\CC^{\ast}$-bispectra.
\end{definition}

As we hope to illustrate in this section,
the separation of suspension coordinates underlying the notion of a cubical $\CC^{\ast}$-bispectrum is a helpful tool. 

A cubical $\CC^{\ast}$-bispectrum can and will be interpreted as a cubical $C_{0}(\R)$-spectrum 
object in the category of cubical $S^{1}$-spectra;
that is, as a collection of cubical $S^{1}$-spectra $\E_{n}\equiv\E_{\ast,n}$ together with maps 
of cubical $S^{1}$-spectra induced by the structure maps $C_{0}(\R)\otimes\E_{n}\rightarrow\E_{n+1}$.
If $\X$ is a pointed cubical $\CC^{\ast}$-space, 
we let $\Sigma^{\infty}_{S^{1},C_{0}(\R)}\X$ denote the corresponding suspension cubical 
$\CC^{\ast}$-bispectrum.
\begin{proposition}
\label{proposition:homofsuspensionofCstaralgebras}
For $\X\in\CC^{\ast}-\Alg$ and $\E\in\Spt_{S^{1},C_{0}(\R)}$ there is an isomorphism
\begin{equation*}
\SH(\Sigma^{\infty}_{S^{1},C_{0}(\R)}\X,\E)=
\underset{n}{\colim}\,\,\SH^{\ast}_{S^{1}}\bigl(C_{0}(\R^{n})\wedge\Sigma^{\infty}_{S^{1}}\X,\E_{n}\bigr).
\end{equation*}
\end{proposition}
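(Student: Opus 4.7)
The plan is to interpret $\Spt_{S^{1},C_{0}(\R)}$ as the category of $C_{0}(\R)$-spectrum objects internal to $\Spt_{S^{1}}$, as indicated immediately after Definition \ref{definition:Cstarbispectra}. Under this interpretation $\Sigma^{\infty}_{S^{1},C_{0}(\R)}\X$ is identified with the $C_{0}(\R)$-suspension spectrum $\Sigma^{\infty}_{C_{0}(\R)}(\Sigma^{\infty}_{S^{1}}\X)$, and I would equip $\Spt_{S^{1},C_{0}(\R)}$ with the stable model structure obtained by running the machinery of Section \ref{subsection:spectra} with the base category $\Box\CC^{\ast}-\Spc_{0}$ replaced by $\Spt_{S^{1}}$ and the circle $C$ replaced by $C_{0}(\R)$. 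All the inputs required for this transfer are in place: $\Spt_{S^{1}}$ is combinatorial, cubical, and left proper by (the $S^{1}$-variant of) Theorem \ref{theorem:Cstarstablemodelstructure}, and the $C_{0}(\R)$-analog of Lemma \ref{lemma:cyclicpermutationcondition} is available via the explicit homotopy $C_{0}(\R^{3}\times I)\to C_{0}(\R^{3})$ produced there.

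First I would verify the two inputs needed to invoke the bispectrum analog of Corollary \ref{corollary:finitelypresentablecofibrantcolimit}: the $\CC^{\ast}$-algebra $\X$ is finitely presentable by Example \ref{example:finitelypresentable}, so $\Sigma^{\infty}_{S^{1}}\X$ is finitely presentable and projective cofibrant in $\Spt_{S^{1}}$; and the functor $\Omega_{C_{0}(\R)}$ preserves sequential colimits of such objects by Example \ref{example:internalhomfinitelypresentable} applied levelwise. Combined with the $C_{0}(\R)$-version of Theorem \ref{theorem:compositionstableweakequivalence} and Lemma \ref{lemma:stabilizationstablefibrant}, iterating $\Omega_{C_{0}(\R)}\RRR$ produces a stably fibrant replacement of $\E$, and the argument of Corollary \ref{corollary:finitelypresentablecofibrantcolimit} transplanted to the bispectrum setting yields
\begin{equation*}
\SH(\Sigma^{\infty}_{S^{1},C_{0}(\R)}\X,\E)=\underset{n}{\colim}\,\,\SH^{\ast}_{S^{1}}\bigl(\Sigma^{\infty}_{S^{1}}\X,\Omega_{C_{0}(\R)}^{n}\E_{n}\bigr).
\end{equation*}

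Then I would invoke the $(C_{0}(\R^{n})\wedge -,\,\Omega_{C_{0}(\R)}^{n})$ adjunction, which descends to $\SH^{\ast}_{S^{1}}$ because $C_{0}(\R^{n})$ is projective cofibrant and the homotopy invariant projective model structure on $\Box\CC^{\ast}-\Spc_{0}$ is monoidal by Proposition \ref{proposition:hiprojectivemonoidal} (hence lifts to a Quillen pair on $\Spt_{S^{1}}$ via Corollary \ref{corollary:patched}). This produces the natural isomorphism
\begin{equation*}
\SH^{\ast}_{S^{1}}\bigl(\Sigma^{\infty}_{S^{1}}\X,\Omega_{C_{0}(\R)}^{n}\E_{n}\bigr)=\SH^{\ast}_{S^{1}}\bigl(C_{0}(\R^{n})\wedge\Sigma^{\infty}_{S^{1}}\X,\E_{n}\bigr),
\end{equation*}
compatible with the transition maps in the colimit, and combining with the previous display gives the desired identification.

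The main obstacle is formal rather than computational: making the bispectrum-as-$C_{0}(\R)$-spectrum-in-$\Spt_{S^{1}}$ interpretation fully rigorous, so that the constructions and proofs of Section \ref{subsection:spectra} apply verbatim. In particular one must check that $\Spt_{S^{1}}$ satisfies the hypotheses used in the proofs of Theorem \ref{theorem:compositionstableweakequivalence} and Lemma \ref{lemma:concludecompositionstableweakequivalence} (left properness, cubical enrichment, compatibility of the $C_{0}(\R)$-tensor with filtered colimits, and the cyclic permutation condition), all of which reduce to properties already established for $\Box\CC^{\ast}-\Spc_{0}$.
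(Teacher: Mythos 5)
Your proposal is correct and follows essentially the same route as the paper: interpret $\Spt_{S^{1},C_{0}(\R)}$ as $C_{0}(\R)$-spectra internal to $\Spt_{S^{1}}$, use finite presentability of $\Sigma^{\infty}_{S^{1}}\X$ together with a stably fibrant replacement built as a sequential colimit of $\Omega_{C_{0}(\R)}^{n}(\E_{n})^{f}$, and conclude via the $(C_{0}(\R^{n})\wedge -,\Omega_{C_{0}(\R)}^{n})$ adjunction. The only cosmetic difference is that you cite the bispectral analog of Corollary \ref{corollary:finitelypresentablecofibrantcolimit} abstractly, whereas the paper reproves its content directly by exhibiting the fibrant replacement $\E^{f}$ with $\E^{f}_{0}=\colim_{n}\Omega_{C_{0}(\R)}^{n}(\E_{n})^{f}$.
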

\begin{proof}
Let $\E_{n}\equiv(\E_{0,n},\E_{1,n},\cdots)$ by the cubical $S^{1}$-spectrum corresponding to $\E$.
There exists a fibrant replacement $\E^{f}$ of $\E$ in $\Spt_{S^{1},C_{0}(\R)}$ and isomorphisms
\begin{equation*}
\SH(\Sigma^{\infty}_{S^{1},C_{0}(\R)}\X,\E) 
=\Spt_{s,t}(\Sigma^{\infty}_{S^{1},C_{0}(\R)}\X,\E^{f})/\simeq \\ 
=\Spt_{S^{1}}(\Sigma^{\infty}_{S^{1}}\X,\E^{f}_{0})/\simeq.
\end{equation*}
The relation $\simeq$ is the homotopy relation on maps. 
In our setting, homotopies are parametrized by $C_{0}(\Box^{1}_{\ttop})$. 
We may choose a fibrant replacement $\E^{f}$ so that
\begin{equation*}
\E^{f}_{0}=\underset{n}{\colim}\,\,
\bigl((\E_{0})^{f} \xymatrix{\ar[r] &} \Omega_{C_{0}(\R)}\bigl((\E_{1})^{f}\bigr) 
\xymatrix{\ar[r] &} \Omega_{C_{0}(\R)}^{2}\bigl((\E_{2})^{f}\bigr) 
\xymatrix{\ar[r] &} \cdots\bigr).
\end{equation*}
Here, 
$(\E_{n})^{f}$ is a fibrant replacement of $\E_{n}$ in $\Spt_{S^{1}}$, 
and $\Omega_{C_{0}(\R)}$ is the right adjoint of the functor 
$C_{0}(\R)\otimes -\colon\Spt_{S^{1}}\rightarrow\Spt_{S^{1}}$.
Since the $S^{1}$-suspension spectra 
$\Sigma^{\infty}_{S^{1}}\X$ and $\Sigma^{\infty}_{S^{1}}\X\otimes C_{0}(\Box^{1}_{\ttop})$ 
are finitely presentable objects in $\Spt_{S^{1}}$, 
we get an isomorphism
\begin{equation*}
\Spt_{S^{1}}(\Sigma^{\infty}_{S^{1}}\X,\E^{f}_{0})/\simeq 
\,\, 
=
\,\,
\underset{n}{\colim}\,\,
\Spt_{S^{1}}\bigl(\Sigma^{\infty}_{S^{1}}\X,\Omega_{C_{0}(\R)}^{n}(\E_{n})^{f}\bigr)/\simeq. 
\end{equation*}
The latter combined with the isomorphism 
\begin{equation*}
\SH^{\ast}_{S^{1}}\bigl(\Sigma^{\infty}_{S^{1}}\X,\Omega_{C_{0}(\R)}^{n}(\E_{n})\bigr) 
= 
\SH^{\ast}_{S^{1}}\bigl(C_{0}(\R^{n})\otimes\Sigma^{\infty}_{S^{1}}\X,\E_{n}\bigr),
\end{equation*}
obtained from the suspension-loop adjunction imply the group isomorphism.
\end{proof}

Using the monoidal product description $C=S^{1}\otimes C_{0}(\R)$ it follows that every cubical 
$\CC^{\ast}$-spectrum $\E$ yields a cubical $\CC^{\ast}$-bispectrum $\E_{S^{1},C_{0}(\R)}$:
\begin{equation*}
\xymatrix{
\vdots & \vdots & \vdots\\
C_{0}(\R^{2})\otimes\E_{0} & C_{0}(\R)\otimes\E_{1} & \E_{2} & \cdots\\
C_{0}(\R)\otimes\E_{0} & \E_{1} & S^{1}\otimes\E_{2} & \cdots\\
\E_{0} & S^{1}\otimes\E_{1} & S^{2}\otimes\E_{2} & \cdots }
\end{equation*}
\vspace{0.1in}

The horizontal structure maps $\sigma_{h}\colon S^{1}\otimes\E_{m,n}\rightarrow\E_{m+1,n}$ 
are defined by the identity map when $m\geq n$, 
and for $m<n$ by the map obtained from switching monoidal factors as in the composite 
\begin{equation*}
\xymatrix{
S^{1}\otimes C_{0}(\R^{n})\otimes\E_{m} \ar[r]^-{\tau\otimes \E_{m}} & 
C_{0}(\R^{n-1})\otimes S^{1}\otimes C_{0}(\R)\otimes\E_{m}
\ar[r]^-{C_{0}(\R^{n-1})\otimes\sigma} & C_{0}(\R^{n-1})\otimes\E_{m+1}.}
\end{equation*}
The vertical structure maps $\sigma_{v}\colon C_{0}(\R)\otimes \E_{m,n}\rightarrow \E_{m,n+1}$
are defined by the identity map if $m\leq n$, 
and otherwise by 
\begin{equation*}
\xymatrix{
C_{0}(\R)\otimes S^m\otimes\E_{n} \ar[r]^-{\tau\otimes\E_{n}} & 
S^{m-1}\otimes C_{0}(\R)\otimes S^{1}\otimes\E_{n}
\ar[r]^-{S^{m-1}\otimes\sigma} & S^{m-1}\otimes\E_{n+1}.}
\end{equation*}
Associated with a cubical $\CC^{\ast}$-bispectrum $\E_{S^{1},C_{0}(\R)}$ there are presheaves of 
bigraded stable homotopy groups $\pi_{p,q}\E$.
In $A$-sections, 
it is defined as the colimit of the diagram:
\begin{equation*}
\xymatrix{
\vdots & \vdots\\
[S^{p+m}\otimes C_{0}(\R^{q+n+1}),\E_{m,n+1}\vert A]
\ar[r]^-{\widetilde{\sigma_{h}}}\ar[u]^-{\widetilde{\sigma_{v}}} &
[S^{p+m+1}\otimes C_{0}(\R^{q+n+1}),\E_{m+1,n+1}\vert A]
\ar[r]\ar[u]^-{\widetilde{\sigma_{v}}} &
\cdots \\
[S^{p+m}\otimes C_{0}(\R^{q+n}),\E_{m,n}\vert A]
\ar[r]^-{\widetilde{\sigma_{h}}}\ar[u]^-{\widetilde{\sigma_{v}}} &
[S^{p+m+1}\otimes C_{0}(\R^{q+n}),\E_{m+1,n}\vert A]
\ar[r]\ar[u]^-{\widetilde{\sigma_{v}}} &
\cdots}
\end{equation*}
Here we may assume $\E_{m,n}$ is projective $\CC^{\ast}$-fibrant for all integers $m,n\in\ZZ$.
A cofinality argument shows the colimit can be computed by taking the 
diagonal and using the transition maps $\widetilde{\sigma_{h}}$ and $\widetilde{\sigma_{v}}$ 
in either order.
In particular, 
the degree $p$ and weight $q$ stable homotopy presheaf of 
a cubical $\CC^{\ast}$-spectrum $\E$ 
is isomorphic to the bigraded presheaf 
$\pi_{p,q}\E_{S^{1},C_{0}(\R)}$ of its associated cubical $\CC^{\ast}$-bispectrum. 
Thus,
Lemma \ref{lemma:stableweakequivalence-homotopy} and the previous observation show that 
$\E\rightarrow\F$ is a stable equivalence if and only if there is an induced isomorphism 
of bigraded presheaves $\pi_{p,q}\E_{S^{1},C_{0}(\R)}=\pi_{p,q}\F_{S^{1},C_{0}(\R)}$.

A level weak equivalences (respectively level cofibrations and level fibrations) of cubical $\CC^{\ast}$-bispectra 
is map $\E\rightarrow\F$ such that $\E_{m,n}\rightarrow\F_{m,n}$ are $\CC^{\ast}$-weak equivalences
(respectively projective cofibrations and projective $\CC^{\ast}$-fibration) for all $m$ and $n$.
We observe that every level fiber sequence $\F\rightarrow\E\rightarrow\E'$ induces a long exact sequence:
\begin{equation}
\label{bilongexactsequence}
\xymatrix{
\cdots\ar[r] &
\pi_{p+1,q}\E'\ar[r] &
\pi_{p,q}\F\ar[r] &
\pi_{p,q}\E\ar[r] &
\pi_{p,q}\E'\ar[r] &
\cdots }
\end{equation}
In effect,
it is harmless to assume that $\E'$ is level fibrant so that we have fiber sequences of cubical $S^{1}$-spectra 
\begin{equation*}
\xymatrix{
\Omega_{C_{0}(\R)}^{q+n}\F_{\ast,n}\ar[r] &
\Omega_{C_{0}(\R)}^{q+n}\E_{\ast,n}\ar[r] &
\Omega_{C_{0}(\R)}^{q+n}\E'_{\ast,n}  }
\end{equation*}
for every $n$, and the corresponding long exact sequences:
\begin{equation*}
\xymatrix{
\cdots\ar[r] &
\pi_{p+1}\Omega_{C_{0}(\R)}^{q+n}\E'_{\ast,n}\ar[r] &
\pi_{p}\Omega_{C_{0}(\R)}^{q+n}\F_{\ast,n}\ar[r] &
\pi_{p}\Omega_{C_{0}(\R)}^{q+n}\E_{\ast,n}\ar[r] &
\pi_{p}\Omega_{C_{0}(\R)}^{q+n}\E'_{\ast,n}\ar[r] & \cdots  }
\end{equation*}
Taking the filtered colimit of these diagrams furnishes the long exact sequence (\ref{bilongexactsequence}). 
This setup is familiar by now and we are ready to sketch a proof of the next result.
\begin{lemma}
\label{lemma:bispectracofibersequenceinducelongexactsequence}
Every level cofiber sequence $\E\rightarrow\E'\rightarrow\E'/\E$ of cubical $\CC^{\ast}$-bispectra
induces a natural long exact sequence:
\begin{equation*}
\xymatrix{
\cdots\ar[r] &
\pi_{p+1,q}\E'/\E\ar[r] &
\pi_{p,q}\E\ar[r] &
\pi_{p,q}\E'\ar[r] &
\pi_{p,q}\E'/\E\ar[r] &
\cdots }
\end{equation*}
\end{lemma}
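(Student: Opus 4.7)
The plan is to exploit the identification, used earlier in this section, of a cubical $\CC^{\ast}$-bispectrum with a $C_{0}(\R)$-spectrum object $\{\E_{\ast,n}\}_{n\geq 0}$ in the category of cubical $S^{1}$-spectra. A level cofiber sequence $\E\rightarrow\E'\rightarrow\E'/\E$ of bispectra then restricts, for each fixed $n\geq 0$, to a cofiber sequence of cubical $S^{1}$-spectra
\[
\E_{\ast,n}\rightarrow\E'_{\ast,n}\rightarrow(\E'/\E)_{\ast,n},
\]
and these are compatible as $n$ varies through the $C_{0}(\R)$-bonding maps. This reduces the statement to Lemma \ref{lemma:cofibersequenceinducelongexactsequence} applied in each $C_{0}(\R)$-degree together with a filtered colimit argument.

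More concretely, starting from the definition of the bigraded stable homotopy presheaves one computes
\[
\pi_{p,q}\E(A)=\underset{n}{\colim}\,\,\pi_{p}\bigl(\Omega_{C_{0}(\R)}^{q+n}\E_{\ast,n}\bigr)(A),
\]
where $\pi_{p}$ denotes the stable homotopy presheaf of a cubical $S^{1}$-spectrum and the colimit is taken along maps adjoint to the vertical structure maps. After functorially replacing $\E\rightarrow\E'$ by a level cofibration between level fibrant bispectra, I would apply Lemma \ref{lemma:cofibersequenceinducelongexactsequence} to the cofiber sequence of cubical $S^{1}$-spectra obtained after applying $\Omega_{C_{0}(\R)}^{q+n}$; this yields, for each $n$, a long exact sequence of presheaves of $S^{1}$-stable homotopy groups. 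These long exact sequences are natural in $n$ via the $C_{0}(\R)$-structure maps, so passing to the filtered colimit $\colim_{n}$ --- which preserves exactness of long exact sequences of abelian groups --- produces the desired long exact sequence in the bigraded groups $\pi_{p,q}$.

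The principal obstacle is to ensure that $\Omega_{C_{0}(\R)}^{q+n}$ applied to a cofiber sequence of cubical $S^{1}$-spectra produces a sequence to which Lemma \ref{lemma:cofibersequenceinducelongexactsequence} is legitimately applicable. A right adjoint such as $\Omega_{C_{0}(\R)}^{q+n}$ preserves fiber but not cofiber sequences on the nose, so this step requires passing to level fibrant replacements and invoking the standard fact that in the stable model structure on $\Spt_{S^{1}}$ a cofiber sequence is stably equivalent to a shifted fiber sequence. Lemma \ref{lemma:pointedprojectiveinternalhomprojectiveCstarfibrant} ensures that $\Omega_{C_{0}(\R)}$ preserves $\CC^{\ast}$-projective fibrancy levelwise, making this reduction compatible with the colimit in $n$. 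With this bookkeeping in place the exactness assertion then follows by the same outline as the proof of Lemma \ref{lemma:cofibersequenceinducelongexactsequence} transported through the identification above.
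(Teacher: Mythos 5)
Your proof is correct, but it routes around the problem differently than the paper does. The paper's proof is very short: it factors $\E'\rightarrow\E'/\E$ as a level acyclic projective cofibration $\E'\overset{\sim}{\rightarrowtail}\F'$ followed by a level fibration $\F'\twoheadrightarrow\E'/\E$, takes the level fiber $\F$ of the latter, observes that $\E_{\ast,n}\rightarrow\F_{\ast,n}$ is a stable weak equivalence of $S^{1}$-spectra for each $n$ (so $\pi_{p,q}\E\cong\pi_{p,q}\F$ and $\pi_{p,q}\E'\cong\pi_{p,q}\F'$), and then directly appeals to the fiber-sequence long exact sequence (\ref{bilongexactsequence}) already established for bispectra in the lines immediately above the lemma. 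In other words, the paper converts cofiber to fiber once, at the level of bispectra, and reuses the existing bispectrum result; the colimit-over-$n$ argument is hidden inside the proof of (\ref{bilongexactsequence}) and never needs to touch $\Omega_{C_{0}(\R)}$ applied to a cofiber sequence. Your proposal instead slices the cofiber sequence by $C_{0}(\R)$-degree, applies Lemma \ref{lemma:cofibersequenceinducelongexactsequence} in each slice, and colimits at the end; that forces you to confront the fact that $\Omega_{C_{0}(\R)}^{q+n}$ is a right adjoint and does not preserve cofiber sequences strictly, which you correctly flag and resolve by invoking stability of $\Spt_{S^{1}}$ (so the derived $\Omega_{C_{0}(\R)}$ preserves distinguished triangles, and cofiber and fiber sequences coincide up to rotation). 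Both arguments work; the paper's is tighter because it does the cofiber$\to$fiber rotation exactly once at the bispectrum level and piggybacks on (\ref{bilongexactsequence}), whereas yours performs the analogous rotation $C_{0}(\R)$-levelwise inside the colimit, paying with the extra bookkeeping about $\Omega_{C_{0}(\R)}^{q+n}$ that you mention but do not fully spell out.
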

\begin{proof}
There is a commutative diagram
\begin{equation*}
\xymatrix{
\E\ar[d]\ar[r] &
\E'\ar[d]\ar[r] &
\E'/\E & \\
\F\ar[r] &
\F'\ar[ur] }
\end{equation*}
where $\E'\rightarrow\F'\rightarrow\E'/\E$ is the composite of a level acyclic projective 
cofibration and a level fibration.
Observe that $\E_{\ast,n}\rightarrow\F_{\ast,n}$ are stable weak equivalences of cubical 
$S^{1}$-spectra.
Hence there are isomorphisms $\pi_{p,q}\E=\pi_{p,q}\F$ for all $p$ and $q$.
Combining this with (\ref{bilongexactsequence}) yields the long exact sequence.
\end{proof} 
\begin{corollary}
For every cubical $\CC^{\ast}$-spectrum $\E$ there are natural isomorphisms
\begin{equation*}
\pi_{p,q}\E=\pi_{p+1,q}(\E\wedge C)
\end{equation*}
for all $p$ and $q$.
\end{corollary}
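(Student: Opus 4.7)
The plan is to derive the identity by combining two facts recorded earlier: the stable weak equivalence $\E\wedge C \simeq \E[1]$ from Corollary \ref{corollary:patched}, and the characterization of stable weak equivalences by isomorphisms on all bigraded stable homotopy presheaves from Lemma \ref{lemma:stableweakequivalence-homotopy}. Together these reduce the problem from an assertion about $\E\wedge C$ to one about the shift $\E[1]$, where everything is controlled by straightforward reindexing of sequential colimits. Before carrying out the comparison I would first pass to a level-fibrant replacement of $\E$ (which exists and is a level weak equivalence by Proposition \ref{proposition:strictmodelstructures}), so that the colimit systems defining both sides are formed from honest $\CC^{\ast}$-weak-equivalence classes of maps in $\HH^{\ast}$.

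With that reduction in place, the core of the argument unwinds the definition
\begin{equation*}
\pi_{p+1,q}(\E[1])(A) \;=\; \underset{n}{\colim}\,[S^{p+1+n}\otimes C_{0}(\R^{q+n}),\,\E[1]_{n}\,|\,A],
\end{equation*}
uses $\E[1]_{n}=\E_{n+1}$ from Example \ref{example:Cstarshift}, and substitutes $m=n+1$. The resulting system is identified with a cofinal subsystem of the one defining $\pi_{p,q}\E(A)$; the required natural isomorphism is induced by the inclusion. Naturality in $\E$ is automatic, since the shift functor, the smash with $C$, and the colimit defining $\pi_{p,q}$ are all functorial.

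The one point requiring care is compatibility of transition maps under the reindexing. The structure maps of $\E[1]$ are inherited from those of $\E$ with a shift, whereas the transition maps in the directed system for $\pi_{p,q}$ are induced by $\widetilde{\sigma}_{C}$, which packages the monoidal factor $C = S^{1}\otimes C_{0}(\R)$ with the cyclic twist of its two tensorands. This twist is neutralised by Lemma \ref{lemma:cyclicpermutationcondition}: the cyclic permutation of $C^{\otimes 3}$ is homotopic to the identity, so the reindexed transition maps and the intrinsic ones agree in the stable homotopy category. An equivalent route, which I expect to be the cleanest in practice and avoids the twist entirely, passes to the bispectrum model: the bispectrum $(\E\wedge C)_{S^{1},C_{0}(\R)}$ is obtained from $\E_{S^{1},C_{0}(\R)}$ by levelwise smashing with $S^{1}\otimes C_{0}(\R)$, and the adjunctions $[S^{1}\otimes K,L]=[K,\Omega_{S^{1}}L]$ and $[C_{0}(\R)\otimes K,L]=[K,\Omega_{C_{0}(\R)}L]$ applied to the diagonal colimit computing $\pi_{p,q}$ produce the asserted identification directly. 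The main obstacle is therefore not conceptual but organisational: keeping the indices, the twist, and the order of transitions $\widetilde{\sigma}_{h},\widetilde{\sigma}_{v}$ consistent throughout the comparison.
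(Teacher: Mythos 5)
The reindexing step is where the argument breaks down, and carrying it through carefully reveals that the printed identity cannot hold. From $\E[1]_{n}=\E_{n+1}$, the colimit computing $\pi_{p+1,q}(\E[1])(A)$ has $n$th term $[S^{p+1+n}\otimes C_{0}(\R^{q+n}),\E_{n+1}\vert A]$; substituting $m=n+1$ transforms this into $[S^{p+m}\otimes C_{0}(\R^{q+m-1}),\E_{m}\vert A]$. The colimit defining $\pi_{p,q}\E(A)$, however, has $m$th term $[S^{p+m}\otimes C_{0}(\R^{q+m}),\E_{m}\vert A]$; the exponents on $C_{0}(\R)$ are off by one, so your reindexed system is \emph{not} cofinal in it --- it is cofinal in the system defining $\pi_{p,q-1}\E(A)$. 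Correctly carried out, your route therefore yields $\pi_{p+1,q}(\E\wedge C)\cong\pi_{p,q-1}\E$, equivalently $\pi_{p,q}\E\cong\pi_{p+1,q+1}(\E\wedge C)$. The same discrepancy persists if you pass to the bispectrum model: smashing with $C=S^{1}\otimes C_{0}(\R)$ raises both the $S^{1}$- and the $C_{0}(\R)$-grading, not just the first, so the adjunctions give $(p+1,q+1)$ on the left, not $(p+1,q)$.

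The printed corollary is therefore almost certainly a misprint. It is placed directly after Lemma~\ref{lemma:bispectracofibersequenceinducelongexactsequence}, and the argument the paper implicitly intends is to feed the level cofiber sequence $\E\rightarrow\cone(\E)\rightarrow\E\wedge S^{1}$, with $\cone(\E)$ contractible, into that long exact sequence, which collapses to $\pi_{p,q}\E\cong\pi_{p+1,q}(\E\wedge S^{1})$. This is consistent with your computation, since $\pi_{p+1,q}(\E\wedge S^{1})\cong\pi_{p+1,q+1}\bigl(\E\wedge S^{1}\otimes C_{0}(\R)\bigr)=\pi_{p+1,q+1}(\E\wedge C)$. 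So your strategy --- replace $\E\wedge C$ by $\E[1]$ using Corollary~\ref{corollary:patched} and Lemma~\ref{lemma:stableweakequivalence-homotopy}, then reindex, handling the twist via Lemma~\ref{lemma:cyclicpermutationcondition} --- is a legitimate alternative to the long-exact-sequence argument, but what it proves (and what the paper's own method proves) is the version with $\E\wedge S^{1}$ (equivalently, $(p+1,q+1)$ with $\E\wedge C$), not the literal statement as printed. Your claim that the reindexed system is cofinal in the one for $\pi_{p,q}\E$ should be retracted; that is the one genuinely false step.
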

\vspace{0.1in}

One checks easily that the proofs of \cite[Lemma 3.9, Corollary 3.10]{Jardine:MSS} translate into the 
following results. 
\begin{corollary}
Suppose $\F\rightarrow\E\rightarrow\E'$ is a level fiber sequence and $\E\rightarrow\E'\rightarrow\E'/\E$
a level cofiber sequence of cubical $\CC^{\ast}$-spectra.
\begin{itemize}
\item
There is an induced stable weak equivalence $\E/\F\rightarrow\E'$.
\item
If $\E'\rightarrow\F'\rightarrow\E'/\E$ is a factoring into a level weak equivalence and a
level fibration,
there is an induced stable weak equivalence from $\E$ to the fiber of $\F'\rightarrow\E'/\E$.
\end{itemize}
\end{corollary}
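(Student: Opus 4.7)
The plan is to reduce both statements to a comparison of the long exact sequences on bigraded stable homotopy presheaves $\pi_{p,q}$ provided by Lemma \ref{lemma:bispectracofibersequenceinducelongexactsequence} (for cofiber sequences) and by the fiber-sequence long exact sequence (\ref{bilongexactsequence}), followed by an application of the five lemma. In view of Lemma \ref{lemma:stableweakequivalence-homotopy}, it suffices to check that the candidate maps induce isomorphisms on $\pi_{p,q}$ for all integers $p,q$.

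For the first item, the composite $\F\rightarrow\E\rightarrow\E'$ is null, so the universal property of the cofiber yields a canonical map $\E/\F\rightarrow\E'$. I would apply the long exact sequence of the fiber sequence $\F\rightarrow\E\rightarrow\E'$ and the long exact sequence of the cofiber sequence $\F\cof\E\rightarrow\E/\F$ (after replacing $\F\rightarrow\E$ by a projective cofibration up to level weak equivalence, which is harmless for computing $\pi_{p,q}$). These fit into a commutative ladder
\begin{equation*}
\xymatrix{
\pi_{p+1,q}\E/\F \ar[r]\ar[d] & \pi_{p,q}\F\ar@{=}[d]\ar[r] & \pi_{p,q}\E\ar@{=}[d]\ar[r] & \pi_{p,q}\E/\F\ar[d] \\
\pi_{p+1,q}\E' \ar[r] & \pi_{p,q}\F\ar[r] & \pi_{p,q}\E\ar[r] & \pi_{p,q}\E'
}
\end{equation*}
in which the outer vertical maps are induced by $\E/\F\rightarrow\E'$; the five lemma then forces these to be isomorphisms. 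The crux is checking that the connecting maps on both long exact sequences are compatible, i.e.\ that the ladder genuinely commutes. This is the main technical obstacle: in Jardine's setting one uses the cyclic permutation condition on the suspension coordinate to identify both connecting homomorphisms with the same $S^{1}$-suspension map, and here Lemma \ref{lemma:cyclicpermutationcondition} plays exactly that role.

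For the second item, let $\mathcal{G}$ denote the fiber of the level fibration $\F'\twoheadrightarrow\E'/\E$. Since the composite $\E\rightarrow\E'\rightarrow\F'\rightarrow\E'/\E$ factors through the trivial cofiber map of $\E\cof\E'\rightarrow\E'/\E$, the universal property of the fiber produces a canonical map $\E\rightarrow\mathcal{G}$. Comparing the long exact sequence of the cofiber sequence $\E\rightarrow\E'\rightarrow\E'/\E$ with that of the fiber sequence $\mathcal{G}\rightarrow\F'\rightarrow\E'/\E$ gives a ladder whose middle vertical map $\pi_{p,q}\E'\rightarrow\pi_{p,q}\F'$ is an isomorphism (because $\E'\rightarrow\F'$ is a level weak equivalence) and whose rightmost column is the identity on $\pi_{\ast,\ast}\E'/\E$. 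A second application of the five lemma then shows that $\E\rightarrow\mathcal{G}$ is an isomorphism on $\pi_{p,q}$ for all $p,q$, hence a stable weak equivalence. Alternatively, one can deduce the second item formally from the first by invoking it with $\mathcal{G}$ in place of $\F$ and $\F'$ in place of $\E$, since the resulting map $\F'/\mathcal{G}\rightarrow\E'/\E$ is a stable weak equivalence and the outer part of the comparison ladder is then automatic.
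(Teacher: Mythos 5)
Your strategy of comparing the two long exact sequences on $\pi_{p,q}$ and invoking the five lemma is natural, but it is not the route the paper takes: the paper simply transcribes Jardine's arguments, which proceed by reducing the assertions about $C$-spectra, via the bispectrum description of stable weak equivalences, to the classical statement that for $S^{1}$-spectra the cofibre of a level fibration maps to the base by a stable equivalence. More importantly, your proposal contains a genuine gap that you correctly flag but do not close: the commutativity of the leftmost square in your ladder, i.e.\ the compatibility of the connecting homomorphism of the cofibre long exact sequence with the connecting homomorphism of the fibre long exact sequence under $\E/\F\rightarrow\E'$. Appealing to Lemma \ref{lemma:cyclicpermutationcondition} does not settle this. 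That lemma, together with Lemma \ref{lemma:compositionstableweakequivalence} and Theorem \ref{theorem:compositionstableweakequivalence}, establishes that $\Sigma_{C}$ becomes invertible in $\SH^{\ast}$; it says nothing directly about identifying the two boundary maps. Deducing the compatibility formally from stability of the pointed model structure would also be circular here, since the triangulated structure on $\SH^{\ast}$ is only established in Section \ref{subsection:triangulatesstructure}, after and in part because of these very statements.

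There is a second concern worth noting. The cofibre long exact sequence (Lemma \ref{lemma:bispectracofibersequenceinducelongexactsequence}) that you feed into your ladder is itself proved by factoring $\E'\rightarrow\E'/\E$ as a level weak equivalence followed by a level fibration and then \emph{observing} that $\E$ maps by a stable equivalence of $S^{1}$-spectra, column by column, to the resulting fibre -- which is exactly the $S^{1}$-spectrum analogue of the second item you wish to prove. So the five-lemma packaging conceals rather than eliminates the real work. The cleaner way to discharge both items, matching the paper's intent, is to argue that the maps $\E/\F\rightarrow\E'$ and $\E\rightarrow\hofib(\F'\rightarrow\E'/\E)$ induce stable weak equivalences on the $S^{1}$-spectrum columns of the associated bispectra (where the assertions are classical consequences of the fibre/cofibre comparison for $S^{1}$-spectra), then conclude by the bigraded stable homotopy presheaf criterion of Lemma \ref{lemma:stableweakequivalence-homotopy} combined with the diagonal cofinality identification $\pi_{p,q}\E\cong\pi_{p,q}\E_{S^{1},C_{0}(\R)}$ given in Section \ref{subsection:bispectra}.
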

\newpage

\subsection{Triangulated structure}
\label{subsection:triangulatesstructure}
Triangulated categories in the sense of homotopical algebra \cite{Hovey:Modelcategories} 
satisfy the axioms of a classical triangulated category as in \cite{GM:Homologicalalgebra}.
In what follows,
the term triangulated category is used in the sense of the former.
We shall explicate a triangulated structure on $\SH^{\ast}$ exploiting its structure as a 
closed $\Ho(\Delta\Set_{\ast})$-module obtained from \cite{Hovey:Modelcategories}.
In particular, 
it turns out every short exact sequence of $\CC^{\ast}$-algebras gives rise to long exact 
sequences of abelian groups; we include some examples to illustrate the utility of the
triangulated structure.

Now suppose $\E\rightarrow\F$ is a projective cofibration between cofibrant objects in $\Spt_{C}$.
Its cone is defined by the pushout diagram:
\begin{equation*}
\xymatrix{
\E\ar[r]\ar[d] & \E\wedge\Delta^{1}_{+}\ar[d]\\
\F\ar[r] & \cone(\E\rightarrow\F) }
\end{equation*}
\begin{example}
The cone of the projection $\E\rightarrow\ast$ is $\E\wedge S^{1}$.
\end{example}
\label{example:coneexample}
The next result follows by gluing \cite[II Lemma 8.8]{GJ:Modelcategories} since cones are examples of 
pushouts.
\begin{lemma}
\label{lemma:conelemma}
If the vertical maps in the following diagram of horizontal cofibrations between projective cofibrant 
objects in $\Spt_{C}$
\begin{equation*}
\xymatrix{
\E\ar[r]\ar[d] & \F\ar[d]\\
\E'\ar[r] & \F' }
\end{equation*}
is a stable weak equivalence, 
then so is the induced map $\cone(\E\rightarrow\F)\rightarrow\cone(\E'\rightarrow\F')$.
\end{lemma}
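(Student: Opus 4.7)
The plan is to invoke the gluing lemma for left proper model categories, using the fact (Theorem \ref{theorem:Cstarstablemodelstructure}) that the stable model structure on $\Spt_{C}$ is left proper, together with the cofibration structure built into the cone construction. Concretely, the cone is defined as the pushout of
\begin{equation*}
\F \longleftarrow \E \longrightarrow \E\wedge\Delta^{1}_{+},
\end{equation*}
so the induced map $\cone(\E\rightarrow\F)\rightarrow\cone(\E'\rightarrow\F')$ is the induced map on pushouts of a commutative ladder whose two rows are of this shape. If I can verify that the three vertical maps in this ladder are stable weak equivalences and that the two horizontal arrows in each row pointing out of $\E$ respectively $\E'$ are projective cofibrations, then the gluing lemma (the cubical $\CC^{\ast}$-spectrum analog of \cite[II Lemma 8.8]{GJ:Modelcategories}, valid in any left proper model category) yields the conclusion.

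First I would check the cofibration hypothesis. The map $\E\rightarrow\F$ is a cofibration by assumption, and $\E\rightarrow\E\wedge\Delta^{1}_{+}$ is obtained by smashing the projective cofibrant $\E$ with the cofibration $S^{0}\rightarrow\Delta^{1}_{+}$ of pointed simplicial (or cubical) sets; since the stable model structure on $\Spt_{C}$ is cubical and monoidal, this map is a projective cofibration. The analogous statements hold for the bottom row.

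Next I would verify the three vertical maps are stable weak equivalences. The leftmost and middle verticals are stable weak equivalences by hypothesis. For the rightmost vertical $\E\wedge\Delta^{1}_{+}\rightarrow\E'\wedge\Delta^{1}_{+}$, I would argue as follows: the functor $-\wedge\Delta^{1}_{+}$ is a left Quillen endofunctor of the stable model structure (here using monoidalness of the cubical structure, cf.\ the pointed analogs of the properties established in Section \ref{subsection:pointedmodelstructures}), and both $\E$ and $\E'$ are projective cofibrant, so by Ken Brown's lemma \cite[Lemma 1.1.12]{Hovey:Modelcategories} the functor preserves the given stable weak equivalence $\E\rightarrow\E'$.

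With these inputs, the gluing lemma applies directly to produce the desired stable weak equivalence of cones. The only mild obstacle is the smash-preserves-equivalence step for the rightmost vertical; everything else is either a direct hypothesis or a consequence of the cubical/monoidal structure already established for $\Spt_{C}$. No genuine new ideas are required beyond assembling these ingredients.
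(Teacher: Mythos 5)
Your proof is correct and takes the same route as the paper, which simply observes that cones are pushouts and invokes the gluing lemma \cite[II Lemma 8.8]{GJ:Modelcategories}. You have merely spelled out the cofibration hypotheses and the Ken Brown step for the rightmost vertical, which the paper leaves implicit.
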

In the special case when $\E\rightarrow\E'$ is the identity map and $\F'=\ast$ in 
Lemma \ref{lemma:conelemma},
we get a map $\cone(\E\rightarrow\F)\rightarrow\cone(\E\rightarrow\ast)$,
and hence a diagram 
\begin{equation}
\label{equation:distinguishedtriangle}
\xymatrix{
\E\ar[r] & \F\ar[r] & \cone(\E\rightarrow\F)\ar[r] & \E\wedge S^{1} }
\end{equation}
natural in $\E\rightarrow\F$, 
cf.~Example \ref{example:coneexample}.
\begin{definition}
\label{definition:cofibersequences}
A cofiber sequence in $\SH^{\ast}$ is a diagram $\E\rightarrow\F\rightarrow\G$ together with 
a coaction of $\Sigma_{S^{1}}\E$ on $\G$ that is isomorphic in $\SH^{\ast}$ to a diagram of 
the form (\ref{equation:distinguishedtriangle}).
\end{definition}
\begin{theorem}
\label{theorem:stabletriangulatedstructure}
The stable homotopy category $\SH^{\ast}$ is a triangulated category.
\end{theorem}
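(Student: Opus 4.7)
The plan is to invoke Hovey's framework \cite[\S7.1]{Hovey:Modelcategories} which produces a triangulation on the homotopy category of any pointed model category whose suspension functor is an auto-equivalence. I would proceed in four steps, with the bulk of the work concentrated in verifying stability of the suspension.

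First, I would record that by Theorem \ref{theorem:Cstarstablemodelstructure} the category $\Spt_{C}$ carries a pointed, left proper, combinatorial and cubical stable model structure, so $\SH^{\ast}=\Ho(\Spt_{C})$ is a pointed category enriched over pointed cubical (equivalently, pointed simplicial) sets. Hovey's general machinery applied to this pointed cubical model structure produces a pre-triangulated structure on $\SH^{\ast}$: the suspension functor is identified with $-\wedge S^{1}$, its right adjoint with $\Omega_{S^{1}}=\underline{\Hom}_{0}(S^{1},-)$, and the class of distinguished triangles is generated by cofiber sequences in the sense of Definition \ref{definition:cofibersequences}. Lemma \ref{lemma:conelemma} is precisely the gluing statement needed to see that these cofiber sequences are homotopy-invariantly well-defined and closed under isomorphism in $\SH^{\ast}$.

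Second, I would check that the natural coaction of $\Sigma_{S^{1}}\E$ on $\cone(\E\rightarrow\F)$ coming from the pushout defining the cone assembles into the coaction required by Hovey's axioms. This is formal from the cubical structure (Lemma \ref{lemma:hiprojectivemonoidal2} in the leveled and stable context) together with the diagram (\ref{equation:distinguishedtriangle}), and is routine once the cubical enrichment is in place.

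Third, and this is where the main content lies, I would verify stability: the suspension $-\wedge S^{1}$ is an auto-equivalence of $\SH^{\ast}$. Theorem \ref{theorem:compositionstableweakequivalence} shows that for every $\E$ the unit map $\E\rightarrow\Omega_{C}\RRR(\E\wedge C)$ is a stable weak equivalence, so $-\wedge C$ is invertible on $\SH^{\ast}$ with inverse $\Omega_{C}\RRR$. Since $C=S^{1}\otimes C_{0}(\R)$, the functors $-\wedge S^{1}$ and $-\wedge C_{0}(\R)$ commute and their composite is invertible; to conclude that each individual factor is an equivalence, I would pass through the cubical $\CC^{\ast}$-bispectrum associated to $\E$ (see Subsection \ref{subsection:bispectra}), where $-\wedge S^{1}$ and $-\wedge C_{0}(\R)$ are separately made invertible by construction, and use Proposition \ref{proposition:homofsuspensionofCstaralgebras} together with the identification of bigraded stable homotopy groups $\pi_{p,q}$ between the spectrum and bispectrum pictures to see that both $-\wedge S^{1}$ and $-\wedge C_{0}(\R)$ are equivalences on $\SH^{\ast}$. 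The cyclic permutation condition of Lemma \ref{lemma:cyclicpermutationcondition} is a crucial input here since it ensures that the bispectrum stabilization behaves symmetrically.

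Fourth, once stability is established, Hovey's theorem on the homotopy category of a stable model category upgrades the pre-triangulation to a genuine triangulation on $\SH^{\ast}$, with distinguished triangles precisely the cofiber sequences of Definition \ref{definition:cofibersequences}. The long exact sequences of bigraded stable homotopy groups obtained from Lemma \ref{lemma:cofibersequenceinducelongexactsequence} are then the expected consequence of the triangulated structure. The hard part will be the third step: separating the invertibility of $-\wedge S^{1}$ from that of $-\wedge C$ requires genuine input from the bispectrum model and the cyclic permutation property, and will occupy most of the proof.
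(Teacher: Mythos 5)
Your overall strategy — invoke Hovey's \cite[\S6.5, \S7.1]{Hovey:Modelcategories} framework to get a pre-triangulated structure on the homotopy category of a pointed model category, then upgrade it to a triangulation by showing the $S^{1}$-suspension is an auto-equivalence — is exactly the route the paper takes, and steps one and four are correct as stated. Your step two is not a separate verification: the coaction structure on cofiber sequences is already packaged in Hovey's construction, so there is nothing additional to check there beyond what the citation provides.

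The point where your proposal diverges from the cleanest path is step three, the verification that $\Sigma_{S^{1}}=-\wedge S^{1}$ is an auto-equivalence of $\SH^{\ast}$. You correctly observe that $-\wedge C$ is invertible, by Theorem \ref{theorem:compositionstableweakequivalence} (or more directly by Corollary \ref{corollary:patched}, which identifies $-\wedge C$ with the shift $[1]$), and that $-\wedge C\cong(-\wedge S^{1})\circ(-\wedge C_{0}(\R))$ with the two factors commuting up to natural isomorphism. But then you claim that deducing invertibility of each factor requires a detour through the bispectrum model, Proposition \ref{proposition:homofsuspensionofCstaralgebras}, and the cyclic permutation condition. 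This detour is unnecessary. The elementary categorical observation that a pair of commuting endofunctors whose composite is an auto-equivalence are themselves auto-equivalences closes the gap: if $K$ is an inverse of $F\circ G$ and $F\circ G\cong G\circ F$, then $G\circ K$ is a right inverse of $F$ and $K\circ G$ is a left inverse, so $F$ is an equivalence, and symmetrically for $G$. No bispectra are needed, and the cyclic permutation condition of Lemma \ref{lemma:cyclicpermutationcondition} is relevant only for the monoidal comparison with symmetric spectra, not for the triangulated structure itself. Your bispectrum route is also harder to make rigorous from what the paper actually proves, since the paper only compares bigraded homotopy groups and does not establish a full equivalence of homotopy categories between $\Spt_{C}$ and $\Spt_{S^{1},C_{0}(\R)}$. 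The paper's own proof is extremely terse and asserts the equivalence of $\Sigma_{S^{1}}$ without argument (indeed it conflates the shift functor, which is $-\wedge C$, with $-\wedge S^{1}$), so filling the gap with the commuting-factors argument is the natural completion.
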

\begin{proof}
The homotopy category of every pointed model category is a pre-triangulated category on merit
of its (co)fiber sequences \cite[\S6.5]{Hovey:Modelcategories}.
Since the shift functor a.k.a.~the $S^{1}$-suspension functor 
$\Sigma_{S^{1}}\colon\SH^{\ast}\rightarrow\SH^{\ast}$ 
is an equivalence of categories,
the assertion follows.
\end{proof}

\begin{remark}
The sum of two maps $f,g\colon\E\rightarrow\F$ is represented by the composite
\begin{equation*}
\xymatrix{
\E\ar[r]^-{\bigtriangleup} &
\E\times\E\ar[r]^-{f\times g} &
\F\times\F &
\F\vee\F \ar[l]_-{\simeq}\ar[r]^-{\bigtriangledown} & \F. }
\end{equation*} 
\end{remark}

Every distinguished triangle $\E\rightarrow\F\rightarrow\G$ in $\SH^{\ast}$ induces 
long exact sequences of abelian groups
\begin{equation*}
\xymatrix{
\cdots\ar[r] & 
[\Hspt,\E[n]]\ar[r] & 
[\Hspt,\F[n]]\ar[r] &
[\Hspt,\G[n]]\ar[r] &
[\Hspt,\Hspt[n+1]]\ar[r]&
\cdots, \\
\cdots\ar[r] & 
[\G[n],\Hspt]\ar[r] & 
[\F[n],\Hspt]\ar[r] &
[\E[n],\Hspt]\ar[r] &
[\G[n-1],\Hspt]\ar[r] &
\cdots.}
\end{equation*}
Here $[-,-]$ denotes maps in $\SH^{\ast}$.
We proceed with some other facts concerning the triangulated structure collected from \cite[Chapter 7]{Hovey:Modelcategories}.

\begin{proposition}
\label{proposition:patched}
The following holds in the triangulated structure on $\SH^{\ast}$.
\begin{itemize}
\item
The class of cofiber sequences is replete, 
i.e.~every diagram isomorphic to a cofiber sequence is a cofiber sequence.
\item 
For every commutative diagram of cofiber sequences
\begin{equation*}
\xymatrix{
\E\ar[r]\ar[d]_{e} & \F\ar[r]\ar[d] & \G\ar@{-->}[d] \\
\E'\ar[r] & \F'\ar[r] & \G' }
\end{equation*}
there exists a nonunique $\Sigma_{S^{1}}e$-equivariant filler $\G\rightarrow\G'$.
\item 
If in a commutative diagram of cofiber sequences 
\begin{equation*}
\xymatrix{
\E\ar[r]\ar[d]_{e} & \F\ar[r]\ar[d]_{f} & \G\ar[d]_{g} \\
\E'\ar[r] & \F'\ar[r] & \G' }
\end{equation*}
the maps $e$ and $f$ are isomorphisms and $g$ is $\Sigma_{S^{1}}e$-equivariant, 
then $g$ is an isomorphism.
\item
If $K$ is a pointed simplicial set and $\E\rightarrow\F\rightarrow\G$ a cofiber sequence,
then $\E\wedge K\rightarrow\F\wedge K\rightarrow\G\wedge K$ and 
$\RR\underline{\Hom}(K,\E)\rightarrow\RR\underline{\Hom}(K,\F)\rightarrow\RR\underline{\Hom}(K,\G)$
are cofiber sequences.
\end{itemize}
\end{proposition}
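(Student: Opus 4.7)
The plan is to deduce each item from the general theory of pre-triangulated homotopy categories of pointed model categories as developed in Chapters 6 and 7 of \cite{Hovey:Modelcategories}, applied to the stable model structure on $\Spt_{C}$ from Theorem \ref{theorem:Cstarstablemodelstructure}, and then upgrade the isomorphism statement in the third item using the compact generators furnished by Theorem \ref{theorem:stableweakgenerators}. The first item is immediate from Definition \ref{definition:cofibersequences}: a cofiber sequence is any diagram isomorphic in $\SH^{\ast}$ to one of the canonical form (\ref{equation:distinguishedtriangle}), so repleteness is built into the definition.

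For the second item I would first apply a cofibrant replacement in $\Spt_{C}$ and factor the horizontal maps as projective cofibrations followed by stable weak equivalences, so that both rows are represented by strict cone sequences built from projective cofibrations $\E\cof\F$ and $\E'\cof\F'$ between projective cofibrant $\CC^{\ast}$-spectra. The left square commutes in $\SH^{\ast}$, so after possibly replacing the representative of the middle vertical map by a homotopic one we obtain a strictly commutative square of representatives. The universal property of the pushout defining $\cone(\E\rightarrow\F)$ then produces the filler $\G\rightarrow\G'$ by extending the chosen homotopy across $\E\wedge\Delta^{1}_{+}$, and equivariance for the coaction of $\Sigma_{S^{1}}\E$ follows by inspection of how the coaction is assembled from the same pushout data. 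Nonuniqueness reflects the freedom in choosing the homotopy witnessing commutativity, and I expect this equivariance clause to be the main obstacle, since the $\Sigma_{S^{1}}e$-action on $\G'$ is only defined up to homotopy and matching it with the action on $\G$ requires keeping careful track of the homotopies produced by the pushout construction.

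For the third item I would apply the standard five-lemma argument: each distinguished triangle yields, upon applying $[\Hspt,-]$ for an arbitrary $\Hspt\in\SH^{\ast}$, a long exact sequence of abelian groups as recorded just after Theorem \ref{theorem:stabletriangulatedstructure}. Given that $e$ and $f$ are isomorphisms and $g$ is $\Sigma_{S^{1}}e$-equivariant, the five lemma shows that $g$ induces an isomorphism $[\Hspt,\G[n]]\rightarrow[\Hspt,\G'[n]]$ for every integer $n$ and every compact generator $\Hspt$ from Theorem \ref{theorem:stableweakgenerators}. Since these generators detect isomorphisms in $\SH^{\ast}$, the map $g$ is an isomorphism.

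For the fourth item, the functor $-\wedge K$ on $\Spt_{C}$ is a left Quillen endofunctor for any cofibrant $K\in\Delta\Set_{\ast}$, arising from the $\Delta\Set_{\ast}$-module structure on the stable model category; it therefore preserves projective cofibrations between cofibrant objects and commutes with the pushout forming $\cone$ and with $S^{1}$-suspension. Consequently it sends the generating distinguished triangles (\ref{equation:distinguishedtriangle}) to diagrams of the same form, which by repleteness are cofiber sequences. The claim for $\RR\underline{\Hom}(K,-)$ follows by duality: it is the total right derived of a right adjoint, so it preserves fiber sequences of level fibrant spectra, and in the stable homotopy category $\SH^{\ast}$ the fiber and cofiber sequences agree up to the sign described by the $\Sigma_{S^{1}}$-equivalence.
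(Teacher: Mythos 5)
The paper gives no proof of Proposition \ref{proposition:patched}; it simply states that these facts are ``collected from \cite[Chapter 7]{Hovey:Modelcategories}'' and relies on Hovey's general development of (co)fiber sequences in pointed model categories. Your proposal is a correct reconstruction of that cited argument: item 1 from Definition \ref{definition:cofibersequences}, item 2 from the pushout/homotopy argument underlying Hovey's filler axiom, item 3 from the five lemma, and item 4 from the fact that the triangulation is defined precisely so that the derived $\Ho(\Delta\Set_{\ast})$-action preserves cofiber sequences. Your honest flagging of the $\Sigma_{S^{1}}e$-equivariance clause in item 2 as the delicate point is well placed; that is indeed where the work lies in Hovey's treatment.

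Two small remarks on where you deviate from the standard route. In item 3, once the five lemma gives that $[\Hspt,g]$ is an isomorphism for \emph{every} $\Hspt$, one usually finishes by Yoneda (take $\Hspt=\G'$ and pull back the identity); your detour through the compact generators of Theorem \ref{theorem:stableweakgenerators} is logically fine and fits the compactly generated framework the paper emphasizes, but it is not needed. In item 4, the phrase ``arising from the $\Delta\Set_{\ast}$-module structure on the stable model category'' is slightly imprecise: $\Spt_{C}$ here is a \emph{cubical} model category, tensored over $\Box\Set_{\ast}$, not over $\Delta\Set_{\ast}$. What Hovey's machinery actually provides, for any pointed model category, is that the homotopy category $\SH^{\ast}$ is a closed $\Ho(\Delta\Set_{\ast})$-module, and it is the \emph{derived} smash $-\wedge K$ on $\SH^{\ast}$ that appears in the proposition; this is also what the paper signals in the sentence preceding Theorem \ref{theorem:stabletriangulatedstructure}. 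With that clarification your argument goes through, since the derived action is defined exactly so as to preserve the generating distinguished triangles (\ref{equation:distinguishedtriangle}).
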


The next result gives a way of producing distinguished triangles in $\SH^{\ast}$.
\begin{lemma}
\label{lemma:homotopypushoutdistinguishedtriangles}
Every homotopy pushout square of simplicial $\CC^{\ast}$-spectra
\begin{equation}
\label{diagram:homotopypushout}
\xymatrix{
\E\ar[r]\ar[d] & \G\ar[d]\\
\F\ar[r] & \Hspt }
\end{equation}
gives rise to a distinguished triangle
\begin{equation*}
\xymatrix{
\cdots\ar[r] & 
\E\ar[r] & 
\F\oplus\G\ar[r] & 
\Hspt\ar[r] & 
\E[1]\ar[r] & \cdots.}
\end{equation*}
\end{lemma}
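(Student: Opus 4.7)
The plan is to reduce the statement to the standard Mayer--Vietoris distinguished triangle that accompanies every homotopy pushout square in a stable model category.

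First I would use left properness of the stable model structure on $\Spt_{C}$ from Theorem~\ref{theorem:Cstarstablemodelstructure} (transferred to simplicial $\CC^{\ast}$-spectra via the Quillen equivalence of Lemma~\ref{lemma:exactprojectiveBoxDelta} and its stable analog) to replace (\ref{diagram:homotopypushout}) up to stable weak equivalence by a strict pushout square in which $\E$ is projective cofibrant and $\E \to \F$ is a projective cofibration between projective cofibrant objects. Then $\G \to \Hspt$ is also a projective cofibration, and the quotient map identifies $\F/\E$ with $\Hspt/\G$ in $\SH^{\ast}$.

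Next I would construct the triangle explicitly. The additive structure of the triangulated category $\SH^{\ast}$ provided by Theorem~\ref{theorem:stabletriangulatedstructure} lets one define $\E \to \F \oplus \G$ with components $(f,-g)$ and $\F \oplus \G \to \Hspt$ as the sum of the two pushout inclusions; the connecting map $\Hspt \to \E[1]$ is the composition $\Hspt \to \Hspt/\G \simeq \F/\E \to \E[1]$, the last arrow being the connecting map of the cofiber sequence $\E \to \F \to \F/\E$ supplied by Definition~\ref{definition:cofibersequences}. To recognise this as a cofiber sequence in the sense of Definition~\ref{definition:cofibersequences}, one models the cone of $(f,-g)$ by the double mapping cylinder $\F \cup_{\E} (\E \wedge \Box^{1}_{+}) \cup_{\E} \G$, which is a projective cofibrant model for the homotopy pushout $\Hspt$ and is canonically stably equivalent to the strict pushout constructed above.

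The main obstacle is the verification that the candidate triangle satisfies the coaction compatibility in Definition~\ref{definition:cofibersequences} and that the signs align. I expect the cleanest route is to apply the octahedral axiom in $\SH^{\ast}$ to the factorization $\E \to \F \oplus \G \to \F$, whose second arrow is the projection to the first summand. This factorization simultaneously recovers the cofiber sequence $\E \to \F \to \F/\E$ and the cofiber sequence $\G \to \Hspt \to \F/\E$, and therefore forces the middle triangle to be distinguished with $\Hspt$ as the cone of $(f,-g)$ and with connecting map as described above.
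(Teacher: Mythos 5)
Your proposal is correct and shares the paper's general strategy: replace the homotopy pushout by a strict pushout along a projective cofibration between cofibrant objects and then identify one vertex of the candidate triangle with the cone of the adjacent map. You and the paper pick different edges, which amounts to a rotation. The paper reduces the claim to showing $\E\wedge S^{1}\cong\cone(\F\oplus\G\rightarrow\Hspt)$ and proves this directly by comparing the pushout square defining $\cone(\E\rightarrow\E\wedge\Delta^{1}_{+})$ with the analogous pushout of $\F\wedge\Delta^{1}_{+}\leftarrow\E\rightarrow\G\wedge\Delta^{1}_{+}$; a gluing argument identifies the lower-right corners up to stable weak equivalence, and the remaining maps then come from the definition of cofiber sequences. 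You instead model $\cone\bigl((f,-g)\colon\E\rightarrow\F\oplus\G\bigr)$ by the double mapping cylinder and identify it with a cofibrant model of $\Hspt$. Both identifications are equivalent and establish the same Mayer--Vietoris triangle; the paper's route dodges the sign bookkeeping you flag for $(f,-g)$.

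The final octahedral step is superfluous and, as phrased, somewhat circular: once the double mapping cylinder is recognized as a projective cofibrant model for $\Hspt$ that simultaneously realizes $\cone(f,-g)$, the diagram $\E\rightarrow\F\oplus\G\rightarrow\Hspt\rightarrow\Sigma_{S^{1}}\E$ is a cofiber sequence in the sense of Definition~\ref{definition:cofibersequences} with the coaction and connecting map produced by the cone construction itself. Invoking the octahedral axiom to ``force the middle triangle to be distinguished'' cannot substitute for this, since the octahedral axiom requires distinguished triangles---including the very one you want to establish---as input. It is a useful consistency check against the cofiber sequences $\E\rightarrow\F\rightarrow\F/\E$ and $\G\rightarrow\Hspt\rightarrow\F/\E$, but it is a consequence of the result rather than an ingredient of its proof.
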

\begin{proof}
We may assume $\E\rightarrow\G$ is a projective cofibration and (\ref{diagram:homotopypushout}) 
is a pushout, 
and it suffices to show $\E\wedge S^{1}$ and $\cone(\F\oplus\G\rightarrow\Hspt)$ are isomorphic in 
$\SH^{\ast}$.
Since $\E\wedge S^{1}$ and $\cone(\E\rightarrow\E\wedge\Box^{1}_{+})$ are isomorphic in $\SH^{\ast}$
the assertion follows by noting there is a naturally induced stable weak equivalence between the 
lower right corners in the following pushouts diagrams:
\begin{equation*}
\xymatrix{
\E\ar[r]\ar[d] & \E\wedge\Delta^{1}_{+}\ar[d]\\
\E\wedge\Delta^{1}_{+}\ar[r] & \cone(\E\rightarrow\E\wedge\Delta^{1}_{+}) }
\;\;\;\;\;
\xymatrix{
\E\ar[r]\ar[d] & \G\wedge\Delta^{1}_{+}\ar[d]\\
\F\wedge\Delta^{1}_{+}\ar[r] & \cone(\F\oplus\G\rightarrow\Hspt) }
\end{equation*}
\end{proof}
\begin{corollary}
\label{corollary:sesdistinguishedtriangles}
Every short exact sequence of $\CC^{\ast}$-algebras gives rise to a distinguished triangle in $\SH^{\ast}$.
\end{corollary}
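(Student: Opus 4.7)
The plan is to combine the defining property of the exact model structure with Lemma \ref{lemma:homotopypushoutdistinguishedtriangles}. Given a short exact sequence $0\rightarrow A\rightarrow E\rightarrow B\rightarrow 0$ of $\CC^{\ast}$-algebras, its associated exact square
\begin{equation*}
\E \equiv
\minCDarrowwidth20pt
\begin{CD}
A @>>> E \\
@VVV @VVV\\
0 @>>> B
\end{CD}
\end{equation*}
corresponds, via the contravariant Yoneda embedding of $\CC^{\ast}-\Alg$ into pointed cubical $\CC^{\ast}$-spaces, to a span $\ast\leftarrow B\rightarrow E$ in $\Box\CC^{\ast}-\Spc_{0}$, with $\ast$ the distinguished basepoint coming from the trivial $\CC^{\ast}$-algebra. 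By the very construction of the exact projective model structure given in the proof of Theorem \ref{theorem:exactprojective}, the canonical comparison map $\hocolim(\ast\leftarrow B\rightarrow E)\rightarrow A$ is, essentially by definition, a weak equivalence in the exact projective model structure. Since both the matrix invariant and the homotopy invariant projective model structures are further Bousfield localizations of the exact projective structure, this same map remains a $\CC^{\ast}$-weak equivalence in $\Box\CC^{\ast}-\Spc_{0}$. Equivalently, the commutative square with vertices $B$, $E$, $\ast$, $A$ is homotopy cocartesian in the pointed homotopy invariant projective model structure.

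I then apply the suspension spectrum functor $\Sigma_{C}^{\infty}\colon\Box\CC^{\ast}-\Spc_{0}\rightarrow\Spt_{C}$. It is left adjoint to level zero evaluation and so left Quillen with respect to the level model structure, and consequently left Quillen with respect to the stable model structure of Theorem \ref{theorem:Cstarstablemodelstructure}, which is a Bousfield localization of the level model structure. Since every representable pointed cubical $\CC^{\ast}$-space is projective cofibrant by Lemma \ref{lemma:projectivecofibrant}, I may factor $B\rightarrow E$ via the cubical mapping cylinder as a projective cofibration followed by a cubical homotopy equivalence; Ken Brown's lemma then ensures that $\Sigma_{C}^{\infty}$ sends the resulting homotopy pushout computation to a homotopy pushout in $\Spt_{C}$. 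This yields the homotopy cocartesian square
\begin{equation*}
\minCDarrowwidth20pt
\begin{CD}
\Sigma_{C}^{\infty}B @>>> \Sigma_{C}^{\infty}E \\
@VVV @VVV\\
\ast @>>> \Sigma_{C}^{\infty}A
\end{CD}
\end{equation*}
in $\Spt_{C}$.

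Applying Lemma \ref{lemma:homotopypushoutdistinguishedtriangles} to this square with $\E=\Sigma_{C}^{\infty}B$, $\F=\ast$, $\G=\Sigma_{C}^{\infty}E$ and $\Hspt=\Sigma_{C}^{\infty}A$ produces the distinguished triangle
\begin{equation*}
\Sigma_{C}^{\infty}B\rightarrow\Sigma_{C}^{\infty}E\rightarrow\Sigma_{C}^{\infty}A\rightarrow(\Sigma_{C}^{\infty}B)[1]
\end{equation*}
in $\SH^{\ast}$, as required. The principal technical obstacle is the verification that $\Sigma_{C}^{\infty}$ takes the homotopy cocartesian square from the homotopy invariant projective model structure on the source to a homotopy cocartesian square in the stable model structure on the target, rather than merely to a homotopy cocartesian square in the level model structure; this amounts to tracking the interaction between Bousfield localization on the two sides and is handled cleanly by cofibrantly replacing within the source before suspending.
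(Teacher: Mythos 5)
Your proof is correct and follows essentially the same route as the paper's: the paper's entire proof is the one-liner ``the suspension functor preserves homotopy pushout squares,'' which, combined with Lemma \ref{lemma:homotopypushoutdistinguishedtriangles}, is precisely the argument you have spelled out in full. The additional bookkeeping you supply (the exact square becoming homotopy cocartesian by construction of the exact localization, persistence under further Bousfield localization, cofibrant replacement via the cubical mapping cylinder, and left Quillen-ness of $\Sigma_{C}^{\infty}$) fills in exactly the details the paper suppresses.
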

\begin{proof}
The suspension functor preserves homotopy pushout squares.
\end{proof}
Next we employ the triangulated structure to compute maps from sequential colimits into arbitrary objects 
in $\SH^{\ast}$.
\begin{lemma}
\label{lemma:lim1ses}
Suppose ${\bf X}\colon\mathbf{N}\rightarrow\Spc_{C}$ is a sequential diagram and $\E$ a cubical $\CC^{\ast}$-spectrum.
Then there is a short exact sequence
\begin{equation*}
\xymatrix{
0\ar[r] & 
\underset{\mathbf{N}}{\lim^{1}}\,\,\SH^{\ast}\bigl(S^{1}\wedge {\bf X}(n),\E\bigr)
\ar[r] & 
\SH^{\ast}(\underset{\mathbf{N}}{\colim}\,\,{\bf X},\E)
\ar[r] & 
\underset{\mathbf{N}}{\lim}\,\,\SH^{\ast}\bigl({\bf X}(n),\E\bigr)
\ar[r] & 
0. }
\end{equation*}
\end{lemma}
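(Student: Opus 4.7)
The plan is to deduce the Milnor sequence by realizing $\colim_{\mathbf{N}} {\bf X}$ as the cofiber of a shift-minus-identity map on the coproduct $\bigvee_{n} {\bf X}(n)$ and then invoking the triangulated structure on $\SH^{\ast}$ established in Theorem \ref{theorem:stabletriangulatedstructure}. Since the stable model structure on $\Spt_{C}$ is weakly finitely generated by Lemma \ref{lemma:spectraweaklyfinitelygenerated}, filtered colimits preserve stable weak equivalences, so after replacing ${\bf X}$ by a projective cofibrant diagram the ordinary colimit computes $\hocolim_{\mathbf{N}} {\bf X}$ in $\SH^{\ast}$. Modeling the sequential homotopy colimit by the mapping telescope and invoking Lemma \ref{lemma:homotopypushoutdistinguishedtriangles} (applied to the standard pushout square realizing the telescope), one obtains a distinguished triangle
\begin{equation*}
\bigvee_{n\geq 0} {\bf X}(n) \xrightarrow{\id - s} \bigvee_{n\geq 0} {\bf X}(n) \longrightarrow \colim_{\mathbf{N}} {\bf X} \longrightarrow \bigvee_{n\geq 0} {\bf X}(n)[1]
\end{equation*}
in $\SH^{\ast}$, where $s$ restricts on the $n$th summand to the inclusion of ${\bf X}(n+1)$ precomposed with the transition map ${\bf X}(n) \to {\bf X}(n+1)$.

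Next, I apply the cohomological functor $\SH^{\ast}(-, \E)$. Because $\SH^{\ast}$ sends coproducts in $\Spt_{C}$ to products of abelian groups, the triangulated long exact sequence from Theorem \ref{theorem:stabletriangulatedstructure} becomes
\begin{equation*}
\prod_{n} \SH^{\ast}(S^{1}\wedge {\bf X}(n),\E) \xrightarrow{\id - s^{\ast}} \prod_{n} \SH^{\ast}(S^{1}\wedge {\bf X}(n),\E) \longrightarrow \SH^{\ast}(\colim_{\mathbf{N}} {\bf X},\E) \longrightarrow \prod_{n} \SH^{\ast}({\bf X}(n),\E) \xrightarrow{\id - s^{\ast}} \prod_{n} \SH^{\ast}({\bf X}(n),\E).
\end{equation*}

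Then I invoke the standard algebraic identification: for any tower $(A_{n}, f_{n})$ of abelian groups there is a short exact sequence $0 \to \lim_{n} A_{n} \to \prod_{n} A_{n} \xrightarrow{\id - s} \prod_{n} A_{n} \to \lim^{1}_{n} A_{n} \to 0$, in which the shift $s$ is built from the $f_{n}$. Applied to the towers $\SH^{\ast}({\bf X}(n),\E)$ and $\SH^{\ast}(S^{1}\wedge {\bf X}(n),\E)$, the kernel of $\id - s^{\ast}$ on the right identifies with $\lim_{\mathbf{N}} \SH^{\ast}({\bf X}(n),\E)$, while the cokernel of $\id - s^{\ast}$ on the left identifies with $\lim^{1}_{\mathbf{N}} \SH^{\ast}(S^{1}\wedge {\bf X}(n),\E)$. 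Splicing these identifications into the displayed long exact sequence yields the desired short exact sequence.

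The main obstacle is producing the telescope triangle cleanly in $\SH^{\ast}$: one must cofibrantly replace ${\bf X}$, exhibit the mapping telescope as a pushout along a projective cofibration so that Lemma \ref{lemma:homotopypushoutdistinguishedtriangles} applies, and verify that $\SH^{\ast}$ converts the relevant coproducts into products (which follows from the compact generation provided by Theorem \ref{theorem:stableweakgenerators} together with the fact that coproducts in $\SH^{\ast}$ are categorical products of representable cohomological functors). Once the triangle is in hand, the remainder is a purely algebraic manipulation using the $\lim$–$\lim^{1}$ exact sequence.
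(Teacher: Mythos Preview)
Your proof is correct and takes essentially the same approach as the paper: both construct the telescope distinguished triangle $\bigvee {\bf X}(n) \to \bigvee {\bf X}(n) \to \colim_{\mathbf{N}} {\bf X}$ (the paper phrases it via the coequalizer of the two inclusions ${\bf X}(n)\hookrightarrow \Delta^{1}_{+}\wedge {\bf X}(n)$ and takes their difference in the additive structure on $\SH^{\ast}$), apply $\SH^{\ast}(-,\E)$, and identify the resulting kernel and cokernel with $\lim$ and $\lim^{1}$. One small remark: the fact that $\SH^{\ast}(-,\E)$ sends coproducts to products is just the universal property of the coproduct in $\SH^{\ast}$---no appeal to compact generation is needed there.
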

\begin{proof}
First we observe there is a naturally induced distinguished triangle
\begin{equation}
\label{equation:induceddistinguished triangle}
\xymatrix{
\bigvee_{\mathbf{N}}{\bf X}(n)
\ar[r] & 
\bigvee_{\mathbf{N}}{\bf X}(n)
\ar[r] & 
\underset{\mathbf{N}}{\colim}\,\,{\bf X}
\ar[r] & 
\bigvee_{\mathbf{N}} S^{1}\wedge {\bf X}(n). }
\end{equation}
To begin with, 
the two natural maps ${\bf X}(n)\hookrightarrow \Delta^{1}_{+}\wedge {\bf X}(n)$ induced by $0_{+}$ and $1_{+}$
induce the diagram
\begin{equation}
\label{equation:coequalizerdiagram}
\xymatrix{
\bigvee_{\mathbf{N}}{\bf X}(n)
\ar@<3pt>[r]\ar@<-3pt>[r] & 
\bigvee_{\mathbf{N}}\Delta^{1}_{+}\wedge {\bf X}(n). }
\end{equation}
The coequalizer of (\ref{equation:coequalizerdiagram}) maps by a weak equivalence to the colimit of ${\bf X}$.
By taking the difference of the two maps in (\ref{equation:coequalizerdiagram}) in the additive structure on 
$\SH^{\ast}$ we deduce that (\ref{equation:induceddistinguished triangle}) is a distinguished triangle.

Now applying $\SH^{\ast}(-,\E)$ to (\ref{equation:induceddistinguished triangle}) yields the long exact sequence
\begin{equation*}
\xymatrix{
\cdots\ar[r] & 
\prod_{\mathbf{N}}[S^{1}\wedge {\bf X}(n),\E]
\ar[r] &
[\underset{\mathbf{N}}{\colim}\,\,{\bf X},\E]
\ar[r] & 
\prod_{\mathbf{N}}[{\bf X}(n),\E]
\ar[r] & 
\cdots. }
\end{equation*}
By the definition of the $\lim^{1}$-term the long exact sequence breaks up into the claimed short exact sequences.
\end{proof}
\begin{remark}
In concrete examples it is often of interest to know whether the $\lim^{1}$-term in Lemma \ref{lemma:lim1ses} vanishes.
\end{remark}

By combining the distinguished triangle (\ref{equation:induceddistinguished triangle}) with the level filtrations of a cubical 
$\CC^{\ast}$-spectrum $\E$ we deduce the next result.
\begin{corollary}
For every cubical $\CC^{\ast}$-spectrum $\E$ there is a naturally induced distinguished triangle
\begin{equation*}
\xymatrix{
\bigvee_{\mathbf{N}}L_{n}\E
\ar[r] & 
\bigvee_{\mathbf{N}}L_{n}\E
\ar[r] & 
\E
\ar[r] & 
\bigvee_{\mathbf{N}} S^{1}\wedge L_{n}\E. }
\end{equation*}
\end{corollary}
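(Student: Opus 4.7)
The plan is to apply the distinguished triangle constructed in the proof of Lemma \ref{lemma:lim1ses} to the sequential diagram furnished by the layer filtration of $\E$. Concretely, Example \ref{example:layerfiltration} produces a sequential diagram
\begin{equation*}
\xymatrix{
L_{0}\E \ar[r] & L_{1}\E \ar[r] & \cdots \ar[r] & L_{n}\E \ar[r] & L_{n+1}\E \ar[r] & \cdots }
\end{equation*}
in $\Spt_{C}$, together with a canonical isomorphism $\colim_{n} L_{n}\E \cong \E$. So I would set ${\bf X}\colon \mathbf{N} \rightarrow \Spt_{C}$ to be ${\bf X}(n)\equiv L_{n}\E$ and feed this into the machinery appearing in the proof of Lemma \ref{lemma:lim1ses}.

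The next step is to reproduce the construction of the distinguished triangle associated to a sequential diagram, but at the level of $\CC^{\ast}$-spectra rather than after applying $\SH^{\ast}(-,\E)$. The two natural maps ${\bf X}(n)\hookrightarrow \Delta^{1}_{+}\wedge {\bf X}(n)$ coming from the pointed inclusions $0_{+},1_{+}\colon S^{0}\hookrightarrow\Delta^{1}_{+}$ assemble into a parallel pair
\begin{equation*}
\xymatrix{
\bigvee_{\mathbf{N}} L_{n}\E \ar@<3pt>[r]\ar@<-3pt>[r] & \bigvee_{\mathbf{N}} \Delta^{1}_{+}\wedge L_{n}\E, }
\end{equation*}
whose coequalizer maps by a stable weak equivalence onto $\colim_{n} L_{n}\E$, and hence onto $\E$. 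Taking the difference of these two maps using the additive structure on $\SH^{\ast}$ and applying the cone construction of Definition \ref{definition:cofibersequences} gives a distinguished triangle
\begin{equation*}
\xymatrix{
\bigvee_{\mathbf{N}} L_{n}\E \ar[r] & \bigvee_{\mathbf{N}} L_{n}\E \ar[r] & \E \ar[r] & \bigvee_{\mathbf{N}} S^{1}\wedge L_{n}\E }
\end{equation*}
in $\SH^{\ast}$, where the third term has been identified with $\E$ via Example \ref{example:layerfiltration} and the fourth is the suspension of the first summand, cf.\ Example \ref{example:coneexample}.

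The main point requiring care is verifying that the coequalizer of the telescope diagram agrees, in $\SH^{\ast}$, with the sequential colimit $\colim_{n} L_{n}\E$, and that passing from this coequalizer presentation to the cofiber triangle via the additive structure really delivers the shape of (\ref{equation:induceddistinguished triangle}). Both are essentially formal: the coequalizer-to-colimit comparison is a standard telescope argument (use Lemma \ref{lemma:spectraweaklyfinitelygenerated} to see that filtered colimits are homotopically well behaved in the stable model structure on $\Spt_{C}$), and the cone-of-a-difference identification is exactly the step already carried out in the proof of Lemma \ref{lemma:lim1ses}. Naturality in $\E$ follows because each ingredient — the layer filtration, the two maps $0_{+},1_{+}$, and the cone construction — is functorial in $\E$.
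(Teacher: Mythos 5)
Your proposal matches the paper's intent exactly: the paper deduces this corollary in one line by combining the distinguished triangle $(\ref{equation:induceddistinguished triangle})$ from the proof of Lemma \ref{lemma:lim1ses} with the layer filtration of Example \ref{example:layerfiltration}, using that $\colim_{n} L_{n}\E \cong \E$. You have simply spelled out the telescope-coequalizer step and the cone-of-a-difference step that the paper leaves implicit.
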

\begin{corollary}
For cubical $\CC^{\ast}$-spectra $\E$ and $\F$ there is a canonical short exact sequence
\begin{equation*}
\xymatrix{
0\ar[r] & 
\underset{\mathbf{N}}{\lim^{1}}\,\,\E^{p+2i-1,q+i}(\F_{i})
\ar[r] & 
\E^{p,q}(\F)
\ar[r] & 
\underset{\mathbf{N}}{\lim}\,\,\E^{p+2i,q+i}(\F_{i})
\ar[r] & 
0. }
\end{equation*}
\end{corollary}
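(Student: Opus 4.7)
The plan is to apply the previous lemma to the layer filtration of $\F$ and identify each term via the standard bigraded suspension isomorphisms. First I would let ${\bf X}(i) = L_i\F$ from Example \ref{example:layerfiltration}; by construction $\colim_{\mathbf{N}} L_i\F$ is isomorphic to $\F$. Applying Lemma \ref{lemma:lim1ses} with target $S^{p-q}\otimes C_0(\R^q)\otimes \E$ (here I read $\Spc_C$ in the statement of that lemma as $\Spt_C$, the arguments being identical), I would obtain a short exact sequence
\begin{equation*}
0\to\underset{\mathbf{N}}{\lim}{}^{1}\,\SH^{\ast}\bigl(S^{1}\wedge L_{i}\F,\,S^{p-q}\otimes C_{0}(\R^{q})\otimes\E\bigr)\to\E^{p,q}(\F)\to\underset{\mathbf{N}}{\lim}\,\SH^{\ast}\bigl(L_{i}\F,\,S^{p-q}\otimes C_{0}(\R^{q})\otimes\E\bigr)\to 0,
\end{equation*}
whose middle term is $\E^{p,q}(\F)$ by the definition \eqref{cohomology} extended from $\CC^{\ast}$-algebras to cubical $\CC^{\ast}$-spectra.

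Next I would identify the two outer terms. Example \ref{example:stableweakequivalence} gives a stable weak equivalence $\Sigma_{C}^{\infty}\F_{i}[-i]\to L_{i}\F$. Combining this with the invertibility of the shift $[-i]$ on $\SH^{\ast}$ and the identification $\E[i]\simeq C^{\otimes i}\wedge \E$ from Corollary \ref{corollary:patched}, together with the obvious $C^{\otimes i}=S^{i}\otimes C_{0}(\R^{i})$, yields
\begin{equation*}
\SH^{\ast}\bigl(L_{i}\F,\,S^{p-q}\otimes C_{0}(\R^{q})\otimes\E\bigr)
=\SH^{\ast}\bigl(\Sigma_{C}^{\infty}\F_{i},\,S^{p+i-q}\otimes C_{0}(\R^{q+i})\otimes\E\bigr)
=\E^{p+2i,q+i}(\F_{i}).
\end{equation*}
For the $\lim^{1}$ term I would push $S^{1}\wedge$ across the $\SH^{\ast}$-pairing using the $(S^{1}\wedge,\Omega_{S^{1}})$ adjunction in the triangulated category, noting that $\Omega_{S^{1}}\bigl(S^{p-q}\otimes C_{0}(\R^{q})\otimes\E\bigr)=S^{p-q-1}\otimes C_{0}(\R^{q})\otimes\E$, and then apply the same identification to obtain $\E^{p+2i-1,q+i}(\F_{i})$.

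Substituting these two identifications into the short exact sequence delivers exactly the asserted formula. The main obstacle is purely a matter of indexing discipline: keeping straight how the shift $[i]$ on $\Spt_{C}$, the smashing with $C^{\otimes i}=S^{i}\otimes C_{0}(\R^{i})$, and the bigrading $(p,q)\mapsto S^{p-q}\otimes C_{0}(\R^{q})$ interact to produce the displacements $(p,q)\mapsto(p+2i,q+i)$ and $(p,q)\mapsto(p+2i-1,q+i)$; once one fixes the convention that smashing with $C$ raises $(p,q)$ by $(2,1)$ and smashing with $S^{1}$ raises it by $(1,0)$, the bookkeeping is forced and the rest is invocation of Lemma \ref{lemma:lim1ses}.
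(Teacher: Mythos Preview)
Your proposal is correct and follows precisely the route the paper intends: apply Lemma \ref{lemma:lim1ses} to the layer filtration ${\bf X}(i)=L_i\F$ with $\colim_{\mathbf N}L_i\F\cong\F$, and then identify the outer terms via the stable equivalence $\Sigma_C^\infty\F_i[-i]\to L_i\F$ together with $\E[i]\simeq C^{\otimes i}\wedge\E$ and $C^{\otimes i}=S^i\otimes C_0(\R^i)$ to produce the shifts $(p,q)\mapsto(p+2i,q+i)$ and $(p,q)\mapsto(p+2i-1,q+i)$. The paper leaves the proof implicit, and your bookkeeping (including the observation that $\Spc_C$ should read $\Spt_C$) is exactly what is needed.
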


Due to the formal nature of the proofs in this section the results remain valid in the stable $\Group$-equivariant setting.

\begin{example}
\label{example:exactsequences}
We include some additional examples to illustrate the applicability of the results in this section.
\begin{itemize}
\item
The suspension extension of a $\Group-\CC^{\ast}$-algebra $A$ is 
\begin{equation*}
\xymatrix{
0\ar[r] & A\otimes C_{0}(0,1)\ar[r] & A\otimes C_{0}(0,1]\ar[r] & A\ar[r] & 0.}
\end{equation*}
The $\Group-\CC^{\ast}$-actions are determined by the pointwise action on $A$. 
\item
The Toeplitz extension
\begin{equation*}
\xymatrix{
0\ar[r] & A\otimes\K\ar[r] & A\otimes\T\ar[r] & \ar[r] A\otimes C(S^{1})& 0.}
\end{equation*}
In the equivariant setting, 
the compact operators $\K$ and the Toeplitz algebra $\T$ are equipped with the trivial actions. 
\item
If $\alpha$ is an automorphism of a unital $\CC^{\ast}$-algebra $A$, 
for the crossed product $A\rtimes_{\alpha}\ZZ$ of $A$ by the action of $\ZZ$ on $A$ given by $\alpha$
there is a short exact sequence of $\CC^{\ast}$-algebras,
the Pimsner-Voiculescu ``Toeplitz extension''
\begin{equation*}
\label{PVexactsequence}
\xymatrix{
0\ar[r] & A\otimes\K\ar[r] & \T_{\alpha}\ar[r] & A\rtimes_{\alpha}\ZZ\ar[r] & 0.}
\end{equation*}
Recall that $\T_{\alpha}$ is the $\CC^{\ast}$-subalgebra of $(A\rtimes_{\alpha}\ZZ)\otimes\T$ generated 
by $a\otimes 1$ and $u\otimes v$, where $u$ is the unitary element such that $uau^{\ast}=\alpha(a)$ for
all $a\in A$ and $v$ is the non-unitary isometry generating the ordinary Toeplitz algebra $\T$. 
\item
Let $s_{1},\cdots,s_{n}$,
$n\geq 2$, 
be isometries on a Hilbert space $\Hilbert$ whose range projections add up to the identity. 
The Cuntz algebra $\mathcal{O}_{n}$ is the unique up to isomorphism simple, purely infinite and unital 
$\CC^{\ast}$-subalgebra of $\K(\Hilbert)$ generated by $\{s_{1},\cdots,s_{n}\}$ subject to the relations 
$s_{i}^{\ast}s_{i}=I$ for all $1\leq i\leq n$ and $\sum_{i=1}^{n}s_{i}s_{i}^{\ast}=I$. 
There exists a short exact sequence
\begin{equation*}
\xymatrix{
0\ar[r] &\K\ar[r] &\E_{n}\ar[r] &\mathcal{O}_{n}\ar[r] & 0.}
\end{equation*}
\item
The $\CC^{\ast}$-algebra $C(S^{1})$ acts on the Hilbert space $L^{2}(S^{1})$ by multiplication 
$f(g)\equiv fg$.
For $\Theta\in[0,1]$, 
let $A_{\Theta}$ be the noncommutative torus generated by multiplication operators and the
unitary rotation $u_{\Theta}$ by $2\pi\Theta$ on $L^{2}(S^{1})$, 
i.e.~$u_{\Theta}(g)(s)\equiv g(s-2\pi\Theta)$.
Recall that $A_{\Theta}$ is simple when $\Theta$ is irrational.
There exists a short exact sequence
\begin{equation*}
\xymatrix{
0\ar[r] &\K\otimes C(S^{1})\ar[r] &\T_{\Theta}\ar[r] & A_{\Theta}\ar[r] & 0.}
\end{equation*}
\item
Let $M$ be a compact manifold with cotangent sphere bundle $T^{\ast}M$.
If $\Psi_{0}$ denotes the closure in the operator norm of the algebra of pseudodifferential operators
of negative order, 
then there exists a short exact sequence
\begin{equation*}
\xymatrix{
0\ar[r] &\K\bigl(L^{2}(M)\bigr)\ar[r] &\Psi_{0}\ar[r] & C(T^{\ast}M)\ar[r] & 0.}
\end{equation*}
\item
Let $T(A)$ be the tensor $\CC^{\ast}$-algebra of $A$,
i.e.~the completion of $A\oplus (A\otimes A)\oplus\cdots$ with respect to the $\CC^{\ast}$-norm
given by the supremum of its $\CC^{\ast}$-seminorms.
In \cite{Cuntz:bivariant} the Kasparov $KK$-theory functor $KK_{n}(A,-)$ is defined using the 
short exact sequence
\begin{equation*}
\xymatrix{
0\ar[r] & J(A)\ar[r] & T(A)\ar[r] & A\ar[r] & 0.}
\end{equation*}
If $A$ is a $\Group-\CC^{\ast}$-algebra, 
then so is $T(A)$.
For the important universal extension property of this short exact sequence we refer to \cite{Cuntz:bivariant}.
\end{itemize}
\end{example}
\newpage

\subsection{Brown representability}
\label{subsection:brownrepresentability}
In this section we note a Brown representability theorem in $\SH^{\ast}$.
A key result due to Rosicky \cite[Proposition 6.10]{Rosicky:brownrepresentability} shows the 
homotopy category of a combinatorial stable model category is well generated in the sense of Neeman 
\cite[Remark 8.1.7]{Neeman:Triangulatedcategories}.  
The precise definition of ``well generated'' is not easily stated and will not be repeated here since 
there is no real need for it.
By \cite[Proposition 8.4.2]{Neeman:Triangulatedcategories} the homotopy category satisfies the 
Brown representability theorem formulated in \cite[Definition 8.4.1]{Neeman:Triangulatedcategories}.
\vspace{0.1in}

Combining the general setup with our results for $\SH^{\ast}$ we deduce the following 
representability result referred to as Brown representability for cohomology:
\begin{theorem}
\label{theorem:stablebrown}
If $\F$ is a contravariant functor from $\SH^{\ast}$ to the category of abelian groups which 
is homological and sends coproducts to products,
there exists an object $\E$ of $\SH^{\ast}$ and a natural isomorphism
\begin{equation*}
\xymatrix{
\SH^{\ast}(-,\E)\ar[r] & \F(-). }
\end{equation*} 
\end{theorem}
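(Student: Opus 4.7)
The plan is to deduce the statement from general triangulated-categorical Brown representability applied to $\SH^{\ast}$, viewed as the homotopy category of the stable model structure on $\Spt_{C}$. The main ingredients are already in hand: Theorem \ref{theorem:Cstarstablemodelstructure} supplies a combinatorial, left proper, cubical stable model structure on $\Spt_{C}$; Theorem \ref{theorem:stabletriangulatedstructure} shows $\SH^{\ast}$ is triangulated with shift given by $S^{1}$-suspension; and Theorem \ref{theorem:stableweakgenerators} produces a set of compact generators, namely the cofibers of the generating projective cofibrations $\Fr_{m}\bigl(A\otimes(\partial\Box^{n}\subset\Box^{n})_{+}\bigr)$.

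First I would record that $\SH^{\ast}$ is a stable homotopy category in the sense of homotopical algebra: it is the homotopy category of the pointed model category $\Spt_{C}$, in which the suspension functor is an equivalence. In particular $\SH^{\ast}$ has all small coproducts (they are computed, up to stable weak equivalence, by taking coproducts of cofibrant representatives in $\Spt_{C}$). Next I would invoke Rosick\'y's theorem \cite[Proposition 6.10]{Rosicky:brownrepresentability}, which states that the homotopy category of any combinatorial stable model category is well generated in the sense of Neeman. Applied to $\Spt_{C}$ with its stable model structure, this gives that $\SH^{\ast}$ is a well generated triangulated category; one may even identify an explicit generating set from Theorem \ref{theorem:stableweakgenerators}.

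With well-generatedness established, the conclusion follows from Neeman's Brown representability theorem for cohomology \cite[Proposition 8.4.2, Definition 8.4.1]{Neeman:Triangulatedcategories}: every contravariant functor $\F\colon\SH^{\ast}\rightarrow\Ab$ which is homological (turns distinguished triangles into long exact sequences) and sends coproducts in $\SH^{\ast}$ to products of abelian groups is representable by an object $\E\in\SH^{\ast}$. The natural isomorphism $\SH^{\ast}(-,\E)\rightarrow\F(-)$ is then produced by the usual element-chasing argument embedded in Neeman's proof, starting from a choice of universal element on a set of compact generators and extending by the long exact sequences coming from the triangulated structure of Theorem \ref{theorem:stabletriangulatedstructure}.

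The main obstacle, and the only nontrivial verification, is the well-generatedness hypothesis, since the definition of a well generated triangulated category is technical. This is entirely absorbed into Rosick\'y's result once one has checked that the stable model structure is combinatorial; in our setting this combinatoriality is guaranteed by Theorem \ref{theorem:Cstarstablemodelstructure} together with the fact that $\Box\CC^{\ast}-\Spc_{0}$ is locally presentable (Lemma \ref{lemma:finitelypresentable}), and this locally presentable structure passes to $\Spt_{C}$ because spectra are sequences of pointed cubical $\CC^{\ast}$-spaces subject to a small set of structure-map constraints. Once these citations are assembled, the proof is essentially a one-line appeal to the combination of Rosick\'y and Neeman.
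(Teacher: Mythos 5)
Your proposal is correct and follows essentially the same route as the paper: invoke Rosick\'y's result that the homotopy category of a combinatorial stable model category is well generated, applied to the stable model structure on $\Spt_{C}$ from Theorem \ref{theorem:Cstarstablemodelstructure}, and then conclude by Neeman's Brown representability theorem for well generated triangulated categories. The additional remarks you make about compact generators and local presentability are consistent with, and slightly more explicit than, the paper's treatment but do not change the argument.
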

\vspace{0.1in}

The functor $\F$ is homological if it sends every distinguished triangle in $\SH^{\ast}$ to an exact sequence of 
abelian groups \cite[Definition 1.1.7]{Neeman:Triangulatedcategories}.
Recall from Corollary \ref{corollary:sesdistinguishedtriangles} that every short exact sequence of $\CC^{\ast}$-algebras 
gives rise to a distinguished triangle in $\SH^{\ast}$.
Note also that $\F$ is matrix invariant and homotopy invariant by virtue of being a (contravariant) functor
on the stable $\CC^{\ast}$-homotopy category.
\newpage

\subsection{$\CC^{\ast}$-symmetric spectra}
\label{subsection:symmetricspectra}

The existence of model categories with strictly associative and commutative monoidal products 
which model the ordinary stable homotopy category is a relatively recent discovery.  
Symmetric spectra of pointed simplicial sets introduced in \cite{HSS:SS} furnishes one such model.
In this section we work out a theory of symmetric spectra for pointed cubical $\CC^{\ast}$-spaces.

Let $\Sigma=\coprod_{n\geq 0}\Sigma_{n}$ be the category with objects $\overline{n}=\{1,2,\cdots,n\}$ 
for $n\geq 0$,
where $\overline{0}=\emptyset $.  
The maps of $\Sigma$ from $\overline{m}$ to $\overline{n}$ are the bijections,
i.e.~the empty set when $m\neq n$ and the symmetric group $\Sigma_{n}$ when $m=n$. 
Note that $\Sigma$ is a skeleton for the category of finite sets and isomorphisms.

\begin{definition}
\label{definition:symmetric-sequence}
A $\CC^{\ast}$-cubical symmetric sequence $\E$ is a functor from $\Sigma$ to $\Box\CC^{\ast}-\Spc_{0}$ or 
equivalently a sequence of pointed cubical $\CC^{\ast}$-spaces $(\E_{n})_{n\geq 0}$ where $\Sigma_{n}$ acts on $\E_{n}$.
Let $\Box\CC^{\ast}-\Spc_{0}^{\Sigma}$ denote the functor category of $\CC^{\ast}$-cubical symmetric sequences.  
\end{definition}
\begin{example}
Every pointed cubical $\CC^{\ast}$-space $\X$ determines a $\CC^{\ast}$-cubical symmetric sequence 
$\Sym(\X)\equiv (\X^{\otimes n})_{n\geq 0}$ where $\Sigma_{n}$ acts on the product $\X^{\otimes n}$ 
by permutation.
\end{example}
\begin{example}
For $n\geq 0$, 
the free symmetric sequence $\Sigma[n]\equiv\Sigma(\overline{n},-)$ gives rise to the free functor 
$\Sigma[n]_{+}\otimes-\colon\Box\CC^{\ast}-\Spc_{0}\rightarrow\Box\CC^{\ast}-\Spc_{0}^{\Sigma}$.
\end{example}

The monoidal structure on $\CC^{\ast}$-cubical symmetric sequences 
\begin{equation}
\xymatrix{
-\otimes^{\Sigma}-\colon\Box\CC^{\ast}-\Spc_{0}^{\Sigma}\times\Box\CC^{\ast}-\Spc_{0}^{\Sigma}
\ar[r] & \Box\CC^{\ast}-\Spc_{0}^{\Sigma}}
\end{equation}
is defined by 
\begin{equation}
\label{monoidalproductsymmetricsequences}
(\E\otimes^{\Sigma}\F)_{n}=
\coprod_{p+q=n}\Sigma_{n}\times_{\Sigma_{p}\times\Sigma_{q}} (\E_{p}\otimes\F_{q}).
\end{equation}
Here, 
if $(\Z,z_{0})$ is a pointed set with an $\Sigma_{n}$-action, 
$\Sigma_{n}\times_{\Sigma_{p}\times\Sigma_{q}}\Z$ denotes the pointed by $(1,z_{0})$ quotient of the 
coproduct of $n!$ copies of $\Z$ by the equivalence relation generated by identifying the elements of 
$\Sigma_{n}\times\{z_{0}\}$ and elements $\bigl(\sigma_{n}(\sigma_{p},\sigma_{q}),z\bigr)$ with 
$\bigl(\sigma_{n},(\sigma_{p},\sigma_{q})z\bigr)$, where $\sigma_{i}\in\Sigma_{i}$ for $i=p,q,n$ and
the group homomorphism $\Sigma_{p}\times\Sigma_{q}\rightarrow\Sigma_{n}$ is defined by 
\begin{equation*}
(\sigma_{p},\sigma_{q})(k)\equiv
\begin{cases}
\sigma_{p}(k) & \text{ if } 1\leq k\leq p\\
\sigma_{q}(k-p)+p & \text{ if } p+1\leq k\leq p+q.\\
\end{cases}
\end{equation*}
To complete the definition of (\ref{monoidalproductsymmetricsequences}) one extends this construction 
in the natural way to all pointed cubical $\CC^{\ast}$-spaces.
The monoidal structure is rigged so that $(\C,0,0,\cdots)$ is the unit,
and $\Sym(\X)$ is freely generated by $(0,\X,0,0,\cdots)$. 
Note that $\E\otimes^{\Sigma}\F$ represents the functor which to $\G$ associates all 
$\Sigma_{p}\times\Sigma_{q}$-equivariant maps $\phi_{p,q}\colon\E_{p}\otimes\F_{q}\rightarrow\G_{p+q}$ 
in $\Box\CC^{\ast}-\Spc_{0}$.
To show that $\otimes^{\Sigma}$ is symmetric it suffices, 
by the Yoneda lemma, 
to define natural bijections 
$\Box\CC^{\ast}-\Spc_{0}^{\Sigma}(\E\otimes^{\Sigma}\F,\G)
\rightarrow
\Box\CC^{\ast}-\Spc_{0}^{\Sigma}(\F\otimes^{\Sigma}\E,\G)$.
In effect, 
if $(\phi_{p,q})\in\Box\CC^{\ast}-\Spc_{0}^{\Sigma}(\E\otimes^{\Sigma}\F,\G)$ define 
$(\phi'_{p,q})\in\Box\CC^{\ast}-\Spc_{0}^{\Sigma}(\F\otimes^{\Sigma}\E,\G)$ by the commutative diagrams:
\begin{displaymath}
\xymatrix{
\F_{p}\otimes\E_{q}\ar[r]^-{\phi'_{p,q}}\ar[d]_-{\tau} & \G_{p+q}\\
\E_{q}\otimes\F_{p}\ar[r]^-{\phi_{q,p}} & \G_{q+p}\ar[u]_-{c_{p,q}} }
\end{displaymath}
Here $\tau$ is the symmetry isomorphism in $\Box\CC^{\ast}-\Spc_{0}$ and $\sigma_{p,q}$ is the permutation 
defined by 
\begin{equation*}
\sigma_{p,q}(k)\equiv
\begin{cases}
k+q & \text{ if } 1\leq k\leq p\\
k-p & \text{ if } p+1\leq k\leq p+q.\\
\end{cases}
\end{equation*}
Then $\phi'_{p,q}$ is $\Sigma_{p}\times\Sigma_{q}$-equivariant, and hence there exists a 
commutativity isomorphism $\sigma\colon\E\otimes^{\Sigma}\F\rightarrow\F\otimes^{\Sigma}\E$.
According to (\ref{monoidalproductsymmetricsequences}) the associativity isomorphism for 
$\otimes^{\Sigma}$ follow using the associativity isomorphism for $\otimes$ in $\Box\CC^{\ast}-\Spc_{0}$.

Now if $\E_{n},\F_{n}\in\Box\CC^{\ast}-\Spc_{0}^{\Sigma_{n}}$ the internal hom 
$\underline{\Hom}^{\Sigma_{n}}(\E_{n},\F_{n})$ exists for formal reasons as the equalizer of the 
two maps $\underline{\Hom}(\E_{n},\F_{n})\rightarrow\underline{\Hom}(\Sigma_{n}\times\E_{n},\F_{n})$ 
induced by the $\Sigma_{n}$-actions on $\E_{n}$ and $\F_{n}$.  
Internal hom objects in $\Box\CC^{\ast}-\Spc_{0}^{\Sigma}$ are defined by
\begin{equation*}
\underline{\Hom}^{\Sigma}(\E,\F)_{k}\equiv
\prod _{n}\underline{\Hom}^{\Sigma_{n}}(X_{n},\F_{n+k}).
\end{equation*}
Note that $\underline{\Hom}^{\Sigma_{n}}(\E_{n},\F_{n+k})$ is the $\Sigma_{n}$-invariants of the internal 
hom $\underline{\Hom}(\E_{n},\F_{n+k})$ in $\Box\CC^{\ast}-\Spc_{0}$ for the action given by associating 
to $\sigma_{n}\in\Sigma_{n}$ the map 
\begin{equation*}
\xymatrix{
\underline{\Hom}(\E_{n},\F_{n+k}) 
\ar[r]^-{{\underline{\Hom}(\sigma_{n},\F_{n+k})}} &
\underline{\Hom}(\E_{n},\F_{n+k}) 
\ar[r]^-{{\underline{\Hom}\bigl(\E_{n},\phi_{n,k}(\sigma_{n},1)\bigr)}} &
\underline{\Hom}(\E_{n},\F_{n+k}).}
\end{equation*}
With these definitions there are natural bijections
\begin{equation*}
\Box\CC^{\ast}-\Spc_{0}^{\Sigma}(\E\otimes^{\Sigma}\F,\G)=
\Box\CC^{\ast}-\Spc_{0}^{\Sigma}\bigl(\E,\underline{\Hom}^{\Sigma}(\F,\G)\bigr).
\end{equation*}

Small limits and colimits in the functor category $\Box\CC^{\ast}-\Spc_{0}^{\Sigma}$ exist and are formed pointwise.
By reference to \cite{Day:closedfunctorsI} or by inspection of the above constructions we get the 
next result.
\begin{lemma}
\label{lemma:symmetric-sequencebcsmfc}
The triple $(\Box\CC^{\ast}-\Spc_{0}^{\Sigma},\otimes^{\Sigma},\underline{\Hom}^{\Sigma})$ forms a bicomplete and 
closed symmetric monoidal category.
\end{lemma}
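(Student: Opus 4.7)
The plan is to deduce the lemma as an instance of Day's convolution construction \cite{Day:closedfunctorsI}, since the text already exhibits the relevant structures ($\otimes^{\Sigma}$, $\underline{\Hom}^{\Sigma}$, the adjunction, the symmetry isomorphism) and the work remaining is essentially verification of coherence. First I would observe that $\Sigma$ is a symmetric monoidal category under disjoint union $\overline{p}\sqcup\overline{q}=\overline{p+q}$ with symmetry the shuffle $\sigma_{p,q}$ defined in the excerpt, and that $\Box\CC^{\ast}-\Spc_{0}$ is closed symmetric monoidal and bicomplete by the pointed analog of the material in Section \ref{subsection:CC-spaces}. Day's theorem then supplies a closed symmetric monoidal structure on $[\Sigma,\Box\CC^{\ast}-\Spc_{0}]=\Box\CC^{\ast}-\Spc_{0}^{\Sigma}$ whose tensor product is the left Kan extension of $(\E,\F)\mapsto\E\otimes\F$ along $\sqcup\colon\Sigma\times\Sigma\to\Sigma$, and one checks this coincides with the formula (\ref{monoidalproductsymmetricsequences}) by computing the Kan extension as a coend and using that every hom-set in $\Sigma$ is either empty or a torsor under $\Sigma_{n}$.

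Next I would verify bicompleteness directly. Since $\Sigma$ is small and $\Box\CC^{\ast}-\Spc_{0}$ is bicomplete (being a pointed variant of the locally presentable category $\Box\CC^{\ast}-\Spc$ from Lemma \ref{lemma:finitelypresentable}), limits and colimits in the functor category exist and are formed pointwise, i.e.\ $(\colim_{i}\E^{i})_{n}=\colim_{i}\E^{i}_{n}$ with the diagonal $\Sigma_{n}$-action and similarly for limits. The unit $(\C,0,0,\ldots)$ and its left/right unit isomorphisms $\E\otimes^{\Sigma}(\C,0,\ldots)\cong\E$ follow from the formula (\ref{monoidalproductsymmetricsequences}) after noting that only the summand $p+q=n$ with $q=0$ contributes and $\Sigma_{n}\times_{\Sigma_{n}\times\Sigma_{0}}(\E_{n}\otimes\C)\cong\E_{n}$ as $\Sigma_{n}$-objects.

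The closed structure reduces to checking the adjunction displayed just before the lemma. For this I would verify that a map $\E\otimes^{\Sigma}\F\to\G$ corresponds, via the universal property of the coproduct-quotient in (\ref{monoidalproductsymmetricsequences}), to a family of $\Sigma_{p}\times\Sigma_{q}$-equivariant maps $\phi_{p,q}\colon\E_{p}\otimes\F_{q}\to\G_{p+q}$; and then by the internal-hom adjunction in $\Box\CC^{\ast}-\Spc_{0}$ together with the $\Sigma_{q}$-invariants construction, such a family corresponds to a map $\E_{p}\to\underline{\Hom}^{\Sigma_{q}}(\F_{q},\G_{p+q})$ natural in $p$, which is exactly a map of symmetric sequences $\E\to\underline{\Hom}^{\Sigma}(\F,\G)$. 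Associativity and symmetry coherence (MacLane's pentagon and hexagon) are inherited from the corresponding coherences in $\Box\CC^{\ast}-\Spc_{0}$ together with the evident equalities of shuffle permutations; this is the most bookkeeping-heavy part of the argument but is entirely formal.

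The main obstacle is not any single step but the verification of the pentagon and hexagon axioms: because the monoidal product mixes $\CC^{\ast}$-tensor data with symmetric-group induction, one must keep careful track of how the shuffle permutations $\sigma_{p,q}$ interact with the associator and symmetry of $\otimes$ on $\Box\CC^{\ast}-\Spc_{0}$. Invoking Day's theorem bypasses this bookkeeping by deducing all coherences from those of $(\Sigma,\sqcup)$ and $(\Box\CC^{\ast}-\Spc_{0},\otimes)$, which is the cleanest route to a complete proof.
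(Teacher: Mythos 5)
Your proposal is correct and takes essentially the same route as the paper, which justifies bicompleteness by noting that limits and colimits in the functor category $\Box\CC^{\ast}-\Spc_{0}^{\Sigma}$ are formed pointwise, and obtains the closed symmetric monoidal structure ``by reference to \cite{Day:closedfunctorsI} or by inspection of the above constructions.'' You have simply elected the Day-convolution branch of that disjunction and filled in the verification that the convolution formula reduces to (\ref{monoidalproductsymmetricsequences}), which the paper leaves implicit.
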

\begin{example}
\label{example:symmetricexample}
For a pointed cubical $\CC^{\ast}$-space $\X$ the $\CC^{\ast}$-cubical symmetric sequence 
$\Sym(\X)$ is a commutative monoid in the closed symmetric structure on $\Box\CC^{\ast}-\Spc_{0}^{\Sigma}$.
Recall the monoid structure is induced by the canonical maps 
\begin{equation*}
\xymatrix{
\Sigma_{p+q}\times_{\Sigma_{p}\times\Sigma_{q}}(\X^{\otimes p}\otimes\X^{\otimes q})\ar[r] &
\X^{\otimes(p+q)}. }
\end{equation*}
\end{example}

Lemma \ref{lemma:symmetric-sequencebcsmfc} and Example \ref{example:symmetricexample} imply the category 
of modules over $\Sym(\X)$ in $\Box\CC^{\ast}-\Spc_{0}^{\Sigma}$ is bicomplete and closed symmetric monoidal.
The monoidal product 
\begin{equation*}
\xymatrix{
-\otimes_{\Sym(\X)}-\colon\Sym(\X)-\Mod\times\Sym(\X)-\Mod\ar[r] & \Sym(\X)-\Mod }
\end{equation*}
is defined by coequalizers in $\Box\CC^{\ast}-\Spc_{0}^{\Sigma}$ of the form
\begin{equation*}
\xymatrix{
\Sym(\X)\otimes^{\Sigma}\F\otimes^{\Sigma}\G\ar@<3pt>[r]\ar@<-3pt>[r] & 
\F\otimes^{\Sigma}\G\ar[r] &
\F\otimes_{\Sym(\X)}\G }
\end{equation*}
induced by 
$\Sym(\X)\otimes^{\Sigma}\F\rightarrow\F$ and $\Sym(\X)\otimes^{\Sigma}\F\otimes^{\Sigma}\G
\rightarrow\F\otimes^{\Sigma}\Sym(\X)\otimes^{\Sigma}\G\rightarrow\F\otimes^{\Sigma}\G$.
Moreover, the internal hom 
\begin{equation*}
\xymatrix{
\underline{\Hom}_{\Sym(\X)}(-,-)\colon
(\Sym(\X)-\Mod)^{\op}\times\Sym(\X)-\Mod\ar[r] & \Sym(\X)-\Mod}  
\end{equation*}
is defined by equalizers in $\Box\CC^{\ast}-\Spc_{0}^{\Sigma}$ of the form
\begin{equation*}
\xymatrix{
\underline{\Hom}_{\Sym(\X)}(\F,\G)
\ar[r] &
\underline{\Hom}^{\Sigma}(\F,\G)
\ar@<3pt>[r]\ar@<-3pt>[r] & 
\underline{\Hom}^{\Sigma}(\Sym(\X)\otimes\F,\G)}.
\end{equation*}
The first map in the equalizer is induced by the $\Sym(\X)$-action on $\F$ and the second map 
is the composition of $\Sym(\X)\otimes-$ and the $\Sym(\X)$-action on $\G$.
Note that $\Sym(\X)$ is the unit for the monoidal product. 

Next we specialize these constructions to the projective cofibrant pointed cubical $\CC^{\ast}$-space 
$C=S^{1}\otimes C_{0}(\R)$. 

\begin{definition}
\label{definition:Cstarcubicalsymmectricspectra}
The category of $\CC^{\ast}$-cubical symmetric spectra $\Spt_{C}^{\Sigma}$ 
is the category of modules in $\Box\CC^{\ast}-\Spc_{0}^{\Sigma}$ over the commutative monoid $\Sym(C)$.
In detail, 
a module over $\Sym(C)$ consists of a sequence of pointed cubical $\CC^{\ast}$-spaces 
$\E\equiv (\E_{n})_{n\geq 0}$ in $\Box\CC^{\ast}-\Spc_{0}^{\Sigma}$ together with $\Sigma_{n}$-equivariant 
structure maps $\sigma_{n}\colon C\otimes \E_{n}\rightarrow\E_{n+1}$ such that the composite 
\begin{equation*}
\xymatrix{
C^{\otimes p}\otimes\E_{n}\ar[r] & C^{\otimes p-1}\otimes\E_{n+1}\ar[r] & \dots\ar[r] & \E_{n+p}}
\end{equation*}
is $\Sigma_{n}\times\Sigma_{p}$-equivariant for all $n,p\geq 0$.  
A map of $\CC^{\ast}$-cubical symmetric spectra $f\colon\E\rightarrow\F$ is a collection of 
$\Sigma_{n}$-equivariant maps 
$f_{n}\colon\E_{n}\rightarrow\F_{n}$ compatible with the structure maps of $\E$ and $\F$ in the 
sense that there are commutative diagrams:
\begin{equation*}
\xymatrix{
C\otimes \E_{n}\ar[d]_-{C\otimes f_{n}}\ar[r]^-{\sigma_{n}} & \E_{n+1}\ar[d]^-{f_{n+1}}\\
C\otimes \F_{n}\ar[r]^-{\sigma_{n}} & \F_{n+1}}
\end{equation*}
\end{definition}
There is a forgetful functor $\Spt_{C}^{\Sigma}\rightarrow\Box\CC^{\ast}-\Spc_{0}^{\Sigma}$ and for all
$\E\in\Box\CC^{\ast}-\Spc_{0}^{\Sigma}$ and $\F\in\Spt_{C}^{\Sigma}$ a natural bijection 
$\Spt_{C}^{\Sigma}(\E\otimes C,\F)\rightarrow\Box\CC^{\ast}-\Spc_{0}^{\Sigma}(\E,\F)$.
We denote by $\wedge$ the monoidal product on $\Spt_{C}^{\Sigma}$. 
Next we give a series of examples.
\begin{example}
\label{example:Cstarsymmetriccubicalprolongation}
If $\RRR\colon\Box\CC^{\ast}-\Spc_{0}\rightarrow\Box\CC^{\ast}-\Spc_{0}$ is a cubical $\CC^{\ast}$-functor,
define the induced functor $\RRR\colon\Spt_{C}^{\Sigma}\rightarrow\Spt_{C}^{\Sigma}$ by 
$\RRR(\E)_{n}\equiv\RRR(\E_{n})$.
Here $\Sigma_{n}$ acts by applying $\RRR$ to the $\Sigma_{n}$-action on $\E_{n}$. 
The structure maps are given by the compositions 
$C\otimes\RRR(\E)_{n}\rightarrow\RRR(C\otimes\E_{n})\rightarrow\RRR(\E_{n+1})$.
For a map $\E\rightarrow\F$ between $\CC^{\ast}$-cubical symmetric spectra $\RRR(\E\rightarrow\F)$
is the sequence of maps  $\RRR(\E_{n}\rightarrow\F_{n})$ for $n\geq 0$.
In particular, 
using the tensor (\ref{cubicalCstarspacetensor}) and cotensor (\ref{cubicalCstarspacecotensor})
structures on pointed cubical $\CC^{\ast}$-spaces we obtain an adjoint functor pair:
\begin{equation*}
\xymatrix{
-\wedge K\colon\Spt_{C}^{\Sigma} \ar@<3pt>[r] & 
\Spt_{C}^{\Sigma}\colon(-)^{K}   \ar@<3pt>[l].}
\end{equation*}
The $\CC^{\ast}$-cubical symmetric spectrum $\E^{K}$ is defined in level $n$ by 
$\hom_{\Box\CC^{\ast}-\Spc_{0}}(K,\E_{n})$ and the structure map 
$C^{p}\otimes\hom_{\Box\CC^{\ast}-\Spc_{0}}(K,\E_{n})
\rightarrow
\hom_{\Box\CC^{\ast}-\Spc_{0}}(K,\E_{p+n})$ 
is the unique map of pointed cubical $\CC^{\ast}$-spaces making the diagram
\begin{equation*}
\xymatrix{
C^{p}\otimes\hom_{\Box\CC^{\ast}-\Spc_{0}}(K,\E_{n})\otimes K\ar[d]\ar[r] &
\hom_{\Box\CC^{\ast}-\Spc_{0}}(K,\E_{p+n})\otimes K\ar[d] \\
C^{p}\otimes\E_{n}\ar[r] & \E_{p+n} }
\end{equation*}
commute.
This construction is natural in $K$ and $\E$, 
and for all pointed cubical sets $K$ and $L$ there is a natural isomorphism
\begin{equation*}
\E^{K\otimes_{\Box\Set_{\ast}}L}=
(\E^L)^{K}.
\end{equation*}
\end{example}
\begin{example}
\label{example:Cstarsymmetricfunctioncomplexes}
The cubical function complex $\hom_{\Spt_{C}^{\Sigma}}(\E,\F)$ of $\CC^{\ast}$-cubical symmetric spectra 
$\E$ and $\F$ is defined by 
\begin{equation*}
\hom_{\Spt_{C}^{\Sigma}}(\E,\F)_{n}\equiv\Spt_{C}^{\Sigma}(\E\otimes\Box^{n}_+,\F).
\end{equation*}
By definition, a $0$-cell of $\hom_{\Spt_{C}^{\Sigma}}(\E,\F)$ is a map $\E\rightarrow\F$.
A $1$-cell is a cubical homotopy $H\colon\E\otimes\Box^{1}_+\rightarrow\F$ from $H\circ (\E\wedge i_{0})$
to $H\circ (\E\wedge i_{1})$ where $i_{0}$ and $i_{1}$ are the two inclusions $\Box^{0}\rightarrow\Box^{1}$.
The $1$-cells generate an equivalence relation on $\Spt_{C}^{\Sigma}(\E,\F)$ and the quotient is 
$\pi_{0}\hom_{\Spt_{C}^{\Sigma}}(\E,\F)$.
Note there is an adjoint functor pair:
\begin{equation*}
\xymatrix{
\E\wedge -\colon\Box\Set_{\ast} \ar@<3pt>[r] & 
\Spt_{C}^{\Sigma}\colon\hom_{\Spt_{C}^{\Sigma}}(\E,-) \ar@<3pt>[l].}
\end{equation*}
Moreover, there exist natural isomorphisms 
\begin{equation*}
\hom_{\Spt_{C}^{\Sigma}}(\E\wedge K,\F)=
\hom_{\Spt_{C}^{\Sigma}}(\E,\F^{K})=
\hom_{\Spt_{C}^{\Sigma}}(\E,\F)^{K}.
\end{equation*}
\end{example}
\begin{example}
\label{example:Cstarsymmetricinternal}
The internal hom of $\CC^{\ast}$-cubical symmetric spectra $\E$ and $\F$ are defined by 
$\underline{\Hom}_{\Spt_{C}^{\Sigma}}(\E,\F)\equiv\underline{\Hom}_{\Sym(C)}(\E,\F)$.
There are natural adjunction isomorphisms
\begin{equation*}
\Spt_{C}^{\Sigma}(\E\wedge\F,\G)=
\Spt_{C}^{\Sigma}\bigl(\E,\underline{\Hom}_{\Spt_{C}^{\Sigma}}(\F,\G)\bigr).
\end{equation*}
In addition, there are natural cubical and internal isomorphisms
\begin{equation*}
\hom_{\Spt_{C}^{\Sigma}}(\E\wedge\F,\G)=
\hom_{\Spt_{C}^{\Sigma}}\bigl(\E,\underline{\Hom}_{\Spt_{C}^{\Sigma}}(\F,\G)\bigr),
\end{equation*}
\begin{equation*}
\underline{\Hom}_{\Spt_{C}^{\Sigma}}(\E\wedge\F,\G)=
\underline{\Hom}_{\Spt_{C}^{\Sigma}}\bigl(\E,\underline{\Hom}_{\Spt_{C}^{\Sigma}}(\F,\G)\bigr).
\end{equation*}
If $\X$ is a pointed cubical $\CC^{\ast}$-space and $\E$ is a $\CC^{\ast}$-cubical symmetric spectrum, 
denote by $\underline{\Hom}_{\Spt_{C}^{\Sigma}}(\X,\E)$ the $\CC^{\ast}$-cubical symmetric spectrum with 
$n$th term the internal hom $\underline{\Hom}(\X,\E_{n})$ with $\Sigma_{n}$-action induced by the action 
on $\E_{n}$. 
Define the $n$th structure map 
$\sigma_{n}\colon C\otimes\underline{\Hom}(\X,\E_{n})\rightarrow\underline{\Hom}(\X,\E_{n+1})$ 
as the adjoint of the composite of the evaluation 
$C\otimes\Ev\colon C\otimes\underline{\Hom}(\X,\E_{n})\otimes\X\rightarrow C\otimes\E_{n}$ 
with the structure map $C\otimes\E_{n}\rightarrow\E_{n+1}$.
With these definitions it follows that $\underline{\Hom}_{\Spt_{C}^{\Sigma}}(\X,-)$ is right adjoint to 
$-\wedge\X$ as endofunctors of $\Spt_{C}^{\Sigma}$.
\end{example}
\begin{example}
\label{example:Cstarsymmetricloops}
The loop $\CC^{\ast}$-cubical symmetric spectrum of $\E$ is 
$\Omega_{C}\E\equiv\underline{\Hom}_{\Spt_{C}^{\Sigma}}(C,\E)$.  
Note that the functor $\Omega_{C}$ is finitely presentable.
\end{example}
\begin{example}
\label{example:Cstarsymmetricshift}
Let $k>0$.
The $\CC^{\ast}$-cubical symmetric spectrum $\E[1]$ has $n$th term $\E_{1+n}$ with $\Sigma_{n}$-action 
given by $1\oplus\sigma_{n}\in\Sigma_{k+n}$.
That is, $1\oplus\sigma_{n}(1)=1$ and $1\oplus\sigma_{n}(i)=1+\sigma_{n}(i-1)$ for $i>1$.
The structure map $C^{p}\otimes\E[1]_{n}\rightarrow\E[1]_{p+n}$ is defined to be the composite of 
$C^{p}\otimes\E_{1+n}\rightarrow\E_{p+1+n}$ with $\sigma_{p,1}\oplus 1$, 
where $\sigma_{p,1}$ is the cyclic permutation of order $p+1$.
Inductively, one defines $\E[k]\equiv\E[k-1][1]$.
\end{example}
\begin{definition}
\label{definition:Cstarsymmetricevaluation}
The $n$th evaluation functor $\Ev_{n}\colon\Spt_{C}^{\Sigma}\rightarrow\Box\CC^{\ast}-\Spc_{0}$ 
sends $\E$ to $\E_{n}$.
Its left adjoint, 
the shift desuspension functor $\Fr_{n}\colon\Box\CC^{\ast}-\Spc_{0}\rightarrow\Spt_{C}^{\Sigma}$ 
is defined by setting $\Fr_{n}\E\equiv\widetilde{\Fr}_{n}\E\otimes^{\Sigma}\Sym(K)$, 
where $\widetilde{\Fr}_{n}\E\equiv(0,\cdots,0,\Sigma[n]_{+}\otimes\E,0,0,\cdots)$.
\end{definition}
\begin{example}
\label{example:Cstarsymmetricanotherdescriptionofinternalhom}
For a $\CC^{\ast}$-cubical symmetric spectrum $\E$, 
the pointed cubical set of maps $\hom_{\Spt_{C}^{\Sigma}}(\Fr_{n}\C,\E)$ is naturally 
$\Sigma_{n}$-equivariant isomorphic to $\hom_{\Box\CC^{\ast}-\Spc_{0}}(\C,\Ev_{n}\E)=\E_{n}$.
In effect,  
$\Fr_{n}\C$ is the $\Sym(C)$-module $\Sym(C)\otimes^{\Sigma}\Sigma[n]_{+}$ and 
$\Sym(C)\otimes^{\Sigma}\Sigma[-]_{+}$ defines a functor $\Sigma^{\op}\rightarrow\Spt_{C}^{\Sigma}$ 
so that $\hom_{\Spt_{C}^{\Sigma}}\bigl(\Sym(C)\otimes^{\Sigma}\Sigma[-]_{+},\E\bigl)$ is the underlying 
$\CC^{\ast}$-cubical symmetric sequence of $\E$.  
In particular, 
$\hom_{\Spt_{C}^{\Sigma}}\bigl(\E\wedge (\Sym(C)\otimes^{\Sigma}\Sigma [-]_{+}),Y\bigr)$ is the underlying 
$\CC^{\ast}$-cubical symmetric sequence of the internal hom $\underline{\Hom}_{\Spt_{C}^{\Sigma}}(\E,Y)$.
\vspace{0.1in}

The point is now to derive an alternate description of the structure maps of $\E$. 
Let $\lambda\colon\Fr_{1}C\rightarrow\Fr_{0}\C$ be the adjoint of the identity map 
$C\rightarrow\Ev_{1}\Fr_{0}\C$ and consider the induced map  
$\underline{\Hom}_{\Spt_{C}^{\Sigma}}(\lambda,\E)\colon
\underline{\Hom}_{\Spt_{C}^{\Sigma}}(\Fr_{0}\C,\E)\rightarrow
\underline{\Hom}_{\Spt_{C}^{\Sigma}}(\Fr_{1}C,\E)$.
By evaluating in level $n$ we get a map 
$\E_{n}\rightarrow\underline{\Hom}_{\Box\CC^{\ast}-\Spc_{0}}(C,\E_{n+1})$ 
which is adjoint to the structure map $\sigma_{n}\colon C\otimes\E_{n}\rightarrow\E_{n+1}$. 
In particular, 
$\underline{\Hom}_{\Spt_{C}^{\Sigma}}(\Fr_{k}\C,\E)$ is the $k$-shift of $\E$; its
underlying symmetric sequence is the sequence of pointed cubical $\CC^{\ast}$-spaces
$\E_k,\E_{1+k},\cdots,\E_{n+k}\cdots$ with $\Sigma_{n}$ acting on $\E_{n+k}$ by restricting the action 
of $\Sigma_{n+k}$ to the copy of $\Sigma_{n}$ that permutes the first $n$ elements of $\overline{n+k}$.  
The structure maps of the $k$-shifted spectrum of $\E$ are the structure maps 
$\sigma_{n+k}\colon C\otimes\E_{n+k}\rightarrow \E_{n+k+1}$.
\end{example}
\begin{example}
\label{example:CstarsymmetricQ}
The adjoint of the $n$th structure map $\sigma_{n}\colon C\otimes\E_{n}\rightarrow\E_{n+1}$ of $\E$ 
yields a map $\tilde{\sigma}_{n}\colon\E_{n}\rightarrow\Omega_{C}\E_{n+1}=\Omega_{C}\E[1]_{n}$ and 
there is an induced map of $\CC^{\ast}$-cubical symmetric spectra $\E\rightarrow\Omega_{C}\E[1]$.

Denote by $(\Theta^{\infty})^{\Sigma}\E$ the colimit of the diagram:
\begin{equation*}
\xymatrix{
\E\ar[r] & \Omega_{C}\E[1]\ar[r] & \Omega_{C}^{2}\E[2]\ar[r] &\cdots }
\end{equation*}

For every $K$ in $\Box\Set_{\ast}$ there is a canonical map  
\begin{equation*}
\xymatrix{
\bigl((\Theta^{\infty})^{\Sigma}\E\bigr)\wedge K\ar[r] & 
(\Theta^{\infty})^{\Sigma}(K\otimes\E)  }
\end{equation*}
so that $(\Theta^{\infty})^{\Sigma}$ is a cubical $\CC^{\ast}$-functor. 
Hence there are induced maps of cubical function complexes 
\begin{equation*}
\xymatrix{
\hom_{\Spt_{C}^{\Sigma}}(\E,\F)\ar[r] & 
\hom_{\Spt_{C}^{\Sigma}}\bigl((\Theta^{\infty})^{\Sigma}\E,(\Theta^{\infty})^{\Sigma}\F\bigr). }
\end{equation*}
\end{example}
\begin{remark}
\label{remark:Cstarsymmetricrightadjoint}
The functor $\Ev_{n}$ has a right adjoint $R_{n}\colon\Box\CC^{\ast}-\Spc_{0}\rightarrow\Spt_{C}^{\Sigma}$.  
Indeed, 
$R_{n}\E\equiv\underline{\Hom}^{\Sigma}(\Sym(C),\widetilde{R}_{n}L)$, 
where $\widetilde{R}_{n}L$ is the $\CC^{\ast}$-cubical symmetric sequence whose $n$th term is the 
cofree $\Sigma_{n}$-object $\E^{\Sigma_{n}}$ and with the terminal object in all other degrees.  
\end{remark}

We are ready to define the level model structures on $\Spt_{C}^{\Sigma}$.

\begin{definition}
\label{definition:Cstarsymmetriclevel}
A map $f\colon\E\rightarrow\F$ between $\CC^{\ast}$-cubical symmetric spectra is a level equivalence 
if $\Ev_{n}f\colon\E_{n}\rightarrow\F_{n}$ is a $\CC^{\ast}$-weak equivalence in 
$\Box\CC^{\ast}-\Spc_{0}$ for every $n\geq 0$.  
And $f$ is a level fibration 
(respectively level cofibration, level acyclic fibration, level acyclic cofibration) 
if $\Ev_{n}f$ is a projective $\CC^{\ast}$-fibration 
(respectively projective cofibration, $\CC^{\ast}$-acyclic projective fibration, 
$\CC^{\ast}$-acyclic projective cofibration) in $\Box\CC^{\ast}-\Spc_{0}$ for every $n\geq 0$.  
A map is a projective cofibration if it has the left lifting property with respect 
to every level acyclic fibration, and an injective fibration if it has the right lifting property 
with respect to every level acyclic cofibration.
\end{definition}

Let $I$ and $J$ denote the generating cofibrations and generating acyclic cofibrations in the 
homotopy invariant model structure on $\Box\CC^{\ast}-\Spc_{0}$.
Set $I_{C}\equiv\bigcup_{n}\Fr_{n}I$ and $J_{C}\equiv\bigcup_{n}\Fr_{n}J$.
We get the next result following the usual script, cf.~\cite[Theorem 8.2]{Hovey:spectra}.

\begin{theorem}
\label{theorem:Cstarsymmetricprojective}
The projective cofibrations, the level fibrations and the level equivalences define a left proper 
combinatorial model structure on $\Spt_{C}^{\Sigma}$.
The cofibrations are generated by $I_{C}$ and the acyclic cofibrations are generated by $J_{C}$. 
\end{theorem}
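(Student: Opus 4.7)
The plan is to transfer the homotopy invariant projective model structure on $\Box\CC^{\ast}-\Spc_{0}$ along the family of adjunctions
\begin{equation*}
\xymatrix{
\Fr_{n}\colon\Box\CC^{\ast}-\Spc_{0}\ar@<3pt>[r] & \Spt_{C}^{\Sigma}\colon\Ev_{n}\ar@<3pt>[l]}
\end{equation*}
from Definition \ref{definition:Cstarsymmetricevaluation}, mirroring Hovey's treatment \cite[Theorem 8.2]{Hovey:spectra}. First I would invoke Lemma \ref{lemma:symmetric-sequencebcsmfc} specialized to modules over the commutative monoid $\Sym(C)$ to conclude that $\Spt_{C}^{\Sigma}$ is bicomplete, with limits and (filtered) colimits computed levelwise on the underlying symmetric sequences. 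This is what makes level equivalences, level (co)fibrations and their generators behave predictably under $\Ev_{n}$ and $\Fr_{n}$.

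Next I would apply the standard recognition principle for cofibrantly generated model categories (cf.\ \cite[Theorem 11.3.2]{Hirschhorn:Modelcategories}) with the candidate generating sets $I_{C}=\bigcup_{n}\Fr_{n}I$ and $J_{C}=\bigcup_{n}\Fr_{n}J$. The key verifications are the following. (a) Domains of $I_{C}$ and $J_{C}$ are small relative to $I_{C}$-cell and $J_{C}$-cell: this reduces, via the left adjoint $\Fr_{n}$ and the levelwise formation of colimits, to smallness of the domains of $I$ and $J$ in $\Box\CC^{\ast}-\Spc_{0}$, which holds by Lemma \ref{lemma:smallness}. (b) By the adjunction, a map $f$ has the right lifting property with respect to $I_{C}$ (respectively $J_{C}$) if and only if $\Ev_{n}f$ has the right lifting property with respect to $I$ (respectively $J$) for every $n$, i.e.\ $f$ is a level acyclic fibration (respectively a level fibration). (c) It remains to show every relative $J_{C}$-cell complex is a level equivalence and a projective cofibration; combined with (b) this gives the acyclicity half of the small object argument.

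The hard part will be (c), namely that $\Fr_{n}$ sends $\CC^{\ast}$-acyclic projective cofibrations to level $\CC^{\ast}$-acyclic projective cofibrations. I would compute $\Fr_{n}$ explicitly: from Definition \ref{definition:Cstarsymmetricevaluation} and the formula (\ref{monoidalproductsymmetricsequences}) one sees that for $m\geq n$,
\begin{equation*}
(\Fr_{n}\X)_{m}=\Sigma_{m\,+}\wedge_{\Sigma_{n}\times\Sigma_{m-n}}\bigl(\X\otimes C^{\otimes(m-n)}\bigr),
\end{equation*}
and $(\Fr_{n}\X)_{m}=\ast$ for $m<n$. Because $C=S^{1}\otimes C_{0}(\R)$ is projective cofibrant and the homotopy invariant projective model structure on $\Box\CC^{\ast}-\Spc_{0}$ is monoidal (Proposition \ref{proposition:hiprojectivemonoidal}), tensoring with $C^{\otimes(m-n)}$ preserves projective (acyclic) cofibrations. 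A coproduct of copies of such a map indexed by the finite set $\Sigma_{m}/(\Sigma_{n}\times\Sigma_{m-n})$ is again a projective (acyclic) cofibration in $\Box\CC^{\ast}-\Spc_{0}$, so each level of $\Fr_{n}j$ is a projective cofibration and a $\CC^{\ast}$-weak equivalence when $j$ is. Pushouts and transfinite compositions are formed levelwise in $\Spt_{C}^{\Sigma}$, and level acyclic cofibrations are closed under these operations, so every $J_{C}$-cell complex is a level equivalence.

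Finally, with the model structure in hand, left properness is immediate: pushouts and level equivalences are detected levelwise, and the projective homotopy invariant model structure on $\Box\CC^{\ast}-\Spc_{0}$ is left proper by the analog of Lemma \ref{lemma:pprojectiveproperness} in the pointed setting. Combinatoriality follows from the fact that $\Spt_{C}^{\Sigma}$ is locally presentable, inherited from $\Box\CC^{\ast}-\Spc_{0}$ via the closed symmetric monoidal structure of Lemma \ref{lemma:symmetric-sequencebcsmfc} and the fact that $\Sym(C)$-modules in a locally presentable symmetric monoidal category remain locally presentable.
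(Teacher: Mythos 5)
Your proposal follows the same Hovey-style route the paper invokes (the paper simply cites \cite[Theorem~8.2]{Hovey:spectra}), and the overall structure — bicompleteness via modules over $\Sym(C)$, the recognition theorem with $I_{C}=\bigcup\Fr_{n}I$, $J_{C}=\bigcup\Fr_{n}J$, the adjunction identification of $I_{C}$-inj and $J_{C}$-inj, and levelwise closure of acyclic cofibrations under pushout and transfinite composition — is correct.

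One detail is off: your explicit formula for $\Fr_{n}$ is the formula for the \emph{semifree} spectrum on a $\Sigma_{n}$-object, not the free spectrum on a pointed cubical $\CC^{\ast}$-space. Since $\widetilde{\Fr}_{n}\X$ has $\Sigma_{n+}\otimes\X$ (not $\X$) in degree $n$, and $\Sigma_{n}$ acts freely on that factor, the correct level $m$ term is $\Sigma_{m+}\wedge_{\Sigma_{m-n}}\bigl(\X\otimes C^{\otimes(m-n)}\bigr)$, i.e.\ a wedge of $m!/(m-n)!$ rather than $\binom{m}{n}$ copies of $\X\otimes C^{\otimes(m-n)}$. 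This does not affect your argument: in both expressions $\Sigma_{m-n}$ (resp.\ $\Sigma_{n}\times\Sigma_{m-n}$) acts freely on the $\Sigma_{m+}$ factor, so the balanced product is still, as a pointed object, a finite coproduct of copies of $\X\otimes C^{\otimes(m-n)}$, and monoidalness of the homotopy invariant model structure (Proposition~\ref{proposition:hiprojectivemonoidal}) plus cofibrancy of $C$ give the required preservation of projective (acyclic) cofibrations. So the conclusion stands; just correct the formula if you write this out.
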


Note that $\Ev_{n}$ takes level (acyclic) fibrations to (acyclic) fibrations, 
so $\Ev_{n}$ is a right Quillen functor and $\Fr_{n}$ is a left Quillen functor.  
From \cite[Theorem 8.3]{Hovey:spectra} we have:
\begin{theorem}
\label{theorem:Cstarsymmectricprojectivemonoidal}
The category $\Spt_{C}^{\Sigma}$ equipped with its level projective model structure is a 
$\Box\CC^{\ast}-\Spc_{0}$-model category.
\end{theorem}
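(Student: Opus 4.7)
The plan is to verify that the level projective model structure on $\Spt_{C}^{\Sigma}$ satisfies the pushout-product axiom over $\Box\CC^{\ast}-\Spc_{0}$ equipped with its homotopy invariant projective model structure. Since the tensor $\E\wedge\X$, cotensor, and enrichment of $\Spt_{C}^{\Sigma}$ over $\Box\CC^{\ast}-\Spc_{0}$ have already been recorded in Examples \ref{example:Cstarsymmetriccubicalprolongation} and \ref{example:Cstarsymmetricinternal}, and since the unit $\C$ of $(\Box\CC^{\ast}-\Spc_{0},\otimes)$ is projective cofibrant by Lemma \ref{lemma:projectivecofibrant}, the unit axiom for a $\V$-model category is automatic and only the pushout-product axiom requires verification.

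The key input is the natural identification
\begin{equation*}
\Fr_{n}(\X)\wedge\Y\;\cong\;\Fr_{n}(\X\otimes\Y)
\end{equation*}
for $\X,\Y\in\Box\CC^{\ast}-\Spc_{0}$, which I would unpack from Definition \ref{definition:Cstarsymmetricevaluation}, where $\Fr_{n}\X=\widetilde{\Fr}_{n}\X\otimes^{\Sigma}\Sym(C)$, together with the distributivity of $\otimes^{\Sigma}$ over the levelwise monoidal product and the commutation of the $\Sigma_{n}$-actions. Using that the left adjoint $\Fr_{n}$ commutes with pushouts, this identification upgrades to the pushout-product identity
\begin{equation*}
f\,\Box\,\Fr_{n}i\;=\;\Fr_{n}(f\,\Box\,i)
\end{equation*}
for any map $f$ in $\Box\CC^{\ast}-\Spc_{0}$ and any map $i$ in $\Box\CC^{\ast}-\Spc_{0}$.

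With this reduction in hand, the pushout-product axiom follows formally. By Theorem \ref{theorem:Cstarsymmetricprojective} the generating projective cofibrations of $\Spt_{C}^{\Sigma}$ are $I_{C}=\bigcup_{n}\Fr_{n}I$ and the generating level acyclic cofibrations are $J_{C}=\bigcup_{n}\Fr_{n}J$. By standard arguments it suffices to verify the axiom on generators, so we may assume $g=\Fr_{n}i$ with $i\in I$ (respectively $i\in J$) and that $f$ ranges over the generators of the homotopy invariant projective model structure on $\Box\CC^{\ast}-\Spc_{0}$. By Proposition \ref{proposition:hiprojectivemonoidal}, that model structure is monoidal, so $f\Box i$ is a projective cofibration, $\CC^{\ast}$-acyclic as soon as either $f$ or $i$ is. Since $\Fr_{n}$ is a left Quillen functor for the level projective structure, $\Fr_{n}(f\Box i)$ is a projective cofibration in $\Spt_{C}^{\Sigma}$, level acyclic under the same hypotheses.

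The main obstacle is the careful verification of the isomorphism $\Fr_{n}(\X)\wedge\Y\cong\Fr_{n}(\X\otimes\Y)$, tracking $\Sigma_{n}$-equivariance and compatibility with the $\Sym(C)$-module structure on both sides; once this strong monoidality of $\Fr_{n}$ is in place, the argument parallels Hovey's \cite[Theorem 8.3]{Hovey:spectra} and requires no further input beyond the monoidalness of the homotopy invariant projective model structure on $\Box\CC^{\ast}-\Spc_{0}$.
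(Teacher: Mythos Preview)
Your proposal is correct and follows exactly the approach of Hovey's \cite[Theorem 8.3]{Hovey:spectra}, which is all the paper invokes for this statement. The reduction to generators via the identity $f\,\Box\,\Fr_{n}i=\Fr_{n}(f\,\Box\,i)$, followed by an appeal to the monoidalness of the base model structure and the fact that $\Fr_{n}$ is left Quillen, is precisely Hovey's argument specialized to this setting; your identification $\Fr_{n}(\X)\wedge\Y\cong\Fr_{n}(\X\otimes\Y)$ can be checked cleanly by noting that both sides are left adjoint to $\underline{\Hom}(\Y,\Ev_{n}(-))$.
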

With some additional work one arrives at the following model structure.
\begin{theorem}
\label{theorem:Cstarsymmetricinjective}
The level cofibrations, the injective fibrations and the level equivalences define a left proper 
combinatorial model structure on $\Spt_{C}^{\Sigma}$.
\end{theorem}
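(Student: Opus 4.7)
The plan is to mirror the construction of the pointwise injective model structure on $\Box\CC^{\ast}-\Spc_{0}$ (Theorem \ref{theorem:injectiveobjectwise}) and promote it levelwise to $\Spt_{C}^{\Sigma}$, in direct analogy with the passage from Theorem \ref{theorem:Cstarsymmetricprojective} to the injective setting. First I would observe that, since colimits and limits in $\Spt_{C}^{\Sigma}$ are formed levelwise in $\Box\CC^{\ast}-\Spc_{0}$, a map $f\colon\E\rightarrow\F$ is a level cofibration precisely when each $\Ev_{n}f$ is a monomorphism, and level equivalences are manifestly closed under the two-out-of-three property and retracts.

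Next I would produce generating sets by the cardinality argument used in Proposition \ref{proposition:injectiveliftings} and Corollary \ref{corollary:injectiveliftings}. Namely, let $\kappa$ be the first infinite cardinal exceeding the cardinality of the set of maps of $\CC^{\ast}-\Spc$, and set $\gamma\equiv\kappa\omega^{\kappa\omega}$ as before. Define $I^{\kappa,\Sigma}_{C}$ to be the set of maps $f\colon\E\rightarrow\F$ of $\CC^{\ast}$-cubical symmetric spectra such that each $\Ev_{n}f(A)$ is a cofibration of pointed cubical sets of cardinality less than $\kappa$, and let $J^{\gamma,\Sigma}_{C}$ be defined analogously using level acyclic cofibrations of cardinality less than $\gamma$. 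The same bootstrapping argument as for $\Box\CC^{\ast}-\Spc$ shows that $I^{\kappa,\Sigma}_{C}$-cofibrations are exactly the level cofibrations and $J^{\gamma,\Sigma}_{C}$-cofibrations are exactly the level acyclic cofibrations; in particular the injective fibrations are identified with maps having the right lifting property against $J^{\gamma,\Sigma}_{C}$, and the level acyclic injective fibrations with those having the right lifting property against $I^{\kappa,\Sigma}_{C}$.

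With the generating sets in hand, the axioms \textbf{CM}1--\textbf{CM}5 of Definition \ref{definition:modelcategory} follow by Quillen's small object argument exactly as in the proof of Theorem \ref{theorem:projectiveobjectwise}, using that $\Spt_{C}^{\Sigma}$ is locally presentable (since it is a category of modules in a locally presentable category over a small monoid, so one invokes \cite[5.2.2b]{Borceux:Handbook2} once more) to guarantee that domains and codomains of $I^{\kappa,\Sigma}_{C}$ and $J^{\gamma,\Sigma}_{C}$ are small. The only nontrivial retract/lifting verification — that a level cofibration which is a level equivalence has the left lifting property against injective fibrations — reduces levelwise to Lemma \ref{lemma:injectiveobjectwise} for $\Box\CC^{\ast}-\Spc_{0}$. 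Combinatoriality is immediate from local presentability together with the cofibrant generation just established, and left properness follows because pushouts in $\Spt_{C}^{\Sigma}$ are formed levelwise and the pointwise injective model on $\Box\CC^{\ast}-\Spc_{0}$ is left proper by the pointed analog of Lemma \ref{lemma:pprojectiveproperness}.

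The main obstacle I anticipate is not any one of the axioms individually but the careful bookkeeping in Proposition \ref{proposition:injectiveliftings} and its corollary, now carried out for symmetric sequences with structure maps: one has to verify that the cardinality truncation of a level monomorphism (respectively level acyclic cofibration) still gives a subobject in $\Spt_{C}^{\Sigma}$, i.e.~is closed under the $\Sigma_{n}$-actions and the images of the structure maps $\sigma_{n}\colon C\otimes\E_{n}\rightarrow\E_{n+1}$. This is handled by the standard trick of enlarging a chosen subobject transfinitely by alternately closing under $\Sigma_{n}$-orbits and under structure maps; regularity of $\kappa$ (respectively $\gamma$) keeps the cardinality bound intact, and the resulting sub-spectrum inclusion lies in $I^{\kappa,\Sigma}_{C}$ (respectively $J^{\gamma,\Sigma}_{C}$), which is precisely what is needed for the solution set condition in the small object argument.
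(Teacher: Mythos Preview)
The paper does not give a proof here; it only says ``with some additional work one arrives at the following model structure.'' Your outline is therefore being measured against what such a proof ought to contain, and the overall strategy---cardinality-bounded generating sets together with a transfinite closure under $\Sigma_{n}$-actions and structure maps---is exactly right, and your identification of that closure step as the main technical obstacle is apt.

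There is, however, a mismatch with the paper's definitions. In Definition~\ref{definition:Cstarsymmetriclevel} a \emph{level cofibration} is a map $f$ with each $\Ev_{n}f$ a \emph{projective} cofibration in $\Box\CC^{\ast}-\Spc_{0}$, not an arbitrary monomorphism; by Lemma~\ref{projectiveimpliesinjective} the former class is strictly smaller. Your set $I^{\kappa,\Sigma}_{C}$, built from levelwise monomorphisms of bounded size and justified via Proposition~\ref{proposition:injectiveliftings}, therefore generates a strictly larger class than what Theorem~\ref{theorem:Cstarsymmetricinjective} is about. (What you have actually sketched is a proof of a different---and also valid---injective-type level model structure on $\Spt_{C}^{\Sigma}$, the one lifted from the injective homotopy invariant model of Theorem~\ref{theorem:homotopyinvariantinjective} rather than from the projective one.) To match the paper's statement, redefine $I^{\kappa,\Sigma}_{C}$ and $J^{\gamma,\Sigma}_{C}$ to consist of maps that are levelwise projective cofibrations (respectively levelwise $\CC^{\ast}$-acyclic projective cofibrations) of bounded size. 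The solution-set argument then rests on accessibility of these classes in the combinatorial projective model on $\Box\CC^{\ast}-\Spc_{0}$ rather than on Proposition~\ref{proposition:injectiveliftings} verbatim, but your closure argument under orbits and structure maps carries over unchanged, as does the verification of left properness.
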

The $\CC^{\ast}$-stable model structures are now only one Bousfield localization away;
the next definition emphasizes the role of $\Omega_{C}$-symmetric spectra as the stably fibrant objects.
We leave the formulation of the injective version to the reader. 
\begin{definition}
\label{definition:stablefibrant}
A level fibrant $\CC^{\ast}$-cubical symmetric spectrum $\G$ is $\CC^{\ast}$-stably fibrant if 
the adjoints $\G_{n}\rightarrow\underline{\Hom}(C,\G_{n+1})$ of the structure maps of $\G$ are 
$\CC^{\ast}$-weak equivalences.
A map $f\colon\E\rightarrow\F$ is a $\CC^{\ast}$-stable weak equivalence if for every 
$\CC^{\ast}$-stably fibrant $\G$ there is an induced weak equivalence of pointed cubical sets
\begin{equation*}
\xymatrix{
\hom_{\Spt_{C}^{\Sigma}}(\QQ f,\G)\colon
\hom_{\Spt_{C}^{\Sigma}}(\QQ\F,\G)\ar[r] &
\hom_{\Spt_{C}^{\Sigma}}(\QQ\E,\G).}
\end{equation*}
\end{definition}
\begin{theorem}
\label{theorem:Cstarsymmetricprojectivestable}
The projective cofibrations and $\CC^{\ast}$-stable weak equivalences define a left proper 
combinatorial monoidal model structure on $\Spt_{C}^{\Sigma}$. 
In this model structure $C\wedge-$ is a Quillen equivalence.
\end{theorem}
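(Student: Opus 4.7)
The plan is to obtain the stable model structure as a left Bousfield localization of the level projective model structure of Theorem \ref{theorem:Cstarsymmetricprojective} following Hovey's template \cite{Hovey:spectra}. Concretely, for each domain or codomain $\X$ of a generating projective cofibration in the homotopy invariant model structure on $\Box\CC^{\ast}-\Spc_{0}$ and each $n\geq 0$, let $\lambda_{n}^{\X}\colon\Fr_{n+1}(C\otimes\X)\to\Fr_{n}\X$ be the adjoint of the identity $C\otimes\X\to\Ev_{n+1}\Fr_{n}\X$, as in Example \ref{example:Cstarsymmetricanotherdescriptionofinternalhom}. Let $S$ be the set of all such $\lambda_{n}^{\X}$. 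Since the level model structure is combinatorial and left proper, Smith's theorem yields the localized model structure $\Spt_{C}^{\Sigma}/S$ with the same cofibrations; left properness and combinatoriality persist under localization.

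Next I would identify the $S$-local fibrant objects with the $\CC^{\ast}$-stably fibrant objects of Definition \ref{definition:stablefibrant}. By the adjunction $(\Fr_{n+1},\Ev_{n+1})$ together with Example \ref{example:Cstarsymmetricanotherdescriptionofinternalhom}, a level fibrant $\G$ is $S$-local iff each $\hom_{\Spt_{C}^{\Sigma}}(\lambda_{n}^{\X},\G)$ is a weak equivalence of cubical sets, which unwinds to saying the adjoint $\G_{n}\to\underline{\Hom}(C,\G_{n+1})$ is a $\CC^{\ast}$-weak equivalence for every $n$. Consequently the $S$-local weak equivalences are characterized by inducing weak equivalences on $\hom_{\Spt_{C}^{\Sigma}}(\QQ(-),\G)$ for every stably fibrant $\G$, which is precisely Definition \ref{definition:stablefibrant}. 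This gives the underlying model category.

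To upgrade to a monoidal model structure, I would verify the pushout product axiom. For two generating projective cofibrations $\Fr_{m}i$ and $\Fr_{n}j$, their pushout product is isomorphic to $\Fr_{m+n}$ of the pushout product of $i$ and $j$, smashed with the $\Sigma_{m+n}$-orbit of $\Sigma_{m}\times\Sigma_{n}$; monoidality of the homotopy invariant model structure on $\Box\CC^{\ast}-\Spc_{0}$ (Proposition \ref{proposition:hiprojectivemonoidal}) handles the cofibration case. The substantive point is to check that smashing a generating projective cofibration with a map $\lambda_{n}^{\X}$ yields a stable weak equivalence. For this one reduces, via Example \ref{example:Cstarsymmetricanotherdescriptionofinternalhom}, to showing that $C\wedge\Fr_{n+1}(C\otimes\X)\to C\wedge\Fr_{n}\X$ and its iterates are stable equivalences; here the cyclic permutation condition on $C$ (Lemma \ref{lemma:cyclicpermutationcondition}) is essential, since the obstruction to the symmetric-spectrum argument is exactly a cyclic shuffling of $C$-factors. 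This is the main technical obstacle, and it is precisely what Hovey's criterion \cite[Theorem 8.11]{Hovey:spectra} is designed to overcome; the cyclic permutation condition verified in Lemma \ref{lemma:cyclicpermutationcondition} is what makes the criterion applicable in our setting.

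Finally, to show $-\wedge C$ is a left Quillen equivalence, I would apply Hovey's general comparison \cite[Theorem 9.1, Theorem 10.3]{Hovey:spectra} between the stable model structure on $\Spt_{C}^{\Sigma}$ and the stable model structure on $\Spt_{C}$ of Theorem \ref{theorem:Cstarstablemodelstructure}: under this Quillen equivalence, $-\wedge C$ on symmetric spectra corresponds to $-\wedge C$ on ordinary spectra, which is a Quillen equivalence by Corollary \ref{corollary:patched}. Alternatively, directly: the shift adjunction $(-[1],\Omega_{C}^{\ell})$ is a Quillen equivalence since stably fibrant objects see the adjoint structure maps as weak equivalences, and one compares $-\wedge C$ with the shift using the stable equivalence between $\Sigma_{C}\E$, $\E[1]$ and $\E\wedge C$ from Corollary \ref{corollary:patched}, which transports to the symmetric setting via the cyclic permutation condition. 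The potentially delicate step here is tracking the $\Sigma_{n}$-actions through the comparison, but this is standard once monoidality and the level comparison are established.
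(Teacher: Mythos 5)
Your localization strategy and the identification of the $S$-local fibrant objects with the $\Omega_{C}$-symmetric spectra coincide with the paper's proof, which simply cites \cite[\S8]{Hovey:spectra} (Theorems~8.8 and 8.10) for the model structure and its properties; your choice of localizing set $\lambda_{n}^{\X}\colon\Fr_{n+1}(C\otimes\X)\to\Fr_{n}\X$ is correct and even fixes a small typographical slip in the paper's statement of the localizing maps.

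However, there is a conceptual error in your monoidality argument. You assert that the cyclic permutation condition (Lemma~\ref{lemma:cyclicpermutationcondition}) is ``essential'' for verifying the pushout product axiom on $\Spt_{C}^{\Sigma}$ and that it is a hypothesis ``what makes [Hovey's Theorem~8.11] applicable.'' This has it backwards. The whole reason one passes to \emph{symmetric} spectra $\Spt_{C}^{\Sigma}$ rather than ordinary spectra $\Spt_{C}$ is precisely that the $\Sigma_{n}$-actions built into the objects absorb the shuffling of $C$-factors, so the stable model structure on $\Spt_{C}^{\Sigma}$ is monoidal with no additional permutation hypothesis on $C$; Hovey's Theorem~8.11 requires only that the underlying level model structure be monoidal (which is supplied here by Proposition~\ref{proposition:hiprojectivemonoidal} and Theorem~\ref{theorem:Cstarsymmectricprojectivemonoidal}) and that $C$ be cofibrant. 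The cyclic permutation condition is instead the key input into Theorem~\ref{theorem:spectrasymmetricspectraprojectivestable}, where one compares the symmetric stable model structure with the non-symmetric stable model structure on $\Spt_{C}$; that comparison is where the cyclic shuffling of $C$-factors becomes a genuine obstruction, and Lemma~\ref{lemma:cyclicpermutationcondition} is what overcomes it. Because your last paragraph also routes the ``$C\wedge-$ is a Quillen equivalence'' claim through the comparison with $\Spt_{C}$ (which does need the cyclic permutation condition), you have essentially transferred a hypothesis that belongs to the \emph{next} theorem into this one, obscuring the fact that the present statement holds intrinsically for $\Spt_{C}^{\Sigma}$; the cleaner route for the Quillen equivalence claim is that it is part of the defining property of the stabilization in \cite[Theorem~8.10]{Hovey:spectra}, with no detour through ordinary spectra required.
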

\begin{proof}
The existence of the projective $\CC^{\ast}$-stable model structure with these properties
follows from Theorem \ref{theorem:Cstarsymmetricprojective} and results in \cite[\S8]{Hovey:spectra}.
In effect, 
\cite[Theorem 8.8]{Hovey:spectra} shows the $\CC^{\ast}$-stably fibrant objects 
a.k.a.~$\Omega_{C}$-symmetric spectra coincides with the fibrant objects in the stable model structure 
on $\Spt_{C}^{\Sigma}$ obtained from \cite[Theorem 8.10]{Hovey:spectra}.
The latter model structure is defined by localizing the projective level model structure at the 
adjoints $\Fr_{n+1}(C\otimes\X)\rightarrow\Fr_{n+1}(\X)$ of the maps 
$C\otimes\X\rightarrow\Ev_{n+1}\Fr_{n}C$ where $\X$ is either a domain or a codomain of the set of 
generating projective cofibrations $I_{\Box\CC^{\ast}-\Spc_{0}}$.
\end{proof}
\begin{remark}
\label{remark:Cstarsymmetricprojectivestableinvariance}
If $C'$ is a projective cofibrant pointed $\CC^{\ast}$-space weakly equivalent to $C$,
then \cite[Theorem 9.4]{Hovey:spectra} implies there is a Quillen equivalence between the 
projective $\CC^{\ast}$-stable model structures on $\Spt_{C'}^{\Sigma}$ and $\Spt_{C}^{\Sigma}$.
By replacing pointed cubical $\CC^{\ast}$-spaces with pointed simplicial $\CC^{\ast}$-spaces,
but otherwise forming the same constructions, 
we may define the Quillen equivalent to $\Spt_{C}^{\Sigma}$ category of $\CC^{\ast}$-simplicial symmetric spectra.
Our results for $\Spt_{C}^{\Sigma}$,  
in particular Theorem \ref{theorem:Cstarsymmetricprojectivestable}, 
hold verbatim in the simplicial context. 
We leave the formulation of Lemma \ref{lemma:spectraweaklyfinitelygenerated} for symmetric spectra to the reader.
\end{remark}
\vspace{0.1in}

Relying on \cite{Hovey:monoidsandalgebras} we may first refine Theorem \ref{theorem:Cstarsymmetricprojectivestable} 
to categories of modules in $\Spt_{C}^{\Sigma}$.
A monoid $\E$ has a multiplication $\E\wedge\E\rightarrow\E$ and a unit map ${\bf 1}\rightarrow\E$ from the sphere 
$\CC^{\ast}$-spectrum subject to the usual associativity and unit conditions.
It is commutative if the multiplication map is unchanged when composed with the twist isomorphism of $\E\wedge\E$.
The category $\Mod_{\E}$ of modules over a commutative monoid $\E$ is closed symmetric with unit $\E$. 
\begin{theorem}
\label{theorem:modulemodelstructure}
Suppose $\E$ is a cofibrant commutative monoid in $\Spt_{C}^{\Sigma}$.
\begin{itemize}
\item 
The category $\Mod_{\E}$ of $\E$-modules is a left proper combinatorial (and cellular) symmetric monoidal model category with 
the classes of weak equivalences and fibrations defined on the underlying category of $\CC^{\ast}$-cubical symmetric spectra 
(cofibrations defined by the left lifting property).
\item
If $\E\rightarrow\F$ is a $\CC^{\ast}$-stable weak equivalence between cofibrant commutative monoid in $\Spt_{C}^{\Sigma}$,
then the corresponding induction functor yields a Quillen equivalence between the module categories $\Mod_{\E}$ and $\Mod_{\F}$.
\item 
The monoidal Quillen equivalence between $\CC^{\ast}$-cubical and $\CC^{\ast}$-simplicial symmetric spectra yields a
Quillen equivalence between $\Mod_{\E}$ and modules over the image of $\E$ in $\CC^{\ast}$-simplicial symmetric spectra.
\end{itemize}
\end{theorem}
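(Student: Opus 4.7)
The plan is to invoke the general machinery for modules and algebras over monoids in monoidal model categories developed in \cite{Hovey:monoidsandalgebras} and \cite{SS:monoidal}, and to verify that the projective $\CC^{\ast}$-stable model structure on $\Spt_{C}^{\Sigma}$ supplied by Theorem \ref{theorem:Cstarsymmetricprojectivestable} meets all the required input hypotheses. Specifically, I need to check (a) that $\Spt_{C}^{\Sigma}$ is a cofibrantly generated (indeed combinatorial) symmetric monoidal model category, (b) that the unit axiom is satisfied, (c) that the monoid axiom holds, and (d) that sufficient left properness is present so that Bousfield localizations on module categories behave well.

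Items (a) and (d) are immediate from Theorem \ref{theorem:Cstarsymmetricprojectivestable}, which provides a left proper combinatorial monoidal model structure, with generating cofibrations $I_C$ and generating acyclic cofibrations obtained by enlarging $J_C$ with the stabilizing maps. The unit axiom (b) holds because the sphere $\Sym(C)$ is cofibrant by construction. The key technical point is the monoid axiom (c): one must show that every transfinite composition of pushouts of maps of the form $f\wedge\id_{\E}$, for $f$ a generating acyclic cofibration and $\E$ an arbitrary $\CC^{\ast}$-cubical symmetric spectrum, is a $\CC^{\ast}$-stable weak equivalence. This is the analogue of Lemma \ref{lemma:pprojectivemonoid} in the stable symmetric setting and follows the template of \cite[Theorem 5.1]{SS:monoidal}: smash products with an arbitrary object preserve level equivalences because the level projective model structure on $\Box\CC^{\ast}-\Spc_{0}$ satisfies the monoid axiom (Lemma \ref{lemma:pprojectivemonoid}), and $\CC^{\ast}$-stable weak equivalences are closed under filtered colimits and pushouts along projective cofibrations by Lemma \ref{lemma:spectraweaklyfinitelygenerated} together with left properness.

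With the monoid axiom established, the first bulleted assertion is \cite[Theorem 4]{SS:monoidal} applied to $\E$: the category $\Mod_{\E}$ inherits a cofibrantly generated model structure with weak equivalences and fibrations created by the forgetful functor, and because $\E$ is a cofibrant commutative monoid, Hovey's refinement \cite{Hovey:monoidsandalgebras} upgrades this to a symmetric monoidal model structure in which $-\wedge_{\E}-$ satisfies the pushout-product axiom. Combinatoriality and left properness transfer along the forgetful/free adjunction since both classes of maps are created pointwise in the underlying category. For the second bulleted claim, let $\phi\colon\E\rightarrow\F$ be a $\CC^{\ast}$-stable weak equivalence between cofibrant commutative monoids. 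Then the induction-restriction adjunction $(\F\wedge_{\E}-,\phi^{\ast})$ is a Quillen pair by construction, and the main theorem of \cite{Hovey:monoidsandalgebras} (compare \cite[Theorem 4.3]{SS:monoidal}) shows it is a Quillen equivalence provided smashing with a cofibrant $\E$-module preserves $\CC^{\ast}$-stable weak equivalences of monoids; this last property again reduces to the monoid axiom applied level-wise, together with cofibrancy of $\E$.

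The third bulleted claim is obtained by transporting the Quillen equivalence between $\CC^{\ast}$-cubical and $\CC^{\ast}$-simplicial symmetric spectra (Remark \ref{remark:Cstarsymmetricprojectivestableinvariance}) to the module categories. The Quillen equivalence is strong monoidal on both sides, so it carries commutative monoids to commutative monoids and preserves cofibrancy; applying \cite[Theorem 3.12]{SS:monoidal} (or the analogous result in \cite{Hovey:monoidsandalgebras}) then gives the required Quillen equivalence between $\Mod_{\E}$ and modules over the image of $\E$. The main obstacle throughout is the verification of the monoid axiom in the stable symmetric context; once this is in hand, the rest of the proof is a direct application of the standard machinery.
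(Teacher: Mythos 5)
Your proposal reconstructs the intended argument faithfully: the paper itself gives no proof of this theorem beyond the sentence ``Relying on \cite{Hovey:monoidsandalgebras} we may first refine Theorem \ref{theorem:Cstarsymmetricprojectivestable} to categories of modules in $\Spt_{C}^{\Sigma}$,'' and your plan --- verify that $\Spt_{C}^{\Sigma}$ with the stable projective model structure is a cofibrantly generated symmetric monoidal model category satisfying the monoid axiom, then apply the Schwede--Shipley/Hovey lifting machinery --- is exactly what that citation is meant to invoke. Your identification of the monoid axiom as the technical crux, and the observation that the second bullet reduces to the induction--restriction Quillen equivalence theorem for cofibrant monoids, are both on target.

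Two places deserve more care. First, the statement includes the parenthetical ``(and cellular),'' which you never address; combinatoriality transfers to $\Mod_{\E}$ easily as you note, but cellularity requires checking the three Hirschhorn conditions (compactness of domains of generating cofibrations, smallness relative to cofibrations, effectiveness of monomorphisms) in the module category, and while this follows along the lines of Lemmas \ref{lemma:smallness}--\ref{lemma:effectivenessness} together with the observation that the forgetful functor to $\Spt_{C}^{\Sigma}$ creates colimits and preserves monomorphisms, you should say so. Second, your treatment of the monoid axiom conflates two steps: Lemma \ref{lemma:pprojectivemonoid} gives the monoid axiom for the pointwise projective model structure on cubical $\CC^{\ast}$-spaces, and passing to the \emph{stable} model structure on $\Spt_{C}^{\Sigma}$ requires one to know that the monoid axiom is inherited under the Bousfield localization used in Theorem \ref{theorem:Cstarsymmetricprojectivestable}. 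This is nontrivial because the generating acyclic cofibrations of the localized structure are not explicit; the standard route (followed in \cite[\S5]{Hovey:spectra}) is to show that maps in $(J_{C}\wedge\Spt_{C}^{\Sigma})$-cell are level equivalences and then that the stabilizing maps smashed with an arbitrary object remain stable equivalences, using the facts you cite (closure of stable equivalences under filtered colimits from Lemma \ref{lemma:spectraweaklyfinitelygenerated} and left properness for pushouts along cofibrations). Your sketch gestures at this but treats it as essentially done; spelling out the two-stage argument would make it watertight. The reference for the third bullet should be to the Schwede--Shipley paper on equivalences of monoidal model categories (or the corresponding section of \cite{Hovey:monoidsandalgebras}), not to a theorem in \cite{SS:monoidal}.
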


The result for algebras in $\Spt_{C}^{\Sigma}$ provided by \cite{Hovey:monoidsandalgebras} is less streamlined; 
refining the following result to the level of model structures is an open problem.
An $\E$-algebra is a monoid in $\Mod_{\E}$.
\begin{theorem}
\label{theorem:algebramodelstructure}
Suppose $\E$ is a cofibrant commutative monoid in $\Spt_{C}^{\Sigma}$.
\begin{itemize}
\item 
The category $\Alg_{\E}$ of $\E$-algebras comprised of monoids in $\Mod_{\E}$ equipped with the classes of 
weak equivalences and fibrations defined on the underlying category of $\CC^{\ast}$-cubical symmetric spectra 
(cofibrations defined by the left lifting property) is a semimodel category in the following sense:
(1) {\bf CM} 1-{\bf CM} 3 holds, 
(2) Acyclic cofibrations whose domain is cofibrant in $\Mod_{\E}$ have the left lifting property with respect to fibrations,
and
(3) Every map whose domain is cofibrant in $\Mod_{\E}$ factors functorially into a cofibration followed by an acyclic fibration
and as an acyclic cofibration followed by a fibration.
Moreover,
cofibrations whose domain is cofibrant in $\Mod_{\E}$ are cofibrations in $\Mod_{\E}$, 
and (acyclic) fibrations are closed under pullback.
\item
The homotopy category $\Ho(\Alg_{\E})$ obtained from $\Alg_{\E}$ by inverting the weak equivalences is equivalent to the full subcategory 
of cofibrant and fibrant $\E$-algebras modulo homotopy equivalence.
\item
If $\E\rightarrow\F$ is a $\CC^{\ast}$-stable weak equivalence between cofibrant commutative monoid in $\Spt_{C}^{\Sigma}$,
then the corresponding induction functor yields an equivalence between $\Ho(\Alg_{\E})$ and $\Ho(\Alg_{\F})$.
\item 
The monoidal Quillen equivalence between $\CC^{\ast}$-cubical and $\CC^{\ast}$-simplicial symmetric spectra yields an equivalence between 
the homotopy categories of $\Alg_{\E}$ and of algebras over the image of $\E$ in $\CC^{\ast}$-simplicial symmetric spectra.
\item
$\Ho(\Alg_{\E})$ acquires an action by the homotopy category of simplicial sets.
\end{itemize}
\end{theorem}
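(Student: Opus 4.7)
The plan is to apply Hovey's general machinery for algebras over monoidal model categories to the symmetric monoidal model structure on $\Mod_{\E}$ provided by Theorem \ref{theorem:modulemodelstructure}. The essential additional input beyond the closed monoidal structure is the monoid axiom of Schwede-Shipley: every transfinite composition of pushouts of maps $\Z\wedge j$, with $\Z\in\Mod_{\E}$ arbitrary and $j$ a generating acyclic cofibration, is a $\CC^{\ast}$-stable weak equivalence. Once this holds, the free $\E$-algebra functor $\M\mapsto\coprod_{n\geq 0}\M^{\wedge_{\E} n}$ can be fed into the small object argument to produce the desired factorizations, but only on the full subcategory of $\Alg_{\E}$ whose underlying $\E$-module is cofibrant, since each transfinite stage of an algebra extension involves pushouts along maps of the shape $\Z\wedge j$ and cofibrancy is required in order to identify these as weak equivalences.

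To verify the monoid axiom I would reduce along the free/forgetful adjunction $\Spt_{C}^{\Sigma}\leftrightarrows\Mod_{\E}$ to the monoid axiom in $\Spt_{C}^{\Sigma}$ with its projective $\CC^{\ast}$-stable model structure of Theorem \ref{theorem:Cstarsymmetricprojectivestable}. For the level generators inherited from $J_{C}$ the argument is a symmetric-spectrum analog of Lemma \ref{lemma:pprojectivemonoid}, carried out degreewise. For the additional stabilization generators $\Fr_{n+1}(C\otimes\X)\rightarrow\Fr_{n}\X$ one uses the cyclic permutation condition (Lemma \ref{lemma:cyclicpermutationcondition}) which is precisely what makes $C\wedge(-)$ a Quillen equivalence on symmetric spectra, and thereby forces the maps in question to remain $\CC^{\ast}$-stable weak equivalences after smashing with an arbitrary $\Z$. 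Closure under transfinite compositions of pushouts is then a consequence of the fact that $\CC^{\ast}$-stable weak equivalences are preserved by filtered colimits, which is the analog for $\Spt_{C}^{\Sigma}$ of Lemma \ref{lemma:spectraweaklyfinitelygenerated} and follows from the weakly finitely generated character of the model structure.

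With the semimodel structure in hand the remaining items fall into place. The assertion that cofibrations with cofibrant domain lie in $\Mod_{\E}$-cofibrations and that (acyclic) fibrations are pullback-stable is formal from the definitions together with the fact that the forgetful functor $\Alg_{\E}\rightarrow\Mod_{\E}$ creates limits. The identification of $\Ho(\Alg_{\E})$ with bifibrant $\E$-algebras modulo homotopy uses that $\E\wedge(\Box^{1})_{+}$ carries a canonical $\E$-algebra cylinder structure on cofibrant algebras, because $(\Box^{1})_{+}$ is a cocommutative coalgebra interval. For the change-of-base result, a $\CC^{\ast}$-stable weak equivalence $\E\rightarrow\F$ of cofibrant commutative monoids yields a strong monoidal Quillen equivalence $\Mod_{\E}\leftrightarrows\Mod_{\F}$ by Theorem \ref{theorem:modulemodelstructure}; the induced adjunction on algebras is then a Quillen equivalence on the bifibrant subcategories, hence induces an equivalence of homotopy categories. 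The cubical-versus-simplicial comparison follows by the same template applied to the Quillen equivalence of Remark \ref{remark:Cstarsymmetricprojectivestableinvariance}. Finally, the $\Ho(\Delta\Set)$-action is transported from the simplicial model, where the tensor $-\wedge K_{+}$ with $K\in\Delta\Set$ preserves algebra structures because $K_{+}$ is canonically a cocommutative coalgebra in pointed simplicial sets.

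The principal obstacle is genuinely the monoid axiom in the projective $\CC^{\ast}$-stable model structure on $\Spt_{C}^{\Sigma}$, because the Bousfield localization producing the $\CC^{\ast}$-stable structure replaces the transparent generating acyclic cofibrations of the level structure by pushout-product combinations that are difficult to smash with an arbitrary spectrum while remaining in a manageable class. The delicate point is to show that after such smashing the resulting maps still lie in the localizing saturated class, which ultimately traces back to controlling how the stabilization functor $(\Theta^{\infty})^{\Sigma}$ of Example \ref{example:CstarsymmetricQ} interacts with the smash product; here the cyclic permutation condition is what makes the symmetric-spectra version of this interaction work, whereas in $\Spt_{C}$ alone no such monoidal model structure is available.
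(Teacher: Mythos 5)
Your proposal is internally inconsistent in a way that matters. You begin by identifying the monoid axiom of Schwede–Shipley as the essential input and then offer an argument for it in the $\CC^{\ast}$-stable structure on $\Spt_C^{\Sigma}$. But if the monoid axiom held, the Schwede–Shipley machinery would yield a genuine cofibrantly generated model structure on $\Alg_{\E}$ (for any monoid $\E$, not just cofibrant ones), with no cofibrancy restriction anywhere — this is precisely the content of \cite{SS:monoidal}. The theorem being proved asserts only a semimodel structure, and the paper explicitly flags "refining the following result to the level of model structures" as an open problem. The paper's actual route is to cite \cite{Hovey:monoidsandalgebras}, whose point is exactly that one can sidestep the monoid axiom by assuming the unit and the monoid are cofibrant: then for a cofibrant source, smashing preserves acyclic cofibrations, which is enough to run the small object argument on the subcategory of algebras with cofibrant underlying module, and this is what manufactures the characteristic weakenings (2) and (3) in the statement. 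Your proposal tries to take both the Schwede–Shipley road (prove the monoid axiom) and the Hovey road (restrict to cofibrant domains), and ends up doing neither coherently.

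Separately, the claimed verification of the monoid axiom is a real gap. The cyclic permutation condition of Lemma \ref{lemma:cyclicpermutationcondition} is what makes $-\wedge C$ a Quillen equivalence and underlies the comparison $\Spt_{C}\simeq\Spt_{C}^{\Sigma}$ (Theorem \ref{theorem:spectrasymmetricspectraprojectivestable}); it says nothing about whether smashing a stabilization acyclic cofibration with an \emph{arbitrary, non-cofibrant} spectrum $\Z$ lands back in the localizing class. Even in the topological or simplicial symmetric-spectra setting the monoid axiom for the stable model structure is a nontrivial theorem that uses special features of the category (flatness of $S$-modules, injective model structures, or levelwise analyses of latching objects), none of which you establish here. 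Asserting that the interaction of $(\Theta^{\infty})^{\Sigma}$ with $\wedge$ is "controlled" by the cyclic permutation condition conflates two different problems. Absent the monoid axiom, the correct argument is the one in \cite{Hovey:monoidsandalgebras}: use that the pushout-product axiom plus cofibrancy of the unit gives that $\W\wedge-$ preserves (stable) acyclic cofibrations whenever $\W$ is cofibrant, and run the transfinite small object argument only starting from cofibrant $\E$-modules. That is where the semimodel, rather than model, conclusion comes from, and it is the argument the paper intends. The remaining bullets (comparison of homotopy categories under change of monoid, cubical-versus-simplicial comparison, simplicial action) do follow along the lines you sketch once the semimodel structure is in place, and these parts of your write-up are in the right spirit.
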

\begin{remark}
For semimodel structures at large see the work of Spitzweck \cite{Spitzweck:semimodelstructures}.
\end{remark}

It turns out there is a perfectly good homotopical comparison between $\Spt_{C}$ and $\CC^{\ast}$-cubical symmetric spectra.
The following result is special to our situation.
\begin{theorem}
\label{theorem:spectrasymmetricspectraprojectivestable}
The forgetful functor induces a Quillen equivalence between the projective 
$\CC^{\ast}$-stable model structures on $\Spt_{C}^{\Sigma}$ and $\Spt_{C}$.
\end{theorem}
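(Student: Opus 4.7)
The plan is to apply Hovey's general machinery \cite{Hovey:spectra} comparing spectra and symmetric spectra in a monoidal model category, specialized to the pointed cubical $\CC^{\ast}$-space setting with suspension coordinate $C = S^{1}\otimes C_{0}(\R)$. Specifically, I will invoke the analog of \cite[Theorem 10.1]{Hovey:spectra}, whose hypothesis is precisely the cyclic permutation condition on the suspension object, which was verified for $C$ in Lemma \ref{lemma:cyclicpermutationcondition}.

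First, I will identify the adjoint pair. The forgetful functor $U\colon\Spt_{C}^{\Sigma}\rightarrow\Spt_{C}$, which discards the symmetric group actions, has a left adjoint $V\colon\Spt_{C}\rightarrow\Spt_{C}^{\Sigma}$ defined levelwise by $V(\E)_{n}\equiv\bigvee_{p+q=n}\Sigma_{n+}\wedge_{\Sigma_{p}}(C^{\otimes p}\otimes\E_{q})$ with the evident structure maps; equivalently $V$ is determined on shifted suspension spectra by $V\Sigma^{\infty}_{C}[n]\X=\Fr_{n}\X$. Since the fibrations and weak equivalences in the projective level model structures on both categories are defined by forgetting down to pointed cubical $\CC^{\ast}$-spaces, $U$ preserves level fibrations and level equivalences, so $(V,U)$ is a Quillen pair at the level model structures. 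The $\CC^{\ast}$-stable model structures are obtained as Bousfield localizations of the level structures along compatible sets of maps $\Fr_{n+1}(C\otimes\X)\rightarrow\Fr_{n}\X$ and their nonsymmetric analogs; hence $(V,U)$ remains a Quillen pair at the $\CC^{\ast}$-stable level.

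For the Quillen equivalence, the strategy follows the proof of \cite[Theorem 10.1]{Hovey:spectra}. One must show: (a) for every stably fibrant $\CC^{\ast}$-cubical symmetric spectrum $\G$, the counit $VU\G\rightarrow\G$ is a $\CC^{\ast}$-stable weak equivalence; and (b) for every cofibrant $\E\in\Spt_{C}$, the unit $\E\rightarrow UV\E$ becomes a $\CC^{\ast}$-stable weak equivalence after fibrant replacement in $\Spt_{C}^{\Sigma}$. The key observation, adapted from Hovey's setup, is that for a stably fibrant symmetric spectrum $\G$ the underlying $\CC^{\ast}$-spectrum $U\G$ is stably fibrant in $\Spt_{C}$ by Definitions \ref{definition:Cstarstablefibrant} and \ref{definition:stablefibrant}, and the stabilization functor $(\Theta^{\infty})^{\Sigma}$ from Example \ref{example:CstarsymmetricQ} is compatible under $U$ with $\Theta^{\infty}$ from Section \ref{subsection:spectra}. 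Thus if one can compute the $n$th level of a stably fibrant replacement of $V\E$ as a homotopy colimit involving cyclic permutations of $C^{\otimes k}$-factors acting on $\E_{n-k}$, one reduces the statement that the unit is a stable equivalence to the fact that these cyclic permutations become homotopic to the identity in $\HH^{\ast}$.

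This last point is the main obstacle, and it is precisely where Lemma \ref{lemma:cyclicpermutationcondition} enters: because the cyclic permutation on $C^{\otimes 3}$ is $C(I)$-homotopic to the identity, the $\Sigma_{n}$-action obstruction on the stabilization of $V\E$ becomes trivial in the unstable $\CC^{\ast}$-homotopy category, so the canonical comparison of stabilizations $\Theta^{\infty}\E\rightarrow U(\Theta^{\infty})^{\Sigma}V\E$ is a level $\CC^{\ast}$-weak equivalence. Combined with Theorem \ref{theorem:stableweakequivalencesandstabilization} characterizing stable weak equivalences as level equivalences after stabilization, this yields both (a) and (b), completing the Quillen equivalence. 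The only thing to check beyond Hovey's paper is that his hypotheses transfer to our projective cofibrantly generated setting on $\Spt_{C}$ and $\Spt_{C}^{\Sigma}$; this is routine since both model structures are weakly finitely generated by Lemma \ref{lemma:spectraweaklyfinitelygenerated} and its symmetric analog, so filtered colimits preserve the relevant classes and the small-object argument interacts well with $U$ and $V$.
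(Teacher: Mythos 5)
Your proposal correctly identifies the adjoint pair $(V,U)$, verifies the Quillen pair at the level and stable structures, and correctly flags the cyclic permutation condition (Lemma \ref{lemma:cyclicpermutationcondition}) as the essential hypothesis. However, there is a genuine gap in the way you attribute the crucial step.

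You write that for the Quillen equivalence ``the strategy follows the proof of \cite[Theorem 10.1]{Hovey:spectra}'' and must show (a) the counit $VU\G\rightarrow\G$ is a stable weak equivalence for stably fibrant $\G$ and (b) the unit is a stable equivalence after fibrant replacement. But Hovey's Theorem~10.1 produces only a \emph{zig-zag} of Quillen equivalences passing through an auxiliary category; its proof does not establish (a) and (b) for the forgetful/symmetrization pair directly, so that strategy is not available to you as stated. The stronger assertion that the forgetful functor $U$ is itself the right half of a Quillen equivalence --- which is what Theorem~\ref{theorem:spectrasymmetricspectraprojectivestable} claims --- requires the more involved argument following Jardine \cite[\S4.4]{Jardine:MSS}, which uses bispectra (Section~\ref{subsection:bispectra}) and the layer filtration (Example~\ref{example:layerfiltration}) to compare the two stabilizations level by level. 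Your proposal gestures at exactly the hard step (``if one can compute the $n$th level of a stably fibrant replacement of $V\E$ as a homotopy colimit involving cyclic permutations\ldots'') but leaves it conditional; supplying that computation is the bulk of the work, and the paper's approach is to invoke the bispectrum/layer-filtration machinery already developed in Sections~\ref{subsection:spectra} and~\ref{subsection:bispectra} to carry it out. To complete your proof you would need to either import Jardine's layer-filtration argument explicitly or settle for the weaker zig-zag conclusion that Hovey's theorem actually yields.
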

\begin{proof}
By \cite[Theorem 10.1]{Hovey:spectra} there exists a zig-zag of Quillen equivalences between
$\Spt_{C}^{\Sigma}$ and $\Spt_{C}$ since the cyclic permutation condition holds for $C$ by 
Lemma \ref{lemma:cyclicpermutationcondition}.
The improved result stating that the forgetful and symmetrization adjoint functor pair between $\Spt_{C}^{\Sigma}$ and $\Spt_{C}$
defines a Quillen equivalence is rather long and involves bispectra and layer filtrations.
We have established the required ingredients needed for the arguments in \cite[\S4.4]{Jardine:MSS} to go through in our setting.
\end{proof}

The next result concerning monoidalness of the stable $\CC^{\ast}$-homotopy category is a consequence of
Theorem \ref{theorem:spectrasymmetricspectraprojectivestable}.
\begin{corollary}
\label{corollary:stablehomotopycategorymonoidalstructure}
The total left derived functor of the smash product $\wedge$ on $\Spt_{C}^{\Sigma}$ yields a symmetric monoidal product $\wedge^{{\bf L}}$ 
on the stable homotopy category $\SH^{\ast}$.
In addition, 
the suspension functor $\Fr_{0}$ induces a symmetric monoidal functor
\begin{equation*}
\xymatrix{
\bigl(\HH^{\ast},\otimes^{{\bf L}},\C\bigr)
\ar[r] &
\bigl(\SH^{\ast},\wedge^{{\bf L}},{\bf 1}\bigr).}
\end{equation*}
\end{corollary}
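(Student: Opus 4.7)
The plan is to derive both assertions from Theorem \ref{theorem:Cstarsymmetricprojectivestable}, Theorem \ref{theorem:spectrasymmetricspectraprojectivestable}, and the general machinery for descending closed symmetric monoidal model structures to homotopy categories used already in the proof of Corollary \ref{corollary:Cstarhomotopycategory}. First, Theorem \ref{theorem:Cstarsymmetricprojectivestable} asserts that the projective $\CC^{\ast}$-stable model structure on $\Spt_{C}^{\Sigma}$ is monoidal, meaning that the smash product $\wedge$ satisfies the pushout product axiom with respect to projective cofibrations and $\CC^{\ast}$-stable acyclic cofibrations. In addition the unit ${\bf 1}=\Fr_{0}\C$ is projective cofibrant, being $\Fr_{0}$ applied to the cofibrant $\CC^{\ast}$-algebra $\C$. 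These are exactly the hypotheses of \cite[Theorem 4.3.2]{Hovey:Modelcategories}, which then provides a closed symmetric monoidal structure on $\Ho(\Spt_{C}^{\Sigma})$ with product $\wedge^{{\bf L}}$ gotten by first cofibrantly replacing and then smashing. Transporting across the equivalence of categories $\Ho(\Spt_{C}^{\Sigma})\simeq \SH^{\ast}$ supplied by the Quillen equivalence of Theorem \ref{theorem:spectrasymmetricspectraprojectivestable} equips $\SH^{\ast}$ with $\wedge^{{\bf L}}$, with associativity, commutativity, and unit constraints derived from those in $\Spt_{C}^{\Sigma}$.

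For the monoidality of $\Fr_{0}\colon\Box\CC^{\ast}-\Spc_{0}\rightarrow\Spt_{C}^{\Sigma}$, the first step is to exhibit a strong symmetric monoidal structure at the level of the underlying categories. Unwinding Definition \ref{definition:Cstarsymmetricevaluation}, $\Fr_{0}\X=\Sym(C)\otimes^{\Sigma}\widetilde{\Fr}_{0}\X=\Sym(C)\otimes^{\Sigma}(\X,0,0,\cdots)$, so the universal property of the free $\Sym(C)$-module together with the identification $(\X,0,\cdots)\otimes^{\Sigma}(\Y,0,\cdots)=(\X\otimes\Y,0,\cdots)$ in $\Box\CC^{\ast}-\Spc_{0}^{\Sigma}$ yields natural isomorphisms $\Fr_{0}(\X\otimes\Y)\cong \Fr_{0}\X\wedge\Fr_{0}\Y$ and $\Fr_{0}\C=\Sym(C)={\bf 1}$ compatible with the associativity, commutativity, and unit constraints on either side. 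The coherence hexagons and triangles reduce to those for $\otimes$ in $\Box\CC^{\ast}-\Spc_{0}$ together with the symmetry in $\Sigma$, and pose no real difficulty.

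The second step is to derive this structure. Since $\Fr_{0}$ is left adjoint to $\Ev_{0}$ and $\Ev_{0}$ plainly preserves $\CC^{\ast}$-stable fibrations and stable acyclic fibrations (by Definition \ref{definition:Cstarsymmetriclevel} and inspection of stably fibrant objects in Definition \ref{definition:stablefibrant}), $\Fr_{0}$ is a left Quillen functor for the projective $\CC^{\ast}$-stable model structure. Its strong monoidal structure preserves projective cofibrant objects (both $\otimes$-wise and $\wedge$-wise, by the pushout product axioms on the source and target) so that the isomorphism $\Fr_{0}(\QQ\X\otimes\QQ\Y)\cong \Fr_{0}\QQ\X\wedge\Fr_{0}\QQ\Y$ descends to a natural isomorphism $\Fr_{0}^{{\bf L}}(\X\otimes^{{\bf L}}\Y)\cong \Fr_{0}^{{\bf L}}\X\wedge^{{\bf L}}\Fr_{0}^{{\bf L}}\Y$ in $\SH^{\ast}$. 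Together with $\Fr_{0}^{{\bf L}}\C\cong{\bf 1}$ and the fact that the associativity, commutativity, and unit coherence diagrams are inherited from $\Box\CC^{\ast}-\Spc_{0}$ via a strict monoidal functor on cofibrant objects, we conclude that $\Fr_{0}$ induces a symmetric monoidal functor $(\HH^{\ast},\otimes^{{\bf L}},\C)\rightarrow(\SH^{\ast},\wedge^{{\bf L}},{\bf 1})$.

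The main obstacle is the bookkeeping in step two: one must ensure that the strong monoidal isomorphism survives simultaneous cofibrant replacement on both factors, which requires using that $\Fr_{0}$ sends projective cofibrant objects of $\Box\CC^{\ast}-\Spc_{0}$ to projective cofibrant objects of $\Spt_{C}^{\Sigma}$ and that $\QQ\X\otimes\QQ\Y\rightarrow\X\otimes\Y$ is a $\CC^{\ast}$-weak equivalence between cofibrant objects (guaranteed by monoidalness of the homotopy invariant model structure on $\Box\CC^{\ast}-\Spc_{0}$ established in Proposition \ref{proposition:hiprojectivemonoidal}). Everything else is a routine coherence check.
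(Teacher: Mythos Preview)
Your proof is correct and follows the natural route the paper implicitly takes: the paper itself offers no detailed proof of this corollary, merely stating it as a consequence of Theorem \ref{theorem:spectrasymmetricspectraprojectivestable}, and your argument supplies precisely the expected details --- invoking the monoidal model structure from Theorem \ref{theorem:Cstarsymmetricprojectivestable} together with \cite[Theorem 4.3.2]{Hovey:Modelcategories} and then transporting along the Quillen equivalence. Your treatment of the monoidality of $\Fr_{0}$ is also the standard one and raises no issues.
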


With the results for $\Spt_{C}^{\Sigma}$ in hand we are ready to move deeper into our treatment of the triangulated structure of $\SH^{\ast}$.
The notion we are interested in is that of a closed symmetric monoidal category with a compatible triangulation,
as introduced in \cite{May:additivityoftraces}.
The importance of this notion is evident from the next theorem which is a consequence of our results for $\Spt_{C}^{\Sigma}$ and specialization 
of the main result in \cite{May:additivityoftraces}.
\begin{theorem}
\label{additivityofeulercharacteristics}
The Euler characteristic is additive for distinguished triangles of dualizable objects in $\SH^{\ast}$.
\end{theorem}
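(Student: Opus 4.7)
The plan is to invoke the main theorem of \cite{May:additivityoftraces}, which asserts additivity of the Euler characteristic on distinguished triangles of dualizable objects in any closed symmetric monoidal triangulated category satisfying a list of compatibility axioms (TC1)--(TC5) between the tensor product, the triangulation, and the suspension. It therefore suffices to verify that $\SH^{\ast}$ belongs to this class of categories.

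First I would assemble the structural ingredients already in place. Theorem \ref{theorem:stabletriangulatedstructure} shows $\SH^{\ast}$ is triangulated, with shift $\Sigma_{S^{1}}$ and cofiber sequences as in Definition \ref{definition:cofibersequences}. Corollary \ref{corollary:stablehomotopycategorymonoidalstructure} provides the closed symmetric monoidal structure $\bigl(\SH^{\ast},\wedge^{{\bf L}},{\bf 1}\bigr)$, obtained by deriving the smash product on $\Spt_{C}^{\Sigma}$ via the monoidal projective $\CC^{\ast}$-stable model structure of Theorem \ref{theorem:Cstarsymmetricprojectivestable} and transported through the Quillen equivalence of Theorem \ref{theorem:spectrasymmetricspectraprojectivestable}. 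The last part of Proposition \ref{proposition:patched} already records the key compatibility axiom (TC2) stating that smashing with a fixed object preserves cofiber sequences in each variable.

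The hard part will be verifying the remaining axioms, and in particular the braiding axiom (TC3) which demands that the symmetry isomorphism $\tau\colon\E\wedge^{{\bf L}}\Sigma_{S^{1}}\F\to\Sigma_{S^{1}}\F\wedge^{{\bf L}}\E$ differs from the one induced by $\tau\colon\E\wedge^{{\bf L}}\F\to\F\wedge^{{\bf L}}\E$ by a sign, i.e.\ the axiom controlling how the symmetry interacts with the suspension. Following the template familiar from symmetric spectra of simplicial sets, this reduces on the point-set level to the cyclic permutation condition for the suspension coordinate, which we have established for $C=S^{1}\otimes C_{0}(\R)$ in Lemma \ref{lemma:cyclicpermutationcondition}; equivalently, the cyclic permutation of $S^{1}\wedge S^{1}\wedge S^{1}$ is homotopic to the identity because it has degree one. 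Granting this, axioms (TC1), (TC4) and (TC5) are either formal consequences of $\wedge^{{\bf L}}$ being symmetric monoidal and $\Sigma_{S^{1}}$ being an autoequivalence of $\SH^{\ast}$, or reduce by the same mechanism to standard coherence checks on $\Spt_{C}^{\Sigma}$. With all compatibility axioms in hand, the main theorem of \cite{May:additivityoftraces} applies and yields the claimed additivity of the Euler characteristic.
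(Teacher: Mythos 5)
Your approach is essentially the one the paper takes: appeal to May's compatibility axioms (TC1)--(TC5) for closed symmetric monoidal categories with a compatible triangulation, observing that $\SH^{\ast}$ satisfies them because it is the homotopy category of the closed symmetric monoidal stable model structure on $\Spt_{C}^{\Sigma}$ (Theorems \ref{theorem:Cstarsymmetricprojectivestable}, \ref{theorem:spectrasymmetricspectraprojectivestable}, and Corollary \ref{corollary:stablehomotopycategorymonoidalstructure}), and then specialize May's additivity theorem. One small but worth-fixing confusion: the sign condition you attribute to (TC3) --- that transposing the two $S^{1}$ factors is homotopic to $-\id$ --- is actually the content of (TC1); (TC3) is the braid axiom involving the simultaneous cofiber object $\HHH$. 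Relatedly, Lemma \ref{lemma:cyclicpermutationcondition} is invoked in the paper to establish the Quillen equivalence between $\Spt_{C}$ and $\Spt_{C}^{\Sigma}$, not to verify a specific (TC) axiom; once one has the monoidal stable model structure on $\Spt_{C}^{\Sigma}$, all of (TC1)--(TC5) follow from May's general argument for homotopy categories of such model structures, and (TC4)--(TC5) in particular are not purely formal consequences of having a symmetric monoidal triangulated category --- the model-categorical input is essential, which is precisely the point emphasized in \cite{May:additivityoftraces}.
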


Next we define Euler characteristics and discuss the content of Theorem \ref{additivityofeulercharacteristics}.
\vspace{0.1in}

The dual of $\E$ in $\SH^{\ast}$ is $\D\E\equiv\underline{\SH^{\ast}}(\E,{\bf 1})$ where $\underline{\SH^{\ast}}(\E,\F)$ is the derived version 
of the internal hom of $\E$ and $\F$ in $\Spt_{C}^{\Sigma}$,
i.e.~take a cofibrant replacement $\E^{c}$ of $\E$ a fibrant replacement $\F^{f}$ of $\F$ and form $\underline{\Hom}_{\Spt_{C}^{\Sigma}}(\E^{c},\F^{f})$.
Let $\epsilon_{\E}\colon\D\E\wedge^{{\bf L}}\E\rightarrow {\bf 1}$ denote the evident evaluation map.
There is a canonical map 
\begin{equation}
\label{dualizablemap}
\xymatrix{
\D\E\wedge^{{\bf L}}\E\ar[r] &
\underline{\SH^{\ast}}(\E,\E). }
\end{equation}
Recall that $\E$ is called dualizable if (\ref{dualizablemap}) is an isomorphism.
For $\E$ dualizable there is a coevaluation map $\eta_{\E}\colon {\bf 1}\rightarrow\E\wedge^{{\bf L}}\D\E$.
The Euler characteristic $\chi(\E)$ of $\E$ is defined as the composite map
\begin{equation}
\label{eulercharacteristics}
\xymatrix{
{\bf 1}\ar[r]^-{\eta_{\E}} &
\E\wedge^{{\bf L}}\D\E\ar[r]^-{\tau} &
\D\E\wedge^{{\bf L}}\E\ar[r]^-{\epsilon_{\E}} &
{\bf 1}. }
\end{equation}
Here,
$\tau$ denotes the twist map.
The categorical definition of Euler characteristics given above and the generalization reviewed below, 
putting trace maps in algebra and topology into a convenient framework, 
was introduced by Dold and Puppe \cite{DP}.
Theorem \ref{additivityofeulercharacteristics} states that for every distinguished triangle 
\begin{equation*}
\xymatrix{
\E\ar[r] & \F\ar[r] & \G\ar[r] & \Sigma_{S^{1}}\E }
\end{equation*}
of dualizable objects in $\SH^{\ast}$, 
the formula
\begin{equation}
\label{eulercharacteristicsformula}
\chi(\F)=\chi(\E)+\chi(\G)
\end{equation}
holds for the Euler characteristics (\ref{eulercharacteristics}) in the endomorphism ring of the sphere $\CC^{\ast}$-spectrum.
Note that if $\E$ and $\F$ are dualizable, then so is $\G$.
As emphasized in \cite{May:additivityoftraces}, 
the proof of the additivity theorem for Euler characteristics makes heavily use of the stable model categorical situation, 
so that a generalization of the formula (\ref{eulercharacteristicsformula}) to arbitrary triangulated categories seems a bit unlikely.
In order to explain this point in some details we shall briefly review the important notion of a closed symmetric monoidal category with 
a compatible triangulation in the sense specified by May \cite{May:additivityoftraces}. 
\begin{remark}
In our treatment of zeta functions of $\CC^{\ast}$-algebras in Section \ref{subsection:zetafunctions} we shall make use of Euler characteristics in
``rationalized'' stable homotopy categories.
\end{remark}

If $\E$ is dualizable as above and there is a  ``coaction'' map $\Delta_{\E}\colon \E\rightarrow\E\wedge^{{\bf L}} C_{\E}$ for some object $C_{\E}$ 
of $\SH^{\ast}$, 
typically arising form a comonoid structure on $C_{\E}$, 
define the trace $\tr(f)$ of a self map $f$ of $\E$ by the diagram:
\begin{equation*}
\xymatrix{ 
{\bf 1} \ar[r]^-{\eta_{\E}} \ar[d]_{\tr(f)} & 
\E\wedge^{{\bf L}}\D\E \ar[r]^-{\tau} & 
\D\E\wedge^{{\bf L}}\E \ar[d]^{\D(f)\wedge^{{\bf L}}\Delta_{\E}} \\
C_{\E} & 
{\bf 1}\wedge^{{\bf L}} C_{\E}\ar[l]_-{\cong} & 
\D\E\wedge^{{\bf L}}\E\wedge^{{\bf L}} C_{\E} \ar[l]_-{\epsilon_{\E}\wedge^{{\bf L}}\id} }
\end{equation*}
For completeness we include some of the basic properties of trace maps proven in \cite{May:additivityoftraces}.
Define a map
$$
(f,\alpha)\colon (\E,\Delta_{\E})\rightarrow(\F,\Delta_{\F})
$$
to consist of a pair of maps $f\colon\E\rightarrow\F$ and $\alpha\colon C_{\E}\rightarrow C_{\F}$ such that the following diagram commutes:
\begin{equation*}
\xymatrix{
\E\ar[r]^-{\Delta_{\E}}\ar[d]_-{f} & \E\wedge^{{\bf L}} C_{\E} \ar[d]^-{f\wedge^{{\bf L}}\alpha}\\
\F\ar[r]^-{\Delta_{\F}} & \F\wedge^{{\bf L}} C_{\F} }
\end{equation*}
\begin{lemma}
\label{tracelemma}
The trace satisfies the following properties, where $\E$ and $\F$ are dualizable
and $\Delta_{\E}$ and $\Delta_{\F}$ are given.
\begin{itemize}
\item 
If $f$ is a self map of the sphere $\CC^{\ast}$-spectrum, then $\chi(f)=f$.
\item
If $(f,\alpha)$ is a self map of $(\E,\Delta_{\E})$, then $\alpha\circ \tr(f)=\tr(f)$.
\item
If $\E\stackrel{i}{\rightarrow}\F\stackrel{r}{\rightarrow}\E$ is a retract, 
$f$ a self map of $\E$, 
and $(i,\alpha)$ a map $(\E,\Delta_{\E})\rightarrow (\F,\Delta_{\F})$, 
then $\alpha\circ\tr(f)=\tr(i\circ f\circ r)$.
\item
If $f$ and $g$ are self maps of $\E$ and $\F$ respectively, 
then $\tr(f\wedge^{{\bf L}} g)=\tr(f)\wedge^{{\bf L}} \tr(g)$,
where 
$\Delta_{\E\wedge^{{\bf L}}\F}= 
(\text{id}\wedge^{{\bf L}}\tau\wedge^{{\bf L}}\text{id})\circ (\Delta_{\E}\wedge^{{\bf L}} \Delta_{\F})$ 
with $\tau$ the transposition.
\item
If $h\colon\E\vee\F\rightarrow\E\vee\F$ induces $f\colon\E\rightarrow\E$ and $g\colon\F\rightarrow\F$ by inclusion and retraction, 
then $\tr(h)=\tr(f) + \tr(g)$,
where $C_{\E} = C_{\F} = C_{\E\vee \F}$ and $\Delta_{\E\vee \F} = \Delta{\E} \vee \Delta{\F}$.
\item
For every self map $f$, 
$\tr(\Sigma_{S^{1}} f)=-\tr(f)$,
where $\Delta{\Sigma_{S^{1}} \E} = \Sigma_{S^{1}} \Delta{\E}$.
\end{itemize}
\end{lemma}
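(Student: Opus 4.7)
The plan is to verify each item by a direct diagram chase in the closed symmetric monoidal triangulated category $\SH^{\ast}$, following the template in May \cite{May:additivityoftraces}. The ambient structure we will rely on has already been set up: the closed symmetric monoidal product $\wedge^{{\bf L}}$ with unit ${\bf 1}$ comes from Corollary \ref{corollary:stablehomotopycategorymonoidalstructure}, the triangulation from Theorem \ref{theorem:stabletriangulatedstructure}, and the compatibility between the two structures is inherited from $\Spt_{C}^{\Sigma}$ via Theorem \ref{theorem:spectrasymmetricspectraprojectivestable}. Having fixed this setup, each item of the lemma is a formal consequence, so the proof reduces to drawing the correct pasting diagram and applying naturality of $\eta$, $\epsilon$ and $\tau$ together with the triangle identities for duality.

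For (i)--(v) the arguments are essentially formal and parallel the classical treatment. Item (i) follows because $\D {\bf 1} = {\bf 1}$ and $\eta_{{\bf 1}}$, $\epsilon_{{\bf 1}}$ coincide with unit isomorphisms, so the composite defining $\chi(f)$ collapses to $f$. Item (ii) is obtained by inserting the compatibility square for $(f,\alpha)$ into the trace diagram for $\tr(f)$ and invoking naturality; the factor $\alpha$ is absorbed because $\Delta_{\E}$ commutes past $\alpha$. Item (iii) follows from the retract $i,r$ by post- and pre-composing the trace diagram for $i\circ f\circ r$ with $r\wedge^{{\bf L}}\id$ and $\id\wedge^{{\bf L}} i$ and using that $\D r\circ \D i = \id$. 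Item (iv) uses the canonical isomorphism $\D(\E\wedge^{{\bf L}}\F)\simeq \D\E\wedge^{{\bf L}}\D\F$ for dualizable objects, together with the fact that $\eta_{\E\wedge^{{\bf L}}\F}$ and $\epsilon_{\E\wedge^{{\bf L}}\F}$ are (up to transposition) the smash of $\eta_{\E},\eta_{\F}$ and of $\epsilon_{\E},\epsilon_{\F}$ respectively; after reshuffling transpositions one reads off $\tr(f\wedge^{{\bf L}} g) = \tr(f)\wedge^{{\bf L}}\tr(g)$. Item (v) uses the splitting $\D(\E\vee\F)=\D\E\vee\D\F$ and the corresponding block-diagonal description of $\eta_{\E\vee\F}$ and $\epsilon_{\E\vee\F}$, so that the trace of $h$ decomposes as the sum of the traces of its two diagonal components $f$ and $g$; the additivity of hom-sets in the pre-triangulated category $\SH^{\ast}$ is what allows us to write this sum.

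The main obstacle, and the only step that genuinely uses the triangulated structure rather than just the symmetric monoidal structure, is (vi). The identity $\tr(\Sigma_{S^{1}} f) = -\tr(f)$ rests on the sign conventions relating the suspension functor to the symmetry isomorphism $\tau$ in the compatibility axioms between the triangulation and the monoidal product. Concretely, if $\sigma_{\E,\F}\colon \E\wedge^{{\bf L}}\F \to \F\wedge^{{\bf L}}\E$ denotes the braiding, then $\sigma_{\Sigma_{S^{1}}\E,\F}$ and $\Sigma_{S^{1}}\sigma_{\E,\F}$ differ by a sign; equivalently, the two evident identifications $\Sigma_{S^{1}}\E\wedge^{{\bf L}}\Sigma_{S^{1}}\F \cong \Sigma_{S^{1}}^{2}(\E\wedge^{{\bf L}}\F)$ differ by $-1$. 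One takes the trace diagram for $\Sigma_{S^{1}} f$, uses that $\D\Sigma_{S^{1}}\E \simeq \Sigma_{S^{1}}^{-1}\D\E$ and that $\eta_{\Sigma_{S^{1}}\E}$, $\epsilon_{\Sigma_{S^{1}}\E}$ are obtained from $\eta_{\E}$, $\epsilon_{\E}$ by suspending and desuspending, and moves the transposition $\tau$ past one of the suspensions; this is where the $-1$ is introduced. Propagating it through the rest of the diagram yields the desired sign. The delicate point here will be bookkeeping of the Koszul signs so that a single $-1$ appears rather than $+1$ or $(-1)^{2}$; for this it is cleanest to work first in $\Spt_{C}^{\Sigma}$ at the model-categorical level, where $\Sigma_{S^{1}}$ is represented by smashing with $S^{1}$ and the relevant signs are computed directly from the degree $-1$ self-map of $S^{2}$ that transposes the two circle factors, and only afterwards pass to $\SH^{\ast}$.
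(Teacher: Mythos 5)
The paper does not actually prove this lemma: it is quoted ``for completeness'' with a citation to May \cite{May:additivityoftraces}, where these items appear among the basic formal properties of traces. Your outline essentially reconstructs May's arguments, and the ideas are right: (i)--(v) are diagram chases using the closed symmetric monoidal structure and the additivity of $\SH^{\ast}$, and (vi) traces the sign back to axiom (TC1), which the paper records a few lines later.

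One genuine slip in your item (iii): from $r\circ i=\id_{\E}$ and contravariance of $\D$ you get $\D i\circ\D r=\D(r\circ i)=\id_{\D\E}$, not $\D r\circ\D i=\id$ as you wrote. The composite $\D r\circ\D i=\D(i\circ r)$ is only the idempotent on $\D\F$, not an identity. The identity $\D i\circ\D r=\id_{\D\E}$, together with $r\circ i=\id_{\E}$, is what you actually need to verify $\eta_{\E}=(r\wedge^{{\bf L}}\D i)\circ\eta_{\F}$ and the analogous relation for $\epsilon$, which is the step that lets you compare the trace diagrams for $f$ and for $i\circ f\circ r$. With that corrected, your plan is sound and aligned with the source the paper relies on.
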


The following additivity theorem was shown by May in \cite[Theorem 1.9]{May:additivityoftraces}.
\begin{theorem} 
\label{additivitytheorem}
Let $\E$ and $\F$ be dualizable in $\SH^{\ast}$, 
$\Delta{\E}$ and $\Delta{\F}$ be given, where $C=C_{\E}=C_{\F}$. 
Let $(f,\text{id})$ be a map $(\E,\Delta{\E})\rightarrow (\F,\Delta{\F})$ and extend $f$ to a distinguished triangle
\begin{equation*}
\xymatrix{
\E\ar[r]^-{f}  & \F\ar[r]^-{g} & \G \ar[r]^-{h} & \Sigma_{S^{1}}\E. }
\end{equation*}
Assume given maps $\phi$ and $\psi$ that make the left square commute in the
first of the following two diagrams:
\begin{equation*}
\xymatrix{
\E \ar[r]^-{f} \ar[d]_{\phi}  &  \F \ar[r]^-{g} \ar[d]^{\psi}
& \G \ar[r]^-{h} \ar[d]^{\omega} & \Sigma_{S^{1}} \E \ar[d]^{\Sigma_{S^{1}}\phi}\\
\E \ar[r]^-{f}  &  \F \ar[r]^-{g} & \G \ar[r]^-{h} & \Sigma_{S^{1}} \E }
\end{equation*}
\begin{equation*}
\xymatrix{
\E \ar[r]^-{f}\ar[d]_{\Delta{\E}}  &  \F \ar[r]^-{g} \ar[d]^{\Delta{\F}}
& \G \ar[r]^-{h} \ar[d]^{\Delta\G} & \Sigma_{S^{1}} \E \ar[d]^{\Sigma_{S^{1}}\Delta{\E}}\\
\E\wedge^{{\bf L}} C \ar[r]^-{f\wedge^{{\bf L}}\text{id}}  &  \F\wedge^{{\bf L}} C \ar[r]^-{g\wedge^{{\bf L}}\text{id}}
& \G\wedge^{{\bf L}} C \ar[r]^-{h\wedge^{{\bf L}}\text{id}} & \Sigma_{S^{1}} (\E\wedge^{{\bf L}} C) }
\end{equation*}
Then there are maps $\omega$ and $\Delta\G$ as indicated above rendering the diagrams commutative and
\begin{equation*}
\tr(\psi)=\tr(\phi)+\tr(\omega).
\end{equation*}
\end{theorem}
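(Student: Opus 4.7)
The plan is to reduce the statement to May's additivity theorem \cite{May:additivityoftraces} by exhibiting $\SH^{\ast}$ as a closed symmetric monoidal category with a compatible triangulation in the sense of loc.~cit. The stable model structure on $\Spt_{C}^{\Sigma}$ from Theorem \ref{theorem:Cstarsymmetricprojectivestable} supplies the required structural input: the derived smash product $\wedge^{{\bf L}}$ of Corollary \ref{corollary:stablehomotopycategorymonoidalstructure}, the suspension equivalence $\Sigma_{S^{1}}$, and the class of distinguished triangles realized by homotopy cofiber sequences of cofibrant $\CC^{\ast}$-symmetric spectra.

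First I would construct $\omega$ and $\Delta_{\G}$ at the level of the model category. Replace $f$ up to stable weak equivalence by a projective cofibration between cofibrant objects, so that $\G$ is represented by the strict cofiber. The fact that $(f,\phi,\psi)$ forms a map of the left-hand square then yields, via the universal property of the cofiber, a compatible self-map $\omega$ of $\G$ lifting the existential filler supplied by axiom {\bf TR3}. The same argument applied to the second diagram, using that $\Delta_{\E}$ and $\Delta_{\F}$ are intertwined by $f\wedge^{{\bf L}}\id_{C}$, produces $\Delta_{\G}\colon\G\to\G\wedge^{{\bf L}}C$ making the required square commute. Working throughout at the cofibrant level ensures all smash products compute the derived smash, so the construction descends to $\SH^{\ast}$.

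Next I would verify the compatibility axioms between the triangulation and the tensor product needed in May's framework, principally the braid/octahedron-type axiom governing how three cofiber sequences interact with $\wedge^{{\bf L}}$. These follow from the model-categorical statement that cofiber sequences are preserved by smashing with a cofibrant object (the pushout--product axiom available by Theorem \ref{theorem:Cstarsymmetricprojectivestable}), together with the fact that iterated homotopy cofibers in $\Spt_{C}^{\Sigma}$ assemble on the nose into a commutative braid. Passing to $\SH^{\ast}$ yields the desired compatibly triangulated closed symmetric monoidal structure.

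With the framework in place, the trace identity $\tr(\psi)=\tr(\phi)+\tr(\omega)$ is then the content of \cite[Theorem 1.9]{May:additivityoftraces}, applied to $(\E,\Delta_{\E})$, $(\F,\Delta_{\F})$ and the chosen $(\G,\Delta_{\G})$. The additivity formula (\ref{eulercharacteristicsformula}) of Theorem \ref{additivityofeulercharacteristics} is recovered as the special case $\phi=\id_{\E}$, $\psi=\id_{\F}$, $\omega=\id_{\G}$ by combining this with the first and fifth items of Lemma \ref{tracelemma}. The hard part will be the verification of the compatibility axioms, since the triangulated-category axioms alone are notoriously insufficient to force additivity of traces; the essential input is that our triangulation descends from a monoidal stable model category, which allows one to choose the fillers $\omega$ and $\Delta_{\G}$ coherently rather than merely existentially.
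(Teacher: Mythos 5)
Your proposal takes the same route as the paper: the paper does not give an independent proof of this theorem but explicitly attributes it to May (\cite[Theorem 1.9]{May:additivityoftraces}) and justifies its applicability to $\SH^{\ast}$ by noting that $\SH^{\ast}$ is the homotopy category of a closed symmetric monoidal stable model structure (via $\Spt_{C}^{\Sigma}$), so that May's compatibility axioms (TC1)--(TC5) hold. Your sketch of how the model structure supplies the coherent fillers $\omega$ and $\Delta_{\G}$ and verifies the braid/compatibility axioms is precisely the content the paper delegates to \cite[\S4]{May:additivityoftraces}, so you are in agreement with the paper's approach.
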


Additivity of Euler characteristics follows from this theorem by starting out with the data of a distinguished triangle.
\vspace{0.1in}

The proof of Theorem \ref{additivitytheorem} uses the fact that $\SH^{\ast}$ is the homotopy category of a closed symmetric monoidal 
stable model structure such that the smash product $\wedge^{{\bf L}}$ is compatible with the triangulated structure in the sense made
precise by the axioms (TC1)-(TC5) stated in \cite[\S4]{May:additivityoftraces}.

The axiom (TC1) asserts there exists a natural isomorphism $\alpha\colon\E\wedge^{{\bf L}} S^{1}\rightarrow\Sigma_{S^{1}}\E$ such that the 
composite map
\begin{equation*}
\xymatrix{
\Sigma_{S^{1}}S^{1}\ar[r]^-{\alpha^{-1}} &
S^{1}\wedge^{{\bf L}} S^{1}\ar[r]^-{\tau} &
S^{1}\wedge^{{\bf L}} S^{1}\ar[r]^-{\alpha} &
\Sigma_{S^{1}}S^{1} }
\end{equation*}
is multiplication by $-1$, 
while (TC2) basically asserts that smashing or taking internal hom objects with every object of $\SH^{\ast}$ preserves distinguished triangles.
These axioms are analogs of the elementary axioms (T1), (T2) for a triangulated category, and are easily verified.
\vspace{0.1in}

Next we formulate the braid axiom (TC3):
Suppose there exist distinguished triangles
\begin{equation*}
\xymatrix{
\E\ar[r]^-{f} &
\F\ar[r]^-{g} &
\G\ar[r]^-{h} &
\Sigma_{S^{1}}\E,}
\end{equation*}
and
\begin{equation*}
\xymatrix{
\E'\ar[r]^-{f'} &
\F'\ar[r]^-{g'} &
\G'\ar[r]^-{h'} &
\Sigma_{S^{1}}\E'.}
\end{equation*}
Then there exist distinguished triangles
\begin{equation*}
\xymatrix{
\F\wedge^{{\bf L}}\E' \ar[r]^-{p_{1}} &
\HHH\ar[r]^-{q_{1}} &
\E\wedge^{{\bf L}}\G'\ar[r]^-{f\wedge^{{\bf L}} h'} &
\Sigma_{S^{1}}(\F\wedge^{{\bf L}}\E'),}
\end{equation*}
\begin{equation*}
\xymatrix{
\Sigma_{S^{1}}^{-1}(\G\wedge^{{\bf L}}\G')\ar[r]^-{p_{2}} &
\HHH\ar[r]^-{q_{2}} &
\F\wedge^{{\bf L}}\F' \ar[r]^-{-g\wedge^{{\bf L}} g'} &
\G\wedge^{{\bf L}}\G', }
\end{equation*}
\begin{equation*}
\xymatrix{
\E\wedge^{{\bf L}}\F' \ar[r]^-{p_{3}} &
\HHH'\ar[r]^-{q_{3}} &
\G\wedge^{{\bf L}}\E'\ar[r]^-{h\wedge^{{\bf L}} f'} &
\Sigma_{S^{1}}(\E\wedge^{{\bf L}}\F'),}
\end{equation*}
such that the following diagrams commute:
\begin{equation}
\label{braiddiagram}
\xymatrix@C=17pt@R=10pt{
\Sigma_{S^{1}}^{-1}(\F\wedge^{{\bf L}}\G')
\ar@/_1pc/[ddd]_-{\Sigma_{S^{1}}^{-1}(\id\wedge^{{\bf L}} h')} 
\ar[rrrddd]^(0.7){\Sigma_{S^{1}}^{-1}(g\wedge^{{\bf L}}\id)} 
&&&
\E\wedge^{{\bf L}}\E' 
\ar[lllddd]_(0.3){f\wedge^{{\bf L}}\id} 
\ar[rrrddd]^(0.3){\id\wedge^{{\bf L}} f'} 
&&&
\Sigma_{S^{1}}^{-1}(\G\wedge^{{\bf L}}\F')
\ar[lllddd]_(0.7){\Sigma_{S^{1}}^{-1}(\id\wedge^{{\bf L}} g')} 
\ar@/^1pc/[ddd]^-{\Sigma_{S^{1}}^{-1}(h\wedge^{{\bf L}}\id)} 
\\ \\ \\
\F\wedge^{{\bf L}}\E' 
\ar@/_1pc/[dddddd]_-{g\wedge^{{\bf L}}\id} 
\ar[rrrdddddd]^(0.4){\id\wedge^{{\bf L}} f'}
\ar[rrrddd]^(0.4){p_{1}}
&&&
\Sigma_{S^{1}}^{-1}(\G\wedge^{{\bf L}}\G') 
\ar[llldddddd]_(0.7){\Sigma_{S^{1}}^{-1}(\id\wedge^{{\bf L}} h')}
\ar[ddd]^-{p_{2}}
\ar[rrrdddddd]^(0.7){\Sigma_{S^{1}}^{-1}(h\wedge^{{\bf L}}\id)}
&&&
\E\wedge^{{\bf L}}\F' 
\ar[lllddd]_(0.4){p_{3}}
\ar[llldddddd]_(0.4){f\wedge^{{\bf L}}\id}
\ar@/^1pc/[dddddd]^-{\id\wedge^{{\bf L}} g'} 
\\ \\ \\
&&& 
\HHH
\ar[lllddd]_(0.7){q_{3}} 
\ar[ddd]^-{q_{2}}
\ar[rrrddd]^(0.7){q_{1}}
\\ \\ \\
\G\wedge^{{\bf L}}\E'
\ar@/_1pc/[ddd]_-{\id\wedge^{{\bf L}} f'} 
\ar[rrrddd]^(0.7){h\wedge^{{\bf L}}\id} 
&&&
\F\wedge^{{\bf L}}\F' 
\ar[lllddd]_(0.3){g\wedge^{{\bf L}}\id} 
\ar[rrrddd]^(0.3){\id\wedge^{{\bf L}} g'} 
&&&
\E\wedge^{{\bf L}}\G' 
\ar[lllddd]_(0.7){-\id\wedge^{{\bf L}} h'} 
\ar@/^1pc/[ddd]^-{f\wedge^{{\bf L}}\id} 
\\ \\ \\
\G\wedge^{{\bf L}}\F'
&&&
\Sigma_{S^{1}}(\E\wedge^{{\bf L}}\E')
&&&
\F\wedge^{{\bf L}}\G' }
\end{equation}
The axiom (TC3) is more complicated than (TC1) and (TC2) in that it involves a simultaneous use of smash products and internal hom objects, 
a.k.a.~desuspensions.
If $\E$ and $\E'$ are subobjects of $\F$ and $\F'$ then $\HHH$ is typically the pushout of $\E\wedge^{{\bf L}}\F'$ and $\F\wedge^{{\bf L}}\E'$
along $\E\wedge^{{\bf L}}\E'$,
$q_{2}\colon\HHH\rightarrow\F\wedge^{{\bf L}}\F'$ the evident inclusion,
while $q_{1}\colon\HHH\rightarrow\E\wedge^{{\bf L}}\G'$ and $q_{2}\colon\HHH\rightarrow\G\wedge^{{\bf L}}\E'$ are obtained by quotienting out by 
$\F\wedge^{{\bf L}}\E'$ and $\E\wedge^{{\bf L}}\F'$ respectively.
For an interpretation of axiom (TC3) in terms of Verdier's axiom (T3) for a triangulated category we refer to \cite{May:additivityoftraces}.
There is an equivalent way of formulating axiom (TC3) which asserts the existence of distinguished triangles
\begin{equation*}
\xymatrix{
\E\wedge^{{\bf L}}\G' \ar[r]^-{r_{1}} &
\HHH'\ar[r]^-{s_{1}} &
\G\wedge^{{\bf L}}\G'\ar[r]^-{h\wedge^{{\bf L}} g'} &
\Sigma_{S^{1}}(\E\wedge^{{\bf L}}\G'),}
\end{equation*}
\begin{equation*}
\xymatrix{
\G\wedge^{{\bf L}}\G')\ar[r]^-{r_{2}} &
\HHH'\ar[r]^-{s_{2}} &
\Sigma_{S^{1}}(\E\wedge^{{\bf L}}\E') \ar[r]^-{-\Sigma_{S^{1}}(f\wedge^{{\bf L}} f')} &
\Sigma_{S^{1}}(\G\wedge^{{\bf L}}\G'), }
\end{equation*}
\begin{equation*}
\xymatrix{
\G\wedge^{{\bf L}}\E' \ar[r]^-{r_{3}} &
\HHH'\ar[r]^-{s_{3}} &
\F\wedge^{{\bf L}}\G'\ar[r]^-{g\wedge^{{\bf L}} h'} &
\Sigma_{S^{1}}(\G\wedge^{{\bf L}}\E'),}
\end{equation*}
such that the diagram (\ref{braiddiagram}) corresponding to the distinguished triangles $(-\Sigma_{S^{1}}^{-1}h,f,g)$ and 
$(-\Sigma_{S^{1}}^{-1}h',f',g')$ commutes \cite[Lemma 4.7]{May:additivityoftraces}. 
This axiom is called (TC3')
\vspace{0.1in}

The additivity axiom (TC4) concerns compatibility of the maps $q_{i}$ and $r_{i}$ in the sense that there is a weak pushout 
and weak pullback diagram:
\begin{equation*}
\xymatrix{
\HHH\ar[r]^-{q_{2}}\ar[d]_-{(q_{1},q_{3})} & \F\wedge^{{\bf L}}\F'\ar[d]^-{r_{2}} \\
(\E\wedge^{{\bf L}}\G')\vee (\G\wedge^{{\bf L}}\E')\ar[r]^-{(r_{1},r_{3})} & \HHH' }
\end{equation*}
In particular, 
$r_{2}\circ q_{2}=r_{1}\circ q_{1}+r_{3}\circ q_{3}$.
Recall that weak limits and weak colimits satisfy the existence but not necessarily the uniqueness part in the defining universal 
property of limits and colimits respectively.
We refer to \cite{May:additivityoftraces} for the precise definition of the subtle braid duality axiom (TC5) involving $\D\E$, $\D\F$ and $\D\G$, 
and the duals of the diagrams appearing in the axioms (TC3) and (TC3').
Assuming axioms (TC1)-(TC5), 
additivity of Euler characteristics is shown in \cite[\S4]{May:additivityoftraces}. 
\vspace{0.1in}

We shall leave the straightforward formulations of the corresponding base change and also the equivariant generalizations of the results in this 
section to the interested reader, 
and refer to \cite{KN} for further developments on the subject of May's axioms.
\newpage

\subsection{$\CC^{\ast}$-functors}
\label{subsection:Cstarfunctors}
The purpose of this section is to construct a convenient and highly structured enriched functor model for the stable $\CC^{\ast}$-homotopy category.
Although there are several recent works on this subject, 
none of the existing setups of enriched functor categories as models for homotopy types apply directly to the stable $\CC^{\ast}$-homotopy category. 
More precisely, 
this subject was initiated with \cite{Lydakis} and vastly generalized in \cite{DRO:general} with the purpose of including examples arising in 
algebraic geometry. 
A further development of the setup is given in \cite{Biedermann} and \cite{BCR}.
Dealing effectively with enriched functors in stable $\CC^{\ast}$-homotopy theory in its present state of the art requires some additional input, 
which in turn is likely to provide a broader range of applications in homotopy theory at large.
The algebro-geometric example of motivic functors in \cite{DRO:motivic} has been pivotal in the construction of a homotopy theoretic model for 
motives \cite{RO1}, \cite{RO2}.
\vspace{0.1in}

For background in enriched category theory we refer to \cite{Kelly}.
We shall be working with the closed symmetric monoidal category $\Box\CC^{\ast}-\Spc$ of cubical $\CC^{\ast}$-spaces relative to 
some (essentially small) symmetric monoidal $\Box\CC^{\ast}-\Spc$-subcategory $\f\Box\CC^{\ast}-\Spc$.
Denote by $[\f\Box\CC^{\ast}-\Spc,\Box\CC^{\ast}-\Spc]$ the $\Box\CC^{\ast}-\Spc$-category of $\Box\CC^{\ast}-\Spc$-functors from 
$\f\Box\CC^{\ast}-\Spc$ to $\Box\CC^{\ast}-\Spc$ equipped with the projective homotopy invariant model structure.
It acquires the structure of a closed symmetric monoidal category \cite{Day:closedfunctorsI}.
Every object $\X$ of $\f\Box\CC^{\ast}-\Spc$ represents a $\Box\CC^{\ast}-\Spc$-functor which we, 
by abuse of notation, 
denote by $\Box\CC^{\ast}-\Spc(\X,-)$.
\begin{theorem}
\label{theorem:enrichedprojectivemodel}
There exists a pointwise model structure on $[\f\Box\CC^{\ast}-\Spc,\Box\CC^{\ast}-\Spc]$ defined by declaring $\SSSS\rightarrow\TTTT$ 
is a pointwise fibration or weak equivalence if $\SSSS(\X)\rightarrow\TTTT(\X)$ is so in $\Box\CC^{\ast}-\Spc$ for every member $\X$ of
$\f\Box\CC^{\ast}-\Spc$.
The pointwise model structure is combinatorial and left proper.
The cofibrations are generated by the set consisting of the maps 
\begin{equation*}
f\otimes \Box\CC^{\ast}-\Spc(\X,-)
\end{equation*}
where $f$ runs through the generating cofibrations of $\Box\CC^{\ast}-\Spc$ and $\X$ through the objects of $\f\Box\CC^{\ast}-\Spc$.
Likewise, 
the acyclic cofibrations are generated by the set consisting of maps of the form 
\begin{equation*}
g\otimes \Box\CC^{\ast}-\Spc(\X,-)
\end{equation*}
where $g$ runs through the generating acyclic cofibrations of $\Box\CC^{\ast}-\Spc$.
\end{theorem}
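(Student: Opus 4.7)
The plan is to deduce the model structure from the recognition theorem for cofibrantly generated model categories, transferring structure from $\Box\CC^{\ast}-\Spc$ along the family of evaluation functors indexed by objects of $\f\Box\CC^{\ast}-\Spc$. For each $\X \in \f\Box\CC^{\ast}-\Spc$, evaluation $\Ev_{\X}\colon [\f\Box\CC^{\ast}-\Spc,\Box\CC^{\ast}-\Spc]\to \Box\CC^{\ast}-\Spc$ preserves all limits and colimits (they are computed pointwise in enriched functor categories with locally presentable target), and by the enriched Yoneda lemma it admits a left adjoint $F_{\X}(\Y)\equiv \Y\otimes\Box\CC^{\ast}-\Spc(\X,-)$. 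With fibrations and weak equivalences declared pointwise, cofibrations defined by the left lifting property, and candidate sets $I\equiv\bigcup_{\X} F_{\X}(I_{\Box\CC^{\ast}-\Spc})$ and $J\equiv\bigcup_{\X} F_{\X}(J_{\Box\CC^{\ast}-\Spc})$ exactly as in the statement, the adjunction shows that a map has the right lifting property against $I$ (respectively $J$) if and only if each evaluation $\SSSS(\X)\to\TTTT(\X)$ has the right lifting property against $I_{\Box\CC^{\ast}-\Spc}$ (respectively $J_{\Box\CC^{\ast}-\Spc}$), i.e.~is a pointwise acyclic fibration (respectively pointwise fibration).

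The next step is to verify the smallness and acyclicity hypotheses of Kan's recognition theorem. Smallness of the domains of $I$ and $J$ follows from local presentability of $\Box\CC^{\ast}-\Spc$ (Lemma \ref{lemma:finitelypresentable}) together with the fact that left adjoints preserve $\kappa$-presentable objects for suitable $\kappa$; equivalently, local presentability of the enriched functor category itself (a consequence of $\f\Box\CC^{\ast}-\Spc$ being essentially small and $\Box\CC^{\ast}-\Spc$ being locally presentable) guarantees that every object, hence every domain, is small. For the acyclicity condition I would show that every relative $J$-cell complex is a pointwise acyclic cofibration: evaluating $F_{\X}(g)$ at $\Y$ yields the map $g\otimes\Box\CC^{\ast}-\Spc(\X,\Y)$ in $\Box\CC^{\ast}-\Spc$, and the plan is to invoke monoidalness of the homotopy invariant projective model structure (Proposition \ref{proposition:hiprojectivemonoidal}) together with its analog of the monoid axiom (Lemma \ref{lemma:pprojectivemonoid} transferred along the localization), so that tensoring an acyclic cofibration with an arbitrary cubical $\CC^{\ast}$-space produces a map whose transfinite compositions of pushouts remain $\CC^{\ast}$-weak equivalences. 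Transfinite compositions and pushouts of pointwise acyclic cofibrations are pointwise acyclic cofibrations because colimits in the functor category are pointwise.

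For left properness, note that pushouts and weak equivalences are both detected pointwise, so left properness in the functor category reduces to left properness of the homotopy invariant projective model structure on $\Box\CC^{\ast}-\Spc$ established in Section \ref{subsection:homotopyinvariantmodelstructures}. Combinatoriality is immediate: the model structure is cofibrantly generated by construction, and the underlying enriched functor category is locally presentable since $\f\Box\CC^{\ast}-\Spc$ is essentially small and $\Box\CC^{\ast}-\Spc$ is locally presentable (by the general theory of presheaf categories over locally presentable bases, \cite[5.3.b]{Borceux:Handbook2}).

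The main obstacle I anticipate is the acyclicity step: showing that $J$-cells remain pointwise $\CC^{\ast}$-weak equivalences genuinely requires control of the interaction between the enriched hom objects $\Box\CC^{\ast}-\Spc(\X,\Y)$ and the generating acyclic cofibrations. In the unenriched setting this is trivial because the homs are sets and tensoring amounts to taking coproducts of $g$, but in our enriched setting the homs are full-fledged cubical $\CC^{\ast}$-spaces, so the argument depends on the monoidal and monoid-type axioms cited above and ultimately rests on $\CC^{\ast}$-weak equivalences being closed under the relevant colimit constructions, which is where Lemma \ref{lemma:afgmodelcategories} (filtered colimit preservation for weakly finitely generated model structures) and the characterization in Proposition \ref{proposition:charactericationprojective*weakequivalences} will do the real work.
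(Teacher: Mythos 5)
Your overall strategy coincides with the paper's: apply Kan's recognition theorem via the evaluation/free-functor adjunctions, with smallness from local presentability of the enriched functor category. The adjunction step, the identification of $I$-inj and $J$-inj, and combinatoriality are all handled correctly. The genuine gap is in your acyclicity argument. You claim that each evaluated generating map $g\otimes\Box\CC^{\ast}-\Spc(\X,\Y)=g\otimes\underline{\Hom}(\X,\Y)$ gives rise to cells that are $\CC^{\ast}$-weak equivalences by appealing to ``Lemma \ref{lemma:pprojectivemonoid} transferred along the localization.'' But Lemma \ref{lemma:pprojectivemonoid} establishes the monoid axiom only for the \emph{pointwise} projective model structure, and the monoid axiom does not pass formally to a left Bousfield localization: localizing enlarges both the set of generating acyclic cofibrations and the class of weak equivalences, and neither the paper nor your argument verifies the monoid axiom for the exact, matrix invariant, or homotopy invariant projective model structures. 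Monoidalness alone (Proposition \ref{proposition:hiprojectivemonoidal}) does not give what you need either, since $\underline{\Hom}(\X,\Y)$ is in general not projectively cofibrant, so the pushout-product axiom cannot be applied with one leg equal to $\emptyset\rightarrow\underline{\Hom}(\X,\Y)$.

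The paper resolves precisely this difficulty by a different device, which you should adopt: switch to the \emph{injective} homotopy invariant model structure, which has the same weak equivalences by Proposition \ref{proposition:homotopyinvariantprojectiveinjective} but in which every object is cofibrant. Then $\underline{\Hom}(\X,\Y)$ is cofibrant, each $g\otimes\underline{\Hom}(\X,\Y)$ is an acyclic cofibration in the injective structure, and transfinite compositions of pushouts of such maps are again injective acyclic cofibrations, hence $\CC^{\ast}$-weak equivalences --- with no monoid axiom needed. The same correction is required for your left-properness step: cofibrations in the functor category are pointwise monomorphisms but are not in general pointwise \emph{projective} cofibrations, so the reduction should be to left properness of the injective (not the projective) homotopy invariant model structure.
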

\begin{proof}
Most parts of the proof is a standard application of Kan's recognition lemma \cite[Theorem 2.1.19]{Hovey:Modelcategories}.
The required smallness assumption in that result holds because $[\f\Box\CC^{\ast}-\Spc,\Box\CC^{\ast}-\Spc]$ is locally presentable.
It is clear that the pointwise weak equivalences satisfy the two-out-of-three axiom and are closed under retracts.
Let $\W_{\ppt}$ denote the class of pointwise weak equivalences.
An evident adjunction argument shows
\begin{equation*}
\{f\otimes \Box\CC^{\ast}-\Spc(\X,-)\}-\text{inj}=
\{g\otimes \Box\CC^{\ast}-\Spc(\X,-)\}-\text{inj}\cap
\W_{\ppt}.
\end{equation*}
It remains to show there is an inclusion
\begin{equation*}
\{g\otimes \Box\CC^{\ast}-\Spc(\X,-)\}-\text{cell}\subseteq
\{f\otimes \Box\CC^{\ast}-\Spc(\X,-)\}-\text{cof}\cap
\W_{\ppt}.
\end{equation*}
We note that it suffices to show maps of $\{f\otimes\underline{\Hom}(\X,\Y)\}$-cell are weak equivalences in $\Box\CC^{\ast}-\Spc$, 
where $f$ is a generating acyclic cofibration and $\X$, $\Y$ are objects of $\f\Box\CC^{\ast}-\Spc$:
Using the inclusion $\{g\otimes \Box\CC^{\ast}-\Spc(\X,-)\}-\text{cof}\subseteq\{f\otimes \Box\CC^{\ast}-\Spc(\X,-)\}-\text{cof}$ it suffices 
to show that maps of $\{g\otimes \Box\CC^{\ast}-\Spc(\X,-)\}-\text{cell}$ are pointwise weak equivalences.
Since colimits in $[\f\Box\CC^{\ast}-\Spc,\Box\CC^{\ast}-\Spc]$ are formed pointwise this follows immediately from the statement about maps of
$\{f\otimes\underline{\Hom}(\X,\Y)\}$-cell.
To prove the remaining claim we shall employ the injective homotopy invariant model structure.
Recall the weak equivalences in the injective model structure coincides with the weak equivalences in the projective model structure, 
but an advantage of the former is that $\underline{\Hom}(\X,\Y)$ is cofibrant.
Thus every map $f\otimes\underline{\Hom}(\X,\Y)$ as above is an acyclic cofibration in the injective homotopy invariant model structure, 
and hence the same holds on the level of cells; 
in particular, these maps are $\CC^{\ast}$-weak equivalences.

Left properness follows provided cofibrations in $[\f\Box\CC^{\ast}-\Spc,\Box\CC^{\ast}-\Spc]$ are pointwise cofibrations in the (left proper) 
injective homotopy invariant model structure.
To prove this we note that the generating cofibrations $f\otimes \Box\CC^{\ast}-\Spc(\X,-)$ are pointwise cofibrations, 
so that every cofibration is a pointwise cofibration.
\end{proof}

For every object $\X$ of $\f\Box\CC^{\ast}-\Spc$ the functor $-\otimes \Box\CC^{\ast}-\Spc(\X,-)$ is a left Quillen functor because evaluating 
at $\X$ clearly preserves fibrations and acyclic fibrations.
There is an evident pairing
\begin{equation}
\label{equation:pairing}
\xymatrix{
\Box\CC^{\ast}-\Spc
\times
[\f\Box\CC^{\ast}-\Spc,\Box\CC^{\ast}-\Spc]
\ar[r] &
[\f\Box\CC^{\ast}-\Spc,\Box\CC^{\ast}-\Spc]. }
\end{equation}
\begin{lemma}
The pairing (\ref{equation:pairing}) is a Quillen bifunctor with respect to the pointwise model structure on 
$[\f\Box\CC^{\ast}-\Spc,\Box\CC^{\ast}-\Spc]$ and the projective homotopy invariant model structure on $\Box\CC^{\ast}-\Spc$.
\end{lemma}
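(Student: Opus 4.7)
The plan is to verify the pushout-product axiom: given a cofibration $i\colon A\rightarrowtail B$ in $\Box\CC^{\ast}-\Spc$ and a cofibration $j\colon\SSSS\rightarrowtail\TTTT$ in $[\f\Box\CC^{\ast}-\Spc,\Box\CC^{\ast}-\Spc]$, the pushout-product
\begin{equation*}
i\square j\colon B\otimes\SSSS \coprod_{A\otimes\SSSS} A\otimes\TTTT \longrightarrow B\otimes\TTTT
\end{equation*}
is a cofibration in the enriched functor category, and is a pointwise $\CC^{\ast}$-weak equivalence whenever $i$ is a $\CC^{\ast}$-acyclic cofibration or $j$ is pointwise $\CC^{\ast}$-acyclic. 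By the standard reduction (compare \cite[Corollary 4.2.5]{Hovey:Modelcategories}), since both model structures involved are cofibrantly generated, it suffices to verify this on generators.

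First I would reduce to generating cofibrations in both variables. By Theorem \ref{theorem:enrichedprojectivemodel}, every generating (acyclic) cofibration of $[\f\Box\CC^{\ast}-\Spc,\Box\CC^{\ast}-\Spc]$ has the form $g\otimes\Box\CC^{\ast}-\Spc(\X,-)$ where $g$ is a generating (acyclic) cofibration of the projective homotopy invariant model structure on $\Box\CC^{\ast}-\Spc$ and $\X\in\f\Box\CC^{\ast}-\Spc$. Let $f\colon K\rightarrowtail L$ be a generating (acyclic) cofibration of $\Box\CC^{\ast}-\Spc$. Using that $-\otimes\Box\CC^{\ast}-\Spc(\X,-)$ is a left $\Box\CC^{\ast}-\Spc$-functor which commutes with colimits in its first argument, one computes the canonical isomorphism
\begin{equation*}
f\,\square\,\bigl(g\otimes\Box\CC^{\ast}-\Spc(\X,-)\bigr)
\;\cong\; (f\square g)\otimes\Box\CC^{\ast}-\Spc(\X,-),
\end{equation*}
so the question reduces to controlling $f\square g$ in $\Box\CC^{\ast}-\Spc$.

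At this point the key input is Proposition \ref{proposition:hiprojectivemonoidal}: the projective homotopy invariant model structure on $\Box\CC^{\ast}-\Spc$ is monoidal. Therefore $f\square g$ is a projective cofibration, which is $\CC^{\ast}$-acyclic whenever either $f$ or $g$ is. Tensoring with the representable $\Box\CC^{\ast}-\Spc(\X,-)$ then produces a generating (acyclic) cofibration of the pointwise model structure, again by Theorem \ref{theorem:enrichedprojectivemodel}. In particular $(f\square g)\otimes\Box\CC^{\ast}-\Spc(\X,-)$ is a cofibration in $[\f\Box\CC^{\ast}-\Spc,\Box\CC^{\ast}-\Spc]$, and is a pointwise weak equivalence when $f\square g$ is $\CC^{\ast}$-acyclic (evaluating at any $\Y$ yields $(f\square g)\otimes\underline{\Hom}(\X,\Y)$, which is $\CC^{\ast}$-acyclic because $f\square g$ is so and the homotopy invariant model structure is monoidal, cf.~Lemma \ref{lemma:hiprojectivemonoidal2}).

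The main obstacle will be bookkeeping rather than anything conceptually new: one must verify that arbitrary cofibrations (not merely generators) in the enriched functor category can be handled, and that the class of pointwise $\CC^{\ast}$-acyclic cofibrations is closed under the pushout-product with a fixed cofibration in $\Box\CC^{\ast}-\Spc$. This closure follows because pointwise acyclic cofibrations are generated by $\{g\otimes\Box\CC^{\ast}-\Spc(\X,-)\}$, and cofibrations/acyclic cofibrations are closed under pushouts, transfinite compositions, and retracts, all of which are preserved pointwise by $-\otimes B$ and by the pushout-product construction, so the argument for generators propagates. With these closure arguments in place, the pushout-product axiom holds, which is the definition of (\ref{equation:pairing}) being a Quillen bifunctor.
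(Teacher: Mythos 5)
Your proof follows the paper's argument quite closely: the key step in both is the natural isomorphism $f\,\square\,\bigl(g\otimes\Box\CC^{\ast}-\Spc(\X,-)\bigr)\cong (f\square g)\otimes\Box\CC^{\ast}-\Spc(\X,-)$, followed by monoidalness of the homotopy invariant model structure on $\Box\CC^{\ast}-\Spc$ (Proposition \ref{proposition:hiprojectivemonoidal}) to control $f\square g$, and then transfer along $-\otimes\Box\CC^{\ast}-\Spc(\X,-)$. So this is essentially the same route.

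One imprecision worth fixing: in your transfer step you argue by evaluating at $\Y$ and claiming $(f\square g)\otimes\underline{\Hom}(\X,\Y)$ is $\CC^{\ast}$-acyclic ``because the homotopy invariant model structure is monoidal, cf.~Lemma \ref{lemma:hiprojectivemonoidal2}.'' But Lemma \ref{lemma:hiprojectivemonoidal2} concerns pushout products of projective cofibrations; deducing from it that tensoring an acyclic projective cofibration with $\underline{\Hom}(\X,\Y)$ is a weak equivalence would require $\underline{\Hom}(\X,\Y)$ to be projective cofibrant, which it generally is not. The paper sidesteps this in two ways you could cite directly: it observes that $-\otimes\Box\CC^{\ast}-\Spc(\X,-)$ is a left Quillen functor (since evaluation at $\X$ preserves fibrations and acyclic fibrations), so it automatically preserves acyclic cofibrations; and in the proof of Theorem \ref{theorem:enrichedprojectivemodel} it passes to the injective homotopy invariant model structure, in which $\underline{\Hom}(\X,\Y)$ is cofibrant, and then uses that injective and projective weak equivalences agree. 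Either of these repairs your evaluation argument; as written, your citation doesn't quite close the gap, though the conclusion is correct.
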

\begin{proof}
For the pushout product of $\SSSS\rightarrow\TTTT$ and $h\otimes \Box\CC^{\ast}-\Spc(\X,-)$ there is a natural isomorphism
\begin{equation*}
(\SSSS\rightarrow\TTTT)
\Box 
\bigl(h\otimes\Box\CC^{\ast}-\Spc(\X,-)\bigr)=
\bigl((\SSSS\rightarrow\TTTT)\Box h\bigr)\otimes\Box\CC^{\ast}-\Spc(\X,-). 
\end{equation*}
Since $\Box\CC^{\ast}-\Spc$ is monoidal by Proposition \ref{proposition:hiprojectivemonoidal} it follows that 
$\bigl((\SSSS\rightarrow\TTTT)\Box h\bigr)$ is a cofibration and an acyclic cofibration if either $\SSSS\rightarrow\TTTT$ or 
$h\otimes \Box\CC^{\ast}-\Spc(\X,-)$, 
and hence $h$, 
is so.
This finishes the proof because $-\otimes \Box\CC^{\ast}-\Spc(\X,-)$ is a left Quillen functor.
\end{proof}

As we have noted repeatedly the next type of result is imperative for a highly structured model structure.
\begin{lemma}
The pointwise model structure on $[\f\Box\CC^{\ast}-\Spc,\Box\CC^{\ast}-\Spc]$ is monoidal.
\end{lemma}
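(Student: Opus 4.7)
The plan is to verify the pushout product axiom on generators using the Day convolution description of the monoidal product on $[\f\Box\CC^{\ast}-\Spc,\Box\CC^{\ast}-\Spc]$, and then to bootstrap to all cofibrations. First I would recall that, by Day's work, the closed symmetric monoidal structure on $[\f\Box\CC^{\ast}-\Spc,\Box\CC^{\ast}-\Spc]$ is the Day convolution whose defining property on representables reads
\begin{equation*}
\Box\CC^{\ast}-\Spc(\X,-)\otimes\Box\CC^{\ast}-\Spc(\Y,-)
=\Box\CC^{\ast}-\Spc(\X\otimes\Y,-),
\end{equation*}
with unit the representable $\Box\CC^{\ast}-\Spc(\C,-)$ on the monoidal unit of $\f\Box\CC^{\ast}-\Spc$. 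Since $\C$ represents the unit, the unit axiom is immediate from the fact that $\Box\CC^{\ast}-\Spc(\C,-)$ is cofibrant in the pointwise model structure: it is the codomain of the generating cofibration obtained by tensoring $\emptyset\rightarrow\C$ with $\Box\CC^{\ast}-\Spc(\C,-)$.

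Next I would check the pushout product axiom on generating (acyclic) cofibrations. For $f$ and $g$ generating (acyclic) cofibrations of $\Box\CC^{\ast}-\Spc$ and $\X,\Y\in\f\Box\CC^{\ast}-\Spc$, the above identification together with the coend formula for Day convolution yields the natural isomorphism
\begin{equation*}
\bigl(f\otimes\Box\CC^{\ast}-\Spc(\X,-)\bigr)
\Box
\bigl(g\otimes\Box\CC^{\ast}-\Spc(\Y,-)\bigr)
=(f\Box g)\otimes\Box\CC^{\ast}-\Spc(\X\otimes\Y,-).
\end{equation*}
By Proposition \ref{proposition:hiprojectivemonoidal} the homotopy invariant projective model structure on $\Box\CC^{\ast}-\Spc$ is monoidal, so $f\Box g$ is a projective cofibration that is $\CC^{\ast}$-acyclic as soon as either $f$ or $g$ is. As noted just before the lemma, the functor $-\otimes\Box\CC^{\ast}-\Spc(\X\otimes\Y,-)$ is left Quillen for the pointwise model structure, hence preserves cofibrations and acyclic cofibrations. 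This gives the pushout product axiom on generators.

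To extend from generators to arbitrary cofibrations I would use the standard formal argument: the class of maps $i$ such that $i\Box j$ is a (acyclic) cofibration for every generating (acyclic) cofibration $j$ is closed under retracts, pushouts, transfinite composition and contains the generators, hence contains all cofibrations; an identical closure argument on the second variable then concludes the pushout product axiom for arbitrary pairs. The main obstacle is the bookkeeping needed to ensure Day convolution commutes with the relevant colimits used in these closure properties, but this is automatic because Day convolution is a left adjoint in each variable separately and cofibrations in $[\f\Box\CC^{\ast}-\Spc,\Box\CC^{\ast}-\Spc]$ are built from the generators via the small object argument. Combining the pushout product axiom with the unit check established above yields that the pointwise model structure is monoidal in the sense of \cite[\S 4.2]{Hovey:Modelcategories}.
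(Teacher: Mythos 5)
Your proof follows essentially the same route as the paper: identify the unit as the (cofibrant) representable $\Box\CC^{\ast}-\Spc(\C,-)$, then establish the pushout product axiom on the generating (acyclic) cofibrations via the natural isomorphism $\bigl(f\otimes\Box\CC^{\ast}-\Spc(\X,-)\bigr)\Box\bigl(g\otimes\Box\CC^{\ast}-\Spc(\Y,-)\bigr)=(f\Box g)\otimes\Box\CC^{\ast}-\Spc(\X\otimes\Y,-)$, monoidalness of the base model structure on $\Box\CC^{\ast}-\Spc$, and the fact that $-\otimes\Box\CC^{\ast}-\Spc(\X\otimes\Y,-)$ is left Quillen. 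The concluding reduction from generators to arbitrary cofibrations is the standard closure argument, which the paper leaves implicit but which you spell out correctly.
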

\begin{proof}
The inclusion of $\f\Box\CC^{\ast}-\Spc$ into $\Box\CC^{\ast}-\Spc$ is a $\Box\CC^{\ast}-\Spc$-functor and the unit of 
$[\f\Box\CC^{\ast}-\Spc,\Box\CC^{\ast}-\Spc]$.
It is cofibrant because the unit of $\Box\CC^{\ast}-\Spc$ is cofibrant.
The natural isomorphism
\begin{equation*}
\bigl(f\otimes\Box\CC^{\ast}-\Spc(\X,-)\bigr)
\Box 
\bigl(g\otimes\Box\CC^{\ast}-\Spc(\Y,-)\bigr)=
(f\Box g)\otimes\Box\CC^{\ast}-\Spc(\X\otimes\Y,-)
\end{equation*}
combined with the facts that $\Box\CC^{\ast}-\Spc$ is monoidal and $-\otimes \Box\CC^{\ast}-\Spc(\X\otimes\Y,-)$ is a left Quillen functor
finishes the proof.
\end{proof}

The following model structure on $\CC^{\ast}$-functors takes into account that $\f\Box\CC^{\ast}-\Spc$ has homotopical content in the form of 
weak equivalences (as a full subcategory of $\Box\CC^{\ast}-\Spc$).
A homotopy $\CC^{\ast}$-functor is an object of $[\f\Box\CC^{\ast}-\Spc,\Box\CC^{\ast}-\Spc]$ which preserves weak equivalences.
What we shall do next is localize the pointwise model structure in such a way that the fibrant objects in the localized model structure are 
precisely the pointwise fibrant homotopy $\CC^{\ast}$-functors.
It will be convenient to let $\X\rightarrow\X'$ denote a generic weak equivalence in $\f\Box\CC^{\ast}-\Spc$. 
\begin{theorem}
There is a homotopy functor model structure on $[\f\Box\CC^{\ast}-\Spc,\Box\CC^{\ast}-\Spc]$ with fibrant objects the pointwise fibrant 
homotopy $\CC^{\ast}$-functors and cofibrations the cofibrations in the pointwise model structure.
The homotopy functor model structure is combinatorial and left proper.
\end{theorem}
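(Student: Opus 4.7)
The plan is to construct the homotopy functor model structure as a left Bousfield localization of the pointwise model structure from Theorem \ref{theorem:enrichedprojectivemodel}. Since that structure is already combinatorial and left proper, Jeff Smith's localization machinery for combinatorial model categories (recalled in the discussion after Definition of combinatorial) applies as soon as we supply a suitable set of maps to invert. The fibrant objects and weak equivalences in the localized structure will then be determined automatically, while the cofibrations stay exactly the same as in the pointwise model structure.

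First I would fix, using essential smallness of $\f\Box\CC^{\ast}-\Spc$, a set of representatives $\W_{\f}$ of the weak equivalences $w\colon\X\to\X'$ in $\f\Box\CC^{\ast}-\Spc$. Because the Yoneda embedding is contravariant, each such $w$ induces a map of representable $\CC^{\ast}$-functors $w^{\ast}\colon\Box\CC^{\ast}-\Spc(\X',-)\to\Box\CC^{\ast}-\Spc(\X,-)$. To ensure these are cofibrations between cofibrant objects in the pointwise model structure (so that homotopy mapping spaces can be computed without further replacement), I would replace $w^{\ast}$ by the pushout product of a cubical mapping cylinder factorization with the generating cofibrations $(\partial\Box^{n}\subset\Box^{n})\otimes-$, in the spirit of the factorizations used in Section \ref{subsection:homotopyinvariantmodelstructures}. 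Call the resulting set $\mathcal{L}$, and form the left Bousfield localization $[\f\Box\CC^{\ast}-\Spc,\Box\CC^{\ast}-\Spc]_{\mathcal{L}}$.

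Next I would identify the fibrant objects. A pointwise fibrant $\SSSS$ is fibrant in the localized structure if and only if it is $\mathcal{L}$-local, i.e.\ for every $w\colon\X\to\X'$ in $\W_{\f}$ the induced map of derived cubical function complexes
\begin{equation*}
\hom_{[\f\Box\CC^{\ast}-\Spc,\Box\CC^{\ast}-\Spc]}\bigl(\Box\CC^{\ast}-\Spc(\X,-),\SSSS\bigr)
\longrightarrow
\hom_{[\f\Box\CC^{\ast}-\Spc,\Box\CC^{\ast}-\Spc]}\bigl(\Box\CC^{\ast}-\Spc(\X',-),\SSSS\bigr)
\end{equation*}
is a $\CC^{\ast}$-weak equivalence. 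The enriched Yoneda lemma identifies this map with $\SSSS(w)\colon\SSSS(\X)\to\SSSS(\X')$, so the fibrant objects are precisely the pointwise fibrant homotopy $\CC^{\ast}$-functors. Left properness and combinatoriality of the localized structure are inherited from the pointwise structure by general Bousfield localization theory, and the cofibrations are unchanged by construction.

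The main obstacle will be handling the fact that the domain $\Box\CC^{\ast}-\Spc(\X',-)$ and codomain $\Box\CC^{\ast}-\Spc(\X,-)$ of the representable map $w^{\ast}$ need to be pointwise cofibrant and the map itself needs to be a cofibration between cofibrant objects, in order for the enriched Yoneda identification of derived mapping complexes to yield the clean characterization of fibrant objects stated in the theorem. This is precisely where the cubical mapping cylinder argument (and tensoring with generating cofibrations of $\Box\CC^{\ast}-\Spc$ to produce a homotopically invariant detecting set, as in Proposition \ref{proposition:almostfinitelygeneratedpreservedunderlocalization}) is required. A secondary issue is checking that restricting to a small representative set $\W_{\f}$ of weak equivalences in $\f\Box\CC^{\ast}-\Spc$ suffices to detect homotopy $\CC^{\ast}$-functoriality at all weak equivalences, which follows by two-out-of-three and the fact that weak equivalences in $\f\Box\CC^{\ast}-\Spc$ are stable under the closure properties provided by Lemma \ref{lemma:afgmodelcategories} and filtered colimit arguments.
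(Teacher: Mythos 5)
Your route --- a left Bousfield localization of the combinatorial, left proper pointwise model structure of Theorem \ref{theorem:enrichedprojectivemodel} --- is the same as the paper's, and the combinatoriality, left properness, and unchanged cofibrations do follow from general localization theory. The gap is in your identification of the $\mathcal{L}$-local objects, and hence in the choice of localizing set $\mathcal{L}$.

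Bousfield locality is detected via the \emph{cubical} function complex $\hom_{[\f\Box\CC^{\ast}-\Spc,\Box\CC^{\ast}-\Spc]}(-,-)$, which is a cubical set, not a cubical $\CC^{\ast}$-space. For a representable source, the $\Box\CC^{\ast}-\Spc$-enriched Yoneda lemma gives the internal hom $\underline{\Hom}\bigl(\Box\CC^{\ast}-\Spc(\X,-),\SSSS\bigr)=\SSSS(\X)$, but the cubical function complex is only its global section $\hom_{\Box\CC^{\ast}-\Spc}\bigl(\C,\SSSS(\X)\bigr)=\SSSS(\X)(\C)$; it does not see $\SSSS(\X)$ at other $\CC^{\ast}$-algebras. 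So localizing at (cylinder replacements of) the representable maps $w^{\ast}$ alone would force only $\SSSS(\X)(\C)\to\SSSS(\X')(\C)$ to be a weak equivalence of cubical sets, strictly weaker than $\SSSS(w)$ being a $\CC^{\ast}$-weak equivalence. Your claim that ``the enriched Yoneda lemma identifies this map with $\SSSS(w)$'' is therefore not valid at the level of cubical function complexes, and the resulting localization would have the wrong fibrant objects. The fix --- which is what the paper does --- is to localize at the set of all maps $\Y\otimes\Box\CC^{\ast}-\Spc(\X',-)\to\Y\otimes\Box\CC^{\ast}-\Spc(\X,-)$ with $\Y$ running over the domains and codomains of the generating cofibrations of $\Box\CC^{\ast}-\Spc$; then $\hom\bigl(\Y\otimes\Box\CC^{\ast}-\Spc(\X,-),\SSSS\bigr)=\hom_{\Box\CC^{\ast}-\Spc}\bigl(\Y,\SSSS(\X)\bigr)$, and taking $\Y=A\otimes\Box^{n}$ detects pointwise (hence $\CC^{\ast}$-) weak equivalences as $A$ ranges over $\CC^{\ast}$-algebras. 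The $\CC^{\ast}$-algebra factor in $\Y$ is essential; pushout products with $(\partial\Box^{n}\subset\Box^{n})$ alone add nothing beyond what the cubical enrichment already supplies. You do gesture at the correct localizing set in your final paragraph, but attribute the need for the tensoring to cofibrancy of the representables, which is not the issue: by Lemma \ref{lemma:projectivecofibrant} the $\Y$'s are already cofibrant, as the paper's Remark following the theorem notes explicitly.
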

\begin{proof}
The existence and the properties of the homotopy functor model structure follow by observing that a pointwise fibrant $\CC^{\ast}$-functor
$\SSSS$ is a homotopy $\CC^{\ast}$-functor if and only if the naturally induced map of simplicial sets
\begin{equation*}
\xymatrix{
& \hom_{[\f\Box\CC^{\ast}-\Spc,\Box\CC^{\ast}-\Spc]}(\Y\otimes\Box\CC^{\ast}-\Spc(\X',-),\SSSS) \\
\ar[r] &
\hom_{[\f\Box\CC^{\ast}-\Spc,\Box\CC^{\ast}-\Spc]}(\Y\otimes\Box\CC^{\ast}-\Spc(\X,-),\SSSS) }
\end{equation*}
is a weak equivalence for every domain and codomain $\Y$ of the generating cofibrations of $\Box\CC^{\ast}-\Spc$. 
That is, 
the homotopy functor model structure is the localization of the pointwise model structure with respect to the set of map
\begin{equation*}
\xymatrix{
\Y\otimes\Box\CC^{\ast}-\Spc(\X',-)
\ar[r] &
\Y\otimes\Box\CC^{\ast}-\Spc(\X,-). }
\end{equation*}
\end{proof}
\begin{remark}
In the above there is no need to apply a cofibrant replacement functor $\QQ$ in the pointwise projective model structure on $\Box\CC^{\ast}-\Spc$
to $\Y$ since all the domains and codomains of the generating cofibrations of $\Box\CC^{\ast}-\Spc$ are cofibrant according to
Lemma \ref{lemma:projectivecofibrant}.
However,
using the same script for more general model categories requires taking a cofibrant replacement.
\end{remark}

We shall refer to the weak equivalences in the homotopy functor model structure as homotopy functor weak equivalences.
\begin{corollary}
If $\Y$ is projective cofibrant in $\Box\CC^{\ast}-\Spc$ then the naturally induced map
\begin{equation*}
\xymatrix{
\Y\otimes\Box\CC^{\ast}-\Spc(\X',-)
\ar[r] &
\Y\otimes\Box\CC^{\ast}-\Spc(\X,-) }
\end{equation*}
is a homotopy functor weak equivalence.
\end{corollary}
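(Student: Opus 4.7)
The strategy is a standard cell induction: let $\mathcal{C}$ denote the class of cubical $\CC^{\ast}$-spaces $\Y$ for which the induced map $f_{\Y}\colon\Y\otimes\Box\CC^{\ast}-\Spc(\X',-)\rightarrow\Y\otimes\Box\CC^{\ast}-\Spc(\X,-)$ is a homotopy functor weak equivalence. By the very definition of the homotopy functor model structure (which is the localization of the pointwise model structure at precisely the maps $f_{\Y}$ with $\Y$ a domain or codomain of the generating projective cofibrations $A\otimes(\partial\Box^{n}\subset\Box^{n})$), $\mathcal{C}$ contains each $A\otimes\partial\Box^{n}$ and $A\otimes\Box^{n}$, and trivially the empty cubical $\CC^{\ast}$-space. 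Since every projective cofibrant $\Y$ is a retract of a transfinite composition of pushouts of generating projective cofibrations starting from $\emptyset$, it will suffice to verify that $\mathcal{C}$ is closed under retracts, under pushouts along projective cofibrations, and under transfinite compositions of projective cofibrations.

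For fixed $\X$ the endofunctor $F_{\X}\equiv -\otimes\Box\CC^{\ast}-\Spc(\X,-)\colon\Box\CC^{\ast}-\Spc\rightarrow[\f\Box\CC^{\ast}-\Spc,\Box\CC^{\ast}-\Spc]$ is a left Quillen functor for the pointwise model structure, hence preserves colimits and sends projective cofibrations to pointwise (and therefore homotopy functor) cofibrations; similarly for $F_{\X'}$. The assignment $\Y\mapsto f_{\Y}$ is the component at $\Y$ of the natural transformation $F_{\X'}\Rightarrow F_{\X}$ induced by $\X\rightarrow\X'$.

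Given a pushout square in $\Box\CC^{\ast}-\Spc$
\begin{equation*}
\xymatrix{
\Y_{1}\ar[r]\ar@{ >->}[d] & \Y_{3}\ar[d] \\
\Y_{2}\ar[r] & \Y_{4} }
\end{equation*}
with $\Y_{1},\Y_{2},\Y_{3}\in\mathcal{C}$ and left vertical map a projective cofibration, applying $F_{\X}$ and $F_{\X'}$ yields a morphism of pushout squares in $[\f\Box\CC^{\ast}-\Spc,\Box\CC^{\ast}-\Spc]$ whose three corner components $f_{\Y_{i}}$ are homotopy functor weak equivalences and in which one leg of each pushout is a cofibration. Since the homotopy functor model structure is left proper, the gluing lemma implies $f_{\Y_{4}}$ is a homotopy functor weak equivalence, so $\Y_{4}\in\mathcal{C}$. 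An entirely parallel argument handles transfinite compositions of projective cofibrations: $F_{\X}$ and $F_{\X'}$ commute with these colimits, which are homotopy colimits in the left proper combinatorial homotopy functor model structure, and hence the componentwise weak equivalence $f_{\Y_{i}}$ assembles to a weak equivalence on the colimit. Closure under retracts is the retract axiom for weak equivalences.

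The main obstacle is simply ensuring the gluing lemma and the transfinite-composition variant are available in the homotopy functor model structure; both depend only on left properness, which was established when that model structure was constructed. No further input is needed, and chaining the three closure properties together with the base case of the generating cells yields $\mathcal{C}\supseteq\{\text{projective cofibrant cubical }\CC^{\ast}\text{-spaces}\}$, which is the desired conclusion.
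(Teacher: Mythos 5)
The paper gives no proof for this corollary, so a direct line-by-line comparison is not possible. Your cell induction argument is correct and complete: the localizing set supplies the base case (domains and codomains of the generating projective cofibrations, all of which are cofibrant by Lemma \ref{lemma:projectivecofibrant}), and you correctly observe that $F_{\X}$ and $F_{\X'}$, being left Quillen for the pointwise model structure, preserve cofibrations and colimits, so the natural transformation $f_{\Y}$ passes through pushouts, transfinite compositions, and retracts by the gluing lemma together with left properness. The only implicit step is closure under coproducts of generating cells (appearing inside each attachment stage of an $I$-cell complex), but this follows from the pushout and transfinite-composition closures you established, so nothing is missing. A shorter alternative, which is probably what the author had in mind given the cubical machinery already set up (Corollary \ref{corollary:weakequivalencesfibrantobjects} and the Quillen bifunctor structure of the pairing in the pointwise model structure), is the adjunction argument: both sides of $f_{\Y}$ are cofibrant, so $f_{\Y}$ is a homotopy functor weak equivalence if and only if $\hom(f_{\Y},\SSSS)$ is a weak equivalence for every homotopy functor fibrant $\SSSS$; by the two-variable adjunction this identifies with $\hom_{\Box\CC^{\ast}-\Spc}(\Y,g_{\SSSS})$ for a map $g_{\SSSS}$ between pointwise fibrant objects of $\Box\CC^{\ast}-\Spc$, which is already known to be a $\CC^{\ast}$-weak equivalence from the base case, and then ${\bf SM}7$ yields the conclusion for arbitrary cofibrant $\Y$. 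Both routes are sound; yours is more elementary while the adjoint route bypasses the transfinite bookkeeping.
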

\vspace{0.1in}

We shall leave implicit the proofs of the following three results which the interested reader can verify following in outline the proofs
of the corresponding results for the pointwise model structure.

\begin{lemma}
\label{lemma:homotopyfunctorpairing}
The pairing (\ref{equation:pairing}) is a Quillen bifunctor with respect to the homotopy functor model structure on 
$[\f\Box\CC^{\ast}-\Spc,\Box\CC^{\ast}-\Spc]$ and the projective homotopy invariant model structure on $\Box\CC^{\ast}-\Spc$.
\end{lemma}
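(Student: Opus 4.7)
The plan is to upgrade Lemma \ref{lemma:pprojectivemonoidal}, the pointwise version of this Quillen bifunctor property, to the homotopy functor model structure. Since the cofibrations in the pointwise and homotopy functor model structures coincide, the case of two cofibrations follows immediately from the pointwise lemma. When $h\colon A\cof B$ is in addition a weak equivalence in $\Box\CC^{\ast}-\Spc$, the pointwise lemma produces a pointwise acyclic cofibration, which is a fortiori a homotopy functor acyclic cofibration since every pointwise weak equivalence is a homotopy functor weak equivalence. The only substantive case is therefore $h$ a cofibration in $\Box\CC^{\ast}-\Spc$ and $\SSSS\cof\TTTT$ an acyclic cofibration in the homotopy functor structure, where one must show that the pushout product is a homotopy functor weak equivalence.

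By the standard Bousfield localization criterion for monoidal model categories, it suffices to verify this when $\SSSS\cof\TTTT$ ranges over the localizing maps
\[
\Y\otimes\Box\CC^{\ast}-\Spc(\X',-)\longrightarrow \Y\otimes\Box\CC^{\ast}-\Spc(\X,-),
\]
where $\X\rightarrow\X'$ is a weak equivalence in $\f\Box\CC^{\ast}-\Spc$ and $\Y$ is a domain or codomain of a generating cofibration of $\Box\CC^{\ast}-\Spc$. Using the natural isomorphism already exploited in the proof of Lemma \ref{lemma:pprojectivemonoidal}, the pairing pushout product of $h$ with such a map is identified with
\[
(h\otimes\Y)\Box \bigl(\Box\CC^{\ast}-\Spc(\X',-)\rightarrow\Box\CC^{\ast}-\Spc(\X,-)\bigr),
\]
where $h\otimes\Y$ is a cofibration in $\Box\CC^{\ast}-\Spc$ by the monoidality of the projective homotopy invariant model structure (Proposition \ref{proposition:hiprojectivemonoidal}), $\Y$ being cofibrant.

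To conclude, test against an arbitrary pointwise fibrant homotopy $\CC^{\ast}$-functor $\W$. Applying the cubical function complex ${\hom}(-,\W)$ combined with the enriched Yoneda isomorphism ${\hom}\bigl(\Box\CC^{\ast}-\Spc(\X,-),\W\bigr)=\W(\X)$ and the tensor-internal hom adjunction for the pairing, the required weak equivalence reduces to the statement that the pullback corner map
\[
{\hom}\bigl(B\otimes\Y,\W(\X)\bigr)\rightarrow {\hom}\bigl(B\otimes\Y,\W(\X')\bigr)\times_{{\hom}(A\otimes\Y,\W(\X'))}{\hom}\bigl(A\otimes\Y,\W(\X)\bigr)
\]
is a weak equivalence of cubical sets. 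Since $\W$ is a homotopy $\CC^{\ast}$-functor, $\W(\X)\rightarrow\W(\X')$ is a weak equivalence between fibrant objects; the two horizontal arrows of the underlying square are weak equivalences by Ken Brown's lemma applied to the right Quillen functors ${\hom}(A\otimes\Y,-)$ and ${\hom}(B\otimes\Y,-)$. The two vertical arrows are Kan fibrations by {\bf SM}7 (Lemma \ref{lemma:hiprojectivemonoidal2}) since $A\otimes\Y\cof B\otimes\Y$ is a cofibration and $\W(\X),\W(\X')$ are fibrant. A standard gluing argument for homotopy cartesian squares then produces the desired weak equivalence. The main obstacle is this last homotopy cartesian verification, carefully balancing the cofibrancy of $h\otimes\Y$ against the fibrancy of the values of $\W$; the remaining pieces are formal consequences of the pointwise lemma and general Bousfield localization theory.
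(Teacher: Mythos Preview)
Your argument is correct and supplies the details the paper omits; the paper leaves this proof implicit, directing the reader to adapt the pointwise argument. Two small remarks. First, your references to ``Lemma~\ref{lemma:pprojectivemonoidal}'' should point instead to the unnumbered pointwise pairing lemma immediately following (\ref{equation:pairing}). Second, the localizing maps $\Y\otimes\Box\CC^{\ast}-\Spc(\X',-)\to\Y\otimes\Box\CC^{\ast}-\Spc(\X,-)$ are not cofibrations, so writing ``$\SSSS\cof\TTTT$ ranges over the localizing maps'' is a slight abuse; the standard criterion you invoke only requires that $Z\otimes s$ be a homotopy functor weak equivalence for each cofibrant $Z\in\Box\CC^{\ast}-\Spc$ and each localizing map $s$, and this is immediate from the Corollary just before this lemma (since $Z\otimes\Y$ is again cofibrant by Proposition~\ref{proposition:hiprojectivemonoidal}). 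Your pushout-product computation against a homotopy functor fibrant $\W$ does establish this---the homotopy-cartesian verification at the end is sound---but is more work than the direct appeal to that Corollary.
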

\vspace{0.1in}

In what follows, 
assume that every object of $\f\Box\CC^{\ast}-\Spc$ is cofibrant.
\begin{lemma}
The functor $-\otimes \Box\CC^{\ast}-\Spc(\X,-)$ is a left Quillen functor with respect to the homotopy functor model structure.
\end{lemma}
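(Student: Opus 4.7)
The plan is to deduce the statement from the Quillen bifunctor result of Lemma \ref{lemma:homotopyfunctorpairing} together with a routine cofibrancy check for the representable $\Box\CC^{\ast}-\Spc(\X,-)$. Recall that for a Quillen bifunctor $(\Y,\SSSS)\mapsto\Y\otimes\SSSS$, fixing a cofibrant object in either slot yields a left Quillen functor in the remaining variable. Applied to our pairing with the homotopy functor model structure on $[\f\Box\CC^{\ast}-\Spc,\Box\CC^{\ast}-\Spc]$, and with $\SSSS=\Box\CC^{\ast}-\Spc(\X,-)$ in the second slot, this gives precisely the desired conclusion provided the representable is cofibrant in the homotopy functor model structure.

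First I would verify that cofibrancy for the homotopy functor model structure reduces to cofibrancy for the pointwise model structure. This is immediate from the construction of the homotopy functor structure as a left Bousfield localization of the pointwise structure: the two share the same class of cofibrations. Next I would produce cofibrancy of $\Box\CC^{\ast}-\Spc(\X,-)$ in the pointwise model structure. Using the isomorphism $\Box\CC^{\ast}-\Spc(\X,-)\cong\C\otimes\Box\CC^{\ast}-\Spc(\X,-)$ together with the fact (already noted in the excerpt, just before Theorem \ref{theorem:enrichedprojectivemodel}) that $-\otimes\Box\CC^{\ast}-\Spc(\X,-)$ is a left Quillen functor for the pointwise model structure, one sees that the map $\emptyset\rightarrow\Box\CC^{\ast}-\Spc(\X,-)$ is the image under this left Quillen functor of the cofibration $\emptyset\rightarrow\C$; hence it is a cofibration, as required.

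With cofibrancy in hand, Lemma \ref{lemma:homotopyfunctorpairing} immediately yields that $-\otimes\Box\CC^{\ast}-\Spc(\X,-)\colon\Box\CC^{\ast}-\Spc\rightarrow[\f\Box\CC^{\ast}-\Spc,\Box\CC^{\ast}-\Spc]$ preserves cofibrations and acyclic cofibrations when the target is equipped with the homotopy functor model structure, which is the assertion of the lemma. There is no serious obstacle here; the only minor point is to keep straight that cofibrancy is invariant under passage to the localized model structure, after which the lemma is a direct application of the Quillen bifunctor machinery assembled in the preceding result.
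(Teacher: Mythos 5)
Your argument is correct, but it takes a different route than the one the paper implicitly intends. The paper leaves these proofs implicit with the instruction to follow the pointwise case, and the pointwise argument is the adjunction observation: $-\otimes\Box\CC^{\ast}-\Spc(\X,-)$ is left adjoint to evaluation at $\X$, and evaluation preserves fibrations and acyclic fibrations. Transported to the homotopy functor setting this still works, and indeed nothing needs to be checked: the homotopy functor model structure is a left Bousfield localization of the pointwise one, so it has the same cofibrations and a larger class of acyclic cofibrations, which means any functor that is left Quillen into the pointwise model structure is automatically left Quillen into the localization. This one-line argument is simpler than both your route and the paper's. Your route instead invokes Lemma \ref{lemma:homotopyfunctorpairing} together with cofibrancy of the representable; both of your steps are carried out correctly (in particular, your deduction of cofibrancy of $\Box\CC^{\ast}-\Spc(\X,-)$ from $\emptyset\rightarrow\C$ being a cofibration and $-\otimes\Box\CC^{\ast}-\Spc(\X,-)$ preserving colimits is fine, and works for any $\X$, not only cofibrant ones). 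The one structural caution is that the paper's own implicit proof of Lemma \ref{lemma:homotopyfunctorpairing}, patterned on its pointwise analogue, itself cites the left Quillen property for the representable functor; your reversal of the logical order is not strictly circular, because the pointwise left Quillen property suffices to run that proof, but it is worth being aware that you are deducing an elementary fact from a more composite one. Nothing is wrong, but you are taking the scenic route.
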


\begin{proposition}
The homotopy functor model structure is monoidal.
\end{proposition}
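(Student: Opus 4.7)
The plan is to verify the two defining conditions of a monoidal model category: cofibrancy of the unit and the pushout product axiom. The unit for the Day convolution on $[\f\Box\CC^{\ast}-\Spc,\Box\CC^{\ast}-\Spc]$ is the representable $\Box\CC^{\ast}-\Spc(\C,-)$, which coincides with the inclusion functor. Since $\C$ is cofibrant in $\Box\CC^{\ast}-\Spc$, the map $\emptyset\to\C$ is a cofibration, and tensoring with the identity representable exhibits $\emptyset\to\Box\CC^{\ast}-\Spc(\C,-)$ as a cofibration in the pointwise (hence homotopy functor) model structure. For the pushout product axiom, recall the cofibrations in the hf model structure coincide with those in the pointwise model structure. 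Since the latter is monoidal, the pushout product of two cofibrations is a cofibration, and it is pointwise acyclic, hence hf acyclic, as soon as one of the factors is a pointwise acyclic cofibration.

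The remaining case is to show that for a cofibration $f$ and a hf acyclic cofibration $g$, the pushout product $f\Box g$ is a hf weak equivalence. Since the hf model structure is the left Bousfield localization of the pointwise model structure at the set
\begin{equation*}
S=\{\Y\otimes\Box\CC^{\ast}-\Spc(\X',-)\to\Y\otimes\Box\CC^{\ast}-\Spc(\X,-)\},
\end{equation*}
indexed by weak equivalences $\X'\to\X$ in $\f\Box\CC^{\ast}-\Spc$ and domains/codomains $\Y$ of generating cofibrations of $\Box\CC^{\ast}-\Spc$, the monoidal localization criterion reduces the task to verifying that for every generating cofibration $f=h\otimes\Box\CC^{\ast}-\Spc(\Z,-)$ and every $s\in S$, the pushout product $f\Box s$ is a hf weak equivalence. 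The key algebraic identity is that representables multiply under Day convolution,
\begin{equation*}
\Box\CC^{\ast}-\Spc(\X,-)\otimes\Box\CC^{\ast}-\Spc(\Y,-)=\Box\CC^{\ast}-\Spc(\X\otimes\Y,-),
\end{equation*}
which yields the natural isomorphism
\begin{equation*}
\bigl(h_{1}\otimes\Box\CC^{\ast}-\Spc(\Z_{1},-)\bigr)\Box\bigl(h_{2}\otimes\Box\CC^{\ast}-\Spc(\Z_{2},-)\bigr)=(h_{1}\Box h_{2})\otimes\Box\CC^{\ast}-\Spc(\Z_{1}\otimes\Z_{2},-).
\end{equation*}
Applied to $f\Box s$, this gives a map of the form $(h\Box k)\otimes\bigl[\Box\CC^{\ast}-\Spc(\X'\otimes\Z,-)\to\Box\CC^{\ast}-\Spc(\X\otimes\Z,-)\bigr]$ for suitable $k$. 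Now $\X\otimes\Z$ and $\X'\otimes\Z$ lie in $\f\Box\CC^{\ast}-\Spc$ since it is a symmetric monoidal subcategory, and $\X'\otimes\Z\to\X\otimes\Z$ is a weak equivalence: this uses monoidality of $\Box\CC^{\ast}-\Spc$ (Proposition \ref{proposition:hiprojectivemonoidal}) together with the standing assumption that every object of $\f\Box\CC^{\ast}-\Spc$ is cofibrant, so that Ken Brown's lemma applies. Consequently $f\Box s$ is again a tensor of a pointwise cofibration with a member of the localizing set, hence a hf weak equivalence.

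The main obstacle is the step invoking the monoidal localization criterion, which requires showing that it suffices to test the pushout product axiom on cofibrations against the small localizing set $S$ rather than against all hf acyclic cofibrations. The standard argument, as in the relevant sections of Hirschhorn and Barwick, proceeds by transfinite induction on the cell decomposition of an hf acyclic cofibration, using closure of hf weak equivalences under pushout along cofibrations and transfinite composition, combined with the fact that the hf acyclic cofibrations are generated from $S$ together with the pointwise generating acyclic cofibrations via the small object argument. The closure of $\f\Box\CC^{\ast}-\Spc$ under the monoidal product and the cofibrancy hypothesis on its objects are precisely what make the induction step go through.
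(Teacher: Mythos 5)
Your proposal is correct in spirit and reaches the same conclusion the paper intends, but it takes a slightly more indirect route than the paper's implicit argument, and one of its formulas is notationally off. The paper deliberately leaves the proof implicit, directing the reader to ``verify following in outline the proofs of the corresponding results for the pointwise model structure.'' The pointwise-case proof uses the identity $\bigl(h\otimes\Box\CC^{\ast}-\Spc(\Z,-)\bigr)\Box g\cong h\,\Box_{\text{tensor}}\bigl(\Box\CC^{\ast}-\Spc(\Z,-)\otimes g\bigr)$, valid for an \emph{arbitrary} map $g$ of $\CC^{\ast}$-functors by associativity and the multiplication of representables under Day convolution. With this identity, the hf case follows directly from the two preceding Lemmas: $-\otimes\Box\CC^{\ast}-\Spc(\Z,-)$ is a left Quillen functor for the hf model structure, so $\Box\CC^{\ast}-\Spc(\Z,-)\otimes g$ is an hf acyclic cofibration whenever $g$ is, and Lemma~\ref{lemma:homotopyfunctorpairing} (the Quillen bifunctor property of the pairing~(\ref{equation:pairing}) for the hf model structure) makes the tensor pushout product with the cofibration $h$ again an hf acyclic cofibration. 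No reduction to the localizing set is needed, because the reduction to generating maps occurs only in the cofibration variable.

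Your argument instead reduces both variables and invokes the monoidal localization criterion (testing against $S$), which in effect reproves the left Quillen Lemma inline; this works, and your observations that $\f\Box\CC^{\ast}-\Spc$ is closed under $\otimes$, that its objects are cofibrant (the standing hypothesis introduced just before the Proposition), and that Ken Brown's lemma then gives the required weak equivalence $\X'\otimes\Z\to\X\otimes\Z$ are exactly the key points. However, the displayed expression $(h\Box k)\otimes\bigl[\Box\CC^{\ast}-\Spc(\X'\otimes\Z,-)\to\Box\CC^{\ast}-\Spc(\X\otimes\Z,-)\bigr]$ is imprecise: tensoring a map of cubical $\CC^{\ast}$-spaces with a map of $\CC^{\ast}$-functors produces a commutative square, not a map. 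What you want is the pushout product in the $\Box\CC^{\ast}-\Spc$-tensor structure, $\,(h\otimes\Y)\,\Box_{\text{tensor}}\bigl[\Box\CC^{\ast}-\Spc(\Z\otimes\X',-)\to\Box\CC^{\ast}-\Spc(\Z\otimes\X,-)\bigr]$, and then Lemma~\ref{lemma:homotopyfunctorpairing} concludes. This is a notational rather than a substantive gap, but it should be repaired before the argument is written down carefully; once done, your approach and the paper's implicit one coincide modulo whether the left-Quillen Lemma is cited or re-derived.
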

\vspace{0.1in}

Although the work in \cite{Biedermann} which makes a heavy use of \cite{BousfieldFriedlander} and \cite{DRO:general} does not apply 
directly to our setting,
it offers an approach which we believe is worthwhile to pursue when the model categories in question are not necessarily right proper.
We shall give such a generalization by using as input the recent paper \cite{Stanculescu}.
\vspace{0.1in}

Stanculescu \cite{Stanculescu} has shown the following result.

\begin{theorem} 
\label{theorem:stanculescu}
Let $\M$ be a combinatorial model category with localization functor $\gamma:\M\rightarrow\Ho(\M)$. 
Suppose there is an accessible functor $\FF:\M\rightarrow\M$ and a natural transformation 
$\alpha:\id\rightarrow\FF$ satisfying the following properties:
\begin{enumerate}[{\bf A} 1:]
\item The functor $\FF$ preserves weak equivalences.
\item For every $X\in\M$, 
the map $\FF(\alpha_{X})$ is a weak equivalence and $\gamma(\alpha_{\FF(X)})$ is a monomorphism.
\end{enumerate}
Then $\M$ acquires a left Bousfield localization with $\FF$-equivalences as weak equivalences.
\end{theorem}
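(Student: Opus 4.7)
The plan is to produce the desired localization as a new model structure on $\M$ with the same cofibrations but with weak equivalences the class $\W_{\FF}\equiv\{f\in\M^{[1]} : \FF(f)\in\W\}$, where $\W$ denotes the class of weak equivalences of $\M$. Once this model structure is in place, the universal property of left Bousfield localization follows by inspection: a left Quillen functor out of $\M$ sending each component $\alpha_{X}$ to a weak equivalence factors through $\W_{\FF}$ because A1 and naturality of $\alpha$ identify $\FF(f)$ with $f$ in the target homotopy category.

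First I would invoke Jeff Smith's recognition principle for combinatorial model categories, which reduces the construction to verifying the following four items for $\W_{\FF}$: (i) closure under retracts and the two-out-of-three property; (ii) $\W_{\FF}$ is an accessibly embedded accessible subcategory of $\M^{[1]}$; (iii) every acyclic fibration of $\M$ lies in $\W_{\FF}$; and (iv) $\mathsf{cof}(\M)\cap\W_{\FF}$ is closed under pushouts and transfinite compositions. Items (i) and (iii) are immediate from A1 together with the corresponding properties of $\W$. For (ii), a standard result for combinatorial model categories shows that $\W$ is itself accessibly embedded and accessible in $\M^{[1]}$; the accessibility hypothesis on $\FF$ then implies the same for the preimage $\W_{\FF}$ under the induced accessible endofunctor of $\M^{[1]}$.

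The technical heart of the proof is (iv), and here A2 is decisive. Closure under transfinite composition follows by combining accessibility of $\FF$ (so that $\FF$ commutes with sufficiently filtered colimits) with the fact that in a combinatorial model category weak equivalences are closed under sufficiently filtered colimits (cf.\ Lemma~\ref{lemma:afgmodelcategories}); any transfinite composition of cofibrations can, after increasing the indexing cardinal, be treated as such a filtered colimit. Closure under pushouts is the crux. Given a pushout square in $\M$ whose left-hand map is a cofibration in $\W_{\FF}$, naturality of $\alpha$ fits $\FF$ applied to the square into a commuting cube with the original square, with edges $\alpha$. The plan is to perform a diagram chase in $\Ho(\M)$, using the two maps $\FF(X)\rightrightarrows\FF\FF(X)$ furnished by $\FF(\alpha_{X})$ and $\alpha_{\FF(X)}$: the first is a weak equivalence by A2, while the monomorphism property of $\gamma(\alpha_{\FF(X)})$ in $\Ho(\M)$ lets one deduce that $\alpha_{\FF(X)}$ itself is a weak equivalence from the equality $\alpha_{\FF(X)}\circ\alpha_{X}=\FF(\alpha_{X})\circ\alpha_{X}$ forced by naturality. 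This turns the comparison cube into one whose $\alpha$-edges are weak equivalences, reducing the pushout problem for $\W_{\FF}$ to a pushout-along-cofibration problem for $\W$ in $\M$, which is already part of the original model structure.

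The main obstacle is (iv), specifically the pushout closure. In the right proper Bousfield--Friedlander setup this is handled by the cube lemma; without right properness the monomorphism condition in A2 is precisely the replacement that makes the zig-zag argument in $\Ho(\M)$ go through. Carrying this out requires some care in passing between $\M$ and $\Ho(\M)$ in the presence of cofibrant/fibrant replacements, and in verifying that the accessibility cardinal used for (ii) and for the transfinite composition argument can be chosen uniformly so that Smith's recognition principle applies cleanly.
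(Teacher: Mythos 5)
Your framing is right: you invoke Smith's recognition criterion and correctly identify that the crux is closure of acyclic $\FF$-cofibrations under pushout (and transfinite composition), with A2's monomorphism clause doing the real work there. The paper's proof is shorter in surface area but takes the same route up to this point, deferring to Stanculescu's Lemma~2.4, which \emph{characterizes} acyclic $\FF$-cofibrations by a left lifting property; once a class of maps is described by a lifting property, closure under pushouts and transfinite compositions is automatic, and that is how {\bf C}~3 is discharged.

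The step in your proposal that does not go through is the diagram chase intended to upgrade $\gamma(\alpha_{\FF(X)})$ from a monomorphism to an isomorphism. From naturality you obtain $\gamma(\alpha_{\FF(X)})\circ\gamma(\alpha_{X})=\gamma(\FF(\alpha_{X}))\circ\gamma(\alpha_{X})$ in $\Ho(\M)$, and $\gamma(\FF(\alpha_{X}))$ is indeed an isomorphism by A1 and A2. But to conclude $\gamma(\alpha_{\FF(X)})=\gamma(\FF(\alpha_{X}))$ you would need to cancel $\gamma(\alpha_{X})$ on the \emph{right}, which requires $\gamma(\alpha_{X})$ to be an epimorphism, not $\gamma(\alpha_{\FF(X)})$ to be a monomorphism. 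A monomorphism cancels on the other side of the composite, so the hypothesis is aimed in the wrong direction for this chase. More fundamentally, if A1 together with the monomorphism clause implied that every $\alpha_{\FF(X)}$ is a weak equivalence, then A2 would collapse to the stronger Bousfield--Friedlander hypothesis and the whole point of Stanculescu's generalization (dropping right properness by weakening exactly this condition) would evaporate. The monomorphism condition is genuinely weaker, and the actual argument must instead establish the lifting-property characterization of acyclic $\FF$-cofibrations (this is where the monomorphism hypothesis enters, but not by upgrading to an isomorphism), after which {\bf C}~3 follows formally. Your handling of {\bf C}~1, {\bf C}~2, and the accessibility of $\W_{\FF}$ is fine, and the transfinite-composition half of {\bf C}~3 via accessibility of $\FF$ is reasonable; it is only the pushout half that needs to be reargued along Stanculescu's lines.
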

\vspace{0.1in}

The assumption that $\FF$ be an accessible functor allows one to verify the hypothesis in Smith's main 
theorem on combinatorial model categories:
\begin{theorem}
\label{theorem:smith}
Suppose $\M$ is a locally presentable category, 
$\W$ a full accessible subcategory of the morphism category of $\M$,  
and $I$ a set of morphisms of $\M$ such that the following conditions hold:
\begin{enumerate}[{\bf C} 1:]
\item $\W$ has the three-out-of-two property.
\item $I-\text{inj}\subseteq \W$.
\item The class $I-\text{cof}\cap\W$ is closed under transfinite compositions and pushouts.
\end{enumerate}
Then $\M$ acquires a cofibrantly generated model structure with classes of weak equivalences $\W$,
cofibrations $I-\text{cof}$, 
and fibrations $(I-\text{cof}\cap \W)-\text{inj}$.
\end{theorem}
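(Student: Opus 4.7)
The plan is to verify the model category axioms {\bf CM}~1--{\bf CM}~5 for the proposed classes, where fibrations are defined via the right lifting property against $I\text{-cof}\cap \W$. Axiom {\bf CM}~1 follows from local presentability of $\M$, {\bf CM}~2 is hypothesis {\bf C}~1, and the retract axiom {\bf CM}~3 holds: for cofibrations and fibrations this is automatic from their lifting-property definitions, while for $\W$ it follows from accessibility (accessible classes of morphisms are Karoubi-closed, since idempotents split in any accessible category). Moreover {\bf C}~2 combined with {\bf CM}~2 shows that $I\text{-inj}$ is precisely the class of maps which are simultaneously fibrations and weak equivalences, i.e.~the trivial fibrations in the putative model structure.

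The first factorization in {\bf CM}~5, as a cofibration followed by a trivial fibration, is produced by applying the transfinite small object argument to $I$. Local presentability ensures the argument terminates since every object of $\M$ is small relative to $I$. The resulting factorization $f=pi$ has $i\in I\text{-cell}\subseteq I\text{-cof}$ and $p\in I\text{-inj}$, whence $p$ is a trivial fibration by {\bf C}~2.

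The main obstacle is the second factorization, which requires exhibiting a \emph{set} $J\subseteq I\text{-cof}\cap\W$ of generating trivial cofibrations with $J\text{-inj}$ equal to the class of fibrations. This is where the accessibility hypothesis on $\W$ is decisive. Fix a regular cardinal $\kappa$ large enough that (i) the domains and codomains of $I$ are $\kappa$-presentable, (ii) $\W$ is closed under $\kappa$-filtered colimits as a subcategory of the morphism category of $\M$, and (iii) the standard accessibility bounds hold uniformly. Let $J$ be a set of representatives for the isomorphism classes of maps in $I\text{-cof}\cap \W$ whose domain and codomain are $\kappa$-presentable. The decisive lemma --- the analogue of Quillen's bounded cofibration argument --- asserts that $J\text{-inj}=(I\text{-cof}\cap\W)\text{-inj}$. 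Its proof uses accessibility to approximate an arbitrary map in $I\text{-cof}\cap\W$ by transfinite compositions of pushouts of maps in $J$, exploiting that any cell in a relative $I$-cell complex can be covered by a $\kappa$-presentable subcomplex which already witnesses membership in $\W$. Granting this, the small object argument applied to $J$ produces the second factorization, whose left factor is a $J$-cell complex (in $I\text{-cof}\cap\W$ by {\bf C}~3) and whose right factor is a fibration.

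With both factorizations established, the lifting axiom {\bf CM}~4 for trivial cofibrations against fibrations follows by the standard retract trick: given a trivial cofibration $f$, factor it as $f=pj$ with $j$ a $J$-cell complex (hence in $I\text{-cof}\cap\W$) and $p$ a fibration; {\bf C}~1 forces $p\in\W$, so $p\in I\text{-inj}$, and lifting $f$ against $p$ exhibits $f$ as a retract of $j$, whence $f$ inherits the left lifting property against all fibrations. The hard part throughout is the bounded cofibration lemma, which is the technical heart of Smith's theorem and is precisely the mechanism by which accessibility of $\W$ is converted into the existence of the generating set $J$.
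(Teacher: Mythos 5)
The paper does not actually prove this statement --- it is quoted verbatim as Jeff Smith's (unpublished) theorem, with the remark that \cite{Barwick} ``gives a streamlined presentation of this material.'' So there is no in-paper proof to compare against; what follows is an evaluation of your proposal against the standard argument in the literature.

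Your overall scaffolding is the right one: the first factorization from the small object argument on $I$, the identification of $I\text{-inj}$ with the acyclic fibrations via {\bf C}\,2 and {\bf C}\,1, the bounded cofibration lemma producing a generating set $J\subseteq I\text{-cof}\cap\W$ of $\kappa$-presentable maps as the technical heart, the second factorization from the small object argument on $J$ (landing in $I\text{-cof}\cap\W$ via {\bf C}\,3), and the retract trick for {\bf CM}\,4. This is essentially the Barwick/Beke/Lurie route, and you correctly locate where accessibility of $\W$ does the real work.

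There is, however, a genuine error in your treatment of {\bf CM}\,3 for $\W$. You write that closure of $\W$ under retracts ``follows from accessibility (accessible classes of morphisms are Karoubi-closed, since idempotents split in any accessible category).'' This conflates two distinct conditions. Idempotents do split in any accessible category, but that only says $\W$ is internally Cauchy complete; it does not say that $\W$ is closed under retracts computed in the ambient arrow category $\M^{\to}$. A retract of $g$ in $\M^{\to}$ is an $f$ equipped with maps $f\to g\to f$ composing to $\id_f$ --- the induced idempotent lives on $g$, not on $f$, so splitting it recovers $f$ only as an object of $\M^{\to}$, with no reason for $f$ to lie in the full subcategory $\W$. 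A counterexample to the general principle: infinite sets form an accessibly embedded accessible full subcategory of $\Set$, yet a finite set is a retract of an infinite one. The correct argument (Barwick's Lemma~1.8) deduces retract-closure of $\W$ from the pieces you already have: factor the retract $f$ of $g\in\W$ as $f=p\circ i$ with $i\in I\text{-cell}$ and $p\in I\text{-inj}\subseteq\W$; it then suffices by {\bf C}\,1 to show $i\in\W$, which is done by comparing this factorization with one of $g$ and using {\bf C}\,3 together with a retract comparison. This is not a cosmetic repair --- without it the verification of {\bf CM}\,3 is incomplete, and the ``standard accessibility bounds'' you invoke do not supply it.
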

\begin{remark}
The ``only if'' implication follows since every accessible functor satisfies the solution-set condition,  
see \cite[Corollary 2.45]{AR:book}, 
and every class of weak equivalences in some combinatorial model category is an accessible subcategory of its morphism category.
\end{remark}

In order to prove Theorem \ref{theorem:stanculescu},
note that conditions $({\bf C} 1)$ and $({\bf C} 2)$ hold,  
so it remains to verify $({\bf C} 3)$.
This follows from the characterization of acyclic $\FF$-cofibrations by the left lifting property described in 
\cite[Lemma 2.4]{Stanculescu}.
\vspace{0.1in}

Suppose $(-)^{\ppt}$ is an accessible fibrant replacement functor in the pointwise model structure on $\CC^{\ast}$-functors.
To construct the homotopy functor model structure using Theorem \ref{theorem:stanculescu}, 
we set
\begin{equation*}
\FF^{\hht}(\SSSS)\equiv
\SSSS\circ (-)^{\ppt}.
\end{equation*} 
The verification of the axioms {\bf A} 1 and {\bf A} 2 for $\FF^{\hht}$ follows as in \cite[Proposition 3.3]{BCR}.
\begin{lemma}
The $\FF^{\hht}$-model structure coincides with the homotopy functor model structure. 
\end{lemma}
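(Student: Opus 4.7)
The plan is to show that the $\FF^{\hht}$-model structure and the homotopy functor model structure coincide by verifying that they share both their cofibrations and their classes of fibrant objects; since a model structure on a fixed category is determined by these two data, this suffices. Both are left Bousfield localizations of the pointwise model structure on $[\f\Box\CC^{\ast}-\Spc,\Box\CC^{\ast}-\Spc]$---the homotopy functor structure by construction, and the $\FF^{\hht}$-structure by Theorem \ref{theorem:stanculescu}---so both have cofibrations equal to the pointwise cofibrations.

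It remains to show that the fibrant objects agree. By construction, the fibrant objects of the homotopy functor model structure are the pointwise fibrant homotopy $\CC^{\ast}$-functors. On the other hand, Theorem \ref{theorem:stanculescu} (together with the characterization of local objects in a left Bousfield localization obtained via $\FF$) identifies the fibrant objects of the $\FF^{\hht}$-structure as the pointwise fibrant $\SSSS$ for which the natural map $\alpha_{\SSSS}\colon \SSSS\to \FF^{\hht}(\SSSS)=\SSSS\circ(-)^{\ppt}$ is a pointwise weak equivalence. I will show these two classes coincide.

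If $\SSSS$ is a pointwise fibrant homotopy $\CC^{\ast}$-functor, then for every $\X\in\f\Box\CC^{\ast}-\Spc$ the replacement map $\X\to\X^{\ppt}$ is a weak equivalence, so $\SSSS(\X)\to\SSSS(\X^{\ppt})=\FF^{\hht}(\SSSS)(\X)$ is a weak equivalence, whence $\alpha_{\SSSS}$ is a pointwise weak equivalence. Conversely, assume $\SSSS$ is pointwise fibrant and $\alpha_{\SSSS}$ is a pointwise weak equivalence. For any weak equivalence $\X\to\Y$ in $\f\Box\CC^{\ast}-\Spc$, the naturality square
\begin{equation*}
\xymatrix{
\SSSS(\X) \ar[r]^-{\alpha_{\SSSS}(\X)}\ar[d] & \SSSS(\X^{\ppt})\ar[d] \\
\SSSS(\Y) \ar[r]^-{\alpha_{\SSSS}(\Y)} & \SSSS(\Y^{\ppt})
}
\end{equation*}
has horizontal weak equivalences. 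Since $\X^{\ppt}\to \Y^{\ppt}$ is a weak equivalence between pointwise fibrant (and, by the standing cofibrancy hypothesis on $\f\Box\CC^{\ast}-\Spc$, fibrant-cofibrant) objects, Lemma \ref{lemma:projectiveCstarweakequivalencebetweenfibrantobjects} presents it as a cubical homotopy equivalence; any $\Box\CC^{\ast}-\Spc$-enriched functor preserves cubical homotopies, so $\SSSS(\X^{\ppt})\to\SSSS(\Y^{\ppt})$ is a pointwise weak equivalence. Two-out-of-three then forces $\SSSS(\X)\to\SSSS(\Y)$ to be a weak equivalence, showing $\SSSS$ is a homotopy $\CC^{\ast}$-functor.

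The main obstacle lies in this last step: ensuring that enriched functors preserve weak equivalences between fibrant-cofibrant objects of $\f\Box\CC^{\ast}-\Spc$. This rests squarely on the cubical character of the homotopy invariant model structure, which through Lemma \ref{lemma:projectiveCstarweakequivalencebetweenfibrantobjects} lets us replace such weak equivalences by cubical homotopy equivalences; the standing cofibrancy hypothesis on $\f\Box\CC^{\ast}-\Spc$ and the cubical enrichment of $\SSSS$ then combine to make the preservation argument go through. A minor secondary point is the verification that $\FF^{\hht}$ is accessible and that $\alpha$ satisfies the axioms {\bf A}1 and {\bf A}2 of Theorem \ref{theorem:stanculescu}, but these are essentially formal and already indicated in the paragraph preceding the lemma.
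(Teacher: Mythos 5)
Your proof is correct and takes essentially the same approach as the paper, whose entire proof is the one-line observation that "the model structures have the same cofibrations and fibrant objects"; you have supplied the detailed justification for both halves of that assertion — the shared cofibrations because both are left Bousfield localizations of the pointwise structure, and the shared fibrant objects via the characterization of local objects in a Stanculescu/Bousfield--Friedlander localization together with Lemma \ref{lemma:projectiveCstarweakequivalencebetweenfibrantobjects} and the cubical enrichment.
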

\begin{proof}
The model structures have the same cofibrations and fibrant objects.
\end{proof}

By using the same type of localization method we construct next the stable model structure on $[\f\Box\CC^{\ast}-\Spc,\Box\CC^{\ast}-\Spc]$.
We fix an accessible fibrant replacement functor $(-)^{\hht}$ in the homotopy functor model structure.
Let $C'$ denote the right adjoint functor of $-\otimes C$ - again denoted by $C$ in what follows - given by cotensoring with $C$.
Note that $C'$ commutes with filtered colimits and homotopy colimits because $C$ is small. 
\vspace{0.1in}

Define the endofunctor $\FF^{\sst}$ of $[\f\Box\CC^{\ast}-\Spc,\Box\CC^{\ast}-\Spc]$ by setting 
\begin{equation*}
\FF^{\sst}(\SSSS)\equiv
\hocolim_{n} \, \bigl(C'^{\otimes n}\circ (\SSSS)^{\hht}\circ C^{\otimes n}\bigr).
\end{equation*} 
This is an accessible functor and it satisfies the axioms {\bf A} 1 and {\bf A} 2 by \cite[Lemma 8.9]{Biedermann}.
\vspace{0.1in}

We are ready to formulate the main result in this section.
Most parts of this result should be clear by now, 
and more details will appear in a revised version of the general setup in \cite{Biedermann} dodging the right properness assumption.

\begin{theorem}
The following holds for the stable model structure on the enriched category of $\CC^{\ast}$-functors 
$$[\f\Box\CC^{\ast}-\Spc,\Box\CC^{\ast}-\Spc].$$
\begin{itemize}
\item
It is a combinatorial and left proper model category.
\item
It is a symmetric monoidal model category.
\item
When $\f\Box\CC^{\ast}-\Spc$ is the category of $C$-spheres then there exists a Quillen equivalence between the stable model structures on 
$\CC^{\ast}$-functors and on cubical $\CC^{\ast}$-spectra. 
\end{itemize}
\end{theorem}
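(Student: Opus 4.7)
The plan is to build the stable model structure by applying Stanculescu's theorem to the accessible endofunctor $\FF^{\sst}$ on the homotopy functor model structure, and then verify the additional properties by reducing each to a statement about the already-established pointwise and homotopy functor model structures. First I would verify that the hypotheses of Theorem \ref{theorem:stanculescu} hold for $\FF^{\sst}$: accessibility follows because $C'$ commutes with filtered colimits, $(-)^{\hht}$ is accessible by construction, and homotopy colimits along $\mathbb{N}$ are accessible; axioms ${\bf A}\,1$ and ${\bf A}\,2$ follow verbatim as in \cite[Lemma 8.9]{Biedermann} once one notes the axiomatic input there is preservation of weak equivalences by $C'$ on pointwise fibrant objects together with the cyclic permutation condition for $C$, which was established in Lemma \ref{lemma:cyclicpermutationcondition}. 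Combinatoriality is then automatic from Smith's recognition theorem (Theorem \ref{theorem:smith}) since the localizing functor is accessible.

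For left properness, the cofibrations coincide with the pointwise cofibrations, which are pointwise injective cofibrations, so given a pushout of a stable equivalence along a cofibration, one obtains a pointwise pushout of pointwise injective cofibrations; I would then apply $\FF^{\sst}$ and use that filtered homotopy colimits of pointwise weak equivalences are pointwise weak equivalences (Lemma \ref{lemma:afgmodelcategories} transported to the functor category), together with the fact that $C'$ and the pointwise fibrant replacement $(-)^{\ppt}$ preserve homotopy pushouts of cofibrations up to pointwise weak equivalence. This avoids invoking right properness, which is the crucial point for working with Stanculescu's framework rather than the Bousfield--Friedlander setup used in \cite{Biedermann}.

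For the monoidal statement, I would first recall from the earlier subsection that the pointwise and homotopy functor model structures on $[\f\Box\CC^{\ast}-\Spc,\Box\CC^{\ast}-\Spc]$ are monoidal and that the pairing (\ref{equation:pairing}) is a Quillen bifunctor (Lemma \ref{lemma:homotopyfunctorpairing}). The unit is cofibrant. It then suffices to verify that for a generating cofibration $f\otimes\Box\CC^{\ast}-\Spc(\X,-)$ and an acyclic $\FF^{\sst}$-cofibration $g\otimes\Box\CC^{\ast}-\Spc(\Y,-)$, the pushout product $(f\Box g)\otimes\Box\CC^{\ast}-\Spc(\X\otimes\Y,-)$ is a stable equivalence. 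Using the natural isomorphism for pushout products of representables and the monoidalness of $\Box\CC^{\ast}-\Spc$ under the homotopy invariant projective model structure (Proposition \ref{proposition:hiprojectivemonoidal}), this reduces to checking that tensoring with a cofibrant $\CC^{\ast}$-functor preserves stable equivalences; one handles this by the standard stability argument, noting that $-\otimes C$ becomes invertible in the homotopy category.

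The main obstacle will be the third part, which asserts a Quillen equivalence with $\Spt_{C}$ when $\f\Box\CC^{\ast}-\Spc$ is the full subcategory on the $C$-spheres $C^{\otimes n}$. The plan is to use evaluation at the sequence of $C$-spheres to produce a right adjoint $\Ev_{C^{\bullet}}\colon[\f\Box\CC^{\ast}-\Spc,\Box\CC^{\ast}-\Spc]\rightarrow\Spt_{C}$ sending a $\CC^{\ast}$-functor $\SSSS$ to the sequence $n\mapsto\SSSS(C^{\otimes n})$ with structure maps induced by the enrichment; its left adjoint sends a cubical $\CC^{\ast}$-spectrum $\E$ to the enriched Kan extension $\X\mapsto\colim\, \E_{n}\otimes\underline{\Hom}(C^{\otimes n},\X)$ (shifted appropriately). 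This is a Quillen pair for the pointwise model structures, and I would then check it descends to a Quillen pair for the stable structures by showing that both localizations are governed by essentially the same set of maps, namely the stabilization maps $C^{\otimes n+1}\otimes\Box\CC^{\ast}-\Spc(C^{\otimes n+1},-)\rightarrow C^{\otimes n}\otimes\Box\CC^{\ast}-\Spc(C^{\otimes n},-)$ on one side and $\Fr_{n+1}(C\otimes \X)\rightarrow \Fr_{n}(\X)$ on the other. The equivalence is then verified by checking that the fibrant objects on both sides are $\Omega_{C}$-spectrum-like objects and that the derived unit and counit are stable equivalences, which, because $C$ satisfies the cyclic permutation condition (Lemma \ref{lemma:cyclicpermutationcondition}) and $\underline{\Hom}(C,-)$ is finitely presentable (Example \ref{example:internalhomfinitelypresentable}), reduces to Theorem \ref{theorem:stableweakequivalencesandstabilization} and the layer filtration arguments of Example \ref{example:layerfiltration}.
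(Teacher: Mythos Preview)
Your approach matches the paper's, which sets up exactly this machinery (Stanculescu's theorem applied to $\FF^{\sst}$, with accessibility and axioms ${\bf A}\,1$, ${\bf A}\,2$ drawn from \cite[Lemma 8.9]{Biedermann}) and then declares that ``most parts of this result should be clear by now, and more details will appear in a revised version of the general setup in \cite{Biedermann} dodging the right properness assumption.'' So the paper does not give a proof beyond the scaffolding you have reproduced; your proposal is more detailed than what the paper itself contains.

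Two comments on your details. First, your left properness argument is overcomplicated and the step where $C'$ is asked to preserve homotopy pushouts is suspect ($C'$ is a right adjoint). You do not need this: Stanculescu's theorem (Theorem \ref{theorem:stanculescu}) explicitly produces a \emph{left Bousfield localization}, and left Bousfield localizations of left proper combinatorial model categories are automatically left proper. Since the homotopy functor model structure is already left proper, you are done. The point of avoiding right properness concerns the \emph{existence} of the localization, not its left properness. Second, for the Quillen equivalence you describe $\f\Box\CC^{\ast}-\Spc$ as the full subcategory on the objects $C^{\otimes n}$, but the paper's category of $C$-spheres is the full subcategory of all $\X$ admitting an acyclic cofibration $C^{\otimes n}\rightarrow\X$ (see the remark following the theorem). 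Your evaluation functor $\Ev_{C^{\bullet}}$ still makes sense, but the comparison requires one more reduction: restriction along the inclusion of $\{C^{\otimes n}\}$ into the category of $C$-spheres is a Quillen equivalence because every $C$-sphere is pointwise weakly equivalent to some $C^{\otimes n}$, and homotopy $\CC^{\ast}$-functors take these to weak equivalences. With that adjustment your outline for the third item is in line with the strategy of \cite[\S7.2]{DRO:general} that the paper references.
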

\begin{remark}
Recall that the category of $C$-spheres is the full subcategory of $\Box\CC^{\ast}-\Spc$ comprising objects $\X$ for which there exists an 
acyclic cofibration $C^{\otimes n}\rightarrow\X$ in the projective homotopy invariant model structure on $\Box\CC^{\ast}-\Spc$.
This is the ``minimal'' choice of $\f\Box\CC^{\ast}-\Spc$.
It is not clear whether the full subcategory of finitely presentable cubical $\CC^{\ast}$-spaces ${\bf fp}\Box\CC^{\ast}-\Spc$ gives a Quillen 
equivalent model structure \cite[\S7.2]{DRO:general}.
This point is also emphasized in \cite[\S10]{Biedermann}.
\end{remark}
\newpage

\section{Invariants}
\label{section:invariants}
In what follows we employ $\CC^{\ast}$-homotopy theory to define invariants for $\CC^{\ast}$-algebras.
Section \ref{subsection:cohomologytheories} introduces briefly bigraded homology and cohomology theories at large.
The main examples are certain canonical extensions of $KK$-theory, 
see Section \ref{subsection:KK-theory}, 
and local cyclic homology theory, 
see Section \ref{subsection:localcyclichomology}, 
to the framework of pointed simplicial $\CC^{\ast}$-spaces.
We also observe that there is an enhanced Chern-Connes character between $KK$-theory and local cyclic theory on the level of 
simplicial $\CC^{\ast}$-spectra.
Section \ref{subsection:KtheoryofCstaralgebras} deals with a form of $K$-theory of $\CC^{\ast}$-algebras which is constructed
using the model structures introduced earlier in this paper.
This form of $K$-theory is wildly different from the traditional $2$-periodic $K$-theory of $\CC^{\ast}$-algebras 
\cite[II]{Connes:noncommutative} and relates to topics in geometric topology.
Finally, in the last section we discuss zeta functions of $\CC^{\ast}$-algebras.

\subsection{Cohomology and homology theories}
\label{subsection:cohomologytheories}
We record the notions of (co)homology and bigraded (co)homology theories. 
\begin{definition}
\begin{itemize}
\item 
A homology theory on $\SH^{\ast}$ is a homological functor $\SH^{\ast}\rightarrow\Ab$ which preserves sums.
Dually,
a cohomology theory on $\SH^{\ast}$ is a homological functor ${\SH^{\ast}}^{\op}\rightarrow\Ab$ which takes sums to products.
\item
A bigraded cohomology theory on $\SH^{\ast}$ is a homological functor $\Phi$ from ${\SH^{\ast}}^{\op}$ to Adams graded graded 
abelian groups which takes sums to products together with natural isomorphisms
\begin{equation*}
\Phi(\E)^{p,q}\cong \Phi(\Sigma_{S^{1}}\E)^{p+1,q}
\end{equation*}
and
\begin{equation*}
\Phi(\E)^{p,q}\cong \Phi(\Sigma_{C_{0}(\R)}\E)^{p,q+1}
\end{equation*}
such that the diagram
\begin{equation*}
\xymatrix{
\Phi(\E)^{p,q} \ar[r] \ar[d] & \Phi(\Sigma_{S^{1}}\E)^{p+1,q} \ar[d] \\
\Phi(\Sigma_{C_{0}(\R)}\E)^{p,q+1} \ar[r] &
\Phi(\Sigma_{C}\E)^{p+1,q+1} }
\end{equation*}
commutes for all integers $p,q\in\ZZ$.

Bigraded homology theories are defined likewise.
\end{itemize}
\end{definition}

The category of graded abelian groups refers to integer-graded objects subject to the Koszul sign rule 
$a\otimes b=(-1)^{\vert\, a\vert\vert\, b\vert} b\otimes a$.
In the case of bigraded cohomology theories there is a supplementary graded structure. 
The category of Adams graded graded abelian groups refers to integer-graded objects in graded abelian groups, 
but no sign rule for the tensor product is introduced as a consequence of the Adams grading. 
It is helpful to think of the Adams grading as being even.
\vspace{0.1in}

As alluded to in the introduction the bigraded cohomology and homology theories associated with a $\CC^{\ast}$-spectrum $\E$ are 
defined by the formulas 
\begin{equation}
\label{cohomology2}
\E^{p,q}(\F)
\equiv
\SH^{\ast}\bigl(\F,S^{p-q}\otimes C_0(\R^q)\otimes\E\bigr),
\end{equation} 
and 
\begin{equation}
\label{homology2}
\E_{p,q}(\F)
\equiv
\SH^{\ast}\bigl(\Sigma_C^{\infty} S^{p-q}\otimes C_0(\R^q),\F\wedge^{{\bf L}} \E\bigr).
\end{equation} 
When $\E$ is the sphere $\CC^{\ast}$-spectrum ${\bf 1}$ then (\ref{cohomology2}) defines the stable cohomotopy groups 
$\pi^{p,q}(\F)\equiv {\bf 1}^{p,q}(\F)$ and (\ref{homology2}) the stable homotopy groups $\pi_{p,q}(\F)\equiv {\bf 1}_{p,q}(\F)$ of $\F$.
Invoking the symmetric monoidal product $\wedge^{{\bf L}}$ on $\SH^{\ast}$ there is a pairing 
\begin{equation}
\label{homologypairing}
\xymatrix{
\pi_{p,q}(\F)\otimes\pi_{p',q'}(\F')
\ar[r] & 
\pi_{p+p',q+q'}(\F\wedge^{{\bf L}}\F'). }
\end{equation}
More generally, 
there exists formally defined products
\begin{equation*}
\xymatrix{
\wedge\colon\E_{p,q}(\F)\otimes\E'_{p',q'}(\F')
\ar[r] & 
(\E\wedge^{{\bf L}}\E')_{p+p',q+q'}(\F\wedge^{{\bf L}}\F'),} 
\end{equation*}
\begin{equation*}
\xymatrix{
\cup\colon\E^{p,q}(\F)\otimes\E'^{p',q'}(\F')
\ar[r] & 
(\E\wedge^{{\bf L}}\E')^{p+p',q+q'}(\F\wedge^{{\bf L}}\F'),} 
\end{equation*}
\begin{equation*}
\xymatrix{
/\colon\E^{p,q}(\F\wedge^{{\bf L}}\F')\otimes\E'_{p',q'}(\F')
\ar[r] & 
(\E\wedge^{{\bf L}}\E')_{p-p',q-q'}(\F),} 
\end{equation*}
\begin{equation*}
\xymatrix{
\backslash \colon\E^{p,q}(\F)\otimes\E'_{p',q'}(\F\wedge^{{\bf L}}\F')
\ar[r] & 
(\E\wedge^{{\bf L}}\E')_{p'-p,q'-q}(\F').}
\end{equation*}
When $\E=\E'$ is a monoid in $\SH^{\ast}$ composing the external products with $\E\wedge^{{\bf L}}\E\rightarrow\E$ yields internal products.
The internalization of the slant product $\backslash$ is a type of cap product.
We refer the interested reader to \cite{May:additivityoftraces} and the references therein for more details concerning the formal deduction of 
the above products using function spectra or derived internal hom objects depending only on the structure of $\SH^{\ast}$ as a symmetric monoidal 
category with a compatible triangulation,  
and the corresponding constructs in classical stable homotopy theory.
\newpage

\subsection{KK-theory and the Eilenberg-MacLane spectrum}
\label{subsection:KK-theory}
The construction we perform in this section is a special case of ``twisting'' a classical spectrum with $KK$-theory:
By combining the integral Eilenberg-MacLane spectrum representing classical singular cohomology and homology with $KK$-theory 
we deduce a $\CC^{\ast}$-symmetric spectrum which is designed to represent $K$-homology and $K$-theory of $\CC^{\ast}$-algebras.
Certain parts involved in this example depend heavily on a theory of noncommutative motives developed in \cite{Ostvar:noncommutativemotives}.
Throughout this section we work with simplicial objects rather than cubical objects, 
basically because we want to emphasize the (simplicial) Dold-Kan equivalence.
\vspace{0.1in}

In the companion paper \cite{Ostvar:noncommutativemotives} we construct an adjoint functor pair:
\begin{equation}
\label{equation:KKadjunction}
\xymatrix{
(-)^{\KK}\colon\Delta\CC^{\ast}-\Spc_{0} \ar@<3pt>[r] & 
\Delta\CC^{\ast}-\Spc^{\KK}_{0}\colon{\bf U} \ar@<3pt>[l] }
\end{equation}
Here $\Delta\CC^{\ast}-\Spc^{\KK}_{0}$ is the category of pointed simplicial $\CC^{\ast}$-spaces with $KK$-transfers,
i.e.~additive functors from Kasparov's category of $KK$-correspondences to simplicial abelian groups.
This is a closed symmetric monoidal category enriched in abelian groups and the symmetric monoidal functor $(-)^{\KK}$ 
is uniquely determined by 
\begin{equation}
\label{equation:KKextension}
(A\otimes\Delta^{n}_{+})^{\KK}\equiv
\KK(A,-)\otimes\widetilde{\ZZ}[\Delta^{n}_{+}].
\end{equation}
The right adjoint of the functor adding $KK$-transfers to pointed simplicial $\CC^{\ast}$-spaces is the lax symmetric monoidal 
forgetful functor ${\bf U}$.
\vspace{0.1in}

With these definitions there are isomorphisms
\begin{align*}
\Delta\CC^{\ast}-\Spc^{\KK}_{0}\bigl((A\otimes\Delta^{n}_{+})^{\KK},\Y\bigr) 
& = 
\Delta\CC^{\ast}-\Spc^{\KK}_{0}\bigl(\KK(A,-),\underline{\Hom}(\ZZ[\Delta^{n}_{+}],\Y)\bigr) \\
& = 
\Delta\CC^{\ast}-\Spc_{0}\bigl(A,{\bf U}\underline{\Hom}(\ZZ[\Delta^{n}_{+}],\Y)\bigr) \\
& = 
\Delta\CC^{\ast}-\Spc_{0}\bigl(A,\underline{\Hom}(\Delta^{n}_{+},{\bf U}\Y)\bigr) \\
& = 
\Delta\CC^{\ast}-\Spc_{0}(A\otimes\Delta^{n}_{+},{\bf U}\Y).
\end{align*}

The above definition clearly extends $KK$-theory to a functor on pointed simplicial $\CC^{\ast}$-spaces.
Moreover, 
for pointed simplicial $\CC^{\ast}$-spaces $\X$ and $\Y$ there exist canonically induced maps
\begin{equation*}
\xymatrix{
\X\otimes\Y^{\KK}\ar[r] & \X^{\KK}\otimes\Y^{\KK}\ar[r] & (\X\otimes\Y)^{\KK}.}
\end{equation*}
\vspace{0.1in}

In particular,
when $\X$ equals the preferred suspension coordinate $C=S^1\otimes C_0(\R)$ in $\CC^{\ast}$-homotopy theory and $\Y$ its $n$th fold
tensor product $C^{\otimes n}$, 
there is a map 
\begin{equation*}
\xymatrix{
C\otimes (C^{\otimes n})^{\KK}\ar[r] &  (C^{\otimes n+1})^{\KK}. }
\end{equation*}
The above defines the structure maps in the $\CC^{\ast}$-algebra analog of the stably fibrant Eilenberg-MacLane spectrum 
\begin{equation*}
\xymatrix{
\HH\ZZ=\{n \ar@{|->}[r] & \widetilde{\ZZ}[S^{n}]\} }
\end{equation*}
studied in stable homotopy theory.
This description clarifies the earlier remark about twisting the integral Eilenberg-MacLane spectrum with $KK$-theory.
A straightforward analysis based on Bott periodicity in $KK$-theory reveals there exist isomorphisms of simplicial 
$\CC^{\ast}$-spaces
\begin{equation*}
(C^{\otimes n})^{\KK}
=
\begin{cases}
K_{0}(-)\otimes \widetilde{\ZZ}[S^{n}] & n\geq 0\text{ even} \\
K_{1}(-)\otimes \widetilde{\ZZ}[S^{n}] & n\geq 1\text{ odd.}
\end{cases}
\end{equation*}

The main result in this section shows the spectrum we are dealing with is stably fibrant.
\begin{theorem}
\label{theorem:KKisfibrant}
The simplicial $\CC^{\ast}$-spectrum
\begin{equation*}
\xymatrix{
\KK\equiv\{n\ar@{|->}[r] & (C^{\otimes n})^{\KK}\}}
\end{equation*}
is stably fibrant.
\end{theorem}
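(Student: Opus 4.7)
\smallskip

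\noindent\textbf{Proof proposal.} By Definition \ref{definition:Cstarstablefibrant} (in its simplicial avatar), I must verify two things: first, that each level $(C^{\otimes n})^{\KK}$ is $\CC^{\ast}$-projective fibrant as a pointed simplicial $\CC^{\ast}$-space, and second, that for every $n\geq 0$ the adjoint structure map
\begin{equation*}
\widetilde\sigma_{n}\colon (C^{\otimes n})^{\KK}\longrightarrow\underline{\Hom}\bigl(C,(C^{\otimes n+1})^{\KK}\bigr)
\end{equation*}
is a $\CC^{\ast}$-weak equivalence. The description already recorded in the excerpt, namely $(C^{\otimes n})^{\KK}\cong K_{\epsilon(n)}(-)\otimes\widetilde{\ZZ}[S^{n}]$ where $\epsilon(n)\in\{0,1\}$ is the parity of $n$, obtained from Bott periodicity in $KK$-theory and the defining formula (\ref{equation:KKextension}), will do most of the work.

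\smallskip

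First I would establish level fibrancy. Each $(C^{\otimes n})^{\KK}(A)$ is a simplicial abelian group, hence a Kan complex, so the presheaf is pointwise fibrant. The matrix invariance axiom for $(C^{\otimes n})^{\KK}$ follows from stability of $KK$-theory, $KK_{i}(\C,A\otimes\K)=KK_{i}(\C,A)$, since tensoring the representing functor with the discrete simplicial abelian group $\widetilde{\ZZ}[S^{n}]$ preserves this isomorphism. Homotopy invariance is immediate from $KK$-homotopy invariance. For the flasque condition, a short exact sequence $0\to A\to E\to B\to 0$ of $\CC^{\ast}$-algebras (completely positive split if one has restricted the tensor product conventions accordingly) induces a six-term exact sequence in $KK_{\ast}(\C,-)$; since $(C^{\otimes n})^{\KK}$ takes values in simplicial abelian groups, this exact sequence rephrases (after smashing with $\widetilde{\ZZ}[S^{n}]$ and using that simplicial abelian groups split into products of Eilenberg--MacLane pieces) as the requirement that
\begin{equation*}
(C^{\otimes n})^{\KK}(A)\longrightarrow (C^{\otimes n})^{\KK}(E)\longrightarrow (C^{\otimes n})^{\KK}(B)
\end{equation*}
is a homotopy fiber sequence of Kan complexes. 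Combining these three points with Theorem \ref{theorem:unstablerepresentabilityKK}, which already says that the $KK$-representing presheaf $F^{\Rep}$ is projective $\CC^{\ast}$-fibrant, gives pointwise $\CC^{\ast}$-projective fibrancy of every level.

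\smallskip

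Second, I would analyse the adjoint structure map. By definition $C=S^{1}\otimes C_{0}(\R)$ and $\underline{\Hom}(C,-)=\Omega_{S^{1}}\underline{\Hom}(C_{0}(\R),-)$, where $\underline{\Hom}(C_{0}(\R),\Z)(A)=\Z(A\otimes C_{0}(\R))$ by (\ref{representableinternalhomevalution}). Applied to $(C^{\otimes n+1})^{\KK}\cong K_{\epsilon(n+1)}(-)\otimes\widetilde{\ZZ}[S^{n+1}]$, this produces
\begin{equation*}
\underline{\Hom}\bigl(C,(C^{\otimes n+1})^{\KK}\bigr)(A)\cong \Omega_{S^{1}}\bigl(K_{\epsilon(n+1)}(A\otimes C_{0}(\R))\otimes\widetilde{\ZZ}[S^{n+1}]\bigr).
\end{equation*}
Bott periodicity identifies $K_{\epsilon(n+1)}(A\otimes C_{0}(\R))\cong K_{\epsilon(n)}(A)$, while the simplicial loop $\Omega_{S^{1}}$ of the simplicial abelian group $M\otimes\widetilde{\ZZ}[S^{n+1}]$ is canonically weakly equivalent to $M\otimes\widetilde{\ZZ}[S^{n}]$ by the Dold--Kan correspondence. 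Chasing the construction of $\widetilde\sigma_{n}$ from (\ref{equation:KKextension}) and the natural pairing $C\otimes(C^{\otimes n})^{\KK}\to(C^{\otimes n+1})^{\KK}$ (together with the compatibility of $(-)^{\KK}$ with the monoidal product) shows that under these identifications $\widetilde\sigma_{n}$ corresponds to the Bott map composed with the Dold--Kan loop comparison, both of which are weak equivalences pointwise; by Lemma \ref{lemma:projectiveCstarweakequivalencebetweenfibrantobjects} a pointwise weak equivalence between $\CC^{\ast}$-projective fibrant objects is a $\CC^{\ast}$-weak equivalence.

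\smallskip

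The main obstacle, and the place where I would spend the most care, is the compatibility step in the last paragraph: one must check that the structure map coming from the lax monoidality of $(-)^{\KK}$ actually realises the classical Bott periodicity isomorphism at the level of the transfer enrichment and not merely up to an unspecified weak equivalence. This amounts to tracing the Kasparov product $KK(\C,A)\otimes KK(\C,C_{0}(\R))\to KK(\C,A\otimes C_{0}(\R))$ through the construction (\ref{equation:KKadjunction})--(\ref{equation:KKextension}) and recognising the image of the Bott element; the required naturality is guaranteed by the companion paper \cite{Ostvar:noncommutativemotives}, but the explicit book-keeping with the Dold--Kan loop comparison and the sign/parity shifts is where the verification becomes delicate.
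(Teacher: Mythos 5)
Your verification of level fibrancy is essentially the paper's: both note that the values are simplicial abelian groups (hence Kan), and invoke matrix invariance, homotopy invariance, and split exactness of $KK$-theory. The paper cites Higson's characterization of $KK$ for these properties where you appeal to Theorem~\ref{theorem:unstablerepresentabilityKK}, but the content is the same.

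For the structure maps you take a genuinely different route. The paper first reduces, via Theorem~\ref{theorem:unstableweakgenerators}, to showing that the comparison maps $\HH^{\ast}\bigl(A\otimes S^m,(C^{\otimes n})^{\KK}\bigr)\to\HH^{\ast}\bigl(C\otimes A\otimes S^m,(C^{\otimes n+1})^{\KK}\bigr)$ are isomorphisms for every generator $A\otimes S^m$, and then transports this claim through the adjunction (\ref{equation:KKadjunction}) into the category of noncommutative motives $\MM(\KK)$, where it becomes the single assertion that $-\otimes\ZZ(1)$ is an isomorphism. You instead work pointwise: unwind $\underline{\Hom}(C,-)$ via (\ref{representableinternalhomevalution}), use the identification $(C^{\otimes n})^{\KK}\cong K_{\epsilon(n)}(-)\otimes\widetilde{\ZZ}[S^n]$, reduce to the classical Bott isomorphism $K_{\epsilon(n+1)}(A\otimes C_0(\R))\cong K_{\epsilon(n)}(A)$ plus a Dold--Kan loop comparison, and conclude by Lemma~\ref{lemma:projectiveCstarweakequivalencebetweenfibrantobjects}. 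This is more elementary in spirit, never leaving the world of simplicial abelian groups.

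However, the delicacy you flag at the end is not a side remark but the genuine gap. The identification $(C^{\otimes n})^{\KK}\cong K_{\epsilon(n)}(-)\otimes\widetilde{\ZZ}[S^n]$ is an abstract isomorphism; by itself it does not tell you that $\widetilde\sigma_n$ corresponds to the Bott map under it (as opposed to some other map of the correct source and target, conceivably null). To close the argument you must trace the lax-monoidal structure map of $(-)^{\KK}$ through (\ref{equation:KKextension}) and identify the resulting map with the Kasparov product against the Bott class, compatibly with your Dold--Kan loop comparison. The paper's detour through $\MM(\KK)$ exists precisely to absorb this book-keeping: the Bott object $\ZZ(1)=\MM\bigl(C_0(\R)\bigr)[-1]$ and the shift functor are defined so that the structure map corresponds to $-\otimes\ZZ(1)$ by construction, leaving only the already-packaged invertibility of $\ZZ(1)$ to cite. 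Your plan is sound in outline, but completing it requires importing from \cite{Ostvar:noncommutativemotives} essentially the same compatibility that the paper's motivic formulation encodes, so you have not actually obtained a shorter path.
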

\begin{proof}
We shall note the constituent spaces $(C^{\otimes n})^{\KK}$ of $\KK$ are fibrant in the 
projective homotopy invariant model structure on $\Delta\CC^{\ast}-\Spc_{0}$.
First, 
$\Ev_{A}(C^{\otimes n})^{\KK}$ is a simplicial abelian group and hence fibrant in the 
model structure on $\Delta\Set_{\ast}$,
so that $(C^{\otimes n})^{\KK}$ is projective fibrant.
For $KK$-theory of $\CC^{\ast}$-algebras, 
homotopy invariance holds trivially, 
while matrix invariance and split exactness hold by \cite[Propositions 2.11,2.12]{Higson:KK}.
The same properties hold for the $KK$-theory of the pointed simplicial $\CC^{\ast}$-spaces $C^{\otimes n}$ using 
(\ref{equation:KKextension}).
These observations imply that $\KK$ is level fibrant.
\vspace{0.1in}

It remains to show that for every $A\in\CC^{\ast}-\Alg$ and $m\geq 0$, 
there is an isomorphism
\begin{equation}
\label{equation:mapforgenerators}
\xymatrix{
\HH^{\ast}\bigl(A\otimes S^m,(C^{\otimes n})^{\KK}\bigr)
\ar[r] &
\HH^{\ast}\bigl(C\otimes A\otimes S^m,(C^{\otimes n+1})^{\KK}\bigr).}
\end{equation}
This reduction step follows directly from Theorem \ref{theorem:unstableweakgenerators}; the latter shows that the set of isomorphism 
classes of cubical $\CC^{\ast}$-algebras of the form $A\otimes S^m$ generates the homotopy category $\HH^{\ast}$.
In the next step of the proof we shall invoke the category of noncommutative motives $\MM(\KK)$ \cite{Ostvar:noncommutativemotives}.
Its underlying category $\Ch(\KK)$ consists of chain complexes of pointed $\CC^{\ast}$-spaces with $\KK$-transfers.
Every $\CC^{\ast}$-algebra $A$ has a corresponding motive $\MM(A)$ and likewise for $C\otimes A$.
The category of motives is in fact constructed analogously to $\HH^{\ast}$.
Via the adjunction (\ref{equation:KKadjunction}), 
the shift operator $[-]$ on chain complexes identifies the map (\ref{equation:mapforgenerators}) with 
\begin{equation*}
\xymatrix{
\MM(\KK)\bigl(\MM(A),\ZZ(n)[2n-m]\bigr)
\ar[r] &
\MM(\KK)\bigl(\MM(C\otimes A)[-2],\ZZ(n+1)[2n-m]\bigr),}
\end{equation*}
where 
\begin{equation*}
\ZZ(1)\equiv\MM\bigl(C_{0}(\R)\bigr)[-1]
\end{equation*}
is the so-called $\CC^{\ast}$-algebraic Bott object \cite{Ostvar:noncommutativemotives}. 
Thus, 
using the symmetric monoidal product in $\Ch(\KK)$, 
the map identifies with $-\otimes\ZZ(1)$ from  $\MM(\KK)\bigl(\MM(A),\ZZ(n)[2n-m]\bigr)$ to 
$\MM(\KK)\bigl(\MM(A)\otimes\ZZ(1),\ZZ(n)[2n-m]\otimes\ZZ(1)\bigr)$.
With these results in hand, 
it remains to note that $-\otimes\ZZ(1)$ is an isomorphism by Bott periodicity. 
\end{proof}
\begin{lemma}
\label{lemma:SHCh(KK)isomorphism}
There is an isomorphism
\begin{equation*}
\KK^{p,q}(A)\equiv
\SH^{\ast}\bigl(\Sigma_C^{\infty} A,\KK\otimes S^{p-q}\otimes C_{0}(\R^q)\bigr)=
\MM(\KK)\bigl(\MM(A),\ZZ(q)[p]\bigr).
\end{equation*} 
\end{lemma}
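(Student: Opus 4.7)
The plan is to run essentially the same argument used in the proof of Theorem \ref{theorem:KKisfibrant}, but now ``in families'' rather than just for the generators. Since Theorem \ref{theorem:KKisfibrant} establishes that $\KK$ is stably fibrant, Corollary \ref{corollary:finitelypresentablecofibrantcolimit} combined with the bispectrum reformulation of bigraded stable homotopy groups from \S\ref{subsection:bispectra} lets me rewrite
\begin{equation*}
\KK^{p,q}(A)=\underset{n}{\colim}\,\,\HH^{\ast}\bigl(A\otimes S^{n-(p-q)}\otimes C_{0}(\R^{n-q}),(C^{\otimes n})^{\KK}\bigr),
\end{equation*}
where the transition maps in the colimit are induced by the structure maps of $\KK$; stable fibrancy of $\KK$ guarantees that this colimit is eventually constant in $n$ (for $n$ sufficiently large compared to $p$ and $q$).

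The second step transfers the computation along the adjunction $(-)^{\KK}\dashv{\bf U}$ from (\ref{equation:KKadjunction}). Because $(C^{\otimes n})^{\KK}$ lies in the image of $(-)^{\KK}$, each term in the colimit rewrites as
\begin{equation*}
\Delta\CC^{\ast}-\Spc_{0}^{\KK}\bigl((A\otimes S^{n-(p-q)}\otimes C_{0}(\R^{n-q}))^{\KK},(C^{\otimes n})^{\KK}\bigr)
\end{equation*}
at the level of the $\KK$-transfer homotopy category. The simplicial Dold-Kan equivalence then turns these Hom-groups into Hom-groups in $\Ch(\KK)$, and by the construction of $\MM(\KK)$ in \cite{Ostvar:noncommutativemotives} these become morphisms in the motivic category of the form $\MM(\KK)(\MM(A)\otimes\MM(C^{\otimes n})[q-p-n],\MM(C^{\otimes n})[-n])$ (up to a straightforward grading bookkeeping).

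The final step invokes Bott periodicity in $\MM(\KK)$ in the form $\ZZ(1)=\MM(C_{0}(\R))[-1]$ used in the proof of Theorem \ref{theorem:KKisfibrant}. Smashing both arguments with $\ZZ(1)^{\otimes -n}$ kills the $n$-dependence entirely, and the colimit collapses to $\MM(\KK)\bigl(\MM(A),\ZZ(q)[p]\bigr)$, as desired. The main obstacle is not any deep input but rather careful bookkeeping: one must track the bidegree $(p,q)$ conventions through two distinct adjunctions, align the simplicial and chain-complex gradings under Dold-Kan, and confirm that the cancellation produced by Bott periodicity matches the motivic Tate twist $\ZZ(q)$ and cohomological shift $[p]$ precisely. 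Most of this bookkeeping has already been performed implicitly in the proof of Theorem \ref{theorem:KKisfibrant} in the special case $A\otimes S^{m}$, so the lemma is in essence a direct generalization of that computation.
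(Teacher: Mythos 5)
Your proposal follows the same strategy as the paper: reduce the stable $\SH^{\ast}$-group to an unstable $\HH^{\ast}$-group using stable fibrancy of $\KK$ (Theorem \ref{theorem:KKisfibrant}) and Corollary \ref{corollary:finitelypresentablecofibrantcolimit}, pass to $\MM(\KK)$ via the adjunction $(-)^{\KK}\dashv\mathbf{U}$, and then absorb the auxiliary suspension level using Bott periodicity $\ZZ(1)=\MM(C_0(\R))[-1]$. The only cosmetic difference is that the paper fixes a single $m\geq p-q,q$ at the outset rather than carrying the colimit along, which makes the degree bookkeeping slightly more transparent.
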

\begin{proof}
Fix some integer $m\geq p-q,q$.
Then $\SH^{\ast}\bigl(\Sigma_C^{\infty}A\otimes C^{\otimes -m}\otimes S^{q-p-m}\otimes C_{0}(\R^{m-q}),\KK\bigr)$ 
is isomorphic to $\HH^{\ast}\bigl(A\otimes S^{m+q-p}\otimes C_{0}(\R^{m-q}),(C^{\otimes m})^{\KK}\bigr)$ and hence, 
by Theorem \ref{theorem:KKisfibrant}, 
to 
\begin{equation*} 
\MM(\KK)\Bigl(\MM(A)\otimes\MM\bigl(C_{0}(\R^{m-q})\bigr)[q-m][m-q],\ZZ(m)[p+(m-q)]\Bigr),
\end{equation*} 
or equivalently
\begin{equation*} 
\MM(\KK)\bigl(\MM(A)\otimes\ZZ(m-q),\ZZ(m)[p]\bigr).
\end{equation*}
By Bott periodicity, tensoring with $\ZZ(m-q)$ implies the identification.
\end{proof}

The simplicial $\CC^{\ast}$-spectrum $\KK$ is intrinsically a simplicial $\CC^{\ast}$-symmetric spectrum via the 
natural action of the symmetric groups on the tensor products $(C^{\otimes n})^{\KK}$.
It is straightforward to show that $\KK$ is a ring spectrum in a highly structured sense.
\begin{lemma}
\label{lemma:KKcommutativemonoid}
$\KK$ is a commutative monoid in the category of $\CC^{\ast}$-symmetric spectra.
\end{lemma}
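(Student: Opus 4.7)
The plan is to transport the commutative monoid structure on the sphere $\CC^{\ast}$-symmetric spectrum $\Sym(C)$, obtained from Example \ref{example:symmetricexample}, along the lax symmetric monoidal endofunctor of $\Delta\CC^{\ast}-\Spc_{0}$ given by ${\bf U}\circ (-)^{\KK}$. Since $(-)^{\KK}$ is strong symmetric monoidal by its construction in \cite{Ostvar:noncommutativemotives} and ${\bf U}$ is its right adjoint, the composite inherits a canonical lax symmetric monoidal structure whose coherence map is precisely the natural transformation
\begin{equation*}
\X^{\KK}\otimes \Y^{\KK}\longrightarrow (\X\otimes\Y)^{\KK}
\end{equation*}
recorded earlier in Section \ref{subsection:KK-theory}, and whose unit is the canonical map $\C\to \C^{\KK}$ adjoint to $\id_{\C^{\KK}}$.

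First I would define the multiplication $\KK\wedge\KK\to\KK$ by specifying its bilinear components. For each pair $m,n\geq 0$, take the lax monoidal structure map
\begin{equation*}
\mu_{m,n}\colon (C^{\otimes m})^{\KK}\otimes (C^{\otimes n})^{\KK}\longrightarrow (C^{\otimes m}\otimes C^{\otimes n})^{\KK}=(C^{\otimes m+n})^{\KK},
\end{equation*}
and the unit $\eta\colon\C\to\C^{\KK}=(C^{\otimes 0})^{\KK}$ in level zero. The required $(\Sigma_{m}\times\Sigma_{n})$-equivariance of $\mu_{m,n}$, with $\Sigma_{m}\times\Sigma_{n}\hookrightarrow\Sigma_{m+n}$ permuting the tensor factors of $C^{\otimes m+n}$, follows from naturality of the lax structure in each slot applied to permutations $\sigma\in\Sigma_{m}$ acting on $C^{\otimes m}$ and $\tau\in\Sigma_{n}$ acting on $C^{\otimes n}$, together with the fact that $(-)^{\KK}$ preserves these symmetric group actions since it is a functor. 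Compatibility with the structure maps $\sigma_{n}\colon C\otimes(C^{\otimes n})^{\KK}\to (C^{\otimes n+1})^{\KK}$ of $\KK$ then amounts to the commutativity of the evident pentagons expressing that $\sigma_{n}$ factors as $\mu_{1,n}$ precomposed with $\eta\otimes\id$, which is a direct consequence of the definitions and of the functoriality of the lax structure maps in both arguments.

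Next I would verify associativity and commutativity of the resulting monoid structure. Associativity reduces to the commutativity of the diagrams
\begin{equation*}
\xymatrix@C=10pt{
(C^{\otimes m})^{\KK}\otimes (C^{\otimes n})^{\KK}\otimes (C^{\otimes p})^{\KK}\ar[r]^-{\mu_{m,n}\otimes\id}\ar[d]_-{\id\otimes\mu_{n,p}} & (C^{\otimes m+n})^{\KK}\otimes (C^{\otimes p})^{\KK}\ar[d]^-{\mu_{m+n,p}} \\
(C^{\otimes m})^{\KK}\otimes (C^{\otimes n+p})^{\KK}\ar[r]^-{\mu_{m,n+p}} & (C^{\otimes m+n+p})^{\KK}
}
\end{equation*}
which is the associativity axiom of the lax symmetric monoidal structure applied to the triple $(C^{\otimes m},C^{\otimes n},C^{\otimes p})$; similarly commutativity follows because $(-)^{\KK}$ is symmetric, so the coherence map $\mu_{m,n}$ intertwines the symmetry on $(C^{\otimes m})^{\KK}\otimes(C^{\otimes n})^{\KK}$ with the image under $(-)^{\KK}$ of the symmetry on $C^{\otimes m}\otimes C^{\otimes n}$, and the latter acts by the block transposition $\sigma_{m,n}\in\Sigma_{m+n}$ as required in the definition of a commutative monoid in $\Spt_{C}^{\Sigma}$ (cf.\ Definition \ref{definition:symmetric-sequence} and Example \ref{example:symmetricexample}). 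The unit axioms are analogous, using that $\eta$ is the unit of the adjunction $((-)^{\KK},{\bf U})$.

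The main obstacle is the combinatorial bookkeeping of symmetric group actions, in particular checking that $\mu_{m,n}$ is $\Sigma_{m}\times\Sigma_{n}$-equivariant in a manner compatible with the $\Sigma_{m+n}$-action on the codomain and that the structure maps intertwine the whole monoid structure; this is however a formal matter once one knows that $(-)^{\KK}$ is a symmetric monoidal functor on the underlying categories, so no new input beyond the construction in \cite{Ostvar:noncommutativemotives} is required. An equivalent and perhaps cleaner packaging is to regard $\Sym(C)$ as a commutative monoid in the category of $\CC^{\ast}$-simplicial symmetric sequences, apply $(-)^{\KK}$ levelwise to obtain the commutative monoid $\Sym(C)^{\KK}$, and identify $\KK$ with the underlying $\Sym(C)$-module via the unit $\C\to\C^{\KK}$ and Bott periodicity, as invoked in the proof of Theorem \ref{theorem:KKisfibrant}.
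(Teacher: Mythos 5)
The paper itself offers no proof of this lemma, remarking only that ``it is straightforward to show that $\KK$ is a ring spectrum in a highly structured sense''; your proposal supplies the argument that the author evidently had in mind. You correctly identify the essential mechanism: the lax symmetric monoidal structure on ${\bf U}\circ(-)^{\KK}$, recorded in the paper as the natural map $\X^{\KK}\otimes\Y^{\KK}\to(\X\otimes\Y)^{\KK}$, gives the bilinear components $\mu_{m,n}$; naturality in each variable yields $\Sigma_{m}\times\Sigma_{n}$-equivariance; and the associativity, symmetry, and unit axioms of a lax symmetric monoidal functor give the corresponding axioms for the monoid. This is correct and complete in substance.

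Two small slips worth fixing. First, when you argue that the structure map $\sigma_{n}\colon C\otimes(C^{\otimes n})^{\KK}\to(C^{\otimes n+1})^{\KK}$ ``factors as $\mu_{1,n}$ precomposed with $\eta\otimes\id$,'' the map you need is the adjunction unit $C\to C^{\KK}$ (the level-one component of the monoid unit $\Sym(C)\to\KK$), not the lax monoidal unit $\eta\colon\C\to\C^{\KK}$; as written the types do not match, although the intended argument is clear. Second, in the alternative packaging at the end, Bott periodicity plays no role: the identification of $\KK$ with a commutative $\Sym(C)$-algebra is achieved purely by restricting along the monoid map $\Sym(C)\to\Sym(C)^{\KK}$ in symmetric sequences furnished by the adjunction unit. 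Bott periodicity enters only in Theorem \ref{theorem:KKisfibrant}, where stable fibrancy is the issue, and invoking it here is a red herring. With these two points amended your argument is a sound replacement for the omitted proof.
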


The construction of motives and $\KK$ alluded to above works more generally for $\Group-\CC^{\ast}$-algebras.
In particular,
there exists an equivariant $KK$-theory $(-)^{\KK^{\Group}}$ for pointed simplicial $\Group-\CC^{\ast}$-spaces. 
For completeness we state the corresponding equivariant result: 
\begin{theorem}
\label{theorem:equivariantKKisfibrant}
The simplicial $\Group-\CC^{\ast}$-spectrum
\begin{equation*}
\xymatrix{
\KK^{\Group}\equiv\{n\ar@{|->}[r] & (C^{\otimes n})^{\KK^{\Group}}\}}
\end{equation*}
is stably fibrant and a commutative monoid in the category of $\Group-\CC^{\ast}$-symmetric spectra. 
\end{theorem}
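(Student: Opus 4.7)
The plan is to transcribe the proof of Theorem \ref{theorem:KKisfibrant} and Lemma \ref{lemma:KKcommutativemonoid} into the $\Group$-equivariant framework outlined in Section \ref{subsection:GG-CC-spaces}, replacing $\CC^{\ast}-\Alg$ by $\Group-\CC^{\ast}-\Alg$ throughout and Kasparov's $KK$ by $\Group$-equivariant $KK$-theory $\Group-KK$. First I would verify level fibrancy: each $(C^{\otimes n})^{\KK^{\Group}}$ is a simplicial abelian group in $A$-sections, hence pointwise fibrant; matrix and homotopy invariance of $\Group-KK$ are standard (cf.\ Higson's axiomatic treatment in \cite{Higson:KK} which goes through $\Group$-equivariantly), and flasqueness follows because $\Group$-equivariant $KK$-theory is split-exact on completely positive split short exact sequences of $\Group-\CC^{\ast}$-algebras. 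Hence $(C^{\otimes n})^{\KK^{\Group}}$ is $\CC^{\ast}$-projective fibrant in the sense of $\Box \Group-\CC^{\ast}-\Spc$ (equivalently $\Delta \Group-\CC^{\ast}-\Spc$), so $\KK^{\Group}$ is level fibrant.

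Next I would prove stable fibrancy by showing that for every $A\in\Group-\CC^{\ast}-\Alg$ and $m\geq 0$ the map
\begin{equation*}
\Group-\HH^{\ast}\bigl(A\otimes S^{m},(C^{\otimes n})^{\KK^{\Group}}\bigr)
\longrightarrow
\Group-\HH^{\ast}\bigl(C\otimes A\otimes S^{m},(C^{\otimes n+1})^{\KK^{\Group}}\bigr)
\end{equation*}
is an isomorphism. The reduction to this class of generators uses the equivariant analog of Theorem \ref{theorem:unstableweakgenerators}, which is available since all the arguments behind the generator statement are enrichment-formal and carry over verbatim to $\Group-\CC^{\ast}$-spaces. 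Through the adjunction of (\ref{equation:KKadjunction}) in its equivariant form and the equivariant category $\MM(\KK^{\Group})$ of noncommutative motives from \cite{Ostvar:noncommutativemotives}, this map is identified with tensoring by the equivariant Bott object $\ZZ(1)=\MM\bigl(C_{0}(\R)\bigr)[-1]$, which is an isomorphism by equivariant Bott periodicity for $\Group-KK$ (valid for compact second countable $\Group$).

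Finally, the monoid structure in $\CC^{\ast}$-symmetric spectra arises from the natural $\Sigma_{n}$-action on $(C^{\otimes n})^{\KK^{\Group}}$ together with the multiplication maps
\begin{equation*}
(C^{\otimes m})^{\KK^{\Group}}\otimes (C^{\otimes n})^{\KK^{\Group}}
\longrightarrow
(C^{\otimes m+n})^{\KK^{\Group}}
\end{equation*}
induced by the lax monoidal structure on $(-)^{\KK^{\Group}}$; commutativity and associativity follow from the corresponding properties of the Kasparov product in $\Group$-equivariant $KK$-theory.

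The main obstacle is the invocation of equivariant Bott periodicity inside $\MM(\KK^{\Group})$: one must verify that the construction of \cite{Ostvar:noncommutativemotives} extends to the equivariant setting in the form needed, i.e.\ that $\MM(\KK^{\Group})$ exists as a closed symmetric monoidal triangulated category in which $\ZZ(1)$ is invertible for every compact second countable group $\Group$. Granting this (which is a matter of running the motivic construction on top of $\Group$-equivariant $KK$ rather than $KK$), the remaining steps are essentially formal translations of the non-equivariant proofs; the compactness assumption on $\Group$ is used to guarantee the algebraic properties of $\Group-KK$ (stability, split exactness, Bott periodicity) on which the argument rests.
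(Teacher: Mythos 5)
Your proposal matches the paper's approach — the paper itself offers no proof beyond the remark that ``the construction of motives and $\KK$ alluded to above works more generally for $\Group-\CC^{\ast}$-algebras,'' so your plan of transcribing the proof of Theorem \ref{theorem:KKisfibrant} and Lemma \ref{lemma:KKcommutativemonoid} into the $\Group$-equivariant framework is precisely what is intended. You have also correctly isolated the one nontrivial point that the paper leaves implicit, namely that the motivic construction of \cite{Ostvar:noncommutativemotives} and Bott periodicity for $\ZZ(1)$ must be checked to extend to $\Group-KK$ for the relevant class of groups.
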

\vspace{0.1in}

\begin{remark}
It is possible to give an explicit model of $\KK$ as a $\CC^{\ast}$-functor.
\end{remark}
\newpage

\subsection{HL-theory and the Eilenberg-MacLane spectrum}
\label{subsection:localcyclichomology}

For the background material required in this section and the next we refer to \cite{Meyer}, \cite{Puschnigg} and \cite{Voigt}.
In particular,
when $\CC^{\ast}$-algebras are viewed as bornological algebras we always work with the precompact bornology in order 
to ensure that local cyclic homology satisfy split exactness, matrix invariance and homotopy invariance.
\vspace{0.1in}

Setting  
\begin{equation}
\label{equation:HLextension}
(A\otimes\Delta^{n}_{+})^{\HL}\equiv
\HL(A,-)\otimes\widetilde{\ZZ}[\Delta^{n}_{+}]
\end{equation}
extends local cyclic homology $\HL$ of $\CC^{\ast}$-algebras to pointed simplicial $\CC^{\ast}$-spaces. 
With respect to the usual composition product in local cyclic homology, 
this gives rise to a symmetric monoidal functor taking values in pointed simplicial $\CC^{\ast}$-spaces equipped 
with $\HL$-transfers,  
and an adjoint functor pair where ${\bf U}$ denotes the lax symmetric monoidal forgetful functor:
\begin{equation}
\label{equation:HLadjunction}
\xymatrix{
(-)^{\HL}\colon\Delta\CC^{\ast}-\Spc_{0} \ar@<3pt>[r] & 
\Delta\CC^{\ast}-\Spc^{\HL}_{0}\colon{\bf U} \ar@<3pt>[l] }
\end{equation}
The existence of (\ref{equation:HLadjunction}) is shown using (\ref{equation:HLextension}) by following the exact same steps 
as for the $\KK$-theory adjunction displayed in (\ref{equation:KKadjunction}).
In analogy with $\KK$ we may now define the structure maps 
\begin{equation*}
\xymatrix{
C\otimes (C^{\otimes n})^{\HL}\ar[r] &  (C^{\otimes n+1})^{\HL} }
\end{equation*}
in a simplicial $\CC^{\ast}$-spectrum we shall denote by $\HL$. 
Moreover, 
the natural $\Sigma_{n}$-action on $(C^{\otimes n})^{\HL}$ equips $\HL$ with the structure of a commutative monoid in 
${\bf Spt}^{\Sigma}_{C}$.
\vspace{0.1in}

Using the multiplication by $(2\pi i)^{-1}$ Bott periodicity isomorphism in local cyclic homology and an argument which runs
in parallel with the proof of Theorem \ref{theorem:KKisfibrant}, 
we deduce that also the local cyclic homology twisted Eilenberg-MacLane spectrum is stably fibrant:
\begin{theorem}
\label{theorem:HLisfibrant}
The simplicial $\CC^{\ast}$-spectrum
\begin{equation*}
\xymatrix{
\HL\equiv\{n\ar@{|->}[r] & (C^{\otimes n})^{\HL}\}}
\end{equation*}
is stably fibrant and a commutative monoid in ${\bf Spt}^{\Sigma}_{C}$.
\end{theorem}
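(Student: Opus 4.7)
The plan is to run the proof in parallel with Theorem \ref{theorem:KKisfibrant}, substituting local cyclic homology $\HL$ for Kasparov $KK$-theory throughout, with the one substantive change being that Bott periodicity in $\HL$ is implemented by multiplication with $(2\pi i)^{-1}$ rather than by the $KK$-theoretic Bott element. First I would verify that $\HL$ is level fibrant. Since $\Ev_{A}(C^{\otimes n})^{\HL}$ is a simplicial abelian group by the defining formula (\ref{equation:HLextension}), each level is projective fibrant. Homotopy invariance, matrix invariance and split exactness (with respect to completely positive split short exact sequences) of $\HL$ on separable $\CC^{\ast}$-algebras equipped with the precompact bornology are the standard properties of local cyclic homology; these properties carry over to pointed simplicial $\CC^{\ast}$-spaces via (\ref{equation:HLextension}) because the tensor product with $\widetilde{\ZZ}[\Delta^{n}_{+}]$ preserves all three conditions. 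Hence each $(C^{\otimes n})^{\HL}$ is $\CC^{\ast}$-projective fibrant.

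Second I would reduce the stable fibrancy condition, as in the proof of Theorem \ref{theorem:KKisfibrant}, to checking that for every $\CC^{\ast}$-algebra $A$ and every $m\geq 0$ the adjoint structure map
\begin{equation*}
\HH^{\ast}\bigl(A\otimes S^m,(C^{\otimes n})^{\HL}\bigr)
\longrightarrow
\HH^{\ast}\bigl(C\otimes A\otimes S^m,(C^{\otimes n+1})^{\HL}\bigr)
\end{equation*}
is an isomorphism. By Theorem \ref{theorem:unstableweakgenerators} it suffices to test this on the generators $A\otimes S^{m}$. Passing through the adjunction (\ref{equation:HLadjunction}) and into a triangulated category of local cyclic motives $\MM(\HL)$ defined in complete analogy with $\MM(\KK)$ from \cite{Ostvar:noncommutativemotives}, the map identifies with the Tate twist $-\otimes\ZZ(1)$ between the groups $\MM(\HL)\bigl(\MM(A),\ZZ(n)[2n-m]\bigr)$ and $\MM(\HL)\bigl(\MM(A)\otimes\ZZ(1),\ZZ(n+1)[2n-m+1]\bigr)$, where now $\ZZ(1)\equiv\MM\bigl(C_{0}(\R)\bigr)[-1]$ is the $\HL$-theoretic Bott object. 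The crux is then that Bott periodicity in local cyclic homology, realized by multiplication with $(2\pi i)^{-1}$, promotes $-\otimes\ZZ(1)$ to an isomorphism in $\MM(\HL)$; this is the main obstacle and the single point where the argument diverges from the $\KK$-theoretic one.

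Finally, to upgrade $\HL$ to a commutative monoid in $\Spt^{\Sigma}_{C}$ I would use that (\ref{equation:HLadjunction}) is a symmetric monoidal adjunction so that $(-)^{\HL}$ is lax symmetric monoidal, producing natural pairings
\begin{equation*}
(C^{\otimes m})^{\HL}\otimes (C^{\otimes n})^{\HL}\longrightarrow (C^{\otimes (m+n)})^{\HL}
\end{equation*}
which are $(\Sigma_{m}\times\Sigma_{n})$-equivariant and compatible with the structure maps. These pairings are induced by the composition product in local cyclic homology, which is associative and graded commutative; the sign subtleties disappear because the factors land in the even Adams grading, so the induced multiplication on $\HL$ is strictly commutative in $\Spt^{\Sigma}_{C}$. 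Together with the unit map $\Sigma^{\infty}_{C}\C\to\HL$ coming from the tautological class $1\in\HL(\C,\C)$ this exhibits $\HL$ as a commutative monoid, exactly parallel to Lemma \ref{lemma:KKcommutativemonoid}.
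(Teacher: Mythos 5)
Your proposal matches the paper's approach exactly: the paper's proof of Theorem \ref{theorem:HLisfibrant} consists of the single remark that one runs the proof of Theorem \ref{theorem:KKisfibrant} in parallel, substituting the multiplication-by-$(2\pi i)^{-1}$ Bott isomorphism of local cyclic homology, with the commutative monoid structure coming from the symmetric monoidal adjunction (\ref{equation:HLadjunction}) as discussed just before the theorem statement. One small slip: the target of the Tate-twist map should be $\MM(\HL)\bigl(\MM(A)\otimes\ZZ(1),\ZZ(n+1)[2n-m]\bigr)$ rather than $\ZZ(n+1)[2n-m+1]$, since $\MM(C\otimes A)[-2]\cong\MM(A)\otimes\ZZ(1)$ and $\ZZ(n)[2n-m]\otimes\ZZ(1)=\ZZ(n+1)[2n-m]$, but this does not affect the argument.
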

\vspace{0.1in}

We shall leave the formulation of the equivariant version of Theorem \ref{theorem:HLisfibrant}, 
for $\Group$ totally disconnected, 
to the reader. 
\newpage

\subsection{The Chern-Connes character}
\label{subsection:theChernConnescharacter}

Local cyclic homology of $\CC^{\ast}$-algebras defines an exact, matrix invariant and homotopy invariant functor into
abelian groups.
Thus the universal property of $\KK$-theory implies that there exists a unique natural transformation between $\KK$-theory 
and local cyclic homology.
Moreover, it turns out this is a symmetric monoidal natural transformation.
By the definition of $\KK$-theory and local cyclic homology for pointed simplicial $\CC^{\ast}$-spaces in terms of left Kan 
extensions, 
if follows that there exists a unique symmetric monoidal natural transformation 
\begin{equation*}
\xymatrix{
(-)^{\KK}\ar[r] & (-)^{\HL}.}
\end{equation*}
We have established the existence of the Chern-Connes character. 
\begin{theorem}
\label{theorem:CCcharacter}
There exists a ring map of simplicial $\CC^{\ast}$-symmetric spectra
\begin{equation}
\label{equation:CCcharacter}
\xymatrix{
\KK\ar[r] & \HL.}
\end{equation}
\end{theorem}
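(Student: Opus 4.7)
The plan is to exploit the universal property of bivariant $KK$-theory. By work of Cuntz-Meyer-Rosenberg, the functor $\KK(A,-)$ is universal among split-exact, matrix invariant, homotopy invariant functors from $\CC^{\ast}-\Alg$ to abelian groups equipped with a bivariant composition product. Since local cyclic homology $\HL(A,-)$ possesses all three properties for the precompact bornology by the work of Puschnigg recalled in Section \ref{subsection:localcyclichomology}, the universal property furnishes a unique natural transformation $\KK(A,B)\rightarrow\HL(A,B)$ compatible in $A$ and $B$. A second appeal to universality, applied to the Kasparov composition product, shows that this transformation is multiplicative, hence defines a symmetric monoidal natural transformation of bifunctors.

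Next I would promote this classical Chern-Connes character from $\CC^{\ast}-\Alg$ to the category of pointed simplicial $\CC^{\ast}$-spaces. Both functors $(-)^{\KK}$ and $(-)^{\HL}$ were defined in (\ref{equation:KKextension}) and (\ref{equation:HLextension}) by the prescriptions
\begin{equation*}
(A\otimes\Delta^{n}_{+})^{\KK}\equiv\KK(A,-)\otimes\widetilde{\ZZ}[\Delta^{n}_{+}],
\qquad
(A\otimes\Delta^{n}_{+})^{\HL}\equiv\HL(A,-)\otimes\widetilde{\ZZ}[\Delta^{n}_{+}],
\end{equation*}
and extended to all of $\Delta\CC^{\ast}-\Spc_{0}$ by a left Kan extension along the Yoneda-type inclusion of the generators $A\otimes\Delta^{n}_{+}$. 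Since left Kan extensions are functorial in natural transformations between the diagrams being extended, the classical character induces a unique symmetric monoidal natural transformation $(-)^{\KK}\rightarrow(-)^{\HL}$ of functors $\Delta\CC^{\ast}-\Spc_{0}\rightarrow\Delta\CC^{\ast}-\Spc_{0}$.

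Evaluating this transformation at $C^{\otimes n}$ yields maps $(C^{\otimes n})^{\KK}\rightarrow(C^{\otimes n})^{\HL}$. I would then check that these maps assemble into a morphism of simplicial $\CC^{\ast}$-symmetric spectra: equivariance for the symmetric group $\Sigma_{n}$ is automatic from functoriality of the transformation in the pointed simplicial $\CC^{\ast}$-space argument, and compatibility with the structure maps $C\otimes(C^{\otimes n})^{\bullet}\rightarrow(C^{\otimes n+1})^{\bullet}$ follows because these structure maps are themselves built from the unit and monoidality of $(-)^{\bullet}$, both of which are preserved by a symmetric monoidal transformation. Finally, the ring map condition reduces to the statement that the multiplications on $\KK$ and $\HL$, which stem from the respective monoidal structures on $\Delta\CC^{\ast}-\Spc^{\KK}_{0}$ and $\Delta\CC^{\ast}-\Spc^{\HL}_{0}$ constructed in Lemma \ref{lemma:KKcommutativemonoid} and Theorem \ref{theorem:HLisfibrant}, are intertwined by the transformation.

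The main obstacle is the verification that the classical Chern-Connes character is genuinely symmetric monoidal, not merely multiplicative at the level of composition products: one must check compatibility of the character with the external tensor product of $KK$-correspondences and with the exterior product on local cyclic bicomplexes, together with the unitality condition on $\C$. I would establish this either by reducing, via universality, to the tautological identity on $\KK(\C,\C)=\ZZ$, or by direct inspection using Puschnigg's bivariant Chern character defined through the $X$-complex of tensor algebras, where monoidality on nuclear $\CC^{\ast}$-algebras is built into the construction. Once symmetric monoidality is in hand, all the remaining checks are formal consequences of the universal property of left Kan extension.
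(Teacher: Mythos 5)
Your proposal follows the paper's proof exactly: universality of $KK$-theory furnishes a unique symmetric monoidal natural transformation between the classical bifunctors, which is then promoted to pointed simplicial $\CC^{\ast}$-spaces via the left Kan extensions defining $(-)^{\KK}$ and $(-)^{\HL}$. You in fact go further than the paper by spelling out the assembly into a map of symmetric spectra (equivariance, structure maps, ring condition) and by explicitly flagging the need to verify that the classical Chern-Connes character is genuinely symmetric monoidal, a point the paper asserts with only a brief ``it turns out.''
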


Since the local cyclic homology of a $\CC^{\ast}$-algebra $A$ is a complex vector space, 
the Chern-Connes character for $A$ induces a $\C$-linear map 
\begin{equation}
\label{equation:classicalCCwithC}
\KK_{\ast}(\C,A)\otimes_{\ZZ}\C\rightarrow\HL_{\ast}(\C,A).
\end{equation}
By naturality there exists an induced map of simplicial $\CC^{\ast}$-symmetric spectra
\begin{equation}
\label{equation:CCwithC}
\xymatrix{
\KK\otimes_{\ZZ}\C\ar[r] & \HL.}
\end{equation}
The constituent spaces in the spectrum on left hand side in (\ref{equation:CCwithC}) are 
$n\mapsto (C^{\otimes n})^{\KK}\otimes_{\ZZ}\C$.
Recall that the map (\ref{equation:classicalCCwithC}) is an isomorphism provided $A$ is a member of the so-called bootstrap category
comprising the $\CC^{\ast}$-algebras with a $\KK$-equivalence to a member of the smallest class of nuclear $\CC^{\ast}$-algebras
that contains $\CC$ and is closed under countable colimits, extensions and $\KK$-equivalences.
Equivalently, 
$A$ is in the bootstrap category if and only if it is $\KK$-equivalent to a commutative $\CC^{\ast}$-algebra.
This implies (\ref{equation:CCwithC}) is a pointwise weak equivalence when restricted to the bootstrap category.  
\vspace{0.1in}

For second countable totally disconnected locally compact groups the work of Voigt \cite{Voigt} allows us to construct as in 
(\ref{equation:CCcharacter}) an equivariant Chern-Connes character.
\newpage

\subsection{$K$-theory of $\CC^{\ast}$-algebras}
\label{subsection:KtheoryofCstaralgebras}
The prerequisite to the $K$-theory of $\CC^{\ast}$-algebras proposed here is Waldhausen's work on $K$-theory of 
categories with cofibrations and weak equivalences \cite{Waldhausen:LNM1126}.
In what follows we tend to confine the general setup to cofibrantly generated pointed model categories $\M$ in 
order to streamline our presentation.
Throughout we consider the homotopy invariant projective model structure on $\Box\CC^{\ast}-\Spc_{0}$ and the 
stable model structures on $S^{1}$-spectra of pointed cubical $\CC^{\ast}$-algebras.
\vspace{0.1in}

A Waldhausen subcategory of $\M$ is a full subcategory $\NN\subset\M$ of cofibrant objects including a zero-object 
$\ast$ with the property that if $\X\rightarrow\Y$ is a map in $\NN$ and $\X\rightarrow\Z$ is a map in $\M$, 
then the pushout $\Y\coprod_{\X}\W$ belongs to $\NN$.
With notions of cofibrations and weak equivalences induced from the model structure on $\M$ it follows that $\NN$ 
is a category with cofibrations and weak equivalences.
For those not familiar with the axioms for cofibrations and weak equivalences in $K$-theory we state these in detail.
\begin{definition}
\label{definition:categorywithcofibrationsandweakequivalences}
A category with cofibrations and weak equivalences consists of a pointed category $\CCC$ equipped with two subcategories 
of cofibrations ${\bf cof}\CCC$ and weak equivalences ${\bf weq}\CCC$ such that the following axioms hold.
\begin{enumerate}[{\bf Cof} 1:]
\item Every isomorphism is a cofibration.
\item Every object is cofibrant.
That is,
$\ast\rightarrow\X$ is in ${\bf cof}\CCC$ for every object $\X$ of $\CCC$.
\item If $\X\rightarrow\Y$ is a cofibration, 
then the pushout of every diagram of the form 
\begin{equation*}
\xymatrix{
\Z &  
\X\ar[r]\ar[l] &
\Y }
\end{equation*}
in $\CCC$ exists, 
and the cobase change map $\Z\rightarrow\Z\cup_{\X}\Y$ is in ${\bf cof}\CCC$.
\end{enumerate}
\begin{enumerate}[{\bf Weq} 1:]
\item Every isomorphism is a weak equivalence.
\item The gluing lemma holds.
That is, 
for every commutative diagram
\begin{equation*}
\xymatrix{ 
\Z\ar[d] &
\X\ar[l]\ar[r]\ar[d] &
\Y\ar[d] \\
\Z' &
\X'\ar[l]\ar[r] &
\Y' }
\end{equation*}
in $\CCC$ where the vertical maps are weak equivalences and the right hand horizontal maps are cofibrations, 
$\Z\cup_{\X}\Y\rightarrow\Z'\cup_{\X'}\Y'$ is in ${\bf weq}\CCC$.
\end{enumerate}
\end{definition}

Applying the $S_{\bullet}$-construction we obtain a simplicial category $wS_{\bullet}(\NN)$.  
Taking the nerve produces a simplicial space $[n]\mapsto NwS_{n}(\NN)$, 
and $K(\NN)$ is defined by the loops $\Omega \vert\,NwS_{\bullet}(\NN)\,\vert$ on the realization of this 
simplicial space.
The algebraic $K$-groups of $\NN$ are defined as $K_{n}(\NN)\equiv\pi_{n}K(\NN)$.
With these definitions one finds that the abelian group $K_{0}(\NN)$ is generated by symbols $[\X]$ where 
$\X$ is an object of $\NN$, 
subject to the relations $[\X]=[\Y]$ if there is a weak equivalence $\X\rightarrow\Y$ and 
$[\Z]=[\X]+[\Y]$ if there is a cofibration sequence $\X\rightarrow\Z\rightarrow\Y$.
\vspace{0.1in}

The algebraic $K$-theory spectrum of $\NN$ is defined by iterating the $S_{\bullet}$-construction forming
$\vert\,NwS^{(n)}_{\bullet}(\NN)\,\vert$ for $n\geq 1$.
It is not difficult to verify that there is a symmetric spectrum structure on the $K$-theory spectrum of $\NN$.
Since we will not make use of this important extra structure here we refer the reader to \cite{Schwede:SS} for details.
\vspace{0.1in}

The identity map on the $K$-theory of the full subcategory of cofibrant objects $\M_{cof}$ is 
null-homotopic by a version of the Eilenberg swindle.
For this reason, 
$K$-theory deals with subcategories of $\M_{cof}$ defined by finiteness conditions which are 
typically not preserved under infinite coproducts.
The cube lemma for cofibrant objects as in \cite[Lemma 5.2.6]{Hovey:Modelcategories} implies 
the full subcategory ${\bf fp}\M_{cof}$ of finitely presentable objects is also a Waldhausen 
subcategory of $\M$.
With these choices of subcategories of pointed model categories we get, 
by combining \cite[Corollary 3.9]{DS:Ktheory} and \cite[Theorem 3.3]{Sagave:diplom}, 
the next result.
\begin{corollary}
\label{corollary:stableQuillenKisomorphism}
Every Quillen equivalence $\M\rightarrow\NN$ between pointed stable model categories induces a weak equivalence 
\begin{equation*}
\xymatrix{
K({\bf fp}\M_{cof})\ar[r]^-{\sim} &
K({\bf fp}\NN_{cof}). }
\end{equation*}
\end{corollary}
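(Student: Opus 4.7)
The plan is to deduce the corollary directly from the two cited results, so the task is essentially to verify that their hypotheses apply in our situation. My first step would be to check that ${\bf fp}\M_{cof}$ is a genuine Waldhausen subcategory of $\M$ in the sense of Definition \ref{definition:categorywithcofibrationsandweakequivalences}, with cofibrations and weak equivalences inherited from the ambient model structure and the zero object coming from $\M$ being pointed. Closure of the cofibrant part under pushouts along cofibrations is supplied by the cube lemma \cite[Lemma 5.2.6]{Hovey:Modelcategories}; closure of finitely presentable objects under finite colimits holds in any locally presentable category, which in particular covers the cofibrantly generated pointed cases of interest here. The same remarks apply to ${\bf fp}\NN_{cof}$.

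Next, given a Quillen equivalence $F\colon \M \rightleftarrows \NN \colon G$, the left adjoint $F$ preserves cofibrations, the zero object, and, by Ken Brown's lemma, weak equivalences between cofibrant objects, so $F$ restricts to an exact functor of Waldhausen categories $\M_{cof} \to \NN_{cof}$. The one genuine obstacle is that $F$ need not preserve finite presentability strictly; it only does so up to weak equivalence. To handle this I would argue on the level of homotopy categories: since $\mathbf{R}G$ commutes with arbitrary coproducts (its left adjoint being a left Quillen functor on a stable model category), the total derived functor $\mathbf{L}F\colon \Ho(\M) \to \Ho(\NN)$ is an equivalence of triangulated categories that sends compact objects to compact objects. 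Under the usual identification of compact objects in the homotopy category of a cofibrantly generated stable model category with the essential image of ${\bf fp}\M_{cof}$, this restricts to an equivalence between the two triangulated subcategories of compact objects.

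At this point I would invoke \cite[Corollary 3.9]{DS:Ktheory}, which upgrades the above homotopy-category level comparison to a weak equivalence of Waldhausen $K$-theory spectra, provided one works with the Dwyer--Kan localizations of the full subcategories of cofibrant-and-fibrant objects. To identify the resulting $K$-theory spectrum with $K({\bf fp}\M_{cof})$ in the sense of the statement, I would then apply \cite[Theorem 3.3]{Sagave:diplom}. Chaining these two inputs yields the asserted weak equivalence. The main obstacle in this outline is the step bridging ``compact object in the homotopy category'' and ``finitely presentable cofibrant object in the model category'': Waldhausen $K$-theory is sensitive to more than just the homotopy category, so the passage must be made at the level of simplicial or model-categorical data. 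This is precisely the technical content of the two cited references, and the efficient route is to invoke them rather than reconstruct their arguments here.
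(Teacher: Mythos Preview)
Your proposal is correct and follows essentially the same route as the paper: the paper simply states that, with the cube lemma \cite[Lemma~5.2.6]{Hovey:Modelcategories} guaranteeing that ${\bf fp}\M_{cof}$ is a Waldhausen subcategory, the result follows by combining \cite[Corollary~3.9]{DS:Ktheory} and \cite[Theorem~3.3]{Sagave:diplom}. Your version expands on why the hypotheses of those two results are satisfied, which is helpful, but the structure of the argument is identical.
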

\vspace{0.1in}

A functor between categories with cofibrations and weak equivalences is called exact if it preserves the zero-object,
cofibrations, weak equivalences and cobase change maps along cofibrations.
An exact functor $F$ is a $K$-theory equivalence if the induced map $\Omega \vert\,NwS_{\bullet}(F)\,\vert$ is a 
homotopy equivalence.
Every left Quillen functor induces an exact functor between the corresponding full subcategories of finitely presentable 
and cofibrant objects.
\vspace{0.1in}

We give the following widely applicable characterization of cofibrant and finitely presentable objects in terms of cell complexes.
\begin{lemma}
\label{lemma:cofibrantfinitelypresentable}
Suppose the domains and codomains of the generating cofibrations $I_{\M}$ are finitely presentable.
Then an object $\X$ of $\M$ is cofibrant and finitely presentable if and only if it is a retract of 
a finite $I_{\M}$-cell complex $\Z$ of the form
\begin{equation*}
\xymatrix{
\ast=\Z_{0}\ar[r] & \Z_{1}\ar[r] & \cdots\ar[r] & \Z_{n}=\Z,}
\end{equation*}
where $\Z_{i}\rightarrow\Z_{i+1}$ is the pushout of a generating projective cofibration.
\end{lemma}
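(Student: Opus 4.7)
The plan is to prove the two implications separately, relying on standard closure properties of cofibrant and of finitely presentable objects, together with the small object argument and the lifting axiom.

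For the easier direction, suppose $\X$ is a retract of a finite $I_{\M}$-cell complex $\Z=\Z_n$ with filtration $\ast=\Z_0\to\Z_1\to\cdots\to\Z_n$, each $\Z_i\to\Z_{i+1}$ being a pushout of some $C_i\to D_i$ in $I_{\M}$. Cofibrancy of $\Z$ is immediate from the definition (each stage is a cofibration, $\ast$ is cofibrant, and cofibrations compose), and retracts of cofibrant objects are cofibrant by \textbf{CM} 3. For finite presentability, I would argue by induction on $n$: $\Z_0=\ast$ is finitely presentable; if $\Z_i$ is finitely presentable and $C_i,D_i$ are finitely presentable (by the standing assumption on $I_{\M}$), then the pushout $\Z_{i+1}=\Z_i\coprod_{C_i}D_i$ is finitely presentable, since finitely presentable objects are closed under finite colimits in any locally presentable category. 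Retracts of finitely presentable objects are finitely presentable, finishing the implication.

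For the converse, suppose $\X$ is cofibrant and finitely presentable. Applying Quillen's small object argument to $\ast\to\X$ with respect to $I_{\M}$ produces a factorization
\begin{equation*}
\xymatrix{\ast\ar[r] & \Z\ar@{->>}[r]^-{\sim} & \X}
\end{equation*}
in which $\ast\to\Z$ is a (possibly transfinite) relative $I_{\M}$-cell complex and $\Z\to\X$ is an acyclic fibration. Since $\X$ is cofibrant, the lifting axiom \textbf{CM} 4 supplies a section $s\colon\X\to\Z$, exhibiting $\X$ as a retract of $\Z$. Now $\Z$ is canonically a filtered colimit of its finite subcomplexes $\Z_\alpha$, where $\alpha$ ranges over finite subsets of the set of cells of $\Z$; each $\Z_\alpha$ is a finite $I_{\M}$-cell complex (coproducts of cells within a single stage can be broken up into a sequence of single-cell pushouts, yielding exactly the form stated in the lemma). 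Because $\X$ is finitely presentable, the section $s\colon\X\to\Z=\colim_\alpha \Z_\alpha$ factors through some $\Z_\alpha$, giving maps $\X\to\Z_\alpha\to\Z\to\X$ whose composite is the identity. Hence $\X$ is a retract of the finite $I_{\M}$-cell complex $\Z_\alpha$.

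The only delicate point in the argument is the factorization of the section through a finite subcomplex, which rests on two ingredients: that $\Z$ is genuinely the filtered colimit of its finite subcomplexes (a straightforward inspection of the small object construction, using that each stage is built by attaching cells indexed by a set), and that finite presentability of $\X$ converts this colimit presentation into a factorization. The combinatorial reorganization of a finite set of attached cells into a linearly ordered sequence of single-cell pushouts $\Z_0\to\Z_1\to\cdots\to\Z_n$ compatible with the attaching maps is routine: any total order refining the partial order on cells induced by the original transfinite filtration works, so no essential obstruction arises here.
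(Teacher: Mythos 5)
Your proof is correct and takes essentially the same route as the paper: obtain $\X$ as a retract of a (possibly transfinite) $I_{\M}$-cell complex, and then use finite presentability to push the retraction down to a finite subcomplex. The one place where you compress significantly is the claim that ``$\Z$ is canonically a filtered colimit of its finite subcomplexes,'' which you call a ``straightforward inspection of the small object construction.'' This is true under the hypothesis that the domains and codomains of $I_{\M}$ are finitely presentable, but it is not free: not every finite subset of cells determines a subcomplex, since the attaching map of each chosen cell must itself factor through the cells already chosen. Verifying that one can always enlarge a finite set of cells to a genuine finite subcomplex requires an induction on the presentation ordinal -- exactly the backward ``peeling'' argument the paper carries out. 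Concretely, the paper first factors $\X\to\X_\infty$ through $\X(\lambda'_i)$, the pushout of $\X_i$ along only a finite subset $\lambda'_i$ of cells at stage $i$, then uses finite presentability of the coproduct $\coprod_{\lambda\in\lambda'_i}s_\lambda$ to factor its attaching map through a finite-at-stage-$(i-1)$ subobject $\X(\lambda''_{i-1})$, and iterates down to stage $0$. This yields a genuine finite subcomplex $\X(\lambda'_0)(\lambda'_1)\cdots(\lambda'_i)$ through which $\X$ factors. So your argument is not wrong, but the step you flag as ``the only delicate point'' is in fact the whole content of the converse direction; if you intend to present this as a full proof you should unpack that inspection into the inductive factorization, or at least cite the compactness argument in the style of Lemma \ref{lemma:compactness}.
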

\begin{proof}
If $\X$ is a retract of a finite $I_{\M}$-cell complex, 
then $\X$ is cofibrant and finitely presentable since a finite colimit of finitely presentable 
objects is finitely presentable and a retract of a finitely presentable object is finitely presentable.
\vspace{0.1in}

Conversely, 
every cofibrant object $\X$ is a retract of the colimit $\X_{\infty}$ of an $I_{\M}$-cell complex 
\begin{equation*}
\xymatrix{
\ast=\X_{0}\ar[r] & \X_{1}\ar[r] & \X_{2}\ar[r] & \cdots\ar[r] & \X_{n}\ar[r] & \cdots} 
\end{equation*}
for pushout diagrams where the top map is a coproduct of generating cofibrations:
\begin{equation}
\label{diagram:cellpushout}
\xymatrix{
\coprod_{\lambda\in\lambda_{i}} s_{\lambda} \ar[d]\ar[r] & 
\coprod_{\lambda\in\lambda_{i}} t_{\lambda} \ar[d]\\
\X_{i}\ar[r] & \X_{i+1}}
\end{equation}
If $\lambda'_{i}\subset\lambda_{i}$ define $\X(\lambda'_{i})\subset\X_{i+1}$ by taking the pushout 
along the attaching maps $s_{\lambda}\rightarrow\X_{i}$ for $\lambda\in\lambda'_{i}$ as in 
(\ref{diagram:cellpushout}).
Note that, 
since $\X$ is finitely presentable, 
there exists a factoring $\X\rightarrow\X(\lambda'_{i})\rightarrow\X_{\infty}$ for 
$\lambda'_{i}\subset\lambda_{i}$ a finite subset.
Likewise, since the coproduct is finitely presentable, 
$\coprod_{\lambda\in\lambda'_{i}}s_{\lambda}\rightarrow\X_{i}$ factors through $\X(\lambda''_{i-1})$ 
for some finite subset $\lambda''_{i-1}$ of $\lambda_{i-1}$.
Clearly $\X(\lambda'_{i})$ is the filtered colimit of $\X(\lambda'_{i-1})(\lambda'_{i})$ for finite 
$\lambda'_{i-1}\subset\lambda_{i-1}$ containing $\lambda''_{i-1}$. 
Hence the map $\X\rightarrow\X(\lambda'_{i})$ factors through some $\X(\lambda'_{i-1})(\lambda'_{i})$.

Iterating this argument we find a factoring of the form
\begin{equation*}
\xymatrix{
\X\ar[r] & \X(\lambda'_{0})(\lambda'_{1})\cdots(\lambda'_{i-1})(\lambda'_{i})\ar[r] & \X_{\infty}, } 
\end{equation*}
as desired.
\end{proof}
\begin{remark}
The last part of the proof does not require that the codomains of $I_{\M}$ are finitely presentable.
\end{remark}

\begin{lemma}
\label{lemma:fpcofibrantfactorization}
Suppose $\M$ is weakly finitely generated and $\X$ and $\Y$ are finitely presentable cofibrant objects in $\M$.
Let $\id_{\M}\rightarrow\RRR$ denote the fibrant replacement functor on $\M$ obtained by applying the small object 
argument to the set $J'_{\M}$.
Then for every map $\X\rightarrow\RRR\Y$ there exists a finitely presentable and cofibrant object $\Y'$ and a 
commutative diagram with horizontal weak equivalences:
\begin{equation*}
\xymatrix{
& X\ar[d]\ar@{=}[r] & \X\ar[d]\\
\Y\ar@{>->}[r] & \Y'\ar@{>->}[r] & \RRR\Y }
\end{equation*}
\end{lemma}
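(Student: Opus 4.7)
The plan is to imitate the factoring through finite subcomplexes argument used in the proof of Lemma \ref{lemma:cofibrantfinitelypresentable}, applied to the small object argument construction of $\RRR$. Since $\M$ is weakly finitely generated, $\RRR\Y$ is obtained as the colimit of a sequence
\begin{equation*}
\Y = \Y_{0}\rightarrow\Y_{1}\rightarrow\cdots\rightarrow\Y_{n}\rightarrow\cdots
\end{equation*}
where each $\Y_{i}\rightarrow\Y_{i+1}$ is the pushout of a coproduct $\coprod_{\lambda\in\Lambda_{i}}s_{\lambda}\rightarrow\coprod_{\lambda\in\Lambda_{i}}t_{\lambda}$ of maps in $J'_{\M}$, and every $s_{\lambda}\rightarrow t_{\lambda}$ has finitely presentable domain and codomain. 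Given a finite subset $\Lambda'\subset\Lambda_{i}$, write $\Y_{i}(\Lambda')$ for the pushout formed using only the indices in $\Lambda'$; then $\Y_{i}(\Lambda')$ is a finitely presentable cofibrant object, $\Y_{i}\rightarrow\Y_{i}(\Lambda')$ is a finite composition of pushouts of maps in $J'_{\M}$, and $\Y_{i+1}=\colim\,\Y_{i}(\Lambda')$ as $\Lambda'$ ranges over finite subsets of $\Lambda_{i}$.

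First, using that $\X$ is finitely presentable and $\RRR\Y=\colim\,\Y_{i}$, I factor $\X\rightarrow\RRR\Y$ through some $\Y_{n}$. Next, I proceed by reverse induction on $n$. Since $\X$ is finitely presentable and $\Y_{n}=\colim\,\Y_{n-1}(\Lambda')$, the map $\X\rightarrow\Y_{n}$ factors through $\Y_{n-1}(\Lambda'_{n-1})$ for some finite $\Lambda'_{n-1}\subset\Lambda_{n-1}$. By finite presentability of $\coprod_{\lambda\in\Lambda'_{n-1}}s_{\lambda}$, the attaching map into $\Y_{n-1}$ factors through some $\Y_{n-2}(\Lambda''_{n-2})$, and we can enlarge $\Lambda'_{n-2}\supset\Lambda''_{n-2}$ so that the required factoring $\X\rightarrow\Y_{n-1}(\Lambda'_{n-1})$ descends to a factoring through $\Y_{n-2}(\Lambda'_{n-2})(\Lambda'_{n-1})$. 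Iterating all the way down to stage $0$ produces a chain of finite subcomplexes with colimit a finitely presentable cofibrant object $\Y'$, fitting into a factoring
\begin{equation*}
\Y=\Y_{0}\,\,\cof\,\,\Y'\,\,\cof\,\,\RRR\Y
\end{equation*}
through which $\X\rightarrow\RRR\Y$ passes.

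By construction $\Y\rightarrow\Y'$ is a finite composition of cobase changes of maps in $J'_{\M}$, hence a cofibration and a weak equivalence (trivial cofibrations are closed under pushout and transfinite composition). The extension $\Y'\rightarrow\RRR\Y$ can also be presented as a transfinite composition of cobase changes of maps in $J'_{\M}$, simply by continuing the small object construction beyond $\Y'$ (using the remaining indices in each $\Lambda_{i}$, together with the later attaching data of $\RRR\Y$); alternatively one applies the two-out-of-three property to the composite $\Y\rightarrow\Y'\rightarrow\RRR\Y$, which is the fibrant replacement weak equivalence for $\Y$. Either way, $\Y'\rightarrow\RRR\Y$ is a (trivial) cofibration, which gives the required diagram.

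The only real point requiring care is the reverse induction step, where enlarging the finite index sets must be done coherently so that, at every stage, the attaching maps of the already-chosen finite subcomplex factor through the finite subcomplex chosen at the previous stage; this is the direct analogue of the argument at the end of the proof of Lemma \ref{lemma:cofibrantfinitelypresentable} and is handled by the same finite-presentability-plus-filtered-colimit dance. Once this bookkeeping is set up, finite presentability and cofibrancy of $\Y'$, and the weak equivalence and cofibration properties of the two horizontal maps, are immediate from the properties of $J'_{\M}$ and the closure properties of trivial cofibrations.
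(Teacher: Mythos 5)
Your proof follows essentially the same approach as the paper's: factor $\X\rightarrow\RRR\Y$ through a finite stage of the small object construction, then inductively pass to finite subcomplexes at each stage using finite presentability of the $J'_{\M}$-data, exactly as in the proof of Lemma \ref{lemma:cofibrantfinitelypresentable}. You are somewhat more explicit about the coherence of the successive choices of finite index sets (and offer two-out-of-three as a cleaner route for $\Y'\rightarrow\RRR\Y$), but these are refinements of the same argument rather than a different strategy.
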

\begin{proof}
Denote by $\RRR_{i}$ all diagrams of the form
\begin{equation*}
\xymatrix{
s'_{\lambda} \ar[d]\ar[r] & \Y_{i-1}\ar[d]\\
t'_{\lambda} \ar[r] & \ast}
\end{equation*}
where $s'_{\lambda}\rightarrow t'_{\lambda}$ is a map in $J'_{\M}$.
Then $\RRR\Y$ is the colimit of the diagram
\begin{equation*}
\xymatrix{
\Y=\Y_{0}\ar@{>->}[r]^-{\sim} & \Y_{1}\ar@{>->}[r]^-{\sim} &
\Y_{2}\ar@{>->}[r]^-{\sim} &\cdots \ar@{>->}[r]^-{\sim} & 
\Y_{n}\ar@{>->}[r]^-{\sim} &\cdots }
\end{equation*}
for pushout diagrams 
\begin{equation*}
\xymatrix{
\coprod s'_{\lambda} \ar[d]\ar[r] & \coprod t'_{\lambda} \ar[d]\\
\Y_{i-1} \ar[r] & \Y_{i}}
\end{equation*}
indexed by the set $\RRR_{i}$.
Since $\X$ is finitely presentable the map $\X\rightarrow\RRR\Y$ factors through some $\Y_{i}$.
Now the trick is to observe that $\Y_{i}$ is a filtered colimit indexed by the finite subsets of 
$\RRR_{i}$ ordered by inclusion.
Hence there is a factoring 
$\X\rightarrow\Y_{\lambda_{i}}\overset{\sim}{\cof}\Y_{i}\overset{\sim}{\cof}\RRR\Y$ 
for some finite subset $\lambda_{i}\subset\RRR_{i}$.
Iterating this argument we find finite subsets $\lambda_{k}\subset\RRR_{k}$ for $1\leq k\leq i$, 
and some factoring 
$\X\rightarrow\Y'\equiv\Y_{\lambda_{1}}\overset{\sim}{\cof}\cdots\overset{\sim}{\cof}
\Y_{\lambda_{i}}\overset{\sim}{\cof}\Y_{i}\overset{\sim}{\cof}\RRR\Y$.
By construction, 
there is an acyclic cofibration $\Y\overset{\sim}{\cof}\Y'$ and $\Y'$ is both finitely presentable and cofibrant.
\end{proof}

Next we recall a much weaker homotopical finiteness condition first introduced in special cases in 
\cite[\S2.1]{Waldhausen:LNM1126}.
An object of $\M$ is called homotopy finitely presentable if it is isomorphic in the homotopy category 
of $\M$ to a finitely presentable cofibrant object.

Let ${\bf hfp}\M_{cof}$ denote the full subcategory of $\M$ of homotopy finitely presentable cofibrant 
objects. 
Note that $\X$ is homotopy finitely presentable if and only if there exist finitely presentable cofibrant 
objects $\Y$ and $\Z$ where $\Z$ is fibrant and weak equivalences 
$\X\overset{\sim}{\rightarrow}\Z\overset{\sim}{\leftarrow}\Y$.
Equivalently, 
there exists a finitely presentable cofibrant object $\Y$ and a weak equivalence from $\X$ to a fibrant
replacement $\RRR\Y$ of $\Y$.
\vspace{0.1in}

With no additional assumptions on the model structure on $\M$ one cannot expect that ${\bf hfp}\M_{cof}$ 
is a category with cofibrations and weak equivalences in the same way as ${\bf fp}\M_{cof}$.
The only trouble is that a pushout of homotopy finitely presentable objects need not be 
homotopy finitely presentable.
However, 
the next result which is reminiscent of \cite[Proposition 3.2]{Sagave:diplom} covers all the cases 
we shall consider in this paper.

\begin{lemma}
\label{lemma:hfpiscatwithcofandweakeq}
Suppose $\M$ is cubical, 
weakly finitely generated and $-\otimes\Box_{+}^{1}$ preserves finitely presentable objects.
Then ${\bf hfp}\M_{cof}$ is a Waldhausen subcategory of $\M$.
\end{lemma}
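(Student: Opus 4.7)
The assertion is that ${\bf hfp}\M_{cof}$ is closed in $\M$ under the operations required by a Waldhausen subcategory: it contains the zero object $\ast$ (which is trivially finitely presentable and cofibrant), it consists of cofibrant objects by construction, and one must verify that for every cofibration $f\colon\X\cof\Y$ and every map $g\colon\X\to\Z$ between objects of ${\bf hfp}\M_{cof}$, the pushout $\Y\cup_{\X}\Z$ again lies in ${\bf hfp}\M_{cof}$. This pushout is automatically cofibrant as the cobase change of a cofibration between cofibrant objects, so the substance of the claim is that $\Y\cup_{\X}\Z$ is isomorphic in $\Ho(\M)$ to some finitely presentable cofibrant object.

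The plan is to rectify the span $\Y\overset{f}{\leftarrow}\X\overset{g}{\to}\Z$ by a weakly equivalent span $\tilde\Y\leftarrow\tilde\X\to\tilde\Z$ of finitely presentable cofibrant objects in which the left leg is a cofibration, and then invoke homotopy invariance of pushouts. First I would use the definition of homotopy finite presentability to select finitely presentable cofibrant objects $\tilde\X,\tilde\Y_{0},\tilde\Z_{0}$ together with weak equivalences $\tilde\X\overset{\sim}{\to}\RRR\X$, $\tilde\Y_{0}\overset{\sim}{\to}\RRR\Y$ and $\tilde\Z_{0}\overset{\sim}{\to}\RRR\Z$, where $\id_{\M}\to\RRR$ is the fibrant replacement functor of Lemma \ref{lemma:fpcofibrantfactorization} obtained from the small object argument applied to $J'_{\M}$. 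The composites $\tilde\X\to\RRR\X\overset{\RRR f}{\to}\RRR\Y$ and $\tilde\X\to\RRR\X\overset{\RRR g}{\to}\RRR\Z$ then serve as representatives of the homotopy classes of $f$ and $g$ with source in ${\bf fp}\M_{cof}$.

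Next I would apply Lemma \ref{lemma:fpcofibrantfactorization} to each of these two composites. This rectifies them, up to weak equivalence, into maps $\tilde\X\to\tilde\Y_{1}$ and $\tilde\X\to\tilde\Z_{1}$ between finitely presentable cofibrant objects, together with acyclic cofibrations $\tilde\Y_{1}\overset{\sim}{\cof}\RRR\Y$ and $\tilde\Z_{1}\overset{\sim}{\cof}\RRR\Z$. To upgrade the map $\tilde\X\to\tilde\Y_{1}$ to a cofibration I pass to the cubical mapping cylinder of Lemma \ref{lemma:cubicalmappingcylinderfactorization}, factoring it as $\tilde\X\cof\cyl(\tilde\X\to\tilde\Y_{1})\overset{\sim}{\to}\tilde\Y_{1}$. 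The object $\cyl(\tilde\X\to\tilde\Y_{1})$ is again finitely presentable because it is a finite colimit built from $\tilde\X\otimes\Box^{1}_{+}$ and $\tilde\Y_{1}$, and by hypothesis $-\otimes\Box^{1}_{+}$ preserves finite presentability. Setting $\tilde\Y\equiv\cyl(\tilde\X\to\tilde\Y_{1})$ and $\tilde\Z\equiv\tilde\Z_{1}$ yields the desired rectified span.

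The pushout $\tilde\Y\cup_{\tilde\X}\tilde\Z$ is now a pushout of finitely presentable cofibrant objects along a cofibration, hence itself finitely presentable and cofibrant. Since the rectification provides a zigzag of weak equivalences of cofibrant spans, with the left leg a cofibration throughout, the cube lemma for cofibrant objects \cite[Lemma 5.2.6]{Hovey:Modelcategories} identifies $\tilde\Y\cup_{\tilde\X}\tilde\Z$ with $\Y\cup_{\X}\Z$ in $\Ho(\M)$, exhibiting the latter as homotopy finitely presentable. The main technical obstacle will be carefully orchestrating the rectification so that the three finitely presentable replacements assemble into a single span with the required cofibration; Lemma \ref{lemma:fpcofibrantfactorization} handles the rectification of individual maps but relies on weak finite generation, and the cubical mapping cylinder is what converts the rectified map into a cofibration without destroying finite presentability — which is precisely why the hypotheses that $\M$ be cubical, weakly finitely generated, and that $-\otimes\Box^{1}_{+}$ preserve finite presentability are all essential.
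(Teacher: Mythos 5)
Your overall strategy mirrors the paper's: rectify the span $\Z\leftarrow\X\cof\Y$ by a weakly equivalent span of finitely presentable cofibrant objects with a cofibration on the left leg, then appeal to the cube lemma, and you correctly identify the roles of the cubical mapping cylinder and of the hypothesis that $-\otimes\Box^{1}_{+}$ preserves finite presentability. However, there is a genuine gap in the rectification step, caused by a reversal of the definition of homotopy finite presentability. The definition supplies, for a homotopy finitely presentable $\X$, a finitely presentable cofibrant $\tilde\X$ and a weak equivalence $\X\overset{\sim}{\to}\RRR\tilde\X$; it does \emph{not} supply a weak equivalence $\tilde\X\overset{\sim}{\to}\RRR\X$ as you assert. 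The distinction matters: $\RRR\tilde\X$ is a sequential colimit starting from the finitely presentable $\tilde\X$ whose finite subcomplexes remain finitely presentable, whereas $\RRR\X$ is built starting from the merely homotopy finitely presentable $\X$ and has no such finite stages. Because of this, your composites $\tilde\X\to\RRR\X\to\RRR\Y$ and $\tilde\X\to\RRR\X\to\RRR\Z$ have codomains $\RRR\Y$ and $\RRR\Z$ that are \emph{not} of the form $\RRR(\text{finitely presentable cofibrant})$ required by Lemma \ref{lemma:fpcofibrantfactorization}, and its proof breaks down there: the factorization through a finitely presentable $\tilde\Y_{1}$ is simply not available.

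The repair is what the paper does: apply $\RRR$ only to the chosen finitely presentable models. Pick $\X'$, $\widetilde\Y$, $\widetilde\Z$ finitely presentable cofibrant with weak equivalences $\X\to\RRR\X'$, $\Y\to\RRR\widetilde\Y$, $\Z\to\RRR\widetilde\Z$. Use the lifting axiom (against the fibrant objects $\RRR\widetilde\Z$ and $\RRR\widetilde\Y$) to obtain a map $\RRR\X'\to\RRR\widetilde\Z$ and, after a ${\bf CM}5$ factorization, a cofibration $\RRR\X'\cof\W$ with an acyclic fibration $\W\overset{\sim}{\fib}\RRR\widetilde\Y$, yielding a middle row $\RRR\widetilde\Z\leftarrow\RRR\X'\cof\W$. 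Only now are the composites $\X'\to\RRR\X'\to\RRR\widetilde\Z$ and $\X'\to\RRR\X'\to\RRR\widetilde\Y$ within the hypotheses of Lemma \ref{lemma:fpcofibrantfactorization}, and that lemma plus the cubical mapping cylinder produces the finitely presentable cofibrant bottom span $\Z'\leftarrow\X'\cof\Y'$. Both the original span and the rectified span map to the middle row by vertical weak equivalences, giving the three-row commutative diagram to which the gluing lemma for cofibrant objects applies twice. The zigzag you gesture at in your last paragraph is exactly this three-row diagram, but the lifting and ${\bf CM}5$ steps needed to build the middle row are absent from your proposal, and without them the zigzag cannot be constructed.
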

\begin{proof}
Suppose $\X$, $\Y$ and $\Z$ are homotopy finitely presentable cofibrant objects and there are maps $\Z\leftarrow\X\cof\Y$.
We show the pushout is homotopy finitely presentable by constructing a commutative diagram with vertical weak equivalences:
\begin{equation*}
\xymatrix{
\Z  \ar[d] & \X  \ar[l] \ar@{>->}[r] \ar[d] & \Y  \ar[d] \\
\RRR\widetilde{\Z} & \RRR\X' \ar[l] \ar@{>->}[r] & \W  \\
\Z' \ar[u] & \X' \ar[l] \ar@{>->}[r] \ar[u] & \Y' \ar[u]}
\end{equation*}
\vspace{0.1in}

Applying the gluing lemma for cofibrant objects \cite[II Lemma 8.8]{GJ:Modelcategories}
or the cube lemma \cite[Lemma 5.2.6]{Hovey:Modelcategories} we deduce that the induced map of pushouts 
$\Y\coprod_{\X}\Z\rightarrow\Y'\coprod_{\X'}\Z'$ is a weak equivalence.
This shows that $\Y\coprod_{\X}\Z$ is homotopy finitely presentable provided $\X'$, $\Y'$ and $\Z'$ are 
finitely presentable cofibrant objects.
Next, 
existence of the middle column in the diagram where $\X'$ is finitely presentable and cofibrant follows 
because $\X$ is homotopy finitely presentable.
For the same reason there exists a weak equivalence $\Z\rightarrow\RRR\widetilde{\Z}$ for some finitely 
presentable and cofibrant object $\widetilde{\Z}$. 
Since $\X$ maps to the fibrant object $\RRR\widetilde{\Z}$ there exists a map 
$\RRR\X'\rightarrow\RRR\widetilde{\Z}$ by the lifting axiom in $\M$.

Now Lemma \ref{lemma:fpcofibrantfactorization} shows the composite map 
\begin{equation*}
\xymatrix{
\X'\ar[r] &
\RRR\X'\ar[r] & \RRR\widetilde{\Z} }  
\end{equation*}
factors through some finitely presentable and cofibrant object $\Z'$ which maps by an acyclic cofibration 
to $\RRR\widetilde{\Z}$. 
It remains to construct $\W$ and $\Y'$. 
\vspace{0.1in}

Since $\Y$ is homotopy finitely presentable there exists a finitely presentable cofibrant object 
$\widetilde{\Y}$ and a weak equivalence from $\Y$ to $\RRR\widetilde{\Y}$. 
The lifting axiom ${\bf CM}4$ in $\M$ yields a map $\RRR\X'\rightarrow\RRR\widetilde{\Y}$ and by 
Lemma \ref{lemma:fpcofibrantfactorization} the composite $\X'\rightarrow\RRR\X'\rightarrow\RRR\widetilde{\Y}$ 
factors through some finitely presentable cofibrant object $\widetilde{\Y}'$.
Using the cubical mapping cylinder we may factor the latter map as a cofibration $\X'\cof\Y'$ composed with 
a cubical homotopy.
Note that $\Y'$ is finitely presentable and cofibrant by the assumption that $-\otimes\Box_{+}^{1}$ preserves 
finitely presentable objects.
By the factorization axiom ${\bf CM}5$ in $\M$ we deduce there is a cofibration $\RRR\X'\cof\W$, 
so that $\W$ is cofibrant,  
and an acyclic fibration $\W\overset{\sim}{\fib}\RRR\widetilde{\Y}$. 
\vspace{0.1in}

Finally, 
the weak equivalences between $\Y$, $\W$ and $\Y'$ follow since there exist liftings in the following diagrams 
(the lower horizontal maps are weak equivalences and so are the right vertical fibrations):
\begin{equation*}
\xymatrix{
\ast\ar@{>->}[r]\ar[d] & \W \ar@{->>}[d]\\
\Y \ar[r] & \RRR\widetilde{\Y}}
\;\;\;\;\;
\xymatrix{
\ast\ar@{>->}[r]\ar[d] & \W \ar@{->>}[d]\\
\Y'\ar@{>->}[r] & \RRR\widetilde{\Y}}
\end{equation*}
\end{proof}
\vspace{0.1in}

Note that ${\bf hfp}\M_{cof}$ contains more fibrant objects than ${\bf fp}\M_{cof}$ since non-constant fibrant objects 
need not be finitely presentable.
The next result follows easily from a version of Waldhausen's approximation theorem \cite[Theorem 2.8]{Sagave:diplom} and 
Lemma \ref{lemma:hfpiscatwithcofandweakeq}.
\vspace{0.1in}

For the convenience of the reader we recall the setup.
A category with cofibrations and weak equivalences $\CCC$ is equipped with special objects if there is a full subcategory
$\CCC'\subseteq\CCC$ and a functor $\QQ\colon\CCC\rightarrow\CCC'$  together with a natural transformation $\id_{\CCC}\rightarrow\QQ$ 
such that $\X\rightarrow\QQ\X$ is a cofibration and a weak equivalence for every object $\X$ of $\CCC$. 
Cofibrant replacement functors in model categories furnish the prime examples of special objects.
\vspace{0.1in}

Axioms ${\bf App} 1$ and ${\bf App} 2$ formulated below are used in the original formulation of the approximation theorem 
\cite[Theorem 1.6.7]{Waldhausen:LNM1126}. 
In the recent slightly modified version \cite[Theorem 2.8]{Sagave:diplom} which we shall refer to as the special approximation theorem, 
axiom ${\bf SApp} 2$ replaces ${\bf App} 2$.
\begin{definition}
\label{definition:approximation}
Let $F\colon\CCC\rightarrow\DDD$ be an exact functor.
\begin{enumerate}[{\bf App} 1:]
\item $F$ reflects weak equivalences.
\item Every map $F(\X)\rightarrow\Z$ in $\DDD$ factors as $F(\X\rightarrow\Y)$ for a cofibration $\X\rightarrow\Y$ in $\CCC$ 
composed with a weak equivalence $F(\Y)\rightarrow\Z$ in $\DDD$.  
\end{enumerate}

\begin{enumerate}[{\bf SApp} 2:]
\item Suppose $\CCC$ is equipped with special objects.
Then ${\bf App}2$ holds if $\Z$ is a special object.
\end{enumerate}
\end{definition}
\vspace{0.1in}

\begin{proposition}
\label{proposition:Ktheoryequivalencefphfp}
Suppose $\M$ is cubical,
weakly finitely generated and $-\otimes\Box_{+}^{1}$ preserves finitely presentable objects.
Then $({\bf fp}\subset{\bf hfp})\M_{cof}$ induces an equivalence in $K$-theory.
\end{proposition}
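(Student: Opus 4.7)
The plan is to apply Sagave's special approximation theorem \cite[Theorem 2.8]{Sagave:diplom} to the inclusion functor $F\colon{\bf fp}\M_{cof} \hookrightarrow {\bf hfp}\M_{cof}$. First I would verify exactness of $F$, which is immediate from Lemma \ref{lemma:hfpiscatwithcofandweakeq}, since both categories inherit their zero object, cofibrations and weak equivalences from $\M$ and the pushout construction is the same on both sides. The reflection-of-weak-equivalences axiom $\mathbf{App}\,1$ is likewise trivial: a map in ${\bf fp}\M_{cof}$ is a weak equivalence in ${\bf hfp}\M_{cof}$ iff it is a weak equivalence in $\M$.

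Next, I would equip ${\bf hfp}\M_{cof}$ with a structure of special objects by letting $\CCC'$ be the full subcategory of fibrant objects and $\QQ=\RRR$ the functorial fibrant replacement obtained from the small object argument applied to the set $J'_{\M}$ furnished by weak finite generation. The natural transformation $\id\to\RRR$ supplies the required natural acyclic cofibration $\X\overset{\sim}{\cof}\RRR\X$; moreover, if $\X\in{\bf hfp}\M_{cof}$, say weakly equivalent to $\RRR\W$ with $\W\in{\bf fp}\M_{cof}$, then $\RRR\X$ is again weakly equivalent to $\RRR\W$ and is cofibrant, so $\RRR\X\in{\bf hfp}\M_{cof}$.

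The main work lies in verifying axiom $\mathbf{SApp}\,2$: given $\X\in{\bf fp}\M_{cof}$, a special object $\Z\in{\bf hfp}\M_{cof}$ (i.e.~fibrant), and a map $f\colon\X\to\Z$, produce a factorization $\X\cof\Y\overset{\sim}{\to}\Z$ with $\Y\in{\bf fp}\M_{cof}$. Choose $\W\in{\bf fp}\M_{cof}$ together with a weak equivalence $\phi\colon\Z\overset{\sim}{\to}\RRR\W$ (available because $\Z$ is bifibrant and lies in ${\bf hfp}\M_{cof}$). Apply Lemma \ref{lemma:fpcofibrantfactorization} to $\phi\circ f\colon\X\to\RRR\W$ to obtain $\W'\in{\bf fp}\M_{cof}$ and a factorization $\X\to\W'\overset{\sim}{\cof}\RRR\W$ of $\phi\circ f$. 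Next, factor $\X\to\W'$ through its cubical mapping cylinder $\Y\equiv\cyl(\X\to\W')$: Lemma \ref{lemma:cubicalmappingcylinderfactorization} gives a cofibration $\X\cof\Y$ and a cubical homotopy equivalence $\Y\overset{\sim}{\to}\W'$. Using the hypothesis that $-\otimes\Box_{+}^{1}$ preserves finitely presentable objects and that finite colimits of finitely presentable objects are finitely presentable, $\Y\in{\bf fp}\M_{cof}$.

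The main technical obstacle is to promote the composite weak equivalence $\Y\overset{\sim}{\to}\W'\overset{\sim}{\cof}\RRR\W$ to a genuine map $\Y\overset{\sim}{\to}\Z$ whose restriction along $\X\cof\Y$ equals $f$; going through $\RRR\W$ only gives $\phi\circ f$. I would resolve this by factoring $\phi$ as an acyclic cofibration followed by a fibration $\Z\overset{\sim}{\cof}\M\overset{\sim}{\fib}\RRR\W$, lifting $\Y\to\RRR\W$ through this acyclic fibration to a map $\Y\to\M$ compatible with $\X\overset{f}{\to}\Z\cof\M$, and composing with a retraction $r\colon\M\to\Z$ of $\Z\cof\M$ (which exists because $\Z$ is fibrant, via the lifting axiom applied to $\id_{\Z}$). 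Two-out-of-three applied to $r\circ(\Z\cof\M)=\id_{\Z}$ shows $r$ is a weak equivalence, and a diagram chase confirms the resulting $\Y\overset{\sim}{\to}\Z$ extends $f$. This verifies $\mathbf{SApp}\,2$, so the special approximation theorem delivers the desired $K$-theory equivalence.
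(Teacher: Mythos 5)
Your proof is correct and fleshes out exactly the argument the paper leaves implicit: the paper asserts that the proposition ``follows easily'' from Sagave's special approximation theorem and Lemma \ref{lemma:hfpiscatwithcofandweakeq}, and your verification of the special-objects structure (via the $J'_{\M}$-fibrant replacement from weak finite generation) together with axiom $\mathbf{SApp}\,2$ (via Lemma \ref{lemma:fpcofibrantfactorization}, the cubical mapping cylinder from Lemma \ref{lemma:cubicalmappingcylinderfactorization}, and the lifting-and-retraction argument to transport the weak equivalence from $\RRR\W$ back to $\Z$) supplies precisely the missing details. The retraction trick used to convert $\phi\circ f$ into $f$ is the kind of step the paper silently subsumes under ``follows easily,'' and you have handled it correctly.
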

\vspace{0.1in}

Next we turn to the examples arising in $\CC^{\ast}$-homotopy theory. 
Let $(\X,\Box\CC^{\ast}-\Spc,\X)$ denote the retract category of a cubical $\CC^{\ast}$-space $\X$. 
The homotopy theory of this category was worked out in Lemma \ref{lemma:modelstructuresonretractivecategories}.
We are interested in the $K$-theory of its full subcategory of finitely presentable cofibrant objects.
\begin{definition}
\label{KtheoryofCstaralgebras}
The $K$-theory of a cubical $\CC^{\ast}$-space $\X$ is 
\begin{equation*}
K(\X)\equiv K\bigl({\bf fp}(\X,\Box\CC^{\ast}-\Spc_{cof},\X)\bigr).
\end{equation*}
\end{definition}
\vspace{0.1in}

Lemma \ref{lemma:hfpiscatwithcofandweakeq} implies ${\bf hfp}(\Box\CC^{\ast}-\Spc)_{cof}$ is a Waldhausen subcategory 
of $\Box\CC^{\ast}-\Spc$.
By applying Proposition \ref{proposition:Ktheoryequivalencefphfp} we get the next result.
\begin{lemma}
\label{lemma:fpKtheory=hpfKtheory}
The special approximation theorem applies to the inclusion 
\begin{equation*}
({\bf fp}\subset{\bf hfp})(\X,\Box\CC^{\ast}-\Spc_{cof},\X). 
\end{equation*}
Thus for every cubical $\CC^{\ast}$-space $\X$ there is an induced equivalence 
\begin{equation*}
\xymatrix{
K(\X)\ar[r] & 
K\bigl({\bf hfp}(\X,\Box\CC^{\ast}-\Spc_{cof},\X)\bigr).}
\end{equation*}
\end{lemma}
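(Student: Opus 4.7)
The plan is to reduce the lemma to Proposition \ref{proposition:Ktheoryequivalencefphfp} applied to the pointed model category $\M=(\X,\Box\CC^{\ast}-\Spc,\X)$ equipped with the model structure supplied by Lemma \ref{lemma:modelstructuresonretractivecategories}. Both assertions in the lemma—applicability of the special approximation theorem and the resulting $K$-theory equivalence—are exactly what that proposition delivers (via its proof, which is an invocation of \cite[Theorem 2.8]{Sagave:diplom}), provided its three hypotheses are satisfied.

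Two of those hypotheses are immediate: Lemma \ref{lemma:modelstructuresonretractivecategories} already asserts that $(\X,\Box\CC^{\ast}-\Spc,\X)$ is a weakly finitely generated cubical model category. What remains is to check that the cubical tensor functor $-\otimes\Box^{1}_{+}$ sends finitely presentable objects to finitely presentable objects in the retract category. Using the explicit description given after Lemma \ref{lemma:modelstructuresonretractivecategories}, the tensor $(\Y,i,r)\otimes\Box^{1}_{+}$ is computed as the pushout in $\Box\CC^{\ast}-\Spc$ of
\begin{equation*}
\X\;\cong\;\X\otimes\Box^{0}\;\longleftarrow\;\X\otimes\Box^{1}_{+}\;\longrightarrow\;\Y\otimes\Box^{1}_{+}.
\end{equation*}
Since $\Box^{1}$ is finitely presentable in $\Box\Set$ (Example \ref{example:finitelypresentable}), tensoring with $\Box^{1}_{+}$ preserves finite presentability in $\Box\CC^{\ast}-\Spc$; combining this with the fact that finite colimits of finitely presentable objects remain finitely presentable (Lemma \ref{lemma:filteredandfinitecolimitfinitelypresentable}) reduces the claim to the statement that finite presentability in $(\X,\Box\CC^{\ast}-\Spc,\X)$ is detected on underlying cubical $\CC^{\ast}$-spaces.

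This last point is where I would spend the most care: an object $(\Y,i,r)$ in the retract category is finitely presentable if and only if $\Y$ is finitely presentable in $\Box\CC^{\ast}-\Spc$. The nontrivial direction is that finite presentability of $\Y$ transfers through the compatibility constraints imposed by $i$ and $r$. Given a filtered system $\{(\Y'_{\alpha},i'_{\alpha},r'_{\alpha})\}$ with colimit $(\Y',i',r')$, one notes that $\Y'=\colim\Y'_{\alpha}$ in $\Box\CC^{\ast}-\Spc$, so any map $\Y\rightarrow\Y'$ factors through some $\Y'_{\alpha}$; the section and retraction compatibilities involve only finitely many maps into and out of $\Y$, and these also factor through a further stage because $\X$ is a fixed finite input and filtered colimits commute with finite limits of sets of morphisms. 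Hence $(\Y,i,r)$ is finitely presentable in the retract category, which is the desired statement.

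Having verified all three hypotheses, Proposition \ref{proposition:Ktheoryequivalencefphfp} applies and yields both the applicability of the special approximation theorem to the inclusion ${\bf fp}\subset{\bf hfp}$ and the induced $K$-theory equivalence. The main obstacle in this plan is precisely the bookkeeping in the previous paragraph: one must ensure that the extra structure (section plus retraction) in the retract category does not spoil the characterization of finitely presentable objects via the underlying cubical $\CC^{\ast}$-space, which is what allows the tensor-preservation argument to go through cleanly.
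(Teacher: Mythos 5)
Your high-level plan matches what the paper does (the paper's ``proof'' is simply the remark ``By applying Proposition \ref{proposition:Ktheoryequivalencefphfp} we get the next result,'' and you are making explicit the verification of that proposition's three hypotheses via Lemma \ref{lemma:modelstructuresonretractivecategories}). However, the key claim you flag as the one deserving the most care — that ``an object $(\Y,i,r)$ in the retract category is finitely presentable if and only if $\Y$ is finitely presentable in $\Box\CC^{\ast}-\Spc$'' — is false in general, and the argument built on it does not go through. Already the zero object $(\X,\id_{\X},\id_{\X})$ of $(\X,\Box\CC^{\ast}-\Spc,\X)$ is trivially finitely presentable in the retract category, while its underlying object $\X$ is an arbitrary cubical $\CC^{\ast}$-space and need not be finitely presentable; so the ``only if'' direction fails. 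The ``if'' direction you actually invoke is equally delicate: surjectivity of
\begin{equation*}
\colim_{\alpha}\,(\X,\Box\CC^{\ast}-\Spc,\X)\bigl((\Y,i,r),(\Z_{\alpha},i_{\alpha},r_{\alpha})\bigr)\longrightarrow
(\X,\Box\CC^{\ast}-\Spc,\X)\bigl((\Y,i,r),(\Z,i_{\Z},r_{\Z})\bigr)
\end{equation*}
requires that the two maps $f_{\alpha}\circ i$ and $i_{\alpha}$ from $\X$ to $\Z_{\alpha}$, which agree after passing to the colimit $\Z$, agree at some finite stage; this forces an injectivity condition on $\colim_{\alpha}\Hom(\X,\Z_{\alpha})\rightarrow\Hom(\X,\colim_{\alpha}\Z_{\alpha})$, i.e.\ a smallness condition on $\X$ which does not hold for a general cubical $\CC^{\ast}$-space. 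The phrase ``$\X$ is a fixed finite input'' is not justified. Moreover, even granting the characterization, your pushout $\X\sqcup_{\X\otimes\Box^{1}_{+}}\Y\otimes\Box^{1}_{+}$ has $\X$ as one of its vertices, so it would not be finitely presentable in $\Box\CC^{\ast}-\Spc$ unless $\X$ were.

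The fix avoids the characterization altogether: $-\otimes\Box^{1}_{+}$ is left adjoint to the cotensor $(-)^{\Box^{1}_{+}}$, and the cotensor $(\Y,i,r)^{\Box^{1}_{+}}$ is the pullback $\X\times_{\X^{\Box^{1}_{+}}}\Y^{\Box^{1}_{+}}$. Since $\Box^{1}$ is finitely presentable, $(-)^{\Box^{1}_{+}}$ preserves filtered colimits in $\Box\CC^{\ast}-\Spc$, and since filtered colimits commute with finite limits, the cotensor on the retract category preserves filtered colimits. A left adjoint of a filtered-colimit-preserving functor automatically preserves finitely presentable objects, giving the third hypothesis of Proposition \ref{proposition:Ktheoryequivalencefphfp} without any claim about what the finitely presentable objects of the retract category actually look like.
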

\vspace{0.1in}

In the remaining of this section we consider the $K$-theory of the trivial $\CC^{\ast}$-algebra.
Adopting the work of R{\"o}ndigs \cite{Roendigs} to our setting we show a fundamental result:
\begin{theorem}
\label{theorem:K-theoryintermsofspectra}
Denote by $\Spt_{S^{1}}$ the category of $S^{1}$-spectra of pointed cubical $\CC^{\ast}$-spaces.
Then the $K$-theory of the trivial $\CC^{\ast}$-algebra is equivalent to the $K$-theory of ${\bf hfp}(\Spt_{S^{1}})_{cof}$. 
\end{theorem}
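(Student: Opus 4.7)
The plan is to adapt R{\"o}ndigs's argument \cite{Roendigs} to our setting. First, observe that since $0$ is both initial and terminal in $\Box\CC^{\ast}-\Spc$, the retract category $(0,\Box\CC^{\ast}-\Spc,0)$ is canonically isomorphic to the category $\Box\CC^{\ast}-\Spc_{0}$ of pointed cubical $\CC^{\ast}$-spaces. Consequently, Definition \ref{KtheoryofCstaralgebras} gives $K(0)=K({\bf fp}(\Box\CC^{\ast}-\Spc_{0})_{cof})$, and Lemma \ref{lemma:fpKtheory=hpfKtheory} (whose hypotheses hold for the pointed homotopy invariant model structure) identifies this with $K({\bf hfp}(\Box\CC^{\ast}-\Spc_{0})_{cof})$.

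Next, introduce the suspension spectrum functor $\Sigma^{\infty}_{S^{1}}\colon \Box\CC^{\ast}-\Spc_{0}\to \Spt_{S^{1}}$, which is left Quillen from the pointed homotopy invariant projective model structure to the stable model structure on $\Spt_{S^{1}}$. Since $\Sigma^{\infty}_{S^{1}}$ preserves finitely presentable cofibrant objects, preserves cofibrations, and sends weak equivalences between cofibrant objects to stable weak equivalences, it restricts to an exact functor of Waldhausen categories
$$F\colon {\bf hfp}(\Box\CC^{\ast}-\Spc_{0})_{cof} \longrightarrow {\bf hfp}(\Spt_{S^{1}})_{cof}.$$
The goal is to show that $F$ induces a weak equivalence of $K$-theory spectra via the special approximation theorem \cite[Theorem 2.8]{Sagave:diplom}, taking the special objects on both sides to be supplied by functorial cofibrant replacement in the respective model structures.

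Two axioms must be verified. First, $F$ reflects weak equivalences on ${\bf hfp}$-cofibrant objects: given $f\colon\X\to \Y$ such that $\Sigma^{\infty}_{S^{1}}f$ is a stable equivalence, replace $\X$ and $\Y$ by their images under enough iterated $S^{1}$-suspensions so the relevant bigraded stable homotopy presheaves of Lemma \ref{lemma:stableweakequivalence-homotopy} become concentrated in a range in which they coincide with the unstable $\CC^{\ast}$-homotopy presheaves of $\RRR\X$ and $\RRR\Y$; then Theorem \ref{theorem:compositionstableweakequivalence} and Corollary \ref{corollary:finitelypresentablecofibrantcolimit} allow us to desuspend back to the conclusion that $f$ itself is a $\CC^{\ast}$-weak equivalence. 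Second, every map $F\X=\Sigma^{\infty}_{S^{1}}\X\to \Z$ with $\Z\in{\bf hfp}(\Spt_{S^{1}})_{cof}$ must factor as $F$ of a cofibration $\X\cof \Y$ in $\Box\CC^{\ast}-\Spc_{0}$ followed by a stable equivalence $F\Y\overset{\sim}{\to}\Z$. For this, represent $\Z$ up to stable equivalence as a finite cell $S^{1}$-spectrum using Lemma \ref{lemma:cofibrantfinitelypresentable} and the description of $I_{\Spt_{S^{1}}}$-cells as shifts $\Fr_{m}(A\otimes(\partial\Box^{n}\subset\Box^{n})_{+})$; the shift-suspension isomorphisms from Corollary \ref{corollary:patched} combined with the cubical mapping cylinder factorization of Lemma \ref{lemma:cubicalmappingcylinderfactorization} allow one to desuspend these cells uniformly to the space level, producing a finite $I$-cell complex $\Y$ in $\Box\CC^{\ast}-\Spc_{0}$ mapping to $\Z$ by a stable equivalence and receiving $\X$ by a cofibration.

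The principal obstacle is the reflection of weak equivalences: stable equivalences of $S^{1}$-spectra are detected only after a filtered colimit in the suspension coordinate, and so in principle identify more objects than $\CC^{\ast}$-weak equivalences do. The expected resolution is that the homotopy finitely presentable condition forces the relevant colimits in the bigraded stable homotopy presheaves (\ref{stablehomotopygroupspresheavesinAsection}) to stabilize at a finite stage, so that stable equivalence between finite objects is detected unstably after a uniform suspension. Once both approximation axioms are verified, the special approximation theorem gives the required $K$-theory equivalence.
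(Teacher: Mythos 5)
The direct approach via the approximation theorem applied to the suspension spectrum functor $\Sigma^{\infty}_{S^{1}}\colon{\bf hfp}(\Box\CC^{\ast}-\Spc_{0})_{cof}\rightarrow{\bf hfp}(\Spt_{S^{1}})_{cof}$ cannot work, and the gap is exactly in the reflection of weak equivalences, which you flag as the ``principal obstacle'' but whose proposed resolution is not sound. The unstable homotopy category $\HH^{\ast}$ is not stable, and $\Sigma^{\infty}_{S^{1}}$ (equivalently, any finite iterate of $S^{1}\otimes-$) does not reflect $\CC^{\ast}$-weak equivalences, even between homotopy finitely presentable cofibrant objects: there can exist finite $\X$ and $\Y$ with a map $f\colon\X\rightarrow\Y$ such that some iterated suspension $S^{n}\otimes f$ is a $\CC^{\ast}$-weak equivalence while $f$ itself is not. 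The step ``desuspend back'' is precisely the move that the unstable setting forbids; suspension equivalence $S^{n}\otimes\X\simeq S^{n}\otimes\Y$ simply does not recover $\X\simeq\Y$. Homotopy finiteness guarantees stabilization of the bigraded stable homotopy presheaves, but they stabilize to stable, not unstable, information, and so cannot be used to return to a conclusion about $f$ in $\HH^{\ast}$. The special approximation theorem requires that the source functor {\em reflect} weak equivalences, so {\bf App}\,1 fails for $\Sigma^{\infty}_{S^{1}}$, and the argument does not go through as written.

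The paper's proof is structured to avoid this difficulty. It never asks $\Sigma^{\infty}_{S^{1}}$ to be a $K$-theory equivalence directly. Instead it introduces the sequential colimit category $S^{1}{\bf fp}(\Box\CC^{\ast}-\Spc_{0})_{cof}$, whose objects are pairs $(\X,n)$ and whose weak equivalences are maps admitting a representative that is a $\CC^{\ast}$-weak equivalence after a uniform suspension. The inclusion ${\bf fp}(\Box\CC^{\ast}-\Spc_{0})_{cof}\rightarrow S^{1}{\bf fp}(\Box\CC^{\ast}-\Spc_{0})_{cof}$ is then a $K$-theory equivalence by Waldhausen's suspension theorem \cite[Proposition 1.6.2]{Waldhausen:LNM1126}, using the good cylinder functor; this is a purely $K$-theoretic equivalence, not an equivalence of Waldhausen categories, and it is exactly the tool that replaces the impossible ``desuspend back.'' The approximation theorem is then applied to the functor $\Phi\colon{\bf sfp}(\Spt_{S^{1}})_{cof}\rightarrow S^{1}{\bf fp}(\Box\CC^{\ast}-\Spc_{0})_{cof}$, $\E\mapsto(\E_{n},n)$, on the category of strictly finitely presentable $S^{1}$-spectra, where the reflection axiom does hold because the target already encodes suspension-stable equivalence; the nontrivial input there is that a stable weak equivalence between finitely presentable cofibrant $S^{1}$-spectra is a level $\CC^{\ast}$-weak equivalence in all sufficiently high levels, which is established via weak finite generation of the stable model structure, not via finiteness of the spaces $\E_{n}$ in the sense you invoke. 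Finally Proposition \ref{proposition:Ktheoryequivalencefphfp} supplies the remaining equivalence ${\bf fp}(\Spt_{S^{1}})_{cof}\hookrightarrow{\bf hfp}(\Spt_{S^{1}})_{cof}$. Your proposal omits the Waldhausen stabilization entirely, which is the step doing the real work.
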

\vspace{0.1in}

To begin with, 
consider a sequential diagram of categories with cofibrations and weak equivalence
\begin{equation}
\label{squentialcofibrationsweakequivalence}
\xymatrix{
\M_{0}\ar[r] & 
\M_{1}\ar[r] &
\cdots\ar[r] &
\M_{n}\ar[r] &
\M_{n+1}\ar[r] &
\cdots.}
\end{equation}

It is straightforward to check that the colimit $\M_{\infty}$ of (\ref{squentialcofibrationsweakequivalence}) taken in the 
category of small categories is a category with cofibrations and weak equivalences:  
A map in $\M_{\infty}$ is a cofibration if some representative of it is a cofibration, and likewise for weak equivalences.
Moreover, 
with this definition the canonical functor $\M_{n}\rightarrow\M_{\infty}$ is exact and there is a naturally induced isomorphism
\begin{equation*}
\xymatrix{
\colim_{n}\,S_{\bullet}\M_{n}\ar[r] &
S_{\bullet}\M_{\infty} }
\end{equation*}
of simplicial categories with cofibrations and weak equivalences.
Now specialize to the constant sequential diagram with value ${\bf fp}(\Box\CC^{\ast}-\Spc_{0})_{cof}$ and transition map the
suspension functor $S^{1}\otimes -$.
Let $S^{1}{\bf fp}(\Box\CC^{\ast}-\Spc_{0})_{cof}$ denote the corresponding colimit with cofibrations and weak equivalences as 
described above.
\begin{lemma}
\label{lemma:step1}
The canonical functor 
\begin{equation*}
\xymatrix{
{\bf fp}(\Box\CC^{\ast}-\Spc_{0})_{cof}\ar[r] & 
S^{1}{\bf fp}(\Box\CC^{\ast}-\Spc_{0})_{cof} }
\end{equation*}
is a $K$-theory equivalence.
\end{lemma}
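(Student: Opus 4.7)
The plan is to reduce Lemma \ref{lemma:step1} to two ingredients: first, that Waldhausen $K$-theory commutes with sequential colimits of categories with cofibrations and weak equivalences along exact functors; and second, that the suspension endofunctor $S^{1}\otimes-$ already induces a self-equivalence on $K\bigl({\bf fp}(\Box\CC^{\ast}-\Spc_{0})_{cof}\bigr)$. Granted these two facts, the canonical map from the $n=0$ term to the colimit is identified with the map into a sequential homotopy colimit along weak equivalences, hence it is itself a weak equivalence.

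First I would establish the colimit statement. By the observation preceding the lemma, the $S_{\bullet}$-construction of the constant sequential diagram commutes with the sequential colimit, so there is an isomorphism of simplicial categories with cofibrations and weak equivalences
\begin{equation*}
\underset{n}{\colim}\,\,S_{\bullet}\,{\bf fp}(\Box\CC^{\ast}-\Spc_{0})_{cof}
\xymatrix{\ar[r] &}
S_{\bullet}\bigl(S^{1}{\bf fp}(\Box\CC^{\ast}-\Spc_{0})_{cof}\bigr).
\end{equation*}
Since taking nerves, geometric realizations and loop spaces all commute with sequential colimits up to weak equivalence, we obtain
\begin{equation*}
\underset{n}{\hocolim}\,K\bigl({\bf fp}(\Box\CC^{\ast}-\Spc_{0})_{cof}\bigr)
\xymatrix{\ar[r]^-{\sim} &}
K\bigl(S^{1}{\bf fp}(\Box\CC^{\ast}-\Spc_{0})_{cof}\bigr),
\end{equation*}
with transition maps induced by $S^{1}\otimes-$.

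Next I would show that $S^{1}\otimes-$ acts as a self-equivalence on $K\bigl({\bf fp}(\Box\CC^{\ast}-\Spc_{0})_{cof}\bigr)$. The category $\Box\CC^{\ast}-\Spc_{0}$ is cubical and pointed, so for every cofibrant $\X$ the cone $C\X\equiv \X\otimes\Box^{1}/\X\otimes\{0\}_{+}$ is contractible and sits in a cofibration sequence $\X\rightarrowtail C\X\twoheadrightarrow S^{1}\otimes\X$ inside ${\bf fp}(\Box\CC^{\ast}-\Spc_{0})_{cof}$, functorially in $\X$. By Waldhausen's additivity theorem applied to this cofibration sequence of exact endofunctors, the identity $\id$ and $S^{1}\otimes-$ satisfy $[\id]+[S^{1}\otimes-]=[C(-)]$ in the group of homotopy classes of exact endofunctors, and since $C(-)$ factors through the contractible subcategory it is null-homotopic on $K$-theory. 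Hence $S^{1}\otimes-$ is homotopic to $-\id$ on $K\bigl({\bf fp}(\Box\CC^{\ast}-\Spc_{0})_{cof}\bigr)$, and in particular a self-weak-equivalence.

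Combining the two steps, the map
\begin{equation*}
K\bigl({\bf fp}(\Box\CC^{\ast}-\Spc_{0})_{cof}\bigr)
\xymatrix{\ar[r] &}
\underset{n}{\hocolim}\,K\bigl({\bf fp}(\Box\CC^{\ast}-\Spc_{0})_{cof}\bigr)
\xymatrix{\ar[r]^-{\sim} &}
K\bigl(S^{1}{\bf fp}(\Box\CC^{\ast}-\Spc_{0})_{cof}\bigr)
\end{equation*}
is a weak equivalence since the first map is a homotopy colimit along weak equivalences. The main obstacle I anticipate is verifying that the cone construction $C(-)$ and the cofibration sequence $\X\rightarrowtail C\X \twoheadrightarrow S^{1}\otimes\X$ restrict to the Waldhausen subcategory ${\bf fp}(\Box\CC^{\ast}-\Spc_{0})_{cof}$ in a functorial and sufficiently rigid way so that Waldhausen's additivity theorem applies verbatim. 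This rests on the cubical enrichment established in \S\ref{subsection:CC-spaces} together with the observation that $-\otimes\Box^{1}_{+}$ preserves projective cofibrant finitely presentable objects, which we have already employed in the proof of Lemma \ref{lemma:hfpiscatwithcofandweakeq}.
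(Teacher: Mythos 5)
Your proof is correct and takes essentially the same approach as the paper, which proves the lemma in one sentence by invoking Waldhausen's Proposition 1.6.2 (the suspension functor is a $K$-theory self-equivalence in the presence of a good cylinder functor) together with the colimit identification $\colim_{n}S_{\bullet}\M_{n}\cong S_{\bullet}\M_{\infty}$ established immediately before the lemma. You have simply unpacked the citation to Waldhausen 1.6.2 into its underlying additivity-theorem argument (the cofibration sequence $\id\rightarrowtail C(-)\twoheadrightarrow S^{1}\otimes-$ with $C(-)$ null on $K$-theory) and made the colimit step explicit, which is fine but not a different route.
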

\begin{proof}
Since the category ${\bf fp}(\Box\CC^{\ast}-\Spc_{0})_{cof}$ has a good cylinder functor, 
the suspension functor $S^{1}\otimes -$ induces a $K$-theory equivalence \cite[Proposition 1.6.2]{Waldhausen:LNM1126}.
\end{proof}

Next we relate the target of the $K$-theory equivalence in Lemma \ref{lemma:step1} to $S^{1}$-spectra of pointed cubical $\CC^{\ast}$-spaces.
An object $\E$ of $\Spt_{S^{1}}$ is called strictly finitely presentable if $\E_{n}$ is finitely presentable in $\Box\CC^{\ast}-\Spc_{0}$ for 
every $n\geq 0$, and there exists an integer $n(\E)$ such that the structure maps of $\E$ are identity maps for $n\geq n(\E)$.
Every finitely presentable $S^{1}$-spectrum is isomorphic to a strictly finitely presentable one.
It implies that the inclusion functor ${\bf sfp}(\Spt_{S^{1}})_{cof}\hookrightarrow {\bf fp}(\Spt_{S^{1}})_{cof}$ is an equivalence of categories,
and therefore: 
\begin{lemma}
\label{lemma:step2}
The inclusion functor 
\begin{equation*}
\xymatrix{
{\bf sfp}(\Spt_{S^{1}})_{cof}\ar[r] &  
{\bf fp}(\Spt_{S^{1}})_{cof} }
\end{equation*}
is a $K$-theory equivalence.
\end{lemma}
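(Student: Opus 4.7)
The plan is to establish that the inclusion is an equivalence of categories with cofibrations and weak equivalences, and then invoke the standard fact that such equivalences induce homotopy equivalences on $K$-theory spectra. The claim in the excerpt that every finitely presentable $S^{1}$-spectrum is isomorphic to a strictly finitely presentable one is precisely what makes the inclusion essentially surjective; since ${\bf sfp}(\Spt_{S^{1}})_{cof}$ sits as a full subcategory of ${\bf fp}(\Spt_{S^{1}})_{cof}$, full faithfulness is automatic, and the cofibrations and weak equivalences on the source are by definition the restrictions of those on the target, so the inclusion is an exact functor between categories with cofibrations and weak equivalences.

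First I would verify essential surjectivity in detail. By the $S^{1}$-spectrum analog of Lemma \ref{lemma:cofibrantfinitelypresentable}, every finitely presentable cofibrant object $\E\in\Spt_{S^{1}}$ is a retract of a finite $I_{S^{1}}$-cell complex, where $I_{S^{1}}=\bigcup_{m}\Fr_{m}I_{\Box\CC^{\ast}-\Spc_{0}}$. Each generating cell $\Fr_{m}(A\otimes(\partial\Box^{n}\subset\Box^{n})_{+})$ is strictly finitely presentable: it is trivial below level $m$, finitely presentable in each level, and its structure maps $\sigma_{k}\colon S^{1}\otimes\Fr_{m}(\X)_{k}\rightarrow\Fr_{m}(\X)_{k+1}$ for $k\geq m$ are the canonical identifications $S^{1}\otimes S^{k-m}\otimes\X\cong S^{k+1-m}\otimes\X$, which become identities under a suitable choice of representative. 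Since strictly finitely presentable objects are closed under finite colimits (take the maximum of the stabilization indices $n(\E_{i})$) and under retracts, every finite $I_{S^{1}}$-cell complex and any retract of one admits a representative which is strictly finitely presentable.

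Second, having established that the inclusion is an equivalence of categories, I would note that cofibrations and weak equivalences in ${\bf sfp}(\Spt_{S^{1}})_{cof}$ are reflected and created by the inclusion. The Waldhausen $S_{\bullet}$-construction is functorial in exact functors, so $S_{\bullet}$ applied to the inclusion yields a levelwise equivalence of categories with cofibrations, and restricting to subcategories of weak equivalences yields a levelwise equivalence of simplicial categories $wS_{\bullet}({\bf sfp})\rightarrow wS_{\bullet}({\bf fp})$. Applying the nerve and geometric realization preserves these equivalences, giving a weak equivalence of simplicial spaces $NwS_{\bullet}({\bf sfp})\rightarrow NwS_{\bullet}({\bf fp})$, hence a homotopy equivalence on the loop spaces defining the respective $K$-theories.

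The main obstacle will be verifying the essential surjectivity claim rigorously, and in particular justifying that ``strictly'' rather than merely ``eventually with invertible structure maps'' can be arranged. The technical point is that passing from a finite cell representative to one whose structure maps are literally identities above some level requires a change of model within the isomorphism class; this is possible because the shift desuspensions $\Fr_{m}(\X)$ come equipped with canonical identifications of their high-level terms, but one must check that pushouts and coproducts of such strictly normalized representatives can again be normalized without disturbing the finitely presentable property. Everything else is standard Waldhausen machinery once this normalization is in place.
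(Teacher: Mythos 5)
Your proposal matches the paper's (very terse) argument: the paper simply asserts that every finitely presentable $S^{1}$-spectrum is isomorphic to a strictly finitely presentable one, deduces that the inclusion is an equivalence of categories, and then concludes the $K$-theory equivalence. You correctly identify the content of that assertion — via the spectrum-level analog of Lemma \ref{lemma:cofibrantfinitelypresentable}, normalization of finite cell complexes, and the isomorphism-class (rather than on-the-nose) closure under retracts — and the worry you flag at the end is the right one, handled exactly as you suggest: structure maps become isomorphisms above some level and one replaces the spectrum by the isomorphic strict model $\E'_{k}=S^{k-N}\otimes\E_{N}$.
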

\vspace{0.1in}

Define the functor
\begin{equation*}
\xymatrix{
\Phi\colon 
{\bf sfp}(\Spt_{S^{1}})_{cof} \ar[r] &
S^{1}{\bf fp}(\Box\CC^{\ast}-\Spc_{0})_{cof} }
\end{equation*}
by sending $\E$ to $(\E_{n},n)$ and $f\colon\E\rightarrow\F$ to $(f_{n},n)$ for $n\geq n(\E),n(\F)$.
Note that $\Phi$ does not extend to a functor from ${\bf fp}(\Spt_{S^{1}})_{cof}$ to $S^{1}{\bf fp}(\Box\CC^{\ast}-\Spc_{0})_{cof}$.
\begin{proposition}
\label{proposition:mainstep}
The functor $\Phi$ is exact and has the approximation property.
\end{proposition}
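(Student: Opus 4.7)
The plan is to verify Proposition \ref{proposition:mainstep} by splitting the statement into (i) exactness of $\Phi$, namely preservation of the zero object, cofibrations, weak equivalences, and pushouts along cofibrations; (ii) axiom App 1 that $\Phi$ reflects weak equivalences; and (iii) axiom App 2 providing the factorization property. Throughout I will exploit that if $\E$ is a strictly finitely presentable cofibrant $S^{1}$-spectrum with $N:=n(\E)$, then $\E_{m}=\Sigma_{S^{1}}^{m-N}\E_{N}$ with identity structure maps for $m\geq N$, so stably $\E$ is equivalent to $\Sigma_{S^{1}}^{\infty}\E_{N}[-N]$ via the stable equivalence of Example \ref{example:stableweakequivalence} applied to the layer filtration, which terminates after finitely many layers in the strictly finitely presentable setting.

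First I would establish exactness. Preservation of the zero object and of pushouts along cofibrations is immediate since pushouts in both categories are computed levelwise. For a projective cofibration $f\colon\E\to\F$ in $\Spt_{S^{1}}$ with $N:=\max(n(\E),n(\F))$, the identity $f_{m}=\Sigma_{S^{1}}^{m-N}f_{N}$ for $m\geq N$ shows $\Phi(f)=(f_{m},m)$ is a cofibration in $S^{1}{\bf fp}(\Box\CC^{\ast}-\Spc_{0})_{cof}$. The substantive step is preservation of weak equivalences: a stable weak equivalence $f$ is an isomorphism in $\SH^{\ast}$, and by the stable equivalences $\Sigma_{S^{1}}^{\infty}\E_{N}[-N]\overset{\sim}{\to}\E$, $\Sigma_{S^{1}}^{\infty}\F_{N}[-N]\overset{\sim}{\to}\F$ combined with Corollary \ref{corollary:finitelypresentablecofibrantcolimit}, the morphism $f$ corresponds to the class of $f_{N}$ in the colimit $\colim_{n}[\Sigma_{S^{1}}^{n}\E_{N},\Sigma_{S^{1}}^{n}\F_{N}]$ of unstable homotopy classes. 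Invertibility in this colimit together with finite presentability of $\E_{N}$ and $\F_{N}$ produces an unstable representative of the inverse at some finite level, and one further suspension realizes the two compositions as identities in $\HH^{\ast}$. Consequently $\Sigma_{S^{1}}^{k-N}f_{N}$ is a $\CC^{\ast}$-weak equivalence for $k$ sufficiently large, which is precisely the statement that $\Phi(f)$ is a weak equivalence in the suspension colimit category.

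For App 1 I would argue dually: if $\Phi(f)$ is a weak equivalence in $S^{1}{\bf fp}(\Box\CC^{\ast}-\Spc_{0})_{cof}$, then by definition of weak equivalences in a sequential colimit of Waldhausen subcategories some representative $\Sigma_{S^{1}}^{k-N}f_{N}=f_{k}$ is a $\CC^{\ast}$-weak equivalence. Strict finite presentability forces $f_{m}=\Sigma_{S^{1}}^{m-k}f_{k}$ to be a $\CC^{\ast}$-weak equivalence for every $m\geq k$, so $f$ is a level equivalence from level $k$ on, and in particular a stable weak equivalence.

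For App 2, given $\alpha\colon\Phi(\E)\to(\Z,n)$ represented for some $k\geq\max(n(\E),n)$ by an unstable map $f_{k}\colon\E_{k}\to S^{k-n}\otimes\Z$, I apply Lemma \ref{lemma:cubicalmappingcylinderfactorization} to factor $f_{k}$ as a projective cofibration $\E_{k}\cof\cyl(f_{k})$ followed by a cubical homotopy equivalence $\cyl(f_{k})\overset{\sim}{\to}S^{k-n}\otimes\Z$. I then promote $\cyl(f_{k})$ to a strictly finitely presentable cofibrant $S^{1}$-spectrum $\F$ by setting $\F_{m}=\E_{m}$ for $m<k$, $\F_{k}=\cyl(f_{k})$, and $\F_{m}=\Sigma_{S^{1}}^{m-k}\cyl(f_{k})$ with identity structure maps for $m>k$. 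The componentwise map $\E\to\F$ is a projective cofibration: the generating condition at level $k$ reduces to the mapping-cylinder inclusion being a cofibration, while at every higher level the corresponding pushout map collapses to the identity on $\Sigma_{S^{1}}^{m-k}\cyl(f_{k})$ since $\E_{k+1}=\Sigma_{S^{1}}\E_{k}$. Then $\Phi(\E\to\F)$ postcomposed with the weak equivalence $(\cyl(f_{k}),k)\overset{\sim}{\to}(S^{k-n}\otimes\Z,k)=(\Z,n)$ in the colimit category recovers $\alpha$, completing App 2. The main obstacle I anticipate is the preservation-of-weak-equivalences step in exactness, where one must convert a purely stable equivalence into an unstable equivalence at a finite suspension level by exploiting the compactness of $\E_{N}$ and $\F_{N}$; the other verifications are essentially bookkeeping around the colimit structure and the mapping cylinder.
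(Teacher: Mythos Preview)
Your proof is correct and follows the same architecture as the paper's: exactness is broken into the easy pieces (zero object, cofibrations, pushouts) plus the crucial preservation of stable weak equivalences, App~1 is handled briefly, and App~2 is done via the levelwise mapping-cylinder construction you describe, which is exactly the paper's $\cyl(\E\rightarrow\Y)$.

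The one genuine difference is in how the key step---that a stable weak equivalence $f\colon\E\to\F$ between strictly finitely presentable cofibrant $S^1$-spectra has $f_k$ a $\CC^{\ast}$-weak equivalence for $k\gg 0$---is justified. The paper simply asserts this follows because the stable model structure on $\Spt_{S^{1}}$ is weakly finitely generated, without unpacking the mechanism. You instead argue directly via the colimit description $\SH^{\ast}_{S^{1}}(\Sigma^{\infty}_{S^{1}}\E_{N},\Sigma^{\infty}_{S^{1}}\F_{N})\cong\colim_{n}[\Sigma^{n}_{S^{1}}\E_{N},\Sigma^{n}_{S^{1}}\F_{N}]$, using finite presentability of $\E_{N}$ and $\F_{N}$ to realize the stable inverse and the two triangle identities at a finite suspension level. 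Your route is more transparent and self-contained; the paper's is terser but leaves the reader to reconstruct essentially the same argument. One small remark: Corollary~\ref{corollary:finitelypresentablecofibrantcolimit} is stated for $C$-spectra, so strictly you are invoking its $S^{1}$-analog, which holds by the identical argument from \cite[Corollary~4.13]{Hovey:spectra}.
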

\begin{proof}
It is clear that $\Phi$ preserves the point and also (projective) cofibrations because if $\E\rightarrow\F$ is a cofibration in $\Spt_{S^{1}}$, 
then $\E_{n}\rightarrow\F_{n}$ is a cofibration of pointed cubical $\CC^{\ast}$-spaces for every $n\geq 0$.
The interesting part of the proof consists of showing that $\Phi$ preserves stable weak equivalences.
More precisely, 
if $\E\rightarrow\F$ is a stable weak equivalence of finitely presentable cofibrant $S^{1}$-spectra of pointed cubical $\CC^{\ast}$-spaces, 
then $\E_{n}\rightarrow\F_{n}$ is a $\CC^{\ast}$-weak equivalence for all $n>>0$.
This uses that the stable model structure on $\Spt_{S^{1}}$ is weakly finitely generated.
\vspace{0.1in}

Next we show that $\Phi$ has the approximation property:
It clearly detects weak equivalences.
For a map $\Phi(\E)\rightarrow (\X,m)$ in $S^{1}{\bf fp}(\Box\CC^{\ast}-\Spc_{0})_{cof}$ we may choose a representative $\E_{n}\rightarrow\Y$,
and there exists an integer $k$ such that $S^{k+m}\otimes\Y=S^{k+n}\otimes\X$.
The map $\E_{n}\rightarrow\Y$ factors through $\cyl(\E_{n}\rightarrow\Y)$ for the good cylinder functor on ${\bf fp}(\Spt_{S^{1}})_{cof}$.
Define the strictly finitely presentable cofibrant $S^{1}$-spectrum $\cyl(\E\rightarrow\Y)$ of pointed cubical $\CC^{\ast}$-spaces by 
\begin{equation*}
\cyl(\E\rightarrow\Y)_{m}\equiv
\begin{cases}
\E_{m} & m<n\\
\cyl(\E_{n}\rightarrow\Y) & m=n\\
S^{1}\otimes\cyl(\E\rightarrow\Y)_{m-1} & m>n.
\end{cases}
\end{equation*}
The structure maps of $\cyl(\E\rightarrow\Y)$ are given by the structure maps of $\E$ if $m<n-1$, 
by $S^{1}\otimes\E_{n-1}\rightarrow\E_{n}\rightarrow\cyl(\E_{n}\rightarrow\Y)$ if $m=n-1$,
and by the appropriate identity map if $m\geq n$.
Clearly, 
$\E\rightarrow\cyl(\E\rightarrow\Y)$ is a (projective) cofibration which provides the required factoring by applying $\Phi$.
\end{proof}
In order to conclude that $\Phi$ is a $K$-theory equivalence, 
note that ${\bf sfp}(\Spt_{S^{1}})_{cof}$ inherits a good cylinder functor from ${\bf fp}(\Spt_{S^{1}})_{cof}$ so that Lemma \ref{lemma:step2} 
allows us to apply the approximation theorem.
\begin{lemma}
\label{lemma:step3}
The functor 
\begin{equation*}
\xymatrix{
\Phi\colon{\bf sfp}(\Spt_{S^{1}})_{cof}\ar[r] &
 S^{1}{\bf fp}(\Box\CC^{\ast}-\Spc_{0})_{cof} } 
\end{equation*}
is a $K$-theory equivalence. 
\end{lemma}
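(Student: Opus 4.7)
The plan is to deduce the statement as a direct application of Waldhausen's approximation theorem \cite[Theorem 1.6.7]{Waldhausen:LNM1126}, using Proposition \ref{proposition:mainstep} as the main input. Recall that the approximation theorem requires three ingredients: (i) a good cylinder functor on the source category satisfying the cylinder axioms, (ii) exactness of the functor, and (iii) verification of the two approximation axioms {\bf App}~1 and {\bf App}~2 from Definition \ref{definition:approximation}.

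First I would observe that ${\bf sfp}(\Spt_{S^{1}})_{cof}$ inherits a good cylinder functor from ${\bf fp}(\Spt_{S^{1}})_{cof}$: given a map $\E \rightarrow \F$ of strictly finitely presentable cofibrant $S^{1}$-spectra, the levelwise cubical mapping cylinder $\cyl(\E_{n} \rightarrow \F_{n})$ is strictly finitely presentable because in levels beyond the stabilization index $\max(n(\E),n(\F))$ the map is the identity, and $-\otimes \Box^{1}_{+}$ preserves finite presentability. The cylinder axioms then follow levelwise from the cubical model structure on $\Box\CC^{\ast}-\Spc_{0}$, exactly as in the construction used in the proof of Proposition \ref{proposition:mainstep} to factor a given map through $\cyl(\E \rightarrow \Y)$.

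Next, Proposition \ref{proposition:mainstep} supplies both exactness of $\Phi$ and the approximation property: the proposition establishes {\bf App}~1 (since a stable weak equivalence between finitely presentable cofibrant $S^{1}$-spectra is a $\CC^{\ast}$-weak equivalence at all sufficiently large levels, using weak finite generation of the stable model structure on $\Spt_{S^{1}}$), and it produces the factorization required by {\bf App}~2 through the explicit $\cyl(\E \rightarrow \Y)$ construction. With all hypotheses of the approximation theorem in hand, we conclude that $\Phi$ induces a homotopy equivalence $\Omega|NwS_{\bullet}\Phi|$, that is, $\Phi$ is a $K$-theory equivalence.

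I do not foresee a substantial obstacle here, since the genuine work was already carried out in Proposition \ref{proposition:mainstep}; the only subtlety is confirming that the cylinder functor restricts from ${\bf fp}(\Spt_{S^{1}})_{cof}$ to the smaller subcategory ${\bf sfp}(\Spt_{S^{1}})_{cof}$, which amounts to checking that the stabilization index behaves well under the mapping cylinder construction. Combining Lemma \ref{lemma:step1}, Lemma \ref{lemma:step2}, and this lemma then yields the chain of $K$-theory equivalences required for Theorem \ref{theorem:K-theoryintermsofspectra}.
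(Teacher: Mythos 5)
Your proposal matches the paper's proof: the paper likewise applies Waldhausen's approximation theorem, noting that ${\bf sfp}(\Spt_{S^{1}})_{cof}$ inherits a good cylinder functor from ${\bf fp}(\Spt_{S^{1}})_{cof}$ and that Proposition \ref{proposition:mainstep} supplies exactness and the approximation property. Your elaboration on why the levelwise mapping cylinder stays strictly finitely presentable is a useful clarification (though phrase it as "beyond the stabilization index the \emph{structure maps} are identities, hence the cylinder in level $n+1$ is the $S^{1}$-suspension of the cylinder in level $n$"), but it does not change the argument.
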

\vspace{0.1in}

Proposition \ref{proposition:Ktheoryequivalencefphfp} takes care of the remaining $K$-theory equivalence needed to finish the proof of 
Theorem \ref{theorem:K-theoryintermsofspectra}. 
\begin{lemma}
\label{lemma:step4}
The inclusion functor 
\begin{equation*}
\xymatrix{
{\bf fp}(\Spt_{S^{1}})_{cof}\ar[r] & 
{\bf hfp}(\Spt_{S^{1}})_{cof} } 
\end{equation*}
is a $K$-theory equivalence. 
\end{lemma}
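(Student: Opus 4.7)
The plan is to deduce Lemma \ref{lemma:step4} as a direct application of Proposition \ref{proposition:Ktheoryequivalencefphfp}, specialized to $\M=\Spt_{S^{1}}$ with its stable model structure. This reduces the problem to verifying three hypotheses on $\Spt_{S^{1}}$: that it is a cubical model category, that it is weakly finitely generated, and that the endofunctor $-\otimes\Box^{1}_{+}$ preserves finitely presentable objects. Once these are in place, Lemma \ref{lemma:hfpiscatwithcofandweakeq} guarantees that ${\bf hfp}(\Spt_{S^{1}})_{cof}$ is a legitimate Waldhausen subcategory, and Proposition \ref{proposition:Ktheoryequivalencefphfp} then delivers the desired $K$-theory equivalence.

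First I would note that the stable model structure on $\Spt_{S^{1}}$ is cubical, since it is constructed as a Bousfield localization of the level projective model structure (which inherits its cubical enrichment levelwise from the cubical homotopy invariant structure on $\Box\CC^{\ast}-\Spc_{0}$); cubicality is preserved under localization by the analog of Theorem \ref{theorem:Cstarsymmectricprojectivemonoidal} transcribed from symmetric spectra to ordinary spectra. Second, the stable model structure on $\Spt_{S^{1}}$ is weakly finitely generated: the argument mirrors Corollary \ref{homotopyweaklyfinitelygenerated} and the discussion preceding Lemma \ref{lemma:spectraweaklyfinitelygenerated}, using the shifts $\Fr_{n}$ of the set $J^{\ast}_{\Box\CC^{\ast}-\Spc}$ along with the finitely presentable pushout-product maps that detect stably fibrant objects via the adjoints of the structure maps. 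All these domains and codomains are finitely presentable because each $\Fr_{n}$ preserves finite presentability and the $S^{1}$-stabilizing maps $\Fr_{n+1}(S^{1}\otimes \X)\rightarrow\Fr_{n}(\X)$ involve only finitely presentable data.

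Third, I would check that $-\otimes\Box^{1}_{+}\colon\Spt_{S^{1}}\rightarrow\Spt_{S^{1}}$ preserves finitely presentable objects. This reduces to the corresponding statement for pointed cubical $\CC^{\ast}$-spaces, where it holds by Example \ref{example:finitelypresentable} together with the fact that $\Box^{1}_{+}$ has only finitely many non-degenerate cells, so that $\X\otimes\Box^{1}_{+}$ is a finite colimit of finitely presentable objects whenever $\X$ is. For an $S^{1}$-spectrum, one passes through the strictly finitely presentable representatives used in Lemma \ref{lemma:step2}: a strictly finitely presentable cofibrant $\E$ is built from finitely many finitely presentable pointed cubical $\CC^{\ast}$-spaces, and $\E\otimes\Box^{1}_{+}$ is constructed levelwise by $(\E\otimes\Box^{1}_{+})_{n}=\E_{n}\otimes\Box^{1}_{+}$, whence the claim.

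With these three verifications in hand, Proposition \ref{proposition:Ktheoryequivalencefphfp} applies verbatim to conclude that the inclusion ${\bf fp}(\Spt_{S^{1}})_{cof}\hookrightarrow {\bf hfp}(\Spt_{S^{1}})_{cof}$ induces an equivalence in $K$-theory. The main obstacle, in my estimation, will be the careful bookkeeping required to transfer the weakly finitely generated property from the unstable to the stable setting for ordinary (not symmetric) $S^{1}$-spectra, since some care is needed in choosing the finite set $J'$ that detects stable fibrations between objects with stably fibrant codomain; the relevant generators are built from the cylinder constructions $J^{\cyl(\E)}_{\Box\CC^{\ast}-\Spc}$, $J^{\cyl(\K)}_{\Box\CC^{\ast}-\Spc}$ and $J^{\cyl(I)}_{\Box\CC^{\ast}-\Spc}$ shifted by $\Fr_{n}$, together with the finitely presentable maps detecting the $\Omega_{S^{1}}$-spectrum condition, and one must check that this combined set still has finitely presentable domains and codomains.
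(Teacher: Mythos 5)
Your proposal is correct and takes the same approach as the paper: the paper simply invokes Proposition \ref{proposition:Ktheoryequivalencefphfp} (stated immediately before the lemma) with $\M=\Spt_{S^{1}}$, leaving the hypothesis checks implicit, while you supply them explicitly. The verification that $\Spt_{S^{1}}$ is cubical, weakly finitely generated, and that $-\otimes\Box^{1}_{+}$ preserves finite presentability are all in order and match the analogous statements the paper records for $\Spt_{C}$.
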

\vspace{0.1in}

Localization techniques imply our last result in this section.
\begin{theorem}
\label{theorem:Ktheoryretract}
The $K$-theory of ${\bf fp}(\Box\Set_{\ast})_{cof}$ is a retract of the $K$-theory of ${\bf fp}(\Box\CC^{\ast}-\Spc_{0})_{cof}$ up to homotopy.
\end{theorem}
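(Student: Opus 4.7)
My plan is to exhibit the retraction by constructing a Quillen pair between $\Box\Set_{\ast}$ and $\Box\CC^{\ast}-\Spc_{0}$ whose composite in one direction is (essentially) the identity, and then transport this structure to Waldhausen $K$-theory by an invocation of functoriality.

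The natural candidate for the embedding is the functor $L\colon\Box\Set_{\ast}\rightarrow\Box\CC^{\ast}-\Spc_{0}$ given by $L(K)=\C\otimes K$, where $\C$ is the unit of the monoidal product on $\Box\CC^{\ast}-\Spc_{0}$ and $\otimes$ refers to the cubical tensor of (\ref{cubicalCstarspacetensor}). First I would verify that $L$ is a left Quillen functor for the homotopy invariant projective model structures: it sends the generating (acyclic) cofibrations $\partial\Box^{n}_{+}\subset\Box^{n}_{+}$ (respectively $\sqcap^{n}_{(\alpha,i),+}\subset\Box^{n}_{+}$) of $\Box\Set_{\ast}$ to the maps $\C\otimes(\partial\Box^{n}_{+}\subset\Box^{n}_{+})$ (respectively $\C\otimes(\sqcap^{n}_{(\alpha,i),+}\subset\Box^{n}_{+})$), which are among the generating (acyclic) cofibrations of $\Box\CC^{\ast}-\Spc_{0}$ for the parameter choice $A=\C$. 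Since $L$ is a left adjoint (its right adjoint is $\Ev_{\C}$) and sends each generating cell to a finitely presentable cell, it restricts to an exact functor of Waldhausen categories
\begin{equation*}
L_{\text{Wald}}\colon {\bf fp}(\Box\Set_{\ast})_{cof}\longrightarrow {\bf fp}(\Box\CC^{\ast}-\Spc_{0})_{cof},
\end{equation*}
and therefore induces a well-defined map of $K$-theory spectra $K(L)$.

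The second step is to construct the retracting map of $K$-theories. The natural candidate is the right adjoint $R=\Ev_{\C}$, which satisfies $R\circ L\cong\id_{\Box\Set_{\ast}}$ because $\C$ is the unit of the monoidal product: this is the unit axiom expressed at the level of $\C$-sections. Were $\Ev_{\C}$ to preserve finite presentability on the nose, functoriality of Waldhausen $K$-theory would immediately yield $K(R)\circ K(L)\simeq K(\id)=\id$ and prove the theorem. To circumvent the failure of $\Ev_{\C}$ to preserve finite presentability (for generic $A$, the spectrum $\CC^{\ast}\text{-}\Alg(\C,A)$ can be infinite, so $\Ev_{\C}(A\otimes\Box^{n}_{+})$ need not be finitely presentable), I would pass to the stable setting via Theorem \ref{theorem:K-theoryintermsofspectra}, which identifies $K({\bf fp}(\Box\CC^{\ast}-\Spc_{0})_{cof})$ with $K({\bf hfp}(\Spt_{S^{1}})_{cof})$. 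The strictly analogous argument in the category of pointed cubical sets (using ${\bf hfp}\Spt_{S^{1},\Box\Set_{\ast}}$ and the same approximation, swindling and special approximation arguments of Lemmas \ref{lemma:step1}--\ref{lemma:step4}) yields a corresponding identification for $K({\bf fp}(\Box\Set_{\ast})_{cof})$. At the stable level, $\Ev_{\C}$ behaves far better with respect to finite presentability because one can replace each level of a strictly finitely presentable $S^{1}$-spectrum by its values on the subcategory of $\CC^{\ast}$-algebras actually appearing in its finite cellular presentation.

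The main obstacle is this finite presentability issue for $\Ev_{\C}$: I expect the bulk of the work to consist in showing that, after passage to $S^{1}$-spectra, a suitable modification of $\Ev_{\C}$ (or a cofinality argument in the spirit of Proposition \ref{proposition:Ktheoryequivalencefphfp} applied to the Waldhausen subcategory on which evaluation lands in ${\bf hfp}(\Spt_{S^{1},\Box\Set_{\ast}})_{cof}$) induces an exact functor whose $K$-theoretic image agrees with that of $\Ev_{\C}$ on the essential image of $L$. Once this is arranged, the relation $R\circ L\cong\id$ lifts to the required homotopy $K(R)\circ K(L)\simeq\id$, thereby realizing $K({\bf fp}(\Box\Set_{\ast})_{cof})$ as a retract of $K({\bf fp}(\Box\CC^{\ast}-\Spc_{0})_{cof})$ up to homotopy.
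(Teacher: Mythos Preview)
Your identification of the Quillen pair $(\C\otimes-,\Ev_{\C})$ with $\Ev_{\C}\circ(\C\otimes-)\cong\id$ is correct and is the natural starting point. But the route you propose to a retraction in $K$-theory has a gap that you do not fully diagnose.

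You correctly flag that $\Ev_{\C}$ fails to preserve finite presentability, but there is a more basic failure: $\Ev_{\C}$ is a \emph{right} Quillen functor and therefore only preserves weak equivalences between \emph{fibrant} objects, whereas the Waldhausen categories ${\bf fp}(-)_{cof}$ and ${\bf hfp}(-)_{cof}$ consist of cofibrant objects. Concretely, the $\CC^{\ast}$-weak equivalence $C(I)\rightarrow\C$ in $\Box\CC^{\ast}-\Spc_{0}$ (corresponding under the contravariant Yoneda embedding to the constant-function map $\C\hookrightarrow C(I)$ of $\CC^{\ast}$-algebras) is sent by $\Ev_{\C}$ to the map of discrete pointed cubical sets $\CC^{\ast}\text{-}\Alg\bigl(C(I),\C\bigr)\rightarrow\CC^{\ast}\text{-}\Alg(\C,\C)$, whose source is the uncountable pointed set $I_{+}$ and whose target is $S^{0}$; this is not a weak equivalence. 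So $\Ev_{\C}$ is not an exact functor on the Waldhausen categories in question, and passing to $S^{1}$-spectra does not repair this: the same example persists levelwise. Your suggestion that ``a suitable modification of $\Ev_{\C}$'' will do amounts to replacing it by its right derived functor, but that functor is no longer strictly exact (it does not preserve pushouts on the nose) and so does not feed directly into the $S_{\bullet}$-construction.

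The paper's proof is the single phrase ``localization techniques''. In the context of Waldhausen $K$-theory this almost certainly means the fibration theorem \cite[Theorem~1.6.4]{Waldhausen:LNM1126}, and the argument it points to is different in kind from yours. After passing to $S^{1}$-spectra via Theorem~\ref{theorem:K-theoryintermsofspectra} (as you also propose), one endows ${\bf hfp}(\Spt_{S^{1}})_{cof}$ with a second, coarser class of weak equivalences $w'$: the weak equivalences of the Bousfield localization of the stable model structure at which $\C\otimes-$ becomes a Quillen equivalence --- equivalently, those $f$ for which $\mathbf{R}\Ev_{\C}(f)$ is a stable equivalence in $\Spt_{S^{1},\Box\Set_{\ast}}$. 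The fibration theorem then yields a homotopy fibre sequence with base $K\bigl({\bf hfp}(\Spt_{S^{1}})_{cof},w'\bigr)$; an approximation-theorem argument identifies this base with $K\bigl({\bf hfp}(\Spt_{S^{1},\Box\Set_{\ast}})_{cof}\bigr)$, and the left Quillen functor $\C\otimes-$ furnishes a section, exhibiting the retract. The point is that Waldhausen's localization machinery produces the retracting map in $K$-theory without ever requiring $\Ev_{\C}$ itself to be realised as an exact functor of Waldhausen categories.
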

\vspace{0.1in}

\begin{remark}
Theorem \ref{theorem:Ktheoryretract} connects $\CC^{\ast}$-homotopy theory to geometric topology since the $K$-theory of ${\bf fp}(\Box\Set)_{cof}$ 
is Waldhausen's $A(\ast)$ or the $K$-theory of the sphere spectrum \cite[\S2]{Waldhausen:LNM1126}.
The spectrum $A(\ast)$ is of finite type \cite{Dwyer:homologystability} and rationally equivalent to the algebraic $K$-theory of the integers.
We refer to \cite{Rosenberg:handbook} for a recent survey and further references.
Theorem \ref{theorem:Ktheoryretract} shows the $K$-theory of the trivial $\CC^{\ast}$-algebra as defined by $\CC^{\ast}$-homotopy theory carries 
highly nontrivial invariants.
\end{remark}
\newpage

\subsection{Zeta functions}
\label{subsection:zetafunctions}
Our definition of zeta functions of $\CC^{\ast}$-algebras is deeply rooted in algebraic geometry.
If $X$ is a quasi-projective variety over a finite field $\FF_{q}$ then its Hasse-Weil zeta function is traditionally defined in terms of the number 
of $\FF_{q^{n}}$-points of $X$ by the formula 
\begin{equation*}
\zeta_{X}(t)\equiv
\exp\bigl(\Sigma_{n=1}^{\infty} \# X(\FF_{q^{n}})\frac{t^{n}}{n}\bigr).
\end{equation*} 
The symmetric group $\Sigma_{n}$ on $n$ letters acts on the $n$-fold product $X\times\cdots\times X$ and the symmetric power $\Sym^{n}(X)$ of $X$ 
is a quotient quasi-projective variety over $\FF_{q}$.
For example,
the higher dimensional affine spaces $\AAA^{n}_{\FF_{q}}=\Sym^{n}(\AAA^{1}_{\FF_{q}})$ and projective spaces 
$\PPP^{n}_{\FF_{q}}=\Sym^{n}(\PPP^{1}_{\FF_{q}})$ arise in this way.
Using symmetric powers the Hasse-Weil zeta function can be rewritten as the formal power series
\begin{equation*}
\zeta_{X}(t)=
\Sigma_{n=0}^{\infty} \#\Sym^{n}(X)(\FF_{q})t^{n}.
\end{equation*} 
Kapranov \cite{Kapranov} has generalized the whole setup by incorporating multiplicative Euler characteristics with compact support in 
the definition of zeta functions.
That is, 
if $\mu$ is an invariant of quasi-projective variety over $\FF_{q}$ with values in a ring $R$ for which $\mu(X)=\mu(X\smallsetminus Y)+\mu(Y)$ 
for every $Y\subset X$ closed and $\mu(X\times Y)=\mu(X)\mu(Y)$,
then the zeta function of $X$ with respect to $\mu$ is the formal power series
\begin{equation*}
\zeta_{X,\mu}(t)\equiv
\Sigma_{n=0}^{\infty} \mu\Sym^{n}(X)t^{n} \in R[[t]].
\end{equation*}
A typical choice of the ground ring is Grothendieck's $K_{0}$-group of varieties over $\FF_{q}$ with ring structure induced by products of 
varieties. 
\vspace{0.1in}

In our first setup the ring $R$ will be a $K_{0}$-group of the thick symmetric monoidal triangulated subcategory $\SH^{\ast\ccc}_{\Q}$ of compact 
objects in the rationalized stable homotopy category of $\CC^{\ast}$-algebras.
The Eilenberg swindle explains why we restrict to compact objects in $\SH^{\ast}_{\Q}$:
If $[\E]$ is a class in $K_{0}(\SH^{\ast})$ and $\coprod\E$ an infinite coproduct of copies of $\E$, 
the identification $\E\oplus\coprod\E=\coprod\E$ implies the class of $\E$ is trivial. 
A crux step toward the definition of zeta functions of $\CC^{\ast}$-algebras is to note there exist symmetric powers of compact objects giving 
rise to a $\lambda$-structure on the ring $K_{0}(\SH^{\ast\ccc}_{\Q})$ with multiplication induced by the monoidal product on $\SH^{\ast}$.
It is the $\lambda$-structure that ultimately allows us to push through the definition of zeta functions by a formula reminiscent of the classical 
one in algebraic geometry.
\vspace{0.1in}

Throughout what follows we shall tactically replace the category $\SH^{\ast}_{\Q}$ with its idempotent completion, 
turning it into a pseudoabelian symmetric monoidal $\Q$-linear category. 
Working with the idempotent completion, 
so that every projector acquires an image,
allows us to construct symmetric powers and wedge powers by using Young's work on the classical representation theory of symmetric groups dealing 
with idempotents and partitions.
We review this next, 
cf.~\cite{JK} and \cite{Weyl} for details.
\vspace{0.1in}

Recall the set of irreducible representations of $\Sigma_{n}$ over $\Q$ is in bijection with the set of partitions $\nu$ of $n$.
Moreover,
there exists a set of orthogonal idempotents $e_{\nu}$ in the group ring $\Q[\Sigma_{n}]$ called the Young symmetrizer so that 
$\Sigma_{\nu}e_{\nu}=1_{\Q[\Sigma_{n}]}$ and $e_{\nu}$ induces the corresponding representation of $\Sigma_{n}$ (up to isomorphism).
For every element $\E$ of $\SH^{\ast}_{\Q}$ there is an algebra map from $\Q[\Sigma_{n}]$ to the endomorphisms of the $n$-fold product
$\E^{(n)}\equiv\E\wedge^{\bf L}\cdots\wedge^{\bf L}\E$ given by sending $\sigma\in\Sigma_{n}$ to the endomorphism $\E_{\sigma}$ that permutes 
the factors accordingly.
That is,
writing $\sigma\in\Sigma_{n}$ as a product of elementary transpositions $(i,i+1)$ for $1\leq i<n$ and letting the latter act on $\E^{(n)}$ 
by applying the commutativity constrain between the $i$th and $i+1$st factor yields a well-defined action.
The identity $\Sigma_{\nu}\E_{e_{\nu}}=\id_{\E^{(n)}}$ follows immediately, 
where $\E_{e_{\nu}}$ is the endomorphism of $\E^{(n)}$ obtained from $e_{\nu}$.
Since $e^{2}_{\nu}=e_{\nu}$ and we are dealing with a pseudo-abelian category, 
the $n$-fold product of $\E$ splits into a direct sum of the images of the idempotents $\E_{e_{\nu}}$ in the endomorphism ring 
$\SH^{\ast}_{\Q}(\E^{(n)},\E^{(n)})$.

\begin{definition}
The Schur functor $\SSS_{\nu}$ of a partition $\nu$ of $n$ is the endofunctor of $\SH^{\ast}_{\Q}$ defined by 
$\SSS_{\nu}(\E)\equiv\E_{e_{\nu}}(\E^{(n)})$.
We say that $\E$ is Schur finite if there exists an integer $n$ and a partition $\nu$ of $n$ such that $\SSS_{\nu}(\E)=0$.
\end{definition}

The notion of Schur finiteness was introduced by Deligne \cite{Deligne2}.
See \cite[A 2.5]{Eisenbud} for more background. 
We thank Mazza for discussions about Schur finiteness in the algebro-geometric setting of motives \cite{Mazza}.

Next we define, 
corresponding to the partition $(n)$ of $n$,  
the $n$th symmetric power of $\E$ by 
\begin{equation*}
\Sym^{n}(\E)\equiv
\SSS_{(n)}(\E)=
\E_{e_{(n)}}(\E^{(n)})=
\frac{1}{n!}
\underset{\sigma\in\Sigma_{n}}{\Sigma}\E_{\sigma}
(\E^{(n)}).
\end{equation*}
Similarly, 
corresponding to the partition $(1,\dots,1)$ of $n$, 
we define the $n$th wedge power of $\E$ by 
\begin{equation*}
\Alt^{n}(\E)\equiv
\SSS_{(1,\dots,1)}(\E)=
\E_{e_{(1,\dots,1)}}(\E^{(n)})=
\frac{1}{n!}
\underset{\sigma\in\Sigma_{n}}{\Sigma}\sgn(\sigma)\E_{\sigma}
(\E^{(n)}).
\end{equation*}
\begin{remark}
The corresponding notions for rationalized Chow groups introduced in \cite{Kimura1} is the subject of much current research on motives in 
algebraic geometry;
in particular, 
the following notions receive much attention:
$\E$ is negative or positive finite dimensional provided $\Sym^{n}(\E)=0$ respectively $\Alt^{n}(\E)=0$ for some $n$, 
and finite dimensional if there exists a direct sum decomposition $\E=\E_{+}\oplus\E_{-}$ where $\E_{+}$ is positive and $\E_{-}$ is negative 
finite dimensional.
The monoidal product of two finite dimensional objects in $\SH_{\Q}$ is finite dimensional, 
and the same holds for Schur finite objects.
The main result in \cite{Guletski1} shows that negative and positive finite dimensional objects satisfy the two-out-of-three property for 
distinguished triangles. 
A thorough study of Schur finite and finite dimensional objects in some rigid setting of $\CC^{\ast}$-algebras remains to be conducted.
\end{remark}

Recall that the Grothendieck group $K_{0}(\SH^{\ast\ccc}_{\Q})$ is the quotient of the free abelian group generated by the isomorphism classes 
$[\E]$ of objects of $\SH^{\ast\ccc}_{\Q}$ by the subgroup generated by the elements $[\F]-[\E]-[\G]$ for every distinguished triangle 
$\E\rightarrow\F\rightarrow\G$.
Due to the monoidal product $\wedge^{\bf L}$ on $\SH^{\ast}_{\Q}$ there is an induced multiplication on $K_{0}(\SH^{\ast\ccc}_{\Q})$ 
which turns the latter into a commutative unital ring.
\vspace{0.1in}

The main result in Guletski{\u\i}'s paper \cite{Guletski2} shows that the wedge and symmetric power constructions define opposite 
$\lambda$-structures on $K_{0}(\SH^{\ast\ccc}_{\Q})$.
Next we recall these notions.
The interested reader can consult the papers by Atiyah and Tall \cite{AT} 
and by Grothendieck \cite{Grothendieck} for further details on this subject (which is important in $K$-theory).
\vspace{0.1in}

Let $R$ be a commutative unital ring.
Then a $\lambda$-ring structure on $R$ consists of maps $\lambda^{n}\colon R\rightarrow R$ for every integer $n\geq 0$ such that the following 
conditions hold:
\begin{itemize}
\item 
$\lambda^{0}(r)=1$ for all $r\in R$
\item 
$\lambda^{1}=\id_{R}$
\item 
$\lambda^{n}(r+r')=\underset{i+j=n}{\Sigma}\lambda^{i}(r)\lambda^{j}(r')$
\end{itemize}

A $\lambda$-ring structure on $R$ induces a group homomorphism
\begin{equation}
\label{equation:universallambdamap}
\xymatrix{
\lambda_{t}\colon
R\ar[r] &
1+tR[[t]];
\,\,
r\ar@{|->}[r] &
1+\Sigma_{n\geq 1}\lambda^{n}(r)t^{n}, }
\end{equation}
from the underlying additive group of $R$ to the multiplicative group of formal power series in an indeterminate $t$ over $R$ with constant 
term $1$, 
i.e.~$\lambda_{t}(r+r')=\lambda_{t}(r)\lambda_{t}(r')$.

The ring $1+tR[[t]]$ acquires a $\lambda$-ring structure when the addition is defined by multiplication of formal power series, 
multiplication and $\lambda$-operations are given by universal polynomials, 
which in turn are uniquely determined by the identities
\begin{equation*}
\bigl(\prod_{i=1}^{k}(1+a_{i}t)\bigr)
\bigl(\prod_{j=1}^{l}(1+b_{j}t)\bigr)=
\prod_{i=1}^{k}\prod_{j=1}^{l}(1+a_{i}b_{j}t)
\end{equation*}
and
\begin{equation*}
\lambda^{n}
\bigl(\prod_{i=1}^{k}(1+a_{i}t)\bigr)=
\prod_{S\subset\{1,\dots,k\}, \#S=n}\bigl(1+t\prod_{j\in S}a_{j}\bigr).
\end{equation*}

Ring homomorphisms between $\lambda$-rings commuting with all the $\lambda$-operations are called $\lambda$-ring homomorphisms.
And a $\lambda$-ring $R$ is called special if the map (\ref{equation:universallambdamap}) is a $\lambda$-ring homomorphism.
In particular, 
the $\lambda$-ring $1+tR[[t]]$ is special,
as was noted by Grothendieck \cite{AT}.
\vspace{0.1in}

Every ring homomorphism $\phi\colon R\rightarrow 1+tR[[t]]$ for which $\phi(r)=1+rt+$higher degree terms defines a $\lambda$-ring structure on $R$.
The opposite $\lambda$-ring structure of a $\lambda$-ring $R$ is the $\lambda$-ring structure associated with the ring homomorphism 
$\phi(r)\equiv\lambda_{-t}(-r)=\lambda_{-t}(r)^{-1}$.
\vspace{0.1in}

In the example of $K_{0}(\SH^{\ast\ccc}_{\Q})$ we set 
\begin{equation}
\label{equation:lambdastructure}
\lambda^{n}([\E])\equiv 
[\Sym^{n}(\E)].
\end{equation}
The main result in \cite{Guletski2} shows that (\ref{equation:lambdastructure}) defines a $\lambda$-ring structure on $K_{0}(\SH^{\ast\ccc}_{\Q})$.
Its opposite $\lambda$-structure arises by replacing the class of $\Sym^{n}(\E)$ by the class of $\Alt^{n}(\E)$ in the definition
(\ref{equation:lambdastructure}). 
The class $[\Sym^{0}(\E)]$ is the unit in $K_{0}(\SH^{\ast\ccc}_{\Q})$.

\begin{definition}
Let 
\begin{equation*}
\xymatrix{
K_{0}(\SH^{\ast\ccc}_{\Q})\ar[r] &
1+tK_{0}(\SH^{\ast\ccc}_{\Q})[[t]];
\,\,
[\E]\ar@{|->}[r] &
1+\Sigma_{n\geq 1}[\Sym^{n}(\E)]t^{n} }
\end{equation*}
be the $\lambda$-ring homomorphism determined by the $\lambda$-ring structure on $K_{0}(\SH^{\ast\ccc}_{\Q})$ in (\ref{equation:lambdastructure}).

The zeta function of $\E$ in $\SH^{\ast\ccc}_{\Q}$ is the formal power series
\begin{equation}
\zeta_{\E}(t)\equiv
\Sigma_{n\geq 0}[\Sym^{n}(\E)]t^{n}.
\end{equation}
\end{definition}
\begin{remark}
We trust the first part of this section makes it plain that our definition of zeta functions of $\CC^{\ast}$-algebras is deeply rooted in 
algebraic geometry.
\end{remark}

The definition of zeta functions makes it clear that the following result holds.
\begin{lemma}
If $\E\rightarrow\F\rightarrow\G$ is a distinguished triangle in $\SH^{\ast\ccc}_{\Q}$ then 
\begin{equation*}
\zeta_{\F}=\zeta_{\E}\zeta_{\G}.
\end{equation*}
\end{lemma}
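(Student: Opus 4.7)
The plan is to reduce the claim to the fact, invoked implicitly in the preceding definition, that the map $[\E]\mapsto \zeta_{\E}(t)$ is the $\lambda$-ring homomorphism
\[
\lambda_{t}\colon K_{0}(\SH^{\ast\ccc}_{\Q})\longrightarrow 1+tK_{0}(\SH^{\ast\ccc}_{\Q})[[t]]
\]
associated with the $\lambda$-structure $\lambda^{n}([\E])=[\Sym^{n}(\E)]$ established by Guletski{\u\i}. Recall from the general formalism of $\lambda$-rings recalled just before the definition of zeta functions that the addition in the target $\lambda$-ring $1+tR[[t]]$ is given by multiplication of formal power series, so the defining property $\lambda_{t}(r+r')=\lambda_{t}(r)\lambda_{t}(r')$ is exactly the statement that $\lambda_{t}$ is additive with respect to this multiplicative group structure.

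The first step is to observe that, by the very definition of $K_{0}(\SH^{\ast\ccc}_{\Q})$ as the free abelian group on isomorphism classes modulo the relation $[\F]-[\E]-[\G]=0$ for every distinguished triangle $\E\rightarrow\F\rightarrow\G$, the hypothesis yields
\[
[\F]=[\E]+[\G]
\]
in $K_{0}(\SH^{\ast\ccc}_{\Q})$. The second step is simply to apply the group homomorphism $\lambda_{t}$, which gives
\[
\zeta_{\F}(t)=\lambda_{t}([\F])=\lambda_{t}([\E]+[\G])=\lambda_{t}([\E])\cdot\lambda_{t}([\G])=\zeta_{\E}(t)\,\zeta_{\G}(t),
\]
where the middle equality uses additivity of $\lambda_{t}$ and the product is the usual product of formal power series over $K_{0}(\SH^{\ast\ccc}_{\Q})$.

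There is essentially no obstacle here beyond being clear about which structures are being used: the only non-formal input is Guletski{\u\i}'s result that $\lambda^{n}([\E])\equiv [\Sym^{n}(\E)]$ genuinely endows $K_{0}(\SH^{\ast\ccc}_{\Q})$ with a $\lambda$-ring structure (so that the Schur-functor identities required to verify $\lambda^{n}(r+r')=\sum_{i+j=n}\lambda^{i}(r)\lambda^{j}(r')$ hold on the level of $K_{0}$), and this has already been invoked immediately before the statement. For completeness I would also spell out the resulting Cauchy-product identity $[\Sym^{n}(\F)]=\sum_{i+j=n}[\Sym^{i}(\E)][\Sym^{j}(\G)]$ in $K_{0}(\SH^{\ast\ccc}_{\Q})$, which is the coefficient-wise reformulation of $\zeta_{\F}=\zeta_{\E}\zeta_{\G}$ and makes transparent that the lemma is a direct consequence of the $\lambda$-structure together with the defining relation in $K_{0}$.
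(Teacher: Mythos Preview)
Your proof is correct and is exactly the argument the paper has in mind: the paper's proof simply reads ``The definition of zeta functions makes it clear that the following result holds,'' and you have unpacked precisely why, using the relation $[\F]=[\E]+[\G]$ in $K_{0}(\SH^{\ast\ccc}_{\Q})$ together with the multiplicativity $\lambda_{t}(r+r')=\lambda_{t}(r)\lambda_{t}(r')$ of the $\lambda$-ring homomorphism.
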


\begin{definition}
A power series $f(t)\in K_{0}(\SH^{\ast\ccc}_{\Q})[[t]]$ is (globally) rational if there exists polynomials 
$g(t),h(t)\in K_{0}(\SH^{\ast\ccc}_{\Q})[t]$ such that $f(t)$ is the unique solution of the equation $g(t)x=h(t)$.
\end{definition}
\begin{corollary}
If $\E\rightarrow\F\rightarrow\G$ is a distinguished triangle in $\SH^{\ast\ccc}_{\Q}$ and two of the three zeta functions $\zeta_{F}$, 
$\zeta_{E}$ and $\zeta_{G}$ are rational, 
then so is the third.
\end{corollary}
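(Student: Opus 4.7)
The plan is to exploit the multiplicative relation $\zeta_{\F} = \zeta_{\E}\zeta_{\G}$ established in the preceding lemma, combined with the observation that each of the three zeta functions in question is a \emph{unit} in the formal power series ring $R[[t]]$, where $R \equiv K_{0}(\SH^{\ast\ccc}_{\Q})$. Indeed, each $\zeta_{\X}$ has constant term $[\Sym^{0}(\X)] = 1$, the unit of $R$, so $\zeta_{\X}$ is invertible in $R[[t]]$.

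I will first re-interpret the definition of rationality more usefully. If $f(t) \in R[[t]]$ is the unique solution of $g(t)x = h(t)$ for polynomials $g(t), h(t) \in R[t]$, then in particular the only element of $R[[t]]$ annihilated by $g(t)$ is zero, i.e.~$g(t)$ must be a non-zero-divisor in $R[[t]]$; conversely, if $g(t)$ is a non-zero-divisor and $g(t)f(t) = h(t)$, then $f(t)$ is automatically the unique solution. Hence rationality of $f(t)$ is equivalent to the existence of polynomials $g(t), h(t)$ with $g(t)$ a non-zero-divisor in $R[[t]]$ and $g(t)f(t) = h(t)$. With this reformulation in hand, I treat the three cases separately.

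The first case, where $\zeta_{\E}$ and $\zeta_{\G}$ are rational, is straightforward: choose defining equations $G_{\E}\zeta_{\E} = H_{\E}$ and $G_{\G}\zeta_{\G} = H_{\G}$, multiply them, and use $\zeta_{\F} = \zeta_{\E}\zeta_{\G}$ to obtain $(G_{\E}G_{\G})\zeta_{\F} = H_{\E}H_{\G}$, where $G_{\E}G_{\G}$ is a product of non-zero-divisors, hence a non-zero-divisor. The remaining two cases are symmetric, so I focus on the case $\zeta_{\F}, \zeta_{\E}$ rational and $\zeta_{\G}$ to be shown rational. With $G_{\F}\zeta_{\F} = H_{\F}$ and $G_{\E}\zeta_{\E} = H_{\E}$, substituting $\zeta_{\F} = \zeta_{\E}\zeta_{\G}$ into the first and multiplying by $G_{\E}$ gives
\begin{equation*}
G_{\F}H_{\E}\,\zeta_{\G} = G_{\E}H_{\F}.
\end{equation*}

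The main (and only nontrivial) point is verifying that the polynomial $G_{\F}H_{\E}$ is a non-zero-divisor in $R[[t]]$. Here the unit property of $\zeta_{\E}$ is essential: since $H_{\E} = G_{\E}\zeta_{\E}$ is a product of a non-zero-divisor $G_{\E}$ and the unit $\zeta_{\E}$, it is itself a non-zero-divisor, and then so is $G_{\F}H_{\E}$. This will be the conceptual heart of the argument, and also the step most at risk of breaking if $R$ were only assumed to be an arbitrary commutative ring without the unit-constant-term property of zeta functions; as it stands, the $\lambda$-structure furnishes everything needed. The third case is handled identically by interchanging the roles of $\E$ and $\G$.
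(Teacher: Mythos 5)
The paper states this corollary immediately after the multiplicativity lemma without giving a proof, presumably regarding it as immediate. Your argument is correct and in fact supplies a genuine missing ingredient: the ``division'' cases (recovering $\zeta_{\E}$ or $\zeta_{\G}$ from the other two) are not automatic from the definition of global rationality, and your reformulation --- $f$ is globally rational iff there are polynomials $g,h$ with $gf=h$ and $g$ a non-zero-divisor in $K_{0}(\SH^{\ast\ccc}_{\Q})[[t]]$ --- together with the observation that every $\zeta_{\X}$ is a unit in the power series ring (constant term $[\Sym^{0}(\X)]=1$) is exactly what makes the argument close. The verification that $H_{\E}=G_{\E}\zeta_{\E}$ remains a non-zero-divisor, because it is a product of a non-zero-divisor and a unit, is the crux and is handled correctly.
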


For classes $[\E]$ and $[\F]$ in $K_{0}(\SH^{\ast\ccc}_{\Q})$, 
if the zeta functions $\zeta_{[\E]}$, $\zeta_{[\F]}$ are rational then so is $\zeta_{[\E]\oplus [\F]}$.
Moreover, 
since the $\lambda$-structure on $K_{0}(\SH^{\ast\ccc}_{\Q})$ is special, 
it follows that $\zeta_{[\E\wedge^{\bf L}\F]}=\zeta_{\E}\ast\zeta_{\F}$ is also rational, 
where the product $\ast$ on the right hand side of the equation is given by the multiplication in the $\lambda$-ring 
$1+tK_{0}(\SH^{\ast\ccc}_{\Q})[[t]]$.
Thus rationality of zeta functions are closed under addition and multiplication in $K_{0}(\SH^{\ast\ccc}_{\Q})$.
Moreover,
the shift functor in the triangulated structure on $\SH^{\ast}_{\Q}$ preserves rationality.
The next result follows easily from the equality 
\begin{equation*}
\bigl(\Sigma_{n\geq 0}[\Alt^{n}(\E)](-t)^{n}\bigr)
\bigl(\Sigma_{n\geq 0}[\Sym^{n}(\E)]t^{n}\bigr)
=1
\end{equation*}
in $K_{0}(\SH^{\ast\ccc}_{\Q})[[t]]$.
\begin{lemma}
\begin{itemize}
\item 
If $\E_{-}$ is negative finite dimensional, 
then $\zeta_{\E_{-}}(t)$ is a polynomial.
\item 
If $\E_{+}$ is positive finite dimensional, 
then $\zeta_{\E_{+}}(t)^{-1}$ is a polynomial.
\item
If $\E$ is finite dimensional, 
then $\zeta_{\E}(t)$ is rational.
\end{itemize}
\end{lemma}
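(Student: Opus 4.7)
The plan is to derive all three statements from the key identity
\begin{equation*}
\bigl(\textstyle\sum_{n\geq 0}[\Alt^{n}(\E)](-t)^{n}\bigr)\bigl(\textstyle\sum_{n\geq 0}[\Sym^{n}(\E)]t^{n}\bigr)=1
\end{equation*}
recorded just before the lemma, combined with a vanishing-propagation statement for symmetric and exterior powers in the pseudo-abelian $\Q$-linear symmetric monoidal category $\SH^{\ast\ccc}_{\Q}$. The key fact I would first establish is: if $\Sym^{N}(\E)=0$ then $\Sym^{m}(\E)=0$ for all $m\geq N$, and similarly for $\Alt^{m}(\E)$. One way to see this is to observe that for each $m\geq 0$ the object $\Sym^{m+1}(\E)$ is a direct summand of $\E\wedge^{\bf L}\Sym^{m}(\E)$: the Young symmetrizer $e_{(m+1)}\in\Q[\Sigma_{m+1}]$ factors through the idempotent $1\otimes e_{(m)}\in\Q[\Sigma_{1}\times\Sigma_{m}]$ acting on $\E^{(m+1)}$, so pseudo-abelianness exhibits $\Sym^{m+1}(\E)$ as a retract of $\E\wedge^{\bf L}\Sym^{m}(\E)$. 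Induction then propagates the vanishing. The sign-twisted variant handles $\Alt$. (If one prefers to stay entirely at the level of $K_{0}$, the special $\lambda$-ring structure recorded in the excerpt, via the Newton-type identities relating the $[\Sym^{n}(\E)]$ and $[\Alt^{n}(\E)]$, already forces the classes $[\Sym^{m}(\E)]$ to vanish for $m\geq N$ once $[\Sym^{N}(\E)]=0$, which is enough for the zeta function.)

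Granted the propagation lemma, the first bullet is immediate: for $\E_{-}$ negative finite dimensional, pick $N$ with $\Sym^{N}(\E_{-})=0$; then $[\Sym^{m}(\E_{-})]=0$ for $m\geq N$, so $\zeta_{\E_{-}}(t)=\sum_{n<N}[\Sym^{n}(\E_{-})]t^{n}$ is a polynomial of degree less than $N$. For the second bullet, when $\E_{+}$ is positive finite dimensional with $\Alt^{N}(\E_{+})=0$, propagation makes $\alpha_{\E_{+}}(t)\equiv\sum_{n\geq 0}[\Alt^{n}(\E_{+})](-t)^{n}$ a polynomial of degree less than $N$; the key identity then reads $\alpha_{\E_{+}}(t)\,\zeta_{\E_{+}}(t)=1$ in $K_{0}(\SH^{\ast\ccc}_{\Q})[[t]]$, whence $\zeta_{\E_{+}}(t)^{-1}=\alpha_{\E_{+}}(t)$ is a polynomial.

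For the third bullet I would appeal to multiplicativity. Since $\lambda_{t}\colon K_{0}(\SH^{\ast\ccc}_{\Q})\to 1+tK_{0}(\SH^{\ast\ccc}_{\Q})[[t]]$ is a group homomorphism from the additive group of $K_{0}$ to the multiplicative group of power series with constant term $1$, and since by definition $\lambda_{t}([\E])=\zeta_{\E}(t)$, the splitting $\E=\E_{+}\oplus\E_{-}$ yields $\zeta_{\E}(t)=\zeta_{\E_{+}}(t)\,\zeta_{\E_{-}}(t)$. By the previous two bullets this is a polynomial times the inverse of a polynomial, so $\zeta_{\E}(t)=h(t)/g(t)$ with $g,h\in K_{0}(\SH^{\ast\ccc}_{\Q})[t]$, which is rational in the sense of the paper.

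The main obstacle is the propagation lemma: verifying that the Young-symmetrizer factorization really exhibits $\Sym^{m+1}(\E)$ as a direct summand of $\E\wedge^{\bf L}\Sym^{m}(\E)$ requires a careful bookkeeping of the idempotents $e_{(m)}$ and $e_{(m+1)}$ in $\Q[\Sigma_{m+1}]$ and the action on $\E^{(m+1)}$. This is the only non-formal input; once in place, everything else is a direct manipulation of formal power series over $K_{0}(\SH^{\ast\ccc}_{\Q})$ using the identity already recorded and the $\lambda$-ring homomorphism property of $\lambda_{t}$.
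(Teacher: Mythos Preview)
Your proposal is correct and follows the same route the paper indicates: the paper's entire proof is the sentence ``The next result follows easily from the equality $\bigl(\sum_{n\geq 0}[\Alt^{n}(\E)](-t)^{n}\bigr)\bigl(\sum_{n\geq 0}[\Sym^{n}(\E)]t^{n}\bigr)=1$'', and you have unpacked exactly what this entails, including the propagation step $\Sym^{N}(\E)=0\Rightarrow\Sym^{m}(\E)=0$ for $m\geq N$ (and its $\Alt$ analogue) which the paper silently absorbs into the phrase ``follows easily''. Your retract argument via the factorization $e_{(m+1)}=e_{(m+1)}\cdot(1\otimes e_{(m)})$ is the standard and correct justification.
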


For the purpose of showing a functional equation for zeta functions of $\CC^{\ast}$-algebras we shall shift focus to the rationalized category 
$\MM(\KK)_{\Q}$ of $KK$-motives.  
The latter is the homotopy category of a stable monoidal model structure on non-connective or $\ZZ$-graded chain complexes $\Ch(\KK)$ of pointed 
$\CC^{\ast}$-spaces with $\KK$-transfers constructed similarly to the homotopy invariant model structure on $\CC^{\ast}$-spaces.  
We leave open the question of comparing zeta functions defined in terms of $\SH^{\ast\ccc}_{\Q}$ and $\MM(\KK)^{\ccc}_{\Q}$, 
but note that the properties shown so far in this section hold for $K_{0}(\MM(\KK)^{\ccc}_{\Q})$ and hence $\zeta_{\E}(t)$, 
where now $\E$ is a compact object in $\MM(\KK)_{\Q}$.

Next we outline the construction of the category of $KK$-motives.
More details will appear in \cite{Ostvar:noncommutativemotives}.
\vspace{0.1in}

There is a category $\KK$ of $\CC^{\ast}$-algebras with maps $A\rightarrow B$ the elements of $\KK(A,B)$ and composition provided by the 
intersection product in $KK$-theory.
We note that $\KK$ is symmetric monoidal and enriched in abelian groups, but it is not abelian.
Let $\Ch(\KK)$ denote the abelian category of additive functors from $\KK$ to non-connective chain complexes $\Ch$ of abelian groups equipped 
with the standard projective model structure \cite[Theorem 2.3.11]{Hovey:Modelcategories}.
By \cite[Theorem 4.4]{DRO:general} there exists a pointwise model structure on $\Ch(\KK)$.
Moreover, 
the pointwise model structure is stable because the projective model structure on $\Ch$ is stable.
\vspace{0.1in}

Next we may localize the pointwise model structure in order to construct the exact, matrix invariant and homotopy invariant symmetric monoidal 
stable model structures on $\Ch(\KK)$.
With due diligence these steps can be carried out as for $\CC^{\ast}$-spaces.
The discussion of Euler characteristics in stable $\CC^{\ast}$-homotopy theory carries over to furnish $\MM(\KK)$ with an additive invariant 
related to triangulated structure.
Moreover, 
we may replace abelian groups with modules over some commutative ring with unit.
In particular, 
there is a rationalized category of motives $\MM(\KK)_{\Q}$ corresponding to non-connective chain complexes of rational vector spaces.
In this category we may form symmetric powers $\Sym^{n}(\E)$ and wedge powers $\Alt^{n}(\E)$ for every object $\E$ and $n\geq 0$. 
Finally, 
we note that the endomorphism ring of the unit $\MM(\KK)_{\Q}({\bf 1},{\bf 1})$ is a copy of the rational numbers.
\vspace{0.1in}

Recall that $\E$ is negative or positive finite dimensional provided $\Sym^{n}(\E)=0$ respectively $\Alt^{n}(\E)=0$ for some $n$, 
and finite dimensional if there exists a direct sum decomposition $\E=\E_{+}\oplus\E_{-}$ where $\E_{+}$ is positive and $\E_{-}$ is negative 
finite dimensional.
We denote by $\MM(\KK)^{\fd}_{\Q}$ the thick subcategory of finite dimensional objects in $\MM(\KK)_{\Q}$.
\vspace{0.1in}

The following result shows in particular that the Euler characteristics of negative and positive finite dimensional rational motives are integers.   
\begin{proposition}
\begin{itemize}
\item 
If $\E$ is finite dimensional, 
then a direct sum decomposition 
$$\E=\E_{+}\oplus\E_{-}$$ 
is unique up to isomorphism.  
\item 
If $\E$ is negative finite dimensional, 
then $\chi(\E)$ is a nonpositive integer and the smallest $n$ such that $\Sym^{n}(\E)=0$ equals $1-\chi(M)$.
\item 
If $\E$ is positive finite dimensional, 
then $\chi(\E)$ is a nonnegative integer and the smallest $n$ such that $\Alt^{n}(\E)=0$ equals $1+\chi(M)$.
\end{itemize}
\end{proposition}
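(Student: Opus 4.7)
The plan is to transport to the category of $KK$-motives $\MM(\KK)_{\Q}$ the strategy pioneered by Kimura and O'Sullivan for finite dimensional Chow motives, exploiting the symmetric monoidal and triangulated structure together with the additivity of Euler characteristics. The guiding principle is that positive and negative finite dimensional objects play the role of even and odd parts of a $\ZZ/2$-graded vector space, with $\chi$ recording their respective dimensions. The three items will all be consequences of a single computational input (a Schur polynomial formula for $\chi(\SSS_\nu\E)$) combined with one rigidity input (a vanishing principle for finite dimensional objects with trivial Euler characteristic).

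First I would establish the character formula: for every partition $\nu$ of $n$,
\begin{equation*}
\chi\bigl(\SSS_\nu(\E)\bigr)=s_\nu\bigl(\chi(\E)\bigr),
\end{equation*}
where $s_\nu$ is the Schur polynomial evaluated at the formal variable $\chi(\E)$. Specializing to the two extreme partitions gives $\chi(\Sym^n\E)=\binom{\chi(\E)+n-1}{n}$ and $\chi(\Alt^n\E)=\binom{\chi(\E)}{n}$. The proof proceeds from the isotypic decomposition $\E^{(n)}=\bigoplus_\nu\SSS_\nu(\E)$ in the pseudoabelian category $\MM(\KK)_{\Q}$, the additivity of traces under distinguished triangles (Theorem \ref{additivityofeulercharacteristics}), and the multiplicativity $\chi(\E^{(n)})=\chi(\E)^n$ from Lemma \ref{tracelemma}; what remains is the purely combinatorial identity in $\Q[\Sigma_n]$ relating $e_\nu$ to the Newton power-sum expansion of $s_\nu$. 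Next I would establish preservation of finite dimensionality under $\SSS_\nu$, using the Schur--Weyl decomposition of the product of a positive and a negative finite dimensional object to reduce to the two extreme cases.

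With these tools, items (ii) and (iii) fall out as follows for the positive case (the negative case being dual). If $N$ is the smallest integer with $\Alt^N(\E)=0$, then $\binom{\chi(\E)}{N}=0$ forces $\chi(\E)\in\{0,1,\dots,N-1\}$; moreover $\Alt^{N-1}(\E)$ is again positive finite dimensional and nonzero, so by the vanishing principle below its Euler characteristic $\binom{\chi(\E)}{N-1}$ is nonzero, forcing $\chi(\E)=N-1$. For (i), given two decompositions $\E=\E_+\oplus\E_-=\E'_+\oplus\E'_-$, the composite $\E_+\hookrightarrow\E\twoheadrightarrow\E'_-$ is a morphism from a positive to a negative finite dimensional object, and an orthogonality lemma (a direct consequence of the vanishing principle applied to the cone) shows any such morphism has trivial image, so the two decompositions agree up to a canonical isomorphism.

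The main obstacle, as in Kimura's original work, is the \emph{vanishing principle}: if $\E$ is finite dimensional and $\chi(\E)=0$, then $\E=0$. In the classical motivic setting this rests on deep nilpotence input (of smash-nilpotent correspondences), and the corresponding assertion in $\MM(\KK)_{\Q}$ is not provided by anything established earlier in the paper. The natural route is to show that every endomorphism $f$ of a finite dimensional $\E$ satisfies a Cayley--Hamilton type identity of degree $1+|\chi(\E)|$ whose coefficients are traces of Schur-functor conjugates of $f$; applied to $f=\id_\E$ with $\chi(\E)=0$ this collapses to $\id_\E=0$, hence $\E=0$. Carrying out this Cayley--Hamilton argument inside $\MM(\KK)_{\Q}$ is where the bulk of the technical work lies, and will likely require additional structural input on the monoidal triangulated category of non-connective chain complexes with $\KK$-transfers beyond what has been developed so far.
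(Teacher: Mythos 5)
Your proposal reconstructs exactly the Kimura--O'Sullivan/Deligne argument that the paper defers to: its proof is a one-line citation to Kimura \cite[Proposition 6.3]{Kimura1}, AKOS \cite[Proposition 9.1.10]{AKOS} for uniqueness of the decomposition, and AKOS \cite[Theorems 7.2.4, 9.1.7]{AKOS} and Deligne \cite[\S7.2]{Deligne1} for the integrality and minimality statements, all read in the pseudoabelian $\Q$-linear rigid tensor category $\MM(\KK)_{\Q}$ with $\MM(\KK)_{\Q}(\mathbf{1},\mathbf{1})=\Q$. So you have correctly identified the route the paper takes. Two remarks on the details. First, you formulate the deep input as the vanishing principle ``$\E$ finite dimensional and $\chi(\E)=0$ implies $\E=0$,'' and then invoke it to pin down the minimal $N$; what \cite[Theorem 7.2.4]{AKOS} actually delivers is the sharper statement that $\Alt^{n+1}(\E)=0$ forces $\chi(\E)\in\{0,\dots,n\}$ \emph{and} $\Alt^{\chi(\E)+1}(\E)=0$. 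From the second clause the minimality $N=\chi(\E)+1$ drops out immediately together with the vanishing principle as the special case $\chi(\E)=0$, so your detour through the vanishing principle is correct but slightly roundabout. Second, the structural input behind that theorem in AKOS is a direct analysis of the Young symmetrizers $e_\nu\in\Q[\Sigma_n]$ acting on $\E^{(n)}$ rather than a Cayley--Hamilton identity; a Cayley--Hamilton formulation exists in this circle of ideas, but it is not the route \cite{AKOS} or \cite{Deligne1} take, so presenting it as the natural attack slightly overstates the difficulty. Your honest flagging that the argument needs input not independently developed in the paper is accurate --- the paper deliberately offloads this to the references rather than re-proving it for $\MM(\KK)_{\Q}$.
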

\begin{proof}
The first part follows as in the proof of \cite[Proposition 6.3]{Kimura1}, cf.~\cite[Proposition 9.1.10]{AKOS}, 
and the last part as in \cite[Theorems 7.2.4, 9.1.7]{AKOS}, cf.~\cite[\S7.2]{Deligne1}. 
\end{proof}

\begin{definition}
If $\E$ is finite dimensional, 
define
\begin{equation*}
\chi_{+}(\E)=
\chi(\E_{+})
\text{ and } 
\chi_{-}(\E)= 
\chi(\E_{-}).
\end{equation*}
\end{definition}

The first part of the next definition is standard while the second part can be found in \cite[Definition 8.2.4]{Kimura2}.
\begin{definition}
In $\MM(\KK)_{\Q}$ we make the following definitions.
\begin{itemize}
\item 
An object $\E$ is invertible if there exists an object $\F$ and an isomorphism 
$$\E\wedge^{\bf L}\F={\bf 1}.$$
\item 
An object $\E$ is $1$-dimensional if it is either 
(1) negative finite dimensional and $\chi(\E)=-1$, 
or 
(2) positive finite dimensional and $\chi(\E)=1$.
\end{itemize}
\end{definition}

Note that if $\E$ is invertible,
then the dual $\D\E$ of $\E$ is an inverse $\F$ which is unique up to unique isomorphism.
The unit object ${\bf 1}$ is clearly $1$-dimensional; 
for a proof we refer to \cite[Example 8.2.5]{Kimura2}.
In fact, 
$\Alt^{2}({\bf 1})=0$ since the twist map on ${\bf 1}\wedge^{\bf L} {\bf 1}$ is the identity map.
\begin{lemma}
The following hold in $\MM(\KK)_{\Q}$.
\begin{itemize}
\item
An object $\E$ is invertible if and only if it is $1$-dimensional.
\item
If $\E$ is negative finite dimensional, 
then $\Sym^{-\chi(\E)}(\E)$ is invertible.
\item
If $\E$ is positive finite dimensional, 
then $\Alt^{\chi(\E)}(\E)$ is invertible.
\end{itemize}
\end{lemma}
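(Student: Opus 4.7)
The plan is to follow the Kimura-style arguments for Chow motives in \cite{Kimura1} and \cite{Kimura2}, adapted to the symmetric monoidal pseudo-abelian $\Q$-linear category $\MM(\KK)_{\Q}$ with its compatible triangulation. The central tools are the decomposition $\E^{(n)} = \bigoplus_{\nu} \SSS_{\nu}(\E)$ over partitions $\nu \vdash n$ coming from the Young symmetrizer, the fact that every finite dimensional object is dualizable, and the binomial identities governing Euler characteristics of symmetric and wedge powers.

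First I would address item (1), beginning with invertibility implying 1-dimensionality. If $\E$ is invertible, it is dualizable with $\E \wedge^{{\bf L}} \D\E = {\bf 1}$, hence $\chi(\E)\chi(\D\E) = 1$ in $\MM(\KK)_{\Q}({\bf 1},{\bf 1}) = \Q$, forcing $\chi(\E) = \pm 1$. The symmetry isomorphism on $\E \wedge^{{\bf L}} \E$ is an involution of an invertible object, so after the identification $\operatorname{End}(\E \wedge^{{\bf L}} \E) = \operatorname{End}({\bf 1}) = \Q$ it must be $\pm \id$; in the $+1$ case $\Alt^{2}(\E) = 0$, making $\E$ positive finite dimensional, while in the $-1$ case $\Sym^{2}(\E) = 0$, making $\E$ negative finite dimensional. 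Combined with $\chi(\E) = \pm 1$ this yields the claim. For the reverse direction, assuming (say) $\E$ is negative 1-dimensional with $\Sym^{2}(\E) = 0$, the projector onto $\Alt^{2}(\E) \subset \E \wedge^{{\bf L}} \E$ is the identity, so the symmetry acts as $-\id$. One then argues that the canonical coevaluation ${\bf 1} \to \E \wedge^{{\bf L}} \D\E$ is an isomorphism via the trace identity of Lemma \ref{tracelemma}: the composite $\E \to \E \wedge^{{\bf L}} \D\E \wedge^{{\bf L}} \E \to \E$ equals $\chi(\E) \cdot \id = -\id$, and a parallel computation exhibits the inverse. The positive case is entirely analogous with $\Alt^{2}(\E) = 0$.

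For items (2) and (3), by item (1) it suffices to prove that $\Sym^{-\chi(\E)}(\E)$ (respectively $\Alt^{\chi(\E)}(\E)$) is 1-dimensional. Set $n = -\chi(\E)$ in the negative case. I would first establish the generating series identity
\begin{equation*}
\sum_{k \geq 0} \chi(\Sym^{k}(\E)) \, t^{k} = (1 - t)^{\chi(\E)},
\end{equation*}
which follows by formal manipulation from additivity of $\chi$ on distinguished triangles, its multiplicativity on monoidal products (Theorem \ref{additivityofeulercharacteristics} and Lemma \ref{tracelemma}), and the Schur decomposition of $\E^{(k)}$. For negative finite dimensional $\E$ with $\chi(\E) = -n$, the right-hand side reduces to the polynomial $(1-t)^{n}$, giving $\chi(\Sym^{n}(\E)) = (-1)^{n}$ and matching the vanishing $\Sym^{k}(\E) = 0$ for $k > n$ recorded in the preceding proposition. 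Next, one shows $\Sym^{n}(\E)$ is itself finite dimensional by invoking closure of finite dimensionality under all Schur functors, so its Euler characteristic $\pm 1$ together with item (1) forces it to be 1-dimensional and hence invertible. The positive case is dual, using $\Alt$ in place of $\Sym$.

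The main obstacle is establishing closure of finite dimensionality under the Schur functors $\SSS_{\nu}$ in $\MM(\KK)_{\Q}$: Kimura's original argument leans on the explicit abelian-group structure of morphisms in Chow theory, whereas here we work in the triangulated setting, and one must import the relevant plethysm identities for decomposing $\SSS_{\nu}(\E_{+} \oplus \E_{-})$ into pieces built from $\Sym^{i}(\E_{+})$ and $\Alt^{j}(\E_{-})$, verifying vanishing of the pieces that exceed the thresholds $\chi(\E_{+})$ and $-\chi(\E_{-})$. A secondary technical point is ensuring that the eigenspace decomposition of the symmetry involution interacts correctly with the idempotent completion of $\SH^{\ast}_{\Q}$ used in the construction of Schur functors, a compatibility needed for the coevaluation argument in item (1).
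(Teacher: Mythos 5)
Your treatment of items (2) and (3) matches the paper's in spirit: the paper derives $1$-dimensionality from the binomial identities $\chi\bigl(\Sym^{n}(\E)\bigr)=\binom{\chi(\E)+n-1}{n}$ and $\chi\bigl(\Alt^{n}(\E)\bigr)=\binom{\chi(\E)}{n}$, which is exactly the information encoded in your generating series. However, you have a sign error there: the correct identity is $\sum_{k\geq 0}\chi\bigl(\Sym^{k}(\E)\bigr)t^{k}=(1-t)^{-\chi(\E)}$, and only with this sign does the right-hand side become the polynomial $(1-t)^{n}$ when $\chi(\E)=-n$, as you use in the next sentence. You are also right to flag that one must still know $\Sym^{-\chi(\E)}(\E)$ (resp.\ $\Alt^{\chi(\E)}(\E)$) is finite dimensional, not merely that its Euler characteristic is $\pm 1$; this closure of finite dimensionality under symmetric and wedge powers (more generally under Schur functors, via the plethysm decomposition of $\SSS_{\nu}(\E_{+}\oplus\E_{-})$) is a genuine input that the paper passes over silently by pointing at the references.

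For item (1), the paper simply cites \cite[8.2.6, 8.2.9]{Kimura2} and offers no argument. Your ``invertible implies $1$-dimensional'' direction is sound: multiplicativity of $\chi$ together with $\chi(\D\E)=\chi(\E)$ gives $\chi(\E)^{2}=1$ in $\MM(\KK)_{\Q}({\bf 1},{\bf 1})=\Q$, and the symmetry on $\E\wedge^{\bf L}\E$, being an involutive automorphism of an invertible object, must act by $\pm 1$, giving the required vanishing of $\Sym^{2}$ or $\Alt^{2}$. But the converse direction as written contains an error. You claim that the composite $\E\rightarrow\E\wedge^{\bf L}\D\E\wedge^{\bf L}\E\rightarrow\E$ built from the coevaluation and evaluation equals $\chi(\E)\cdot\id_{\E}$. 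That composite is the \emph{identity} on $\E$ for every dualizable $\E$---this is exactly the triangular (zigzag) identity of a dual pair and holds independently of any hypothesis on $\Sym^{2}$ or $\Alt^{2}$. The scalar $\chi(\E)$ only appears when a twist map is inserted, as in $\alpha\circ\eta$ for $\alpha=\epsilon\circ\tau$, which recovers $\chi(\E)\in\Q$ as an endomorphism of ${\bf 1}$, not of $\E$. So the step claiming ``a parallel computation exhibits the inverse'' is not justified as stated. To make this direction work one should follow Kimura or Deligne more closely: under $\Alt^{2}(\E)=0$ (say), one uses the resulting relation $\tau_{\E,\E}=\id$ to show that the reordered evaluation $\E\wedge^{\bf L}\D\E\rightarrow{\bf 1}$ and the coevaluation are mutually inverse up to the nonzero scalar $\chi(\E)$, which requires manipulating the twist explicitly rather than the bare triangular identity.
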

\begin{proof} 
The first part is clear from \cite[8.2.6, 8.2.9]{Kimura2}. 
It remains to show that $\Sym^{-\chi(\E)}(\E)$ and $\Alt^{\chi(\E)}(\E)$ are $1$-dimensional objects. 
This follows from the easily verified formulas
\begin{equation*}
\chi\bigl(\Sym^{n}(\E)\bigr)=
\binom{\chi(\E)+n-1}{n}
\end{equation*}
and
\begin{equation*}
\chi\bigl(\Alt^{n}(\E)\bigr)= 
\binom{\chi(\E)}{n}
\end{equation*}
found in \cite[\S3]{Andre} and \cite[\S7.2]{Deligne1}. 
\end{proof}

Next we define the determinant of a finite dimensional rational motive in analogy with determinants of algebro-geometric motives appearing in 
\cite[Definition 2]{Kahn} and \cite[Definition 8.4.3]{Kimura2}.
\begin{definition}
If $\E$ is a finite dimensional rational motive, 
define the determinant of $\E$ by 
\begin{equation*}
\ddet(\E)\equiv
\Alt^{\chi_{+}(\E)}(\E_{+})\wedge^{\bf L} 
\D\Sym^{-\chi_{-}(\E)}(\E_{-}).
\end{equation*}
The determinant of $\E$ is well-defined up to isomorphism.
\end{definition}

According to the combinatorics of the Littlewood-Richardson numbers \cite[I9]{Macdonald} the following identities hold for the determinant,
cf.~\cite[\S3]{Andre}, \cite[\S1]{Deligne1}.
\begin{proposition}
Suppose $\E$ and $\F$ are finite dimensional objects of $\MM(\KK)_{\Q}$.
Then
\begin{itemize}
\item 
$\ddet(\E\oplus \F)=
\ddet(\E)\ddet(\F)$.
\item 
$\ddet(\E\wedge^{\bf L} \F)= 
\ddet(\E)^{\chi(\F)}\wedge^{\bf L} \ddet(\F)^{\chi(\E)}$.
\item
$\ddet(\D\E)=
\D\ddet(\E)$.
\item 
$\ddet\bigl(\Alt^{n}(\E)\bigr)= 
\ddet(\E)^{r}$, where  $r=n\binom{\chi(\E)}{n}/\chi(\E)$.
\item 
$\ddet\bigl(\Sym^{n}(\E)\bigr)= 
\ddet(\E)^{s}$, where $s=n\binom{\chi(\E)+n-1}{n}/\chi(\E)$.
\end{itemize}
\end{proposition}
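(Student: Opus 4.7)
The plan is to reduce each identity to a statement about the combinatorics of Schur functors in a pseudoabelian $\Q$-linear rigid symmetric monoidal category, so that the arguments are formally identical to those of Deligne \cite{Deligne1} and Andr\'e \cite{Andre} in the tannakian setting. The basic formal inputs are: (i) $\Sym^n$ and $\Alt^n$ are defined via the Young symmetrizers $e_{(n)}, e_{(1,\dots,1)}\in\Q[\Sigma_n]$ acting on $\E^{(n)}$; (ii) the duality $\D$ commutes with every Schur functor because it is a $\Q$-linear monoidal functor, so $\D\Sym^n(\X)=\Sym^n(\D\X)$ and $\D\Alt^n(\X)=\Alt^n(\D\X)$; (iii) the top nonvanishing Alt or Sym yields an invertible object; and (iv) uniqueness of the decomposition $\E=\E_{+}\oplus\E_{-}$ allows us to reduce to the case of a purely positive or purely negative object.

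First I would dispose of duality. Using (ii), $\D\E_{+}$ remains positive finite dimensional of the same Euler characteristic and likewise $\D\E_{-}$ remains negative, so $(\D\E)_{\pm}=\D\E_{\pm}$ and $\chi_{\pm}(\D\E)=\chi_{\pm}(\E)$. Substituting in the definition and applying $\D\D\simeq\id$ yields $\ddet(\D\E)=\D\Alt^{\chi_{+}(\E)}(\E_{+})\wedge^{\bf L}\Sym^{-\chi_{-}(\E)}(\E_{-})=\D\ddet(\E)$. Next, for additivity $\ddet(\E\oplus\F)=\ddet(\E)\ddet(\F)$, observe that $(\E\oplus\F)_{\pm}=\E_{\pm}\oplus\F_{\pm}$ (by uniqueness of the decomposition) and then invoke the categorical binomial identities
\begin{equation*}
\Alt^{n}(\X\oplus\Y)=\bigoplus_{i+j=n}\Alt^{i}(\X)\wedge^{\bf L}\Alt^{j}(\Y),\qquad
\Sym^{n}(\X\oplus\Y)=\bigoplus_{i+j=n}\Sym^{i}(\X)\wedge^{\bf L}\Sym^{j}(\Y),
\end{equation*}
which follow from decomposing the induced representation $\mathrm{Ind}_{\Sigma_i\times\Sigma_j}^{\Sigma_n}$ of the trivial (resp.\ sign) character. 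Taking $n$ equal to the sum of the top Alt/Sym degrees leaves only one summand on the right.

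The tensor product identity is the core obstacle, and I would attack it via bilinearity in direct sums reducing to the four cases $\E,\F$ each purely positive or purely negative. In each case the formula is equivalent to the classical identity for $\dim V=m,\dim W=n$:
\begin{equation*}
\textstyle\bigwedge^{mn}(V\otimes W)\simeq(\bigwedge^{m}V)^{\otimes n}\otimes(\bigwedge^{n}W)^{\otimes m},
\end{equation*}
together with the analog involving $\Sym$ on the positive/negative mix (this is where signs and the distinction between the two $\lambda$-structures appear). In the categorical setting this is a special case of the plethysm decomposition of $\Alt^{mn}(\X\wedge^{\bf L}\Y)$ and $\Sym^{mn}(\X\wedge^{\bf L}\Y)$ via Schur--Weyl duality; extracting the top component forces all Schur functors $\SSS_{\nu}$ with $\nu\neq(m^{n})$ or $(n^{m})$ to vanish on $\X_{+}$ or $\Y_{+}$ by the dimensional constraint, and the Littlewood--Richardson rule identifies the single surviving term.

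Finally, the plethysm formulas $\ddet\bigl(\Alt^{n}(\E)\bigr)=\ddet(\E)^{r}$ and $\ddet\bigl(\Sym^{n}(\E)\bigr)=\ddet(\E)^{s}$ will follow by computing the top Alt or Sum of $\Alt^{n}(\E)$ and $\Sym^{n}(\E)$ (these are themselves finite dimensional by the multiplicative properties of dimension in $K_{0}(\MM(\KK)^{\fd}_{\Q})$ recorded above, giving Euler characteristics $\binom{\chi(\E)}{n}$ and $\binom{\chi(\E)+n-1}{n}$). The key combinatorial fact is that $\Alt^{\binom{m}{n}}(\Alt^{n}V)$ and $\Alt^{\binom{m+n-1}{n}}(\Sym^{n}V)$ are each powers of $\det V$, with the exponents $n\binom{m}{n}/m$ and $n\binom{m+n-1}{n}/m$ dictated by the Littlewood--Richardson rule and a straightforward count of how often the partition $(1^{m})$ appears in the relevant plethysms $\lambda\circ\mu$; again the non-det contributions vanish on a finite dimensional object for dimensional reasons. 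The hard step throughout is step three and the plethysm count in this last step; both are combinatorial rather than conceptual and rest on reading off the top-weight component predicted by Littlewood--Richardson, as indicated in the statement of the proposition.
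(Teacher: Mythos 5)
Your proposal is correct and takes essentially the same route as the paper: the paper's ``proof'' consists of pointing to the combinatorics of Littlewood--Richardson numbers (citing Macdonald) and to Andr\'e and Deligne, and your argument is exactly the formal reduction that those references carry out in the tannakian/pseudoabelian setting (Cauchy decomposition for sums, the Cauchy formula $\Alt^{k}(\X\wedge^{\bf L}\Y)\cong\bigoplus_{\lambda\vdash k}\SSS_{\lambda}(\X)\wedge^{\bf L}\SSS_{\lambda'}(\Y)$ with the dimensional vanishing isolating the top weight, commutation of $\D$ with Schur functors, and the plethysm count for the last two items). No gap; the elaboration you give is the content behind the paper's citation.
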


\begin{lemma} 
\label{lemma:symandaltofduals}
\begin{itemize}
\item
If $\E$ is negative finite dimensional, 
there is an isomorphism
\begin{equation*}
\Sym^{n}(\D\E)\simeq \Sym^{-\chi(\E)-n}(\E)\wedge^{\bf L}\D\ddet(\E)
\end{equation*}
for all $n\in [0,-\chi(\E)]$.
\item 
If $\E$ is positive finite dimensional, 
there is an isomorphism
\begin{equation*}
\Alt^{n}(\D\E)= 
\Alt^{\chi(\E)-n}(\E)\wedge^{\bf L}\D\ddet(\E)
\end{equation*}
for all $n\in [0,\chi(\E)]$.
\end{itemize}
\end{lemma}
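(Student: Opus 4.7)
The plan is to prove both parts in parallel following the classical theory of finite dimensional objects in $\Q$-linear symmetric monoidal pseudo-abelian categories, as developed by Kimura, Deligne and Andr\'e. Throughout I write everything in the rationalized motivic category $\MM(\KK)_{\Q}$, which is symmetric monoidal, $\Q$-linear and pseudo-abelian, so that the constructions $\Sym^{n}$, $\Alt^{n}$ and $\D$ are all available.

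First I would establish the natural identifications $\Sym^{n}(\D\E)\simeq\D\Sym^{n}(\E)$ and $\Alt^{n}(\D\E)\simeq\D\Alt^{n}(\E)$ for every dualizable $\E$. This reduces to the observation that on the rigid subcategory of dualizable objects the duality $\D$ is a symmetric monoidal functor, and in particular intertwines the $\Sigma_{n}$-actions on $\E^{(n)}$ and $(\D\E)^{(n)}$ used to define the Young symmetrizers $e_{(n)}$ and $e_{(1^{n})}$. Hence the image of $e_{\nu}$ on $(\D\E)^{(n)}$ is canonically dual to its image on $\E^{(n)}$.

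Second, for $\E$ negative finite dimensional with $N\equiv-\chi(\E)$, the hypothesis gives $\Sym^{N+1}(\E)=0$ and $\Sym^{N}(\E)$ is invertible. Unwinding the definition of $\ddet$ in the purely negative case shows $\ddet(\E)=\D\Sym^{N}(\E)$, so $\D\ddet(\E)\simeq\Sym^{N}(\E)$. I would construct the multiplication maps
\begin{equation*}
\mu_{n}\colon\Sym^{n}(\E)\wedge^{\bf L}\Sym^{N-n}(\E)\to\Sym^{N}(\E)
\end{equation*}
coming from the commutative algebra structure on $\bigoplus_{k\geq 0}\Sym^{k}(\E)$, and show that each $\mu_{n}$ is a perfect pairing, i.e. its adjoint
\begin{equation*}
\Sym^{N-n}(\E)\to\underline{\Hom}\bigl(\Sym^{n}(\E),\Sym^{N}(\E)\bigr)\simeq\D\Sym^{n}(\E)\wedge^{\bf L}\Sym^{N}(\E)
\end{equation*}
is an isomorphism. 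Combined with the first step this yields $\Sym^{n}(\D\E)\simeq\D\Sym^{n}(\E)\simeq\Sym^{N-n}(\E)\wedge^{\bf L}\D\ddet(\E)$, which is the desired formula. The positive case is strictly analogous: for $\E$ positive finite dimensional and $M\equiv\chi(\E)$ one has $\Alt^{M+1}(\E)=0$, $\ddet(\E)=\Alt^{M}(\E)$, and one proves that the multiplication maps $\Alt^{n}(\E)\wedge^{\bf L}\Alt^{M-n}(\E)\to\Alt^{M}(\E)$ are perfect pairings.

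The main obstacle is precisely the perfect-pairing claim, since in our setting we do not have immediate access to a classical fiber functor on $\MM(\KK)_{\Q}$ that would reduce the verification to multilinear algebra in super vector spaces. To overcome this I would invoke Deligne's universal property for $\Q$-linear symmetric monoidal pseudo-abelian categories: the pair $\bigl(\E,\Sym^{\leq N}(\E)\bigr)$ (resp.\ $\bigl(\E,\Alt^{\leq M}(\E)\bigr)$) is the image, under an essentially unique symmetric monoidal $\Q$-linear functor out of the free such category on a negative (resp.\ positive) finite dimensional object of Euler characteristic $-N$ (resp.\ $M$), and this free category embeds faithfully into the category of super vector spaces where $\mu_{n}$ is classically non-degenerate. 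The perfect-pairing property is therefore forced in $\MM(\KK)_{\Q}$. A more hands-on alternative, which avoids Deligne's machinery, is to exhibit a splitting of $\mu_{n}$ directly using the comultiplication on the symmetric (resp.\ exterior) bialgebra $\bigoplus_{k}\Sym^{k}(\E)$ together with Newton--Girard identities forced by the vanishing $\Sym^{N+1}(\E)=0$; this is essentially the route taken by Kimura and should go through verbatim in our additive triangulated setting.
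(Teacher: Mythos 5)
Your proposal follows essentially the same route as the paper: identify $\Alt^n(\D\E)$ (resp. $\Sym^n(\D\E)$) with $\D\Alt^n(\E)$ (resp. $\D\Sym^n(\E)$), produce the multiplication map into $\ddet(\E)$, and show this defines a Dold--Puppe duality between $\Alt^n(\E)$ and $\Alt^{\chi(\E)-n}(\E)\wedge^{\bf L}\D\ddet(\E)$. The only difference is that the paper outsources the verification of the duality axioms to the citation of Kahn, whereas you sketch how to actually establish the perfect-pairing claim (via Deligne's universal property of the free pseudo-abelian $\Q$-linear symmetric monoidal category on a finite dimensional object, or via Kimura-style Newton--Girard manipulations); this is a reasonable elaboration of the step the paper takes on faith from the literature, and your computation of $\D\ddet(\E)\simeq\Sym^{-\chi(\E)}(\E)$ in the purely negative case (and its positive analogue) correctly unwinds the paper's definition of the determinant.
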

\begin{proof} 
Note that $\Alt^{n}(\D\E)$ is isomorphic to $\D\Alt^{n}(\E)$.
Using the evident map
\begin{equation}
\label{evidentmap}
\xymatrix{
\Alt^{n}(\E)\wedge^{\bf L} \Alt^{\chi(\E)-n}(\E)\ar[r] & \ddet(\E) }
\end{equation}
we get 
\begin{equation}
\label{evidentmap1}
\xymatrix{
\Alt^{n}(\E)\wedge^{\bf L} \bigl(\Alt^{\chi(\E)-n}(\E)\wedge^{\bf L}\D\ddet(\E)\bigr)\ar[r] & 
{\bf 1}. }
\end{equation}
Likewise, 
replacing $\E$ by its dual in (\ref{evidentmap}) yields 
\begin{equation}
\label{evidentmap2}
\xymatrix{
{\bf 1}\ar[r] & 
\bigl(\Alt^{\chi(\E)-n}(\E)\wedge^{\bf L} \D\ddet(\E)\bigr)\wedge^{\bf L}\Alt^{n}(\E). }
\end{equation}
The maps in (\ref{evidentmap1}) and (\ref{evidentmap2}) satisfy the Dold-Puppe duality axioms in \cite{DP}, cf.~\cite{Kahn}.
The proof for the symmetric powers is entirely similar. 
\end{proof}

We are ready to formulate and prove a functional equation for zeta functions of finite dimensional rational motives.
The result follows using the same steps as in the proof of the main result in \cite{Kahn}.
\begin{theorem}
Suppose $\E$ is finite dimensional.
Then the zeta functions of $\E$ and its dual are related by the functional equation
\begin{equation*}
\zeta_{\D\E}(t^{-1})=
(-1)^{\chi_{+}(\E)}\ddet(\E) t^{\chi(\E)}\zeta_{\E}(t).
\end{equation*}
\end{theorem}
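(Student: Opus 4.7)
The plan is to decompose $\E = \E_+ \oplus \E_-$ and prove the functional equation separately on the positive and negative summands, then combine. Multiplicativity of the zeta function under $\oplus$ follows from the $\lambda$-ring identity $\lambda_t(x+y)=\lambda_t(x)\lambda_t(y)$ applied to $K_0(\MM(\KK)^{\fd}_{\Q})$, and the remaining ingredients are additive: $\ddet(\E)=\ddet(\E_+)\ddet(\E_-)$, $\chi(\E)=\chi_+(\E)+\chi_-(\E)$, and $\chi_+(\E_-)=0$. Multiplying the two pure functional equations then recovers the full statement, so it suffices to verify each pure case.

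For $\E$ negative finite dimensional with $d=-\chi(\E)$, I would expand $\zeta_{\D\E}(t)=\sum_{n=0}^d [\Sym^n(\D\E)]\,t^n$ and invoke Lemma \ref{lemma:symandaltofduals}(i) to rewrite each $\Sym^n(\D\E)$ in terms of $\Sym^{d-n}(\E)$. The reindexing $k=d-n$ converts this sum into a determinant factor times $t^d\cdot\zeta_\E(t^{-1})$, and substituting $t\mapsto t^{-1}$ yields the functional equation with $(-1)^{\chi_+(\E)}=1$ since $\chi_+(\E)=0$.

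For $\E$ positive finite dimensional with $d=\chi(\E)$, I would instead work with the ``anti-zeta'' series $\sigma_\E(t)\equiv\sum_{n=0}^d [\Alt^n(\E)](-t)^n$. By Guletski{\u\i}'s theorem the opposite $\lambda$-structures defined by $\Sym$ and $\Alt$ on $K_0(\MM(\KK)^{\fd}_{\Q})$ satisfy $\sigma_\E(t)\cdot\zeta_\E(t)=1$, so $\sigma_\E$ is polynomial of degree $d$ and $\zeta_\E=\sigma_\E^{-1}$. Applying Lemma \ref{lemma:symandaltofduals}(ii) together with the analogous reindexing converts $\sigma_{\D\E}(t)$ into a determinant factor times $(-t)^d\sigma_\E(t^{-1})$; inverting and then substituting $t\mapsto t^{-1}$ produces the factor $(-1)^d=(-1)^{\chi_+(\E)}$ and the correct power $t^{\chi(\E)}$.

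The principal obstacle is careful bookkeeping of signs and invertibility. The signature $(-1)^{\chi_+(\E)}$ originates entirely from the positive summand: it is the residue of the alternating signs in $\sigma_\E$ surviving the reindexing $k=d-n$ and the subsequent passage from $\sigma_{\D\E}$ back to $\zeta_{\D\E}$. The determinant contribution requires tracking both $[\ddet(\E)]$ and $[\D\ddet(\E)]$ — which satisfy $[\ddet(\E)][\D\ddet(\E)]=1$ in $K_0$ because $\ddet(\E)$ is $1$-dimensional and hence invertible — through the substitution $t\mapsto t^{-1}$; the inversion from $\sigma$ to $\zeta$ in the positive case is precisely what flips $[\D\ddet(\E_+)]$ into $[\ddet(\E_+)]$, matching the determinant factor in the stated functional equation.
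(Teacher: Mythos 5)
Your proposal is correct and follows essentially the same route as the paper: reduce to the pure negative and pure positive finite dimensional cases by additivity of the $\lambda$-structure, then reindex using Lemma \ref{lemma:symandaltofduals}, invoking the reciprocal relation between symmetric and wedge power generating series (your ``anti-zeta'' $\sigma_\E$ is exactly the paper's $\zeta_{\D\E}(t^{-1})^{-1}$) in the positive case. The only differences are cosmetic — you compute $\zeta_{\D\E}(t)$ and substitute $t\mapsto t^{-1}$ at the end rather than carrying $t^{-1}$ throughout — and your diagnostic remark that the sign $(-1)^{\chi_+(\E)}$ originates entirely from inverting the positive summand's series is an accurate and useful bookkeeping check that the paper leaves implicit.
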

\begin{proof} 
We may assume $\E$ is negative or positive finite dimensional since the zeta function is an additive invariant of the triangulated structure 
on $\MM(\KK)_{\Q}$.
In what follows we use that symmetric powers and wedge powers define opposite special $\lambda$-structures on $K_{0}(\MM(\KK)^{\ccc}_{\Q})$.
\begin{itemize}
\item
If $\E$ is negative finite dimensional, then
\begin{equation*}
\zeta_{\D\E}(t^{-1})=
\sum_{n=0}^{-\chi(\E)}\Sym^{n}(\D\E)t^{-n}.
\end{equation*}
By Lemma \ref{lemma:symandaltofduals}, 
the sum equals
\begin{align*}
[\D\ddet(\E)]\sum_{n=0}^{-\chi(\E)} [\Sym^{-\chi(\E)-n}(\E)] t^{-n} 
& = [\D\ddet(\E)]\sum_{n=0}^{-\chi(\E)} [\Sym^{n}(\E)] t^{n+\chi(\E)}\\
& = [\D\ddet(\E)]t^{\chi(\E)}\sum_{n\geq 0} [\Sym^{n}(\E)] t^{n}\\
& = [\D\ddet(\E)]t^{\chi(\E)}\zeta_{\E}(t).
\end{align*}
\item
If $\E$ is positive finite dimensional, then
\begin{equation*}
\zeta_{\D\E}(t^{-1})^{-1}= 
\sum_{n=0}^{\chi(\E)} [\Alt^{n}(\D\E)] (-t)^{-n}.
\end{equation*}
By Lemma \ref{lemma:symandaltofduals}, 
the sum equals
\begin{align*}
[\D\ddet(\E)]\sum_{n=0}^{\chi(\E)} [\Alt^{\chi(\E)-n}(\E)] (-t)^{-n}
& = [\D\ddet(\E)]\sum_{n=0}^{\chi(\E)} [\Alt^{n}(\E)] (-t)^{n-\chi(\E)}\\
& = [\D\ddet(\E)](-t)^{-\chi(\E)}\sum_{n=0}^{\chi(\E)} [\Alt^{n}(\E)] (-t)^{n}\\
& = [\D\ddet(\E)](-t)^{-\chi(\E)}\zeta_{\E}(t)^{-1}.
\end{align*}
\end{itemize}
It remains to note that $\E$ and its dual $\D\E$ have the same sign.
\end{proof}
\newpage

\section{The slice filtration}
\label{section:theslicefiltration}
In this section we construct a sequence of full triangulated subcategories 
\begin{equation}
\label{slicefiltration}
\cdots\subseteq 
\Sigma_{C}^{1}\SH^{\ast,\eff}\subseteq
\SH^{\ast,\eff}\subseteq
\Sigma_{C}^{-1}\SH^{\ast,\eff}\subseteq
\cdots
\end{equation}
of the stable $\CC^{\ast}$-homotopy category $\SH^{\ast}$.
Here, 
placed in degree zero is the smallest triangulated subcategory $\SH^{\ast,\eff}$ of $\SH^{\ast}$ that is closed under direct sums and contains every  
suspension spectrum $\Sigma^{\infty}_{C}\X$,
but none of the corresponding desuspension spectra $\Sigma^{-n}_{C}\Sigma^{\infty}_{C}\X$ for any $n\geq 1$.
We shall refer to $\SH^{\ast,\eff}$ as the effective stable $\CC^{\ast}$-homotopy category.
If $q$ is an integer, 
we define the category $\Sigma_{C}^{q}\SH^{\ast,\eff}$ as the smallest triangulated full subcategory of that is closed under direct sums and contains 
for all $t-m\geq q$ the $\CC^{\ast}$-spectra of the form
\begin{equation}
\label{levelcompactgenerators}
\Fr_{m}\bigl(S^{s}\otimes C_{0}(\R^{t})\otimes \X\bigr).
\end{equation}
With these definitions we deduce the slice filtration (\ref{slicefiltration}) which can be viewed as a Postnikov tower. 
The analogous construction in motivic homotopy theory is due to Voevodsky \cite{VVopenproblems}.
Much of the current research in the motivic theory evolves around his tantalizing set of conjectures concerning the slice filtration.
\vspace{0.1in}

In order to make precise the meaning of ``filtration'' in the above we note that the smallest triangulated subcategory of $\SH^{\ast}$ that contains 
$\Sigma_{C}^{q}\SH^{\ast,\eff}$ for every integer $q$ coincides with $\SH^{\ast}$ since the latter is a compactly generated triangulated category. 
Likewise, 
at each level of the slice filtration we have the following result. 
\begin{lemma}
\label{lemma:levelcompactlygenerated}
The category $\Sigma_{C}^{q}\SH^{\ast,\eff}$ is a compactly generated triangulated category with the set of compact generators given by 
(\ref{levelcompactgenerators}).
Thus a map $f\colon \E\rightarrow\F$ in $\Sigma_{C}^{q}\SH^{\ast,\eff}$ is an isomorphism if and only if there is a naturally induced isomorphism
\begin{equation*}
\xymatrix{
\Sigma_{C}^{q}\SH^{\ast,\eff}\bigl(\Fr_{m}\bigl(S^{s}\otimes C_{0}(\R^{t})\otimes \X\bigr),\E\bigr)\ar[r] &
\Sigma_{C}^{q}\SH^{\ast,\eff}\bigl(\Fr_{m}\bigl(S^{s}\otimes C_{0}(\R^{t})\otimes \X\bigr),\F\bigr) }
\end{equation*}
for every compact generator $\Fr_{m}\bigl(S^{s}\otimes C_{0}(\R^{t})\otimes \X\bigr)$.
\end{lemma}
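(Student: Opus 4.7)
The plan is to verify the two assertions in sequence: that the listed $\CC^{\ast}$-spectra form a set of compact generators for $\Sigma_{C}^{q}\SH^{\ast,\eff}$, and then that compact generation yields the stated isomorphism criterion. Throughout I implicitly restrict $\X$ to range over a small skeleton of finitely presentable cofibrant pointed cubical $\CC^{\ast}$-spaces (which exists by Lemma \ref{lemma:filteredandfinitecolimitfinitelypresentable}), so that the generating class really is a set.

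First I would check that every object of the form $\Fr_{m}\bigl(S^{s}\otimes C_{0}(\R^{t})\otimes \X\bigr)$ is compact in $\SH^{\ast}$. Theorem \ref{theorem:stableweakgenerators} supplies the cofibers of the generating projective cofibrations as a set of compact generators for $\SH^{\ast}$, and by Example \ref{example:finitelypresentable} the pointed cubical $\CC^{\ast}$-space $S^{s}\otimes C_{0}(\R^{t})\otimes\X$ is finitely presentable and projective cofibrant. Since $\Fr_{m}$ is a left Quillen functor and ${\bf fp}\Box\CC^{\ast}-\Spc_{0}$ is closed under finite colimits and tensors, the resulting suspension spectrum is cofibrant and finitely presentable, hence compact in $\SH^{\ast}$ by the argument in the proof of Theorem \ref{theorem:stableweakgenerators}.

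Second, because $\Sigma_{C}^{q}\SH^{\ast,\eff}$ is a full triangulated subcategory of $\SH^{\ast}$ closed under direct sums, its coproducts coincide with those inherited from $\SH^{\ast}$. Compactness therefore transfers to the subcategory. By the very definition of $\Sigma_{C}^{q}\SH^{\ast,\eff}$ as the smallest such subcategory containing the proposed generators, the latter generate it in the localizing sense. This exhibits $\Sigma_{C}^{q}\SH^{\ast,\eff}$ as a compactly generated triangulated category in the sense of Neeman \cite[\S4]{Neeman:Triangulatedcategories}, with the prescribed set as a set of compact generators.

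Third, for the isomorphism criterion I would argue in the standard way: embed $f\colon\E\rightarrow\F$ in a distinguished triangle $\E\rightarrow\F\rightarrow\G\rightarrow\E[1]$ in $\Sigma_{C}^{q}\SH^{\ast,\eff}$. The long exact sequence obtained by applying $\Sigma_{C}^{q}\SH^{\ast,\eff}\bigl(\Fr_{m}(S^{s}\otimes C_{0}(\R^{t})\otimes\X),-\bigr)$ shows the hypothesis forces all these Hom-groups into $\G$ (and into its shifts) to vanish. The full subcategory of objects $\G'$ with vanishing Hom from every compact generator, and from every shift thereof, is a localizing subcategory; by compact generation established in the previous step it must be the zero subcategory. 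Hence $\G=0$ and $f$ is an isomorphism. The main obstacle is bookkeeping: one must confirm essential smallness of the generating set and verify that restricting coproducts and triangles to the subcategory does not alter compactness, but once these points are settled the argument is formal.
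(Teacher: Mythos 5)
The paper states this lemma without proof, so there is no argument in the source against which to compare; your proposal supplies the missing details, and the route you take is the standard one. Your argument is correct in substance. A few points that you elide but which are easy to supply: (i) the paper's definition of $\Sigma_{C}^{q}\SH^{\ast,\eff}$ only demands closure under direct sums, but a triangulated subcategory of a category with coproducts that is closed under all coproducts is automatically closed under retracts by the Eilenberg swindle, so it is indeed a localizing subcategory; (ii) the passage from ``smallest localizing subcategory containing the generators equals the whole category'' to ``the orthogonal complement of the generators is zero'' --- which is what you actually need in the third step --- is a theorem of Neeman (valid because the generators are compact), not merely a restatement of the definition; (iii) the hypothesis in the ``Thus\dots'' clause only mentions Hom-groups into $\E$ and $\F$, not their shifts, so your argument needs the (true but unremarked) fact that the family of objects in (\ref{levelcompactgenerators}) is stable under the shift functor up to stable equivalence: $\Sigma_{S^{1}}$ raises $s$ by one, and a negative shift of an object with $s=0$ can be rewritten, using the isomorphism $\Fr_{m}(\Y)\simeq\Fr_{m+1}(C\otimes\Y)$ in $\SH^{\ast}$, as a generator with $s=0$ and both $m$ and $t$ raised by one, preserving the constraint $t-m\geq q$. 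With these points filled in, the proof is complete.
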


The ``effective'' $s$-stable $\CC^{\ast}$-homotopy category $\SH^{\ast,\eff}_{s}$ is defined similarly to $\SH^{\ast,\eff}$ by replacing $C$-suspension 
spectra with $S^{1}$-suspension spectra.
\vspace{0.1in}

We are ready to discuss certain functors relating $\Sigma_{C}^{q}\SH^{\ast,\eff}$ and $\SH^{\ast}$.
\begin{proposition}
For every integer $q$ the full inclusion functor 
\begin{equation*}
\xymatrix{
i_{q}\colon 
\Sigma_{C}^{q}\SH^{\ast,\eff}\ar[r] &
\SH^{\ast} }
\end{equation*}
acquires an exact right adjoint
 \begin{equation*}
\xymatrix{
r_{q}\colon 
\SH^{\ast}\ar[r] &
\Sigma_{C}^{q}\SH^{\ast,\eff} }
\end{equation*}
such that the following hold:
\begin{itemize}
\item
The unit of the adjunction $\id\rightarrow r_{q}\circ i_{q}$ is an isomorphism.
\item
By defining $f_{q}\equiv i_{q}\circ r_{q}$ there exists a natural transformation $f_{q+1}\rightarrow f_{q}$ and $f_{q+1}=f_{q+1}\circ f_{q}$.
\end{itemize}
\end{proposition}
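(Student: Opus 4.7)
The plan is to build the right adjoint $r_q$ by invoking Brown representability for the compactly generated triangulated category $\SH^{\ast}$ established in Theorem \ref{theorem:stablebrown}, then read off the listed properties from general facts about adjunctions between triangulated categories.

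First I would fix an object $\E$ of $\SH^{\ast}$ and consider the contravariant functor
\begin{equation*}
\F_{\E}\colon \bigl(\Sigma_{C}^{q}\SH^{\ast,\eff}\bigr)^{\op}\to\Ab,\qquad \G\mapsto\SH^{\ast}(i_{q}\G,\E).
\end{equation*}
Since $i_{q}$ preserves sums and distinguished triangles (it is the inclusion of a triangulated subcategory closed under direct sums), $\F_{\E}$ is homological and sends coproducts to products. By Lemma~\ref{lemma:levelcompactlygenerated} the category $\Sigma_{C}^{q}\SH^{\ast,\eff}$ is compactly generated by the set in (\ref{levelcompactgenerators}), so Brown representability (the version in Theorem~\ref{theorem:stablebrown}, applied to this well-generated subcategory) produces an object $r_{q}\E\in\Sigma_{C}^{q}\SH^{\ast,\eff}$ together with a natural isomorphism
\begin{equation*}
\Sigma_{C}^{q}\SH^{\ast,\eff}(-,r_{q}\E)\cong\SH^{\ast}(i_{q}(-),\E).
\end{equation*}
Standard naturality arguments promote $r_{q}$ to a functor and this to an adjunction $i_{q}\dashv r_{q}$. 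Exactness of $r_{q}$ is automatic: a right adjoint between triangulated categories whose left adjoint is exact is itself exact, because $r_{q}$ commutes with the shift (by adjointness to $\Sigma_{S^1}\circ i_q=i_q\circ\Sigma_{S^1}$) and sends distinguished triangles to distinguished triangles thanks to the five lemma applied on the compact generators.

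Next I would verify that the unit $\id\to r_{q}\circ i_{q}$ is an isomorphism. This is a formal consequence of the fact that $i_{q}$ is fully faithful: for any $\G\in\Sigma_{C}^{q}\SH^{\ast,\eff}$ and any compact generator $\HHH$ of that subcategory, the adjunction together with full faithfulness of $i_{q}$ gives
\begin{equation*}
\Sigma_{C}^{q}\SH^{\ast,\eff}(\HHH,r_{q}i_{q}\G)=\SH^{\ast}(i_{q}\HHH,i_{q}\G)=\Sigma_{C}^{q}\SH^{\ast,\eff}(\HHH,\G),
\end{equation*}
and testing against the set of compact generators from Lemma~\ref{lemma:levelcompactlygenerated} shows that the unit is an isomorphism.

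Finally, setting $f_{q}\equiv i_{q}\circ r_{q}$, the inclusion $\Sigma_{C}^{q+1}\SH^{\ast,\eff}\subseteq\Sigma_{C}^{q}\SH^{\ast,\eff}$ means $i_{q+1}$ factors as $i_{q}\circ j$ for a fully faithful $j$, so the counit of the adjunction $i_{q}\dashv r_{q}$ evaluated on $i_{q+1}r_{q+1}\E$ yields the desired natural transformation $f_{q+1}\to f_{q}$. For the relation $f_{q+1}=f_{q+1}\circ f_{q}$, the image of $f_{q}$ lies in $\Sigma_{C}^{q}\SH^{\ast,\eff}$, and applying $r_{q+1}$ to $f_{q}\E=i_{q}r_{q}\E$ reduces (via the unit isomorphism for the $q$-level adjunction) to computing $r_{q+1}\E$, so $f_{q+1}f_{q}\E\cong f_{q+1}\E$ naturally. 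The main obstacle is the clean invocation of Brown representability in the well-generated setting of $\Sigma_{C}^{q}\SH^{\ast,\eff}$ rather than in all of $\SH^{\ast}$; once this is in place, the remaining assertions are purely formal consequences of the fully faithful embedding and a diagram chase on the compact generators.
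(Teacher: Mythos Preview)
Your proof is correct and follows essentially the same approach as the paper: the paper invokes Neeman's adjoint functor theorem \cite[Theorem 4.1]{Neeman} for the exact coproduct-preserving inclusion $i_q$ out of the compactly generated category $\Sigma_C^q\SH^{\ast,\eff}$ (Lemma~\ref{lemma:levelcompactlygenerated}), which is exactly the Brown representability argument you unfold by hand. The remaining assertions about the unit and the natural transformation $f_{q+1}\to f_q$ are handled in the paper by the same formal reasoning you give, simply by noting that $i_q$ is a full embedding and chasing the evident diagram of nested inclusions.
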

\begin{proof}
Existence of the right adjoint $r_{q}$ follows by combining Lemma \ref{lemma:levelcompactlygenerated} with a general result due to Neeman 
\cite[Theorem 4.1]{Neeman} since the inclusion functor $i_{q}$ is clearly exact and preserves coproducts.
The unit of the adjunction is an isomorphism because $i_{q}$ is a full embedding, 
while the last claim follows by contemplating the diagram:
\begin{equation*}
\xymatrix{
\SH^{\ast}\ar[r]^-{r_{q+1}} \ar@{=}[d] &
\Sigma_{C}^{q+1}\SH^{\ast,\eff}\ar@{>->}[r]^-{i_{q+1}} \ar@{>->}[d] & 
\SH^{\ast}\ar@{=}[d] & \\
\SH^{\ast}\ar[r]^-{r_{q}} &
\Sigma_{C}^{q}\SH^{\ast,\eff}\ar@{>->}[r]^-{i_{q}} & 
\SH^{\ast} }
\end{equation*}
\end{proof}

Next we discuss some properties of $f_{q}$ and the counit of the adjunction.
\begin{lemma}
For every integer $q$ and map $f\colon \E\rightarrow\F$ in $\SH^{\ast}$ the induced map $f_{q}\colon f_{q}\E\rightarrow f_{q}\F$ is an isomorphism 
in $\SH^{\ast}$ if and only if there is a naturally induced isomorphism
\begin{equation*}
\xymatrix{
\Sigma_{C}^{q}\SH^{\ast,\eff}\bigl(\Fr_{m}\bigl(S^{s}\otimes C_{0}(\R^{t})\otimes \X\bigr),\E\bigr)\ar[r] &
\Sigma_{C}^{q}\SH^{\ast,\eff}\bigl(\Fr_{m}\bigl(S^{s}\otimes C_{0}(\R^{t})\otimes \X\bigr),\F\bigr) }
\end{equation*}
for every compact generator $\Fr_{m}\bigl(S^{s}\otimes C_{0}(\R^{t})\otimes \X\bigr)$.
\end{lemma}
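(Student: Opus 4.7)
The plan is to deduce the statement from Lemma \ref{lemma:levelcompactlygenerated} by exploiting the adjunction $(i_q, r_q)$ together with the identity $f_q = i_q \circ r_q$. First I would observe that because $i_q\colon \Sigma_{C}^{q}\SH^{\ast,\eff}\hookrightarrow\SH^{\ast}$ is a fully faithful embedding of triangulated categories, it both preserves and reflects isomorphisms. Consequently, the map $f_q(f) = i_q r_q(f)\colon f_q\E\to f_q\F$ is an isomorphism in $\SH^{\ast}$ if and only if $r_q(f)\colon r_q\E\to r_q\F$ is an isomorphism in $\Sigma_{C}^{q}\SH^{\ast,\eff}$.

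Second, I would apply Lemma \ref{lemma:levelcompactlygenerated} to the map $r_q(f)$ in the compactly generated triangulated category $\Sigma_{C}^{q}\SH^{\ast,\eff}$. This tells us that $r_q(f)$ is an isomorphism if and only if, for every compact generator $G = \Fr_{m}\bigl(S^{s}\otimes C_{0}(\R^{t})\otimes \X\bigr)$ with $t-m\geq q$, the induced map
\begin{equation*}
\Sigma_{C}^{q}\SH^{\ast,\eff}(G, r_q\E)\longrightarrow\Sigma_{C}^{q}\SH^{\ast,\eff}(G, r_q\F)
\end{equation*}
is a bijection.

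Finally, I would translate this criterion back to the original setting. The adjunction $(i_q, r_q)$ supplies natural bijections
\begin{equation*}
\Sigma_{C}^{q}\SH^{\ast,\eff}(G, r_q\E) \;\cong\; \SH^{\ast}(i_q G,\E) \;=\; \SH^{\ast}(G,\E)
\end{equation*}
for every object $G$ of $\Sigma_{C}^{q}\SH^{\ast,\eff}$, where the last identification uses that $i_q$ is a full subcategory inclusion (so that $i_q G$ is $G$ itself, viewed in $\SH^{\ast}$). Under these bijections, the Hom-maps appearing in the preceding paragraph correspond precisely to the maps appearing in the statement, yielding the desired equivalence.

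The main point to handle with care is interpretive rather than technical: since $\E$ and $\F$ themselves need not lie in $\Sigma_{C}^{q}\SH^{\ast,\eff}$, the Hom-groups $\Sigma_{C}^{q}\SH^{\ast,\eff}(G,\E)$ in the statement must be read via this adjunction, equivalently as $\SH^{\ast}(G,\E)$ for $G\in\Sigma_{C}^{q}\SH^{\ast,\eff}$, or as $\Sigma_{C}^{q}\SH^{\ast,\eff}(G,r_q\E)$. Once this convention is fixed the argument is entirely formal, and no further input beyond Lemma \ref{lemma:levelcompactlygenerated} and the adjoint pair $(i_q,r_q)$ is required.
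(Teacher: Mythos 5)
Your proposal is correct and follows essentially the same route as the paper, which states only that the result "follows from Lemma \ref{lemma:levelcompactlygenerated}"; you have simply unpacked the implicit steps, namely that $i_q$ is fully faithful (so $f_q(f)$ is an isomorphism iff $r_q(f)$ is), that Lemma \ref{lemma:levelcompactlygenerated} characterizes when $r_q(f)$ is an isomorphism, and that the adjunction $(i_q,r_q)$ identifies $\Sigma_{C}^{q}\SH^{\ast,\eff}(G,r_q\E)$ with $\SH^{\ast}(G,\E)$. Your remark that the Hom-sets in the statement need to be read via this adjunction (since $\E$ and $\F$ need not lie in $\Sigma_{C}^{q}\SH^{\ast,\eff}$) is an accurate clarification of the paper's notation.
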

\begin{proof}
This follows from Lemma \ref{lemma:levelcompactlygenerated}.
\end{proof}
\begin{lemma}
For every integer $q$ the counit of the adjunction $f_{q}\rightarrow\id$ evaluated at $\E$ yields an isomorphism 
\begin{equation*}
\xymatrix{
\SH^{\ast}\bigl(\Fr_{m}\bigl(S^{s}\otimes C_{0}(\R^{t})\otimes \X\bigr),f_{q}\E\bigr)\ar[r] &
\SH^{\ast}\bigl(\Fr_{m}\bigl(S^{s}\otimes C_{0}(\R^{t})\otimes \X\bigr),\E\bigr) }
\end{equation*}
for every compact generator $\Fr_{m}\bigl(S^{s}\otimes C_{0}(\R^{t})\otimes \X\bigr)$ of $\Sigma_{C}^{q}\SH^{\ast,\eff}$.
\end{lemma}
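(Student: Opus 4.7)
The plan is to deduce the claimed isomorphism as a purely formal consequence of the adjunction $(i_q, r_q)$ together with the fact that $i_q$ is a full embedding, exploiting that each compact generator $G \equiv \Fr_{m}\bigl(S^{s}\otimes C_{0}(\R^{t})\otimes \X\bigr)$ with $t-m\geq q$ lies in $\Sigma_{C}^{q}\SH^{\ast,\eff}$ by construction, so that $i_{q}G = G$ canonically.

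First I would set up the two natural bijections at hand. By the adjunction $(i_{q},r_{q})$ proved in the preceding proposition we have a natural isomorphism
\begin{equation*}
\SH^{\ast}(i_{q}G,\E) \;\cong\; \Sigma_{C}^{q}\SH^{\ast,\eff}(G,r_{q}\E),
\end{equation*}
and because $i_{q}$ is a full and faithful embedding the identity is an isomorphism of hom-sets
\begin{equation*}
\Sigma_{C}^{q}\SH^{\ast,\eff}(G,r_{q}\E) \;\cong\; \SH^{\ast}(i_{q}G,i_{q}r_{q}\E) \;=\; \SH^{\ast}(G,f_{q}\E).
\end{equation*}
Composing these two isomorphisms produces a natural bijection between $\SH^{\ast}(G,f_{q}\E)$ and $\SH^{\ast}(G,\E)$.

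Next I would identify this composite with the map induced by the counit. By the standard formula relating an adjunction to its counit, the inverse of the adjunction bijection sends $\phi\colon G\rightarrow r_{q}\E$ to the composite $\epsilon_{\E}\circ i_{q}\phi$, where $\epsilon_{\E}\colon f_{q}\E \to \E$ is the counit evaluated at $\E$. Reversing this, the bijection $\SH^{\ast}(G,f_{q}\E)\rightarrow \SH^{\ast}(G,\E)$ obtained above is precisely postcomposition with $\epsilon_{\E}$, which is exactly the map in the statement. A short diagram chase against the triangular identities confirms this identification.

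Finally, since $G$ by definition runs through the compact generators of $\Sigma_{C}^{q}\SH^{\ast,\eff}$ listed in \eqref{levelcompactgenerators}, the above applies uniformly. No step of this argument involves any hard computation: the entire content is the standard observation that for a fully faithful right or left adjoint, the unit (respectively counit) induces an isomorphism on hom-sets out of objects in the essential image. The only point to watch is the mild notational issue that $G$ must be viewed simultaneously as an object of $\Sigma_{C}^{q}\SH^{\ast,\eff}$ and of $\SH^{\ast}$, which is harmless because $i_{q}$ is the inclusion of a full triangulated subcategory. Thus the main (and essentially only) task is to keep the bookkeeping of adjoints and inclusions straight; there is no genuine obstacle beyond that.
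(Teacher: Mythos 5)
Your proof is correct and matches the paper's approach: both proofs rest on the canonical identification $G = i_{q}G$ for a compact generator $G$ of $\Sigma_{C}^{q}\SH^{\ast,\eff}$ together with the $(i_{q},r_{q})$-adjunction, and you fill in (accurately) the small bookkeeping step identifying the resulting bijection with postcomposition by the counit.
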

\begin{proof}
This follows by using the canonical isomorphism
\begin{equation*}
\xymatrix{
\SH^{\ast}\bigl(\Fr_{m}\bigl(S^{s}\otimes C_{0}(\R^{t})\otimes \X\bigr),\F\bigr)=
\SH^{\ast}\bigl(i_{q}\Fr_{m}\bigl(S^{s}\otimes C_{0}(\R^{t})\otimes \X\bigr),\F\bigr) }
\end{equation*}
and the adjunction between $i_{q}$ and $r_{q}$.
\end{proof}

\begin{theorem}
\label{theorem:slicefunctor}
For every integer $q$ there exists an exact functor 
\begin{equation*}
\xymatrix{
s_{n}\colon \SH^{\ast}\ar[r] & \SH^{\ast}. }
\end{equation*}
There exist natural transformations $f_{q}\rightarrow s_{q}$ and $s_{q}\rightarrow \Sigma_{S^{1}}f_{q+1}$ such that the following hold:
\begin{itemize}
\item
For every $\E$ there exists a distinguished triangle in $\SH^{\ast}$
\begin{equation*}
\xymatrix{
f_{q+1}\E\ar[r] &
f_{q}\E\ar[r] &
s_{q}\E\ar[r] &
\Sigma_{S^{1}}f_{q+1}\E. }
\end{equation*}
\item
The functor $s_{q}$ takes values in the full subcategory $\Sigma_{C}^{q}\SH^{\ast,\eff}$ of $\SH^{\ast}$.
\item
Every map in $\SH^{\ast}$ from an object of $\Sigma_{C}^{q+1}\SH^{\ast,\eff}$ to $s_{q}\E$ is trivial.
\item
The above properties characterizes the exact functor $s_{q}$ up to canonical isomorphism.
\end{itemize}
\end{theorem}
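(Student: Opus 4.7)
The plan is to define $s_{q}\E$ as a functorial cofiber of the natural transformation $f_{q+1}\E\rightarrow f_{q}\E$ and then verify each listed property using the adjunctions $(i_{q},r_{q})$ and the compact generators of $\Sigma_{C}^{q}\SH^{\ast,\eff}$. First I would produce the natural transformation $f_{q+1}\rightarrow f_{q}$: since $\Sigma_{C}^{q+1}\SH^{\ast,\eff}\subseteq\Sigma_{C}^{q}\SH^{\ast,\eff}$ via the inclusion established after the proposition preceding the theorem, the counit $f_{q}\E\rightarrow \E$ exhibits $f_{q}\E$ as the terminal object in $\Sigma_{C}^{q}\SH^{\ast,\eff}$ mapping to $\E$ up to the relevant universal property, so the counit $f_{q+1}\E\rightarrow\E$ factors uniquely through $f_{q}\E$. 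Functoriality and compatibility with the equation $f_{q+1}=f_{q+1}\circ f_{q}$ are then immediate from the adjunction data already constructed.

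Next I would realize the cofiber functorially. Working in a combinatorial, left proper stable model presentation of $\SH^{\ast}$ (for instance the projective $\CC^{\ast}$-stable model structure on $\Spt_{C}^{\Sigma}$ from Theorem \ref{theorem:Cstarsymmetricprojectivestable}), I would apply an accessible fibrant replacement and a functorial mapping cone to the representative of $f_{q+1}\E\rightarrow f_{q}\E$ to obtain $s_{q}\E$ together with the natural maps $f_{q}\rightarrow s_{q}$ and $s_{q}\rightarrow\Sigma_{S^{1}}f_{q+1}$; descending to $\SH^{\ast}$ produces the distinguished triangle required in the first bullet. The second bullet, that $s_{q}\E\in\Sigma_{C}^{q}\SH^{\ast,\eff}$, is then immediate: both $f_{q+1}\E$ and $f_{q}\E$ lie in $\Sigma_{C}^{q}\SH^{\ast,\eff}$ (using $\Sigma_{C}^{q+1}\SH^{\ast,\eff}\subseteq\Sigma_{C}^{q}\SH^{\ast,\eff}$), and this full triangulated subcategory is closed under cofibers of its maps.

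For the third bullet I would apply $\SH^{\ast}(\G,-)$ to the distinguished triangle for an arbitrary $\G\in\Sigma_{C}^{q+1}\SH^{\ast,\eff}$. The counit isomorphisms for the adjunctions $(i_{q+1},r_{q+1})$ and $(i_{q},r_{q})$ give
\begin{equation*}
\SH^{\ast}(\G,f_{q+1}\E)\xrightarrow{\cong}\SH^{\ast}(\G,\E)\xleftarrow{\cong}\SH^{\ast}(\G,f_{q}\E),
\end{equation*}
and by construction of $f_{q+1}\rightarrow f_{q}$ the composite matches the identity on $\SH^{\ast}(\G,\E)$, so the induced map $\SH^{\ast}(\G,f_{q+1}\E)\rightarrow\SH^{\ast}(\G,f_{q}\E)$ is an isomorphism. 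The same holds after applying $\Sigma_{S^{1}}$, so the long exact sequence associated to the triangle forces $\SH^{\ast}(\G,s_{q}\E)=0$, as required.

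The main obstacle is the last bullet, the uniqueness up to canonical isomorphism. My plan is to argue abstractly: if $s_{q}'$ is any other exact functor with the same structural data, apply $\SH^{\ast}(-,s_{q}'\E)$ to our distinguished triangle. Since $f_{q+1}\E\in\Sigma_{C}^{q+1}\SH^{\ast,\eff}$ and the analogue of the vanishing bullet holds for $s_{q}'\E$, the groups $\SH^{\ast}(f_{q+1}\E,s_{q}'\E)$ and $\SH^{\ast}(\Sigma_{S^{1}}f_{q+1}\E,s_{q}'\E)$ vanish, whence the triangle yields a canonical isomorphism $\SH^{\ast}(s_{q}\E,s_{q}'\E)\cong\SH^{\ast}(f_{q}\E,s_{q}'\E)$, and the map $f_{q}\E\rightarrow s_{q}'\E$ dictated by the assumed natural transformation for $s_{q}'$ produces a unique canonical map $s_{q}\E\rightarrow s_{q}'\E$; reversing roles and using the two-out-of-three property in the triangulated structure (together with Lemma \ref{lemma:levelcompactlygenerated} to detect isomorphisms on compact generators) shows it is an isomorphism, natural in $\E$.
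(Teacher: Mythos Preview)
Your argument is essentially correct and amounts to an explicit unpacking of what the paper does in one sentence: the paper simply invokes Neeman's Propositions~9.1.8 and 9.1.19 on Bousfield localization in compactly generated triangulated categories, together with ``standard arguments'', applied to the inclusion $\Sigma_{C}^{q+1}\SH^{\ast,\eff}\subseteq\Sigma_{C}^{q}\SH^{\ast,\eff}$. Your construction of $s_{q}$ as the cofiber of $f_{q+1}\rightarrow f_{q}$ and your verifications of the four bullets are precisely the content of those general results specialized to this situation, so the two approaches are really the same argument at different levels of abstraction.

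One point worth tightening: your appeal to a model-categorical functorial cone presupposes that the colocalization functors $f_{q}$, $f_{q+1}$ and the natural transformation between them lift to the point-set level. The preceding proposition in the paper constructs $r_{q}$ only at the homotopy-category level via Neeman's adjoint functor theorem, so this lift is not immediate (it would require, for instance, a right Bousfield localization of the model structure, and only left properness has been established). Neeman's route sidesteps this entirely: the vanishing in your third bullet forces the TR3 fill-in between any two candidate cofibers to be \emph{unique}, which is already enough to make the cone assignment into a genuine functor within the triangulated category. In fact your own uniqueness argument for the fourth bullet, read backwards, supplies exactly this; so you can drop the model-level detour and obtain functoriality of $s_{q}$ directly from the triangulated structure.
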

\begin{proof}
Compact generatedness of the triangulated categories $\Sigma_{C}^{q+1}\SH^{\ast,\eff}$ and $\Sigma_{C}^{q}\SH^{\ast,\eff}$ imply the above according to 
\cite[Propositions 9.1.8, 9.1.19]{Neeman:Triangulatedcategories} and standard arguments.
\end{proof}

\begin{definition}
\label{definition:slices}
The $n$th slice of $\E$ is $s_{n}\E$.
\end{definition}
\begin{remark}
We note that $s_{n}\E$ is unique up to unique isomorphism.
If $\E\in \Sigma_{C}^{n}\SH^{\ast,\eff}$ and $q\leq n$, 
then $f_{q}\E=\E$ and the $q$th slice $s_{q}\E$ of $\E$ is trivial for all $q<n$.
\end{remark}

The functor $s_{q}$ is compatible with the smash product in the sense that for $\E$ and $\F$ there is a natural map
\begin{equation*}
\xymatrix{
s_{q}(\E)\wedge^{\bf L} s_{q'}(\F) 
\ar[r] & 
s_{q+q'}(\E\wedge^{\bf L}\F).}
\end{equation*}
In particular, there is a map
\begin{equation*}
\xymatrix{
s_{0}({\bf 1})\wedge^{\bf L} s_{q}(E)\ar[r] & 
s_{q}(E). }
\end{equation*}
This shows the zero-slice of the sphere spectrum has an important property in this setup.

\begin{lemma}
For every integer $q$ and map $f\colon \E\rightarrow\F$ in $\SH^{\ast}$ the induced map $s_{q}\colon s_{q}\E\rightarrow s_{q}\F$ is an isomorphism 
in $\SH^{\ast}$ if and only if there is a naturally induced isomorphism
\begin{equation*}
\xymatrix{
\SH^{\ast}\bigl(\Fr_{m}\bigl(S^{s}\otimes C_{0}(\R^{t})\otimes \X\bigr),\E\bigr)\ar[r] &
\SH^{\ast}\bigl(\Fr_{m}\bigl(S^{s}\otimes C_{0}(\R^{t})\otimes \X\bigr),\F\bigr) }
\end{equation*}
for every compact generator $\Fr_{t-q}\bigl(S^{s}\otimes C_{0}(\R^{t})\otimes \X\bigr)$.
\end{lemma}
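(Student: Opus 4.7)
The plan is to combine compact generation of $\Sigma_{C}^{q}\SH^{\ast,\eff}$ from Lemma \ref{lemma:levelcompactlygenerated} with the three defining properties of $s_q$ listed in Theorem \ref{theorem:slicefunctor} and the preceding lemma about the counit $f_q\E\to\E$. First I would note that $s_q\E$ and $s_q\F$ both lie in $\Sigma_{C}^{q}\SH^{\ast,\eff}$ by the second item of Theorem \ref{theorem:slicefunctor}, so Lemma \ref{lemma:levelcompactlygenerated} tells us that $s_q f$ is an isomorphism in $\SH^{\ast}$ if and only if $\SH^{\ast}(G,s_q\E)\to\SH^{\ast}(G,s_q\F)$ is an isomorphism for every compact generator $G=\Fr_{m}(S^{s}\otimes C_{0}(\R^{t})\otimes \X)$ of $\Sigma_{C}^{q}\SH^{\ast,\eff}$, i.e.\ those with $t-m\geq q$.

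Next I would cut down the class of generators that must be tested. For a generator with $t-m\geq q+1$ the object $G$ already belongs to $\Sigma_{C}^{q+1}\SH^{\ast,\eff}$, and the third item of Theorem \ref{theorem:slicefunctor} forces $\SH^{\ast}(G,s_q\E)=\SH^{\ast}(G,s_q\F)=0$, so such generators give no condition. Only the ``bottom level'' generators with $t-m=q$ remain, and these are exactly the $\Fr_{t-q}(S^{s}\otimes C_{0}(\R^{t})\otimes \X)$ appearing in the statement. To translate the condition on $\SH^{\ast}(G,s_q(-))$ into a condition on $\SH^{\ast}(G,-)$ I would apply $\SH^{\ast}(G,-)$ to the distinguished triangle
\begin{equation*}
f_{q+1}\E \longrightarrow f_{q}\E \longrightarrow s_{q}\E \longrightarrow \Sigma_{S^{1}}f_{q+1}\E
\end{equation*}
of Theorem \ref{theorem:slicefunctor} for both $\E$ and $\F$ and compare via the five lemma. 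The preceding lemma identifies $\SH^{\ast}(G,f_{q}\E)\cong\SH^{\ast}(G,\E)$ naturally in $\E$, since $G$ is a compact generator of $\Sigma_{C}^{q}\SH^{\ast,\eff}$, and so $\SH^{\ast}(G,f_q f)$ is an isomorphism precisely when $\SH^{\ast}(G,f)$ is an isomorphism.

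The main obstacle will be to absorb the contribution of $f_{q+1}\E$: one needs $\SH^{\ast}(G,f_{q+1}\E)=0$ for every $G$ with $t-m=q$ in order for the long exact sequence to collapse to the desired natural identification $\SH^{\ast}(G,s_{q}\E)\cong\SH^{\ast}(G,\E)$. Since $f_{q+1}\E\in\Sigma_{C}^{q+1}\SH^{\ast,\eff}$ and $\Sigma_{C}^{q+1}\SH^{\ast,\eff}$ is compactly generated by objects of strictly larger weight, this reduces via coproducts, cofiber sequences and retracts to checking that $\SH^{\ast}(G,G')=0$ for every compact generator $G'=\Fr_{m'}(S^{s'}\otimes C_{0}(\R^{t'})\otimes\X')$ with $t'-m'\geq q+1$. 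The verification of this orthogonality is the hard part; I would compute both sides using Corollary \ref{corollary:finitelypresentablecofibrantcolimit} together with the adjunctions $\Fr_{m}\dashv\Ev_{m}$, expressing $\SH^{\ast}(G,G')$ as a filtered colimit of unstable $\HH^{\ast}$-groups, and exploit the strict weight inequality $t'-m'>t-m$ together with the circle decomposition $C=S^{1}\otimes C_{0}(\R)$ to match factors and conclude the vanishing. Granted this, the five lemma applied to the naturality square of long exact sequences yields the claimed equivalence between $s_{q}f$ being an isomorphism and $\SH^{\ast}(G,f)$ being an isomorphism for every $G=\Fr_{t-q}(S^{s}\otimes C_{0}(\R^{t})\otimes\X)$.
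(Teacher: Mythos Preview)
The paper states this lemma without proof, so there is no argument to compare against. Your steps 1--3 are sound: $s_q\E,s_q\F\in\Sigma_{C}^{q}\SH^{\ast,\eff}$, generators of weight $\geq q+1$ map trivially into slices by Theorem \ref{theorem:slicefunctor}, and the counit lemma gives $\SH^{\ast}(G,f_q\E)\cong\SH^{\ast}(G,\E)$ for generators $G$ of $\Sigma_{C}^{q}\SH^{\ast,\eff}$. The reduction of the test for $s_qf$ to generators of weight exactly $q$ is correct.

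The genuine gap is your step 4. The orthogonality $\SH^{\ast}(G,G')=0$ for $G$ of weight exactly $q$ and $G'$ a generator of weight $\geq q+1$ is not part of the slice formalism and there is no reason to expect it. For instance with $q=0$, $G={\bf 1}$ and $G'=\Sigma^{\infty}_{C}C_0(\R)$, you are asserting that a specific bigraded stable homotopy group of the sphere $\CC^{\ast}$-spectrum vanishes; the paper explicitly leaves such computations open, and in the analogous motivic situation the corresponding groups are nonzero. The weight inequality $t'-m'>t-m$ you propose to exploit via Corollary \ref{corollary:finitelypresentablecofibrantcolimit} does not force vanishing of the unstable $\HH^{\ast}$-groups in the colimit: having extra $C_0(\R)$-factors on one side imposes no orthogonality. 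Without this vanishing the long exact sequence does not collapse to $\SH^{\ast}(G,s_q\E)\cong\SH^{\ast}(G,\E)$, and neither direction of the biconditional follows: from $s_qf$ an isomorphism you cannot control $\SH^{\ast}(G,f_{q+1}f)$, and conversely isomorphisms on weight-$q$ generators alone say nothing about $f_{q+1}$. Either a genuinely different argument is needed, or the statement itself may require testing on all generators with $t-m\geq q$ as in the preceding $f_q$-lemma.
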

\vspace{0.1in}

The distinguished triangles in $\SH^{\ast}$ 
\begin{equation*}
\xymatrix{
f_{q+1}\E\ar[r] &
f_{q}\E\ar[r] &
s_{q}\E\ar[r] &
\Sigma_{S^{1}}f_{q+1}\E }
\end{equation*}
induce in a standard way an exact couple and a spectral sequence with input the groups $\pi_{p,n}(s_{q}\E)$ where the $r$th differential go from 
tridegree $(p,n,q)$ to $(p-1,n,q+r)$.
It would be interesting to work out concrete examples of such spectral sequences.
\newpage

\vspace{0.5in}

\begin{center}
Department of Mathematics, University of Oslo, Norway.\\
e-mail: paularne@math.uio.no
\end{center}
\end{document}